\numberwithin{equation}{section}
\newtheorem{theorem}{Theorem}[section]
\newtheorem{corollary}[theorem]{Corollary}
\newtheorem{lemma}[theorem]{Lemma}
\newtheorem{proposition}[theorem]{Proposition}
\newtheorem{conjecture}[theorem]{Conjecture}
\newtheorem*{claim}{Claim}
\newtheorem*{claim 1}{Claim 1}
\newtheorem*{claim 2}{Claim 2}
\newtheorem*{claim 3}{Claim 3}
\newtheorem*{claim 4}{Claim 4}
\theoremstyle{definition}
\newtheorem{definition}[theorem]{Definition}
\newtheorem{remark}[theorem]{Remark}
\newtheorem{example}[theorem]{Example}
\newtheorem*{acknowledgment}{Acknowledgment}
\newcommand{\C}{\mathbb{C}}
\newcommand{\R}{\mathbb{R}}
\newcommand{\Z}{\mathbb{Z}}
\newcommand{\N}{\mathbb{N}}
\newcommand{\T}{\mathbb{T}}
\newcommand{\ud}{\mathrm{d}}
\renewcommand{\angle}{\measuredangle}
\DeclarePairedDelimiter{\floor}{\lfloor}{\rfloor}
\DeclarePairedDelimiter{\ceil}{\lceil}{\rceil}
\title[Sharp estimates for oscillatory integral operators]{Sharp estimates for oscillatory integral operators via polynomial partitioning}
\author[L. Guth]{ Larry Guth }
\address{Department of Mathematics, Massachusetts Institute of Technology, 182 Memorial Drive, Cambridge, Massachusetts, 02139, USA.}
\email{lguth@math.mit.edu}
\author[J.Hickman]{ Jonathan Hickman }
\address{Eckhart hall Room 414, Department of mathematics, University of Chicago, 5734 S. University Avenue, Chicago, Illinois, 60637, USA.}
\curraddr{Mathematical Institute, North Haugh, St Andrews, Fife, KY16 9SS.}
\email{jeh25@st-andrews.ac.uk}
\author[M. Iliopoulou]{ Marina Iliopoulou }
\address{Department of Mathematics, University of California, Berkeley, CA, 94720-3840, USA.}
\curraddr{Room 262, Sibson Building, Parkwood Road, CT2 7FS.}
\email{m.iliopoulou@kent.ac.uk}
\begin{document}

\begin{abstract} The sharp range of $L^p$-estimates for the class of H\"ormander-type oscillatory integral operators is established in all dimensions under a positive-definite assumption on the phase. This is achieved by generalising a recent approach of the first author for studying the Fourier extension operator, which utilises polynomial partitioning arguments. The main result implies improved bounds for the Bochner--Riesz conjecture in dimensions $n \geq 4$.
\end{abstract}

\maketitle




 \section{Introduction}




\subsection{Statement of results} 

Let $B^{d}$ denote the unit ball in $\R^{d}$ and $\Sigma \colon B^{n-1} \to \R^n$ be a smooth\footnote{In view of the methods of the present article it is convenient to work in the $C^{\infty}$ category, but the forthcoming definitions and questions certainly make sense at lower levels of regularity (in particular, in the $C^2$ class).} parametrisation of a hypersurface. Further let $a \in C_c^{\infty}(\R^{n-1})$ be non-negative and supported in $B^{n-1}$ and suppose $\Sigma$ has non-vanishing Gaussian curvature on the support $\mathrm{supp}\,a$ of $a$. Analytically, this means that $\Sigma$ satisfies the following conditions:
\begin{itemize}
\item[E1)] $\mathrm{rank}\, \partial_{\omega} \Sigma(\omega) = n-1$ for all $\omega \in B^{n-1}$.
\item[E2)] Defining the Gauss map $G \colon B^{n-1} \to S^{n-1}$ by $G(\omega) := \frac{G_0(\omega)}{|G_0(\omega) |}$ where
\begin{equation*}
G_0(\omega) := \bigwedge_{j=1}^{n-1} \partial_{\omega_j} \Sigma(\omega),
\end{equation*}
the curvature condition
\begin{equation*}
\det \partial^2_{\omega \omega} \langle \Sigma(\omega),G(\omega_0)\rangle|_{\omega = \omega_0} \neq 0
\end{equation*}
holds for all $\omega_0 \in \mathrm{supp}\,a$.
\end{itemize}
Here the wedge product of $n-1$ vectors in $\R^n$ is identified with a vector in the usual manner. 

A central problem in harmonic analysis is to understand the Lebesgue space mapping properties of the extension operator $E$ associated to such a parametrised hypersurface. This operator is defined by the formula
\begin{equation}\label{extension operator}
Ef(x) := \int_{B^{n-1}} e^{2 \pi i \langle x, \Sigma(\omega)\rangle} a(\omega) f(\omega)\,\ud \omega
\end{equation}
for all integrable $f \colon B^{n-1} \to \C$. Thus, $E$ is an oscillatory integral operator with associated phase function 
\begin{equation}\label{extension phase}
\phi(x;\omega) := \langle x, \Sigma(\omega)\rangle.
\end{equation} Observe that the parametrisation $\Sigma$ can be recovered from the phase by differentiation; that is,
\begin{equation*}
\partial_{x} \phi(x;\omega) = \Sigma(\omega). 
\end{equation*}
Typically, one is interested in proving local estimates for \eqref{extension operator} of the form\footnote{Given a (possibly empty) list of objects $L$, for real numbers $A_p, B_p \geq 0$ depending on some Lebesgue exponent $p$ the notation $A_p \lesssim_L B_p$ or $B_p \gtrsim_L A_p$ signifies that $A_p \leq CB_p$ for some constant $C = C_{L,n,p} \geq 0$ depending on the objects in the list, $n$ and $p$. In addition, $A_p \sim_L B_p$ is used to signify that $A_p \lesssim_L B_p$ and $A_p \gtrsim_L B_p$.}
\begin{equation}\label{local extension estimate}
\|Ef\|_{L^p(B(0,\lambda))} \lesssim_{\varepsilon} \lambda^{\varepsilon}\|f\|_{L^p(B^{n-1})};
\end{equation}
here the left-hand norm has been localised to a ball of radius $\lambda \geq 1$ and the right-hand constant is allowed some weak dependence on $\lambda$. In particular, the Fourier restriction conjecture asserts that \eqref{local extension estimate} should hold for any $\varepsilon > 0$ in the range $p \geq 2n/(n-1)$. 

In this article the natural variable coefficient generalisations of such extension operators \eqref{extension operator} and estimates \eqref{local extension estimate} are studied. In particular, here more general oscillatory integral operators are considered  whose associated phase function $\phi(x;\omega)$ shares the property of the extension operator that for each $x$ the map $\omega \mapsto \partial_{x} \phi(x;\omega)$ parametrises a hypersurface of non-vanishing Gaussian curvature. Crucially, however, the choice of hypersurface is now allowed to smoothly vary with $x$. 

To formalise this discussion, let $n \geq 2$, $a \in C_c^{\infty}(\R^n \times \R^{n-1})$ be non-negative and supported in $B^n \times B^{n-1}$ and $\phi \colon B^n \times B^{n-1} \to \R$ be a smooth function which satisfies the following conditions:
\begin{itemize}
\item[H1)] $\mathrm{rank}\, \partial_{\omega x}^2 \phi(x;\omega) = n-1$ for all $(x;\omega) \in B^n \times B^{n-1}$.
\item[H2)] Defining the map $G \colon B^n \times B^{n-1} \to S^{n-1}$ by $G(x;\omega) := \frac{G_0(x;\omega)}{|G_0(x;\omega) |}$ where
\begin{equation*}
G_0(x;\omega) := \bigwedge_{j=1}^{n-1} \partial_{\omega_j} \partial_x\phi(x;\omega),
\end{equation*}
the curvature condition
\begin{equation*}
\det \partial^2_{\omega \omega} \langle \partial_x\phi(x;\omega),G(x; \omega_0)\rangle|_{\omega = \omega_0} \neq 0
\end{equation*}
holds for all $(x; \omega_0) \in \mathrm{supp}\,a$.
\end{itemize}

Clearly H1) and H2) agree with E1) and E2) when one restricts to phases of the form \eqref{extension phase}, and this definition therefore leads to a generalisation of the operator $E$ introduced above. 

Suppose  $\phi$ satisfies H1) and H2), for any $\lambda \geq 1$ let $a^{\lambda}(x; \omega) := a(x/\lambda; \omega)$, $\phi^{\lambda}(x;\omega) :=\lambda\phi(x/\lambda;\omega)$ and define the operator $T^{\lambda}$ by
\begin{equation*}
T^{\lambda}f(x) :=  \int_{B^{n-1}} e^{2 \pi i \phi^{\lambda}(x; \omega)}a^{\lambda}(x;\omega) f(\omega)\,\ud \omega
\end{equation*}
for all integrable $f \colon B^{n-1} \to \C$. In this case $T^{\lambda}$ is said to be a \emph{H\"ormander-type operator}. Note that the spatial localisation featured in \eqref{local extension estimate} is now built into the operator. 

\begin{theorem}[Stein \cite{Stein1986}, Bourgain--Guth \cite{Bourgain2011}]\label{general main theorem} Suppose $T^{\lambda}$ is a H\"ormander-type operator. For all $\varepsilon >0$ the estimate 
\begin{equation}\label{general estimate}
\|T^{\lambda}f\|_{L^p(\R^n)} \lesssim_{\varepsilon, \phi, a} \lambda^{\varepsilon}\|f\|_{L^{p}(B^{n-1})}
\end{equation}
holds uniformly for $\lambda \geq 1$ whenever $p$ satisfies
\begin{align}\label{general theorem 1}
p \geq 2 \cdot \frac{n+1}{n-1} & \qquad \textrm{if $n$ is odd;} \\
\nonumber
p \geq 2 \cdot \frac{n+2}{n} & \qquad \textrm{if $n$ is even.}
\end{align}
\end{theorem}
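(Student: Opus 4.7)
The plan is to prove the two bounds separately, since they reflect two distinct techniques: a stationary-phase/$TT^*$ argument gives the odd-dimensional case, while a multilinear-to-linear reduction is needed for the even-dimensional improvement.

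\textbf{Odd dimensions.} For $p \geq 2(n+1)/(n-1)$ I would implement a $TT^*$ argument of Tomas--Stein type. The Schwartz kernel of the self-adjoint operator $T^\lambda (T^\lambda)^*$ is
\[
K^\lambda(x,y) = \int_{B^{n-1}} e^{2\pi i (\phi^\lambda(x;\omega) - \phi^\lambda(y;\omega))} a^\lambda(x;\omega) \overline{a^\lambda(y;\omega)}\, \ud \omega.
\]
Regarding this as an oscillatory integral in $\omega$, the stationary-phase condition reads $\partial_\omega \phi^\lambda(x;\omega) = \partial_\omega \phi^\lambda(y;\omega)$. Expressing the difference on the left via the fundamental theorem of calculus and invoking the rank hypothesis H1), critical points can exist only when $x-y$ points in a specific direction (up to error); by H2) the Hessian $\partial^2_{\omega\omega}$ of the phase is non-degenerate at any such critical point. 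Applying the standard stationary phase formula yields
\[
|K^\lambda(x,y)| \lesssim (1+|x-y|)^{-(n-1)/2}
\]
for $x,y$ in the support of $a^\lambda$, with rapid decay once $|x-y|$ exceeds an appropriate threshold. A dyadic decomposition of this kernel, combined with the convexity argument behind Tomas--Stein, converts this into an $L^{p'} \to L^p$ bound for $T^\lambda (T^\lambda)^*$ at $p = 2(n+1)/(n-1)$, and hence an $L^2 \to L^p$ bound for $T^\lambda$. Since $f$ is supported in the unit ball $B^{n-1}$, Hölder's inequality upgrades this to the required $L^p \to L^p$ estimate.

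\textbf{Even dimensions.} For $p \geq 2(n+2)/n$ with $n$ even, the Stein exponent must be sharpened. Following Bourgain--Guth, the plan is to combine the linear Stein bound with an $n$-linear oscillatory integral estimate. Specifically, if $\tau_1,\dots,\tau_n \subset B^{n-1}$ are pieces whose unit normals $G(x;\omega_j)$ span a parallelepiped of volume bounded below by some fixed $\nu > 0$ for all admissible $(x;\omega_j)$, then one expects the Hörmander analogue of the Bennett--Carbery--Tao multilinear restriction inequality,
\[
\Big\| \prod_{j=1}^n |T^\lambda f_j|^{1/n} \Big\|_{L^{2n/(n-1)}(\R^n)} \lesssim_{\varepsilon,\nu} \lambda^{\varepsilon} \prod_{j=1}^n \|f_j\|_{L^2(\tau_j)}^{1/n},
\]
to hold; one establishes this via wave-packet decomposition, reducing to a multilinear Kakeya bound for the curved tubes generated by $\phi^\lambda$. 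With this multilinear estimate in hand, I would run Bourgain--Guth's linearisation: at each $x$, decompose $T^\lambda f(x)$ into contributions from $K^{-1}$-caps $\tau \subset B^{n-1}$, and note that either (i) a single cap dominates, allowing parabolic rescaling and induction on $\lambda$, (ii) a cluster of nearby caps dominates, which can be regrouped into a larger cap, or (iii) $n$ pairwise $\nu$-transversal caps all contribute comparably, in which case the multilinear estimate applies. Optimising over $K$ and interpolating against the Stein endpoint produces the exponent $2(n+2)/n$ in even dimensions.

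\textbf{Main obstacle.} The most delicate step is the variable-coefficient multilinear estimate. Unlike the translation-invariant situation underlying Bennett--Carbery--Tao, wave packets for a Hörmander-type operator are not straight tubes but curved tubes whose direction at $x$ is dictated by $G(x;\omega)$; verifying that transversality at a single scale propagates consistently across the ball $B(0,\lambda)$, and that the resulting curved Kakeya bound survives, is the technical heart of the proof. A secondary difficulty lies in tuning the Bourgain--Guth parameter $K$ so that the interpolation between the multilinear exponent $2n/(n-1)$ and the Stein exponent $2(n+1)/(n-1)$ lands precisely at $2(n+2)/n$ when $n$ is even, while tracking the induction-on-scales bookkeeping to close with only a $\lambda^{\varepsilon}$ loss.
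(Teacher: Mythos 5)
Your odd-dimensional argument via $TT^*$ is essentially Stein's original 1986 proof and is a legitimate, more elementary route to that half of the theorem than what the paper does; the paper instead deduces \emph{both} parities from a single mechanism, namely the Bennett--Carbery--Tao $k$-linear estimates (Theorem~\ref{Bennett Carbery Tao theorem}), their conversion into $k$-broad inequalities (Corollary~\ref{Bennett Carbery Tao corollary}), and the $k$-broad-to-linear transference of Proposition~\ref{general proposition Bourgain Guth}, optimising over $k$. What the paper's route buys is uniformity and the ability to quantify exactly where the parity enters: the optimisation forces $k = \lfloor(n+2)/2\rfloor$, which when $n$ is odd reproduces Stein's exponent and when $n$ is even gives $2(n+2)/n$.

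Your even-dimensional argument has a real gap. You invoke only the $n$-linear estimate, and your trichotomy (single cap / cluster of caps / $n$ transversal caps) omits the case that actually governs the exponent: the significant caps might neither collapse to a single larger cap nor yield $n$ transversal directions, but instead concentrate near a $(k-1)$-dimensional subspace $V$ for some intermediate $2 \leq k \leq n$. This ``narrow'' case cannot be folded into parabolic rescaling as you propose; it is precisely where the Kakeya compression phenomenon of \S\ref{Necessary conditions section} bites, and it is handled in the paper by the narrow-term decoupling estimate of Lemma~\ref{general decoupling theorem} together with parabolic rescaling and induction on scales. Moreover, using $k=n$ is suboptimal: running Proposition~\ref{general proposition Bourgain Guth} with the $n$-linear exponent $2n/(n-1)$ requires $p \geq 2\cdot\frac{n-n+2}{n-n+1} = 4$, which is far worse than $2(n+2)/n$; the correct choice is $k=(n+2)/2$, which balances the $k$-broad constraint $p \geq 2k/(k-1)$ against the narrow-case constraint $p \geq 2(n-k+2)/(n-k+1)$. ``Interpolating against the Stein endpoint'' does not substitute for this, since both inputs are $L^p$ bounds and neither is weak-type; the gain comes from the decoupling/rescaling handling of the narrow case, not from interpolation.
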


The odd dimensional case is due to Stein \cite{Stein1986}, who in fact showed that the above estimates are valid for $p \geq 2(n+1)/(n-1)$ in all dimensions  without the $\lambda^{\varepsilon}$-loss. The strengthened results in even dimensions were established much later by Bourgain and the first author \cite{Bourgain2011}.\footnote{Strictly speaking, in \cite{Bourgain2011} weaker $L^{\infty}-L^p$ bounds are proven, but the methods can be used to establish the $L^p-L^p$ strengthening: see, for instance, \cite[\S9]{Guth2018} where the $L^p-L^p$ argument appears (although in a slightly disguised form).} A detailed history of this problem is provided later in the introduction. It is remarked that Theorem~\ref{general main theorem} is sharp in the sense that there are examples of H\"ormander-type operators for which \eqref{general estimate} fails whenever $p$ does not satisfy \eqref{general theorem 1}. Such examples originate from work of Bourgain \cite{Bourgain1991} and are discussed in detail in \S\ref{previous results section} and \S\ref{Necessary conditions section}.

The majority of this work concerns the case where the phase satisfies a strengthened version of H2), namely:
\begin{itemize}
\item[H2$^+$)] For all $(x; \omega_0) \in \mathrm{supp}\,a$ the matrix 
\begin{equation*}
\partial^2_{\omega \omega} \langle \partial_x\phi(x;\omega),G(x;\omega_0)\rangle|_{\omega = \omega_0}
\end{equation*}
is positive-definite.
\end{itemize}
If $\phi$ satisfies H1) and H2$^+$), then $T^{\lambda}$ is said to be a \emph{H\"ormander-type operator with positive-definite phase}. Geometrically, this condition implies that the principal curvatures of the hypersurface parametrised by $\omega \mapsto \partial_x \phi(x;\omega)$ are everywhere positive. A hypersurface satisfying this condition is said to be \emph{positively-curved}. 

Lee \cite{Lee2006} observed that for positive-definite phases one may prove estimates beyond the range of Theorem~\ref{general main theorem}\footnote{In particular, Lee \cite{Lee2006} proved that for positive-definite phases \eqref{general estimate} holds for $p\geq 2\cdot\frac{n+2}{n}$ in \textit{all} dimensions, extending the range in Theorem~\ref{general main theorem} when $n$ is odd.}. The main result of this article provides sharp estimates in this setting.

\begin{theorem}\label{main theorem} Suppose $T^{\lambda}$ is a H\"ormander-type operator with positive-definite phase. For all $\varepsilon  > 0$ the estimate 
\begin{equation}\label{Hormander estimate}
\|T^{\lambda}f\|_{L^p(\R^n)} \lesssim_{\varepsilon, \phi, a} \lambda^{\varepsilon}\|f\|_{L^{p}(B^{n-1})}
\end{equation}
holds for all $\lambda \geq 1$ whenever $p$ satisfies
\begin{align}\label{main theorem 1}
p \geq 2 \cdot \frac{3n+1}{3n-3} & \qquad \textrm{if $n$ is odd;} \\
\nonumber
p \geq 2 \cdot \frac{3n+2}{3n-2} & \qquad \textrm{if $n$ is even.}
\end{align}
\end{theorem}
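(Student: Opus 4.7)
The plan is to adapt the polynomial partitioning scheme that the first author developed for the Fourier extension operator to the variable coefficient Hörmander setting. The starting point is a wave packet decomposition at scale $\lambda$: one partitions $B^{n-1}$ into caps $\theta$ of radius $\lambda^{-1/2}$ and decomposes $f = \sum_{\theta, v} f_{\theta,v}$ so that each $T^\lambda f_{\theta,v}$ is essentially concentrated on a \emph{curved tube} $T_{\theta,v}$ of radius $\lambda^{1/2+\delta}$ — namely, the $\lambda^{1/2+\delta}$-neighbourhood of the integral curve of the direction field determined by H1). Under H2$^+$), these curved tubes behave locally like smoothly bent translates of straight tubes, and, crucially, cones of tubes coming from transverse caps intersect with quantitative transversality. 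The aim is then to prove \eqref{Hormander estimate} by induction on the radius $\lambda$ and, nested inside it, on the dimension $n$.

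The next ingredient is a broad/narrow dichotomy in the spirit of Bourgain--Guth, formulated via the first author's $k$-\emph{broad norms} $\|T^\lambda f\|_{\mathrm{BL}^p_{k,A}}$. These capture the contribution to $T^\lambda f(x)$ coming from wave packets whose caps span a genuinely $k$-dimensional cone at $x$, and they satisfy a useful recursion: control of a broad norm at scale $\lambda$ together with a decoupling/parabolic rescaling argument recovers the full $L^p$ bound up to estimates for $T^\lambda$ on functions localised to caps of radius $\lambda^{-1/2}$, which can be absorbed via induction on $\lambda$. The variable coefficient multilinear restriction theorem of Bennett--Carbery--Tao (which requires only transversality, not curvature) gives the base case $k=n$ and, more importantly, is strong enough to serve as the driver of the polynomial partitioning iteration at general $k$.

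The heart of the argument is then bounding $\|T^\lambda f\|_{\mathrm{BL}^p_{k,A}}$ by polynomial partitioning. One applies Guth's theorem to produce a polynomial $P$ of degree $D=D(\varepsilon)$ whose zero set $Z(P)$ divides $\mathbb{R}^n$ into $\sim D^n$ cells of roughly equal $L^p$-mass. Two regimes arise. In the \emph{cellular} regime, a typical cell is met by only $\sim 1/D^{n-1}$ of the wave packets, and applying the inductive hypothesis on $\lambda$ inside each cell and summing closes the estimate provided $p$ lies in the claimed range. In the \emph{algebraic} regime, most of the mass sits in the $\lambda^{1/2+\delta}$-neighbourhood of $Z(P)$; one then splits the relevant tubes into a \emph{transverse} family, which meets $Z(P)$ quasi-transversally and is controlled by a Wongkew-type polynomial counting bound, and a \emph{tangential} family, whose tubes lie inside the $\lambda^{1/2+\delta}$-neighbourhood of $Z(P)$ and so live essentially inside an $(n-1)$-dimensional algebraic set, allowing induction on dimension. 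Iterating and interpolating yields the exponents in \eqref{main theorem 1}, with the odd/even distinction reflecting the optimal choice of $k$ for which the base multilinear estimate is available.

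The principal obstacle will be the tangential sub-case in the variable coefficient setting. For the translation-invariant extension operator, tangential wave packets are honest straight tubes lying in a neighbourhood of $Z(P)$, and tangency to $Z(P)$ combined with the fixed surface $\Sigma$ forces an extra dimensional reduction that feeds cleanly into induction on $n$. For a general Hörmander-type operator, the tubes are curved, and the hypersurface $\omega \mapsto \partial_x\phi(x;\omega)$ itself depends on $x$, so "being tangent to $Z(P)$" becomes a subtler condition intertwining with the curvature data. Handling this requires exploiting H2$^+$) in an essential way: positive-definiteness rules out the Bourgain-type Kakeya compression counterexamples that prevent one from going beyond the range of Theorem~\ref{general main theorem}, and it ensures that a family of curved tubes simultaneously tangent to a low-degree variety still satisfies a quantitative polynomial Wolff-type estimate strong enough to drive the induction. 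This is where I would expect the new technical content of the proof to lie.
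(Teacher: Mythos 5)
The overall architecture you describe — wave packets on curved tubes, $k$-broad norms, broad/narrow dichotomy, polynomial partitioning into cellular/algebraic and then tangential/transverse sub-cases, induction on both radius and dimension — is indeed the paper's scheme, and you have correctly identified the tangential sub-case as the technical heart. But the proposal as written misidentifies the crucial new ingredient, and if executed literally it would prove only the weaker Theorem~1.1, not Theorem~1.2.

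The problem is your claim that the Bennett--Carbery--Tao multilinear theorem ``is strong enough to serve as the driver of the polynomial partitioning iteration at general $k$.'' It is not. BCT gives $k$-linear (hence $k$-broad) estimates only for $p \geq 2k/(k-1)$, and feeding that exponent through the broad-to-linear mechanism (decoupling + parabolic rescaling + induction on scales) reproduces exactly the endpoints $2(n+1)/(n-1)$ (odd $n$) and $2(n+2)/n$ (even $n$) of Theorem~1.1. The content of Theorem~1.2 is precisely the improvement from that range to the exponents in the statement, and this rests on a genuinely stronger $k$-broad estimate (the paper's Theorem~1.6), valid down to $\bar p(k,n) = 2(n+k)/(n+k-2) < 2k/(k-1)$, which BCT cannot give. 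The positive-definite hypothesis is exactly what makes that sharper $k$-broad estimate true: BCT needs only transversality, but the paper's $k$-broad estimate would be \emph{false} without H2$^+$) (the sharp examples in \S 2.4 show why).

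You do gesture in the right direction at the end (``handling this requires exploiting H2$^+$) in an essential way''), but the mechanism you propose — ``a quantitative polynomial Wolff-type estimate'' for curved tubes tangent to a low-degree variety — is not what closes the argument, and I do not see how such an incidence bound would produce the required gain. What is actually needed is a \emph{transverse equidistribution} estimate: for $g$ concentrated on wave packets that are $R^{-1/2+\delta_m}$-tangent to an $m$-dimensional transverse complete intersection $Z$, and $\rho < R$, one has
\begin{equation*}
\int_{N_{\rho^{1/2+\delta_m}}(Z)\cap B} |T^\lambda g|^2 \; \lesssim \; R^{1/2+O(\delta_m)}\,(\rho/R)^{(n-m)/2}\,\|g\|_{L^2(B^{n-1})}^2,
\end{equation*}
and it is the factor $(\rho/R)^{(n-m)/2}$ that ultimately produces the exponent $\bar p(k,n)$ in the transverse sub-case of the induction. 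The proof of this estimate is not Kakeya-theoretic: it is an uncertainty-principle argument on the frequency side. One shows that, restricted to a ball $B$ of radius $R^{1/2+\delta_m}$ and a cap $\tau$ of radius $\rho^{-1/2+\delta_m}$, the frequencies $\partial_x\phi^\lambda(\bar x;\omega_\theta)$ of the tangential wave packets lie in a thin neighbourhood of an affine subspace $A_\xi$ whose orthogonal complement $V' = V_\xi^\perp$ is \emph{quantitatively transverse} to $Z$. This last step is where H2$^+$) is indispensable: for indefinite phases (the hyperbolic paraboloid being the prototype) the subspace $V'$ can actually be contained in the tangent plane of $Z$, and the entire mechanism collapses. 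Saying that ``positive-definiteness rules out the Kakeya compression counterexamples'' is not a proof idea; the working mechanism is this frequency-side transversality, which you would need to identify and prove.

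Two further points worth noting. First, the base case $k=n$ plays no role; the dimensional base case is $m \leq k-1$, where the $k$-broad norm is $\mathrm{RapDec}(R)$ by construction. Second, the polynomial partitioning and the tube-counting lemmas require the core curves $\Gamma^\lambda_{\theta,v}$ to be replaced by polynomial approximants of degree $O_\varepsilon(1)$, which is only possible at spatial scales $R \lesssim_\varepsilon \lambda^{1-\varepsilon}$; one therefore proves the $k$-broad estimate first for this restricted range of radii and then passes to $R \leq \lambda$ by a separate patching argument. These are not cosmetic: without them the Bézout-type bounds in the cellular and transverse cases do not hold uniformly in $R$.
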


This result improves upon the previous best results of Lee \cite{Lee2006} and Bourgain and the first author \cite{Bourgain2011}. Moreover, it is sharp in the sense that there are H\"ormander-type operators with positive-definite phase for which \eqref{Hormander estimate} fails whenever $p$ does not satisfy \eqref{main theorem 1}. Examples of this kind appear, for instance, in \cite{Bourgain2011, Minicozzi1997} and are discussed in detail in \S\ref{previous results section} and \S\ref{Necessary conditions section}.  It is remarked that range of $p$ obtained in \cite{Lee2006, Bourgain2011} agrees with Theorem \ref{main theorem} for $n=3$ and therefore the sharp result in this case is due to Lee \cite{Lee2006} and Bourgain and the first author \cite{Bourgain2011}. In all higher dimensions \eqref{main theorem 1} is a strictly larger range of $p$ than what was previously known. Finally, away from the endpoint values one may apply $\varepsilon$-removal techniques to establish \eqref{Hormander estimate} without the $\lambda^{\varepsilon}$-loss in the constant (see $\S$\ref{Epsilon removal section}, below). 



\subsection{Applications to the Bochner--Riesz problem}

By the standard Carleson--Sj\"olin reduction \cite{Carleson1972} (see also \cite{Hormander1973}), Theorem~\ref{main theorem} implies new $L^p$-bounds on Bochner--Riesz multipliers in dimensions $n \geq 4$. Theorem~\ref{main theorem} also implies bounds for the oscillatory integral operators of Minicozzi--Sogge \cite{Minicozzi1997} which arise in relation to the Bochner--Riesz problem on compact manifolds (see \cite[Chapter 5]{Sogge2017}). Moreover, for certain choices of manifold, these estimates are sharp. In this section these well-known applications are briefly reviewed.

\subsubsection*{Euclidean Bochner--Riesz} For $\alpha \geq 0$ the \emph{Bochner--Riesz multiplier of order $\alpha$} is the function
\begin{equation*}
    m^{\alpha}(\xi) := (1 - |\xi|)_+^{\alpha}
\end{equation*}
where $(t)_+ := t$ if $t > 0$ and 0 otherwise.

\begin{corollary}\label{Bochner--Riesz corollary} If $p$ satisfies the condition in \eqref{main theorem 1}, then
\begin{equation}\label{Bochner--Riesz bound}
    \|m^{\alpha}(D)f\|_{L^p(\R^n)} \lesssim_{\alpha} \|f\|_{L^p(\R^n)} \qquad \textrm{for $\alpha > \alpha(p) := \max \Big\{n \big| \frac{1}{2} - \frac{1}{p}\big| - \frac{1}{2}, 0 \Big\}$.}
\end{equation}
\end{corollary}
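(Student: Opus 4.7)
The plan is to execute the standard Carleson--Sj\"olin dyadic reduction, which converts the boundedness of the Bochner--Riesz multiplier into a local extension estimate for the sphere $S^{n-1}$ that is in turn a special case of Theorem~\ref{main theorem}. First, choose $\eta \in C_c^\infty(\R)$ supported in $[1/4,1]$ forming a partition of unity $\sum_{k\geq 1}\eta(2^k t)=1$ on $(0,1/4]$, and decompose
\begin{equation*}
m^\alpha(\xi) = m_0(\xi) + \sum_{k\geq 1} 2^{-k\alpha}\beta\bigl(2^k(1-|\xi|)\bigr),
\end{equation*}
where $\beta(s):=s^\alpha\eta(s)\in C_c^\infty([1/4,1])$ and $m_0$ is a smooth multiplier compactly supported away from the sphere (hence harmless on any $L^p$). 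Each piece $m_k^\alpha(\xi):=2^{-k\alpha}\beta(2^k(1-|\xi|))$ is supported in an annulus of thickness $\sim 2^{-k}$ around $S^{n-1}$, and the task reduces to summing the operator norms of $m_k^\alpha(D)$ in $k$.

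Next, I would rewrite $m_k^\alpha(D)$ in terms of the sphere extension operator $Ef(x):=\int_{S^{n-1}}e^{2\pi i\langle x,\omega\rangle}f(\omega)\,\mathrm{d}\sigma(\omega)$. Passing to polar coordinates $\xi=(1-2^{-k}\rho)\omega$ with $\rho\in[1/4,1]$, $\omega\in S^{n-1}$, the convolution kernel of $m_k^\alpha(D)$ can be expressed as an average, in $\rho$, of modulated and mildly rescaled sphere extension kernels. Using this representation together with a $TT^*$/duality argument (and the fact that the kernel of $m_k^\alpha(D)$ is rapidly decreasing outside $B(0, 2^k)$, thanks to $k$-many integrations by parts in $\omega$), one obtains a bound of the form
\begin{equation*}
\|m_k^\alpha(D)f\|_{L^p(\R^n)} \lesssim 2^{-k\alpha}\cdot 2^{-k}\cdot\sup_{g}\frac{\|Eg\|_{L^p(B(0,2^k))}}{\|g\|_{L^p(S^{n-1})}}\cdot 2^{k\cdot n(1/2 - 1/p)\cdot\mathrm{\,scaling\,factor}}
\end{equation*}
after carefully tracking the Jacobian and the $L^p$-normalisations induced by the $2^{-k}$-rescaling from $B(0,2^k)$ to $B^n$. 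The precise bookkeeping yields a factor $2^{k\alpha(p)}$ in the numerator, so each piece is controlled by $2^{k(\alpha(p)-\alpha)}$ times the local extension operator norm on the ball of radius $2^k$.

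The key analytic input is then to observe that the phase $\phi(x;\omega):=\langle x,\Sigma(\omega)\rangle$ associated with a smooth graph parametrisation $\Sigma$ of (a cap of) $S^{n-1}$ satisfies H1) and H2$^+$), since the sphere has positive principal curvatures everywhere. Patching finitely many such caps, a rescaling $x\mapsto 2^k x$ turns the desired estimate on $B(0,2^k)$ into precisely an estimate of the form \eqref{Hormander estimate} with $\lambda=2^k$. Invoking Theorem~\ref{main theorem}, one obtains, for any $\varepsilon>0$ and for $p$ in the range \eqref{main theorem 1},
\begin{equation*}
\|m_k^\alpha(D)f\|_{L^p(\R^n)} \lesssim_\varepsilon 2^{k(\alpha(p)+\varepsilon-\alpha)}\|f\|_{L^p(\R^n)}.
\end{equation*}
Finally, choosing $\varepsilon<\alpha-\alpha(p)$ (which is positive by hypothesis) the geometric series in $k$ converges and the desired bound \eqref{Bochner--Riesz bound} follows.

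The only subtle point is the bookkeeping in the second step: one must make sure that the powers of $2^k$ produced by (i) the $L^p$-normalisation under the rescaling, (ii) the Jacobian of the polar change of variables, and (iii) the decay of the kernel of $m_k^\alpha(D)$ away from $B(0,2^k)$ combine to give exactly the exponent $\alpha(p)$; this is the content of the classical Carleson--Sj\"olin reduction and is where the sharp Bochner--Riesz threshold $\alpha(p)=\max\{n|1/2-1/p|-1/2,0\}$ enters. The remainder of the argument is routine and, away from the endpoint, can in fact be strengthened by the $\varepsilon$-removal alluded to in \S\ref{Epsilon removal section}.
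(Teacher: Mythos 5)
Your proposal takes a genuinely different route from the paper and, as written, has a gap. The paper's argument is the classical Carleson--Sj\"olin reduction: it computes the physical-space kernel $K^{\alpha}(x) = \sum_{\pm} a_{\pm}(x) e^{\pm 2\pi i |x|}(1+|x|)^{-(n+1)/2 -\alpha}$ via stationary phase, performs a dyadic decomposition in $|x|$, and rescales each dyadic piece to a Carleson--Sj\"olin operator $S^{\lambda}f(x) = \int e^{2\pi i\lambda|x-y|}a(x,y)f(y)\,\mathrm{d}y$. Crucially, $S^{\lambda}$ is a \emph{variable coefficient} oscillatory integral operator: the phase $|x-y|$ is not of the form $\langle x, \Sigma(\omega)\rangle$, and the hypersurface parametrisation varies with $x$. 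The paper then freezes $y_n$, obtaining H\"ormander-type operators $T^{\lambda}_{y_n}$ with positive-definite phase, applies Theorem~\ref{main theorem} to these, and uses Minkowski's inequality.

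Your proposal, by contrast, decomposes $m^{\alpha}$ dyadically in Fourier space and claims to reduce each piece $m_k^{\alpha}(D)$ directly to the \emph{constant coefficient} sphere extension operator $E$. This is where the gap is. Writing $m_k^{\alpha}(D)f$ in polar coordinates does express the kernel as an average of sphere-measure Fourier transforms $\widehat{d\sigma_r}$, but the operator $f \mapsto \widehat{d\sigma_r}\ast f = \mathcal{F}^{-1}[\hat{f}\,d\sigma_r]$ is the extension of the \emph{restriction of $\hat{f}$ to the sphere of radius $r$}, and that slice of $\hat{f}$ cannot be controlled in $L^p(S^{n-1})$ by $\|f\|_{L^p(\R^n)}$ — this is precisely the restriction problem itself, going the wrong way. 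Your $TT^*$ remark does not resolve this, since $|m_k^{\alpha}|^2$ is another multiplier of the same type. More conceptually: the implication \emph{local $L^p$ extension estimates $\Rightarrow$ Bochner--Riesz estimates} is a nontrivial theorem (due to Tao, building on Carbery, Christ, Bourgain) and is established only in a range roughly $p > 2(n+2)/n$, which is \emph{strictly smaller} than the range in \eqref{main theorem 1} for $n\geq 4$. So even if the bookkeeping worked out, the reduction would not cover the full claimed range. You attribute your reduction to Carleson--Sj\"olin, but the Carleson--Sj\"olin reduction is specifically a reduction to a variable-coefficient operator; the paper exploits the full strength of Theorem~\ref{main theorem} (which treats variable coefficient phases) exactly because the Carleson--Sj\"olin phase $|x-y|$ cannot be linearised to an extension phase. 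Your observation that the sphere parametrisation is a valid positive-definite reduced phase is correct but beside the point: the operator you need to bound is not the extension operator. To repair the proof, follow the paper's route: use the stationary-phase kernel expansion, then the reduction to the Carleson--Sj\"olin operators $S^\lambda$, then freeze one input variable to land in the H\"ormander framework where Theorem~\ref{main theorem} applies directly.
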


Here $m^{\alpha}(D)$ is the multiplier operator associated to $m^{\alpha}$, defined \emph{a priori} by
\begin{equation*}
    m^{\alpha}(D)f(x) := \int_{\hat{\R}^n} e^{2 \pi i \langle x, \xi \rangle} m^{\alpha}(\xi) \hat{f}(\xi)\,\ud \xi,
\end{equation*}
where $\hat{f}$ denotes the Fourier transform of $f$. 

Recall that the Bochner--Riesz conjecture asserts that \eqref{Bochner--Riesz bound} holds for all $p \geq 2n/(n-1)$.\footnote{Once \eqref{Bochner--Riesz bound} is known in the range $p \geq 2n/(n-1)$, it immediately extends to all $1 \leq p \leq \infty$ by duality and interpolation with the trivial $p = 2$ case.} The conjecture was resolved for $n=2$ by Carleson--Sj\"olin \cite{Carleson1972} but remains open in all higher dimensions. Corollary~\ref{Bochner--Riesz corollary} provides some progress towards this conjecture when $n \geq 4$, improving over earlier results of Fefferman--Stein (see \cite{Fefferman1970}), Lee \cite{Lee2004}, Bourgain--Guth \cite{Bourgain2011} and others. When $n = 3$ the range in Corollary~\ref{Bochner--Riesz corollary} matches that of Lee \cite{Lee2006} and Bourgain--Guth \cite{Bourgain2011}. 

For completeness, here a sketch is provided to show how one may deduce Corollary~\ref{Bochner--Riesz corollary} from Theorem~\ref{main theorem}. This follows a standard argument due to Carleson and Sj\"olin \cite{Carleson1972}. 

A stationary phase computation shows that the kernel $K^{\alpha} := (m^{\alpha})\;\widecheck{}\;$ of $m^{\alpha}(D)$ is given by
\begin{equation*}
    K^{\alpha}(x) = \sum_{\pm} \frac{a_{\pm}(x) e^{\pm 2 \pi i |x|}}{(1 + |x|)^{\frac{n+1}{2} + \alpha}} 
\end{equation*}
where the $a_{\pm}$ are symbols of order 0 in the sense that $|\partial_x^{\beta}a_{\pm}(x)| \lesssim_{\beta} |x|^{-|\beta|}$ for all multi-indices $\beta \in \N_0^n$. After applying a dyadic decomposition, the Bochner--Riesz conjecture is therefore reduced to bounding the \emph{Carleson--Sj\"olin operators} 
\begin{equation}\label{Carleson--Sjolin operator}
    S^{\lambda}f(x) := \int_{\R^n} e^{2 \pi i \lambda|x-y|} a(x,y) f(y)\,\ud y,
\end{equation} 
where $a \in C^{\infty}(\R^n \times \R^n)$ has compact support bounded away from the diagonal $\{(x,x) : x \in \R^n\}$. In particular, Corollary~\ref{Bochner--Riesz corollary} is a consequence of the following bound.

\begin{corollary} If $p$ satisfies the conditions in \eqref{main theorem 1} and $\varepsilon > 0$, then
\begin{equation}\label{Carleson--Sjolin bound}
    \|S^{\lambda}f\|_{L^p(\R^n)} \lesssim_{\varepsilon} \lambda^{(n-1)/2 - n/p + \varepsilon} \|f\|_{L^p(\R^n)} \qquad \textrm{for all $\lambda \geq 1$.}
\end{equation}
\end{corollary}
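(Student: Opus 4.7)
The plan is the standard Carleson--Sj\"olin reduction: foliate the $y$-integration along the hyperplanes $\{y_n = \mathrm{const}\}$ and apply Theorem~\ref{main theorem} to each leaf. I would begin by applying a smooth partition of unity to $a$ and rotating coordinates so that, on each piece of the decomposition, $(x-y)/|x-y|$ stays close to $-e_n$; in particular $(x-y)_n$ is bounded away from zero on $\mathrm{supp}\,a$. Writing $y = (y', y_n) \in \R^{n-1} \times \R$, for each $y_n$ in the projection of the $y$-support introduce the sliced operator
$$
S^\lambda_{y_n} g(x) := \int_{\R^{n-1}} e^{2\pi i \lambda \phi_{y_n}(x;y')}\, a(x, (y', y_n))\, g(y')\, \mathrm{d} y', \qquad \phi_{y_n}(x; y') := |x - (y', y_n)|,
$$
so that $S^\lambda f(x) = \int S^\lambda_{y_n}(f(\cdot, y_n))(x)\, \mathrm{d} y_n$.

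The geometric heart of the proof is the verification that, for each $y_n$, the phase $\phi_{y_n}$ satisfies both H1 and H2$^+$. The mixed second derivatives are
$$
\partial_{y_j} \partial_{x_i} \phi_{y_n}(x; y') = -\frac{\delta_{ij}}{|x - y|} + \frac{(x_i - y_i)(x_j - y_j)}{|x - y|^3},
$$
whose full $n \times n$ version has one-dimensional kernel along $(x-y)/|x-y|$; since the $n$-th component of this kernel vector is non-zero by the choice of coordinates, the $n \times (n-1)$ submatrix indexed by $y_1, \dots, y_{n-1}$ has full rank, yielding H1. For H2$^+$, the key observation is that $y' \mapsto \partial_x \phi_{y_n}(x; y') = (x - (y', y_n))/|x - (y', y_n)|$ parametrises a patch of the unit sphere $S^{n-1}$; the Gauss map of $S^{n-1}$ is the identity, so the matrix appearing in H2$^+$ equals the second fundamental form of $S^{n-1}$, which is positive-definite everywhere. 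After the standard rescaling $x \mapsto \lambda x$ taking the unit $x$-ball to $B(0, \lambda)$, each slice $S^\lambda_{y_n}$ thus becomes a H\"ormander-type operator with positive-definite phase in the format of Theorem~\ref{main theorem}, with phase and amplitude depending smoothly on the compact parameter $y_n$.

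Applying Theorem~\ref{main theorem} uniformly in $y_n$ and unwinding the rescaling produces
$$
\|S^\lambda_{y_n} g\|_{L^p(\R^n)} \lesssim_\varepsilon \lambda^{-n/p + \varepsilon}\, \|g\|_{L^p(B^{n-1})}
$$
for $p$ in the range \eqref{main theorem 1}. Minkowski's inequality in $y_n$, followed by H\"older's inequality on the bounded $y_n$-support, then gives
$$
\|S^\lambda f\|_{L^p(\R^n)} \leq \int \|S^\lambda_{y_n}(f(\cdot, y_n))\|_{L^p(\R^n)}\, \mathrm{d} y_n \lesssim_\varepsilon \lambda^{-n/p + \varepsilon}\, \|f\|_{L^p(\R^n)},
$$
which implies the desired bound \eqref{Carleson--Sjolin bound}. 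The only substantive ingredient is the geometric identification of the sliced hypersurface with a patch of the Euclidean sphere, which is precisely what permits the invocation of the positive-definite case H2$^+$ of Theorem~\ref{main theorem}; the localisation, rescaling, and Minkowski summation steps are otherwise routine.
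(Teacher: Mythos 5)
Your proof is correct and follows essentially the same route the paper sketches: freeze the $y_n$ variable, observe that each slice is a H\"ormander-type operator with positive-definite phase because $y' \mapsto \partial_x |x-(y',y_n)|$ parametrises a patch of the unit sphere (whose second fundamental form is definite), apply Theorem~\ref{main theorem} uniformly in $y_n$ after the rescaling $x \mapsto \lambda x$, and sum via Minkowski's and H\"older's inequalities. Note that your argument in fact yields the stronger bound $\lambda^{-n/p+\varepsilon}\|f\|_{L^p}$, which is the estimate genuinely needed in the dyadic summation for Corollary~\ref{Bochner--Riesz corollary}; the stated exponent $(n-1)/2 - n/p + \varepsilon$ then follows a fortiori since $\lambda \geq 1$.
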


Theorem~\ref{main theorem} can be used to prove estimates of the form \eqref{Carleson--Sjolin bound}. In particular, one may write $S^{\lambda}$ as a superposition of operators $T^{\lambda}_{y_n}$ for which the $y_n$ variable is frozen:
\begin{equation*}
    S^{\lambda}f(x) = \int_{\R} T^{\lambda}_{y_n}f_{y_n}(x)\,\ud y_n, \qquad f_{y_n}(y') := f(y',y_n).
\end{equation*}
 It is not difficult to check that each $T^{\lambda}_{y_n}$ is a H\"ormander-type operator with positive-definite phase. Theorem~\ref{main theorem} can be applied to the $T^{\lambda}_{y_n}$ with a uniform constant (this uniformity is a consequence of the proof) and Minkowski's inequality can then be used to convert these estimates into bounds for $S^{\lambda}$, yielding Corollary~\ref{Bochner--Riesz corollary}.

\subsubsection*{Bochner--Riesz over compact manifolds} The classical Bochner--Riesz multipliers have natural analogues defined over compact Riemannian manifolds $(M,g)$ without boundary. In this setting, one defines the multiplier operator $m^{\alpha}(D)$ in terms of spectral projectors associated to an eigenbasis for the Laplace--Beltrami operator $-\Delta_g$; see \cite{Sogge1987} or \cite[Chapter 5]{Sogge2017} for further details. Unlike in the euclidean case, Theorem~\ref{main theorem} does not, in general, directly lead to new $L^p$ bounds for the spectral multipliers $m^{\alpha}(D)$. Nevertheless, as described presently, Theorem~\ref{main theorem} does provide bounds for certain variants of the operator \eqref{Carleson--Sjolin operator} which, in some sense, control the ``local behaviour'' of the Bochner--Riesz multipliers on $(M,g)$.

When studying Bochner--Riesz multipliers in the manifold setting, one is led to consider certain variants of the Carleson--Sj\"olin operator \eqref{Carleson--Sjolin operator}, defined by
\begin{equation}\label{manifold Carleson--Sjolin}
    S^{\lambda}_gf(x) := \int_{M} e^{2 \pi i \lambda \mathrm{dist}_g(x,y)}a(x,y) f(y)\,\ud y;
\end{equation}
 here $a \in C^{\infty}(M \times M)$ is supported away from the diagonal whilst $\mathrm{dist}_g$ and $\ud y$ are, respectively, the distance function and measure on $M$ induced by the Riemannian metric. Operators of this form were studied previously by Minicozzi--Sogge \cite{Minicozzi1997} (see also \cite[Chapter 5]{Sogge2017}).

Working in local co-ordinates and then arguing as in the euclidean case, one may use Theorem~\ref{main theorem} to prove the following bound.

\begin{corollary}\label{Bochner--Riesz manifold corollary} Suppose $(M,g)$ is a compact Riemannian manifold of dimension $n \geq 2$ without boundary. If $p$ satisfies the conditions in \eqref{main theorem 1} and $\varepsilon > 0$, then
\begin{equation*}
    \|S_g^{\lambda}f\|_{L^p(M)} \lesssim_{\varepsilon, g} \lambda^{(n-1)/2 - n/p + \varepsilon} \|f\|_{L^p(M)} \qquad \textrm{for all $\lambda \geq 1$.}
\end{equation*}
\end{corollary}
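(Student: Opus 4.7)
The plan is to reduce Corollary~\ref{Bochner--Riesz manifold corollary} to Theorem~\ref{main theorem} by mimicking the Carleson--Sj\"olin reduction described for the euclidean operator $S^\lambda$. First, decompose $a=a_{\mathrm{near}}+a_{\mathrm{far}}$, where $a_{\mathrm{near}}$ is a smooth cutoff supported in a tubular neighbourhood of the diagonal of width smaller than the injectivity radius of $(M,g)$ (so that $\mathrm{dist}_g$ is smooth on $\mathrm{supp}\,a_{\mathrm{near}}$), and $a_{\mathrm{far}}$ is supported at positive distance from the diagonal. The operator associated to $a_{\mathrm{far}}$ has a uniformly bounded kernel and is therefore trivially bounded on $L^p(M)$ with a constant independent of $\lambda$; since $(n-1)/2-n/p\geq 0$ throughout the range \eqref{main theorem 1}, this is consistent with the desired bound, and it suffices to handle $a_{\mathrm{near}}$.

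A finite cover of the tubular neighbourhood by products $U_j\times V_j$ of coordinate charts, together with a subordinate smooth partition of unity, decomposes $a_{\mathrm{near}}=\sum_j a_j$. Pulling each piece back under the chart maps reduces matters to bounding operators
\begin{equation*}
\tilde S^\lambda_j f(x) = \int_{\R^n} e^{2\pi i\lambda\,d_j(x,y)}\,\tilde a_j(x,y)\,f(y)\,\ud y
\end{equation*}
on $\R^n$, where $d_j$ is the coordinate representation of $\mathrm{dist}_g$ and $\tilde a_j$ absorbs the Jacobian of the Riemannian volume form. Writing $y=(y',y_n)$ and $f_{y_n}(y'):=f(y',y_n)$, Fubini gives $\tilde S^\lambda_j f=\int T^\lambda_{j,y_n} f_{y_n}\,\ud y_n$ with $T^\lambda_{j,y_n}g(x):=\int e^{2\pi i\lambda d_j(x,(y',y_n))}\tilde a_j(x,(y',y_n))g(y')\,\ud y'$; the isotropic rescaling $x\mapsto x/\lambda$ converts $T^\lambda_{j,y_n}$ into a H\"ormander-type operator with phase $\phi(x;y'):=d_j(x,(y',y_n))$.

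The geometric crux is verifying H1) and H2$^+$) for $\phi$. Since $|\nabla_g\mathrm{dist}_g(x,\cdot)|_{g^*}\equiv 1$ away from the diagonal, $\partial_x\phi(x;y')$ always lies on the unit cosphere in $T_x^*M$ associated to the dual metric $g^*$. Choosing local coordinates on each patch so that $\partial_{y_n}$ is close to the radial geodesic direction from $y$ to $x$ (which can be arranged on a sufficiently small chart) renders the hyperplanes $\{y_n=\mathrm{const}\}$ transverse to that radial direction, which is the kernel of $y\mapsto\partial_x d_j(x,y)$; consequently, the restricted map $y'\mapsto\partial_x\phi(x;y')$ is a local diffeomorphism onto an $(n-1)$-dimensional open piece of the cosphere, giving H1). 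Since $g^*$ is positive-definite, the cosphere is an ellipsoid in $T_x^*M\cong\R^n$, hence strictly convex with everywhere-positive Euclidean Gaussian curvature, so the Hessian of the height function with respect to the Euclidean Gauss normal is positive-definite -- precisely H2$^+$) (modulo a harmless orientation adjustment of the $y'$-coordinates to fix the sign of the wedge-product normal).

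With the hypotheses checked, Theorem~\ref{main theorem} applied uniformly in $j$ and $y_n$ (such uniformity being a consequence of the quantitative nature of the proof combined with compactness) yields, after undoing the rescaling $x\mapsto x/\lambda$, the estimate $\|T^\lambda_{j,y_n}g\|_{L^p(\R^n)}\lesssim_\varepsilon\lambda^{-n/p+\varepsilon}\|g\|_{L^p(B^{n-1})}$ for $p$ in the range \eqref{main theorem 1}. Minkowski's inequality in $y_n$, followed by H\"older on the bounded $y_n$-interval, then produces the desired global bound for $S_g^\lambda$. The main obstacle I anticipate is the third step: securing the transversality needed for H1) on every patch (which dictates the geometry of the cover) and controlling how the positive-definiteness in H2$^+$) is preserved under the chart changes used to decompose $a_{\mathrm{near}}$.
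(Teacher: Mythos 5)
Your proposal is correct and follows the route the paper intends: localise to coordinate charts, freeze one output variable to reduce to H\"ormander-type operators, verify H1) and H2$^+$) using the fact that $\partial_x\mathrm{dist}_g(x,\cdot)$ parametrises a piece of the (strictly convex) $g^*_x$-cosphere, apply Theorem~\ref{main theorem} with uniform constants, and conclude via Minkowski and H\"older. The one genuine addition is your near/far decomposition of $a$ to dispose of the region beyond the injectivity radius where $\mathrm{dist}_g$ need not be smooth; this is needed under the hypotheses as stated, is handled correctly by the trivial kernel bound (valid since $(n-1)/2-n/p\geq 0$ in the range~\eqref{main theorem 1}), and the transversality worry you flag for H1) dissolves on sufficiently small charts by orienting $y_n$ near the radial geodesic direction.
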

An interesting feature of this result is that there are examples of manifolds $(M,g)$ for which the range of exponents \eqref{main theorem} is sharp: see \cite{Minicozzi1997}. 

There is an analogue of the Carleson--Sj\"olin reduction in the manifold setting, which relies on the Hadamard parametrix for the wave equation on $(M,g)$. This can be used to prove $L^p$ bounds for Bochner--Riesz multipliers over $(M,g)$ in the restricted range $p \geq 2(n+1)/(n-1)$: see \cite[Chapter 5]{Sogge2017}. Unfortunately, the parametrix is only effective for short time intervals and, as a consequence, Corollary~\ref{Bochner--Riesz manifold corollary} does not appear to directly imply any new bounds for Bochner--Riesz multipliers over general $(M,g)$. In particular, it seems that it is necessary to combine Corollary~\ref{Bochner--Riesz manifold corollary} with global geometric information to fully understand the Bochner--Riesz problem. For $p \geq 2(n+1)/(n-1)$ such difficulties can be overcome using $L^p$ eigenfunction estimates of Sogge \cite{Sogge1988}  (see \cite{Sogge1987} or \cite[Chapter 5]{Sogge2017}), but the method appears to be tied down to this restricted range of exponents. 

Finally, it is remarked that new Bochner--Riesz estimates \textit{can} be obtained for certain specific choices of manifold $(M,g)$ which enjoy additional symmetries. The simplest example is the flat torus $\mathbb{T}^n$; indeed, Corollary~\ref{Bochner--Riesz corollary} implies similar $L^p$ bounds in the toral setting via the classical multiplier transference principle. A more involved example is the $n$-dimensional Euclidean sphere $S^n$.\footnote{The authors are grateful to Christopher D. Sogge for drawing their attention to this example.} In this case, using the periodicity of the geodesic flow, one may entirely reduce the Bochner--Riesz problem to bounding operators essentially of the form \eqref{manifold Carleson--Sjolin}, as observed in \cite{Sogge1986} (see also \cite{MSS}). It may be possible to extend these methods to treat the class of Zoll manifolds, following a line of investigation initiated in \cite{MSS}.



\subsection{Historical remarks}\label{previous results section}

The problem of determining $L^p$ estimates for H\"ormander-type operators has an interesting history. H\"ormander  \cite{Hormander1973} asked whether \eqref{Hormander estimate} holds for $p > 2n/(n-1)$ (without $\varepsilon$-loss) under the hypotheses H1) and H2) only and proved that this is indeed the case when $n=2$. This numerology agrees with the Fourier restriction conjecture and also the Bochner--Riesz multiplier problem, both of which would follow from a positive answer to H\"ormander's question.\footnote{The connection with Bochner--Riesz multipliers is made via the classical reduction of Carleson--Sj\"olin \cite{Carleson1972, Hormander1973}, as discussed in the previous subsection.} Stein \cite{Stein1986} provided further evidence for this numerology by proving the estimate
\begin{equation*}
\|T^{\lambda}f\|_{L^p(\R^n)} \lesssim_{\phi, a} \|f\|_{L^2(B^{n-1})} \qquad \textrm{for all $p \geq 2\cdot \frac{n+1}{n-1}$,}
\end{equation*} 
matching what was known about the high-dimensional Fourier restriction and Bochner--Riesz problems at that time. It was therefore somewhat surprising when Bourgain \cite{Bourgain1991} showed that, in general, Stein's theorem is sharp. In particular, he demonstrated that for every odd dimension $n \geq 3$ there exists a H\"ormander-type operator for which
\begin{equation}\label{L infinity bound}
\|T^{\lambda}f\|_{L^p(\R^n)} \lesssim_{\phi, a} \|f\|_{L^{\infty}(B^{n-1})}
\end{equation} 
fails to hold uniformly in $\lambda \geq 1$ whenever $p < 2(n+1)/(n-1)$. Aside from answering H\"ormander's original question in the negative, Bourgain's work hinted at an interesting divergence between the odd and even-dimensional theory. Moreover, it was noted in \cite[p.87]{Bourgain1995} that in even dimensions the $L^{\infty}-L^p$ estimates always hold in a wider range than that of Stein's theorem. Thus, in general, the even-dimensional case is better behaved than the odd-dimensional case. This was further highlighted by Bourgain and the first author \cite{Bourgain2011} who showed that in even dimensions \eqref{L infinity bound} holds for $p > 2(n+2)/n$. Furthermore, in \cite{Bourgain2011} and also implicitly in the work of Wisewell \cite{Wisewell2005}, examples were found in even dimensions which show that \eqref{L infinity bound} can fail for $p < 2(n+2)/n$. Thus, the range of exponents in H\"ormander's original question is valid only when $n=2$.

At this point it is useful to describe the nature of the counter-examples of \cite{Bourgain1991}, \cite{Wisewell2005} and \cite{Bourgain2011} and provide some explanation for the difference between the odd and even-dimensional cases. Roughly speaking, in the odd-dimensional case $T^{\lambda}$ and $f$ can be chosen so that $|T^{\lambda}f|$ is concentrated in the 1-neighbourhood of a low degree algebraic variety $Z$ of dimension $(n+1)/2$. This is the smallest possible dimension for which such concentration is possible. In the even-dimensional case $(n+1)/2$ is no longer an integer, and it transpires that $|T^{\lambda}f|$ can only be concentrated into the $1$-neighbourhood of a variety of relatively large dimension $(n+2)/2$. These observations are related to Kakeya compression phenomena for sets of space curves (see \cite{Wisewell2005} for a thorough introduction to this topic and \cite{Minicozzi1997} for the related problem of Kakeya sets of geodesics in Riemannian manifolds). They also hint at some underlying algebraic structure in the problem.

So far the discussion has focused on operators satisfying the original H1) and H2) hypotheses of H\"ormander. Lee \cite{Lee2006} observed that under the positive-definite hypothesis H2$^+$) one can establish improvements over the range given by Stein's theorem in all dimensions. In particular, he showed that for any H\"ormander-type operator with positive-definite phase \eqref{Hormander estimate} holds for $p \geq 2(n+2)/n$. This coincides with Theorem~\ref{main theorem} when $n=3$, but is weaker in higher dimensions. Wisewell \cite{Wisewell2005} and Minicozzi--Sogge \cite{Minicozzi1997} produced examples (again relying on Kakeya compression phenomena) to show that this result is sharp when $n=3$ (see also \cite[\S6]{Bourgain2011}). 

Comparing the sharp examples under the H2) and H2$^+$) hypotheses highlights another important consideration in addition to Kakeya compression phenomena. This feature relates to how the mass of $|T^{\lambda}f|$ can be distributed in neighbourhoods of low degree varieties. It accounts for the improved behaviour demonstrated by operators with positive-definite phase and is described in detail in $\S$\ref{Necessary conditions section}. 

Given the results of this article, the sharp range of estimates for this problem are now understood under either the H2) or H2$^+$) hypothesis. The corresponding endpoint values for $p$ are concisely tabulated in Figure~\ref{Hormander endpoints}.

\begin{figure}
\begin{center}
\begin{TAB}(c,1cm,2cm)[5pt]{|c|c|c|}{|c|c|c|}
 & $n$ odd & $n$ even    \\
H2) & $\displaystyle 2\cdot\frac{n+1}{n-1}$ & $\displaystyle 2\cdot\frac{n+2}{n}$  \\
H2$^+$) & $\displaystyle 2\cdot\frac{3n+1}{3n-3}$ & $\displaystyle 2\cdot\frac{3n+2}{3n-2}$ \\
\end{TAB}
\caption{Endpoint values for $p$ for H\"ormander-type operators under various hypotheses.}
\label{Hormander endpoints}
\end{center}
\end{figure}
It is remarked that it is possible to prove estimates beyond the range of Theorem~\ref{main theorem} under additional assumptions on the phase function. For example, the first author \cite{Guth2016} has shown that for $n=3$ and all $\varepsilon > 0$ the extension operator $E_{\mathrm{par}}$ associated to the paraboloid satisfies
\begin{equation*}
\|E_{\mathrm{par}}f\|_{L^p(B(0,\lambda))} \lesssim_{\varepsilon} \lambda^{\varepsilon} \|f\|_{L^{p}(B^{2})}
\end{equation*}
for all $p \geq 3 + 1/4$ and this was further improved to $p \geq 3 + 3/13$ by Wang \cite{Wang}. Furthermore, the aforementioned restriction conjecture asserts that the above inequality should be valid in the wider range $p \geq 3$. 

\subsection{Multilinear estimates} The proof of Theorem~\ref{main theorem} follows the strategy introduced by the first author in \cite{Guth2018}. The argument relies on establishing (weakened versions of) multilinear estimates for H\"ormander-type operators. The multilinear approach was introduced in the late 1990s to study oscillatory integral operators (although it was arguably already implicit in many earlier foundational works in the subject \cite{Fefferman1970, Carleson1972}) and has proven an invaluable tool. To describe the $k$-linear setup one first requires the notion of transversality. 

\begin{definition}\label{transverse definition}
Let $1 \leq k \leq n$ and $\mathbf{T}^{\lambda} = (T_1^{\lambda}, \dots, T_k^{\lambda})$ be a $k$-tuple of H\"ormander-type operators, where $T^{\lambda}_j$ has associated phase $\phi_j^{\lambda}$, amplitude $a_j^{\lambda}$ and generalised Gauss map $G_j$ for $1 \leq j \leq  k$. Then $\mathbf{T}^{\lambda}$ is said to be $\nu$-transverse for some $0 < \nu \leq 1$ if
\begin{equation*}
\big|\bigwedge_{j=1}^k G_j(x;\omega_j) \big| \geq \nu \quad \textrm{for all  $(x; \omega_j) \in \mathrm{supp}\,a_j$  for $1 \leq j \leq k$. }
\end{equation*}
\end{definition}

The following conjecture is a natural generalisation of an existent conjecture of Bennett \cite{Bennett2014} for Fourier extension operators.

\begin{conjecture}[$k$-linear H\"ormander conjecture]\label{k-linear conjecture} Let $1 \leq k \leq n$ and suppose $ (T_1^{\lambda}, \dots, T_k^{\lambda})$ is a $\nu$-transverse $k$-tuple of H\"ormander-type operators with positive-definite phase functions. For all  $p \geq \bar{p}(k, n) := 2(n+k)/(n+k -2)$ and $\varepsilon > 0$ the estimate
\begin{equation}\label{k-linear inequality}
\big\| \prod_{j=1}^k |T^{\lambda}_jf_j|^{1/k}\big\|_{L^p(\R^n)} \lesssim_{\varepsilon, \nu, \phi} \lambda^{\varepsilon} \prod_{j=1}^k \|f_j\|_{L^2(B^{n-1})}^{1/k}
\end{equation}
holds for all $\lambda \geq 1$. 
\end{conjecture}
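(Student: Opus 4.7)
The plan is a wave packet plus multilinear Kakeya strategy, augmented by polynomial partitioning when $k < n$. To set things up, perform a wave packet decomposition at scale $\lambda^{1/2}$: decompose each $f_j$ into pieces supported on caps $\theta_j \subset B^{n-1}$ of radius $\lambda^{-1/2}$ and write $T^{\lambda}_j f_j = \sum_{T_j} \psi_{T_j}$, where each $\psi_{T_j}$ is essentially concentrated on a curved tube $T_j$ of dimensions $\lambda^{1/2} \times \cdots \times \lambda^{1/2} \times \lambda$. Condition H2$^+$) guarantees that these tubes are well-defined integral curves of the associated bicharacteristic flow, and $\nu$-transversality ensures that tubes coming from different factors meet at quantitative angles, which is precisely the setup for multilinear Kakeya.

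Using $L^2$-orthogonality of the wave packets (a consequence of H1)) together with interpolation against the trivial $L^{\infty}$ bound, one reduces \eqref{k-linear inequality} at the endpoint $p = \bar{p}(k,n)$ to a Kakeya-type inequality for the products $\prod_{j=1}^{k} \bigl(\sum_{T_j} c_{T_j}^2 \chi_{T_j}\bigr)^{1/(2k)}$, where the $c_{T_j}$ are the amplitudes of the wave packets. When $k = n$, this is the multilinear Kakeya inequality of Bennett--Carbery--Tao in its curved formulation, which is already established (up to the $\varepsilon$-loss permitted in the statement) for tubes associated to H\"ormander-type operators and yields the conjecture at $p = 2n/(n-1)$.

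For the more delicate range $1 \leq k < n$, the exponent $\bar{p}(k,n) > 2n/(n-1)$ reflects the fact that a $k$-transverse family of tubes can concentrate on an algebraic variety of dimension $(n+k)/2$ without violating multilinear Kakeya. I would attempt to handle this by a polynomial partitioning induction on $\lambda$, following the template introduced in \cite{Guth2018}: at each scale choose a polynomial $P$ of degree $D \sim \lambda^{\varepsilon'}$ that partitions $\R^n$ into cells carrying roughly equal shares of the $L^p$-mass of $\prod_j |T^{\lambda}_j f_j|^{1/k}$; apply the inductive hypothesis cellwise, exploiting that each wave packet enters only $O(D)$ cells; and treat the residual contribution from a $\lambda^{1/2+\delta}$-neighbourhood of $Z(P)$ by viewing $Z(P)$ as an $(n-1)$-dimensional algebraic hypersurface on which a polynomial Wolff-type axiom combined with $\nu$-transversality forces the union of tubes to spread at least as much as an $(n+k)/2$-dimensional set.

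The hard part, and the source of essentially all the difficulty, is the contribution near $Z(P)$. The Kakeya compression phenomena discussed in \S\ref{previous results section} and in \cite{Wisewell2005, Minicozzi1997} show that in the positively-curved variable-coefficient setting, curved tubes can genuinely compress onto $(n+k)/2$-dimensional varieties, and a polynomial Wolff-type axiom quantitatively strong enough to saturate $\bar{p}(k,n)$ in this setting is essentially equivalent to the conjecture itself. Such axioms are available in the translation-invariant flat case by algebraic arguments on $Z(P)$, but for general curved tubes arising from H\"ormander phases they appear to demand new ideas beyond the wave packet plus polynomial partitioning toolkit in its present form. Accordingly, the strategy above is best read as a plausible route rather than a complete argument, and Conjecture~\ref{k-linear conjecture} should be regarded as genuinely open for $k < n$.
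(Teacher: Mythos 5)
You have correctly recognised that the statement in question is a \emph{conjecture} and is not proved in the paper — and that observation is the whole point. The paper records only the special cases $k=1$ (Stein), $k=2$ (Lee, under the positive-definite hypothesis), and $k=n$ (Bennett--Carbery--Tao), and then deliberately sidesteps the general case by instead proving the weaker $k$-broad estimates of Theorem~\ref{k-broad theorem}, which the authors show are strong enough to drive the linear argument in $\S$\ref{k-broad to linear section}. Your diagnosis of where the wave-packet plus polynomial-partitioning strategy runs aground — namely that the algebraic contribution near $Z(P)$ would require a polynomial Wolff-type axiom sharp enough to account for compression onto $(n+k)/2$-dimensional varieties, and that producing such an axiom for curved tubes is essentially as hard as the conjecture itself — is exactly the structural reason the paper works with $L^2$-based $k$-broad norms and transverse equidistribution rather than attacking \eqref{k-linear inequality} directly. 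One small correction: since Lee settled $k=2$, the range that remains genuinely open is $3 \leq k \leq n-1$, not all $k<n$. With that caveat, your assessment that this conjecture is open and that the toolkit of the paper does not close it is accurate and matches the authors' own framing.
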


Techniques have been developed by Tao--Vargas--Vega \cite{Tao1998} and Bourgain and the first author \cite{Bourgain2011} to convert $k$-linear into linear inequalities. There are a number of features of the multilinear theory which suggest that it is more approachable than directly tackling the linear estimates. For instance, here the desired inequalities are $L^2$-based, giving greater scope for orthogonality methods.

Some instances of the conjecture are known. 
\begin{itemize} 
\item The $k=1$ case corresponds to Stein's theorem \cite{Stein1986} (which holds without the positive-definite hypothesis).  
\item The $k=2$ case was established by Lee \cite{Lee2006}, who then used the method of Tao--Vargas--Vega \cite{Tao1998} to derive estimates for the linear problem. This approach yields Theorem~\ref{main theorem} in the $n=3$ case, but produces strictly weaker results in higher dimensions (see the discussion in $\S$\ref{previous results section}).
\item The $k=n$ case was established by Bennett--Carbery--Tao \cite{Bennett2006} who also gave partial results at all levels of multilinearity (see also \cite{Bennett2014} for further discussion of this work). Bourgain and the first author \cite{Bourgain2011} later developed a method to deduce improved linear estimates from these multilinear inequalities. 
\end{itemize}

The precise statement of the Bennett--Carbery--Tao theorem \cite{Bennett2006} is as follows.

\begin{theorem}[Bennett--Carbery--Tao \cite{Bennett2006}]\label{Bennett Carbery Tao theorem} Let $2 \leq k \leq n$ and suppose that $ (T_1^{\lambda}, \dots, T_k^{\lambda})$ is a $\nu$-transverse $k$-tuple of H\"ormander-type operators. For all $p \geq 2k/(k-1)$ and $\varepsilon > 0$ the estimate
\begin{equation*}
\big\| \prod_{j=1}^k |T^{\lambda}_jf_j|^{1/k}\big\|_{L^p(\R^n)} \lesssim_{\varepsilon ,(\phi_j)_{j=1}^k} \nu^{-C_{\varepsilon}}\lambda^{\varepsilon} \prod_{j=1}^k \|f_j\|_{L^2(B^{n-1})}^{1/k}
\end{equation*}
 holds for all $\lambda \geq 1$. 
\end{theorem}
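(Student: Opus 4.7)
The plan is to follow an induction-on-scales strategy, reducing the $k$-linear estimate to the multilinear Kakeya inequality for $k$ families of transverse tubes. By complex interpolation with the trivial bound $\|T^\lambda_j f_j\|_{L^\infty} \lesssim \|f_j\|_{L^1}$, it suffices to establish the endpoint $p = 2k/(k-1)$.

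First I would perform a standard wave packet decomposition. Cover $B^{n-1}$ by finitely overlapping caps $\theta$ of radius $\lambda^{-1/2}$ and decompose $f_j = \sum_{\theta,v} f_{j,\theta,v}$, where $v$ runs over a $\lambda^{1/2}$-spaced lattice and $f_{j,\theta,v}$ is an appropriate spatial and frequency truncation of $f_j$. A routine non-stationary phase/integration-by-parts computation shows that $T^\lambda_j f_{j,\theta,v}$ is essentially supported on a tube $T_{j,\theta,v}$ of dimensions $\lambda^{1/2} \times \cdots \times \lambda^{1/2} \times \lambda$, aligned along the direction $G_j(x_{\theta,v};\omega_\theta)$. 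The transversality condition $|\bigwedge_j G_j(x;\omega_j)| \geq \nu$ then translates directly into quantitative angular transversality between tube directions coming from the $k$ different families.

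The induction is carried out on the scale $\lambda$. Fix a small parameter $\delta > 0$ and localise the left-hand side to balls $B$ of radius $R = \lambda^{1-\delta}$. Within each such $B$, the operators $T^\lambda_j$ rescale to H\"ormander-type operators at scale $R$, so the inductive hypothesis gives control of the local $L^{2k/(k-1)}$-norm in terms of the $L^2$ quantities associated with the wave packets passing through $B$. Summing these local bounds over the cover and handling the spatial overlap of the wave packets from the $k$ families via the multilinear Kakeya inequality of Bennett--Carbery--Tao,
\[
\int_{\R^n} \prod_{j=1}^k \bigg( \sum_{T_j \in \mathbb{T}_j} \chi_{T_j} \bigg)^{1/(k-1)} \lesssim_\varepsilon R^\varepsilon \nu^{-C_\varepsilon} R^{n/2} \prod_{j=1}^k (\#\mathbb{T}_j)^{1/(k-1)},
\]
produces the desired bound at scale $\lambda$ with an additional $R^\varepsilon$-loss. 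Iterating over the geometric sequence $\lambda,\lambda^{1-\delta},\lambda^{(1-\delta)^2},\ldots$, the accumulated loss is at most $\lambda^{C\delta}$, which may be absorbed into $\lambda^\varepsilon$ upon taking $\delta = \delta(\varepsilon)$ sufficiently small.

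The hardest step is establishing the multilinear Kakeya inequality itself with a clean $R^\varepsilon$-loss and polynomial dependence on $\nu^{-1}$. The original Bennett--Carbery--Tao proof passes through a delicate heat-flow monotonicity argument for a carefully chosen energy functional, where verifying that the monotonicity propagates uniformly in the transversality parameter requires considerable care. A cleaner alternative, due to the first author, proceeds via its own induction on scales using polynomial partitioning—an approach which is in fact closely related to the techniques developed later in the present paper.
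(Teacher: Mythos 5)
The paper does not prove this theorem: it is quoted as a black-box result attributed to Bennett--Carbery--Tao \cite{Bennett2006}, with a footnote near Corollary~\ref{Bennett Carbery Tao corollary} observing that the $k$-linear version for H\"ormander-type operators is not explicitly stated in that reference but follows by combining the analysis of \S5 and \S6 of \cite{Bennett2006} (see also \cite[\S5]{Bourgain2011}). There is therefore no internal proof to compare your sketch against.

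That said, your sketch follows the standard Bennett--Carbery--Tao strategy and correctly identifies the main ingredients: wave packet decomposition adapted to the H\"ormander phase, transversality passing to quantitative angular separation between the $k$ tube families, the multilinear Kakeya inequality, and an induction on scales over a geometric chain of radii to absorb the accumulated losses. Three points deserve more care. First, for the interpolation reduction to the endpoint $p = 2k/(k-1)$ you should use the trivial bound $\|T^\lambda_j f_j\|_{L^\infty(\R^n)} \lesssim \|f_j\|_{L^2(B^{n-1})}$ (which is immediate since the amplitude is bounded and compactly supported) rather than $L^1 \to L^\infty$, because both sides must carry $L^2$-norms in order to match the form of the $k$-linear estimate you are interpolating with. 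Second, the specific form of the multilinear Kakeya inequality you write down needs checking: the $k$-linear Kakeya inequality in $\R^n$ with $k < n$ transverse families is not obtained from the $n$-linear one by mechanical substitution of $k$ for $n$, and the polynomial dependence on $\nu^{-1}$ has to be tracked through the heat-flow (or polynomial-partitioning) proof rather than asserted. Third, when you say the operators "rescale to H\"ormander-type operators at scale $R$" on each small ball, what is actually used in the BCT deduction is a localisation-and-constant-coefficient-approximation argument, and the entire content of the variable-coefficient extension in \S6 of \cite{Bennett2006} is making this approximation quantitative and stable under the iteration; this is where the bulk of the technical work lies, and the proposal as written elides it.
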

The positive-definite assumption does not appear in the hypotheses of Theorem~\ref{Bennett Carbery Tao theorem}.\footnote{In particular, of the results mentioned above only Lee's bilinear estimate \cite{Lee2006} exploits the positive-definite hypothesis.} Combining Theorem~\ref{Bennett Carbery Tao theorem} with the method of \cite{Bourgain2011} leads to the sharp estimates for H\"ormander-type operators stated in Theorem~\ref{general main theorem}. For completeness, the details of this argument are given in $\S$\ref{k-broad to linear section}. 

\subsection{$k$-broad estimates}\label{k-broad subsection}

In \cite{Guth2018} it was observed that, in the context of Fourier extension operators, the method of \cite{Bourgain2011} does not require the full power of the $k$-linear theory, but rather can take as its input inequalities of a weaker form than \eqref{k-linear inequality} known as \emph{$k$-broad estimates}. By applying polynomial partitioning techniques, the first author \cite{Guth2018} was further able to prove the sharp range of $L^2$-based\footnote{These estimates have an $L^2$-norm appearing on the right-hand side. Relaxing $L^2$ to $L^{\infty}$ has led to further improvements on the Fourier restriction problem for the paraboloid \cite{Wang, HickmanRogers2018}.} $k$-broad estimates for the Fourier extension operator associated to the paraboloid. This led to an improvement on the known range of estimates for parabolic restriction in dimensions $n \geq 4$. The main goal of this paper is to extend the theory of $k$-broad estimates to the more general context of H\"ormander-type operators with positive-definite phase. 

The $k$-broad setup involves the notion of a $k$-broad norm, which was introduced in \cite{Guth2018}. Decompose $B^{n-1}$ into finitely-overlapping balls $\tau$ of radius $K^{-1}$, where $K$ is a large constant. These balls will be frequently referred to as \emph{$K^{-1}$-caps}. Given a function $f \colon B^{n-1} \to \C$ write $f = \sum_{\tau} f_{\tau}$ where $f_{\tau}$ is supported in $\tau$. In view of the rescaling $\phi^{\lambda}$ of the phase function, define the rescaled generalised Gauss map 
\begin{equation*}
G^{\lambda}(x; \omega) := G(x/\lambda;\omega) \qquad \textrm{for $(x;\omega) \in \mathrm{supp}\, a^{\lambda}$.}
\end{equation*}
For each $x \in B(0, \lambda)$ there is a range of normal directions associated to the cap $\tau$ given by
\begin{equation*}
G^{\lambda}(x; \tau) := \{G^{\lambda}(x; \omega) : \omega \in \tau,\, (x; \omega) \in \mathrm{supp}\, a^{\lambda} \}.
\end{equation*}
If $V \subseteq \R^n$ is a linear subspace, then let $\angle(G^{\lambda}(x; \tau), V)$ denote the smallest angle between any non-zero vector $v \in V$ and $v' \in G^{\lambda}(x;\tau)$. 

The spatial ball $B(0, \lambda)$ is also decomposed into relatively small balls $B_{K^2}$ of radius $K^2$. In particular, fix $\mathcal{B}_{K^2}$ a collection of finitely-overlapping $K^2$-balls which are centred in and cover $B(0, \lambda)$. For $B_{K^2} \in \mathcal{B}_{K^2}$ centred at some point $\bar{x} \in B(0, \lambda)$ define
\begin{equation}\label{mu definition}
\mu_{T^{\lambda}f}(B_{K^2}) := \min_{V_1,\dots,V_A \in \mathrm{Gr}(k-1, n)} \Big(\max_{\tau : \angle(G^{\lambda}(\bar{x}; \tau), V_a) > K^{-1}  \textrm{ for } 1 \leq a \leq A} \|T^{\lambda}f_{\tau}\|_{L^p(B_{K^2})}^p \Big);
\end{equation}
here $\mathrm{Gr}(k-1, n)$ is the Grassmannian manifold of all $(k-1)$-dimensional subspaces in $\R^n$. It will often be notationally convenient to write $\tau \notin V_a$ to mean that $\angle(G^{\lambda}(\bar{x}; \tau), V_a) > K^{-1}$ (the choice of centre $\bar{x}$ should always be clear from the context); with this notation the above expression becomes
\begin{equation*}
\mu_{T^{\lambda}f}(B_{K^2}) := \min_{V_1,\dots,V_A \in \mathrm{Gr}(k-1, n)} \Big(\max_{\tau : \tau \notin V_a  \textrm{ for } 1 \leq a \leq A} \|T^{\lambda}f_{\tau}\|_{L^p(B_{K^2})}^p \Big).
\end{equation*}
 For $U\subseteq \R^n$ the $k$-broad norm over $U$ is then defined to be
\begin{equation}\label{k-broad norm definition}
\|T^{\lambda}f\|_{\mathrm{BL}^p_{k,A}(U)} := \Big(\sum_{\substack{B_{K^2} \in \mathcal{B}_{K^2} \\ B_{K^2} \cap U \neq \emptyset}} \mu_{T^{\lambda}f}(B_{K^2}) \Big)^{1/p}.
\end{equation}
It is remarked that $\|T^{\lambda}f\|_{\mathrm{BL}^p_{k,A}(U)}$ is not a norm in the traditional sense, but it does satisfy weak variants of certain key properties of $L^p$-norms, as discussed below in $\S$\ref{Broad norm section}. 

Theorem~\ref{main theorem} will be a consequence of certain estimates for $k$-broad norms. These estimates are proved under a further technical assumption that the phase is of \emph{reduced form}. The details of this condition are postponed until $\S$\ref{Reductions section}.

\begin{theorem}\label{k-broad theorem} For $2 \leq k \leq n$ and all $\varepsilon  > 0$ there exists a constant $C_{\varepsilon} > 1$ and an integer $A$ such that whenever $T^{\lambda}$ is a H\"ormander-type operator with positive-definite reduced phase the estimate
\begin{equation}\label{k-broad inequality}
\|T^{\lambda}f\|_{\mathrm{BL}^p_{k,A}(\R^n)} \lesssim_{\varepsilon} K^{C_{\varepsilon}} \lambda^{\varepsilon} \|f\|_{L^2(B^{n-1})}
\end{equation}
holds for all $\lambda \geq 1$  and $K \geq 1$ whenever $p \geq \bar{p}(k, n) := 2(n+k)/(n+k -2)$. 
\end{theorem}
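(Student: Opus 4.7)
The plan is to follow the polynomial partitioning approach that Guth developed in the paraboloid extension setting, adapting all of the geometric steps to the variable-coefficient framework. The argument proceeds by induction on the physical radius $R$: rather than proving \eqref{k-broad inequality} directly, one defines a quantity
\begin{equation*}
\mathrm{Dec}(R) := \sup \frac{\|T^{\lambda}f\|_{\mathrm{BL}^p_{k,A}(B_R)}}{\|f\|_{L^2(B^{n-1})}},
\end{equation*}
taken over all $R$-balls, all positive-definite reduced H\"ormander-type operators, and all $L^2$ inputs, and seeks an inequality of the form $\mathrm{Dec}(R) \leq C_{\varepsilon,K}\big(R^{\varepsilon/2}\mathrm{Dec}(R/2) + R^{\varepsilon}\,\text{(algebraic term)}\big)$ which iterates to the claim. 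The base case $R \leq K^{C}$ is immediate from the trivial pointwise bound.

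The first step is a wave packet decomposition at scale $R$: decompose $B^{n-1}$ into $R^{-1/2}$-caps $\theta$ and write $f=\sum_{(\theta,v)}f_{\theta,v}$ so that each $T^{\lambda}f_{\theta,v}$ is essentially concentrated on a curved tube $T_{\theta,v}$ of dimensions $R^{1/2+\delta}\times\cdots\times R^{1/2+\delta}\times R$ whose core is an integral curve of the direction field determined by $G^{\lambda}$. The reduced form hypothesis is what allows one to treat these tubes as almost straight, and to identify their direction with a single unit vector in $S^{n-1}$. One then applies polynomial partitioning (Guth--Katz adapted to the $k$-broad measure) to produce a polynomial $P$ of degree $D=D(\varepsilon)$ whose zero set $Z(P)$ equidistributes the $k$-broad mass. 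This leads to a standard dichotomy: either a \emph{cellular} case, where the mass concentrates in $\sim D^n$ cells of $\R^n\setminus Z(P)$, or an \emph{algebraic} case, where it concentrates in an $R^{1/2+\delta}$-neighbourhood of $Z(P)$.

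In the cellular case each wave packet tube meets at most $O(D)$ cells, so pigeonholing on the tubes and invoking the inductive hypothesis at radius $R/\mathrm{poly}(D)$ closes the case with room to spare (this is where we pick up the factor $R^{\varepsilon/2}$). In the algebraic case, cover the neighbourhood of $Z(P)$ by balls $B_{\rho}$ of radius $\rho := R^{1-\delta}$ and, relative to each $B_{\rho}$, split the wave packets into \textbf{tangential} ones (those whose central direction makes angle $\lesssim R^{-1/2+2\delta}$ with the tangent space of $Z$ throughout $B_{\rho}$) and \textbf{transverse} ones. A transverse tube meets only $O(D)$ of the $B_{\rho}$, so the transverse contribution is handled by the inductive hypothesis at the smaller scale $\rho$, again yielding an $R^{\varepsilon}$-type gain.

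The main obstacle, and the place where the positive-definite hypothesis enters decisively, is the tangential contribution. One uses a quantitative transversality / Wongkew-type bound to show that tangential tubes on $B_{\rho}$ cluster in the $R^{1/2+\delta}$-neighbourhood of an $m$-dimensional algebraic set for some $m\leq n-1$. The positive curvature of the hypersurfaces $\partial_x\phi(x;\cdot)(B^{n-1})$ forces the normals $G^{\lambda}(\bar{x};\theta)$ of tangential caps to lie on a convex piece of sphere inside a thin $R^{-1/2}$-neighbourhood of an affine subspace; combined with the $K^{-1}$-separation built into the definition of $\mu_{T^{\lambda}f}$, this yields that any collection of tangential caps contributing to $\mu_{T^{\lambda}f}(B_{K^2})$ must span a $k$-transverse configuration at scale $K^{-1}$. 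One can then invoke the Bennett--Carbery--Tao theorem (Theorem~\ref{Bennett Carbery Tao theorem}) on $B_{\rho}$ to bound the tangential piece directly at the exponent $p=\bar{p}(k,n)$, and iterate appropriately. Balancing the cellular, transverse and tangential contributions against the parameter $\delta$ gives the claimed estimate with the polynomial factor $K^{C_{\varepsilon}}$.
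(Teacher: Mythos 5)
There is a genuine gap in your treatment of the tangential sub-case, and it is fatal to the claimed range of exponents. You propose to handle tangential wave packets by showing that the caps contributing to $\mu_{T^{\lambda}f}(B_{K^2})$ span a $k$-transverse configuration and then invoking the Bennett--Carbery--Tao theorem. But Theorem~\ref{Bennett Carbery Tao theorem} only provides the multilinear estimate for $p \geq 2k/(k-1)$, which is \emph{strictly larger} than the target exponent $\bar{p}(k,n) = 2(n+k)/(n+k-2)$ whenever $k < n$ (equality holds only at $k=n$). So this step produces an estimate in the wrong, narrower range, and cannot yield Theorem~\ref{k-broad theorem} for the exponents $\bar{p}(k,n) \leq p < 2k/(k-1)$. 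The feature that forces the exponent down to $\bar{p}(k,n)$ is not multilinear transversality at a single scale; it is a cascading dimensional reduction, which your outline omits.

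The paper's actual proof instead establishes a more general proposition (Proposition~\ref{k-linear for m proposition}) for functions whose wave packets are $R^{-1/2+\delta_m}$-tangent to an $m$-dimensional transverse complete intersection $Z$, and carries out a \emph{double} induction on the radius $R$ and on the dimension $m$. In the algebraic/tangential sub-case one passes to a lower-dimensional variety $Y^l \subseteq Z$ with $l < m$ and invokes the dimensional induction hypothesis, never appealing to BCT. The base case of the dimension induction is $m \leq k-1$, where the $k$-broad norm of the tangential contribution is $\mathrm{RapDec}(R)$ essentially for the convexity reason you describe. Moreover, the positive-definite hypothesis enters not in the tangential sub-case but in the \emph{transverse} sub-case, through the transverse equidistribution estimates of Lemma~\ref{equidistributedinputs2}: these produce the crucial gain $\|\tilde{g}_b\|_{L^2} \lesssim R^{O(\delta_m)}(\rho/R)^{(n-m)/4}\|g\|_{L^2}$, which is what makes the contributions from the translates $Z+b$ summable without loss. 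Your sketch attributes the role of curvature entirely to the tangential side, which is not where it does the work. To repair the argument you would need to (a) generalise the induction quantity to track the ambient variety dimension $m$, (b) replace the BCT step by an appeal to the dimensional induction hypothesis, and (c) introduce the transverse equidistribution machinery to close the transverse sub-case.
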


The range of $p$ is sharp for this theorem, as can be seen by considering the extension operator associated to the (elliptic) paraboloid (see \cite{Guth2018}). As explained in $\S$\ref{Broad norm section} below, the $k$-broad estimate \eqref{k-broad inequality} is weaker than the corresponding $k$-linear estimate \eqref{k-linear inequality}, and so Theorem~\ref{k-broad theorem} can be viewed as a weak substitute for Conjecture~\ref{k-linear conjecture}.

To derive $L^p$ estimates from Theorem~\ref{k-broad theorem}, roughly speaking, one argues as follows. The global $L^p$ norm is broken up into contributions over balls $B_{K^2}$; the problem is to estimate each $\|T^{\lambda}f\|_{L^p(B_{K^2})}^p$ and then to sum these estimates in $B_{K^2}$. Fixing one such ball, there exists a collection of $(k-1)$-dimensional subspaces $V_1, \dots, V_A$ such that
\begin{equation*}
\mu_{T^{\lambda}f}(B_{K^2}) = \max_{\tau \notin V_a  \textrm{ for } 1 \leq a \leq A} \|T^{\lambda}f_{\tau}\|_{L^p(B_{K^2})}^p.
\end{equation*}
Thus, by the triangle and H\"older's inequalities,
\begin{equation*}
\|T^{\lambda}f\|_{L^p(B_{K^2})}^p \lesssim_A K^{O(1)}\mu_{T^{\lambda}f}(B_{K^2}) + \sum_{a = 1}^A\big\|\sum_{\tau \in V_a} T^{\lambda}f_{\tau}\big\|_{L^p(B_{K^2})}^p.
\end{equation*}
The $k$-broad estimate \eqref{k-broad norm definition} effectively controls the first term on the right-hand side of the above display, after summing over all $B_{K^2}$. The problem of estimating $\|T^{\lambda}f\|_{L^p(B_{R})}^p$ is therefore reduced to studying expressions of the form
\begin{equation*}
\big\|\sum_{\tau \in V} T^{\lambda}f_{\tau}\big\|_{L^p(B_{K^2})}^p
\end{equation*}
for each $B_{K^2}$, where the sum is over caps $\tau$ which make a small angle with some $(k-1)$-dimensional subspace $V$. This term can then be controlled using a combination of $\ell^p$-decoupling and an induction on scales argument, leading to the proof of Theorem~\ref{main theorem}. The full details of this argument are given in $\S$\ref{k-broad to linear section}.




\subsection*{Structure of the article} The structure of this article is as follows:
\begin{itemize}
\item In $\S$\ref{Necessary conditions section} sharp examples for Theorem~\ref{general main theorem} and Theorem~\ref{main theorem} are discussed in detail.
\item In $\S$\ref{Proof sketch section} the key features of the problem are identified in order to motivate the forthcoming analysis. 
\item In $\S$\ref{Reductions section} some basic reductions are described which allow one to assume the phase is of a certain \emph{reduced form} in the proof of Theorem~\ref{main theorem}.  
\item In $\S$\ref{Wave packet section} and $\S$\ref{Broad norm section} some basic analytic tools are introduced. In particular, the wave packet decomposition for H\"ormander-type operators is defined and studied, some elementary aspects of the $L^2$-theory for H\"ormander-type operators are reviewed, and there  is also a discussion of the basic properties of the $k$-broad norms and their relation to $k$-linear estimates.
\item In $\S$\ref{Algebraic preliminaries section} certain algebraic tools from combinatorial geometry are introduced.  In particular, polynomial partitioning techniques are reviewed and some important geometric lemmas are proved; these techniques will play a fundamental r\^ole in the proof of Theorem~\ref{k-broad theorem}. 
\item In $\S$\ref{Transverse equidistribution section} and $\S$\ref{Adjusting wave packets section} \emph{transverse equidistribution estimates} for $T^{\lambda}$ are introduced and studied. These estimates rely heavily on the positive-definite hypothesis and partially account for the improved behaviour exhibited by operators satisfying the H2$^+$) hypothesis. 
\item In $\S$\ref{Proof section} the proof of $k$-broad estimates of Theorem~\ref{k-broad theorem} is given.
\item In $\S$\ref{k-broad to linear section} the linear estimates of Theorem~\ref{main theorem} are deduced as a consequence of the $k$-broad estimates of Theorem~\ref{k-broad theorem}. For completeness, the same methods are also applied to deduce Theorem~\ref{general main theorem} as a consequence of Corollary~\ref{Bennett Carbery Tao corollary}. 
\item In $\S$\ref{Epsilon removal section} standard $\varepsilon$-removal lemmas are generalised to the variable coefficient setting. This allows one to strengthen Theorem~\ref{main theorem} away from the endpoint by removing the $\lambda^{\varepsilon}$-dependence in the constant.
\item Appended are some remarks concerning (non)-stationary phase arguments used throughout the paper.
\end{itemize}

\begin{acknowledgment} The authors would like to thank the anonymous referees for very thorough and tremendously helpful reports. The authors would also like to thank David Beltran and Christopher D. Sogge for engaging discussions on topics related to this article. The first author is supported by a Simons Investigator Award. This material is based upon work supported by the National Science Foundation under Grant No. DMS-1440140 while the second and third authors were in residence at the Mathematical Sciences Research Institute in Berkeley, California, during the Spring 2017 semester.  
\end{acknowledgment}




\section{Necessary conditions}\label{Necessary conditions section}




\subsection{An overview of the sharp examples} In this section examples of H\"ormander-type operators are studied in view of establishing the necessity of the conditions on the $p$ exponent in the linear estimates of Theorem~\ref{general main theorem} and Theorem~\ref{main theorem}. This analysis will also identify some key features of operators with positive-definite phase which will later be exploited in the proof of Theorem~\ref{main theorem}. As discussed in \S\ref{previous results section}, such examples first arose in the work of Bourgain \cite{Bourgain1991, Bourgain1995} and were later developed by Wisewell \cite{Wisewell2005}, Minicozzi--Sogge \cite{Minicozzi1997} and Bourgain--Guth \cite{Bourgain2011}, amongst others. The presentation in this section follows the lines of \cite{Bourgain1991, Bourgain2011}.

All the examples considered here are of the following general form: for a fixed operator $T^{\lambda}$, a function $f$ is chosen so that $|f|$ is constant whilst $|T^{\lambda}f|$ is concentrated in $N_{\lambda^{\sigma}}(Z) \cap B(0,\lambda)$ for some low-degree algebraic variety $Z$ with $\dim Z = m$; here $N_{\lambda^{\sigma}}(Z)$ is the $\lambda^{\sigma}$-neighbourhood of $Z$. In particular, one has 
\begin{equation}\label{universal example 1}
\|T^{\lambda}f\|_{L^2(\R^n)} \sim \|T^{\lambda}f\|_{L^2(N_{\lambda^{\sigma}}(Z) \cap B(0,\lambda))}.
\end{equation}
The examples will further be chosen so that 
\begin{equation}\label{universal example 2}
\|T^{\lambda}f\|_{L^2(\R^n)} \sim \lambda^{1/2}\|f\|_{L^2(B^{n-1})};
\end{equation}
note that, by H\"ormander's generalisation of the Hausdorff--Young inequality \cite{Hormander1973}, the inequality $\|T^{\lambda}f\|_{L^2(\R^n)} \lesssim \lambda^{1/2}\|f\|_{L^2(B^{n-1})}$ always holds (see also $\S$\ref{Wave packet section}, below). 

Playing \eqref{universal example 1} and \eqref{universal example 2} off against one another yields the necessary conditions on $p$. Indeed, for $f$ as above
\begin{equation*}
 \|f\|_{L^p(B^{n-1})} \sim  \|f\|_{L^2(B^{n-1})} \sim \lambda^{-1/2}\|T^{\lambda}f\|_{L^2(\R^n)} \sim \lambda^{-1/2}\|T^{\lambda}f\|_{L^2(N_{\lambda^{\sigma}}(Z) \cap B(0,\lambda))}. 
\end{equation*}
Now, assuming the estimate $\|T^{\lambda}g\|_{L^p(\R^n)} \lesssim_{\varepsilon} \lambda^{\varepsilon} \|g\|_{L^p(B^{n-1})}$ holds for all $\varepsilon > 0$ and applying H\"older's inequality, it follows that  
\begin{equation*}
 \|f\|_{L^p(B^{n-1})} \lesssim_{\varepsilon}  |N_{\lambda^{\sigma}}(Z) \cap B(0,\lambda)|^{1/2 - 1/p}\lambda^{-1/2}\lambda^{\varepsilon}  \|f\|_{L^p(B^{n-1})}.
\end{equation*}
 By a theorem of Wongkew \cite{Wongkew1993} (see Theorem~\ref{Wongkew theorem} below),
\begin{equation}\label{quasi Wongkew}
    |N_{\lambda^{\sigma}}(Z) \cap B(0,\lambda)| \lesssim \lambda^{m + (n-m)\sigma},
\end{equation}
where the implied constant depends only on $n$. In fact, for the simple varieties used in the arguments below, \eqref{quasi Wongkew} can be shown by direct inspection. Thus, combining the previous displays and recalling that $\lambda$ can be taken arbitrarily large, one concludes that
\begin{equation}\label{universal necessary condition}
p \geq 2 \cdot \frac{\sigma(n-m) + m}{\sigma(n-m) + m -1}.
\end{equation}
This condition depends on the two parameters $m$ and $\sigma$, and becomes more restrictive the more $m$ and $\sigma$ decrease. Therefore, in order to obtain the strongest possible restriction on $p$ for a given phase function, one wishes to find the lowest possible $m$ and $\sigma$, over all $f$, for which the mass of $T^{\lambda}f$ can concentrate in the $\lambda^\sigma$-neighbourhood of an $m$-dimensional low-degree algebraic variety.

\begin{itemize}
\item The optimal choice of $m$ is $n - \lfloor \frac{n-1}{2} \rfloor$. This value arises directly from the theory of Kakeya sets of curves, and will be discussed in more detail in the next subsection. 

\item The optimal choice of $\sigma$ depends on the signature of the phase. For general H\"ormander-type operators, one may find examples for which $\sigma = 0$. If one assumes the positive-definite condition H2$^+$), then $\sigma = 1/2$ is the lowest possible value. This difference in behaviour is governed by \emph{transverse equidistribution estimates} for $T^{\lambda}$, which were introduced in the context of Fourier extension operators in \cite{Guth2018}. This will be discussed in detail in $\S$\ref{mass concentration section}.
\end{itemize}

The optimal pairs $(m, \sigma)$ under the various hypotheses are tabulated in Figure~\ref{Hormander conditions}. Plugging these values into \eqref{universal necessary condition} gives the corresponding sharp range of estimates for $T^{\lambda}$. 

\begin{figure}
\begin{center}
\begin{TAB}(c,1cm,2cm)[5pt]{|c|c|c|}{|c|c|c|}
 & $n$ odd & $n$ even    \\
H2) & $\displaystyle \Big(\frac{n+1}{2}, 0\Big)$ & $\displaystyle \Big(\frac{n}{2}+1, 0\Big)$  \\
H2$^+$) & $\displaystyle \Big(\frac{n+1}{2}, \frac{1}{2}\Big)$ & $\displaystyle \Big(\frac{n}{2}+1, \frac{1}{2}\Big)$ \\
\end{TAB}
\caption{Optimal values of $(m, \sigma)$ for the sharp examples.}
\label{Hormander conditions}
\end{center}
\end{figure}




\subsection{Model operators}
The examples described above will arise from operators with phase of the relatively simple form
\begin{equation}\label{model phase}
\phi(x;\omega) := \langle x', \omega \rangle + \frac{1}{2}\langle \bm{A}(x_n)\omega, \omega\rangle
\end{equation}
where $\bm{A} \colon \R \to \mathrm{Sym}(n-1, \R)$ is a polynomial function taking values in the class of real symmetric matrices which satisfies $\bm{A}(0) = 0$. For such a phase the condition H1) always holds whilst H2) (respectively, H2$^+$)) holds if and only if the component-wise derivative $\bm{A}'(x_n) \in \mathrm{GL}(n-1, \R)$ (respectively, $\bm{A}'(x_n)$ is positive-definite) for all relevant $x_n \in [-1,1]$. Observe that if $\bm{A}(x_n) := x_nA$ for some fixed $A \in  \mathrm{Sym}(n-1, \R) \cap \mathrm{GL}(n-1, \R)$, then the resulting operator is the extension operator associated to the graph of the non-degenerate quadratic form $\omega \mapsto \frac{1}{2}\langle A\omega, \omega \rangle$. For the present purpose, one is interested in examples with higher-order dependence on $x_n$. 

Let $T^{\lambda}$ be an operator associated to the phase function \eqref{model phase} for some $\bm{A}$ and a choice of non-negative amplitude function $a$. The $\omega$-support of $a$ is assumed to lie in $B(0,c)$ where $c > 0$ is a small constant. Cover $B^{n-1}$ by finitely-overlapping balls $\theta$ of radius $\lambda^{-1/2}$; these balls will frequently be referred to as \emph{$\lambda^{-1/2}$-caps}. Let $\psi_{\theta}$ be a smooth partition of unity adapted to this cover. Consider a \emph{wave packet} of the form
\begin{equation*}
f_{\theta,v_{\theta}}(\omega) := e^{-2\pi i \lambda \langle v_{\theta}, \omega - \omega_{\theta}  \rangle} \psi_{\theta}(\omega)
\end{equation*}
 for some choice of $v_{\theta} \in \R^{n-1}$ and $\omega_{\theta}$ the centre of the cap $\theta$. To obtain the necessary conditions for $L^p$-boundedness of $T^{\lambda}$, the operator will be tested against functions given by superpositions of these basic wave packets.

Each localised piece $T^{\lambda}f_{\theta, v_{\theta}}$ is concentrated in a tubular region in $\R^n$. In particular, define the curve
\begin{equation}\label{Kakeya sets of curves 1}
\gamma_{\theta, v_{\theta}}(t) := v_{\theta} - \bm{A}(t)\omega_{\theta}\qquad \textrm{for $t \in (-1,1)$} 
\end{equation}
and let $T_{\theta, v_{\theta}}$ be the curved tube 
\begin{equation*}
T_{\theta, v_{\theta}} := \big\{ x \in B(0, \lambda) : |x' - \lambda\gamma_{\theta, v_{\theta}}(x_n/\lambda)| < c\lambda^{1/2} \big\}
\end{equation*}
for $c > 0$ as above. It is not difficult to show that 
\begin{equation}\label{Kakeya sets of curves 2}
|T^{\lambda}f_{\theta, v_{\theta}}(x)| \gtrsim \lambda^{-(n-1)/2}\chi_{T_{\theta, v_{\theta}}}(x) \qquad \textrm{for all $x \in B(0, \lambda)$,}
\end{equation}
provided that $c$ is chosen suitably small. Indeed, let $a_{\theta}^{\lambda}$ be the rescaled amplitude
\begin{equation*}
  a_{\theta}^{\lambda}(x;\omega) :=  a^{\lambda}(x; \omega_{\theta} + \lambda^{-1/2} \omega) \psi_{\theta}(\omega_{\theta} + \lambda^{-1/2} \omega)
\end{equation*}
and $\phi_{\theta}^{\lambda}$ be the phase function
\begin{equation*}
    \phi_{\theta}^{\lambda}(x; \omega) := \lambda^{-1/2}\langle x' - \lambda\gamma_{\theta, v_{\theta}}(x_n/\lambda), \omega \rangle + \frac{1}{2}\langle \mathbf{A}(x_n/\lambda)\omega,\omega \rangle
\end{equation*}
so that, by a linear change of variables,
\begin{equation}\label{Kakeya sets of curves 3}
    T^{\lambda}f_{\theta, v_{\theta}}(x) = \lambda^{-(n-1)/2} e^{2 \pi i \phi^{\lambda}(x;\omega_{\theta})}\int_{\R^{n-1}} e^{2 \pi i \phi_{\theta}^{\lambda}(x; \omega)}  a_{\theta}^{\lambda}(x; \omega) \,\ud \omega .
\end{equation}
Taking absolute values in \eqref{Kakeya sets of curves 3} and writing $e^{2 \pi i \phi_{\theta}^{\lambda}(x; \omega)}$ in terms of its real and imaginary parts, one deduces that
\begin{equation*}
    |T^{\lambda}f_{\theta, v_{\theta}}(x)| \gtrsim \lambda^{-(n-1)/2}\left|\int_{\R^{n-1}} \cos \big(2 \pi \phi_{\theta}^{\lambda}(x; \omega)\big)  a_{\theta}^{\lambda}(x; \omega) \,\ud \omega \right|.
\end{equation*}
Provided $c$ is sufficiently small,
\begin{equation*}
    |\phi_{\theta}^{\lambda}(x; \omega)| \leq \frac{1}{100} \qquad \textrm{whenever $x \in T_{\theta, v_{\theta}}$ and $(x;\omega) \in \mathrm{supp}\,a_{\theta}^{\lambda}$.}
\end{equation*}
Thus, if $x \in T_{\theta, v_{\theta}}$, then
$\cos \big(2 \pi \phi_{\theta}^{\lambda}(x; \omega)\big) \gtrsim 1$ and the desired bound \eqref{Kakeya sets of curves 2} follows.




\subsection{Kakeya sets of curves}\label{Kakeya sets of curves section} By studying the geometry of the family of tubes $T_{\theta, v_{\theta}}$ one may construct sharp examples for Theorem~\ref{main theorem}. These examples arise owing to Kakeya compression phenomena, whereby the tubes are arranged to lie in a neighbourhood of a low-dimensional set. For $n=3$, the following example appears in Bourgain--Guth \cite{Bourgain2011} (see also \cite{Minicozzi1997, Wisewell2005} for related constructions). Let $\phi$ be of the form \eqref{model phase} where $\bm{A}(t)$ is taken to be the $(n-1)\times(n-1)$ block-diagonal matrix
\begin{equation*}
\bm{A}(t) := 
\underbrace{\begin{pmatrix}
t & t^2 \\
t^2 & t + t^3
\end{pmatrix}
\oplus \dots \oplus 
\begin{pmatrix}
t & t^2 \\
t^2 & t + t^3
\end{pmatrix}}_{\text{$\lfloor \frac{n-1}{2} \rfloor$-fold}}
\oplus \,(t).
\end{equation*}
Here it is understood that the final $1 \times 1$ block appears only when $n$ is even. Observe that the resulting phase \eqref{model phase} satisfies H1) and H2$^+$) on $B^n \times B^{n-1}$.

Suppose that $T^{\lambda}$ is the associated oscillatory integral operator. The estimate
\begin{equation}\label{hypothesised inequality}
\|T^{\lambda}f\|_{L^p(\R^n)} \lesssim_{\varepsilon}\lambda^{\varepsilon} \|f\|_{L^p(B^{n-1})}
\end{equation}
is tested against a superposition of wave packets
\begin{equation*}
f := \sum_{\theta : \lambda^{-1/2}-\mathrm{cap}} \epsilon_{\theta} \cdot f_{\theta, v_{\theta}}
\end{equation*}
where the $\epsilon_{\theta} \in  \{1, -1\}$ are uniformly distributed independent random signs. By Khintchine's theorem (see, for instance, \cite[Appendix D]{Stein1970}), the expected value of $|T^{\lambda}f(x)|$ is given by
\begin{equation*}
\mathbf{E}[\,|T^{\lambda}f(x)|\,]\sim \big(\sum_{\theta : \lambda^{-1/2}-\mathrm{cap}} |T^{\lambda}f_{\theta, v_{\theta}}(x)|^2\big)^{1/2}  \gtrsim \lambda^{-(n-1)/2} \big(\sum_{\theta : \lambda^{-1/2}-\mathrm{cap}} \chi_{T_{\theta, v_{\theta}}}(x) \big)^{1/2}
\end{equation*}
for all $x \in B(0,\lambda)$. Thus, by H\"older's and Minkowski's inequalities,
\begin{equation*}
\lambda^{-(n-1)/2}\big(\int \sum_{\theta : \lambda^{-1/2}-\mathrm{cap}} \chi_{T_{\theta, v_{\theta}}} \big)^{1/2} \lesssim \big| \bigcup_{\theta : \lambda^{-1/2}-\mathrm{cap}} T_{\theta, v_{\theta}}\big|^{1/2 - 1/p} \mathbf{E}[\,\|T^{\lambda}f\|_{L^p(\R^n)}\,].
\end{equation*}
The hypothesis \eqref{hypothesised inequality} together with a direct computation now gives
\begin{equation}\label{Kakeya sets of curves 4}
\|f\|_{L^p(B^{n-1})} \lesssim_{\varepsilon} \big| \bigcup_{\theta : \lambda^{-1/2}-\mathrm{cap}} T_{\theta, v_{\theta}}\big|^{1/2 - 1/p}  \lambda^{-1/2 + \varepsilon} \|f\|_{L^p(B^{n-1})},
\end{equation}
since $\|f\|_{L^p(B^{n-1})} \sim 1$ is independent of the outcomes of the $\epsilon_{\theta}$. 

Varying $v_{\theta}$ corresponds to translating the tube $T_{\theta, v_{\theta}}$ in space in the $x'$ direction. In view of \eqref{Kakeya sets of curves 4}, one wishes to choose the $v_{\theta}$ in order to arrange the tubes so that their union has small measure. For the above choice of phase it is in fact possible to select the $v_{\theta}$ so that the tubes all lie in the $\lambda^{1/2}$-neighbourhood of a low-dimensional, low degree algebraic variety. In particular, let $m := n- \lfloor \frac{n-1}{2} \rfloor$ and $Z := Z(P_1, \dots, P_{n-m})$ be the common zero set of the polynomials 
\begin{equation*}
P_j(x) := \lambda x_{2j} - x_{2j -1}x_n \qquad \textrm{for $1 \leq j \leq \left\lfloor \tfrac{n-1}{2} \right\rfloor$.}
\end{equation*}
Thus, $Z$ is an algebraic variety of dimension $m$ and degree $O_n(1)$. If one defines
\begin{equation*}
v_{\theta,  2j-1} := -\omega_{\theta, 2j} \quad \textrm{and} \quad v_{\theta, 2j} = v_{\theta, n-1} = 0 \qquad \textrm{for $1 \leq j \leq \left\lfloor \tfrac{n-1}{2} \right\rfloor$}
\end{equation*}
for each cap $\theta$, then a simple computation shows that the curve $t \mapsto (\lambda\gamma_{\theta, v_{\theta}}(t/\lambda), t)$ lies in $Z$. Thus, 
\begin{equation*}
 \bigcup_{\theta : \lambda^{-1/2}-\mathrm{cap}} T_{\theta, v_{\theta}} \subseteq N_{\lambda^{1/2}}(Z) \cap B(0,\lambda)
\end{equation*}
and the desired necessary conditions on $p$ follow from \eqref{Kakeya sets of curves 4}. 

In conclusion, here the necessary conditions arise due the fact that it is possible to compress an $(n-1)$-dimensional family of curves into a set of small dimension $m$. The value $m = n- \lfloor \frac{n-1}{2} \rfloor$ is optimal for this kind of behaviour, in view of known estimates for associated Kakeya maximal functions: see \cite{Wisewell2005} and \cite{Bourgain2011}.




\subsection{Mass concentration}\label{mass concentration section} It will be useful to contrast the behaviour in the positive-definite and indefinite cases by considering sharp examples for Theorem~\ref{general main theorem} (that is, for the class of operators satisfying  H1) and the weaker hypothesis H2)). As before, Kakeya compression plays a significant r\^ole in the argument, but one can introduce additional interference between the wave packets which leads to stronger necessary conditions. In particular, this interference causes the mass of $|T^{\lambda}f|$ to concentrate in a tiny $O(1)$-neighbourhood of a variety $Z$; such tight concentration is not possible under the H2$^+$) hypothesis (as demonstrated by Theorem~\ref{main theorem}).

The following example was introduced by Bourgain \cite{Bourgain1991} (see also \cite{Bourgain1995}). Once again, the phase is taken to be of the form \eqref{model phase}. This time $\bm{A}(t)$ is defined to be the $(n-1)\times(n-1)$ block-diagonal matrix
\begin{equation}\label{second example}
\bm{A}(t) := 
\underbrace{\begin{pmatrix}
0 & t \\
t & t^2
\end{pmatrix}
\oplus \dots \oplus 
\begin{pmatrix}
0 & t \\
t & t^2
\end{pmatrix}}_{\text{$\lfloor \frac{n-1}{2} \rfloor$-fold}}
\oplus \,(t).
\end{equation}
Clearly, the corresponding phase satisfies H1) and H2), but H2$^+$) fails. Define the curves $\gamma_{\theta, v_{\theta}}$ as in \eqref{Kakeya sets of curves 1} so that
\begin{equation*}
|T^{\lambda}f_{\theta, v_{\theta}}(x)| \gtrsim \lambda^{-(n-1)/2}\chi_{T_{\theta, v_{\theta}}}(x)  \qquad \textrm{for all $x \in B(0,\lambda)$.}
\end{equation*}
If one takes
\begin{equation*}
v_{\theta,  2j-1} := -\omega_{\theta, 2j-1} \quad \textrm{and} \quad v_{\theta, 2j} = v_{\theta, n-1} = 0 \qquad \textrm{for $1 \leq j \leq \left\lfloor \tfrac{n-1}{2} \right\rfloor$,}
\end{equation*}
then it follows that the curve $t \mapsto (\lambda\gamma_{\theta, v_{\theta}}(t/\lambda), t)$ lies in $Z$ for all $\lambda^{-1/2}$-caps $\theta$, where $Z$ is the same variety as that appearing in the previous subsection (see Figure~\ref{Kakeya figure}).



\begin{figure}
\begin{center}
\tdplotsetmaincoords{75}{15}
\begin{tikzpicture}[tdplot_main_coords, scale=1.9]

	\begin{scope}[rotate around x=5]
   \foreach \a in {-1,-0.8,...,1}
  {
    \foreach \b in {-1,-0.8,...,1}
    {
		\draw[black, very thin] 
		({-\a + \b*1.0},{-1.0},{\a*1.0-\b*(1.0*1.0)})
    \foreach \t in {-1.0,-0.9,...,1.0}
    {
        --({-\a - \b*\t},{\t},{-\a*\t-\b*(\t*\t)})
    }
    ;
    }
    }
		\end{scope}
			\end{tikzpicture}
			\caption{The Kakeya compression phenomenon for the curves arising from the matrix \eqref{second example}.}
\label{Kakeya figure}
\end{center}
\end{figure}


One may repeat the analysis of $\S$\ref{Kakeya sets of curves section} by taking $f$ to be a linear combination of wave packets $f_{\theta, v_{\theta}}$ with random signs. This leads to the same necessary conditions as in the positive-definite case. However, certain deterministic choices of $f$ lead to stronger conditions on $p$. In particular, consider the function 
\begin{equation}\label{mass concentration 2}
\tilde{f} := \sum_{\theta : \lambda^{-1/2}-\mathrm{cap}} e^{2 \pi i \lambda Q(\omega_{\theta})} f_{\theta, v_{\theta}},
\end{equation}
where the $v_{\theta}$ are as defined above and $Q$ is the quadratic polynomial 
\begin{equation*}
Q(\omega) := \frac{1}{2}\sum_{j=1}^{\lfloor \frac{n-1}{2} \rfloor} \omega_{2j-1}^2.
\end{equation*}
Each modulated wave packet appearing in \eqref{mass concentration 2} has a phase given by
\begin{equation*}
\lambda\big(Q(\omega_{\theta}) - \langle v_{\theta}, \omega - \omega_{\theta}  \rangle\big) = \lambda Q(\omega) -  \frac{\lambda}{2}\sum_{j=1}^{\lfloor \frac{n-1}{2} \rfloor} (\omega_{2j-1} - \omega_{\theta, 2j-1})^2. 
\end{equation*}
Since the $\lambda(\omega_{2j-1} - \omega_{\theta, 2j-1})^2$ terms are bounded functions on the support of $\psi_{\theta}$, they do not contribute any significant oscillation. One may therefore heuristically identify $\tilde{f}$ with the function 
\begin{equation*}
f(\omega) := e^{2 \pi i \lambda Q(\omega)}\psi(\omega)
\end{equation*}
where $\psi$ is a bump function supported in $B^{n-1}$. Using a simple stationary phase argument, it was shown in \cite{Bourgain1991} (see also \cite{Bourgain1995, Bourgain2011}) that
\begin{equation}\label{mass concentration 3}
|T^{\lambda}f(x)| \gtrsim \lambda^{-\lfloor n/2 \rfloor/2}\chi_{N_c(Z)}(x) \qquad \textrm{for all $x \in B(0,\lambda)$.}
\end{equation}
Here $0 < c < 1$ is some small, fixed constant (which is independent of $\lambda$) and $Z$ is as defined in $\S$\ref{Kakeya sets of curves section}. With this estimate, one readily deduces the desired necessary conditions on $p$. Indeed, testing the inequality 
        \begin{equation*}
 \|T^{\lambda}f\|_{L^p(\R^n)} \lesssim_{\varepsilon} \lambda^{\varepsilon} \|f\|_{L^p(B^{n-1})}
        \end{equation*}
against the function $f$ as defined above, it follows from \eqref{mass concentration 3} that
\begin{equation*}
  \lambda^{-\floor{n/2}/2}|N_c(Z) \cap B(0,\lambda)|^{1/p} \lesssim_{\varepsilon} \lambda^{\varepsilon}.
\end{equation*}
    Since $|N_c(Z) \cap B(0,\lambda)| \sim \lambda^{n - \floor{(n-1)/2}}$ and $n = \floor{\frac{n}{2}} + \floor{\frac{n-1}{2}} + 1$, one deduces that
    \begin{equation*}
    \lambda^{-\floor{n/2}/2 + (\floor{n/2} +1)/p} \lesssim_{\varepsilon} \lambda^{\varepsilon}.
    \end{equation*}
     Taking $\lambda \geq 1$ large and $0 < \varepsilon < 1$ small, this forces
\begin{equation*}
-\frac{n-1}{2} + \frac{n+1}{p} \leq 0  \quad \textrm{if $n$ is odd} \quad \textrm{and} \quad
-\frac{n}{2} + \frac{n+2}{p} \leq 0  \quad \textrm{if $n$ is even,}
\end{equation*}
corresponding to the constraints on $p$ from Theorem~\ref{general main theorem}.

For completeness, the details of the argument used in \cite{Bourgain1991, Bourgain1995} to prove \eqref{mass concentration 3} are reviewed. Observe that $T^{\lambda}f(x)$ is an oscillatory integral with smooth amplitude $\psi$ and phase 
\begin{equation}\label{mass concentration 4}
\sum_{j=1}^{\lfloor \frac{n-1}{2} \rfloor} x_{2j-1}\omega_{2j-1} + x_{2j}\omega_{2j} + \frac{\lambda}{2}(\omega_{2j-1} + \lambda^{-1}x_n\omega_{2j})^2 + \delta_{\mathrm{e}}\big(x_{n-1} \omega_{n-1} + \frac{x_n}{2}\omega_{n-1}^2\big)
\end{equation}
where $\delta_{\mathrm{e}} = 0$ if $n$ is odd and $\delta_{\mathrm{e}} = 1$ if $n$ is even. Introduce new variables $z_j := \omega_{2j-1} + \lambda^{-1}x_n\omega_{2j}$ for $1 \leq j \leq \left\lfloor \tfrac{n-1}{2} \right\rfloor$. If $x \in Z$, then the phase function \eqref{mass concentration 4} can be expressed as
\begin{equation*}
\sum_{j=1}^{\lfloor \frac{n-1}{2} \rfloor} x_{2j-1}z_j + \frac{\lambda}{2}z_j^2 + \delta_{\mathrm{e}}\big(x_{n-1} \omega_{n-1} + \frac{x_n}{2}\omega_{n-1}^2\big).
\end{equation*}
The integral $T^{\lambda}f$ can now be reduced to a product of $\left\lfloor \tfrac{n-1}{2} \right\rfloor + \delta_{\mathrm{e}} = \left\lfloor \tfrac{n}{2} \right\rfloor$ integrals, each in a single variable. For $x \in Z$ the inequality \eqref{mass concentration 3} follows as a consequence of standard stationary phase estimates applied to each of these integrals (see, for instance, \cite[Chapter VIII, Proposition 3]{Stein1993}). This lower bound can then be extended to some $c$-neighbourhood of $Z$ via a simple estimate on the gradient of $T^{\lambda}f(x)$.





 \section{Key features of the analysis}\label{Proof sketch section}

The examples of the previous section highlight several key features of H\"ormander-type operators. All these features are exploited in the proofs of the linear and $k$-broad estimates.
\begin{enumerate}[1)]
\item \textbf{Algebraic structure.} The sharp examples were given by arranging collections of wave packets to lie in a relatively small neighbourhood of a low degree, low dimensional algebraic variety $Z$. It turns out that this is an essential feature of both the linear and $k$-broad problems. To exploit this underlying algebraic structure, the proof of Theorem~\ref{k-broad theorem} will rely on a variant of the polynomial partitioning method introduced by Katz and the first author \cite{Guth2015}. Roughly speaking, this method allows one to reduce to the case where $|T^{\lambda}f|$ concentrates around some low degree, low dimensional variety, as in the sharp examples. This can be thought of as a dimensional reduction and, indeed, the proof of Theorem~\ref{k-broad theorem} will proceed by an induction on dimension. Polynomial partitioning has played an increasingly important r\^ole in the theory of oscillatory integral operators, beginning with the work on the restriction problem in \cite{Guth2016, Guth2018} and, more recently, in \cite{Wang, HickmanRogers2018}.  
\item \textbf{Non-concentration/transverse equidistribution.} Suppose one does not assume the phase is positive-definite. The example of $\S$\ref{mass concentration section} then shows that interference between the wave packets can cause $|T^{\lambda}f|$ to be concentrated in a tiny neighbourhood of $Z$. In order to prove the sharp range of estimates in the positive-definite case one must rule out the possibility of such concentration. This is achieved by extending the theory of so-called \emph{transverse equidistribution estimates} introduced in \cite{Guth2018} to the variable coefficient setting. These estimates can be interpreted as showing that $|T^{\lambda}f(x)|$ is morally constant along transverse directions to $Z$ in a $\lambda^{1/2}$-neighbourhood of the variety. Consequently, $|T^{\lambda}f(x)|$ cannot concentrate in a smaller neighbourhood.
\item \textbf{Parity of the dimension.} Another key feature of the examples discussed in the previous section is their dependence on the parity of the ambient dimension $n$. Recall that this is directly related to the minimal dimension 
\begin{equation*}
d(n) := \left\{\begin{array}{ll}
\frac{n+1}{2} & \textrm{if $n$ is odd} \\[6pt]
\frac{n+2}{2} & \textrm{if $n$ is even}
\end{array}\right. 
\end{equation*}
of Kakeya sets of curves in $\R^n$.  The parity of the dimension does not play an overt r\^ole in the proof of the $k$-broad estimates, but it becomes a noticeable feature when one wishes to pass from $k$-broad to linear estimates in the proof of Theorem~\ref{main theorem}. In particular, for each fixed value of $2 \leq k \leq n$, the method of $\S$\ref{k-broad to linear section} shows that the $k$-broad estimates imply a (possibly empty/trivial) range of linear estimates. It transpires that to optimise the range of linear estimates obtained in this manner one should choose $k$ to correspond to the dimension $d(n)$ from the Kakeya problem. 
\end{enumerate}

The proof of the $k$-broad estimates follows the same general scheme as that used to study Fourier extension operators in \cite{Guth2018}, and heavily exploits the the features 1) and 2) of the problem highlighted above. A detailed sketch of the argument in the extension context is provided in \cite{Guth2018}; this sketch is likely to be beneficial to readers new to these ideas. 
 



 \section{Reductions}\label{Reductions section}




\subsection{Basic reductions}

The prototypical example of a positive-definite phase function is given by 
\begin{equation}\label{prototypical phase}
\phi_{\mathrm{par}}(x;\omega) := \langle x', \omega \rangle + x_n\frac{|\omega|^2}{2}.
\end{equation}
This is the phase associated to the extension operator for the (elliptic) paraboloid. In general, to prove Theorem~\ref{main theorem} it suffices to only consider phases which are given by small perturbations of $\phi_{\mathrm{par}}$. Observations of this kind have been used previously in the theory of oscillatory integral operators and the arguments of this section are inspired by \cite{Hormander1973, Lee2006}. 

To understand why such a reduction is possible, first recall that the class of operators under consideration are those of the form
\begin{equation*}
T^{\lambda}f(x) := \int_{\R^{n-1}} e^{2 \pi i \phi^{\lambda}(x;\omega)} a^{\lambda}(x; \omega) f(\omega)\, \ud \omega
\end{equation*}
where $\phi$ satisfies H1) and H2$^+$). In addition, one may assume a number of fairly stringent conditions on the form of $\phi$ on the support of $a$.  

\begin{lemma}\label{reduction lemma} To prove Theorem~\ref{main theorem} for some fixed $\varepsilon > 0$ it suffices to consider the case where $a$ is supported on $X \times \Omega$ where $X := X' \times X_n$ and $X' \subset B^{n-1}$, $X_n \subset B^1$ and $\Omega \subset B^{n-1}$ are small balls centred at 0 upon which the phase $\phi$ has the form
\begin{equation}\label{reduced phase}
\phi(x; \omega) = \langle x', \omega \rangle + x_nh(\omega) + \mathcal{E}(x;\omega).
\end{equation}
Here $h$ and $\mathcal{E}$ are smooth functions, $h$ is quadratic in $\omega$ and $\mathcal{E}$ is quadratic in $x$ and $\omega$.\footnote{Explicitly, if $(\alpha, \beta) \in \N_0 \times \N_0^{n-1}$ is a pair of multi-indices, then:
\begin{enumerate}[i)]
\item $\partial^{\beta}_{\omega}h(0) = \partial^{\beta}_{\omega}\partial^{\alpha}_x\mathcal{E}(x;0)=0$ whenever $x \in X$ and $|\beta| \leq 1$;
\item  $\partial^{\beta}_{\omega}\partial^{\alpha}_x\mathcal{E}(0;\omega)=0$ whenever $\omega \in \Omega$ and $|\alpha| \leq 1$.
\end{enumerate}} Furthermore, letting $c_{\mathrm{par}} > 0$ be a small constant, which may depend on the admissible parameters $n$, $p$ and $\varepsilon$, one may assume that
\begin{gather}\label{close to identity}
\|\partial_{\omega x'}^2 \phi(x; \omega) - \mathrm{I}_{n-1}\|_{\mathrm{op}} < c_{\mathrm{par}}, \quad |\partial_{\omega}\partial_{x_n} \phi(x; \omega)| < c_{\mathrm{par}}, \\
\label{close to identity again} \|\partial_{\omega \omega}^2\partial_{x_k} \phi(x; \omega) - \delta_{kn}\mathrm{I}_{n-1}\|_{\mathrm{op}} < c_{\mathrm{par}}
\end{gather}
 for all $(x;\omega) \in X \times \Omega$ and $1\leq k \leq n$. 
\end{lemma}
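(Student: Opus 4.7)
The plan is to exploit the invariance of the class of Hörmander-type operators with positive-definite phase under a few natural transformations, each of which preserves H1), H2$^+$), and the target estimate \eqref{Hormander estimate} up to bounded constant factors: (a) smooth changes of variable in $x$ or $\omega$ with bounded Jacobians (absorbing Jacobian factors into the amplitude), (b) subtraction from the phase of any function of $x$ alone, which multiplies $T^\lambda f$ by a unimodular factor, and (c) subtraction of any function of $\omega$ alone, which multiplies $f$ by a unimodular factor. A finite sequence of such operations will reduce the problem to a phase of the form \eqref{reduced phase}.

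First I would partition $\mathrm{supp}\,a$ via a finite partition of unity into pieces of radius bounded by a small parameter $\rho = \rho(\varepsilon, c_{\mathrm{par}}) > 0$, handling each piece separately via the triangle inequality. On one such piece, translate in $x$ and $\omega$ to place its base point at the origin; then subtractions (b) and (c), applied in sequence, arrange $\phi(x, 0) \equiv 0$ and $\phi(0, \omega) \equiv 0$ (a short check shows the second subtraction does not disturb the first). By H1), $\partial^2_{\omega x}\phi(0, 0)$ has rank $n-1$, so a linear change of variable in $x$ normalizes this matrix to $\partial^2_{\omega x'}\phi(0, 0) = I_{n-1}$ and $\partial^2_{\omega x_n}\phi(0, 0) = 0$. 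Consequently $G(0, 0) = \pm e_n$, and the H2$^+$) hypothesis at the origin states precisely that $\pm\partial^3_{\omega\omega x_n}\phi(0, 0)$ is positive-definite; flipping $x_n$ if necessary, a linear change of variable in $\omega$ (compensated by a linear change in $x'$ to preserve $\partial^2_{\omega x'}\phi(0, 0) = I_{n-1}$) normalizes this Hessian to $I_{n-1}$.

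To meet the footnote conditions (i) and (ii), I would apply two further smooth changes of variable. Since $V'(\omega) := \partial_{x'}\phi(0, \omega) = \omega + O(|\omega|^2)$ after the previous step, the map $V'$ is a local diffeomorphism near $\omega = 0$; substituting $\omega \mapsto (V')^{-1}(\omega)$ produces a new phase $\tilde\phi$ satisfying $\partial_{x'}\tilde\phi(0, \omega) \equiv \omega$ on a neighbourhood of the origin. Symmetrically, $W(x) := \partial_\omega\tilde\phi(x, 0) = x' + O(|x|^2)$, so the map $x \mapsto (W(x), x_n)$ is a local diffeomorphism near $x = 0$ whose inverse, applied in $x$, yields $\hat\phi$ with $\partial_\omega\hat\phi(x, 0) \equiv x'$. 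A short Taylor-series bookkeeping, using that both diffeomorphisms are the identity to first order at the origin, confirms that the normalizations established previously at the origin are preserved. Setting $h(\omega) := \partial_{x_n}\hat\phi(0, \omega)$ and $\mathcal{E}(x, \omega) := \hat\phi(x, \omega) - \langle x', \omega \rangle - x_n h(\omega)$, both (i) and (ii) follow immediately from the four identities $\hat\phi(x, 0) \equiv 0$, $\hat\phi(0, \omega) \equiv 0$, $\partial_{x'}\hat\phi(0, \omega) \equiv \omega$, $\partial_\omega\hat\phi(x, 0) \equiv x'$. Finally, at the origin $\hat\phi$ has the second derivatives matching those of $\phi_{\mathrm{par}}$, so selecting $\rho$ sufficiently small in the initial partition step ensures \eqref{close to identity} and \eqref{close to identity again} throughout the support by continuity. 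The main obstacle I anticipate is verifying that the two non-linear changes of variable preserve H2$^+$) and all the origin normalizations; this follows from a chain-rule computation showing that the fundamental form in H2$^+$) transforms by a positive scalar multiple together with a congruence under the Jacobian, preserving positive-definiteness.
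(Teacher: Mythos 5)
Your argument is correct and follows essentially the same route as the paper: localization, modulation invariance (subtracting off the pure-$x$ and pure-$\omega$ parts), linear normalization at the origin, and inverse-function-theorem reparametrizations in $\omega$ and $x'$, all of which manifestly preserve H1), H2$^+$) and the $L^p$ estimate up to controlled constants. The main differences are cosmetic reorderings — you normalize the Hessian $\partial^2_{\omega\omega}\partial_{x_n}\phi(0,0)=\mathrm{I}_{n-1}$ early and reparametrize $\omega$ before $x'$, whereas the paper does the $x'$-change first (with a more explicit Taylor-expansion bookkeeping) and defers the linear Hessian normalization to the end — and your observation that the four exact identities $\hat\phi(x,0)\equiv 0$, $\hat\phi(0,\omega)\equiv 0$, $\partial_{x'}\hat\phi(0,\omega)\equiv\omega$, $\partial_{\omega}\hat\phi(x,0)\equiv x'$ directly yield the footnote conditions is a somewhat tidier way to organize the same computation.
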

Here $\mathrm{I}_{n-1}$ denotes the $(n-1)\times(n-1)$ identity matrix, $\delta_{ij}$ the Kronecker $\delta$-function and $\|\,\cdot\,\|_{\mathrm{op}}$ the operator norm.

It is perhaps useful to provide a brief interpretation of the formula \eqref{reduced phase}. Since $h$ is quadratic, $h(\omega) = \frac{1}{2}\langle \partial_{\omega\omega}^2h(0) \omega, \omega \rangle + O(|\omega|^3)$. Although unnecessary for the forthcoming analysis, by rotating the $\omega$ co-ordinates one may further assume that $h(\omega) = \frac{|\omega|^2}{2} + O(|\omega|^3)$. Thus, the phase in \eqref{reduced phase} is given by
\begin{equation*}
    \phi(x;\omega) = \phi_{\mathrm{par}}(x;\omega) + \textrm{higher order terms}
\end{equation*}
and is therefore a perturbation of the prototypical example $\phi_{\mathrm{par}}$.

The proof of the lemma is based upon three elementary principles:

\paragraph{\emph{Localisation}} If a property $P$ of a phase holds locally on $\mathrm{supp}\,a$, then typically one may assume $P$ holds on the whole of $\mathrm{supp}\,a$ by applying a partition of unity, the triangle inequality and shifting co-ordinates.

\paragraph{\emph{Parametrisation invariance}} By the change of variables formula, one may compose $\phi$ with a smooth change of either the $x$ or $\omega$ variables. 

\paragraph{\emph{Modulation invariance}} One is free to add smooth functions to the phase which depend only on either the $x$ or on the $\omega$ variables. In particular, $\phi$ can be replaced by
\begin{equation*}
\phi(x;\omega) + \phi(0;0) - \phi(0;\omega) - \phi(x;0)
\end{equation*}
and therefore one may assume that
\begin{equation}\label{modulation invariance}
\partial_{x}^{\alpha}\phi(x;0) = 0 \quad \textrm{and} \quad \partial_{\omega}^{\beta}\phi(0;\omega) = 0
\end{equation}
for all multi-indices $(\alpha, \beta) \in \N_0^n \times \N_0^{n-1}$. 

The following argument provides an example of these three principles working together. Rotating the $x$-co-ordinates, one may assume that $\partial_{\omega} \partial_{x_n} \phi(0;0) = 0$. By H2) it follows that 
\begin{equation*}
\det \partial_{x'\omega}^2 \phi(0;0) \neq 0.
\end{equation*}
The inverse function theorem now implies the existence of local inverses to the functions $\omega \mapsto \partial_{x'}\phi(x; \omega)$ and $x' \mapsto \partial_{\omega}\phi(x; \omega)$ in a neighbourhood of 0. Thus, by localisation, one may assume $\mathrm{supp}\,a$ is contained in $X\times \Omega$ where $X = X' \times X_n$ for $X'\subset B^{n-1}$, $X_n \subset B^1$ and $\Omega \subset B^{n-1}$ small balls centred at 0 and that there exist smooth functions $\Phi$ and $\Psi$ taking values in $X$ and $\Omega$, respectively, such that
\begin{equation}\label{change of variables}
\partial_{x'}\phi(x; \Psi(x; u)) = u \quad \textrm{and} \quad \partial_{\omega}\phi(\Phi(z', x_n; \omega), x_n; \omega ) = z'.
\end{equation}
The former identity can be thought of as a generalisation of the fact that any hypersurface can be locally parametrised as a graph. The latter identity features in the proof of Lemma~\ref{reduction lemma} and it is useful to highlight some further properties of $\Phi$. For each $(x_n, \omega) \in X_n \times \Omega$ the map $z' \mapsto \Phi(z',x_n;\omega)$ is a diffeomorphism from its domain onto $X'$; this provides a useful change of variables on $X'$. Furthermore, it is easy to see that $0$ lies in the domain of this map when $x_n = 0$, $\omega = 0$ and that
\begin{equation}\label{Phi properties}
\Phi(0;0) = 0, \quad  \partial_{x_n}\Phi(0;0) = 0 \quad \textrm{and} \quad \partial_{x'}\Phi(0;0)= \partial_{x'\omega}^2\phi(0;0)^{-1}.
\end{equation} 
Indeed, the first identity follows directly from \eqref{modulation invariance} whilst the remaining identities are obtained by differentiating the defining expression for $\Phi$ from \eqref{change of variables}.

\begin{proof}[Proof (of Lemma~\ref{reduction lemma})] By \eqref{modulation invariance} one may assume that
\begin{equation*}
\phi(x; \omega) = \langle \partial_\omega \phi(x;0), \omega \rangle  + \rho(x;\omega)
\end{equation*}
where $\rho$ is quadratic in $\omega$ and satisfies $\rho(0;\omega) = 0$ for all $\omega \in \Omega$. Let $\Phi$ be the function defined in \eqref{change of variables} and, using localisation and parametrisation invariance, perform the change of variables $x' \mapsto \Phi(x',x_n;0)$ on $X'$ so that the phase becomes
\begin{equation*}
\phi(x; \omega) =  \langle x', \omega \rangle + \rho(\Phi(x;0), x_n;\omega).
\end{equation*}
By \eqref{Phi properties} one has $\partial_{x_n}\Phi(x', 0;0) = O(|x|)$ and, taking a Taylor expansion of $\rho$ in $x_n$,
\begin{equation*}
\rho(\Phi(x;0), x_n; \omega) = \rho(\Phi(x', 0;0), 0;\omega) + (\partial_{x_n}\rho)(\Phi(x', 0;0), 0;\omega) x_n + O(|x|^2).
\end{equation*}
Note that the first expression on the right-hand side satisfies 
\begin{equation*}
\rho(\Phi(x', 0;0),0;\omega) = \langle \partial_{x'\omega}\phi(0;0)^{-1}x', (\partial_{x'}\rho)(0;\omega) \rangle + O(|x|^2)
\end{equation*}
whilst, Taylor-expanding now in $x'$, it follows that
\begin{equation*}
(\partial_{x_n} \rho)(\Phi(x', 0;0),0;\omega) = (\partial_{x_n}\rho)(0;\omega) + O(|x|).
\end{equation*}
Combining these observations, and noting, for instance, that \eqref{modulation invariance} implies that $\partial_x^{\alpha}\rho(x;0) = 0$ for all $\alpha \in \N_0^n$ and $x \in B^n$, one deduces that
\begin{equation*}
\phi(x; \omega) =  \big\langle x', \omega + \partial_{x'\omega}\phi(0;0)^{-\top} (\partial_{x'}\rho)(0;\omega)\big\rangle + x_n(\partial_{x_n}\rho)(0;\omega) + O(|x|^2|\omega|^2);
\end{equation*}
Here the symbol $\top$ is used to denote the matrix transpose and $-\top$ the inverse matrix transpose. 

Since $\rho$ is quadratic in $\omega$, it follows that $\omega \mapsto \omega + \partial_{x'\omega}\phi(0;0)^{-\top} (\partial_{x'}\rho)(0;\omega)$ is a well-defined change of variables in a neighbourhood of the origin and so, once again by localisation and parametrisation invariance, the problem is reduced to considering phase functions of the from \eqref{reduced phase}. By the construction $h$ and $\mathcal{E}$ are quadratic. Finally, the condition H2$^+$) implies that the matrix $\partial_{\omega \omega}^2\partial_{x_n} \phi(0; 0)$ is positive definite. Applying a linear co-ordinate change, one may therefore suppose that $\partial_{\omega \omega}^2\partial_{x_n} \phi(0; 0) = \mathrm{I}_{n-1}$. On the other hand, clearly $\partial_{\omega}\partial_{x_n}\phi(0;0) = 0$, $\partial_{\omega x'}^2\phi(0; 0) = \mathrm{I}_{n-1}$ and $\partial_{\omega \omega}^2\partial_{x_k} \phi(0; 0) = 0_{n-1}$ for $1 \leq k \leq n-1$. By continuity, if the support of $a$ is sufficiently small, then the conditions of \eqref{close to identity} and \eqref{close to identity again} are valid on the support of $a$. 
\end{proof}

\subsection{Parabolic rescaling}\label{parabolic rescaling subsection} In addition to the reductions of Lemma~\ref{reduction lemma}, it will be useful to have control over higher order derivatives of the phase, and also the amplitude function. Such control is made possible using a simple scaling argument.

Consider the constant coefficient case $\phi(x;\omega) := \langle x',\omega \rangle + x_n h(\omega)$ where $h(\omega) = \frac{|\omega|^2}{2} + O(|\omega|^3)$, so that $\phi$ is a perturbation of the prototypical phase $\phi_{\mathrm{par}}$ defined in \eqref{prototypical phase}. The corresponding operator
\begin{equation*}
    Ef(x) := \int_{B^{n-1}} e^{2 \pi i \phi(x;\omega)} f(\omega)\,\ud \omega
\end{equation*}
(that is, the extension operator associated to the graph of $h$) has a special scaling structure. Let $\bar{\omega} \in B^{n-1}$ and $\rho \geq 1$ and note that
\begin{equation*}
    \phi(x;\bar{\omega} + \rho^{-1}\omega) - \phi(x;\bar{\omega}) = \tilde{\phi}(\tilde{x}; \omega)
\end{equation*}
where $\tilde{\phi}$ is defined the same way as $\phi$ but with $h$ replaced with 
\begin{equation*}
    \tilde{h}(\omega) := \rho^2 \big( h(\bar{\omega} + \rho^{-1}\omega) - h(\bar{\omega}) - \rho^{-1}\langle \partial_{\omega}h(\bar{\omega}), \omega \rangle \big). 
\end{equation*}
and $\tilde{x} := (\rho^{-1}\partial_{\omega}\phi(x;\bar{\omega}),\rho^{-2}x_n)$. The linear map $x \mapsto \tilde{x}$ is referred to as a \emph{parabolic rescaling}, owing to the $\rho^{-1}$ and $\rho^{-2}$ scaling factors.

Now consider the special case where $h(\omega) = h_{\mathrm{par}}(\omega) := \frac{|\omega|^2}{2}$ (so that $\phi = \phi_{\mathrm{par}}$), noting that
\begin{equation}\label{something trivial}
    |\partial_{\omega}^{\beta}h_{\mathrm{par}}(\omega)| = 0  \qquad \textrm{for all $\beta \in \N_0^n$ with $|\beta| \geq 3$.}
\end{equation}
Write $E_{\mathrm{par}}$ for the operator $E$ and observe that $\tilde{h}_{\mathrm{par}} = h_{\mathrm{par}}$ and, consequently, $\tilde{\phi}_{\mathrm{par}} = \phi_{\mathrm{par}}$. Thus, if $\mathrm{supp}\,f \subseteq B(\bar{\omega},\rho^{-1})$, then 
\begin{equation*}
    |E_{\mathrm{par}}f(x)| = |E_{\mathrm{par}}\tilde{f}(\tilde{x})|
\end{equation*}
where $\tilde{f}(\omega) := \rho^{-(n-1)}f(\bar{\omega}+\rho^{-1}\omega)$ is supported in $B^{n-1}$. 

For general $h$ such a clean scaling identity does not hold. In particular, parabolic rescaling does not preserve $E$ but transforms it into the operator $\tilde{E}$ associated to the phase $\tilde{\phi}$: that is,
\begin{equation*}
  |Ef(x)| = |\tilde{E}\tilde{f}(\tilde{x})|.  
\end{equation*}
A useful feature, however, is that the new phase $\tilde{\phi}$ more closely resembles the prototypical example $\phi_{\mathrm{par}}$ (relative to $\phi$). In particular,
\begin{equation*}
    |\partial_{\omega}^{\beta} \tilde{h}(\omega)| \lesssim_{\beta} \rho^{-(|\beta| - 2)} \qquad \textrm{for all $\beta \in \N_0^n$ with $|\beta| \geq 3$}
\end{equation*}
so that, as $\rho$ is large, the function $\tilde{h}$ is `closer' to satisfying \eqref{something trivial} (relative to $h$). 

These observations can be extended to the variable coefficient setting to prove the following reduction. 

\begin{lemma}\label{second reduction lemma}  To prove Theorem~\ref{main theorem} for some fixed $\varepsilon > 0$ it suffices to consider the case where, in addition to the properties described in Lemma~\ref{reduction lemma}, the phase satisfies
\begin{equation*}
\|\partial_{\omega}^{\beta}\partial_x^{\alpha} \phi \|_{L^{\infty}(X \times \Omega)} < c_{\mathrm{par}} \qquad \textrm{for $1 \leq |\alpha| \leq N_{\mathrm{par}}$, $3 \leq |\beta| \leq N_{\mathrm{par}}$}
\end{equation*}
for some small constant $c_{\mathrm{par}}$ and large integer $N_{\mathrm{par}} \in \N$, which can be chosen to depend on $n$, $p$ and $\varepsilon$. Furthermore, one may assume that the amplitude satisfies
\begin{equation*}
\|\partial_{\omega}^{\beta}a\|_{L^{\infty}(X \times \Omega)} \lesssim_{\beta} 1 \qquad \textrm{for all $0 \leq |\beta| \leq N_{\mathrm{par}}$.}
\end{equation*}
\end{lemma}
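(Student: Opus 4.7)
The plan is to use a parabolic rescaling in the $\omega$-variable to pass from a phase satisfying only the conditions of Lemma~\ref{reduction lemma} to one additionally satisfying the higher-derivative smallness required by Lemma~\ref{second reduction lemma}. Given the target parameters $c_{\mathrm{par}}$ and $N_{\mathrm{par}}$, choose a large constant $\rho \geq 1$ depending on them. Decompose $\Omega$ into finitely-overlapping balls $\tau_\nu$ of radius $\rho^{-1}$ centred at points $\omega_\nu$, write $f = \sum_\nu f_\nu$ via a smooth partition of unity $\{\psi_\nu\}$ adapted to this cover, and introduce rescaled variables $u := \rho(\omega - \omega_\nu) \in B^{n-1}$ and $\tilde x := (\rho^{-1}(x' + x_n \partial_\omega\partial_{x_n}\phi(0;\omega_\nu)),\, \rho^{-2}x_n)$, which, up to a constant factor, satisfies $\tilde x \in B(0,\tilde\lambda)$ for $\tilde\lambda := \lambda/\rho^2$.

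Taylor expanding the reduced phase around $\omega_\nu$ yields
\begin{equation*}
\lambda\phi(x/\lambda;\omega_\nu + \rho^{-1}u) = \lambda\phi(x/\lambda;\omega_\nu) + \langle x_{(\nu)}, u\rangle + \tilde\lambda\,\tilde\phi_\nu(\tilde x/\tilde\lambda; u),
\end{equation*}
where $x_{(\nu)} := \lambda\rho^{-1}\partial_\omega\phi(x/\lambda;\omega_\nu)$ is affine in $x$ up to lower-order corrections and $\tilde\phi_\nu$ collects the remaining terms. The first summand depends only on $x$ and contributes an $L^p$-invariant modulation, while the second is linear in $u$ and is absorbed into the rescaling of $\tilde x'$. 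After the change of variables, $T^\lambda f_\nu(x)$ equals, up to modulation and a factor of $\rho^{-(n-1)}$, a H\"ormander-type operator $\tilde T^{\tilde\lambda}_\nu \tilde g_\nu(\tilde x)$ at scale $\tilde\lambda$ with phase $\tilde\phi_\nu$, amplitude $\tilde a_\nu$, and data $\tilde g_\nu(u) := f_\nu(\omega_\nu + \rho^{-1}u)$.

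The rescaling preserves all of the structural properties of Lemma~\ref{reduction lemma}: the conditions H1), H2$^+$), the form \eqref{reduced phase} and the normalisations \eqref{close to identity}, \eqref{close to identity again} transfer directly since they hold automatically at the origin and parabolic rescaling is affine. The crucial new feature is that the substitution $u = \rho(\omega - \omega_\nu)$ converts each $\partial_\omega^\beta$-derivative into $\rho^{-|\beta|}\partial_u^\beta$; combined with the overall prefactor $\rho^2$ in the reparametrisation, derivatives with $|\beta| \geq 3$ gain a factor $\rho^{-(|\beta|-2)}$. A direct computation then yields
\begin{equation*}
\|\partial_u^\beta\partial_{\tilde x}^\alpha \tilde\phi_\nu\|_{L^\infty} \lesssim \rho^{-1} \qquad \textrm{for } 1 \leq |\alpha| \leq N_{\mathrm{par}},\ 3 \leq |\beta| \leq N_{\mathrm{par}},
\end{equation*}
which is $< c_{\mathrm{par}}$ once $\rho$ is taken sufficiently large. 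The amplitude bounds follow analogously, since derivatives of the partition-of-unity factor $\psi_\nu(\omega_\nu + \rho^{-1}u)$ are $O(1)$ in $u$ while derivatives of $a$ gain additional powers of $\rho^{-1}$.

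To close the argument one applies Theorem~\ref{main theorem} in the nicer class of phases with a slightly smaller exponent $\varepsilon' < \varepsilon$ to each $\tilde T^{\tilde\lambda}_\nu \tilde g_\nu$ at scale $\tilde\lambda$. Undoing the change of variables, summing over $\nu$ using Minkowski's inequality, and relating $\sum_\nu\|f_\nu\|_{L^p}$ to $\|f\|_{L^p}$ via H\"older's inequality (exploiting the bounded overlap of the caps) produces an estimate of the form $C_{\varepsilon'}\rho^{\gamma - 2\varepsilon'}\lambda^{\varepsilon'}\|f\|_{L^p}$ for some fixed $\gamma = \gamma(n,p)$. This is bounded by $\lambda^\varepsilon\|f\|_{L^p}$ whenever $\lambda \geq \lambda_0(\rho,\varepsilon,\varepsilon')$, and the remaining range $1 \leq \lambda \leq \lambda_0$ is absorbed into the implicit constant using the trivial bound $\|T^\lambda f\|_{L^\infty} \lesssim \|f\|_{L^1}$. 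The main obstacle is the bookkeeping of the parabolic rescaling: one must check that the anisotropic $x$-change of variables in which $x_n$ scales by $\rho^{-2}$ and $x'$ by $\rho^{-1}$ produces a phase still of the reduced form, and here the positivity hypothesis H2$^+$)---which guarantees that the quadratic-in-$u$ part $x_n \partial^2_{\omega\omega}h(\omega_\nu)u \cdot u$ has the same sign as $x_n$ and a definite Hessian of size comparable to the identity---is precisely what permits this genuinely parabolic rescaling.
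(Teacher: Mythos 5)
Your overall strategy is the right one---a parabolic rescaling on $\rho^{-1}$-caps after a finitely-overlapping decomposition of $\Omega$---and matches the paper's approach at that level. But there is a genuine gap in how you set up the spatial change of variables, and it breaks the claim that the rescaled phase stays in the reduced class of Lemma~\ref{reduction lemma}.

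You use the \emph{linear} map $\tilde x' := \rho^{-1}\bigl(x' + x_n\,\partial_\omega\partial_{x_n}\phi(0;\omega_\nu)\bigr)$. After the Taylor expansion the linear-in-$u$ part of the rescaled phase is $\rho^{-1}\langle \partial_\omega\phi^\lambda(x;\omega_\nu), u\rangle$, and for a reduced phase $\phi = \langle x',\omega\rangle + x_n h(\omega) + \mathcal{E}(x;\omega)$ one has
\begin{equation*}
\rho^{-1}\partial_\omega\phi^\lambda(x;\omega_\nu) = \tilde x' + \rho^{-1}\partial_\omega\mathcal{E}^\lambda(x;\omega_\nu),
\end{equation*}
so your linear substitution leaves a residual $\rho^{-1}\langle \partial_\omega\mathcal{E}^\lambda(x;\omega_\nu), u\rangle$. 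This residual is \emph{linear} in $u$ and nonlinear in $x$, and therefore it cannot be absorbed into the new error $\tilde{\mathcal{E}}$, which Lemma~\ref{reduction lemma} requires to vanish to second order in $\omega$ at $\omega = 0$. It also does not become small: since $\mathcal{E}^\lambda(x;\omega) = \lambda\mathcal{E}(x/\lambda;\omega)$ is of size comparable to $\lambda$ at the edge of the support and $\omega_\nu$ is generically away from the origin, the residual is $O(\lambda/\rho)$, not $o(1)$. You gesture at this by writing that $x_{(\nu)}$ is ``affine in $x$ up to lower-order corrections,'' but those corrections are exactly what destroys the reduced form, and the subsequent claim that ``the rescaling preserves all of the structural properties of Lemma~\ref{reduction lemma}'' does not hold. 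The paper avoids this by using the \emph{nonlinear} spatial reparametrisation $x' \mapsto \lambda\Phi(\rho x'/\lambda,\rho^2 x_n/\lambda;\bar\omega)$, where $\Phi$ is the implicit function from \eqref{change of variables} solving $\partial_\omega\phi(\Phi(z',x_n;\omega),x_n;\omega) = z'$; this makes the linear-in-$\omega$ part of the rescaled phase exactly $\langle x',\omega\rangle$ and hence restores the reduced form. Without this step the whole inductive use of the reduced class (e.g.\ in the wave packet construction, Lemma~\ref{Hormander L2}, and the transverse equidistribution estimates, all of which lean on \eqref{close to identity}--\eqref{close to identity again} and on $\mathcal{E}$ being quadratic in both variables) would not go through.

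A smaller point: your closing remark that H2$^+$) is ``precisely what permits this genuinely parabolic rescaling'' is a misattribution. Parabolic rescaling works for any nondegenerate Hessian, positive-definite or not---the paper carries out the same rescaling in the mixed-signature setting (compare Proposition~\ref{general proposition Bourgain Guth}). The positive-definite hypothesis earns its keep in the transverse equidistribution estimates of \S\ref{Transverse equidistribution section}, not here. Also note the bookkeeping is cleaner than you indicate: there is no need to appeal to Theorem~\ref{main theorem} with a smaller $\varepsilon'$ and absorb a $\rho^{\gamma}$ loss by restricting to large $\lambda$---the reduction simply shows that a bound for the subclass of phases satisfying the extra conditions yields, after decomposition and rescaling, a bound for the general reduced class with the same $\varepsilon$, since $\rho$ depends only on $\phi$ and $\varepsilon$ and the number of caps is $O_\rho(1)$.
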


\begin{proof} One may assume that the phase of $T^{\lambda}$ is given by $\phi^{\lambda}(x;\omega) := \lambda\phi(x/\lambda;\omega)$ where
\begin{equation*}
 \phi(x;\omega) = \langle x',\omega\rangle + x_nh(\omega) + \mathcal{E}(x;\omega) \quad \textrm{for $(x;\omega) \in X \times \Omega$.}
\end{equation*}
Let $\rho \geq 1$, $f \in L^1(B^{n-1})$ and cover $B^{n-1}$ by finitely-overlapping $\rho^{-1}$-balls. Provided $\rho$ is chosen to depend only on $\phi$ and $\varepsilon > 0$, by the triangle inequality one may assume that $f$ is supported on one such ball, say $B(\bar{\omega}, \rho^{-1})$ where $\bar{\omega} \in B^{n-1}$. Thus, by a linear change of variables,
\begin{equation*}
|T^{\lambda}f(x)| = \big|\int_{B^{n-1}} e^{2\pi i (\phi^{\lambda}(x;\bar{\omega} + \rho^{-1} \omega) - \phi^{\lambda}(x;\bar{\omega}))} a^{\lambda}(x;\bar{\omega} + \rho^{-1} \omega) \tilde{f}(\omega) \,\ud \omega\big|
\end{equation*}
where $\tilde{f}(\omega) := \rho^{-(n-1)}f(\bar{\omega}+\rho^{-1}\omega)$. The phase function appearing in the above oscillatory integral may be expressed as
\begin{equation*}
\rho^{-1}\langle (\partial_{\omega}\phi^{\lambda})(x;\bar{\omega}), \omega \rangle + \rho^{-2}\big(x_n\tilde{h}(\omega) + \lambda\tilde{\mathcal{E}}_1(x/\lambda; \omega)\big)
\end{equation*}
where
\begin{equation*}
\tilde{h}(\omega) := \rho^{2}\big(h(\bar{\omega}+\rho^{-1}\omega) - h(\bar{\omega}) - \rho^{-1}\langle \partial_{\omega}h(\bar{\omega}), \omega\rangle\big)
\end{equation*}
and
\begin{equation*}
\tilde{\mathcal{E}}_1(x;\omega) := \rho^{2}\big(\mathcal{E}(x;\bar{\omega}+\rho^{-1}\omega) - \mathcal{E}(x;\bar{\omega}) - \rho^{-1}\langle \partial_{\omega}\mathcal{E}(x; \bar{\omega}), \omega\rangle \big).
\end{equation*}
These definitions are motivated by the analysis of the constant coefficient case, as in the discussion prior to the statement of the lemma. Defining 
\begin{equation*}
  \tilde{\mathcal{E}}(x;\omega) := \tilde{\mathcal{E}}_1(\Phi(\rho^{-1}x', x_n;\bar{\omega}), x_n;\omega)  
\end{equation*}
where $\Phi$ is the function introduced in \eqref{change of variables}, under the change of variables
\begin{equation*}
    x' \mapsto \lambda\Phi(\rho x'/\lambda, \rho^2 x_n/\lambda; \bar{\omega}), \quad x_n \mapsto \rho^2 x_n
\end{equation*}
the phase and amplitude are transformed into $\tilde{\phi}^{\lambda/\rho^2}(x; \omega)$ and $\tilde{a}^{\lambda/\rho^2}(x;\omega)$, respectively, where
\begin{equation*}
\tilde{\phi}(x; \omega) := \langle x', \omega \rangle + x_n\tilde{h}(\omega) + \tilde{\mathcal{E}}(x; \omega)
\end{equation*}
and
\begin{equation*}
\tilde{a}(x;\omega) := a(\Phi(\rho^{-1} x', x_n;\bar{\omega}),x_n;\bar{\omega} + \rho^{-1}\omega).
\end{equation*}
In particular, defining 
\begin{equation}\label{rescaled operator}
 \tilde{T}^{\lambda/\rho^2}g(x) := \int_{\R^{n-1}} e^{2\pi i \tilde{\phi}^{\lambda/\rho^2}(x;\omega)} \tilde{a}^{\lambda/\rho^2}(x;\omega) g(\omega) \,\ud \omega
\end{equation}
it follows that 
\begin{equation*}
 \|T^{\lambda}f\|_{L^p(\R^n)} \lesssim \rho^{(n+1)/p} \|\tilde{T}^{\lambda/\rho^2}\tilde{f}\|_{L^p(\R^n)}.
\end{equation*}
It is easy to verify that the phase $\tilde{\phi}$ satisfies the conditions of Lemma~\ref{reduction lemma} and, provided $\rho$ is chosen appropriately (depending on $\phi$ and $a$), it also satisfies the additional conditions described in Lemma~\ref{second reduction lemma}. The same is true for the amplitude $\tilde{a}$, except that the $\omega$ support has now been enlarged. However, by applying a partition of unity and translation argument, it is possible to assume without loss of generality that $\tilde{a}$ satisfies the desired support condition. This facilitates the reduction.
\end{proof}

\subsection{Controlling higher order $x$ derivatives} A final, elementary scaling argument allows one to control higher order derivatives in $x$.

\begin{lemma}\label{third reduction lemma}
To prove Theorem~\ref{main theorem} for some fixed $\varepsilon > 0$ it suffices to consider the case where, in addition to the properties described in Lemma~\ref{reduction lemma} and  Lemma~\ref{second reduction lemma}, the phase satisfies
\begin{equation}\label{small x derivatives}
\|\partial_{\omega}^{\beta}\partial_x^{\alpha} \phi \|_{L^{\infty}(X \times \Omega)} < c_{\mathrm{par}} \qquad \textrm{for $2 \leq |\alpha| \leq N_{\mathrm{par}}$, $0 \leq |\beta| \leq N_{\mathrm{par}}$.}
\end{equation}
Furthermore, one may assume that the amplitude satisfies
\begin{equation*}
\|\partial_{\omega}^{\beta}\partial_x^{\alpha} a\|_{L^{\infty}(X \times \Omega)} \lesssim_{\alpha, \beta} 1 \qquad \textrm{for all $0 \leq |\alpha|, |\beta| \leq N_{\mathrm{par}}$.}
\end{equation*}
\end{lemma}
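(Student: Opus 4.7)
The plan is to mirror the parabolic rescaling proof of Lemma \ref{second reduction lemma}, now rescaling in the $x$-variable rather than the $\omega$-variable. The heuristic is that localizing $T^\lambda$ to a ball of $x$-radius $\delta \lambda$ for small $\delta \in (0,1)$ and rescaling this ball back to unit size produces a new H\"ormander-type operator with parameter $\mu = \delta\lambda$ whose phase has all $\partial_x^\alpha$ derivatives for $|\alpha| \geq 2$ damped by a factor of $\delta^{|\alpha|-1}$; this is precisely the mechanism that will enforce \eqref{small x derivatives}.

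First I would fix a small $\delta \in (0,1)$ depending on $n$, $p$, $\varepsilon$, $c_{\mathrm{par}}$ and on upper bounds for the derivatives of $\phi$. Cover $X$ by finitely-overlapping balls $\{B(\bar x_\ell, \delta)\}_\ell$ of cardinality $O(\delta^{-n})$ and use a smooth partition of unity to decompose $a = \sum_\ell a_\ell$ with $\mathrm{supp}\,a_\ell \subseteq B(\bar x_\ell, \delta) \times \Omega$, inducing $T^\lambda = \sum_\ell T_\ell^\lambda$. For each $\ell$ set $\mu := \delta\lambda$, define the modulated input $f_\ell(\omega) := e^{2\pi i \lambda\phi(\bar x_\ell;\omega)} f(\omega)$ (so $|f_\ell| = |f|$), and introduce
\[
\tilde\phi_\ell(z;\omega) := \delta^{-1}\bigl[\phi(\bar x_\ell + \delta z;\omega) - \phi(\bar x_\ell;\omega)\bigr], \qquad \tilde a_\ell(z;\omega) := a_\ell(\bar x_\ell + \delta z;\omega).
\]
The amplitude $\tilde a_\ell$ is supported in $B^n \times \Omega$, and direct substitution shows $T_\ell^\lambda f(\lambda \bar x_\ell + y) = \tilde T_\ell^\mu f_\ell(y)$, where $\tilde T_\ell^\mu$ is the H\"ormander-type operator with phase $\tilde\phi_\ell$ and amplitude $\tilde a_\ell$; consequently $\|T_\ell^\lambda f\|_{L^p(\R^n)} = \|\tilde T_\ell^\mu f_\ell\|_{L^p(\R^n)}$.

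For $|\alpha| \geq 1$ the scaling identity $\partial_\omega^\beta \partial_z^\alpha \tilde\phi_\ell(z;\omega) = \delta^{|\alpha|-1}\partial_\omega^\beta \partial_x^\alpha \phi(\bar x_\ell + \delta z;\omega)$ gives $|\partial_\omega^\beta \partial_z^\alpha \tilde\phi_\ell| = O(\delta)$ whenever $|\alpha| \geq 2$, so \eqref{small x derivatives} holds for $\delta$ small enough; the $|\alpha| = 1$ case transfers directly, and for $|\alpha| = 0$, $|\beta| \geq 3$ the Taylor identity $\partial_\omega^\beta \tilde\phi_\ell(z;\omega) = \int_0^1 \langle z, \partial_x\partial_\omega^\beta \phi(\bar x_\ell + t\delta z;\omega)\rangle\,\mathrm{d}t$ reduces matters to the $|\alpha| = 1$ case already controlled by Lemma \ref{second reduction lemma}. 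A subtlety is that the reduced form of Lemma \ref{reduction lemma} may fail: although $\tilde\phi_\ell(0;\omega) = 0 = \tilde\phi_\ell(z;0)$ are inherited, the error term $\delta^{-1}[\mathcal{E}(\bar x_\ell + \delta z;\omega) - \mathcal{E}(\bar x_\ell;\omega)]$ picks up a linear-in-$z$ coefficient $\partial_x\mathcal{E}(\bar x_\ell;\omega)$ whenever $\bar x_\ell \neq 0$. This coefficient is quadratic in $\omega$ (since $\partial_\omega^\beta \mathcal{E}(x;0) = 0$ for $|\beta| \leq 1$ forces $\partial_\omega^\beta\partial_x\mathcal{E}(x;0) = 0$ for $|\beta| \leq 1$), so it can be absorbed by a further $\omega$-change of variables $\omega = \Psi_\ell(\tilde\omega)$ exactly as in the proof of Lemma \ref{reduction lemma}, producing a reduced phase $\bar\phi_\ell$. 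Since $\Psi_\ell$ is a small perturbation of the identity with derivatives controlled by derivatives of $\phi$ at $\bar x_\ell$ uniformly in $\ell$, composition with $\Psi_\ell$ preserves all previously-established small-derivative bounds up to constant factors that are absorbed by a further shrinking of $\delta$ at the outset.

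Finally, applying the hypothesised estimate to $\bar T_\ell^\mu \bar f_\ell$ (and noting $\|\bar T_\ell^\mu\bar f_\ell\|_{L^p} = \|\tilde T_\ell^\mu f_\ell\|_{L^p}$ and $\|\bar f_\ell\|_{L^p} \sim \|f\|_{L^p}$) yields $\|T_\ell^\lambda f\|_{L^p(\R^n)} \lesssim_\varepsilon (\delta\lambda)^\varepsilon \|f\|_{L^p(B^{n-1})}$; summing over $\ell$ via the triangle inequality (or $\ell^p$-orthogonality using the bounded overlap of the $x$-supports) produces the desired bound with a constant $C_\delta$ depending only on admissible parameters. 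The main technical obstacle is verifying that the $\omega$-reduction $\Psi_\ell$ in the third paragraph preserves the size conditions of Lemmas \ref{second reduction lemma} and \ref{third reduction lemma} uniformly across the net $\{\bar x_\ell\}$; this is where the chain-rule computations and the uniform boundedness of derivatives of $\phi$ on $X$ do the work.
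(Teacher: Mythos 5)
Your proposal is correct and takes essentially the same approach as the paper: an $x$-rescaling $\phi(x;\omega)\mapsto\rho\phi(x/\rho;\omega)$ (your $\delta = 1/\rho$) to damp the higher-order $x$-derivatives by powers of $1/\rho$, followed by a partition of unity to restore the support condition. You are more explicit than the paper about the recentering and the subsequent $\omega$-change of variables needed to re-establish the normalized form of Lemma~\ref{reduction lemma} when $\bar x_\ell\neq 0$; the paper folds this into the blanket phrase "partition of unity" (invoking the localisation principle stated before Lemma~\ref{reduction lemma}), but your spelled-out version is the content it is alluding to.
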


\begin{proof} Let $T^{\lambda}$ be an operator associated to a phase $\phi$ and amplitude $a$ satisfying the conditions of Lemma~\ref{second reduction lemma}. Let $\rho \geq 1$ be a large constant, which will be chosen depending on $a$ and $\phi$, $n$ and $\varepsilon$ only, and define
\begin{equation*}
    \tilde{\phi}(x;\omega) := \rho\phi(x/\rho;\omega), \qquad \tilde{a}(x;\omega) := a(x/\rho;\omega).
\end{equation*}
One may easily verify that $\tilde{\phi}$ and $\tilde{a}$ satisfy the conditions in Lemma~\ref{second reduction lemma}, except for an enlargement of the $x$-support which may be dealt with via a partition of unity. Furthermore,
\begin{equation*}
    \|T^{\lambda}f\|_{L^p(\R^n)} = \|\tilde{T}^{\lambda/\rho}f\|_{L^p(\R^n)}
\end{equation*}
and so to prove $L^p$ estimates for $T^{\lambda}$ it suffices to prove corresponding estimates for $\tilde{T}^{\lambda}$. Finally, provided $\rho$ is suitably chosen, it follows that $\tilde{\phi}$ and $\tilde{a}$ satisfy the additional conditions in the statement of Lemma~\ref{third reduction lemma}.

\end{proof}

\begin{definition}\label{reduced definition} Henceforth $c_{\mathrm{par}} > 0$ and $N_{\mathrm{par}} \in \N$ are assumed to be fixed constants, chosen to satisfy the requirements of the forthcoming arguments. A positive-definite phase satisfying the properties of Lemma~\ref{third reduction lemma} for this choice of $c_{\mathrm{par}}$ and $N_{\mathrm{par}}$ is said to be \emph{reduced}.
\end{definition} 
This notion of reduced positive-definite phase is precisely that which appears, then undefined, in the statement of the $k$-broad estimates of Theorem~\ref{k-broad theorem}.


\subsection{Geometric consequences}

Henceforth, unless otherwise stated, all positive-definite phase functions $\phi$ are assumed to be reduced in the sense described above. The strategy is to obtain uniform estimates over this class of phases. 

By the definition of H\"ormander-type operators, for each $x \in X$ the map $\omega \mapsto \partial_x \phi(x;\omega)$ parametrises a smooth hypersurface $\Sigma_x$. In many respects, these hypersurfaces are geometrically very similar to the paraboloid $\omega \mapsto (\omega, \frac{|\omega|^2}{2})$. To see this, recall that $\Psi \colon U \to \Omega$ is a smooth function which satisfies
\begin{equation}\label{R change of variables}
\partial_{x'}\phi(x; \Psi(x; u)) = u 
\end{equation}
for all $(x; u) \in U \subset X \times \R^{n-1}$. On each of the fibres $U_x := \{u \in \R^{n-1} : (x;u) \in U\}$ of the domain $U$, the map $u \mapsto \Psi(x;u)$ is a diffeomorphism. Thus, \eqref{R change of variables} implies that $\Sigma_x$ is the graph of the function 
\begin{equation*}
h_x(u) := \partial_{x_n}\phi(x; \Psi(x;u))
\end{equation*}
over the fibre $U_x$. Each $h_x$ is a perturbation of $\frac{|u|^2}{2}$ in the following sense.  

\begin{lemma}\label{hx nearly paraboloid} The function $h_x$ satisfies $h_x(0) = 0$, $\partial_{u}h_x(0) = 0$ and
\begin{equation*}
\|\partial_{u u}^2 h_x(u) - \mathrm{I}_{n-1} \|_{\mathrm{op}} = O(c_{\mathrm{par}}) 
\end{equation*}
for all $u \in U_x$.
\end{lemma}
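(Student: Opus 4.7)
The idea is to use the structural information about $\phi$ guaranteed by the reduction (Lemma~\ref{reduction lemma}, Lemma~\ref{second reduction lemma}, Lemma~\ref{third reduction lemma}), together with the defining identity $\partial_{x'}\phi(x;\Psi(x;u)) = u$, and to compute $\partial_{u}h_x$ and $\partial^2_{uu}h_x$ by two applications of the chain rule.

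First I would record that $\Psi(x;0)=0$ for every $x\in X$. Indeed, writing $\phi$ in the reduced form $\phi(x;\omega)=\langle x',\omega\rangle + x_n h(\omega) + \mathcal{E}(x;\omega)$, one has $\partial_{x'}\phi(x;\omega) = \omega + \partial_{x'}\mathcal{E}(x;\omega)$, and the vanishing properties of $h$ and $\mathcal{E}$ in $\omega$ force $\partial_{x'}\mathcal{E}(x;0)=0$; by uniqueness in the inverse function theorem, $\omega=0$ is the unique solution of $\partial_{x'}\phi(x;\omega)=0$ near $0$. Consequently $h_x(0) = \partial_{x_n}\phi(x;0) = h(0) + \partial_{x_n}\mathcal{E}(x;0) = 0$. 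Next, the chain rule gives
\begin{equation*}
\partial_u h_x(u) = \partial_\omega\partial_{x_n}\phi\bigl(x;\Psi(x;u)\bigr)\cdot \partial_u\Psi(x;u),
\end{equation*}
and evaluating at $u=0$ the factor $\partial_\omega\partial_{x_n}\phi(x;0) = \partial_\omega h(0) + \partial_\omega\partial_{x_n}\mathcal{E}(x;0)$ vanishes again by the quadratic vanishing of $h$ and $\mathcal{E}$ in $\omega$, yielding $\partial_u h_x(0) = 0$.

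For the Hessian bound, differentiating the relation $\partial_{x'}\phi(x;\Psi(x;u))=u$ once in $u$ yields
\begin{equation*}
\partial_u\Psi(x;u) = \bigl(\partial^2_{x'\omega}\phi(x;\Psi(x;u))\bigr)^{-1},
\end{equation*}
and \eqref{close to identity} together with a Neumann-series expansion gives $\partial_u\Psi = \mathrm{I}_{n-1} + O(c_{\mathrm{par}})$. Differentiating once more, using the derivative formula for inverses and applying \eqref{close to identity again} (which says $\partial^2_{\omega\omega}\partial_{x_k}\phi = O(c_{\mathrm{par}})$ for $k<n$), one gets $\partial^2_{uu}\Psi(x;u) = O(c_{\mathrm{par}})$ as well. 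Then the chain rule applied twice to $h_x(u)=\partial_{x_n}\phi(x;\Psi(x;u))$ produces two contributions to $\partial^2_{uu}h_x(u)$: the principal one is
\begin{equation*}
(\partial_u\Psi)^{\!\top}\cdot\partial^2_{\omega\omega}\partial_{x_n}\phi(x;\Psi)\cdot(\partial_u\Psi),
\end{equation*}
which by \eqref{close to identity again} (the $k=n$ case) and the bound on $\partial_u\Psi$ equals $\mathrm{I}_{n-1}+O(c_{\mathrm{par}})$; the secondary one is $\sum_j \partial_{\omega_j}\partial_{x_n}\phi \cdot \partial^2_{uu}\Psi^j$, which by \eqref{close to identity} and the bound on $\partial^2_{uu}\Psi$ is $O(c_{\mathrm{par}})$. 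Summing the two contributions delivers $\|\partial^2_{uu}h_x(u) - \mathrm{I}_{n-1}\|_{\mathrm{op}} = O(c_{\mathrm{par}})$.

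The only slightly delicate step is the estimate $\partial^2_{uu}\Psi = O(c_{\mathrm{par}})$, since a priori one only sees that $\partial_u\Psi$ is close to $\mathrm{I}_{n-1}$; the point is that the second derivative of $\Psi$ picks up a factor of $\partial^2_{\omega\omega}\partial_{x'}\phi$, which is small (not just bounded) thanks to \eqref{close to identity again} for $k<n$. This is precisely the place where the full strength of the reduced form is used, and it is the step I expect to require the most care in writing out cleanly.
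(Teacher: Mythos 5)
Your proof is correct and follows essentially the same route as the paper: establish $\Psi(x;0)=0$, $\partial_u\Psi=\mathrm{I}_{n-1}+O(c_{\mathrm{par}})$ and $\partial^2_{uu}\Psi=O(c_{\mathrm{par}})$, then compute $\partial^2_{uu}h_x$ via the chain rule and split it into the conjugated Hessian term (which is $\mathrm{I}_{n-1}+O(c_{\mathrm{par}})$ by \eqref{close to identity again} with $k=n$) plus the secondary term involving $\partial_\omega\partial_{x_n}\phi\cdot\partial^2_{uu}\Psi$ (which is $O(c_{\mathrm{par}})$). The only cosmetic difference is in how the bound on $\partial^2_{uu}\Psi$ is obtained: you differentiate the inverse-matrix formula for $\partial_u\Psi$, while the paper differentiates the defining identity $\partial_{x_k}\phi(x;\Psi(x;u))=u_k$ twice; both turn on the same input, namely that $\partial^2_{\omega\omega}\partial_{x_k}\phi=O(c_{\mathrm{par}})$ for $k<n$.
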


Before proving the lemma, some simple properties of $\Psi$ are recorded. By \eqref{modulation invariance} it follows that $\Psi(x; 0) = 0$. The implicit function theorem implies that $\partial_{u}\Psi(x;u) = \partial_{x'\omega}^2 \phi(x;\Psi(x;u))^{-1}$ so that, by \eqref{close to identity} and the local Lipschitz continuity of taking matrix inverses,
\begin{equation}\label{Psi bound 1}
\|\partial_{u}\Psi(x;u) - \mathrm{I}_{n-1}\|_{\mathrm{op}} = O(c_{\mathrm{par}}).
\end{equation}
As a consequence of this identity (and choosing $c_{\mathrm{par}}$ to be sufficiently small),
\begin{equation*}
|\Psi(x;u) - \Psi(x;u')| \sim |u - u'| \qquad \textrm{for all $u, u' \in U_x$,}
\end{equation*}
where the implied constant depends only on $n$. In addition, if $1 \leq k \leq n-1$, then by twice differentiating $\partial_{x_k}\phi(x; \Psi(x; u)) = u_k$ in the $u$ variables one may deduce that the $k$-th co-ordinate $\Psi_k$ of $\Psi$ satisfies
\begin{equation}\label{Psi bound 3}
\|\partial_{uu}^2\Psi_k(x;u)\|_{\mathrm{op}} = O(c_{\mathrm{par}}).
\end{equation}
The stated properties of $h_x$ now easily follow. 

\begin{proof}[Proof (of Lemma~\ref{hx nearly paraboloid})] By \eqref{modulation invariance} one has
\begin{equation*}
h_x(0) = \partial_{x_n}\phi(x; \Psi(x;0)) = \partial_{x_n}\phi(x; 0) = 0.
\end{equation*}
Similarly, $\partial_{u}h_x(0) = 0$ since $\partial_{\omega}\partial_{x_n}\phi(x; 0) = 0$. Finally, 
\begin{equation*}
\partial_{uu}^2 h_x(u) = (\partial_{u}\Psi)^{\top}(x;u) (\partial_{\omega \omega}^2 \partial_{x_n}\phi)(x; \Psi(x;u)) \partial_{u}\Psi(x;u) + E(x;u)
\end{equation*}
where $E(x; \omega)$ is the $n-1 \times n-1$ matrix whose $(i,j)$th entry is given by
\begin{equation*}
E_{ij}(x; u) = \langle (\partial_{\omega}\partial_{x_n}\phi)(x; \Psi(x;u)), \partial_{u_iu_j}\Psi(x;u) \rangle. 
\end{equation*}
By \eqref{close to identity} and \eqref{Psi bound 3} it follows that $\|E(x;u)\|_{\mathrm{op}} = O(c_{\mathrm{par}})$, whilst \eqref{close to identity again} and multiple applications of  \eqref{Psi bound 1} imply that
\begin{equation*}
\|(\partial_{u}\Psi)^{\top}(x;u) (\partial_{\omega \omega}^2 \partial_{x_n}\phi)(x; \Psi(x;u)) \partial_{u}\Psi(x;u) - \mathrm{I}_{n-1}\|_{\mathrm{op}} = O(c_{\mathrm{par}}). 
\end{equation*}
This concludes the proof. 
\end{proof}

 
Similar reasoning can be used to provide useful uniform estimates for the generalised Gauss map associated to $T^{\lambda}$. To state the result, let 
\begin{equation*}
X^{\lambda} := \{x \in \R^n : x/\lambda \in X\}    
\end{equation*}
 denote the $\lambda$-dilate of $X$, so that $a^{\lambda}$ is supported in $X^{\lambda} \times \Omega$.

\begin{lemma}\label{Gauss Lipschitz} For all $x, \bar{x} \in X^{\lambda}$ and $\omega, \bar{\omega} \in \Omega$ the estimates
\begin{equation*}
\angle(G^{\lambda}(x; \omega), G^{\lambda}(x; \bar{\omega})) \sim |\omega - \bar{\omega}| \quad \textrm{and} \quad
\angle(G^{\lambda}(x;\omega), G^{\lambda}(\bar{x};\omega))\lesssim \lambda^{-1}|x-\bar{x}|
\end{equation*}
hold with implied constants which depend only on the dimension. 
\end{lemma}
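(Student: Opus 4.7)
The plan is to reduce both estimates to regularity and bi-Lipschitz statements about explicit maps by exploiting the graph parameterisation of $\Sigma_x$ developed in the discussion preceding the lemma. First, I will establish the identification
\begin{equation*}
G(x;\omega) = \pm N_x\bigl(\partial_{x'}\phi(x;\omega)\bigr), \qquad N_x(u) := \frac{(-\partial_u h_x(u),1)}{\sqrt{1+|\partial_u h_x(u)|^2}},
\end{equation*}
where $h_x$ is the graphing function for $\Sigma_x$ from Lemma~\ref{hx nearly paraboloid} and the sign is fixed by orientation. This follows by expanding the wedge $G_0(x;\omega) = \bigwedge_{j=1}^{n-1}\partial_{\omega_j}\partial_x\phi$ after reparameterising $\omega = \Psi(x;u)$: in the $u$-parameterisation the tangent vectors of $\Sigma_x$ are $e_j + (\partial_{u_j} h_x)e_n$, whose wedge is exactly $(-\partial_u h_x, 1)$, while the change of parameter contributes the Jacobian $\det \partial_u \Psi$ which is absorbed by the normalisation.

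With this formula in hand, the first estimate follows by the chain rule:
\begin{equation*}
\partial_\omega G(x;\omega) = \partial_u N_x(u)\big|_{u = \partial_{x'}\phi(x;\omega)} \cdot \partial^2_{\omega x'}\phi(x;\omega).
\end{equation*}
The right-hand factor is $O(c_{\mathrm{par}})$-close to $\mathrm{I}_{n-1}$ by \eqref{close to identity}. A direct differentiation of $N_x$ shows that, to leading order, $\partial_u N_x(u)$ is obtained by composing $-\partial^2_{uu}h_x(u)$ with the inclusion of $\R^{n-1}$ into the first $n-1$ coordinates of $\R^n$; Lemma~\ref{hx nearly paraboloid} then gives $\partial^2_{uu}h_x$ close to $\mathrm{I}_{n-1}$. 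Both factors are therefore linear maps with operator norms and inverses close to $1$. Since $G$ takes values in a small neighbourhood of $e_n$ on $X \times \Omega$, angles between values of $G$ are comparable to their Euclidean distances on the sphere, and integrating $\partial_\omega G$ along the straight segment from $\omega$ to $\bar\omega$ (contained in $\Omega$ by convexity) gives $\angle(G(x;\omega), G(x;\bar\omega)) \sim |\omega - \bar\omega|$. As $G^\lambda(x;\omega) = G(x/\lambda;\omega)$ depends on $\omega$ in the same way as $G$, the estimate transfers directly.

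For the second estimate, the reduced phase hypothesis (Lemma~\ref{third reduction lemma}) ensures that all relevant $x$-derivatives of $\phi$, $\Psi$, and $h_x$ are uniformly bounded on $X\times \Omega$, so the chain rule applied to the formula for $G$ yields $|\partial_x G(x;\omega)| \lesssim 1$ there. Applying the mean value inequality to $x \mapsto G(x/\lambda;\omega)$ then gives $|G^\lambda(x;\omega) - G^\lambda(\bar x;\omega)| \lesssim \lambda^{-1}|x - \bar x|$, and since both vectors lie close to $e_n$ this translates into the claimed bound on the angle.

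The main technical point is the verification of the explicit formula for $G$ in terms of $N_x$; once that is established, both estimates reduce to routine chain-rule computations combined with the quantitative information already assembled in \eqref{close to identity}, \eqref{close to identity again}, and Lemma~\ref{hx nearly paraboloid}, so no further analytic input is needed.
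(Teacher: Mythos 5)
The paper omits its own proof, remarking only that it is ``an elementary calculus exercise in the style of the proof of Lemma~\ref{hx nearly paraboloid}.'' Your argument is exactly in that spirit — graph reparameterisation via $\Psi$, the identity $G(x;\omega)=\pm N_x(\partial_{x'}\phi(x;\omega))$, chain rule, and the quantitative bounds from \eqref{close to identity}, \eqref{Psi bound 1} and Lemma~\ref{hx nearly paraboloid} — and it is correct; the only tiny imprecision is that the wedge picks up $\det \partial^2_{\omega x'}\phi$ (equivalently $(\det\partial_u\Psi)^{-1}$) rather than $\det\partial_u\Psi$, which is harmless since it cancels under normalisation.
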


The proof, which is an elementary calculus exercise in the style of the proof of Lemma~\ref{hx nearly paraboloid}, is omitted. 

If the $x$ parameter is restricted to a relatively small ball, then the second inequality in Lemma~\ref{Gauss Lipschitz} often allows the Gauss map to be treated as if it were constant in $x$. This is consistent with the idea that $T^{\lambda}$ is a small perturbation of a constant coefficient operator and can therefore be effectively approximated by constant coefficient operators at certain spatial scales. 




 \section{Basic analytic preliminaries}\label{Wave packet section}




\subsection{Wave packet decomposition} 

Throughout the following sections $\varepsilon > 0$ is a fixed small parameter and $\delta > 0$ is a tiny number satisfying\footnote{For $A, B \geq 0$ the notation $A \ll B$ or $B \gg A$ is used to denote that $A$ is `much smaller' than $B$; a more precise interpretation of this is that $A \leq C_{\varepsilon}^{-1}B$ for some constant $C_{\varepsilon} \geq 1$ which can be chosen to be large depending on $n$ and $\varepsilon$.} $\delta \ll \varepsilon$ and $\delta \sim_{\varepsilon} 1$.

A wave packet decomposition is carried out with respect to some spatial parameter $1\ll R \ll \lambda$. Cover $B^{n-1}$ by finitely-overlapping balls $\theta$ of radius $R^{-1/2}$ and let $\psi_{\theta}$ be a smooth partition of unity adapted to this cover. These $\theta$ will frequently be referred to as $R^{-1/2}$-\emph{caps}. Cover $\R^{n-1}$ by finitely-overlapping balls of radius $CR^{(1+\delta)/2}$ centred on points belonging to the lattice $R^{(1+\delta)/2}\Z^{n-1}$. By Poisson summation one may find a bump function adapted to $B(0, R^{(1+\delta)/2})$ so that the functions $\eta_v(z) := \eta(z- v)$ for $v \in R^{(1+\delta)/2}\Z^{n-1}$ form a partition of unity for this cover. Let $\mathbb{T}$ denote the collection of all pairs $(\theta, v)$. Thus, for $f \colon \R^{n-1} \to \C$ with support in $B^{n-1}$ and belonging to some suitable \emph{a priori} class one has
\begin{equation*}
f = \sum_{(\theta, v) \in \mathbb{T}} (\eta_v(\psi_{\theta}f)\;\widecheck{}\;)\;\widehat{}\; = \sum_{(\theta, v) \in \mathbb{T}} \widehat{\eta_v} \ast(\psi_{\theta}f).
\end{equation*} 
For each $R^{-1/2}$-cap $\theta$ let $\omega_{\theta} \in B^{n-1}$ denote its centre. Choose a real-valued smooth function $\tilde{\psi}$ so that the function $\tilde{\psi}_{\theta}(\omega) := \tilde{\psi}(R^{1/2}(\omega - \omega_{\theta}))$ is supported in $\theta$ and $\tilde{\psi}_{\theta}(\omega) = 1$ whenever $\omega$ belongs to a $cR^{-1/2}$ neighbourhood of the support of $\psi_{\theta}$ for some small constant $c > 0$. Finally, define
\begin{equation*}
f_{\theta, v} := \tilde{\psi}_{\theta} \cdot [\widehat{\eta_v} \ast (\psi_{\theta}f)]. 
\end{equation*}

\begin{definition}  The notation $ \mathrm{RapDec}(R)$ is used to denote any quantity $C_R$ which is rapidly decaying in $R$. More precisely, $C_R = \mathrm{RapDec}(R)$ if 
\begin{equation*}
    |C_R| \lesssim_{\varepsilon} R^{-N} \qquad \textrm{for all $N \leq \sqrt{N_{\mathrm{par}}}$.}
\end{equation*}
Here $N_{\mathrm{par}}$ is the large integer appearing in the definition of reduced phase from Definition~\ref{reduced definition}. By choosing $N_{\mathrm{par}}$ large one may assume, say,
\begin{equation*}
    \varepsilon^{-10^n} \ll \sqrt{N_{\mathrm{par}}} \ll \varepsilon^{10^n} N_{\mathrm{par}}.
\end{equation*} 
\end{definition}

The function $\widehat{\eta_v}(\omega)$ is rapidly decaying for $|\omega| \gtrsim R^{-(1+\delta)/2}$ and, consequently, 
\begin{equation*}
\|f_{\theta, v} - \widehat{\eta_v} \ast (\psi_{\theta}f)\|_{L^{\infty}(\R^{n-1})} \leq \mathrm{RapDec}(R)\|f\|_{L^2(B^{n-1})}.
\end{equation*}
 It follows that 
\begin{equation*}
\|f - \sum_{\theta, v}f_{\theta, v}\|_{L^{\infty}(\R^{n-1})} \leq \mathrm{RapDec}(R)\|f\|_{L^2(B^{n-1})}.
\end{equation*}

The functions $f_{\theta, v}$ are almost orthogonal: if $\mathbb{S} \subseteq \T$, then
\begin{equation*}
\big\| \sum_{(\theta, v) \in \mathbb{S}} f_{\theta, v} \big\|_{L^2(\R^{n-1})}^2 \sim  \sum_{(\theta, v) \in \mathbb{S}} \|f_{\theta, v} \|_{L^2(\R^{n-1})}^2.
\end{equation*}

Let $T^{\lambda}$ be an operator with reduced phase $\phi$ and amplitude $a$ supported in $X\times \Omega$ as in Lemma~\ref{reduction lemma}. For $(\theta, v)\in \T$ define the curve $\gamma_{\theta, v}^1 \colon I_{\theta,v}^{1} \to \R^{n-1}$ by setting $\gamma_{\theta, v}^1(t) := \Phi(v, t; \omega_{\theta})$, where $\Phi$ is the function introduced in $\S$\ref{Reductions section} and 
\begin{equation*}
I_{\theta,v}^{1} := \big\{ t \in X_n : \partial_{\omega}\phi(x', t; \omega_{\theta}) = v \textrm{ for some } x' \in X' \big\}.
\end{equation*}
 Thus, $\partial_{\omega} \phi(\gamma_{\theta,v}^1(t), t;\omega_{\theta}) = v$ for all $t \in I_{\theta,v}^{1}$. Moreover, the rescaled curve $\gamma^\lambda_{\theta,v}(t) :=\lambda \gamma^1_{\theta,v/\lambda}\left(t/\lambda\right)$ satisfies 
\begin{equation*}
\partial_{\omega}\phi^\lambda( \gamma^\lambda_{\theta, v}(t), t; \omega_{\theta}) = v \quad \textrm{for all $t \in I_{\theta,v}^{\lambda} := \{t \in \R : t/\lambda \in I_{\theta,v/\lambda}^{1}\}$.}
\end{equation*}
Let $\Gamma_{\theta, v}^{\lambda} \colon I_{\theta,v}^{\lambda} \to \R^n$ denote the graphing map $\Gamma_{\theta, v}^{\lambda}(t) := (\gamma_{\theta,v}^{\lambda}(t),t)$; by an abuse of notation $\Gamma_{\theta,v}^{\lambda}$ will also be used to denote the image of this mapping. The geometry of the curves $\Gamma_{\theta,v}^{\lambda}$ is related to the generalised Gauss map $G^{\lambda}$ by the following elementary lemma.

\begin{lemma} The tangent space $T_{\Gamma^{\lambda}_{\theta,v}(t)}\Gamma^{\lambda}_{\theta,v}$ lies in the direction of the unit vector $G^{\lambda}(\Gamma^{\lambda}_{\theta,v}(t); \omega_{\theta})$ for all $t \in I^{\lambda}_{\theta,v}$.
\end{lemma}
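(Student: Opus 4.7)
The plan is a direct calculation. The tangent space of $\Gamma_{\theta,v}^\lambda$ at $\Gamma_{\theta,v}^\lambda(t)$ is one-dimensional, spanned by
$$\partial_t\Gamma_{\theta,v}^\lambda(t) = \bigl(\partial_t\gamma_{\theta,v}^\lambda(t),\,1\bigr),$$
so it suffices to show this tangent vector is parallel to $G^\lambda(\Gamma_{\theta,v}^\lambda(t);\omega_\theta)$. Since $G^\lambda$ is, by definition, the unit vector proportional to $\bigwedge_{j=1}^{n-1}\partial_{\omega_j}\partial_x\phi^\lambda(\,\cdot\,;\omega_\theta)$ (the $\lambda$-rescaling leaves the direction of this wedge unchanged, because $\partial_x\phi^\lambda(x;\omega)=\partial_x\phi(x/\lambda;\omega)$), it is the (up to sign) unique unit vector orthogonal to each of the $n-1$ vectors $\partial_{\omega_j}\partial_x\phi^\lambda(\Gamma_{\theta,v}^\lambda(t);\omega_\theta)$ for $j=1,\dots,n-1$. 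Hypothesis H1) ensures these vectors are linearly independent, so the orthogonal complement is indeed one-dimensional.

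Thus the task reduces to verifying that $\partial_t\Gamma_{\theta,v}^\lambda(t)$ is orthogonal to each $\partial_{\omega_j}\partial_x\phi^\lambda(\Gamma_{\theta,v}^\lambda(t);\omega_\theta)$. To do this, I differentiate in $t$ the defining identity
$$\partial_\omega\phi^\lambda(\Gamma_{\theta,v}^\lambda(t);\omega_\theta)=v,$$
which is valid for all $t\in I_{\theta,v}^\lambda$. The chain rule yields, for each $j=1,\dots,n-1$,
$$\bigl\langle\partial_{\omega_j}\partial_x\phi^\lambda(\Gamma_{\theta,v}^\lambda(t);\omega_\theta),\,\partial_t\Gamma_{\theta,v}^\lambda(t)\bigr\rangle=0.$$
This is exactly the required orthogonality, so $\partial_t\Gamma_{\theta,v}^\lambda(t)$ lies in the one-dimensional line spanned by $G^\lambda(\Gamma_{\theta,v}^\lambda(t);\omega_\theta)$, completing the proof.

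There is essentially no obstacle here; the only small point that warrants checking is the compatibility of the rescaling between $\phi$ and $\phi^\lambda$ (so that $G^\lambda$ defined via $G(x/\lambda;\omega)$ agrees with the Gauss map of the mixed Hessian of $\phi^\lambda$), which follows immediately from $\partial_x\phi^\lambda(x;\omega)=\partial_x\phi(x/\lambda;\omega)$ and the fact that normalising a wedge product is insensitive to positive scalar multiples of its factors. Everything else is a one-line application of the chain rule to the defining relation $\partial_\omega\phi^\lambda(\Gamma_{\theta,v}^\lambda(t);\omega_\theta)=v$.
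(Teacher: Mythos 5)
Your argument is correct and is essentially the same as the paper's: both differentiate the defining relation $\partial_\omega\phi^\lambda(\Gamma_{\theta,v}^\lambda(t);\omega_\theta)=v$ in $t$ and then identify the resulting kernel condition with the wedge-product/Gauss-map characterization of $G^\lambda$. You spell out the intermediate orthogonality and rank steps that the paper leaves implicit, but the route is identical.
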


\begin{proof} Differentiating the equation $\partial_{\omega} \phi(\Gamma_{\theta,v}^{\lambda}(t);\omega_{\theta}) = v$, it follows that 
\begin{equation*}
    \partial_{\omega x}^2 \phi^{\lambda}(\Gamma_{\theta,v}^{\lambda}(t);\omega_{\theta})(\Gamma_{\theta,v}^{\lambda})'(t) = 0.
\end{equation*}
 Thus, $(\Gamma_{\theta,v}^{\lambda})'(t)$ must be parallel to $G^{\lambda}(\Gamma^{\lambda}_{\theta,v}(t); \omega_{\theta})$ since, by definition, the latter vector spans the kernel of $\partial_{\omega x}^2 \phi^{\lambda}(\Gamma_{\theta,v}^{\lambda}(t);\omega_{\theta})$. 
\end{proof}

Define the curved $R^{1/2+\delta}$-tube
\begin{equation*}
T_{\theta,v} := \big\{ (x',x_n) \in B(0,R) : x_n \in I_{\theta, v}^{\lambda} \textrm{ and } |x' - \gamma^\lambda_{\theta, v}(x_n)| \leq R^{1/2 + \delta} \big\}.
\end{equation*}
The curve $\Gamma_{\theta,v}^{\lambda}$ is referred to as the \emph{core} of $T_{\theta,v}$. Observe that, since $\phi$ is of the reduced form defined in $\S$\ref{Reductions section}, one has
\begin{equation}\label{wave packet concentration 0}
|x' - \gamma^\lambda_{\theta, v}(x_n)|\sim |\partial_\omega\phi^\lambda\left(x;\omega_\theta\right)-v|,
\end{equation}
for all $x =(x',x_n)\in X^{\lambda}$ with $x_n \in I_{\theta,v}^{\lambda}$ (uniformly in $\lambda$).

\begin{example} Let $\phi^{\lambda}(x; \omega) := \langle x', \omega \rangle + x_nh(\omega)$ and observe that $\gamma^\lambda_{\theta, v}(t) = v - t\partial_{\omega}h(\omega_{\theta})$ parametrises a straight line through $v$ in the direction of $\partial_{\omega}h(\omega_{\theta})$. The tube is given by
\begin{equation*}
T_{\theta,v} = \big\{ x \in B(0,R) : |x' + x_n \partial_{\omega}h(\omega_{\theta}) - v| \leq R^{1/2 + \delta} \big\},
\end{equation*}
which agrees with those studied in the case of the extension operator. 
\end{example}

\begin{lemma}\label{wave packet concentration lemma} If $1 \ll R \ll \lambda$ and $x \in B(0,R) \setminus T_{\theta, v}$, then 
\begin{equation}\label{wave packet concentration equation}
|T^\lambda f_{\theta, v}(x)| \leq (1 + R^{-1/2}|\partial_{\omega}\phi^{\lambda}(x;\omega_{\theta}) - v|)^{-(n+1)}\mathrm{RapDec}(R)\|f\|_{L^2(B^{n-1})}. 
\end{equation}
\end{lemma}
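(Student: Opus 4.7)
The aim is a non-stationary phase bound for the oscillatory integral defining $T^{\lambda}f_{\theta,v}(x)$, driven by the observation that, by \eqref{wave packet concentration 0}, $x \notin T_{\theta,v}$ forces
\begin{equation*}
M := |\partial_{\omega}\phi^{\lambda}(x;\omega_{\theta}) - v| \gtrsim R^{1/2+\delta}.
\end{equation*}
Thus the phase $\Phi(\omega) := \phi^{\lambda}(x;\omega) - \langle v, \omega\rangle$ should have large gradient throughout the cap $\theta$, so repeated integration by parts will yield the required decay.

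\textbf{Step 1 (Unfold and rescale).} Using the definition of $f_{\theta,v}$ and $\widehat{\eta_v}(\xi) = e^{-2\pi i \langle v, \xi\rangle}\hat{\eta}(\xi)$, one rewrites
\begin{equation*}
T^{\lambda}f_{\theta,v}(x) = \int e^{2 \pi i \langle v, \omega'\rangle} \psi_{\theta}(\omega') f(\omega') K(x; \omega')\,\ud \omega'
\end{equation*}
where $K(x;\omega') := \int e^{2\pi i \Phi(\omega)} a^{\lambda}(x;\omega) \tilde{\psi}_{\theta}(\omega)\hat{\eta}(\omega-\omega')\,\ud\omega$. After the substitution $u = R^{1/2}(\omega - \omega_{\theta})$ this becomes an oscillatory integral on the unit scale with phase $\tilde{\Phi}(u) := \Phi(\omega_{\theta} + R^{-1/2}u)$ and $\nabla_u \tilde{\Phi}(u) = R^{-1/2}[\partial_{\omega}\phi^{\lambda}(x;\omega_{\theta}+R^{-1/2}u) - v]$.

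\textbf{Step 2 (Phase and amplitude estimates).} The reduced form of $\phi$ from Lemma~\ref{third reduction lemma}, combined with $x \in B(0,R)$, gives $\|\partial_{\omega\omega}^2\phi^{\lambda}(x;\,\cdot\,)\|_{\mathrm{op}} \lesssim R$ and $|\partial_{\omega}^{\beta}\phi^{\lambda}(x;\,\cdot\,)| \lesssim R c_{\mathrm{par}}$ for $3 \leq |\beta| \leq N_{\mathrm{par}}$, using the modulation identity $\phi(0;\omega) = 0$ and the smallness of $\|\partial_{\omega}^{\beta}\partial_x^{\alpha}\phi\|_{\infty}$. A Taylor expansion then yields, for $|u| \lesssim 1$,
\begin{equation*}
|\nabla_u\tilde{\Phi}(u)| \geq R^{-1/2}M/2 \qquad \text{and} \qquad |\partial_u^{\beta}\tilde{\Phi}(u)| = O(1) \text{ for } 2 \leq |\beta| \leq N_{\mathrm{par}},
\end{equation*}
where the lower bound uses that the variation of $\partial_{\omega}\phi^{\lambda}$ across the cap is at most $O(R^{1/2}) \ll M$.

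\textbf{Step 3 (Iterated integration by parts).} Apply the standard operator $L = (2\pi i)^{-1} \nabla_u\tilde{\Phi}/|\nabla_u\tilde{\Phi}|^2 \cdot \nabla_u$ together with its formal adjoint $L^*$. Each iteration contributes a factor of size $(R^{-1/2}M)^{-1}$ together with a derivative of the amplitude; derivatives of $\tilde{\psi}_{\theta}$ and $a^\lambda$ cost nothing at unit scale, while derivatives of $\hat{\eta}$ (the Fourier transform of a bump on scale $R^{(1+\delta)/2}$) cost an additional factor of $R^{(1+\delta)/2} \cdot R^{-1/2} = R^{\delta/2}$ per derivative in $u$. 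Since $M \gtrsim R^{1/2+\delta}$, the net gain per iteration is bounded by $R^{-\delta/2}$. Running $N = n+1+K$ iterations produces the pointwise estimate
\begin{equation*}
|K(x; \omega')| \lesssim_N (1 + R^{-1/2}M)^{-(n+1)} R^{-K\delta/2} |h_N(\omega' - \omega_{\theta})|,
\end{equation*}
where $h_N$ inherits the $L^2$-bound $\|h_N\|_{L^2(\R^{n-1})} = O(1)$ from the Schwartz decay of $\hat{\eta}$ (crucially $h_N$ stays concentrated near $\omega' = \omega_{\theta}$ on scale $R^{-(1+\delta)/2}$).

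\textbf{Step 4 (Closing in $\omega'$).} Cauchy--Schwarz in $\omega'$ then gives
\begin{equation*}
|T^{\lambda}f_{\theta,v}(x)| \lesssim (1 + R^{-1/2}M)^{-(n+1)} R^{-K\delta/2} \|\psi_{\theta}f\|_{L^2} \|h_N\|_{L^2},
\end{equation*}
and choosing $K$ large (still with $N \leq \sqrt{N_{\mathrm{par}}}$) yields the claimed estimate \eqref{wave packet concentration equation}. The main technical obstacle is bookkeeping during Step 3: one must verify that the $R^{\delta/2}$-growth per $\hat{\eta}$-derivative is outweighed by the $R^{-1/2}M \gtrsim R^{\delta}$-gain per integration by parts, and that derivatives of $1/|\nabla_u\tilde{\Phi}|^2$ (controlled by the $O(1)$-Hessian estimate of Step 2) do not spoil the induction.
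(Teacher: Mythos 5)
Your proposal is correct in outline but takes a genuinely different route from the paper's. You keep the convolution kernel $\hat\eta$ inside the oscillatory integral, rescale in $\omega$, and then track the cost of each $\hat\eta$-derivative (a factor $R^{\delta/2}$, since $\hat\eta$ lives at scale $R^{-(1+\delta)/2}$ and the chain rule for the substitution $u=R^{1/2}(\omega-\omega_\theta)$ supplies $R^{-1/2}$) against the gain per integration by parts ($|\nabla_u\tilde\Phi|^{-1}\lesssim R^{-\delta}$). The paper instead applies Plancherel \emph{before} any stationary-phase analysis, rewriting
\begin{equation*}
T^\lambda f_{\theta,v}(x) = \int_{\R^{n-1}} \eta_v \cdot (\psi_\theta f)\;\widecheck{}\;\cdot \overline{\check{G}_x},
\end{equation*}
so that $\eta_v$ enters only as a bounded cutoff on the dual side and is never differentiated; the non-stationary-phase analysis then falls on $\check{G}_x(z)$, whose amplitude $a_x^{\lambda,R}$ has $O(1)$ derivatives by Lemma~\ref{third reduction lemma}. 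This is what Plancherel buys: cleaner bookkeeping. Your route also closes, as you flag, because the margin $R^\delta$ in the gradient lower bound beats the loss $R^{\delta/2}$ per $\hat\eta$-derivative; but note that after $N = n+1+K$ iterations the prefactor you actually produce is $(1+R^{-1/2}M)^{-(n+1)}R^{(n+1)\delta/2-K\delta/2}$, not $(1+R^{-1/2}M)^{-(n+1)}R^{-K\delta/2}$ — the extra $R^{(n+1)\delta/2}$ is harmless once $K$ is large, but the displayed estimate in Step~3 should reflect it.

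There is one genuine gap at the outset. The implication $x\notin T_{\theta,v}\Rightarrow M\gtrsim R^{1/2+\delta}$ does not follow from \eqref{wave packet concentration 0} alone: that relation is asserted only for $x_n\in I_{\theta,v}^\lambda$, and the condition $x_n\in I_{\theta,v}^\lambda$ is itself built into the definition of $T_{\theta,v}$, so $x\notin T_{\theta,v}$ splits into two cases. When $x_n\notin I_{\theta,v}^\lambda$ one needs a separate argument, which the paper supplies using $1\ll R\ll\lambda$: the image of $X'$ under $z'\mapsto\lambda\partial_\omega\phi(z',x_n/\lambda;\omega_\theta)$ contains the ball $B(0,c\lambda)$, so $x_n\notin I_{\theta,v}^\lambda$ forces $|v|\gtrsim\lambda$, while $|\partial_\omega\phi^\lambda(x;\omega_\theta)|\lesssim R$, and hence $M\gtrsim\lambda\gg R^{1/2+\delta}$. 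Without this case distinction, Step~1 of your proposal is incomplete.
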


\begin{proof} Observe that 
\begin{equation*}
T^\lambda f_{\theta,v}(x) = \int_{\R^{n-1}} \widehat{\eta_v} \ast (\psi_{\theta}f) \cdot \overline{G_x}
\end{equation*}
where 
\begin{equation*}
G_x(\omega) := e^{-2\pi i \phi^\lambda(x;\omega)} a^\lambda (x; \omega) \tilde{\psi}_{\theta}(\omega)
\end{equation*}
and so, by Plancherel,
\begin{equation*}
T^\lambda f_{\theta,v}(x) = \int_{\R^{n-1}} \eta_v \cdot (\psi_{\theta}f)\;\widecheck{}\; \cdot \overline{\check{G}_x}.
\end{equation*}
By a simple change of variables, if one defines the phase and amplitude functions
\begin{align*}
    \phi_{x,z}^{\lambda,R}(\omega) &:= R^{1/2}\phi^\lambda (x;\omega_{\theta} + R^{-1/2}\omega) - \langle z, \omega \rangle, \\
    a_{x}^{\lambda,R}(\omega) &:= a^\lambda(x; \omega_{\theta} + R^{-1/2}\omega) \tilde{\psi}(\omega), 
\end{align*} 
then 
\begin{equation*}
|\check{G}_x(z)| = R^{-(n-1)/2} \big|\int_{\R^{n-1}} e^{-2 \pi i R^{-1/2}\phi_{x,z}^{\lambda,R}(\omega)} a_{x}^{\lambda,R}(\omega)\,\ud \omega \big|.
\end{equation*}
This integral is analysed using (non-)stationary phase. 

\begin{claim} Fixing $x \in B(0,R) \setminus T_{\theta,v}$, $z \in \mathrm{supp}\,\eta_v$ and $R \gg 1$, the estimates
\begin{enumerate}[i)]
    \item $R^{-1/2}|\partial_{\omega} \phi_{x,z}^{\lambda,R}(\omega)| \sim R^{-1/2}|\partial_{\omega}\phi^{\lambda}(x;\omega_{\theta}) - v| \gtrsim R^{\delta}$,
    \item $|\partial_{\omega}^{\alpha} \phi_{x,z}^{\lambda,R}(\omega)| \lesssim |\partial_{\omega} \phi_{x,z}^{\lambda,R}(\omega)|$ for all $2 \leq |\alpha| \leq N_{\mathrm{par}}$,
    \item $|\partial_{\omega}^{\alpha} a_{x}^{\lambda,R}(\omega)| \lesssim_{\varepsilon} 1$ for all $|\alpha| \leq N_{\mathrm{par}}$
\end{enumerate}
hold on $\mathrm{supp}\,a_x^{\lambda,R}$.
\end{claim}

Once the claim is established, repeated integration-by-parts yields
\begin{equation}\label{wave packet concentration 1}
|\check{G}_x(z)| = (1 + R^{-1/2}|\partial_{\omega}\phi^{\lambda}(x;\omega_{\theta}) - v|)^{-(n+1)}\mathrm{RapDec}(R).
\end{equation}
Such integration-by-parts or non-stationary phase arguments are standard but, for the reader's convenience (and since arguments of this kind will feature repeatedly in the article), the details are appended. The precise result used here is Lemma~\ref{integration-by-parts lemma}. The desired inequality \eqref{wave packet concentration equation} is an immediate consequence of \eqref{wave packet concentration 1} together with Young's inequality and Plancherel's theorem. 

It remains to establish the claim. Here the stated uniformity in the estimates is a consequence of the reductions made in \S\ref{Reductions section}. In view of Lemma~\ref{second reduction lemma}, the bound iii) for the amplitude is immediate and so it remains to show the bounds for the phase.

\subsubsection*{Proof of i)} Computing the derivative of the phase function,
\begin{align*}
\partial_{\omega} \phi_{x,z}^{\lambda,R}(\omega) &= \partial_{\omega}\phi^\lambda(x;\omega_{\theta} + R^{-1/2}\omega) -  z \\
&= \big[\partial_{\omega}\phi^\lambda(x;\omega_{\theta}) - v\big] + [v - z] + \big[\partial_{\omega}\phi^\lambda(x;\omega_{\theta} + R^{-1/2}\omega) - \partial_{\omega}\phi^\lambda(x;\omega_{\theta})\big].
\end{align*}
Observe that $|v - z| \lesssim R^{(1+\delta)/2}$ whenever $z \in \mathrm{supp}\,\eta_v$. Moreover,
\begin{equation*}
|\partial_{\omega_j}\phi^\lambda(x;\omega_{\theta} + R^{-1/2}\omega) - \partial_{\omega_j}\phi^\lambda(x;\omega_{\theta})| \leq R^{-1/2} \int_0^1 |\langle \partial_{\omega}\partial_{\omega_j} \phi^\lambda(x; \omega_{\theta} + tR^{-1/2}\omega), \omega \rangle| \,\ud t.
\end{equation*}
Since $\partial_{\omega_i\omega_j}^2 \phi^\lambda(0; \omega) = 0$ for all $\omega \in \Omega$ and $1 \leq i,j \leq n-1$, it follows that
\begin{align}
\nonumber
|\partial_{\omega_i\omega_j}^2 \phi^\lambda(x; \omega_{\theta} + tR^{-1/2}\omega)| &\leq \int_0^1 | \langle \partial_x \partial_{\omega_i\omega_j}^2 \phi^\lambda(sx; \omega_{\theta} + tR^{-1/2}\omega), x \rangle|\,\ud s \\
\label{wave packet concentration 2}
&\leq \|\partial_x \partial_{\omega_i\omega_j}^2 \phi^\lambda\|_{L^{\infty}(X^{\lambda}\times \Omega)}|x| \lesssim R
\end{align}
for $x \in B(0,R) \cap X^{\lambda}$, where the uniformity in the last inequality is due to Lemma~\ref{reduction lemma}. Combining these estimates,
\begin{equation*}
|\partial_{\omega}\phi^\lambda(x;\omega_{\theta} + R^{-1/2}\omega) - \partial_{\omega}\phi^\lambda(x;\omega_{\theta})|\lesssim R^{1/2}. 
\end{equation*}
On the other hand, for $x \in B(0,R)\setminus T_{\theta, v}$ it is claimed that
\begin{equation}\label{wave packet concentration 3}
|\partial_{\omega}\phi^{\lambda}(x;\omega_{\theta}) - v| \gtrsim R^{1/2 + \delta}.
\end{equation}
If $x_n \in I_{\theta,v}^{\lambda}$, then \eqref{wave packet concentration 3} follows directly from the definition of $T_{\theta,v}$ and \eqref{wave packet concentration 0}. Temporarily assuming \eqref{wave packet concentration 3} holds in general, it follows that the $\partial_{\omega}\phi^{\lambda}(x;\omega_{\theta}) - v$ term dominates in the above expansion of $\partial_{\omega} \phi_{x,z}^{\lambda,R}(\omega)$. In particular, for all $z \in \mathrm{supp}\,\eta_v$, one concludes that
\begin{equation*}
R^{-1/2}|\partial_{\omega} \phi_{x,z}^{\lambda,R}(\omega)| \sim R^{-1/2}|\partial_{\omega}\phi^{\lambda}(x;\omega_{\theta}) - v| \gtrsim R^{\delta}
\end{equation*}
whenever $R \gg 1$. 

It remains to establish \eqref{wave packet concentration 3} for $x \in B(0,R)\setminus T_{\theta, v}$ with $x_n \notin I_{\theta,v}^{\lambda}$. The condition on $x_n$ implies that $v \notin \tilde{X}'$ where $\tilde{X}'$ is defined to be the image of $X'$ under the diffeomorphism $z' \mapsto \lambda\partial_{\omega}\phi(z', x_n/\lambda;\omega_{\theta})$. The set $\tilde{X}'$ will contain a ball centred at $\lambda\partial_{\omega} \phi(0, x_n/\lambda; \omega_{\theta})$ of radius $2c\lambda$ for some dimensional constant $c >0$. Since $|x_n| < R$ and $\partial_{\omega} \phi(0; \omega_{\theta}) = 0$, one observes that $\lambda|\partial_{\omega} \phi(0, x_n/\lambda; \omega_{\theta})| \lesssim R$ and so $B(0, c\lambda) \subseteq \tilde{X}'$, provided $R \ll \lambda$. Consequently, $|v| \gtrsim \lambda$ whilst, on the other hand, $|\partial_{\omega}\phi^{\lambda}(x; \omega_{\theta})| \lesssim R$ and so, again provided $R \ll \lambda$, the estimate \eqref{wave packet concentration 3} immediately follows.\footnote{This argument can also be used to show that for $|v| \lesssim \lambda$, the domain $I_{\theta, v}^{\lambda}$ contains an interval about 0 of length $\sim \lambda$.} 

\subsubsection*{Proof of ii)} Fix $\alpha \in \N_0^n$ with $2 \leq |\alpha| \leq N_{\mathrm{par}}$. By arguing as in \eqref{wave packet concentration 2}, one obtains
\begin{align*}
    |\partial_{\omega}^{\alpha} \phi^{\lambda,R}_{x,z}(\omega)| &= R^{-(|\alpha| - 1)/2}|(\partial_{\omega}^{\alpha} \phi^{\lambda})(x; \omega_{\theta} + R^{-1/2}\omega)| \\
    &\leq R^{-(|\alpha| - 1)/2}\|\partial_x \partial_{\omega}^{\alpha} \phi^\lambda\|_{L^{\infty}(X^{\lambda}\times \Omega)}|x| \lesssim R^{1/2},
\end{align*}
where the uniformity in the last inequality is due to Lemma~\ref{reduction lemma} and Lemma~\ref{second reduction lemma}. Since $|\partial_{\omega} \phi^{\lambda,R}_{x,z}(\omega)| \geq R^{1/2}$ by i), this concludes the proof. 

\end{proof}




\subsection{An $L^2$ estimate} The following standard $L^2$-bound, which has been mentioned previously in $\S$\ref{Necessary conditions section}, will play a significant r\^ole in the forthcoming analysis.  

\begin{lemma}[H\"ormander \cite{Hormander1973}]\label{Hormander L2} If $1 \leq R\leq \lambda$ and $B_R$ is any ball of radius $R$, then
\begin{equation*}
\|T^\lambda f\|_{L^2(B_{R})} \lesssim R^{1/2} \|f\|_{L^2(B^{n-1})}.
\end{equation*}
\end{lemma}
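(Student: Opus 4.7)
The plan is to argue by a standard $TT^*$ (or rather $T^*T$) method combined with non-stationary phase. First I would fix a non-negative Schwartz function $\eta \in \mathcal{S}(\R^n)$ with $\eta \geq \chi_{B(0,1)}$ and define the localised operator $T^\lambda_{R} f(x) := \eta\bigl((x-x_{B_R})/R\bigr)^{1/2}\, T^\lambda f(x)$, so that $\|T^\lambda f\|_{L^2(B_R)} \leq \|T^\lambda_R f\|_{L^2(\R^n)}$. The bound then reduces to showing $\|T^\lambda_R\|_{L^2 \to L^2}^2 = \|(T^\lambda_R)^* T^\lambda_R\|_{L^2 \to L^2} \lesssim R$, i.e.\ a bound for the integral operator on $L^2(B^{n-1})$ with kernel
\begin{equation*}
K_R(\omega,\omega') \;=\; \int_{\R^n} e^{2\pi i (\phi^\lambda(x;\omega) - \phi^\lambda(x;\omega'))}\, a^\lambda(x;\omega)\,\overline{a^\lambda(x;\omega')}\, \eta\!\left(\frac{x-x_{B_R}}{R}\right) dx.
\end{equation*}

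Next I would analyse this kernel via non-stationary phase in $x$. By the mean value theorem and the rank condition H1),
\begin{equation*}
\bigl|\nabla_x\bigl(\phi^\lambda(x;\omega)-\phi^\lambda(x;\omega')\bigr)\bigr| \;=\; \Bigl|\int_0^1 \partial^2_{x\omega}\phi^\lambda(x;\omega'+s(\omega-\omega'))\,ds \cdot (\omega-\omega')\Bigr| \;\gtrsim\; |\omega-\omega'|,
\end{equation*}
uniformly on the support of the amplitude (the factor of $\lambda$ from rescaling cancels with the Jacobian built into $\phi^\lambda$). One then performs repeated integration by parts using the first-order differential operator $L^* = -(2\pi i)^{-1}\nabla_x \cdot \bigl[|\nabla_x \Delta \phi|^{-2} \nabla_x \Delta \phi \,(\,\cdot\,)\bigr]$ (with $\Delta\phi := \phi^\lambda(\,\cdot\,;\omega)-\phi^\lambda(\,\cdot\,;\omega')$). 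Each application produces a factor of $|\omega-\omega'|^{-1}$, while the resulting derivatives land on the smooth cutoff $\eta((x-x_{B_R})/R)$, giving a factor of $R^{-1}$ (the derivatives of $a^\lambda$ contribute at worst $\lambda^{-1} \leq R^{-1}$, and higher $x$-derivatives of $\Delta \phi$ are harmless since they are $O(\lambda)$ by H\"ormander's condition on $\phi^\lambda$). After $N$ integrations by parts and using the trivial volume bound $R^n$,
\begin{equation*}
|K_R(\omega,\omega')| \;\lesssim_N\; R^n \bigl(1 + R|\omega-\omega'|\bigr)^{-N}.
\end{equation*}

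Finally I would conclude by Schur's test. Choosing $N = n$,
\begin{equation*}
\sup_{\omega}\int_{\R^{n-1}} |K_R(\omega,\omega')|\,d\omega' \;\lesssim\; R^n \int_{\R^{n-1}} (1+R|\omega-\omega'|)^{-n}\,d\omega' \;\lesssim\; R^n \cdot R^{-(n-1)} \;=\; R,
\end{equation*}
and the symmetric bound holds by swapping $\omega\leftrightarrow\omega'$. Hence $\|(T^\lambda_R)^* T^\lambda_R\|_{L^2\to L^2} \lesssim R$, which yields the claimed estimate.

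The only genuinely delicate point is ensuring that the non-stationary phase estimates are uniform in $\lambda$ and $R$ with the stated constant. This amounts to checking that all derivatives of $\Delta\phi$ beyond first order are bounded by a harmless multiple of $|\omega-\omega'|$ times suitable powers of $\lambda$, so that the integration-by-parts operator $L^*$ can be iterated cleanly (cf.\ the iterated non-stationary phase lemma referenced elsewhere in the paper). Given H1) and the smoothness of $\phi$ on the compact set $B^n \times B^{n-1}$, this is routine.
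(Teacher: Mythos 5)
Your argument is correct, but it takes a genuinely different route from the paper's. The paper proves Lemma~\ref{Hormander L2} by factoring through Lemma~\ref{Hormander L2 again}: for each fixed $x_n$ the sliced operator $f \mapsto T^\lambda f(\,\cdot\,, x_n)$ is bounded on $L^2$ with constant uniform in $x_n$ and $\lambda$ — this is exactly H\"ormander's generalisation of the Hausdorff--Young inequality for an $(n-1)$-dimensional oscillatory integral with non-degenerate mixed Hessian $\partial^2_{x'\omega}\phi$ — and then simply integrates the squared slice bound over an $x_n$-interval of length $\sim R$ via Fubini. You instead apply $T^*T$ to the truncated operator directly, bound the kernel by non-stationary phase in all $n$ spatial variables (using the full-rank condition H1) to get the codimension-$0$ gradient bound $|\nabla_x \Delta\phi| \gtrsim |\omega-\omega'|$), and finish with Schur's test. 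The two proofs are cousins — H\"ormander's Hausdorff--Young estimate is itself a $T^*T$ argument one dimension down — but yours handles the ball $B_R$ in one step rather than slice-by-slice. Both are valid; the paper's version is slightly more modular because the slice estimate Lemma~\ref{Hormander L2 again} is reused elsewhere (e.g.\ in the proof of Lemma~\ref{reverse Plancherel}).

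One inaccuracy in your write-up that you should fix: you assert the higher $x$-derivatives of $\Delta\phi$ are ``harmless since they are $O(\lambda)$.'' As stated that is wrong — $O(\lambda)$ would \emph{not} be harmless: when a derivative from $L^*$ lands on the weight $|\nabla_x\Delta\phi|^{-2}\nabla_x\Delta\phi$ it produces a term of size $|\partial_x^2\Delta\phi|/|\nabla_x\Delta\phi|^2$, and with only $|\partial_x^2\Delta\phi| = O(\lambda)$ this is $O(\lambda/|\omega-\omega'|^2)$, which blows up. What actually makes the iteration work is that $\partial_x^\alpha\phi^\lambda(x;\omega) = \lambda^{1-|\alpha|}(\partial_x^\alpha\phi)(x/\lambda;\omega)$, so for $|\alpha|\geq 2$ the mean value theorem in $\omega$ gives $|\partial_x^\alpha\Delta\phi| \lesssim \lambda^{1-|\alpha|}|\omega-\omega'| \leq \lambda^{-1}|\omega-\omega'|$. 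Each such term therefore contributes at most $\lambda^{-1}/|\omega-\omega'| \leq (R|\omega-\omega'|)^{-1}$, which matches the $R^{-1}|\omega-\omega'|^{-1}$ factor you get when a derivative hits the cutoff, and the iteration closes as you claimed.
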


This lemma is a direct corollary of the following lemma which, in turn, is a consequence of H\"ormander's generalisation of the Hausdorff--Young inequality \cite{Hormander1973}. 

\begin{lemma}\label{Hormander L2 again} For any fixed $x_n \in \R$, the estimate
\begin{equation*}
\|T^\lambda f\|_{L^2(\R^{n-1}\times \{x_n\})} \lesssim \|f\|_{L^2(B^{n-1})} 
\end{equation*}
holds. 
\end{lemma}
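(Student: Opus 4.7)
The plan is to bound the slice operator $Sf(x') := T^{\lambda}f(x', x_n)$ uniformly in $x_n$ by a standard $TT^*$ argument. Computing $SS^*$, one obtains the integral operator with kernel
\[
K(x', y') = \int_{\R^{n-1}} e^{2\pi i [\phi^{\lambda}(x', x_n; \omega) - \phi^{\lambda}(y', x_n; \omega)]} a^{\lambda}(x', x_n; \omega) \overline{a^{\lambda}(y', x_n; \omega)}\,\ud \omega,
\]
so the problem reduces to establishing the pointwise kernel bound $|K(x', y')| \lesssim_N (1 + |x' - y'|)^{-N}$ for all large $N$; Schur's test then yields $\|SS^*\|_{L^2 \to L^2} \lesssim 1$, whence $\|S\|_{L^2 \to L^2} \lesssim 1$ as required.

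To establish the decay of $K$, I would apply non-stationary phase in the $\omega$ variable. The key input is a lower bound on the $\omega$-gradient of the phase difference: using H1 together with the normalisation \eqref{close to identity}, the mixed Hessian $\partial^2_{\omega x'}\phi$ is close to the identity uniformly on the support of $a$, and hence the mean value theorem yields
\[
|\partial_{\omega}\phi^{\lambda}(x', x_n; \omega) - \partial_{\omega}\phi^{\lambda}(y', x_n; \omega)| \sim |x' - y'|
\]
uniformly in $\omega$ and in $x_n$, the factor of $\lambda$ from differentiating $\phi^{\lambda}$ cancelling the $1/\lambda$ from $x/\lambda$. Meanwhile, the higher $\omega$-derivatives of the phase difference are controlled by $\lesssim |x' - y'|$ via the bounds on $\partial_x \partial_{\omega}^{\alpha}\phi$ furnished by Lemma~\ref{third reduction lemma}, and the amplitude factor is smooth with all derivatives uniformly bounded. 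Feeding these estimates into the integration-by-parts lemma from the appendix (Lemma~\ref{integration-by-parts lemma}) then produces the required rapid decay of $K$ in $|x' - y'|$.

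The main obstacle is purely one of bookkeeping: one must verify that the ratios of higher $\omega$-derivatives of the phase difference to $|\partial_{\omega}\cdot|$ remain bounded as required by the non-stationary phase hypothesis, uniformly over the support of $a^{\lambda}$ and independently of $\lambda$ and $x_n$. The uniformity built into the reductions of Section~\ref{Reductions section} is exactly what makes this possible, and in particular explains why the implicit constant in the resulting estimate depends only on $\phi$ and $a$, and not on the fixed slice parameter $x_n$.
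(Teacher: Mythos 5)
Your argument is correct, but it takes a different route from the paper's. The paper first rescales $x' \mapsto \lambda x'$, writing the slice operator as $\int e^{2\pi i\lambda\phi(x',x_n/\lambda;\omega)}a(x',x_n/\lambda;\omega)f(\omega)\,\ud\omega$, and then cites H\"ormander's $L^2$ oscillatory integral theorem (the generalisation of the Hausdorff--Young inequality) as a black box, relying only on the non-degeneracy $|\det\partial^2_{x'\omega}\phi|\gtrsim 1$ and on the uniformly compact support of the amplitude. Your $TT^*$-plus-non-stationary-phase argument is essentially an unfolding of the standard proof of that cited theorem: the kernel bound $|K(x',y')|\lesssim_N(1+|x'-y'|)^{-N}$ followed by Schur's test is exactly what underlies H\"ormander's estimate. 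What your version buys is self-containedness, and it makes visible precisely where the reductions of \S\ref{Reductions section} enter: the lower bound $|\partial_\omega(\phi^\lambda(x',x_n;\cdot)-\phi^\lambda(y',x_n;\cdot))|\sim|x'-y'|$, uniform in $x_n$ and $\lambda$ via the cancellation you identify, comes from the first estimate in \eqref{close to identity}, while the control of the second and higher $\omega$-derivatives of the phase difference by a fixed multiple of its gradient (as required to invoke Lemma~\ref{integration-by-parts lemma}) follows from \eqref{close to identity again} and Lemma~\ref{second reduction lemma}; for $|x'-y'|\lesssim 1$ one uses the trivial bound from the compactness of the $\omega$-support. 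The paper's citation is shorter. Note, incidentally, that the paper's assertion that ``the original hypotheses on the phase imply $|\det\partial^2_{x'\omega}\phi|\gtrsim 1$'' already presupposes the basic reductions (in particular the rotation in $x$ ensuring $\partial_\omega\partial_{x_n}\phi(0;0)=0$), so the implicit reliance on the reduced form is common to both arguments; you are right to make it explicit.
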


\begin{proof} Defining $Sf(x') := T^\lambda f(\lambda x',x_n)$, the problem is to show that
\begin{equation}\label{Hormander Hausdorff-Young}
\|Sf\|_{L^2(\R^{n-1})} \lesssim \lambda^{-(n-1)/2}\|f\|_{L^2(\R^{n-1})}.
\end{equation}
Observe that
\begin{equation*}
Sf(x') = \int_{\R^{n-1}} e^{2\pi i \lambda \phi(x', x_n/\lambda;\omega)} a(x', x_n/\lambda;\omega) f(\omega)\,\ud \omega.
\end{equation*}
The original hypotheses on the phase $\phi$ imply that 
\begin{equation*}
|\det\partial_{x'\omega}^2\phi(x', x_n/\lambda;\omega)| \gtrsim 1
\end{equation*}
whilst $(x'; \omega) \mapsto a(x', x_n/\lambda;\omega)$ has support in some bounded subset of $\R^{n-1} \times \R^{n-1}$. Both these conditions hold uniformly in $x_n$ and $\lambda$. Thus, the operator $S$ satisfies the conditions of H\"ormander's generalisation of the Hausdorff--Young inequality \cite{Hormander1973} (see also, for instance, \cite[p. 377]{Stein1993}) uniformly in  $x_n$ and $\lambda$. Applying H\"ormander's theorem immediately yields \eqref{Hormander Hausdorff-Young}. 
\end{proof}




\subsection{The locally constant property}\label{locally constant section} As a final analytic preliminary, some simple consequences of the uncertainty principle are discussed. It is remarked that the result of this subsection (that is, Lemma~\ref{locally constant lemma}) only plays a r\^ole in the proof of Theorem~\ref{main theorem} much later in the argument (namely, in the parabolic rescaling argument in $\S$\ref{k-broad to linear section}). It does, however, feature in an independent discussion in the following section. 

\begin{definition} A function $\zeta \colon \R^n \to [0,\infty)$ is said to be \emph{locally constant at scale $\rho$} for some $\rho > 0$ if $\zeta(x) \sim \zeta(y)$ for all $x, y \in \R^n$ with $|x - y| \lesssim \rho$.
\end{definition}

Owing to the uncertainty principle, heuristically one expects the following: if $f$ is supported on a $\rho^{-1}$-cap, then $|T^{\lambda}f|$ is essentially constant at scale $\rho$. For extension operators this is due to the fact that, under the support hypothesis on the input function, $Ef$ has (distributional) Fourier support inside a $\rho^{-1}$-ball. For general H\"ormander-type operators $T^{\lambda}$ the Fourier transform of $T^{\lambda}f$ does not necessarily have compact support. It will, however, be concentrated in some $\rho^{-1}$-ball and this is sufficient to ensure the locally constant property holds. This discussion is formalised by the following lemma.

\begin{lemma}\label{locally constant lemma} Let $T^{\lambda}$ be a H\"ormander-type operator and $1 \leq k \leq n$. There exists a smooth, rapidly decreasing function $\zeta \colon \R^n \to [0,\infty)$ with the following properties:
\begin{enumerate}[1)]
\item $\zeta$ is locally constant at scale 1.
\item If $\delta > 0$ and $1 \leq \rho \leq \lambda^{1- \delta}$, then the pointwise inequality
\begin{equation*}
|T^{\lambda}f|^{1/k} \lesssim |T^{\lambda}f|^{1/k}\ast \zeta_{\rho} +  \mathrm{RapDec}(\lambda)\|f\|_{L^2(B^{n-1})}^{1/k}
\end{equation*}
holds whenever $f$ is supported in some $\rho^{-1}$-ball. Here $\zeta_{\rho}(x) := \rho^{-n}\zeta(x/\rho)$. 
\end{enumerate}
\end{lemma}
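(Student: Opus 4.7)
Let $\bar\omega$ denote the centre of the $\rho^{-1}$-ball supporting $f$, and set
\[
h(x):=e^{-2\pi i\phi^{\lambda}(x;\bar\omega)}T^{\lambda}f(x)=\int e^{2\pi i[\phi^{\lambda}(x;\omega)-\phi^{\lambda}(x;\bar\omega)]}a^{\lambda}(x;\omega)f(\omega)\,\ud\omega,
\]
so $|h|=|T^{\lambda}f|$. The strategy is to show that $h$ is approximately Fourier-localised in $B(0,C_0\rho^{-1})$ and then apply a Plancherel--Polya (Peetre maximal) inequality at the fractional exponent $p=1/k$ to the band-limited part. Rescale $x=\lambda y$ to expose $\lambda$ as the large parameter, with phase $\lambda F(y)$ and $F(y):=\phi(y;\omega)-\phi(y;\bar\omega)-\langle y,\xi\rangle$. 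By a Taylor expansion in $\omega$ and the reduced phase bounds of Lemma~\ref{third reduction lemma}, for every multi-index $\alpha$ with $1\leq|\alpha|\leq N_{\mathrm{par}}$ the difference satisfies $|\partial_y^{\alpha}[\phi(y;\omega)-\phi(y;\bar\omega)]|\lesssim|\omega-\bar\omega|\lesssim\rho^{-1}$ uniformly in $y$ on $\mathrm{supp}\,f$. Hence for $|\xi|\geq C_0\rho^{-1}$ with $C_0$ large, $|\nabla_y F|\geq |\xi|/2$ and $|\partial_y^{\alpha}F|\lesssim|\nabla_y F|$ for $2\leq|\alpha|\leq N_{\mathrm{par}}$, so the integration-by-parts lemma from the appendix yields $|\hat h(\xi)|\lesssim_N\lambda^{n-N}|\xi|^{-N}\|f\|_{L^1(B^{n-1})}$ for all $N\leq\sqrt{N_{\mathrm{par}}}$. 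Using $\|f\|_1\leq\rho^{-(n-1)/2}\|f\|_2$ and $\rho\leq\lambda^{1-\delta}$, choosing $N$ sufficiently large within the admissible range gives
\[
\int_{|\xi|\geq C_0\rho^{-1}}|\hat h(\xi)|\,\ud\xi\leq\mathrm{RapDec}(\lambda)\|f\|_{L^2(B^{n-1})}.
\]

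Fix a Schwartz function $\eta$ with $\hat\eta\in C_c^{\infty}(B(0,2C_0))$ and $\hat\eta\equiv 1$ on $B(0,C_0)$, and set $\eta_\rho(x):=\rho^{-n}\eta(x/\rho)$. Splitting $h=h\ast\eta_\rho+E$, the error satisfies $\hat E=\hat h\cdot(1-\hat\eta(\rho\,\cdot\,))$, supported on $|\xi|\geq C_0\rho^{-1}$, so $\|E\|_{L^\infty}\leq\|\hat E\|_{L^1}\leq\mathrm{RapDec}(\lambda)\|f\|_{L^2(B^{n-1})}$ by the previous paragraph. The band-limited piece $h_{\mathrm{low}}:=h\ast\eta_\rho$ has Fourier support in $B(0,2C_0\rho^{-1})$, so the Plancherel--Polya inequality for band-limited functions at exponent $p=1/k$ yields, for $M=M(n,k)$ sufficiently large,
\[
|h_{\mathrm{low}}(x)|^{1/k}\lesssim\int|h_{\mathrm{low}}(y)|^{1/k}\zeta_\rho(x-y)\,\ud y,\qquad \zeta(x):=C_M(1+|x|)^{-M},
\]
with $\zeta_\rho(x):=\rho^{-n}\zeta(x/\rho)$. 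The function $\zeta$ is smooth, nonnegative, rapidly decreasing, and manifestly locally constant at scale $1$.

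To assemble, the pointwise sub-additivity $(a+b)^{1/k}\leq a^{1/k}+b^{1/k}$ together with the bound on $E$ gives $|h_{\mathrm{low}}(y)|^{1/k}\leq|T^{\lambda}f(y)|^{1/k}+\mathrm{RapDec}(\lambda)\|f\|_2^{1/k}$. Integrating against $\zeta_\rho$ and using that $\int\zeta_\rho=\int\zeta$ is an absolute constant yields
$|h_{\mathrm{low}}(x)|^{1/k}\lesssim|T^{\lambda}f|^{1/k}\ast\zeta_\rho(x)+\mathrm{RapDec}(\lambda)\|f\|_2^{1/k}$,
which combined with $|T^{\lambda}f(x)|^{1/k}=|h(x)|^{1/k}\leq|h_{\mathrm{low}}(x)|^{1/k}+|E(x)|^{1/k}$ produces the claim.

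The chief technical obstacle is the Plancherel--Polya inequality at $p=1/k<1$, which lies below the reach of Young's inequality: one must instead dominate the Peetre maximal function pointwise by the Hardy--Littlewood maximal function applied to $|h_{\mathrm{low}}|^p$ (\emph{cf.}\ Triebel's \emph{Theory of Function Spaces}). A secondary bookkeeping point is the refined bound $|\partial_y^{\alpha}F|\lesssim\rho^{-1}$ for $|\alpha|\geq 2$, which comes from expanding in $\omega$ rather than using the crude $O(1)$ bound on $\partial_y^\alpha\phi$: without this refinement each integration-by-parts would lose a factor $\rho|\xi|^{-1}\gg 1$ in the regime $|\xi|\sim\rho^{-1}$, and the tail integral would fail to be $\mathrm{RapDec}(\lambda)$.
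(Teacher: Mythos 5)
Your argument is correct and follows essentially the same route as the paper: modulate by $e^{-2\pi i\phi^{\lambda}(\cdot;\bar\omega)}$, use non-stationary phase (with exactly the refined bound $|\partial_x^{\alpha}\phi(x;\omega)-\partial_x^{\alpha}\phi(x;\bar\omega)|\lesssim\rho^{-1}$ that you rightly single out) to show the modulated operator is band-limited up to a $\mathrm{RapDec}(\lambda)$ error, and then invoke a sub-$L^1$ Bernstein-type inequality to pass to the $1/k$-th power. The only divergence is in the last step: the paper applies the fractional Bernstein inequality $\|g\|_{L^1}\lesssim r^{n(k-1)}\|g\|_{L^{1/k}}$ to the product $y\mapsto T^{\lambda}f(x-y)\eta_{\rho}(y)$, which yields the weight $\zeta\gtrsim|\eta|^{1/k}$ and hence a genuinely Schwartz-class $\zeta$, whereas your Peetre-maximal route produces $\zeta(x)=C_M(1+|x|)^{-M}$, which is neither smooth at the origin (use $(1+|x|^2)^{-M/2}$) nor rapidly decreasing in the sense the lemma asserts. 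This is cosmetic — polynomial decay of large fixed order suffices everywhere the lemma is used, and the Plancherel--Polya convolution bound does in fact hold with a rapidly decreasing weight (run the Peetre argument with the weight $(1+|z|)^{(1-p)n/p}|\psi(z)|$ coming from the Schwartz reproducing kernel $\psi$ rather than passing through the Hardy--Littlewood maximal function) — but as written you prove a slightly weaker statement than the one claimed.
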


It is useful to work with the parameter $k$ here in order to apply the locally constant property effectively in $k$-linear settings. 

The locally constant property of $\zeta$ implies that
\begin{equation*}
|T^{\lambda}f|^{1/k}\ast \zeta_{\rho}(x) \sim |T^{\lambda}f|^{1/k}\ast \zeta_{\rho}(y)  \quad \textrm{for all $x, y \in \R^n$ with $|x - y| \lesssim \rho$;}
\end{equation*}
namely, $|T^{\lambda}f|^{1/k}\ast \zeta_{\rho}$ is locally constant at scale $\rho$. This is a rigorous formulation of the locally constant heuristic discussed above. 

\begin{proof}[Proof (of Lemma~\ref{locally constant lemma})] Suppose that $\mathrm{supp}\,f \subset B(\bar{\omega},\rho^{-1})$ where $\bar{\omega} \in \Omega$ and observe that
\begin{equation*}
[e^{-2\pi i \phi^{\lambda}(\,\cdot\,;\bar{\omega})}T^{\lambda}f]\;\widehat{}\;(\xi) = \int_{\R^{n-1}} K^{\lambda}(\xi;\omega) f(\omega)\,\ud \omega
\end{equation*}
where the function $K^{\lambda}$ is given by
\begin{equation*}
K^{\lambda}(\xi;\omega) = \lambda^n \int_{\R^n} e^{-2\pi i\lambda(\langle x, \xi \rangle - \phi(x ;\omega) + \phi(x;\bar{\omega}))} a(x;\omega)\,\ud x.
\end{equation*}
This oscillatory integral is estimated via (non-)stationary phase, using the simple estimate
\begin{equation*}
|\partial_x\phi(x ;\omega) - \partial_x \phi(x;\bar{\omega}) | \lesssim \rho^{-1} \qquad \textrm{for $(x;\omega) \in X \times \Omega$ with $\omega \in \mathrm{supp}\,f$.}
\end{equation*}
In particular, if $|\xi| \geq C\rho^{-1}$ for a suitably large constant $C \geq 1$, then repeated integration-by-parts, combined with the control on the derivatives of $a$ ensured by Lemma~\ref{third reduction lemma}, shows that
\begin{equation*}
|K^{\lambda}(\xi;\omega)| \leq  \mathrm{RapDec}(\lambda) (1 + |\xi|)^{-(n+1)}.
\end{equation*}
Let  $\eta$ be a Schwartz function on $\R^n$ with $\hat{\eta}(\xi) = 1$ for all $|\xi| < C$ for a suitable constant $C \geq 1$ and support in $B(0,2C)$. Such a function can further be chosen so that $|\eta|^{1/k}$ admits a smooth, rapidly decreasing majorant $\zeta$ which is locally constant at scale 1. From the above observations,
\begin{equation*}
[e^{-2\pi i \phi^{\lambda}(\,\cdot\,;\bar{\omega})}T^{\lambda}f]\;\widehat{}\;(\xi)  = [e^{-2\pi i \phi^{\lambda}(\,\cdot\,;\bar{\omega})}T^{\lambda}f]\;\widehat{}\;(\xi)\widehat{\eta_{\rho}}(\xi) +  E(f,\lambda)(\xi)
\end{equation*}
where $|E(f,\lambda)(\xi)| \leq \mathrm{RapDec}(\lambda) (1 + |\xi|)^{-(n+1)}\|f\|_{L^2(B^{n-1})}$. Applying Fourier inversion and using the triangle inequality to estimate the error, 
\begin{equation*}
e^{-2\pi i \phi^{\lambda}(x,;\bar{\omega})}T^{\lambda}f(x) = [e^{-2\pi i \phi^{\lambda}(\,\cdot\,;\bar{\omega})}T^{\lambda}f] \ast \eta_{\rho}(x) + \mathrm{RapDec}(\lambda)\|f\|_{L^2(B^{n-1})}
\end{equation*}
and, in particular, 
\begin{equation*}
|T^{\lambda}f(x)| \leq \int_{\R^n}|T^{\lambda}f(x-y)\eta_{\rho}(y)|\,\ud y +  \mathrm{RapDec}(\lambda)\|f\|_{L^2(B^{n-1})}.
\end{equation*}
Observe that, for fixed $x$, the function appearing in absolute values in the above integrand has Fourier support in a ball of radius $O(\rho^{-1})$.  Bernstein's inequality\footnote{More precisely, here the proof uses a general form of Bernstein's inequality, valid for exponents less than 1. In particular, if $0< p\leq q \leq \infty$ and $g$ is an integrable function on $\R^n$ satisfying $\mathrm{supp}\,\hat{g} \subseteq B_r$, then 
\begin{equation*}
\|g\|_{L^q(\R^n)} \lesssim r^{n (1/p - 1/q)} \|g\|_{L^p(\R^n)}.
\end{equation*}
This extension follows from the classical Bernstein inequality (that is, the above estimate in the restricted range $1\leq p \leq q \leq \infty$) in a rather straight-forward manner. The classical Bernstein inequality is itself a direct consequence of Young's convolution inequality: see, for instance, \cite[\S5]{Wolff2003}.} may therefore be applied to dominate the right-hand side by 
\begin{equation*}
 \Big(\int_{\R^n}|T^{\lambda}f(x-y)|^{1/k} \zeta_{\rho}(y)\,\ud y\Big)^k +  \mathrm{RapDec}(\lambda)\|f\|_{L^2(B^{n-1})},
\end{equation*}
which concludes the proof.
\end{proof}




\section{Properties of the $k$-broad norms}\label{Broad norm section}

\subsection{$k$-broad triangle inequality and logarithmic convexity} The functional $f \mapsto \|T^{\lambda}f\|_{\mathrm{BL}^p_{k,A}(U)}$ is not a norm in a literal sense, but it does exhibit some properties similar to those of $L^p$-norms. For instance, the map $U \mapsto \|T^{\lambda}f\|_{\mathrm{BL}^p_{k,A}(U)}^p$ behaves similarly to a measure.
 
\begin{lemma}[Finite (sub)-additivity] Let $U_1, U_2 \subseteq \R^n$ and $U := U_1 \cup U_2$. If $1 \leq p < \infty$ and $A$ is a non-negative integer, then
\begin{equation*}
\|T^{\lambda}f\|_{\mathrm{BL}^p_{k,A}(U)}^p \leq \|T^{\lambda}f\|_{\mathrm{BL}^{p}_{k,A}(U_1)}^p + \|T^{\lambda} f\|_{\mathrm{BL}^{p}_{k,A}(U_2)}^p
\end{equation*} 
holds for all integrable $f \colon B^{n-1} \to \C$.
\end{lemma}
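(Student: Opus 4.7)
The plan is to derive the inequality directly from the definition of the $k$-broad norm, treating it essentially as a statement about non-negative sums indexed by a covering family of balls. The key observation is that the complicated quantity $\mu_{T^{\lambda}f}(B_{K^2})$, which is defined via a min-max over subspaces and caps, plays no real role here: its only relevant feature is non-negativity. So the proof reduces to a short combinatorial/set-theoretic argument about the index set of balls.

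Concretely, I would first unpack the definition to write
\begin{equation*}
\|T^{\lambda}f\|_{\mathrm{BL}^p_{k,A}(U)}^p = \sum_{\substack{B_{K^2} \in \mathcal{B}_{K^2} \\ B_{K^2}\cap U \neq \emptyset}} \mu_{T^{\lambda}f}(B_{K^2}),
\end{equation*}
and similarly for $U_1$ and $U_2$. Next, I would note that if $U = U_1 \cup U_2$, then for any $B_{K^2} \in \mathcal{B}_{K^2}$ the condition $B_{K^2} \cap U \neq \emptyset$ is equivalent to $B_{K^2} \cap U_1 \neq \emptyset$ or $B_{K^2} \cap U_2 \neq \emptyset$. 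Hence the index set appearing on the left-hand side is the union of the two index sets appearing on the right-hand side, so by non-negativity of $\mu_{T^{\lambda}f}$,
\begin{equation*}
\sum_{B_{K^2} \cap U \neq \emptyset} \mu_{T^{\lambda}f}(B_{K^2}) \leq \sum_{B_{K^2}\cap U_1 \neq \emptyset} \mu_{T^{\lambda}f}(B_{K^2}) + \sum_{B_{K^2}\cap U_2 \neq \emptyset} \mu_{T^{\lambda}f}(B_{K^2}),
\end{equation*}
which is exactly the claimed inequality once one raises the outer $1/p$ exponents appropriately.

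There is no real obstacle here; the statement is essentially a reformulation of the fact that counting measure on the collection of $K^2$-balls is sub-additive, combined with the non-negativity of $\mu_{T^{\lambda}f}$. The only cosmetic point worth flagging is that the inequality can be slightly wasteful, since any ball $B_{K^2}$ meeting both $U_1$ and $U_2$ contributes $\mu_{T^{\lambda}f}(B_{K^2})$ twice on the right-hand side; this harmless over-counting is why one obtains sub-additivity rather than equality, and is the reason the title of the lemma distinguishes ``finite additivity'' from ``(sub)-additivity.''
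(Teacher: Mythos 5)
Your proof is correct and matches the paper's approach: the paper simply states that the lemma is an immediate consequence of the definition, and your argument correctly unpacks exactly that immediacy — namely that the index set of $K^2$-balls meeting $U = U_1 \cup U_2$ is the union of the corresponding index sets for $U_1$ and $U_2$, so non-negativity of $\mu_{T^{\lambda}f}$ gives sub-additivity of the $p$-th power.
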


This is an immediate consequence of the definition of the $k$-broad norms. A slightly less trivial observation is that $\|T^{\lambda}f\|_{\mathrm{BL}^p_{k,A}(U)}$ also satisfies weak versions of the triangle and logarithmic convexity inequalities. 

\begin{lemma}[Triangle inequality \cite{Guth2018}]\label{triangle inequality lemma} If $U \subseteq \R^n$, $1 \leq p < \infty$ and $A := A_1 + A_2$ for $A_1, A_2$ non-negative integers, then
\begin{equation*}
\|T^{\lambda}(f_1 + f_2)\|_{\mathrm{BL}^p_{k,A}(U)} \lesssim \|T^{\lambda}f_1\|_{\mathrm{BL}^p_{k,A_1}(U)} + \|T^{\lambda} f_2\|_{\mathrm{BL}^p_{k,A_2}(U)}
\end{equation*} 
holds for all integrable $f_1, f_2 \colon B^{n-1} \to \C$.
\end{lemma}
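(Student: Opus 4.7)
The plan is to reduce everything to a ball-wise comparison of the $\mu$-quantities defining the $k$-broad norm and then sum. The decomposition $f = \sum_{\tau} f_{\tau}$ is linear in $f$ (it arises from a smooth partition of unity on $B^{n-1}$), and $T^{\lambda}$ is linear, so $T^{\lambda}(f_1+f_2)_{\tau} = T^{\lambda}(f_1)_{\tau} + T^{\lambda}(f_2)_{\tau}$ for every $K^{-1}$-cap $\tau$. With this in hand, fix a ball $B = B_{K^2} \in \mathcal{B}_{K^2}$ with $B \cap U \neq \emptyset$ and let $V_1^{(1)},\dots,V_{A_1}^{(1)}$ and $V_1^{(2)},\dots,V_{A_2}^{(2)}$ be minimizers (over $\mathrm{Gr}(k-1,n)^{A_i}$) in the definition of $\mu_{T^{\lambda}f_i}(B)$ for $i=1,2$. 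I would then form the concatenated $A$-tuple $(V_1^{(1)},\dots,V_{A_1}^{(1)},V_1^{(2)},\dots,V_{A_2}^{(2)})$ and use it as a (suboptimal) competitor in the minimum defining $\mu_{T^{\lambda}(f_1+f_2)}(B)$.

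For any cap $\tau$ satisfying $\tau \notin V_a$ for $1 \leq a \leq A$ with respect to the concatenated tuple, one simultaneously has $\tau \notin V_a^{(1)}$ for all $a$ and $\tau \notin V_a^{(2)}$ for all $a$. The ordinary $L^p$ triangle inequality combined with linearity yields
\begin{equation*}
\|T^{\lambda}(f_1+f_2)_{\tau}\|_{L^p(B)} \leq \|T^{\lambda}(f_1)_{\tau}\|_{L^p(B)} + \|T^{\lambda}(f_2)_{\tau}\|_{L^p(B)},
\end{equation*}
and after raising to the $p$th power (using $(x+y)^p \leq 2^{p-1}(x^p+y^p)$) and taking the maximum over admissible $\tau$, each resulting maximum is bounded by the corresponding $\mu_{T^{\lambda}f_i}(B)$ since one is maximising over a subcollection of the caps allowed in the latter definitions. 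This gives the ball-wise estimate
\begin{equation*}
\mu_{T^{\lambda}(f_1+f_2)}(B) \lesssim \mu_{T^{\lambda}f_1}(B) + \mu_{T^{\lambda}f_2}(B),
\end{equation*}
where the $\mu$ on the left is with respect to $A$ subspaces and those on the right with respect to $A_1$ and $A_2$ subspaces, respectively.

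To conclude, I would sum over the balls $B_{K^2} \in \mathcal{B}_{K^2}$ meeting $U$, then take the $p$th root. Since $p \geq 1$, the function $t \mapsto t^{1/p}$ is subadditive on $[0,\infty)$, so
\begin{equation*}
\Big(\sum_{B} \mu_{T^{\lambda}(f_1+f_2)}(B)\Big)^{1/p} \lesssim \Big(\sum_{B} \mu_{T^{\lambda}f_1}(B)\Big)^{1/p} + \Big(\sum_{B} \mu_{T^{\lambda}f_2}(B)\Big)^{1/p},
\end{equation*}
which is precisely the claimed inequality. There is no genuine obstacle here; the only point requiring any thought is the observation that the minimum over $A$-tuples of subspaces in the definition of $\mu$ is subadditive when one splits $A = A_1 + A_2$ and concatenates near-optimal choices for each summand, and this is exactly where the hypothesis on $A$ enters.
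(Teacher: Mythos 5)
Your proposal is correct and is exactly the standard argument. The paper itself does not write out a proof but instead cites \cite{Guth2018} and remarks that the argument is elementary and that the parameter $A$ exists precisely to allow concatenation of subspace tuples; your reconstruction — fix a ball $B_{K^2}$, take near-optimal $(k-1)$-plane tuples for $f_1$ and $f_2$ of lengths $A_1$ and $A_2$, concatenate them to get a competitor of length $A$ for $f_1+f_2$, and observe that the resulting max runs over a subcollection of the caps allowed in either $\mu_{T^{\lambda}f_i}(B_{K^2})$ — is precisely that argument, and the final step (summing the ball-wise estimate and using subadditivity of $t\mapsto t^{1/p}$) is routine.
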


\begin{lemma}[Logarithmic convexity \cite{Guth2018}]\label{logarithmic convexity inequality lemma} Suppose that $U \subseteq \R^n$, $1 \leq p, p_1, p_2 < \infty$ and $0 \leq \alpha_1, \alpha_2 \leq 1$ satisfy $\alpha_1 + \alpha_2 = 1$ and
\begin{equation*}
\frac{1}{p} = \frac{\alpha_1}{p_1} + \frac{\alpha_2}{p_2}.
\end{equation*}
If $A := A_1 + A_2$ for $A_1, A_2$ non-negative integers, then
\begin{equation*}
\|T^{\lambda}f\|_{\mathrm{BL}^p_{k,A}(U)} \lesssim \|T^{\lambda}f\|_{\mathrm{BL}^{p_1}_{k,A_1}(U)}^{\alpha_1} \|T^{\lambda} f\|_{\mathrm{BL}^{p_2}_{k,A_2}(U)}^{\alpha_2}
\end{equation*} 
holds for all integrable $f \colon B^{n-1} \to \C$.
\end{lemma}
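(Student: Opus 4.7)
The plan is to establish a pointwise bound on each summand in the definition \eqref{k-broad norm definition} of the $k$-broad norm, and then to sum using H\"older's inequality. Fix any $B_{K^2} \in \mathcal{B}_{K^2}$ with $B_{K^2} \cap U \neq \emptyset$ and, for $j = 1,2$, let $(V^{(j)}_1,\dots,V^{(j)}_{A_j}) \in \mathrm{Gr}(k-1,n)^{A_j}$ be a tuple attaining the minimum in the variant of \eqref{mu definition} with $(p,A)$ replaced by $(p_j,A_j)$. The existence of such a minimiser follows from the fact that the inner maximum is over a finite collection of caps $\tau$ and the constraint $\angle(G^{\lambda}(\bar{x};\tau),V_a) > K^{-1}$ defines an open subset of $\mathrm{Gr}(k-1,n)$, which together imply lower semi-continuity of the relevant function on the compact Grassmannian.

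The concatenated $A$-tuple $(V^{(1)}_1,\dots,V^{(1)}_{A_1},V^{(2)}_1,\dots,V^{(2)}_{A_2})$ is an admissible choice in the minimum defining $\mu_{T^\lambda f}(B_{K^2})$, so
\[
\mu_{T^\lambda f}(B_{K^2}) \leq \max_{\tau \notin V^{(j)}_a \textrm{ for all } j,a} \|T^\lambda f_\tau\|_{L^p(B_{K^2})}^p.
\]
For each eligible cap, H\"older's inequality on $B_{K^2}$ yields the familiar log-convexity
\[
\|T^\lambda f_\tau\|_{L^p(B_{K^2})}^p \leq \|T^\lambda f_\tau\|_{L^{p_1}(B_{K^2})}^{\alpha_1 p}\,\|T^\lambda f_\tau\|_{L^{p_2}(B_{K^2})}^{\alpha_2 p}.
\]
Bounding the maximum of this product by the product of the two maxima, and then enlarging each maximisation set by weakening the constraint on $\tau$ so that it need only avoid the tuple $(V^{(j)}_a)_a$ relevant for exponent $p_j$, the optimality of the chosen subspace tuples produces the pointwise estimate
\[
\mu_{T^\lambda f}(B_{K^2}) \leq \mu_1(B_{K^2})^{\alpha_1 p/p_1}\,\mu_2(B_{K^2})^{\alpha_2 p/p_2},
\]
where $\mu_j(B_{K^2})$ abbreviates the value of \eqref{mu definition} with $(p,A)$ replaced by $(p_j,A_j)$.

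Summing this inequality over balls $B_{K^2} \in \mathcal{B}_{K^2}$ intersecting $U$ and applying H\"older's inequality in the summation variable with exponents $p_1/(\alpha_1 p)$ and $p_2/(\alpha_2 p)$---these are H\"older-conjugate since the hypothesis $1/p = \alpha_1/p_1 + \alpha_2/p_2$ together with $\alpha_1 + \alpha_2 = 1$ gives $\alpha_1 p/p_1 + \alpha_2 p/p_2 = 1$---yields
\[
\|T^\lambda f\|_{\mathrm{BL}^p_{k,A}(U)}^p \leq \|T^\lambda f\|_{\mathrm{BL}^{p_1}_{k,A_1}(U)}^{\alpha_1 p}\,\|T^\lambda f\|_{\mathrm{BL}^{p_2}_{k,A_2}(U)}^{\alpha_2 p},
\]
and taking $p$-th roots completes the argument. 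There is no substantial obstacle here: the lemma is a clean combination of log-convexity for $L^p$-norms (applied ball by ball) with H\"older's inequality (applied in the sum over balls), organised so that the splittings $A = A_1 + A_2$ and $1/p = \alpha_1/p_1 + \alpha_2/p_2$ match up precisely with the min-over-subspaces structure in the definition of the $k$-broad norm.
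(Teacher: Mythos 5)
Your proof is correct, and it is essentially the argument from \cite{Guth2018} that the paper refers to without reproducing: concatenate the optimal $(k-1)$-plane tuples for each exponent, apply log-convexity of $L^{p}$-norms on each $B_{K^2}$, bound the max of the product by the product of the maxes (enlarging the eligibility set for each factor to recover $\mu_j$), and finally apply H\"older in the sum over $K^2$-balls with the conjugate pair $p_1/(\alpha_1 p)$, $p_2/(\alpha_2 p)$. The variable-coefficient Gauss map $G^{\lambda}(\bar{x};\tau)$ plays no active role here, so the argument goes through verbatim; your aside on attainment of the minimum via lower semi-continuity on the compact Grassmannian is a correct (if optional) detail, and your constant is in fact $1$.
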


These estimates are proven in the context of Fourier extension operators in \cite{Guth2018}. The arguments are entirely elementary and readily generalise to the variable coefficient case. It is remarked that the parameter $A$ appears in the definition of the $k$-broad norm to allow for these weak triangle and logarithmic convexity inequalities.

\subsection{$k$-broad versus $k$-linear} A relationship between $k$-broad and $k$-linear estimates is given by the following proposition.

\begin{proposition}\label{k-broad vrs k-linear proposition} Let $\mathcal{T}$ be a class of H\"ormander-type operators which is closed under translation,\footnote{That is, if $T^{\lambda} \in \mathcal{T}$ and $a \in \R^n$, then the translated operator $T^{\lambda}_a$ defined by $T^{\lambda}_af(x) := T^{\lambda}f(x+a)$ also belongs to $\mathcal{T}$.} $2 \leq p < \infty$, $2 \leq k \leq n$ and $\varepsilon >0$. Suppose that for all $1 \ll R \leq \lambda$ and $R$-balls $B_R$ the $k$-linear inequality
\begin{equation*}
\big\|\prod_{j=1}^k|T^{\lambda}_jf_j|^{1/k}\|_{L^p(B_R)} \lesssim_{\varepsilon, (\phi_j)_{j=1}^k} \nu^{-C_{\varepsilon}} R^{\varepsilon}\|f\|_{L^2(B^{n-1})}
\end{equation*}
holds whenever $(T_1^{\lambda}, \dots, T_k^{\lambda}) \in \mathcal{T}^k$ is a $\nu$-transverse $k$-tuple of H\"ormander-type operators. Then for all $1 \ll R \leq \lambda$ and $R$-balls $B_R$ the $k$-broad inequality
\begin{equation*}
\|T^{\lambda}f\|_{\mathrm{BL}^p_{k,1}(B_R)} \lesssim_{\varepsilon, \phi} K^{C_{\varepsilon}} R^{\varepsilon}\|f\|_{L^2(B^{n-1})}
\end{equation*}
holds for any $T^{\lambda} \in \mathcal{T}$. 
\end{proposition}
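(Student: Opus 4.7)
The plan is to bound $\|T^\lambda f\|^p_{\mathrm{BL}^p_{k,1}(B_R)} = \sum_{B_{K^2} \cap B_R \neq \emptyset} \mu_{T^\lambda f}(B_{K^2})$ by reducing each summand to a $k$-linear quantity and then invoking the hypothesis. The first step is to extract, for each $K^2$-ball $B_{K^2}$ with centre $\bar{x}$, a transverse $k$-tuple of caps $\tau_1^{B_{K^2}}, \ldots, \tau_k^{B_{K^2}}$. Let $V^* \in \mathrm{Gr}(k-1,n)$ attain the minimum defining $\mu_{T^\lambda f}(B_{K^2})$. I would build the tuple iteratively: given $\tau_1, \ldots, \tau_{j-1}$, take any $(k-1)$-plane $\widetilde{V}_{j-1}$ containing the span of $G^\lambda(\bar{x}; \omega_{\tau_i})$ for $i < j$. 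Since $V^*$ is a minimiser,
\[
    \mu_{T^\lambda f}(B_{K^2}) \leq \max_{\tau \notin \widetilde{V}_{j-1}} \|T^\lambda f_\tau\|^p_{L^p(B_{K^2})},
\]
so one may select $\tau_j \notin \widetilde{V}_{j-1}$ with $\|T^\lambda f_{\tau_j}\|^p_{L^p(B_{K^2})} \geq \mu_{T^\lambda f}(B_{K^2})$. Inductively, linear independence of the $G^\lambda(\bar{x};\omega_{\tau_i})$ is preserved, and the final tuple is $K^{-O(1)}$-transverse at $\bar{x}$ in the sense that $|\bigwedge_j G^\lambda(\bar{x}; \omega_{\tau_j})| \gtrsim K^{-(k-1)}$.

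The next step is to pass from the geometric mean of individual norms to an $L^p$-norm of a product. Since each $f_{\tau_j^{B_{K^2}}}$ is supported on a $K^{-1}$-cap, Lemma~\ref{locally constant lemma} furnishes a locally constant property for $|T^\lambda f_{\tau_j^{B_{K^2}}}|^{1/k}$; combined with the lower bounds $\|T^\lambda f_{\tau_j}\|^p_{L^p(B_{K^2})} \geq \mu_{T^\lambda f}(B_{K^2})$ from the previous step, a standard wave packet argument should yield
\[
    \mu_{T^\lambda f}(B_{K^2}) \lesssim \Big\|\prod_{j=1}^k |T^\lambda f_{\tau_j^{B_{K^2}}}|^{1/k}\Big\|^p_{L^p(B_{K^2}^*)}
\]
for a fixed enlargement $B_{K^2}^*$. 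Summing over $B_{K^2}$ and grouping by the selected tuple (of which only $O(K^{(n-1)k})$ are possible), the finite overlap of the $B_{K^2}^*$ then gives
\[
    \sum_{B_{K^2} \cap B_R \neq \emptyset} \mu_{T^\lambda f}(B_{K^2}) \lesssim \sum_{(\tau_1, \ldots, \tau_k)} \Big\|\prod_{j=1}^k |T^\lambda f_{\tau_j}|^{1/k}\Big\|^p_{L^p(B_R^*)}.
\]

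For each tuple I would apply the $k$-linear hypothesis to the operators $T^\lambda_j$ obtained by restricting the amplitude of $T^\lambda$ smoothly to $\tau_j$. The main obstacle is that Lemma~\ref{Gauss Lipschitz}, $\angle(G^\lambda(x; \omega), G^\lambda(\bar{x}; \omega)) \lesssim \lambda^{-1}|x - \bar{x}|$, only guarantees $K^{-O(1)}$-transversality of the tuple on balls of radius $\lesssim \lambda K^{-(k-1)}$, so for $R$ comparable to $\lambda$ the transversality on $B_R$ may degenerate. I would handle this by decomposing $B_R$ into $O(K^{O(1)})$ sub-balls of radius $\sim \lambda K^{-(k-1)}$ (affordable because $R \leq \lambda$) on which transversality at scale $K^{-O(1)}$ is preserved, and applying the hypothesis separately on each. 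This gives the bound $K^{O(C_\varepsilon)} R^{\varepsilon p} \prod_j \|f_{\tau_j}\|^{p/k}_{L^2}$ per tuple. A final sum over tuples, using H\"older's inequality together with almost orthogonality of the $f_\tau$, yields
\[
    \sum_{(\tau_1, \ldots, \tau_k)} \prod_{j=1}^k \|f_{\tau_j}\|^{p/k}_{L^2} = \Big(\sum_\tau \|f_\tau\|^{p/k}_{L^2}\Big)^k \lesssim K^{O(1)} \|f\|^p_{L^2},
\]
which combines with the previous estimates to give the $k$-broad inequality.
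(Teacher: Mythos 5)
Your iterative construction of the transverse caps is a clean alternative to the paper's maximality argument and gives the same wedge lower bound $\gtrsim K^{-(k-1)}$, so that part is fine. The genuine gap is in the step
\[
\mu_{T^\lambda f}(B_{K^2}) \lesssim \Big\|\prod_{j=1}^k |T^\lambda f_{\tau_j}|^{1/k}\Big\|^p_{L^p(B_{K^2}^*)},
\]
which you attribute to ``a standard wave packet argument.'' This inequality is false for a fixed (constant-factor) enlargement $B^*_{K^2}$. The lower bounds $\|T^\lambda f_{\tau_j}\|_{L^p(B_{K^2})}^p \geq \mu_{T^\lambda f}(B_{K^2})$ only give $\mu \leq \prod_j \|T^\lambda f_{\tau_j}\|^{p/k}_{L^p(B_{K^2})}$, and passing from a product of $L^p$-norms to the $L^p$-norm of a pointwise product is not legitimate: the wave packets composing $T^\lambda f_{\tau_1}$ and $T^\lambda f_{\tau_2}$ are tubes of length $\sim\lambda$ and width $\sim\lambda^{1/2}$ that may be translated to pass through essentially disjoint regions of $B_{K^2}$ (recall $K^2 \ll \lambda^{1/2}$), so the pointwise product can be negligible even while each individual norm is large. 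Local constancy at scale $K$ does not repair this, since $K \ll K^2 = \operatorname{radius}(B_{K^2})$.

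The paper's resolution of exactly this difficulty is the averaging over translations $\sigma_{y_j,z_j}(x) := x + y_j - z_j$: after applying Lemma~\ref{locally constant lemma} to each factor and averaging over $x \in B(\bar{x},K)$, one obtains not the naive product bound but
\[
\mu_{T^{\lambda}f}(B_{K^2}) \lesssim  K^{O(1)} \int_{(\R^n)^{2k}} \int_{B(\bar{x},K)}  \prod_{j=1}^k |T^{\lambda}f_{\tau_j^*}\circ\sigma_{y_j,z_j}(x)|^{p/k}\,\ud x \, Z_K(\mathbf{y},\mathbf{z})\,\ud \mathbf{y}\,\ud \mathbf{z},
\]
with each factor shifted independently so that the product is no longer killed by mismatched supports. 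The multilinear hypothesis is then applied to the translated operators $T_{\tau_j}^\lambda \circ \sigma_{y_j,z_j}$ — this is precisely where the hypothesis that $\mathcal{T}$ is closed under translation is used, and your proposal gives no indication that this ingredient, or this mechanism, is needed. Without it the argument does not close. (A secondary, non-fatal point: you decompose into sub-balls of radius $\lambda K^{-(k-1)}$ to preserve transversality; the paper shows radius $\lambda/\bar{C}K$ already suffices, because perturbing one of $k$ nearly-parallel unit vectors by $\epsilon$ changes the wedge by only $O(\epsilon K^{-(k-2)})$ rather than $O(\epsilon)$ — but both choices cost only $K^{O(1)}$.)
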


Recall, the notion of $\nu$-transversality was introduced in Definition~\ref{transverse definition}. The parameter $K$ in the above theorem is the same as that which appears in the definition of the $k$-broad norms; the $C_{\varepsilon}$ denote constants, which may vary from line to line, which depend only on $n$ and $\varepsilon$. 

The (local version of the) Bennett--Carbery--Tao theorem \cite{Bennett2006} therefore implies a version of Theorem~\ref{k-broad theorem} which holds for all H\"ormander-type operators (that is, without the positive-definite hypothesis) with a restricted range of $p$.\footnote{The version of the Bennett--Carbery--Tao theorem used here is not explicitly stated in \cite{Bennett2006} (there the variable coefficient estimates are only presented at the $n$-linear level). Nevertheless, $k$-linear inequalities for H\"ormander-type operators are readily obtained by combining the analysis of \S5 and \S6 of \cite{Bennett2006}: see \cite[\S5]{Bourgain2011}.}

\begin{corollary}\label{Bennett Carbery Tao corollary} Let $T^{\lambda}$ be a H\"ormander-type operator. For all $2 \leq k \leq n$,  $p \geq 2k/(k-1)$ and $\varepsilon > 0$ the estimate
\begin{equation*}
\|T^{\lambda}f\|_{\mathrm{BL}^p_{k,1}(B_R)} \lesssim_{\varepsilon, \phi} K^{C_{\varepsilon}} R^{\varepsilon} \|f\|_{L^2(B^{n-1})}
\end{equation*}
holds for all $\lambda \geq 1$. 
\end{corollary}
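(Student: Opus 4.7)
The plan is to derive the corollary directly from Proposition~\ref{k-broad vrs k-linear proposition} by taking $\mathcal{T}$ to be the class of all H\"ormander-type operators. This class is closed under spatial translation, since translating $x \mapsto x + a$ in the phase $\phi$ and amplitude $a$ does not affect the pointwise conditions H1) and H2) (both depend only on derivatives of $\phi$ at a point). Moreover, the exponent range $p \geq 2k/(k-1)$ appearing in the corollary matches precisely that of the Bennett--Carbery--Tao theorem (Theorem~\ref{Bennett Carbery Tao theorem}), so the latter is the natural input.

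The substantive task is to verify the $k$-linear hypothesis of Proposition~\ref{k-broad vrs k-linear proposition}: namely, that for every $\nu$-transverse $k$-tuple $(T^{\lambda}_1, \dots, T^{\lambda}_k)$ of H\"ormander-type operators and every $R$-ball $B_R$ with $1 \ll R \leq \lambda$, the local bound
\begin{equation*}
\Big\|\prod_{j=1}^k |T^{\lambda}_j f_j|^{1/k}\Big\|_{L^p(B_R)} \lesssim_{\varepsilon} \nu^{-C_{\varepsilon}} R^{\varepsilon} \prod_{j=1}^k \|f_j\|_{L^2(B^{n-1})}^{1/k}
\end{equation*}
holds. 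Theorem~\ref{Bennett Carbery Tao theorem} supplies the global version of this inequality on $\R^n$ with $\lambda^{\varepsilon}$-loss, which is immediately weaker by a factor $(\lambda/R)^{\varepsilon}$. To replace $\lambda^{\varepsilon}$ by $R^{\varepsilon}$ one performs a standard rescaling, in the spirit of \S\ref{parabolic rescaling subsection}: on $B_R$ the operator $T^{\lambda}_j$ can, after an affine change of variables in $x$ together with a suitable reparametrisation in $\omega$, be identified with a H\"ormander-type operator at the reduced spatial scale $R$. Under such a change of variables the rank and curvature hypotheses H1), H2) are preserved, and the transversality constant $\nu$ is changed only by a factor depending on $n$. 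Applying Theorem~\ref{Bennett Carbery Tao theorem} to the rescaled $k$-tuple at scale $R$ then yields the desired $R^{\varepsilon}$-loss.

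Once the local $k$-linear inequality is established, invoking Proposition~\ref{k-broad vrs k-linear proposition} immediately produces the $k$-broad estimate
\begin{equation*}
\|T^{\lambda} f\|_{\mathrm{BL}^p_{k,1}(B_R)} \lesssim_{\varepsilon, \phi} K^{C_{\varepsilon}} R^{\varepsilon} \|f\|_{L^2(B^{n-1})},
\end{equation*}
which is the content of the corollary. The principal technical point is the rescaling step: one must track uniform control on the derivatives of the rescaled phases together with the transversality constant, so that Theorem~\ref{Bennett Carbery Tao theorem} applies with constants independent of $R$ and $\lambda$. This is broadly analogous to, but simpler than, the parabolic rescaling of \S\ref{parabolic rescaling subsection}, since no reduction to positive-definite or reduced form is required here.
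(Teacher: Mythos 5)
Your proposal is correct and follows the same route as the paper: Corollary~\ref{Bennett Carbery Tao corollary} is obtained by feeding a local $k$-linear estimate with $R^{\varepsilon}$-loss into Proposition~\ref{k-broad vrs k-linear proposition}, using that the class of all H\"ormander-type operators is translation-closed. The one point where you diverge is in sourcing that local input: the paper simply invokes ``the local version of the Bennett--Carbery--Tao theorem,'' citing \cite{Bennett2006} and \cite[\S5]{Bourgain2011} for the fact that the $k$-linear estimates there are genuinely local at scale $R$, whereas you derive the local bound from the stated global Theorem~\ref{Bennett Carbery Tao theorem} by an affine rescaling. Your rescaling does work: writing $x = \bar{x} + y$ and setting $\tilde{\phi}(z;\omega) := (\lambda/R)\bigl[\phi(\bar{x}/\lambda + Rz/\lambda;\omega) - \phi(\bar{x}/\lambda;\omega)\bigr]$ gives $R\tilde{\phi}(y/R;\omega) = \phi^{\lambda}(\bar{x}+y;\omega) - \phi^{\lambda}(\bar{x};\omega)$, so $T^{\lambda}f_j$ on $B_R$ agrees with a scale-$R$ operator applied to the modulated datum $e^{-2\pi i \phi^{\lambda}(\bar{x};\cdot)}f_j$ (no $\omega$-reparametrisation is actually needed); since $\partial_{z\omega}^2\tilde{\phi}$ and the Gauss map coincide pointwise with those of $\phi$, the conditions H1), H2) and the transversality constant $\nu$ are preserved exactly, and all derivatives of $\tilde{\phi}$ and the cut-off amplitude are uniformly bounded because $R \leq \lambda$. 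The only caveat, which you correctly flag, is that this requires the implicit constant in Theorem~\ref{Bennett Carbery Tao theorem} to depend on the phases only through such uniform derivative and nondegeneracy bounds; granting that, your self-contained derivation is a legitimate substitute for the paper's citation.
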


For completeness the proof of Proposition~\ref{k-broad vrs k-linear proposition} is given; the result itself will not be used in the proof of Theorem~\ref{main theorem} and is included mainly for expository purposes. Thus, readers interested only in the proof of Theorem~\ref{main theorem} may safely skip to the next section.  

\begin{proof}[Proof (of Proposition~\ref{k-broad vrs k-linear proposition})] Let $\mathcal{Z} \subset B_R$ be a maximal set of points with the property that the balls $B(z, R/2\bar{C}K)$ for $z \in \mathcal{Z}$ are pairwise disjoint. Here $\bar{C} \geq 1$ is a suitable constant, chosen to meet the forthcoming requirements of the proof. Letting $B_z := B(z, R/\bar{C}K)$ for $z\in \mathcal{Z}$, it follows that $\# \mathcal{Z} \lesssim K^n$ and
\begin{equation*}
\|T^{\lambda}f\|_{\mathrm{BL}^p_{k,1}(B_R)}^p \leq \sum_{z \in \mathcal{Z}} \|T^{\lambda}f\|_{\mathrm{BL}^p_{k,1}(B_z)}^p.
\end{equation*}
Fixing $z \in \mathcal{Z}$  it therefore suffices to show that
\begin{equation*}
\|T^{\lambda}f\|_{\mathrm{BL}^p_{k,1}(B_z)} \lesssim_{\varepsilon} K^{C_{\varepsilon}}R^{\varepsilon} \|f\|_{L^2(B^{n-1})},
\end{equation*}
since summing the contributions from each choice of $z \in \mathcal{Z}$ only introduces an acceptable $K^n$ factor into the estimate. By introducing a bump function into the definition of the operator, one may further assume that the amplitude $a^{\lambda}$ has $x$-support in the ball concentric to $B_z$ with twice the radius.

 Fix a ball $B_{K^2} = B(\bar{x}, K^2) \in \mathcal{B}_{K^2}$ with $B_{K^2} \cap B_z \neq \emptyset$ and suppose that $V$ is a $(k-1)$-dimensional subspace which realises the minimum in $\mu_{T^{\lambda}f}(B_{K^2})$. Thus, by definition, if $\tau$ is a $K^{-1}$-cap for which 
\begin{equation*}
 \|T^{\lambda}f_{\tau}\|_{L^p(B_{K^2})}^p > \mu_{T^{\lambda}f}(B_{K^2}), 
\end{equation*}
then $\tau \in V$, where the inclusion symbol is used in the non-standard sense described in the introduction. Amongst all such subspaces $V$ choose one which maximises the cardinality of the set 
\begin{equation*}
\mathcal{T}(V) :=  \big\{ \tau \in V : \|T^{\lambda} f_{\tau}\|_{L^p(B_{K^2})}^p \geq \mu_{T^{\lambda}f}(B_{K^2})\big\}.
\end{equation*}
By definition there exists some cap $\tau^* \notin V$ such that  $\|T^{\lambda}f_{\tau^*}\|_{L^p(B_{K^2})}^p = \mu_{T^{\lambda}f}(B_{K^2})$. 

Suppose there exists a $(k-2)$-dimensional subspace $W \subset \R^n$ such that $\tau \in W$ for all $\tau \in \mathcal{T}(V)$. Then defining $V' := \mathrm{span}\,\big( W \cup \{G^{\lambda}(\bar{x}, \omega_{\tau^*})\}\big)$ where $\omega_{\tau^*}$ is the centre of $\tau^*$, it follows that $\tau^* \in V'$ and $\tau \in V'$ for all $\tau \in \mathcal{T}(V)$. On the other hand, $V'$ also realises the minimum in the definition in $\mu_{T^{\lambda}f}(B_{K^2})$, since
\begin{equation*}
\|T^{\lambda}f_\tau\|_{L^p(B_{K^2})}^p \leq \mu_{T^{\lambda}f}(B_{K^2}) \qquad \textrm{for all $\tau \notin V'$ with $\tau \in V$;}
\end{equation*}
this is immediate by the fact that, if $\tau \notin V'$, then $\tau \notin W$, so $\tau$ does not belong to $\mathcal{T}(V)$. These observations contradict the maximality of $V$ and, consequently, no such subspace $W$ can exist.

By the preceding discussion, one may find a family of caps $\tau_1^*, \dots, \tau_{k-1}^* \in \mathcal{T}(V)$ satisfying
\begin{equation}\label{transversal 1}
\big|\bigwedge_{j=1}^kG^{\lambda}(\bar{x}, \omega_j) \big| \gtrsim K^{-(k-1)} \quad \textrm{for all $\omega_j \in \tau_j^*$, $1 \leq j \leq k$.}
\end{equation}
Thus,
\begin{equation}\label{k broad vrs k linear 1}
 \mu_{T^{\lambda}f}(B_{K^2}) \leq \prod_{j=1}^k \|T^{\lambda} f_{\tau_j^*}\|_{L^p(B_{K^2})}^{p/k}
 \end{equation}
for $\tau_k^* := \tau^*$. To apply the hypothesised multilinear estimate one wishes to exchange the order of taking the norm and product on the right-hand side of the above expression; that is, one wishes to prove an estimate of the form
\begin{equation*}
 \prod_{j=1}^k \|T^{\lambda} f_{\tau_j^*}\|_{L^p(B_{K^2})}^{p/k} \lesssim K^{O(1)} \big\|\prod_{j=1}^k |T^{\lambda}f_{\tau_j^*}|^{1/k}\big\|_{L^p(B_{K^2})}^{p}.
\end{equation*}
This is achieved by exploiting the locally constant property of the $T^{\lambda}f_{\tau}$, as discussed in $\S$\ref{locally constant section}. In particular, by Lemma~\ref{locally constant lemma} and H\"older's inequality there exists a non-negative, rapidly decreasing, locally constant function $\zeta$ such that
\begin{equation}\label{k broad vrs k linear 2}
|T^{\lambda}f_{\tau}|^{p/k} \lesssim |T^{\lambda}f_{\tau}|^{p/k}\ast \zeta_{K} + \mathrm{RapDec}(\lambda)\|f\|_{L^2(B^{n-1})}^{p/k}
\end{equation}
holds for all $K^{-1}$-caps $\tau$. Since rapidly decaying error terms are entirely harmless, henceforth they will be suppressed in the notation. Observe that for all $z \in B(\bar{x}, K^2)$ and $y \in \R^n$ one has $\zeta_K(z-y) \lesssim K^{O(1)} w_{K}(\bar{x}-y)$ where $w_{K}(y) := (1+ |y|/K)^{-N}$ for some choice suitable of large exponent $N$ satisfying $N = O(1)$. Combining these observations with a second application of \eqref{k broad vrs k linear 2} yields
\begin{equation}\label{k broad vrs k linear 3}
\|T^{\lambda}f_{\tau}\|_{L^p(B_{K^2})}^{p/k} \lesssim K^{O(1)} \int_{\R^n} |T^{\lambda}f_{\tau}|^{p/k}\ast \zeta_{K}(y) w_{K}(\bar{x}-y)\ud y.
\end{equation}
 Temporarily fix $x \in B(0, K)$ and note that the locally constant property of $\zeta$ implies that
\begin{equation}\label{k broad vrs k linear 4}
|T^{\lambda} f_{\tau}|^{p/k}\ast \zeta_{K}(y) \lesssim |T^{\lambda}f_{\tau}|^{p/k}\ast \zeta_{K}(x+y) \qquad \textrm{for all $y \in \R^n$.}
\end{equation}
Thus, by \eqref{k broad vrs k linear 1}, \eqref{k broad vrs k linear 3} and \eqref{k broad vrs k linear 4}, one deduces that 
\begin{equation*}
\mu_{T^{\lambda}f}(B_{K^2}) \lesssim  K^{O(1)} \int_{(\R^n)^k}\prod_{j=1}^k |T^{\lambda}f_{\tau_j^*}|^{p/k}\ast \zeta_{K}(x+y_j)w_{K}(\bar{x} - y_j)\ud \mathbf{y}.
\end{equation*}
Taking the average of both sides of this estimate over all $x \in B(0,K)$ and shifting the $y_j$ variables, 
\begin{equation*}
\mu_{T^{\lambda}f}(B_{K^2}) \lesssim  K^{O(1)} \int\displaylimits_{(\R^n)^{2k}} \int\displaylimits_{B(\bar{x},K)}  \prod_{j=1}^k |T^{\lambda}f_{\tau_j^*}\circ\sigma_{y_j,z_j}(x)|^{p/k}\,\ud x Z_K(\mathbf{y},\mathbf{z})\ud \mathbf{y}\ud \mathbf{z}
\end{equation*}
where $Z_K(\mathbf{y}, \mathbf{z}) := \prod_{j=1}^k w_{K}(y_j)\zeta_K(z_j)$ and $\sigma_{y_j,z_j}(x) := x+ y_j -z_j$. Since both $w_K$ and $\zeta_K$ decay (at least) as rapidly as $|y|^{-N}$ away from $B(0,K)$, one may restrict the integral in $\mathbf{y}$ and $\mathbf{z}$ from the whole space $(\R^n)^{2k}$ to the bounded region $B(0,\lambda/\bar{C}K)^{2k}$ at the expense of an additional harmless error term. 

It is possible to localise to a finer scale than $\lambda/\bar{C}K$, but this scale suffices for the purposes of the proof. In particular, if $|y|, |z| < \lambda/\bar{C}K$, then it follows from Lemma~\ref{Gauss Lipschitz} that 
\begin{equation}\label{k broad vrs k linear 5}
|G^{\lambda}(x, \omega)  - G^{\lambda}(\sigma_{y,z}(x), \omega)| \lesssim  \frac{|x - \sigma_{y,z}(x)|}{\lambda} \lesssim \bar{C}^{-1}K^{-1} \quad \textrm{for all $(x,\omega) \in \mathrm{supp}\,a$.}
\end{equation}
If $\bar{C}$ is chosen sufficiently large, then this bound is enough to ensure that pre-composing by $\sigma_{y,z}$ preserves certain transversality properties, as discussed below. 

Given a $K^{-1}$-cap $\tau$, let $T_{\tau}^{\lambda}$ be a H\"ormander-type operator given by replacing the amplitude $a^{\lambda}$ in the definition of $T^{\lambda}$ with some amplitude $a_{\tau}^{\lambda}$ which has $\omega$-support in a $2K^{-1}$-cap concentric to $\tau$ and which satisfies $T_{\tau}^{\lambda}f_{\tau} = T^{\lambda}f_{\tau}$. One now wishes to bound
\begin{equation}\label{k broad vrs k linear 6}
 \int\displaylimits_{B(0,\lambda/\bar{C}K)^{2k}} \int\displaylimits_{B(\bar{x},K)}  \prod_{j=1}^k |T_{\tau_j^*}^{\lambda}f_{\tau_j^*}\circ\sigma_{y_j,z_j}(x)|^{p/k}\,\ud x Z_K(\mathbf{y},\mathbf{z})\,\ud \mathbf{y}\ud \mathbf{z}
\end{equation}
For the purposes of this proof a $k$-tuple $(\tau_1, \dots, \tau_k)$  of $K^{-1}$-caps is said to be \emph{transverse} if $(T_{\tau_1}^{\lambda}, \dots, T_{\tau_k}^{\lambda})$ is a $cK^{-(k-1)}$-transverse $k$-tuple of H\"ormander-type operators, for a suitable choice of small constant $c > 0$. Now suppose $(x,\omega) \in \mathrm{supp}\,a$ so that, by the original decomposition, both $x$ and $\bar{x}$ lie in a ball of radius $R/\bar{C}K$. Since $R \leq \lambda$, if follows from Lemma~\ref{Gauss Lipschitz} that
\begin{equation*}
|G^{\lambda}(\bar{x}, \omega)  - G^{\lambda}(x, \omega)| \lesssim  \frac{|\bar{x} - x|}{\lambda} \lesssim \bar{C}^{-1}K^{-1} \quad \textrm{for all $(x,\omega) \in \mathrm{supp}\,a$.}
\end{equation*}
Thus, choosing $\bar{C}$ sufficiently large, in addition to \eqref{transversal 1}, one may assume that $(\tau_1^*, \dots, \tau_k^*)$ is transverse. The expression \eqref{k broad vrs k linear 6} is therefore dominated by
 \begin{equation*}
\sum_{(\tau_j)_{j=1}^k\, \mathrm{trans.}} \,\,\int\displaylimits_{B(0,\lambda/\bar{C}K)^{2k}} \int\displaylimits_{B(\bar{x},K)}  \prod_{j=1}^k |T_{\tau_j}^{\lambda}f_{\tau_j}\circ\sigma_{y_j,z_j}(x) |^{p/k}\,\ud x Z_K(\mathbf{y}, \mathbf{z})\,\ud \mathbf{y}\ud \mathbf{z},
\end{equation*}
where the sum is over all choices of transverse $k$-tuples of caps. Summing both sides of this inequality over all $B(\bar{x},K^2) \in \mathcal{B}_{K^2}$ with $B(\bar{x},K^2) \cap B_z \neq \emptyset$, it suffices to show that
\begin{equation*}
 \int_{\R^n}  \prod_{j=1}^k |T_{\tau_j}^{\lambda}f_{\tau_j}\circ\sigma_{y_j,z_j}(x)|^{p/k}\,\ud x \lesssim R^{\varepsilon} \|f\|_{L^2(B^{n-1})}^p
\end{equation*}
for any choice of $K^{-(k-1)}$-transverse tuple $(\tau_1, \dots, \tau_k)$ and any $\mathbf{y}, \mathbf{z} \in B(0,\lambda/\bar{C}K)^k$. However, defining $T_j^{\lambda}f(x) := T_{\tau_j}^{\lambda}f\circ\sigma_{y_j,z_j}(x)$ and again choosing $\bar{C}$ to be sufficiently large, it follows from \eqref{k broad vrs k linear 5} that these operators are $\sim K^{-(k-1)}$-transverse in the sense of Definition~\ref{transverse definition}. Thus, the desired estimate is an immediate consequence of the hypothesised multilinear inequality.

\end{proof}




 \section{Algebraic preliminaries}\label{Algebraic preliminaries section}




\subsection{Basic definitions and results} Let $0 \leq m \leq n$ and consider a collection of real polynomials $P_j \in \R[X_1, \dots, X_n]$, $1 \leq j \leq n-m$. Let $Z(P_1, \dots, P_{n-m})$ denote their zero locus; that is,
\begin{equation*}
Z(P_1, \dots, P_{n-m}) := \big\{ x \in \R^n : P_j(x) = 0 \textrm{ for $1 \leq j \leq n-m$} \big\}.
\end{equation*}
A set of the above form is referred to as a \emph{variety} and the \emph{maximum degree} of $Z(P_1, \dots, P_{n-m})$ is defined to be the number
\begin{equation*}
\overline{\deg}\, Z(P_1, \dots, P_{n-m}) := \max_{1 \leq j \leq n-m} \deg P_j. 
\end{equation*}

\begin{remark}
 The notion of maximum degree is unnatural from a geometric perspective: it is not an intrinsic quantity associated to the variety but depends on the choice of defining polynomials $P_1, \dots, P_{n-m}$. Nevertheless, it is a convenient quantity to work with for the purposes of this article.  
\end{remark}

Throughout this article it will be convenient to work with varieties which satisfy the additional property that the $n \times (n-m)$ matrix $(\nabla P_1(z) \dots \nabla P_{n-m}(z))$ has full rank whenever $z \in Z(P_1, \dots, P_{n-m})$. In this case $Z(P_1, \dots, P_{n-m})$ is said to be a \emph{transverse complete intersection}. Clearly any transverse complete intersection is a smooth $m$-dimensional submanifold of $\R^n$. 

For 0-dimensional transverse complete intersections the following well-known variant of the classical B\'ezout theorem holds (see, for instance, \cite{Chen2010}). 

\begin{theorem}[B\'ezout's theorem]\label{Bezout's theorem} Suppose $Z = Z(P_1, \dots, P_n)$ is a transverse complete intersection. Then $Z$ is finite and $\#Z \leq \prod_{j=1}^n \deg P_j$.
\end{theorem}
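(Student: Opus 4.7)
The plan is to reduce the statement to the classical complex (projective) B\'ezout theorem. The starting observation is that with $m = 0$, the transverse complete intersection hypothesis says that the $n \times n$ matrix $(\nabla P_1(z) \mid \cdots \mid \nabla P_n(z))$ has full rank $n$ at every $z \in Z$, which is equivalent to the Jacobian of the map $P := (P_1, \dots, P_n) \colon \R^n \to \R^n$ being invertible at each such point. Since this is the same matrix regardless of whether $P$ is viewed as a real or complex map, the complex Jacobian of $P$ at $z$ is also invertible.

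Next, I would apply the complex inverse function theorem: each $z \in Z$ is therefore an isolated point of the fibre $P^{-1}(0)$ in $\C^n$, and hence an isolated point of the complex zero locus $V := \{w \in \C^n : P_j(w) = 0 \text{ for } 1 \leq j \leq n\}$ (viewing $Z \subseteq \R^n \subseteq \C^n$ in the usual way).

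The final step is to invoke the standard consequence of the projective B\'ezout theorem that, for any polynomials $P_1, \dots, P_n \in \C[X_1, \dots, X_n]$, the total number of isolated points of the common zero locus $V$ in $\C^n$ is at most $\prod_{j=1}^n \deg P_j$. To see this, homogenise each $P_j$ to $\tilde{P}_j \in \C[X_0, \dots, X_n]$ of degree $d_j := \deg P_j$ and consider the projective variety $\tilde{V} \subseteq \mathbb{P}^n(\C)$ cut out by $\tilde{P}_1, \dots, \tilde{P}_n$. Projective B\'ezout (in its general, multiplicity-respecting form) then bounds the sum of intersection multiplicities over the isolated points of $\tilde V$ by $\prod_{j=1}^n d_j$, and each isolated point of $V$ contributes an isolated point of $\tilde V$ with intersection multiplicity at least $1$. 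Combining the three steps yields finiteness of $Z$ together with the bound $\#Z \leq \prod_{j=1}^n \deg P_j$.

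The main obstacle, which I would not try to rederive in detail, is the last ingredient: the validity of the isolated-point count even when $\tilde V$ contains positive-dimensional components alongside its isolated points (i.e., when the intersection fails to be proper). This is a classical but slightly delicate fact about improper intersections in algebraic geometry, and the paper handles it simply by citing \cite{Chen2010}.
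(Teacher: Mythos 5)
The paper offers no proof of this statement---it is quoted as a known fact with a pointer to \cite{Chen2010}---so there is no in-text argument to compare against. Your reconstruction is correct and is the natural way to reduce the real statement to the complex one. The first two steps are complete and elementary: with $m=0$ the transverse complete intersection condition makes the real Jacobian of $P = (P_1, \dots, P_n)$ invertible at each $z \in Z$, this matrix coincides with the complex Jacobian, and the holomorphic inverse function theorem then forces $z$ to be an isolated point of the complex zero set $V \subseteq \C^n$. The one external ingredient you lean on---that $n$ complex polynomials in $n$ variables of degrees $d_1, \dots, d_n$ have at most $\prod_j d_j$ isolated common zeros in $\C^n$, even when the intersection is improper---is precisely the nontrivial content, and it is exactly what the paper's cited reference supplies (the refined/excess form of B\'ezout; see also Example~12.3.1 in Fulton's \emph{Intersection Theory}). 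Two small points you gesture at but could make explicit: isolated points of $V$ remain isolated in the projective closure $\tilde{V}$ because $\tilde{V} \cap \C^n = V$ and the affine chart is open in $\mathbb{P}^n(\C)$, and the local intersection multiplicity at such a point is at least one since the corresponding local ring is nonzero. Neither is a gap. Note also that your argument only certifies that the \emph{real} points of $Z$ are among the isolated points of $V$; there may be additional non-real isolated points, but this only strengthens the inequality $\#Z \leq \prod_j \deg P_j$.
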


A key tool in the present analysis of H\"ormander-type operators is the following polynomial partitioning result, which is a variant of the polynomial partitioning theorem introduced in \cite{Guth2015} and is based on the classical polynomial ham sandwich theorem of Stone and Tukey \cite{Stone1942}. 

\begin{theorem}[Polynomial partitioning \cite{Guth2018}]\label{polynomial partitioning theorem} Suppose $W \in L^1(\R^n)$ is non-negative. For any degree $D \in \N$ there is a polynomial $P$ of degree $\deg P \lesssim D$ such that the following hold. 
\begin{enumerate}[i)]
\item The set $Z(P)$ is a finite union of $\sim \log D$ transverse complete intersections.
\item If $\{O_i\}_{i \in \mathcal{I}}$  denotes the set of connected components of $\R^n\setminus Z(P)$, then $\#\mathcal{I} \lesssim D^n$ and 
\begin{equation*}
\int_{O_i} W \sim D^{-n} \int_{\R^n} W \qquad \textrm{for all $i \in \mathcal{I}$.}
\end{equation*}
\end{enumerate}
\end{theorem}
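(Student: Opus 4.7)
The plan is to construct $P$ as a product $P_1 P_2 \cdots P_k$ with $k \sim \log D$, where each $P_j$ is produced by a separate application of the classical polynomial ham sandwich theorem of Stone--Tukey. Recall that this result states: for $N$ compactly supported finite Borel measures on $\R^n$, there is a nonzero polynomial of degree $\lesssim N^{1/n}$ whose zero set bisects each measure, in the sense that $\{P > 0\}$ and $\{P < 0\}$ carry equal mass. After a standard truncation and limiting argument, one may assume $W$ has compact support; write $M := \int W$ and fix $k := \lceil n \log_2 D \rceil$ so that $2^k \sim D^n$.

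The $P_j$ are constructed recursively. Having chosen $P_1, \ldots, P_{j-1}$, form the $2^{j-1}$ sign-pattern sets
\[
C_\sigma := \{ x \in \R^n : \mathrm{sign}\, P_i(x) = \sigma_i \text{ for all } 1 \leq i \leq j-1 \}, \qquad \sigma \in \{+,-\}^{j-1},
\]
each of which carries $W$-mass exactly $M / 2^{j-1}$ by the inductive hypothesis. Apply ham sandwich to the $N = 2^{j-1}$ measures $W\chi_{C_\sigma}\,\ud x$ to produce $P_j$ of degree $\lesssim 2^{(j-1)/n}$ whose zero set bisects each $C_\sigma$. Taking $P := P_1 \cdots P_k$, a geometric sum bounds $\deg P \leq \sum_{j=1}^k \deg P_j \lesssim 2^{k/n} \lesssim D$; the $2^k \sim D^n$ final sign-pattern cells each carry mass $M/2^k \sim D^{-n}M$.

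For part (ii), a classical theorem of Oleinik--Petrovski--Thom--Milnor bounds the number of connected components of $\R^n \setminus Z(P)$ by $\lesssim (\deg P)^n \lesssim D^n$, and since each component $O_i$ is contained in some sign-pattern cell the upper bound $\int_{O_i} W \lesssim D^{-n} M$ is automatic. To achieve the matching lower bound, one groups together the connected components within each sign-pattern cell whose individual $W$-mass is too small, forming a new indexing $\{O_i\}$ in which each cell has mass $\sim D^{-n} M$; this is consistent with $\#\mathcal{I} \lesssim D^n$ since the total mass is $M$. For part (i), a generic arbitrarily small perturbation of the coefficients of each $P_j$ can be chosen so that $\nabla P_j$ is nonvanishing on $Z(P_j)$, which makes $Z(P_j)$ a smooth hypersurface, namely a transverse complete intersection of codimension one. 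By continuity of $W$-integrals in the polynomial coefficients, the bisection is preserved up to a multiplicative constant, which is absorbed into the $\sim$ of the conclusion, so $Z(P) = \bigcup_{j=1}^k Z(P_j)$ realises $Z(P)$ as a union of $k \sim \log D$ transverse complete intersections.

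The main obstacle will be the combined perturbation and merging bookkeeping needed to simultaneously ensure the transverse complete intersection property of each $Z(P_j)$, mass equidistribution at the (effective) level of connected components, and the count bound $\#\mathcal{I} \lesssim D^n$. This delicate combination is the technical heart of the argument in \cite{Guth2018}, and the plan is to follow that strategy closely.
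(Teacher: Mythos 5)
The paper does not actually prove this theorem — it is imported from \cite{Guth2018} — and your sketch is precisely the iterated Stone--Tukey argument used there: at stage $j$ bisect the $2^{j-1}$ existing sign cells with a polynomial of degree $\lesssim 2^{(j-1)/n}$, take the product $P = P_1\cdots P_k$ with $k \sim n\log_2 D$ so that $\deg P \lesssim 2^{k/n} \sim D$, perturb each factor (via Sard) so that $\nabla P_j$ is nonvanishing on $Z(P_j)$, and invoke Milnor--Thom for the component count. Your observation that the two-sided bound $\int_{O_i}W \sim D^{-n}\int W$ cannot hold for literal connected components, and that one must instead take the $O_i$ to be the $2^k$ sign-condition cells (equivalently, merge components within a sign cell), correctly identifies the standard abuse of terminology in the statement; this causes no harm downstream, since the cellular argument (Lemma~\ref{each tube enters few cells}) only uses the per-cell upper bound together with a B\'ezout count of how many cells a low-degree curve can meet.
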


The connected components $O_i$ of the set $\R^n \setminus Z(P)$ are referred to as \emph{cells}. It is remarked that in \cite{Guth2018} a stronger version of the above theorem is stated and proved, which provides further structural information about the polynomial $P$ (in particular, the full result is stable under certain small perturbations of $P$). Whilst the methods of this article will require this strengthened version of Theorem~\ref{polynomial partitioning theorem}, the full statement of the result is not reproduced here (it is only needed to address certain technical aspects of the analysis). 

It was observed in recent work of the first author \cite{Guth2016, Guth2018} that polynomial partitioning is a useful tool for studying oscillatory integral operators. Roughly speaking, Theorem~\ref{polynomial partitioning theorem} can be used to effectively reduce the problem to situations where the mass of $T^{\lambda}f$ is concentrated in the neighbourhood of some low-degree algebraic variety; note that this is precisely the setup in the sharp examples discussed in $\S$\ref{Necessary conditions section}. 




\subsection{Polynomial approximation}\label{polynomial approximation subsection} Recall that the operators $T^{\lambda}$ are defined with respect to data belonging to the $C^{\infty}$ category. In order to apply algebraic methods to the problem, one must approximate certain $C^{\infty}$ functions by polynomials. This applies, in particular, to the core curves $\Gamma_{\theta, v}^{\lambda}$ which appear in the definition of the wave packets in $\S$\ref{Wave packet section}. Similar issues were addressed in \cite{Bourgain2011, Zahl2012} via a Jackson-type approximation theorem (see, for instance, \cite{Bagby2002}); for the present purpose an entirely elementary Taylor approximation argument is all that is required. 

Let $\varepsilon > 0$ be a small parameter and define $N = N_{\varepsilon} := \lceil 1/2\varepsilon \rceil \in \N$. Suppose that $\Gamma \colon (-1, 1) \to \R^n$ is a smooth curve satisfying
\begin{equation*}
\|\Gamma\|_{C^{N+1}(-1,1)} := \max_{0 \leq k \leq N+1} \sup_{|t| < 1} |\Gamma^{(k)}(t)| \lesssim 1.
\end{equation*}
The following lemma implies that
\begin{equation*}
\|\Gamma^1_{\theta,v}\|_{C^{N+1}(-1,1)}\lesssim 1,
\end{equation*}
revealing further properties of the core curves of the tubes defined in \S\ref{Wave packet section}.

\begin{lemma} The curves $\Gamma_{\theta,v}^1$ satisfy
\begin{equation*}
\begin{array}{rll}
    |(\Gamma_{\theta,v}^1)'(t)| &\sim 1  &\textrm{for all $t \in I_{\theta,v}^1$,}  \\[5pt]
     \displaystyle\sup_{t \in I_{\theta,v}^1} |(\Gamma_{\theta,v}^1)^{(k)}(t)| &\lesssim c_{\mathrm{par}} & \textrm{for $2 \leq k \leq N$.}
\end{array}
\end{equation*}
 \end{lemma}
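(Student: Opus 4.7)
Since $\Gamma^1_{\theta,v}(t) = (\gamma^1_{\theta,v}(t), t)$, the last coordinate is linear in $t$, so
\[
|(\Gamma^1_{\theta,v})'(t)|^2 = 1 + |(\gamma^1_{\theta,v})'(t)|^2 \quad \text{and}\quad  (\Gamma^1_{\theta,v})^{(k)}(t) = ((\gamma^1_{\theta,v})^{(k)}(t), 0) \text{ for } k \geq 2.
\]
Thus it suffices to show $|(\gamma^1_{\theta,v})^{(k)}(t)| \lesssim c_{\mathrm{par}}$ for all $1\leq k \leq N$ and all $t\in I^1_{\theta,v}$ (the conclusion $|(\Gamma^1_{\theta,v})'(t)|\sim 1$ then follows from the display, since $c_{\mathrm{par}}$ is small).

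The starting point is the defining identity $\partial_\omega \phi(\gamma^1_{\theta,v}(t), t;\omega_\theta) = v$ from \eqref{change of variables}. Abbreviate $\gamma(t) := \gamma^1_{\theta,v}(t)$ and set $g(y,s) := \partial_\omega \phi(y,s;\omega_\theta)$, regarded as a smooth $\R^{n-1}$-valued function. Differentiating $g(\gamma(t),t) = v$ once in $t$ and using \eqref{close to identity} gives
\[
\partial_{x'\omega}^2\phi \cdot \gamma'(t) = -\partial_{x_n\omega}^2\phi, \qquad \text{evaluated at } (\gamma(t),t;\omega_\theta).
\]
By Lemma~\ref{reduction lemma}, $\partial_{x'\omega}^2\phi$ is within $c_{\mathrm{par}}$ of $\mathrm{I}_{n-1}$ in operator norm (hence its inverse exists and is $O(1)$), while $|\partial_{x_n\omega}^2\phi| < c_{\mathrm{par}}$. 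This gives the base case $|\gamma'(t)| \lesssim c_{\mathrm{par}}$.

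For higher $k$, proceed by induction. Applying $\partial_t^k$ to $g(\gamma(t),t)=v$ and using Fa\`a di Bruno's formula, one isolates the term involving the top-order derivative $\gamma^{(k)}(t)$ to obtain
\[
\partial_{x'\omega}^2\phi \cdot \gamma^{(k)}(t) = -G_k\bigl(\gamma'(t),\ldots,\gamma^{(k-1)}(t)\bigr),
\]
where $G_k$ is a polynomial in the lower-order derivatives of $\gamma$ whose coefficients are products of mixed derivatives $\partial_x^\alpha \partial_\omega^{\beta}\phi$ with $|\alpha|\geq 1$ and $|\beta|\geq 1$, evaluated at $(\gamma(t),t;\omega_\theta)$. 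The crucial combinatorial observation is that the only contribution where all $x$-derivatives in the chain rule collapse onto a single factor of order one is precisely the extracted $\partial_{x'\omega}^2\phi \cdot \gamma^{(k)}(t)$ term; every remaining term in $G_k$ must contain at least one factor $\partial_x^{\alpha_0}\partial_\omega\phi$ with $|\alpha_0|\geq 2$. By Lemma~\ref{third reduction lemma} (which controls all mixed derivatives with at least two $x$-derivatives), each such factor is $\lesssim c_{\mathrm{par}}$. Combined with the inductive hypothesis $|\gamma^{(j)}(t)|\lesssim c_{\mathrm{par}}$ for $1\leq j\leq k-1$ and the $O(1)$ bound on $(\partial_{x'\omega}^2\phi)^{-1}$, this yields $|\gamma^{(k)}(t)|\lesssim c_{\mathrm{par}}$, completing the induction.

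The main obstacle is bookkeeping: one must verify carefully that the expansion of $\partial_t^k[g(\gamma(t),t)]$ via Fa\`a di Bruno indeed separates cleanly into the single $\gamma^{(k)}$-term with coefficient $\partial_{x'\omega}^2\phi$ plus remainder terms each carrying a derivative of $\phi$ of $x$-order at least $2$. Once this structural fact is in place, the bounds from Lemmas~\ref{reduction lemma}, \ref{second reduction lemma} and \ref{third reduction lemma} slot in mechanically, with all combinatorial constants depending only on $k\leq N = N_\varepsilon$ and hence absorbable into the implicit constants.
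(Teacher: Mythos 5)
Your proof is correct and takes essentially the same approach as the paper's. The paper's argument also starts from the defining identity $\partial_\omega\phi(\gamma^1_{\theta,v}(t),t;\omega_\theta)=v$, differentiates once to obtain $(\gamma^1_{\theta,v})'(t)=-\partial_{\omega x'}^2\phi^{-1}\partial_\omega\partial_{x_n}\phi$, and then invokes \eqref{close to identity} and \eqref{small x derivatives} for the higher-order bounds; the paper simply leaves the Fa\`a di Bruno/Leibniz bookkeeping for $k\geq 2$ implicit, whereas you spell out the structural fact that every term other than $\partial_{\omega x'}^2\phi\cdot\gamma^{(k)}$ carries a factor $\partial_x^{\alpha_0}\partial_\omega\phi$ with $|\alpha_0|\geq 2$ and so is controlled by \eqref{small x derivatives}.
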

 
 \begin{proof} This follows from the reductions made in $\S$\ref{Reductions section}. Indeed, recall that $\Gamma_{\theta,v}^1(t) = (\gamma^1_{\theta,v}(t), t)$ satisfies $\partial_{\omega}\phi(\gamma_{\theta,v}^1(t),t;\omega_{\theta}) = v$. Differentiating this identity yields
 \begin{equation*}
     (\gamma^1_{\theta,v})'(t) = -\partial_{\omega x'}^2\phi(\gamma_{\theta,v}^1(t),t;\omega_{\theta})^{-1}\partial_{\omega}\partial_{x_n}\phi(\gamma_{\theta,v}^1(t),t;\omega_{\theta}).
 \end{equation*}
 The bounds now follow from \eqref{close to identity} and \eqref{small x derivatives}, provided $N_{\mathrm{par}}$ is chosen to be sufficiently large.
 \end{proof}
  
Let $[\Gamma]_{\varepsilon} \colon \R \to \R^n$ denote the polynomial curve given by the degree $N$ Taylor approximation of $\Gamma$ around 0. Observe that
\begin{equation*}
\|[\Gamma]_{\varepsilon}\|_{C^{\infty}(-2, 2)} \leq e^2 \|\Gamma\|_{C^{N}(-1,1)} \lesssim 1. 
\end{equation*}
Given $\lambda \gg 1$, noting that $\lambda^{-\varepsilon N} \leq \lambda^{-1/2}$, Taylor's theorem yields
\begin{equation*}
|\Gamma^{(i)}(t) - [\Gamma]_{\varepsilon}^{(i)}(t)| \lesssim_{\varepsilon} \lambda^{-1/2}|t|^{1 - i} \quad \textrm{for all $|t| \lesssim_{\varepsilon} \lambda^{-\varepsilon}$ and $i = 0,1$.} 
\end{equation*}
Letting $\Gamma^{\lambda} \colon (-\lambda, \lambda) \to \R^n$ denote the rescaled curve $\Gamma^{\lambda}(t) := \lambda \Gamma(t/\lambda)$, the above inequalities trivially imply that
\begin{equation}\label{Taylor approximation 1}
\|[\Gamma^{\lambda}]_{\varepsilon}'\|_{C^{\infty}(-2\lambda, 2\lambda)} \lesssim 1 \quad \textrm{and} \quad \|[\Gamma^{\lambda}]_{\varepsilon}''\|_{C^{\infty}(-2\lambda, 2\lambda)} \lesssim \lambda^{-1} 
\end{equation}
and
\begin{equation}\label{Taylor approximation 2}
|(\Gamma^{\lambda})^{(i)}(t) - ([\Gamma^{\lambda}]_{\varepsilon})^{(i)}(t)| \lesssim_{\varepsilon} \lambda^{-1/2}|t|^{1 - i} \quad \textrm{for all $|t| \lesssim_{\varepsilon} \lambda^{1-\varepsilon}$ and $i = 0,1$.} 
\end{equation}
Combining the $i = 1$ case of the above estimate with the elementary inequality\footnote{This follows by estimating the area of a triangle with sides of length $|x|,|y|,|x-y|$, taking the length of the `base' to be $\min \{|x|, |y|\}$ and bounding the `perpendicular height' by $|x-y|$.}
\begin{equation*}
|x \wedge y| \leq \min \{|x|, |y|\} |x-y| \qquad \textrm{for all $x,y \in \R^n \setminus \{0\}$,}
\end{equation*}c 
together with the fact that $|(\Gamma^{\lambda})'(t)| \sim |[\Gamma^{\lambda}]_{\varepsilon}'(t)|  \sim 1$, one observes that the tangent spaces to the curves $\Gamma^{\lambda}$ and $[\Gamma^{\lambda}]_{\varepsilon}$ have a small angular separation; more precisely,
\begin{equation}\label{Taylor approximation 3}
\angle (T_{\Gamma^{\lambda}(t)}\Gamma^{\lambda}, T_{[\Gamma^{\lambda}]_{\varepsilon}(t)}[\Gamma^{\lambda}]_{\varepsilon}) \lesssim_{\varepsilon} \lambda^{-1/2} \qquad \textrm{for all $|t| \lesssim_{\varepsilon} \lambda^{1-\varepsilon}$.}
\end{equation}




\subsection{Transverse interactions between curved tubes and varieties} 

Let $Z = Z(P_1, \dots, P_{n-m})$ be a transverse complete intersection and fix a polynomial curve $\Gamma \colon \R \to \R^n$. The purpose of this subsection is to study transverse interactions between $\Gamma$ and an $r$-neighbourhood of $Z$; that is, roughly speaking, one wishes to understand the set of points at which the curve $\Gamma$ enters $\mathcal{N}_r Z$ at a large angle. More precisely, given $\alpha, r > 0$ the problem is to estimate the size of the set
\begin{equation*}
Z_{> \alpha, r, \Gamma} := \big\{z \in Z : \exists\, x \in \Gamma \textrm{ with } |x - z| < r \textrm{ and } \angle(T_zZ, T_x\Gamma) > \alpha \big\}.
\end{equation*}
It will be convenient to assume that $\Gamma$ is a \emph{polynomial graph}, by which it is meant that the curve can be rotated so that it is given by $\Gamma(t) = (\gamma(t), t)$ for some polynomial mapping $\gamma \colon \R \to \R^{n-1}$. 

\begin{lemma}\label{transverse interaction lemma} Let $n \geq 2$, $1 \leq m \leq n$ and $Z = Z(P_1, \dots, P_{n-m}) \subseteq \R^n$ be a transverse complete intersection. Suppose $\Gamma \colon \R \to \R^n$ is a polynomial graph satisfying
\begin{equation}\label{transverse interaction 1}
\|\Gamma'\|_{L^{\infty}(-2\lambda, 2\lambda)} \lesssim 1 \quad \textrm{and} \quad \|\Gamma''\|_{L^{\infty}(-2\lambda, 2\lambda)} \leq \delta
\end{equation}
for some $\lambda, \delta > 0$. There exists a dimensional constant $\bar{C} > 0$ such that for all $\alpha > 0$ and $0 <r < \lambda$ satisfying $\alpha \geq \bar{C}\delta r$ the set $Z_{> \alpha, r, \Gamma} \cap B(0, \lambda)$ is contained in a union of $O((\overline{\deg}\, Z \cdot \deg \Gamma)^n)$ balls of radius $r/\alpha$. 
\end{lemma}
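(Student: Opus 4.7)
My plan is to reduce the lemma to a B\'ezout count of \emph{nearest-point critical events} --- pairs $(t^*, z^*) \in \R \times Z$ for which $z^* - \Gamma(t^*)$ is simultaneously perpendicular to $T_{\Gamma(t^*)}\Gamma$ and to $T_{z^*}Z$. Such events form the common zero locus of the square polynomial system
\begin{equation*}
P_j(z) = 0 \quad (1\leq j\leq n-m), \qquad z - \Gamma(t) = \sum_{j=1}^{n-m}\lambda_j \nabla P_j(z), \qquad (z - \Gamma(t))\cdot \Gamma'(t) = 0,
\end{equation*}
in the $2n - m + 1$ variables $(z, t, \lambda)$, where $\lambda = (\lambda_1, \dots, \lambda_{n-m})$ are Lagrange multipliers encoding the membership $z - \Gamma(t) \in N_zZ$. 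Each equation has degree $\lesssim \overline{\deg}\, Z \cdot \deg \Gamma$; after a small generic perturbation to produce a transverse complete intersection, Theorem~\ref{Bezout's theorem} bounds the number of isolated zeros by $O((\overline{\deg}\, Z \cdot \deg \Gamma)^n)$.

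The geometric input is that each $z \in Z_{> \alpha, r, \Gamma}\cap B(0,\lambda)$ lies within distance $O(r/\alpha)$ of some critical event. Given $z$ with a nearby $x_0 = \Gamma(t_0)$ realising the angle condition, I would produce such a critical event by running a gradient flow of $f(t,w) := \tfrac{1}{2}|\Gamma(t) - w|^2$ on $\R \times Z$ starting at $(t_0, z)$: the angle hypothesis forces the component of $\Gamma'(t_0)$ normal to $T_zZ$ to have length $\gtrsim \alpha$, and the transversality of $T_{\Gamma(t)}\Gamma$ and $T_wZ$ propagates this to an estimate $|\nabla_{\R \times Z} f| \gtrsim \alpha r$ throughout the relevant regime. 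Since the initial value $f(t_0, z) < r^2/2$, the flow terminates at a genuine critical event after arclength $O(r/\alpha)$, giving the required proximity. The condition $\alpha \geq \bar{C}\delta r$ enters here to absorb the curvature of $\Gamma$ as a small perturbation over this scale. The degenerate case $\Gamma \subseteq Z$ (yielding positive-dimensional components of the Bezout system) is handled separately: in that setting $T_x\Gamma \subseteq T_xZ$ along $\Gamma$, and a short continuity argument combined with the angle hypothesis rules out $z \in Z_{> \alpha, r, \Gamma}$ on the relevant scale. Centring balls of radius $Cr/\alpha$ at the critical events then gives the required cover.

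The hardest step is the quantitative geometric estimate producing a critical event at the sharp scale $r/\alpha$. The potential obstruction is that no uniform bound on the second fundamental form of $Z$ is available just from $\overline{\deg}\, Z$, so a naive implicit function theorem argument centred at $z$ does not directly yield the sharp scale. The gradient-flow approach bypasses this by working intrinsically on $\R \times Z$, but requires careful verification that $|\nabla_{\R \times Z} f|$ stays bounded below by a multiple of $\alpha r$ until the flow actually terminates; this is precisely where the combination of the angle condition, the approximate linearity of $\Gamma$ (from $\alpha \geq \bar{C}\delta r$), and the transverse complete intersection structure of $Z$ are used in concert.
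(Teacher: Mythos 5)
Your approach is genuinely different from the paper's. The paper straightens the curve: it applies the diffeomorphism $\Upsilon(x',x_n) := (x' - \gamma(x_n), x_n)$, which sends $\Gamma$ to the vertical line $\ell = \mathrm{span}\{e_n\}$ and $Z$ to the transverse complete intersection $\tilde{Z} := \Upsilon(Z)$ with $\overline{\deg}\,\tilde{Z} \leq \overline{\deg}\,Z\cdot\deg\Gamma$, verifies by a short angle/distance comparison (using \eqref{transverse interaction 1}) that $Z_{>\alpha,r,\Gamma}\cap B(0,\lambda) \subseteq \Upsilon^{-1}(\tilde{Z}_{>c\alpha, Cr, \ell})$, and then invokes the already-established case of lines from \cite[Lemma 5.7]{Guth2018}. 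This reduction is clean precisely because it pushes all the genuinely hard algebraic-geometric content into the known line case. Your plan — a Lagrange-multiplier Bézout count of critical events plus a gradient-flow argument to show each $z \in Z_{>\alpha,r,\Gamma}$ lies within $O(r/\alpha)$ of a critical event — is a plausible-looking direct attack, but it has gaps.

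The central gap is the claimed gradient lower bound. You assert that the angle hypothesis gives $|\nabla_{\R\times Z}f| \gtrsim \alpha r$ ``throughout the relevant regime,'' but this need not hold even at the initial point $(t_0, z)$. Writing $v := \Gamma(t_0) - z$, one has $\nabla_{\R\times Z} f(t_0,z) = (v\cdot\Gamma'(t_0),\, -\mathrm{proj}_{T_zZ}v)$. The angle condition bounds $|\mathrm{proj}_{N_zZ}\Gamma'(t_0)|\gtrsim\alpha$ from below, but says nothing about the decomposition of $v$ itself: when $n-m\geq 2$, the set of directions $v$ with $\mathrm{proj}_{T_zZ}v = 0$ and $v\cdot\Gamma'(t_0)=0$ has dimension $n-m-1\geq 1$, so $v$ can be almost normal to $Z$ and almost perpendicular to $\Gamma'(t_0)$ simultaneously, with the angle condition fully in force, and then $|\nabla f(t_0,z)|$ is arbitrarily small compared to $\alpha|v|$. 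Even where the bound holds initially, maintaining it along the flow requires controlling the rotation of $T_wZ$, which is a second-fundamental-form quantity that $\overline{\deg}\,Z$ does not bound; the hypothesis $\alpha\geq\bar{C}\delta r$ only tames the rotation of $T_{\Gamma(t)}\Gamma$. You flag this yourself, but the gradient flow does not bypass it: without curvature control, ``small gradient'' does not imply proximity to a critical event, so you cannot bound the arclength of the flow. There are also two secondary issues worth noting: the naive Bézout count over the $2n-m+1$ variables $(z,t,\lambda)$ gives an exponent that is not $n$ (harmless for the application, where $\overline{\deg}Z\cdot\deg\Gamma = O_\varepsilon(1)$, but it does not yield the stated form of the lemma), and the treatment of positive-dimensional components of the critical-event variety via ``a small generic perturbation'' is left vague — both of these are handled inside the line-case lemma in the paper's route.
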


The case of interest is given by taking $\Gamma := [\Gamma_{\theta,v}^{\lambda}]_{\varepsilon}$ to be the polynomial approximant of the curve $\Gamma_{\theta,v}^{\lambda}$ introduced in the previous subsection. Here $\deg \Gamma \lesssim_{\varepsilon} 1$ and, by \eqref{Taylor approximation 1}, the condition \eqref{transverse interaction 1} holds with $\delta \sim_{\varepsilon} 1/\lambda$; thus, Lemma~\ref{transverse interaction lemma} implies that for $\alpha > 0$ and $0 <r < \lambda$  satisfying $\alpha \gtrsim r/\lambda$, the set $Z_{> \alpha, r, \Gamma} \cap B(0, \lambda)$ is contained in a union of $O_{\varepsilon}((\overline{\deg}\, Z)^n)$ balls of radius $r/\alpha$.

Using B\'ezout's theorem (that is, Theorem~\ref{Bezout's theorem}), Lemma~\ref{transverse interaction lemma} was established in the case where $\Gamma$ is a line by the first author in  \cite[Lemma 5.7]{Guth2018}. If $\Gamma = \ell$ is a line, then the condition \eqref{transverse interaction 1} holds for any $\lambda > 0$ and any $\delta > 0$ and therefore the lemma implies that for any $\alpha, r >0$ the set $Z_{> \alpha, r, \ell}$ is contained in a union of $O((\overline{\deg}\, Z)^n)$ balls of radius $r/\alpha$. The result for general curves $\Gamma$ is, in fact, a rather straight-forward consequence of the special case of lines. 

\begin{proof}[Proof (of Lemma~\ref{transverse interaction lemma})] Since the problem is rotationally invariant, one may assume that $\Gamma(t) = (\gamma(t), t)$ where $\gamma \colon \R \to \R^{n-1}$ is a polynomial mapping. 

The function $\Upsilon \colon \R^n \to \R^n$ given by $\Upsilon(x',  x_n) := (x' - \gamma(x_n), x_n)$ is clearly a diffeomorphism which maps bijectively between $\Gamma$ and the vertical line $\ell = \mathrm{span}\{e_n\}$. Furthermore, it easily follows that the image set
\begin{equation*}
\tilde{Z} := \Upsilon \big( Z(P_1, \dots, P_{n-m})\big) = Z(P_1 \circ \Upsilon^{-1}, \dots, P_{n-m} \circ \Upsilon^{-1})
\end{equation*}
is a transverse complete intersection of maximum degree $\overline{\deg}\,\tilde{Z} \leq \overline{\deg}\, Z \cdot \deg \Gamma$. 

Let $\lambda, \alpha, r$ satisfy the hypotheses of the lemma for some suitably large dimensional constant $\bar{C} \geq 1$. The key observation is as follows.

\begin{claim} There exist dimensional constants $0 < c \leq 1$ and $C \geq 1$ such that 
\begin{equation*}
Z_{> \alpha, r, \Gamma} \cap \big(\R^{n-1} \times (-\lambda, \lambda)\big) \subseteq \Upsilon^{-1} \big( \tilde{Z}_{>c\alpha, Cr, \ell} \big).
\end{equation*}
\end{claim}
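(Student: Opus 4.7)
The plan is to verify the inclusion by transferring the witness directly: given $z \in Z_{>\alpha, r, \Gamma} \cap (\R^{n-1} \times (-\lambda,\lambda))$ with witness $x \in \Gamma$, I will take $\tilde{x} := \Upsilon(x)$ as witness for $\tilde{z} := \Upsilon(z)$. The point $\tilde{x}$ lies on $\ell$ because $x = (\gamma(x_n), x_n)$ yields $\Upsilon(x) = (0,x_n)$. The distance estimate $|\tilde{x} - \tilde{z}| < Cr$ is then immediate from the global Lipschitz bound $|\Upsilon(p) - \Upsilon(q)| \leq (1 + \|\gamma'\|_\infty) |p-q|$, which is $\lesssim 1$ by \eqref{transverse interaction 1}.

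The main work lies in the angle estimate $\angle(T_{\tilde{z}}\tilde{Z}, T_{\tilde{x}}\ell) > c\alpha$. The key observation is that at any point $y$ the differential has the explicit form
\[
D\Upsilon|_y = \begin{pmatrix} \mathrm{I}_{n-1} & -\gamma'(y_n) \\ 0 & 1 \end{pmatrix},
\]
which is bi-Lipschitz as a linear map with constants bounded in terms of $\|\gamma'\|_\infty$, hence dimensional. Moreover $D\Upsilon|_x$ sends $T_x\Gamma = \mathrm{span}\{\Gamma'(x_n)\}$ to $\mathrm{span}\{e_n\} = T_{\tilde{x}}\ell$, since $D\Upsilon|_x(\gamma'(x_n),1) = e_n$. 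A standard calculation shows that a bi-Lipschitz linear isomorphism distorts angles between subspaces by at most a factor depending only on its bi-Lipschitz constant, so
\[
\angle(D\Upsilon|_x v, e_n) \geq c_0 \angle(v, T_x\Gamma) > c_0 \alpha \quad \text{for every nonzero } v \in T_zZ,
\]
for some dimensional constant $c_0 > 0$.

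It remains to replace $D\Upsilon|_x$ by $D\Upsilon|_z$ in the expression above; this is the step that uses the curvature bound on $\Gamma$. Writing $B := D\Upsilon|_z - D\Upsilon|_x$, only the final column of the upper block is nonzero, so the mean value theorem combined with $\|\Gamma''\|_\infty \leq \delta$ and $|x-z|<r$ gives $\|B\|_{\mathrm{op}} \leq |\gamma'(z_n) - \gamma'(x_n)| \leq \delta r$. Since $D\Upsilon|_x$ is bounded below on unit vectors by a dimensional constant, an elementary perturbation of vectors yields
\[
\sin \angle(D\Upsilon|_z v, e_n) \geq \sin \angle(D\Upsilon|_x v, e_n) - O(\delta r) \geq c_0' \alpha - O(\delta r)
\]
for some dimensional $c_0' > 0$. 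The hypothesis $\alpha \geq \bar{C}\delta r$ then absorbs the error term provided $\bar{C}$ is chosen sufficiently large (depending only on $n$), which yields $\angle(T_{\tilde{z}}\tilde{Z}, T_{\tilde{x}}\ell) > c\alpha$ for a suitable dimensional $c > 0$.

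I expect the only obstacle to be bookkeeping: carefully tracking the dimensional constants through the angle distortion under the bi-Lipschitz map $D\Upsilon|_x$ and the subsequent additive perturbation by $B$, and verifying that the $C^2$ bound on $\Gamma$ feeds exactly the right factor of $\delta r$ to be absorbed by the hypothesis on $\alpha$. No nontrivial geometric input is needed beyond these linear-algebraic considerations, so the claim should reduce to a short calculation.
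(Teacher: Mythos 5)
Your proof is correct and takes essentially the same route as the paper: both arguments transfer the angle hypothesis through the shear $\Upsilon$ (exploiting that its differential is uniformly bi-Lipschitz and maps the relevant tangent of $\Gamma$ to $e_n$), and both absorb the discrepancy between the base points $x$ and $\Gamma(z_n)$ using the $C^2$ bound $\|\Gamma''\|_\infty \leq \delta$ together with the hypothesis $\alpha \geq \bar{C}\delta r$. The only differences are cosmetic: the paper first replaces $T_x\Gamma$ by $T_{\Gamma(z_n)}\Gamma$ (losing a factor of $2$) and then passes through $\Upsilon$ via an explicit wedge-product identity after parametrising curves in $Z$ as graphs over $x_n$, whereas you apply $D\Upsilon|_x$ first and then perturb the linear map itself to $D\Upsilon|_z$, invoking an abstract angle-distortion lemma for bi-Lipschitz linear maps — arguably a cleaner packaging that avoids the paper's case split on whether $T_z\eta$ lies in $e_n^{\perp}$.
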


Once this claim is verified, Lemma~\ref{transverse interaction lemma} easily follows. Indeed, one may apply the special case of Lemma~\ref{transverse interaction lemma} for lines (which, as previously remarked, is proved in \cite[Lemma 5.7]{Guth2018}) to conclude that $\tilde{Z}_{>c\alpha, Cr, \ell}$ is contained in a union of $O((\overline{\deg}\, Z \cdot \deg \Gamma)^n)$ balls of radius $r/\alpha$. On the other hand, as a consequence of the first hypothesis in \eqref{transverse interaction 1},
\begin{equation}\label{transverse interaction 2}
|\Upsilon(x) - \Upsilon(x')| \sim |x - x'| \qquad \textrm{for all $x, x' \in \R^{n-1} \times (-\lambda, \lambda)$.}
\end{equation}
Combining these observations, it follows that the set $Z_{ > \alpha, r, \Gamma} \cap B(0, \lambda)$ can be covered by $O((\overline{\deg}\, Z \cdot \deg \Gamma)^n)$ balls of radius $r/\alpha$, as required.

Turning to the proof of the claim, let $z \in Z_{> \alpha, r, \Gamma} \cap B(0, \lambda)$ and note that there exists some $x = \Gamma(x_n) \in \Gamma$ with $|x - z| < r$ and $\angle(T_zZ, T_x\Gamma) > \alpha$. Defining $\tilde{z} := \Upsilon(z) \in \tilde{Z}$ and $\tilde{x} := \Upsilon(x) \in \ell$, it follows from \eqref{transverse interaction 2} that $|\tilde{x} - \tilde{z}| \lesssim r$. Thus, the problem is reduced to showing that $\angle(T_{\tilde{z}}\tilde{Z}, e_n) = \angle(T_{\tilde{z}}\tilde{Z}, T_{\tilde{x}} \ell) \gtrsim \alpha$. 

Observe that, provided $\bar{C}$ is sufficiently large depending only on $n$,
\begin{equation}\label{transverse interaction 2.5}
\angle(T_zZ, T_{\Gamma(z_n)}\Gamma) > \alpha/2.
\end{equation}
Indeed, $|z_n| < \lambda$ and $|x_n| < \lambda + r < 2\lambda$ and so, by the second condition in \eqref{transverse interaction 1}, 
\begin{equation*}
|\Gamma'(x_n) - \Gamma'(z_n)| \leq \delta|x_n - z_n| <\delta r < \bar{C}^{-1} \alpha.
\end{equation*}
Thus, if $\bar{C}$ is appropriately chosen, then $\angle (T_{\Gamma(x_n)}\Gamma, T_{\Gamma(z_n)}\Gamma)<\alpha/2$, which immediately yields \eqref{transverse interaction 2.5}. 

Combining the observations of the previous paragraphs, the claim follows provided one can show that $\angle(T_{\tilde{z}}\tilde{Z}, e_n) \sim \angle(T_zZ, T_{\Gamma(z_n)}\Gamma)$. Let $\eta$ be a smooth curve in $Z$ containing $z$ and define $\tilde{\eta} := \Upsilon(\eta)$; thus, $\tilde{\eta}$ is a smooth curve in $\tilde{Z}$ containing $\tilde{z}$. The problem is now reduced to proving that
\begin{equation}\label{transverse interaction 3}
\angle(T_{\tilde{z}}\tilde{\eta}, e_n) \sim \angle(T_z\eta, T_{\Gamma(z_n)}\Gamma). 
\end{equation}  
If $T_z\eta$ lies in the hyperplane $e_n^{\perp}$ orthogonal to $e_n$, then the above estimate easily follows. Indeed, the tangent space $T_{\Gamma(z_n)}\Gamma$ is spanned by $\Gamma'(z_n) = (\gamma'(z_n), 1)$ and therefore, by \eqref{transverse interaction 1}, one has $\angle(T_z\eta, T_{\Gamma(z_n)}\Gamma) \sim 1$. On the other hand, it is clear from the definition of $\Upsilon$ that $T_{\tilde{z}}\tilde{\eta}$ also lies in $e_n^{\perp}$ and so $\angle(T_{\tilde{z}}\tilde{\eta}, e_n) = \pi/2$. Thus, \eqref{transverse interaction 3} holds in this case.

If $T_z\eta$ does not lie in the hyperplane $e_n^{\perp}$, then $\eta$ can be locally parametrised as a graph over the $x_n$-variable. By an abuse of notation, let $\eta$ denote this graph parametrisation and $\tilde{\eta} := \Upsilon\circ \eta$ so that $\eta(z_n) = z$ and $\tilde{\eta}(z_n) = \tilde{z}$. One may easily verify that $|\tilde{\eta}'(z_n) \wedge e_n| = |\eta'(z_n) \wedge \Gamma'(z_n)|$ and so
\begin{equation*}
\sin \angle (T_{\tilde{z}}\tilde{\eta}, e_n)|\tilde{\eta}'(z_n)| = \sin \angle(T_z\eta, T_{\Gamma(z_n)}\Gamma) |\eta'(z_n)| |\Gamma'(z_n)|.
\end{equation*} 
By the first hypothesis in \eqref{transverse interaction 1}, one has $|\tilde{\eta}'(z_n)| \sim |\eta'(z_n)|$ and $|\Gamma'(z_n)| \sim 1$, and \eqref{transverse interaction 3} follows.
\end{proof}




\section{Transverse equidistribution estimates}\label{Transverse equidistribution section}




\subsection{Tangential wave packets and transverse equidistribution}

In this section the theory of transverse equidistribution estimates, as introduced in \cite{Guth2018}, is extended to the variable coefficient setting. This is a key step in the proof of Theorem~\ref{k-broad theorem} and here the positive-definite hypothesis H2$^+$) plays a crucial r\^ole in the argument.

The first step is to give a precise definition of what it means for a wave packet to be `tangential' to a transverse complete intersection $Z$. Throughout this section let $T^{\lambda}$ be a Hormander-type operator with reduced positive-definite phase $\phi$ and for some $R \ll \lambda$ define the (curved) tubes $T_{\theta,v}$ as in $\S$\ref{Wave packet section}. Furthermore, let $\delta_m$ denote a small parameter satisfying $0 < \delta \ll \delta_m \ll 1$ (here $\delta$ is the same parameter as that which appears in the definition of the wave packets). 

\begin{definition}\label{tangent definition} Suppose $Z = Z(P_1, \dots, P_{n-m})$ is a transverse complete intersection. A tube $T_{\theta, v}$ is $R^{-1/2 + \delta_m}$-tangent to $Z$ in $B(0,R)$ if
\begin{equation*}
T_{\theta, v} \subseteq N_{R^{1/2 + \delta_m}}(Z)
\end{equation*}
and
\begin{equation*}
\angle(G^{\lambda}(x; \omega_{\theta}), T_zZ) \leq \bar{c}_{\mathrm{tang}} R^{-1/2 + \delta_m}
\end{equation*}
for any $x \in T_{\theta, v}$ and $z \in Z \cap B(0,2R)$ with $|x - z| \leq\bar{C}_{\mathrm{tang}} R^{1/2 + \delta_m}$.
\end{definition}

Here $\bar{c}_{\mathrm{tang}} > 0$ (respectively, $\bar{C}_{\mathrm{tang}} \geq 1$) is a dimensional constant, chosen to be sufficiently small (respectively, large) for the purposes of the following arguments. 

\begin{definition} If $\mathbb{S} \subseteq \T$, then $f$ is said to be concentrated on wave packets from $\mathbb{S}$ if
\begin{equation*}
f = \sum_{(\theta, v) \in \mathbb{S}} f_{\theta, v} + \mathrm{RapDec}(R)\|f\|_{L^2(B^{n-1})}.
\end{equation*}
\end{definition}

One wishes to study functions concentrated on wave packets from the collection
\begin{equation*}
\T_Z := \big\{(\theta, v) \in \T : T_{\theta, v} \textrm{ is $R^{-1/2 + \delta_m}$-tangent to $Z$ in $B(0,R)$}\big\}.
\end{equation*}

Let $B \subseteq \R^n$ be a fixed ball of radius $R^{1/2 + \delta_m}$ with centre $\bar{x} \in B(0,R)$. Throughout this section the analysis will be essentially confined to a spatially localised operator $\eta_B\cdot T^{\lambda}g $ where $\eta_B$ is a suitable choice of Schwartz function concentrated on $B$. For any $(\theta, v)$, a stationary phase argument shows that the Fourier transform of $\eta_B\cdot T^{\lambda}g_{\theta, v}$ is concentrated near the surface
\begin{equation}\label{Sigma definition}
\Sigma := \{ \Sigma(\omega) : \omega \in \Omega \} \quad \textrm{where} \quad \Sigma(\omega) := \partial_{x}\phi^{\lambda}(\bar{x};\omega).
\end{equation}
Now consider the refined set of wave packets 
\begin{equation*}
\T_{Z,B} := \big\{(\theta, v) \in \T_Z : T_{\theta, v} \cap B \neq \emptyset \big\}.
\end{equation*}
If $(\theta, v) \in \T_{Z,B}$, then the direction $G^{\lambda}(\bar{x}; \omega_{\theta})$ of the curved tube $T_{\theta, v}$ on the ball $B$ must make a small angle with each of the tangent spaces $T_zZ$ for all $z \in Z \cap B$. It transpires that this essentially constrains the frequency $\Sigma(\omega_{\theta})$ to lie in a small neighbourhood of some fixed (depending on the choice of ball $B$) $m$-dimensional manifold $S_{\xi}$ (here $m=\dim Z$).\footnote{The subscript $\xi$ is used here to indicate that $S_{\xi}$ lies in the $\xi$ parameter space (that is, $\hat{\R}^n$). In particular, it does \textit{not} denote a dependence on some choice of $\xi$. Variants of this notation (such as $A_{\xi}$, $S_{\omega}$, $A_{\omega}$, \&c) feature throughout this section with the obvious corresponding intended meaning.} In the case of the parabolic extension operator $E_{\mathrm{par}}$, which is studied in \cite{Guth2018}, the relationship between the normal direction $G^{\lambda}(\bar{x}; \omega_{\theta})$ and the frequency  $\Sigma(\omega_{\theta})$ is particularly simple. Here $\Sigma(\omega_{\theta}) = \big(\omega_{\theta}, \tfrac{|\omega_{\theta}|^2}{2}\big)$ is constrained to lie in roughly the $R^{-1/2}$-neighbourhood some \emph{affine} subspace $A_{\xi}$.
  \begin{example}
    Suppose, for simplicity, that $Z$ is an $m$-dimensional affine plane so that $T_zZ = V$ for all $z \in Z$ where $V$ is the $m$-dimensional linear subspace parallel to $Z$. To avoid degenerate situations, also assume $V$ makes a small angle with the $e_n$ direction. For the prototypical case of the parabolic extension operator $E_{\mathrm{par}}$ the (unnormalised) Gauss map is an affine map: $G_0(\omega) = (-\omega, 1)$. Consequently, 
    \begin{equation*}
        A_{\omega} := \{\omega \in \R^{n-1} : G_0(\omega) \in V\}
    \end{equation*}    
    is an affine subspace of $\R^{n-1}$ of dimension $m-1$. Thus, if  $G_0(\omega) \in V$, then $\Sigma(\omega) \in A_{\xi} := A_{\omega} \times \R$. 
    
       Note that for general H\"ormander-type operators the condition $G^{\lambda}(\bar{x};\omega) \in V$ defines a (possibly curved) submanifold rather than an affine subspace. 
    \end{example}
    In view of this frequency concentration in the case of $E_{par}$, the uncertainty principle then suggests that if $g$ concentrated on wave packets from $\T_{Z,B}$, then the function $|E_{\mathrm{par}}g(x)|$ is morally constant as one varies $x$ by $R^{1/2}$ in directions perpendicular to $A_{\xi}$. Furthermore, it can be shown that the affine subspace $A_{\xi}$ makes a small angle with the tangent planes $T_zZ$ for $z \in Z \cap B$ and so $|E_{\mathrm{par}}g(x)|$ is morally constant as one varies $x$ by $R^{1/2}$ in directions transverse to $Z \cap B$.

One wishes to extend the above observations for $E_{\mathrm{par}}$ to the variable coefficient setting; that is, for $g$ concentrated on wave packets from $\T_{Z,B}$,\footnote{In fact, in the general case a more stringent hypothesis on $g$ is required, as discussed below.} the problem is to show that $|T^{\lambda}g|$ is morally constant in directions transverse to $Z\cap B$. More precisely, one wishes to establish an inequality roughly of the form
\begin{equation}\label{ideal transverse equidistribution}
 \fint_{N_{\rho^{1/2 + \delta_m}}(Z)\cap B} |T^{\lambda}g|^2 \lesssim \fint_B |T^{\lambda}g|^2 
\end{equation}
for $0 < \rho < R$; this would show that the $L^2$ mass of $T^{\lambda}g$ is unable to concentrate in a small neighbourhood of $Z\cap B$. For the parabolic extension operator the observations of the previous paragraph can be used to prove \eqref{ideal transverse equidistribution} (up to a rapidly decaying error term). The general case is more complicated, however. First of all, the surface $S_{\xi}$ described above is no longer necessarily an affine subspace and may possess curvature. One way to circumvent this issue is to introduce a further constraint on the family of wave packets. Let $R^{1/2} < \rho \ll R$ and throughout this section let $\tau \subset \R^{n-1}$ be a fixed cap of radius $O(\rho^{-1/2 + \delta_m})$ centred at a point in $B^{n-1}$. Now define
\begin{equation*}
\T_{Z, B,\tau} := \big\{(\theta, v) \in \T_Z : \theta \cap \tau \neq \emptyset \textrm{ and }  T_{\theta, v} \cap B \neq \emptyset  \big\}.
\end{equation*}
The frequencies $\Sigma(\omega_{\theta})$ for $(\theta, v) \in \T_{Z, B,\tau}$ are further constrained to lie in a small region of $\Sigma$ upon which the curved space $S_{\xi}$ can be effectively approximated by an affine space $A_{\xi}$. Consequently, one can carry out a similar analysis as in the parabolic extension case.  

The second issue is to ensure that the resulting affine space $A_{\xi}$ makes a small angle with the tangent spaces $T_zZ$ for $z \in Z \cap B$. This is crucial to ensure that the morally constant property holds in directions transverse to $Z$. For general H\"ormander-type operators this property can fail (a simple example is given by the extension operator associated to the hyperbolic paraboloid, as discussed below). In order to ensure the angle condition one needs to exploit the additional positive-definite hypothesis H2$^+$).

In practice, the rigorous formulation of these heuristics is somewhat messier than \eqref{ideal transverse equidistribution}, and it is convenient to state the key estimate in the following manner. 

\begin{lemma}\label{Transverse equidistribution lemma 1} With the above setup, if $\overline{\deg}\,Z \lesssim_{\varepsilon} 1$ and $g$ is concentrated on wave packets from $\T_{Z, B,\tau}$, then
\begin{equation}\label{transverse equidistribution lemma estimate}
\int_{N_{\rho^{1/2 + \delta_m}}(Z) \cap B} |T^{\lambda}g|^2 \lesssim R^{1/2+O(\delta_m)}(\rho/R)^{(n-m)/2}\|g\|^2_{L^2(B^{n-1})}.
\end{equation}
\end{lemma}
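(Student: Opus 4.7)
The plan is to follow the strategy developed for the Fourier extension operator in \cite{Guth2018} and adapt it to the variable-coefficient setting using the reduced form of the phase established in $\S$\ref{Reductions section}. First I would localise to $B$: choose a smooth bump $\eta_B$ adapted to $B$ whose Fourier transform is supported in a ball of radius $R^{-1/2-\delta_m}$, and set $h := \eta_B\cdot T^\lambda g$, so that up to $\mathrm{RapDec}(R)\|g\|_{L^2(B^{n-1})}^2$ the left-hand side of \eqref{transverse equidistribution lemma estimate} is controlled by $\int_{\R^n} |h|^2 \chi_{N_{\rho^{1/2+\delta_m}}(Z)\cap B}$. A stationary phase computation around $\bar{x}$ shows that each $\widehat{\eta_B T^\lambda g_{\theta,v}}$ is concentrated in an $R^{-1/2-\delta_m}$-ball about $\Sigma(\omega_\theta)$, so $\widehat{h}$ is essentially supported near the discrete set $\{\Sigma(\omega_\theta) : (\theta,v) \in \T_{Z,B,\tau}\}$.

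The central step is to identify an $m$-dimensional affine subspace $A_\xi \subset \R^n$ such that $\widehat{h}$ is, up to rapid decay, supported in $N_{R^{-1/2+O(\delta_m)}}(A_\xi)$, with $A_\xi$ close to the tangent spaces $T_z Z$ for $z \in Z\cap B$. To construct $A_\xi$, fix a reference point $z_0 \in Z\cap B$, set $V := T_{z_0}Z$, and define
\begin{equation*}
A_\omega := \{\omega \in \Omega : G^\lambda(\bar{x};\omega) \in V\}.
\end{equation*}
Since the Gauss map is a local diffeomorphism (by H1)), $A_\omega$ is a smooth $(m-1)$-dimensional submanifold near $\omega_\tau$, well approximated on the cap $\tau$ by an affine plane with error $O(\rho^{-1+O(\delta_m)})$. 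The tangency hypothesis pins each $\omega_\theta$ within $O(R^{-1/2+\delta_m})$ of $A_\omega$ (after absorbing the variation of $T_zZ$ as $z$ ranges over $Z\cap B$, which is controlled thanks to $\overline{\deg}\,Z \lesssim_\varepsilon 1$). Setting
\begin{equation*}
A_\xi := \Sigma(\omega_\tau) + d\Sigma(\omega_\tau)[A_\omega - \omega_\tau] + \R\, G^\lambda(\bar{x};\omega_\tau),
\end{equation*}
a quadratic Taylor expansion of $\Sigma$ on $\tau$, combined with the standing assumption $\rho > R^{1/2}$ (so $\rho^{-1} < R^{-1/2}$), delivers the claimed frequency containment. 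The angle condition between $A_\xi$ and $V$ is where the positive-definite hypothesis H2$^+$) is crucially used: positive-definiteness of $\partial^2_{\omega\omega}\langle\partial_x\phi, G\rangle$ forces the span above to lie close to $V$, mirroring the elementary verification in the parabolic extension case of \cite{Guth2018} but now implemented via the reduced form of the phase. In the indefinite case this fails, as illustrated by the extension operator for the hyperbolic paraboloid.

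With $A_\xi$ in hand, I would apply the uncertainty principle: convolving with a Schwartz function whose Fourier transform is identically one on $N_{R^{-1/2+O(\delta_m)}}(A_\xi)$ and invoking a Bernstein-type inequality on $(n-m)$-dimensional slices orthogonal to $A_\xi$ (in the spirit of Lemma~\ref{locally constant lemma}), one concludes that $|h|^2$ is morally constant at scale $R^{1/2-O(\delta_m)}$ in the directions orthogonal to $A_\xi$. By the angle condition, these directions coincide up to small error with the normal bundle of $Z$, so for each $A_\xi$-parallel slab $S$ of transverse thickness $R^{1/2-O(\delta_m)}$ intersected with $B$,
\begin{equation*}
\int_{S\cap N_{\rho^{1/2+\delta_m}}(Z)} |h|^2 \lesssim \Big(\frac{\rho^{1/2+\delta_m}}{R^{1/2-O(\delta_m)}}\Big)^{n-m} \int_S |h|^2.
\end{equation*}
Summing over the $R^{O(\delta_m)}$ slabs that tile $B$ and invoking Lemma~\ref{Hormander L2} to control $\|h\|_{L^2(B)}^2 \lesssim R^{1/2+\delta_m}\|g\|_{L^2(B^{n-1})}^2$ yields the stated estimate. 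The main obstacle will be verifying the angle condition: the Gauss map here is nonlinear and $\bar{x}$-dependent, so H2$^+$) must be used carefully — together with the small-perturbation-of-$\phi_{\mathrm{par}}$ features built into the reduced phase — to show that the linear span above actually lies $O(R^{-1/2+O(\delta_m)})$-close to $V$, and not merely that it has the correct dimension.
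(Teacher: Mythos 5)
Your outline (localise to $B$, exhibit frequency concentration near an $m$-dimensional affine space $A_\xi$ built from the Gauss-map preimage of a tangent plane to $Z$, invoke the uncertainty principle on transverse slabs, sum and apply H\"ormander's $L^2$ bound) follows the same general strategy as the paper, but the central geometric claim is wrong: the affine space you construct is \emph{not} close to $V = T_{z_0}Z$, even for the paraboloid.

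Concretely, take $\phi_{\mathrm{par}}$ so that $\Sigma(\omega) = (\omega, |\omega|^2/2)$ and $dG_0(\omega_\tau)[\delta] = (-\delta, 0)$, whence $V_\omega = \{\delta : (\delta,0) \in V\}$. Your $A_\xi$ contains the tangent directions $d\Sigma(\omega_\tau)[\delta] = (\delta, \langle \omega_\tau, \delta\rangle)$ for $\delta \in V_\omega$. Although $(\delta, 0)\in V$ by construction, the extra term $\langle \omega_\tau, \delta\rangle e_n$ is of size $\sim |\omega_\tau||\delta|$, which is $O(1)\cdot|\delta|$ since $\omega_\tau$ need not be near $0$; for generic $V$ the unit vector $e_n \notin V$, so $d\Sigma(\omega_\tau)[\delta]$ is at distance $\sim 1$ from $V$, not $O(R^{-1/2+\delta_m})$. (E.g.\ $n=3$, $V = \mathrm{span}\{e_1, e_2 + 10 e_3\}$, $V_\omega = \mathrm{span}\{(1,0)\}$, $\omega_\tau = (1/2, -1/10)$: $d\Sigma(\omega_\tau)[(1,0)] = (1,0,1/2)$ lies at a fixed positive distance from $V$.) So the step ``these directions coincide up to small error with the normal bundle of $Z$'' fails, and the slab-measure estimate that you derive from it is unjustified.

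What the paper actually proves — and what the argument genuinely needs — is weaker: not that $A_\xi$ is close to $V$, but that $V' := V_\xi^\perp$ is \emph{quantitatively transverse} to $T_zZ$ (in the sense of Lemma~\ref{quantitatively transverse lemma}). Crucially, the paper's $A_\xi$ is not the tangent plane to $\Sigma$ plus the normal; it is the vertical cylinder $A_u\times\R$ over the tangent plane $A_u$ to $S_u$ in the graph parametrisation $u \mapsto (u, \bar h(u))$. This forces $V' \subset e_n^\perp$, and then the transversality follows from the two ingredients $\overline{\angle}(V, e_n^\perp) \gtrsim 1$ (automatic whenever $\T_{V,B,\tau} \neq \emptyset$) and Claim~4 — namely that $V_u$ is a $O(c_{\mathrm{par}})$-perturbation of $V \cap e_n^\perp$, which is precisely where H2$^+$) (via $\partial^2_{uu}\bar h \approx I$, Lemma~\ref{hx nearly paraboloid}) enters. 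Your $V'$ is not horizontal, so that mechanism is unavailable, and no replacement is supplied. There is also a secondary point worth flagging: the paper takes $V$ to be of \emph{minimal} dimension consistent with the wave-packet directions, and this minimality is used (via a rank–nullity contradiction) to upgrade transversality of the pair $(V,V')$ to transversality of $(T_zZ, V')$ for all $z \in Z\cap 2B$; fixing $V := T_{z_0}Z$ skips this step and is insufficient when $\dim V$ should properly be $< m$.
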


The inequality \eqref{transverse equidistribution lemma estimate} is related to the heuristic inequality \eqref{ideal transverse equidistribution} via H\"ormander's $L^2$ bound
\begin{equation*}
 \|T^{\lambda}g\|_{L^2(B)}^2 \lesssim R^{1/2 + \delta_m} \|g\|_{L^2(B^{n-1})}^2.
\end{equation*}
The estimate is presented in this way (rather than in a form more closely resembling \eqref{ideal transverse equidistribution}) as it provides a relatively clean statement and, moreover, \eqref{transverse equidistribution lemma estimate} happens to be the precise bound required later in the proof.




\subsection{Uncertainty principle preliminaries} If $G \colon \R^n \to \C$ is frequency supported on a ball of radius $r > 0$, then the uncertainty principle dictates that $G$ should be essentially constant at spatial scale $r^{-1}$. In particular, the $L^2$-mass of $G$ cannot be highly concentrated in any ball of radius $\rho < r^{-1}$ and so one has
\begin{equation*}
 \fint_{B(x_0,\rho)} |G|^2 \lesssim \fint_{B(x_0,r^{-1})} |G|^2.
\end{equation*}
Strictly speaking, for this inequality to hold the right-hand integral should be taken with respect to a rapidly decaying weight function rather than over the compact region $B(x_0,r^{-1})$ (see, for instance, \cite[Section 6]{Guth2018}). There is a variant of this estimate which is effective in cases where $G$ has the property that $\hat{G}$ is merely concentrated in (rather than supported in) an $r$-ball. 

\begin{lemma}\label{concentration uncertainty} If $r^{-1/2} \leq \rho \leq r^{-1}$, then for any ball $B(x_0,\rho)$, $\xi_0 \in \hat{\R}^n$ and $\delta > 0$ one has
\begin{equation*}
 \fint_{B(x_0,\rho)} |G|^2 \lesssim_{\delta} \|\hat{G}w_{B(\xi_0, r)}^{-1}\|_{\infty}^{2\delta/(1+\delta)}  \frac{1}{|B(0, r^{-1})|}\big(\int_{\R^n} |G|^2\big)^{1/(1+\delta)}.
\end{equation*}
Here $w_{B(\xi_0, r)}$ is a weight concentrated on $B(\xi_0, r)$ given by
\begin{equation}\label{weight function}
w_{B(\xi_0, r)}(\xi) :=  (1 + r^{-1}|\xi - \xi_0|)^{-N}
\end{equation}
for some large $N = N_{\delta} \in \N$. 
\end{lemma}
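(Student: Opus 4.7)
The plan is a three-step interpolation between the weighted pointwise bound $|\hat G| \leq M w$ and Plancherel's identity, followed by a spatial localisation. Specifically, I would first upgrade the pointwise information into a weighted $L^{1+\delta}$ estimate for $\hat G$, then transfer this estimate to the spatial side via Hausdorff--Young, and finally apply Hölder on $B(x_0, \rho)$. The hypothesis $\rho \geq r^{-1/2}$ is reserved for the very last step, where it absorbs a $\rho$-factor exactly.

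For step one, using $|\hat G|^{1+\delta} \leq M^\delta w^\delta |\hat G|$ and Cauchy--Schwarz,
\begin{equation*}
\int_{\R^n} |\hat G|^{1+\delta} \leq M^\delta \|w^\delta\|_2 \|\hat G\|_2 \lesssim_\delta M^\delta r^{n/2} \|G\|_2,
\end{equation*}
where the exponent $N = N_\delta$ in the definition of $w$ is chosen so that $2\delta N > n$, making $\|w^\delta\|_2^2 = \int w^{2\delta}\,\mathrm{d}\xi \sim r^n$. Extracting roots gives $\|\hat G\|_{1+\delta} \lesssim_\delta M^{\delta/(1+\delta)} r^{n/(2(1+\delta))} \|G\|_2^{1/(1+\delta)}$.

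For step two, Hausdorff--Young applied to $\hat G$ (valid for $\delta \in (0,1]$, so that $1+\delta \leq 2$) yields $\|G\|_{(1+\delta)/\delta} \lesssim \|\hat G\|_{1+\delta}$. For step three, Hölder on $B = B(x_0, \rho)$ with conjugate exponents $\big(\tfrac{1+\delta}{2\delta}, \tfrac{1+\delta}{1-\delta}\big)$ gives $\int_B |G|^2 \leq |B|^{(1-\delta)/(1+\delta)} \|G\|_{(1+\delta)/\delta}^2$. Dividing by $|B| \sim \rho^n$ and assembling,
\begin{equation*}
\fint_{B(x_0,\rho)} |G|^2 \lesssim_\delta \rho^{-2n\delta/(1+\delta)} M^{2\delta/(1+\delta)} r^{n/(1+\delta)} \|G\|_2^{2/(1+\delta)}.
\end{equation*}
The hypothesis $\rho \geq r^{-1/2}$ then supplies $\rho^{-2n\delta/(1+\delta)} \leq r^{n\delta/(1+\delta)}$, so that the $r$-exponents combine to $r^{n/(1+\delta)+n\delta/(1+\delta)} = r^n \sim |B(0, r^{-1})|^{-1}$, matching the target.

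No step presents a serious obstacle; the only mild subtleties are selecting $N = N_\delta$ large enough for convergence of $\|w^\delta\|_2$, and observing that the condition $\rho \geq r^{-1/2}$ is the one exactly needed to close the interpolation. The restriction $\delta \leq 1$ arises from Hausdorff--Young; larger $\delta$ is handled by interpolating with the trivial bound $\fint_B |G|^2 \leq \|G\|_\infty^2 \leq \|\hat G\|_1^2 \lesssim M^2 r^{2n}$ together with the constraint $\|G\|_2 \lesssim_\delta M r^{n/2}$ already implicit in $|\hat G| \leq M w$.
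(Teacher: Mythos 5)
Your proof is correct and takes a genuinely different route from the paper's. The paper spatially localises: it multiplies $G$ by a bump $\check{\psi}_\rho$ adapted to $B(x_0,\rho)$, passes to the Fourier side via Plancherel so that $\int_{B}|G|^2 \lesssim \|\psi_\rho\ast\hat G\|_2^2$, and then estimates the convolution pointwise by inserting the weight and splitting into two auxiliary integrals by H\"older, each bounded directly. Your argument skips the spatial cutoff entirely: a weighted $L^{1+\delta}$ bound for $\hat G$ by Cauchy--Schwarz and Plancherel, transfer to $\|G\|_{(1+\delta)/\delta}$ via Hausdorff--Young, then a single H\"older on $B(x_0,\rho)$. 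This buys brevity and uses only textbook inequalities; it also does not need the upper bound $\rho \le r^{-1}$ (which the paper's estimate of the term $\mathrm{I}$ does use, to compare $\rho^{-n}$ with $r^n$). On the other hand, the paper's H\"older split works uniformly in $\delta > 0$, whereas Hausdorff--Young confines your direct argument to $\delta \le 1$. You correctly flag the fix: for $\delta > 1$, running the $\delta=1$ bound $\fint_B |G|^2 \lesssim M r^n\|G\|_2$, factoring $\|G\|_2 = \|G\|_2^{2/(1+\delta)}\|G\|_2^{(\delta-1)/(1+\delta)}$, and applying $\|G\|_2 \lesssim M r^{n/2}$ to the second factor gives the correct $M$-exponent and an $r$-exponent $n(1+3\delta)/(2(1+\delta)) \ge n$; since $r \le 1$ is implicit in the standing hypothesis $r^{-1/2}\le\rho\le r^{-1}$, this is at least as strong as the claimed $r^n$, closing the case $\delta > 1$.
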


Hence, if $\hat{G}$ is concentrated in $B(\xi_0, r)$ in the sense that $|\hat{G}(\xi)| \lesssim M w_{B(\xi_0, r )}(\xi)$ for some controllable constant $M \geq 0$, then the lemma produces a favourable estimate. 

\begin{remark} Lemma~\ref{concentration uncertainty} is \textit{not} sharp in terms of the dependence on the $r$ and $\rho$ parameters. It differs, however, from the sharp inequality only by $O(\delta)$ powers and such losses are negligible for the purposes of this article. 
 \end{remark}

\begin{proof}[Proof (of Lemma~\ref{concentration uncertainty})] Define $\psi_{\rho}$ by $(\psi_{\rho})\,\widecheck{}\,(x) := \check{\psi}(\rho^{-1}(x-x_0))$ where $\psi$ is a Schwartz function which satisfies $|\check{\psi}(x)| \gtrsim 1$ on $B(0,1)$. Thus, by Plancherel,
\begin{equation*}
 \int_{B(x_0,\rho)} |G|^2 \lesssim  \int |\check{\psi}_{\rho} G|^2 = \int |\psi_{\rho} \ast \hat{G}|^2.
\end{equation*}
Using the rapid decay of $\psi$, one deduces that
\begin{equation*}
 |\psi_{\rho} \ast \hat{G}(\xi)| \lesssim_{\delta} \rho^n \int_{\hat{\R}^n} w_{B(\xi,\rho^{-1})}(\eta)^{\delta/(1+\delta)} |\hat{G}(\eta)|\,\ud \eta
\end{equation*}
for all $\xi \in \hat{\R}^n$. By expressing the right-hand integral as
\begin{equation*}
\int_{\hat{\R}^n} \Big(w_{B(\xi,\rho^{-1})}(\eta)w_{B(\xi_0,r)}(\eta)^{1/2}\Big)^{\delta/(1+\delta)}\Big(|\hat{G}(\eta)|^{1+\delta}w_{B(\xi_0,r)}(\eta)^{-\delta/2}\Big)^{1/(1+\delta)} \,\ud \eta 
\end{equation*}
and applying H\"older's inequality, it follows that
\begin{equation*}
|\hat{\psi} \ast \hat{G}(\xi)|  \lesssim |B(x_0,\rho)| \cdot \mathrm{I}(\xi)^{\delta/(1+ \delta)}\cdot \mathrm{II}(\xi)^{1/(1+ \delta)}
\end{equation*}
where
\begin{align*}
\mathrm{I}(\xi) &:= \int_{\hat{\R}^n} w_{B(\xi , \rho^{-1})}(\eta)w_{B(\xi_0,r)}(\eta)^{1/2}\,\ud \eta, \\
\mathrm{II}(\xi) &:= \int_{\hat{\R}^n} |\hat{G}(\eta)|^{1+\delta} w_{B(\xi_0,r)}(\eta)^{-\delta/2}\,\ud \eta .
\end{align*}

To estimate $\mathrm{I}(\xi)$ first perform the variable shift $\eta \mapsto \eta + \xi_0$ and then decompose the range of integration into the regions $|\eta|< |\xi-\xi_0|/2$ and $|\eta| \geq |\xi-\xi_0|/2$. Since $w_{B(\xi - \xi_0, \rho^{-1})}(\eta) \lesssim_{\delta} w_{B(\xi_0, \rho^{-1})}(\xi)$ for $|\eta| < |\xi-\xi_0|/2$ and $w_{B(0, \rho^{-1})}(\eta) \lesssim_{\delta} w_{B(\xi_0, \rho^{-1})}(\xi)$ for $|\eta| \geq |\xi-\xi_0|/2$, it follows that 
\begin{equation*}
\mathrm{I}(\xi) \lesssim_{\delta} |B(x_0,\rho)|^{-1} w_{B(\xi_0,\rho^{-1})}(\xi)^{1/2}.
\end{equation*}

To estimate $\mathrm{II}(\xi)$ note that, provided $N_{\delta}$ is chosen sufficiently large, by Cauchy--Schwarz and Plancherel's theorem one has
\begin{align*}
\mathrm{II}(\xi) &\leq \|\hat{G}w_{B(\xi_0,r)}^{-1}\|_{\infty}^{\delta}\int |\hat{G}(\eta)|w_{B(\xi_0,r)}(\eta)^{\delta/2}\,\ud \eta \\
&\lesssim_{\delta} |B(0,r^{-1})|^{-1/2}\|\hat{G}w_{B(\xi_0,r)}^{-1}\|_{\infty}^{\delta}\|G\|_{L^2(\R^n)}.
\end{align*}

 Combining these observations, one obtains the desired estimate but with an additional factor of $(\rho r^{1/2})^{-2n\delta / (1+\delta)}$ on the right-hand side. Since $1 \leq \rho r^{1/2}$, the result immediately follows. 
\end{proof}




\subsection{Wave packets tangential to linear subspaces} Here, as a step towards  Lemma~\ref{Transverse equidistribution lemma 1}, transverse equidistribution estimates are proven for functions concentrated on wave packets tangential to some fixed linear subspace $V \subseteq \R^n$. As before, let $B$ be a ball of radius $R^{1/2 + \delta_m}$ with centre $\bar{x} \in \R^n$ and define
\begin{equation*}
\T_{V,B} := \big\{(\theta, v) : \angle(G^{\lambda}(\bar{x}, \omega_{\theta}), V) \lesssim R^{-1/2 + \delta_m}  \textrm{ and }  T_{\theta, v} \cap B \neq \emptyset   \big\}.
\end{equation*}
Let $R^{1/2} < \rho < R$ and for $\tau \subset \R^{n-1}$ a ball of radius $O(\rho^{-1/2+ \delta_m})$ centred at a point in $B^{n-1}$ define
\begin{equation*}
\T_{V, B,\tau} := \big\{(\theta, v) \in \T_{V,B} : \theta \cap (\tfrac{1}{10}\cdot \tau) \neq \emptyset\big\}
\end{equation*}
where $(\tfrac{1}{10}\cdot \tau)$ is the cap concentric to $\tau$ but with $1/10$th of the radius. 

The key estimate is the following.

\begin{lemma}\label{Transverse equidistribution lemma 2} If $V \subseteq \R^n$ is a linear subspace, then there exists a linear subspace $V'$ with the following properties:
\begin{enumerate}[1)]
\item $\dim V + \dim V' = n$.
\item $V, V'$ are quantitatively transverse in the sense that there exists a uniform constant $c_{\mathrm{trans}} > 0$ such that
\begin{equation*}
 \angle(v, v') \geq 2c_{\mathrm{trans}} \qquad \textrm{for all non-zero vectors $v \in V$ and $v' \in V'$.}
\end{equation*}
\item If $g$ is concentrated on wave packets from $\T_{V, B,\tau} $, $\Pi$ is any plane parallel to $V'$ and $x_0 \in \Pi \cap B$, then the inequality
\begin{align*}
\int\displaylimits_{\Pi \cap B(x_0, \rho^{1/2 + \delta_m})}\!\!\!\!\!\!\!\!\!\!\!\! |T^{\lambda}g|^2 \lesssim_{\delta} R^{O(\delta_m)} (\rho/R)^{\dim V'/2} \|g\|_{L^2(B^{n-1})}^{2\delta/(1+\delta)} \big(\int\displaylimits_{\Pi \cap 2B} |T^{\lambda}g|^2 \big)^{1/(1+\delta)}
\end{align*}
holds up to the inclusion of a $\mathrm{RapDec}(R)\|g\|_{L^2(B^{n-1})}$ term on the right-hand side. 
\end{enumerate}
\end{lemma}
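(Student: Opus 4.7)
The plan is to identify an affine subspace $A_\xi \subset \hat\R^n$ of dimension $\dim V$ near which the frequency support of $\eta_B \cdot T^\lambda g$ concentrates, take $V'$ to be the orthogonal complement in $\R^n$ of the linear part of $A_\xi$, use the positive-definite hypothesis H2$^+$) to verify the quantitative transversality between $V$ and $V'$, and finally apply Lemma~\ref{concentration uncertainty} slice by slice. Throughout, $\eta_B$ will denote a Schwartz function adapted to $2B$ with $\eta_B \equiv 1$ on $B$; if $\T_{V, B, \tau} = \emptyset$, all terms are $\mathrm{RapDec}(R)\|g\|_{L^2(B^{n-1})}$ and the bound is vacuous.

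To construct $A_\xi$, set $S_\omega := \{\omega \in \Omega : G^\lambda(\bar x;\omega) \in V\}$; H2$^+$) makes $G^\lambda(\bar x;\cdot)$ a local diffeomorphism onto its image in $S^{n-1}$, so a point $\omega_0 \in S_\omega \cap \tau$ exists. Let $A_\omega$ be the affine tangent space to $S_\omega$ at $\omega_0$ (of dimension $\dim V - 1$), and set
\begin{equation*}
A_\xi := \Sigma(\omega_0) + \partial_\omega \Sigma(\omega_0)(A_\omega - \omega_0) + \R \cdot G^\lambda(\bar x;\omega_0),
\end{equation*}
with $\Sigma$ as in \eqref{Sigma definition}. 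The two linear summands are orthogonal since $\partial_\omega \Sigma(\omega_0)$ takes values in $T_{\Sigma(\omega_0)}\Sigma = G^\lambda(\bar x;\omega_0)^\perp$, so $\dim A_\xi = \dim V$. For each $(\theta,v) \in \T_{V,B,\tau}$, a stationary-phase argument mirroring Lemma~\ref{wave packet concentration lemma} shows that $\widehat{\eta_B \cdot T^\lambda g_{\theta,v}}$ decays rapidly outside an $O(R^{-1/2-\delta_m})$-neighborhood of $\Sigma(\omega_\theta)$. From $|\omega_\theta - \omega_0| \lesssim \rho^{-1/2+\delta_m}$ together with the tangency condition $\angle(G^\lambda(\bar x;\omega_\theta), V) \lesssim R^{-1/2+\delta_m}$, Taylor-expanding $G^\lambda$ around $\omega_0$ places $\omega_\theta$ within $O(R^{-1/2+O(\delta_m)})$ of $A_\omega$ (the quadratic linearization error $O(\rho^{-1+2\delta_m})$ is harmless since $\rho > R^{1/2}$). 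Linearizing $\Sigma$ then places $\Sigma(\omega_\theta)$ within the same $O(R^{-1/2+O(\delta_m)})$-neighborhood of $A_\xi$.

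Define $V'$ to be the orthogonal complement in $\R^n$ of the linear part of $A_\xi$, so $\dim V + \dim V' = n$. The quantitative transversality of $V$ and $V'$ is equivalent to the angular distance between $V$ and the linear part of $A_\xi$ being $O(c_{\mathrm{par}})$. For the prototype $\phi_{\mathrm{par}}$, direct computation gives $G^\lambda(\bar x;\omega_0) \in V$ and $\partial_\omega \Sigma(\omega_0)(A_\omega - \omega_0) = V \cap G^\lambda(\bar x;\omega_0)^\perp$, so the linear part of $A_\xi$ equals $V$ exactly. For a general reduced positive-definite phase, Lemma~\ref{hx nearly paraboloid} and Lemma~\ref{Gauss Lipschitz} introduce only an $O(c_{\mathrm{par}})$ correction, provided the shape operator of $\Sigma$ at $\omega_0$ is quantitatively invertible—and this is exactly what H2$^+$) secures. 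Consequently $\partial_\omega \Sigma(\omega_0)$ maps $A_\omega - \omega_0$ onto a subspace of $G^\lambda(\bar x;\omega_0)^\perp$ lying within angle $O(c_{\mathrm{par}})$ of $V \cap G^\lambda(\bar x;\omega_0)^\perp$, and together with $G^\lambda(\bar x;\omega_0) \in V$ this spans a subspace $O(c_{\mathrm{par}})$-close to $V$.

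For each plane $\Pi$ parallel to $V'$ and $x_0 \in \Pi \cap B$, view $G := (\eta_B \cdot T^\lambda g)|_\Pi$ as a function on $V' \cong \R^{\dim V'}$. Since $V'$ is orthogonal to the linear part of $A_\xi$, the projection of $A_\xi$ onto $(V')^*$ is a single point $\xi_0$, and therefore $\hat G$ is concentrated in the $R^{-1/2+O(\delta_m)}$-ball around $\xi_0$. Applying Lemma~\ref{concentration uncertainty} on $V'$ with $r := R^{-1/2+O(\delta_m)}$ and spatial scale $\rho^{1/2+\delta_m}$ (which satisfies $r^{-1/2} \leq \rho^{1/2+\delta_m} \leq r^{-1}$ since $R^{1/2} < \rho < R$), and bounding $\|\hat G\, w_{B(\xi_0,r)}^{-1}\|_\infty \lesssim R^{(n-m)/2 + O(\delta_m)} \|g\|_{L^2(B^{n-1})}$ via $\|\hat G\|_\infty \leq \|G\|_{L^1(\Pi)}$, Cauchy--Schwarz, and Lemma~\ref{Hormander L2 again}, a direct computation of powers (absorbing $R^{\delta(n-m)/(1+\delta)}$ into $R^{O(\delta_m)}$, valid since $\delta \ll \delta_m$) yields the desired estimate. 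The main obstacle is the transversality step: without H2$^+$), the shape operator could degenerate, causing the linear part of $A_\xi$ to drift far from $V$ and $V' = A_\xi^\perp$ to develop non-trivial intersection with $V$—precisely the failure mode illustrated by hyperbolic-type phases.
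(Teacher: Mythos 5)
The overall architecture you propose matches the paper's: identify an affine $\dim V$-dimensional set $A_\xi$ near which the frequency support of $\eta_B\cdot T^\lambda g$ concentrates, set $V'$ to be the orthocomplement of its linear part, establish transversality of $V$ and $V'$ by comparing to the prototype $\phi_{\mathrm{par}}$ via the reductions of \S\ref{Reductions section}, and finish by applying Lemma~\ref{concentration uncertainty} slice by slice. However, your construction of $A_\xi$ is genuinely different from the paper's. The paper passes to the graph parametrisation $u \mapsto \Psi(u)$, takes $A_u$ to be the tangent space to $S_u := \Psi^{-1}(S_\omega)$, and sets $A_\xi := A_u \times \R$, so that $V_\xi := $ linear part of $A_\xi$ always contains the vertical direction $e_n$. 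This makes the distance estimate $\mathrm{dist}(\xi_\theta, A_\xi) = \mathrm{dist}(u_\theta, A_u)$ an exact identity (the paper's Claim~3). You instead build $A_\xi = \Sigma(\omega_0) + \partial_\omega\Sigma(\omega_0)(A_\omega - \omega_0) + \R\cdot G^\lambda(\bar x;\omega_0)$ directly from the ambient surface, so $A_\xi$ is tangent to $\Sigma$ at $\Sigma(\omega_0)$ and contains the normal line rather than the vertical. This produces a $V'$ distinct from the paper's $V_\xi^\perp$, but one that still satisfies properties 1)--3), as Taylor-expanding $\Sigma$ and $G^\lambda$ (your linearisation-error estimates are correct, using $\rho > R^{1/2}$ to absorb the quadratic term) furnishes the same $O(R^{-1/2+O(\delta_m)})$ concentration of $\{\Sigma(\omega_\theta)\}$ near $A_\xi$. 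So this part of the proposal is a valid alternative.

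Two points require correction. First, a minor one: your claim that for $\phi_{\mathrm{par}}$ one has $\partial_\omega\Sigma(\omega_0)(A_\omega - \omega_0) = V\cap G^\lambda(\bar x;\omega_0)^\perp$ \emph{exactly} is false; for a general $V$ with $e_n\notin V$ and generic $\omega_0 \in S_\omega$, the vector $(w,\langle\omega_0,w\rangle)$ need not lie in $V$. A careful computation (using $G_0(\omega_0) \in V$ and that $\|\mathrm{proj}_{V^\perp}e_n\| \leq |\omega_0|$) shows the angular error is $O(|\omega_0|^2) = O(c_{\mathrm{par}}^2)$, which is of course negligible for the transversality conclusion --- but the ``equals $V$ exactly'' statement should be replaced by an $O(c_{\mathrm{par}})$ approximation before being perturbed to the general phase.

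Second, there is a genuine gap in the final step. You propose to bound $\|\hat G\, w_{B(\xi_0,r)}^{-1}\|_\infty$ via the chain $\|\hat G\, w_{B(\xi_0,r)}^{-1}\|_\infty \lesssim \|\hat G\|_\infty \leq \|G\|_{L^1(\Pi)}$, but since $w_{B(\xi_0,r)} \leq 1$ one has $w_{B(\xi_0,r)}^{-1} \geq 1$ and hence $\|\hat G\, w_{B(\xi_0,r)}^{-1}\|_\infty \geq \|\hat G\|_\infty$; the first inequality runs in the wrong direction. The quantity $\|\hat G\, w_{B(\xi_0,r)}^{-1}\|_\infty$ must instead be bounded by combining the stationary-phase decay of each $\widehat{(\eta_B\cdot T^\lambda g_{\theta,v})|_\Pi}$ away from $\mathrm{proj}_{V'}\xi_\theta$ (which you do mention) with the concentration of $\{\mathrm{proj}_{V'}\xi_\theta\}$ near $\xi_0$: that yields the pointwise bound $|\hat G(\xi)| \lesssim R^{O(1)} w_{B(\xi_0,r)}(\xi)\|g\|_{L^2(B^{n-1})}$, whence $\|\hat G\, w^{-1}\|_\infty \lesssim R^{O(1)}\|g\|_{L^2(B^{n-1})}$. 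Moreover, your attempt to sharpen the $R$-exponent by invoking Lemma~\ref{Hormander L2 again} does not work when $\dim V > 1$, since that lemma restricts $T^\lambda g$ to a hyperplane, not to $\Pi$ of codimension $\dim V$. Fortunately neither sharpening nor the mis-stated chain is needed: as in the paper, since $\|\hat G\, w^{-1}\|_\infty$ enters Lemma~\ref{concentration uncertainty} only to the tiny power $2\delta/(1+\delta)$ with $\delta \ll \delta_m$, any crude polynomial bound $R^{O(1)}\|g\|_{L^2(B^{n-1})}$ is absorbed into $R^{O(\delta_m)}$.
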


\begin{proof} The argument is presented in a number of stages.
 


 
\subsection*{Constructing the subspace $V'$} Recall that $\omega \mapsto G^{\lambda}(\bar{x};\omega) := \frac{G_0^{\lambda}(\bar{x};\omega)}{|G_0^{\lambda}(\bar{x};\omega)|}$ is the Gauss map associated to the hypersurface $\Sigma$, defined in \eqref{Sigma definition}. Since $G(x; 0) = e_n$ for all $x \in X$, Lemma~\ref{Gauss Lipschitz} implies that 
\begin{equation*}
\angle(G^{\lambda}(\bar{x}; \omega), e_n) \sim |\omega| \qquad \textrm{for all $\omega \in \Omega$.}
\end{equation*}
Consequently, by choosing $\mathrm{diam}\,\Omega$ to be sufficiently small in the initial reductions, one may assume that the Gauss map $\omega \mapsto G^{\lambda}(\bar{x}; \omega)$ always makes a wide angle with the hyperplane $e_n^{\perp}  = \R^{n-1} \times \{0\}$. In particular,
\begin{equation*}
\angle(G^{\lambda}(\bar{x}; \omega), e_n^{\perp}) \gtrsim 1 \quad \textrm{for all $\omega \in \Omega$.}
\end{equation*}
Since the situation is trivial if $\mathbb{T}_{V, B} = \emptyset$, one may assume that  
\begin{equation}\label{TE2 1}
\overline{\angle}(V, e_n^{\perp}) := \max_{v \in V\cap S^{n-1}} \angle(v, e_n^{\perp}) \gtrsim 1. 
\end{equation}
Define $S_{\omega} \subset \R^{n-1}$ by 
\begin{equation*}
S_{\omega} := \big\{ \omega \in \Omega : G^{\lambda}(\bar{x}; \omega) \in V \big\}.
\end{equation*}
Fixing an orthonormal basis $\{N_1, \dots, N_{n - \dim V}\}$ for $V^{\perp}$, one has
\begin{equation*}
S_{\omega} = \big\{ \omega \in \Omega : \langle G_0^{\lambda}(\bar{x}; \omega), N_k \rangle = 0 \textrm{ for $1 \leq k \leq n - \dim V$} \big\}. 
\end{equation*}

\begin{claim 1} If $S_{\omega} \neq \emptyset$, then $S_{\omega}$ is a smooth surface in $\R^{n-1}$ of dimension $\dim V - 1$.
\end{claim 1}

\begin{proof}[Proof of Claim 1] Let $\omega \in S_{\omega}$ and note that each $N_k$ is tangential to $\Sigma$ at $\Sigma(\omega)$. Hence, one may write
\begin{equation*}
N_k = \sum_{j=1}^{n-1} N_k^{(j)}(\omega) \partial_{\omega_j}\partial_x\phi^{\lambda}(\bar{x}; \omega)
\end{equation*}
for some choice of coefficients $N_k^{(j)}(\omega) \in \R$. A computation now shows that
\begin{equation*}
\partial_{\omega_i} \langle G_0^{\lambda}(\bar{x}; \omega), N_k \rangle = -\sum_{j=1}^{n-1} \langle \partial^2_{\omega_i\omega_j}\partial_x\phi^{\lambda}(\bar{x}; \omega) , G_0^{\lambda}(\bar{x}; \omega) \rangle N_k^{(j)}(\omega).
\end{equation*}
The condition H2) implies the invertibility of the $(n-1)\times(n-1)$ matrix whose $(i,j)$th entry is given by  
\begin{equation*}
\langle \partial^2_{\omega_i\omega_j}\partial_x\phi^{\lambda}(\bar{x}; \omega) , G_0^{\lambda}(\bar{x}; \omega) \rangle.
\end{equation*}
Thus, the Jacobian of $\omega \mapsto (\langle G_0^{\lambda}(\bar{x}; \omega), N_k \rangle)_{k=1}^{n - \dim V}$ has maximal rank, and the claim follows by the implicit function theorem. 
\end{proof}

At this point it is convenient to switch to a graph parametrisation of $\Sigma$ via the change of variables $u \mapsto \Psi^{\lambda}(\bar{x}; u)$ where $\Psi^{\lambda}$ is the (appropriate $\lambda$-rescaling of the) function introduced in $\S$\ref{Reductions section}. For convenience, let $\Psi \colon U \to \Omega$ denote this mapping; that is, $\Psi(u) := \Psi^{\lambda}(\bar{x}; u)$. Recall that the hypersurface $\Sigma$ coincides with the graph of the function 
\begin{equation}\label{h bar definition}
\bar{h} \colon U \to \R, \quad \bar{h}(u) := \partial_{x_n}\phi^{\lambda}(\bar{x}; \Psi(u)). 
\end{equation}
If $S_{\omega} \cap \tau = \emptyset$, then it follows by Lemma \ref{Gauss Lipschitz} that
\begin{equation*}
\angle(G^{\lambda}(\bar{x}; \theta), V) \gtrsim \rho^{-1/2 + \delta_m} > R^{-1/2 + \delta_m}
\end{equation*}
whenever $\theta \cap (\tfrac{1}{10}\cdot\tau) \neq \emptyset$. Consequently, $\T_{V, B, \tau} = \emptyset$ and the situation is trivial. Thus, one may assume without loss of generality that $S_{\omega} \cap \tau \neq \emptyset$ and so, letting
\begin{equation*}
 S_u := \Psi^{-1}(S_{\omega}) = \{u \in U : G_0^{\lambda}(\bar{x};\Psi (u)) \in V\},
\end{equation*}
it follows that $S_u \cap \Psi^{-1}(\tau) \neq \emptyset$. The properties of the mapping $\Psi$ discussed in $\S$\ref{Reductions section} imply that $\Psi^{-1}(\tau)$ is roughly a ball of radius $O(\rho^{-1/2 + \delta_m})$. 

Fix some $u_0 \in S_u \cap \Psi^{-1}(\tau)$ and let $A_u$ denote the tangent plane to $S_u$ at $u_0$. Here, the tangent plane is interpreted as a $(\dim V - 1)$-dimensional affine subspace of $\R^{n-1}$ through $u_0$. Now define $A_{\xi} := A_u \times \R \subseteq \R^n$, so that $\dim A_{\xi} = \dim V$, and let $V_u$ and $V_{\xi}$ be the linear subspaces parallel to $A_u$ and $A_{\xi}$, respectively. Finally, let $V' := V_{\xi}^{\perp}$ so that $\dim V + \dim V' = n$.




\subsection*{Verifying the transverse equidistribution estimate in 3)} Suppose $\Pi \subseteq \R^n$ is an affine subspace parallel to $V'$ which intersects $B$ and $x_0 \in \Pi \cap B$. Let $\eta_B(x) := \eta((x-\bar{x})/R^{1/2 + \delta_m})$ where $\eta$ is a Schwartz function which satisfies $\eta(x) = 1$ for $x \in B(0,2)$ and, for any $(\theta,v) \in \T$, consider
\begin{equation*}
\big(\eta_B\cdot T^{\lambda}g_{\theta, v} |_{\Pi} \big)\;\widehat{}\;(\xi)  = e^{-2\pi i \langle x_0, \xi \rangle}R^{\dim V'(1/2+ \delta_m)}\int_{B^{n-1}} K^{\lambda,R}(\xi;\omega)g_{\theta, v}(\omega)\,\ud \omega 
\end{equation*}
where the kernel $K^{\lambda,R}$ is given by
\begin{equation*}
    K^{\lambda,R}(\xi;\omega) := \int_{V'} e^{2 \pi i \phi_{\omega}^{\lambda, R}(z)} a^{\lambda, R}_{\omega}(z)\,\ud z 
\end{equation*}
for the phase an amplitude functions
\begin{align*}
\phi_{\omega}^{\lambda, R}(z) &:= \phi^{\lambda}(x_0 + R^{1/2 + \delta_m}z; \omega) -R^{1/2 +\delta_m} \langle z, \xi \rangle\\
a^{\lambda, R}_{\omega}(z) &:= a^{\lambda}(x_0 + R^{1/2+\delta_m}z;\omega)\tilde{\eta}(z)
\end{align*}
and $\tilde{\eta}(z) := \eta(z + (x_0 - \bar{x})/R^{1/2 + \delta_m})$. 

\begin{claim 2} Fixing $\omega \in \Omega$, $\xi \in \hat{\R}^n$ such that $|\xi - \mathrm{proj}_{V'}\Sigma(\omega)| \gtrsim R^{-1/2 +\delta_m}$ and $R \gg 1$, the estimates
\begin{enumerate}[i)]
    \item $|\partial_{z} \phi_{\omega}^{\lambda,R}(z)| \sim R^{1/2 + \delta_m}|\xi - \mathrm{proj}_{V'}\Sigma(\omega)| \gtrsim R^{2\delta_m}$,
    \item $|\partial_{z}^{\alpha} \phi_{\omega}^{\lambda,R}(z)| \lesssim |\partial_{z} \phi_{\omega}^{\lambda,R}(z)|$ for all $2 \leq |\alpha| \leq N_{\mathrm{par}}$,
    \item $|\partial_{z}^{\alpha} a_{\omega}^{\lambda,R}(z)| \lesssim_{\varepsilon} 1$ for all $|\alpha| \leq N_{\mathrm{par}}$
\end{enumerate}
hold on $\mathrm{supp}\,a_{z}^{\lambda,R}$. Here $\Sigma(\omega) := \partial_x\phi^{\lambda}(\bar{x};\omega)$ is as defined in \eqref{Sigma definition}.
\end{claim 2}

Once the claim is established, repeated integration-by-parts (see Lemma~\ref{integration-by-parts lemma}) shows that $K^{\lambda,R}$ is rapidly decaying whenever $|\xi - \mathrm{proj}_{V'}\Sigma(\omega)| \gtrsim R^{-1/2 +\delta_m}$ and, in particular, 
\begin{equation*}
|K^{\lambda,R}(\xi; \omega)| \lesssim_{\varepsilon} (1 + R^{1/2}|\xi - \mathrm{proj}_{V'}\Sigma(\omega)|)^{-N} \qquad \textrm{for all $N \leq N_{\mathrm{par}}$}.
\end{equation*}

\begin{proof}[Proof of Claim 2] The uniformity in the estimates is due to the reductions from \S\ref{Reductions section}. The bound iii) for the amplitude immediately follows from Lemma~\ref{third reduction lemma} and it remains to prove the bounds for the phase. 

\subsubsection*{Proof of i)} The $z$-gradient of the phase $\phi^{\lambda,R}_{\omega}$ is equal to 
\begin{equation*}
R^{1/2+\delta_m}\big(\mathrm{proj}_{V'}\big[(\partial_x\phi^{\lambda})(x_0 + R^{1/2+\delta_m}z; \omega) - (\partial_x\phi^{\lambda})(\bar{x}; \omega)] -[\xi  - \mathrm{proj}_{V'}\Sigma(\omega)\big] \big)
\end{equation*}
where, by Lemma~\ref{third reduction lemma}, the first term satisfies
\begin{equation*}
\big|\mathrm{proj}_{V'}\big[(\partial_x\phi^{\lambda})(x_0 + R^{1/2+\delta_m}z; \omega) - (\partial_x\phi^{\lambda})(\bar{x}; \omega)\big]\big| \lesssim R^{1/2 + \delta_m}/\lambda \ll R^{-1/2 + \delta_m}.
\end{equation*}
Thus, if $|\xi - \mathrm{proj}_{V'}\Sigma(\omega)| \gtrsim R^{-1/2 +\delta_m}$, then the desired bound immediately follows. 
\subsubsection*{Proof of ii)} Fix $\alpha \in \N_0^n$ with $2 \leq |\alpha| \leq N_{\mathrm{par}}$. It follows that 
\begin{equation*}
\partial_z^{\alpha} |\phi^{\lambda,R}_{\omega}(z)| \leq \lambda (R^{1/2 + \delta_m}/\lambda)^{|\alpha|} \|\partial_x^{\alpha}\phi\|_{L^{\infty}(X \times \Omega)} \lesssim R^{2 \delta_m}
\end{equation*}
and the desired bound now follows from i). 
\end{proof}

If $\omega \in \mathrm{supp}\, g_{\theta, v}$, then $|\omega - \omega_{\theta}| < R^{-1/2}$ and so $|\Sigma(\omega) - \xi_{\theta}| \lesssim R^{-1/2}$ where $\xi_{\theta} := \Sigma(\omega_{\theta})$. Consequently, 
\begin{equation}\label{variable coefficient decoupling 100}
  | (\eta_B \cdot T^{\lambda}g_{\theta, v} |_{\Pi})\;\widehat{}\;(\xi) | \lesssim_N  R^{O(1)} w_{B(\mathrm{proj}_{V'}\xi_{\theta},R^{-1/2})}(\xi) \|g_{\theta, v}\|_{L^2(B^{n-1})}
\end{equation}
where the definition of the weight function 
\begin{equation*}
w_{B(\mathrm{proj}_{V'}\xi_{\theta},R^{-1/2})}(\xi) := (1 + R^{1/2}|\xi - \xi_{\theta}|)^{-N}
\end{equation*}
agrees with that of Lemma~\ref{concentration uncertainty} (although here the weights are thought of as functions on $V'$), and so $N = N_{\delta}$ is a large integer, depending on $\delta$. 

The following geometric observation is key to the proof of property 3). 

\begin{claim 3}  If $(\theta, v) \in \mathbb{T}_{B,\tau, V}$, then $\mathrm{dist}(\xi_{\theta}, A_{\xi}) \lesssim R^{-1/2 + \delta_m}$.
\end{claim 3}

Temporarily assume this claim and recall that $V' := V_{\xi}^{\perp}$ where $V_{\xi}$ is the linear subspace parallel to the affine subspace $A_{\xi}$. Thus, if $(\theta, v) \in \mathbb{T}_{B,\tau, V}$, then $\mathrm{proj}_{V'}\xi_{\theta}$ lies in some fixed ball of radius $O(R^{-1/2 + \delta_m})$. Letting $\xi_* \in V'$ denote the centre of this ball, it follows that $w_{B(\mathrm{proj}_{V'}\,\xi_{\theta},R^{-1/2})} \lesssim_{\delta} w_{B(\xi_{*},R^{-1/2+\delta_m})}$ and so
\begin{equation*}
\sum_{(\theta, v) \in \mathbb{T}_{V, B, \tau}} w_{B(\mathrm{proj}_{V'}\,\xi_{\theta},R^{-1/2})} \lesssim_{\delta} R^{O(1)} w_{B(\xi_{*},R^{-1/2+\delta_m})}.
\end{equation*}  
Recalling \eqref{variable coefficient decoupling 100}, 
\begin{equation*}
\|(\eta_B \cdot T^{\lambda}g |_{\Pi})\;\widehat{}\;w_{B(\xi_*,R^{-1/2+\delta_m})}^{-1}\|_{\infty} \lesssim R^{O(1)} \|g\|_{L^2(B^{n-1})}
\end{equation*}
and, applying Lemma~\ref{concentration uncertainty}, one concludes that
\begin{equation*}
 \int\displaylimits_{B(x_0, \rho^{1/2 + \delta_m}) \cap \Pi} \!\!\!\!\!\!\!\!\!\! |T^{\lambda}g|^2 \lesssim_{\delta} R^{O(\delta_m)}\big(\frac{\rho^{1/2}}{R^{1/2}}\big)^{\dim V'} \|g\|_{L^2(B^{n-1})}^{2\delta/(1+\delta)} \big(\int\displaylimits_{\Pi} |T^{\lambda}g|^2 |\eta_B|^2\big)^{1/(1+\delta)}.
\end{equation*}
If $\xi \notin 2B$, then $\xi \notin \bigcup_{(\theta, v) \in \mathbb{T}_B} T_{\theta, v}$ and so $|T^{\lambda}g_{\theta, v}(\xi)| = \mathrm{RapDec}(R)\|g\|_{L^2(B^{n-1})}$ for all $(\theta, v) \in \mathbb{T}_{V, B, \tau}$. Hence
\begin{equation*}
\int_{\Pi} |T^{\lambda}g|^2 |\eta_B|^2 \leq \int_{2B \cap \Pi} |T^{\lambda}g|^2 + \mathrm{RapDec}(R)\|g\|_{L^2(B^{n-1})},
\end{equation*} 
completing the proof of property 3) under the assumption that the above claim holds. 

\begin{proof}[Proof of Claim 3] Fix $(\theta, v) \in \mathbb{T}_{B,\tau, V}$ and let $u_{\theta} := \mathrm{proj}_{x_n^{\perp}}\,\Sigma(\omega_{\theta})$. Recalling that $A_{\xi} = A_u \times \R$  and applying triangle inequality, one deduces that
\begin{equation*}
\mathrm{dist}(\xi_{\theta}, A_{\xi}) = \mathrm{dist}(u_{\theta}, A_u) \leq \mathrm{dist}\big(u_{\theta}, S_u \cap \Psi^{-1}(\tau)\big) + \sup_{u_* \in S_u \cap \Psi^{-1}(\tau)}\mathrm{dist}(u_*, A_u).
\end{equation*}
Furthermore, by Lemma \ref{Gauss Lipschitz},
\begin{equation*}
 \mathrm{dist}\big(u_{\theta}, S_u \cap \Psi^{-1}(\tau)\big) \sim \mathrm{dist}(\omega_{\theta}, S_{\omega}\cap \tau) \lesssim \angle(G^{\lambda}(\bar{x};\omega_{\theta}), V) \lesssim R^{-1/2 + \delta_m},
\end{equation*}
where the last inequality is by the definition of $\mathbb{T}_{B,\tau, V}$. On the other hand, fixing $u_* \in S_u \cap \Psi^{-1}(\tau)$, one wishes to estimate $\mathrm{dist}(u_*, A_u)$. Provided $\rho$ is sufficiently large (so that $\mathrm{diam}\,\tau$ is sufficiently small), the surface $S_u \cap \Psi^{-1}(\tau)$ can be parametrised as the graph of some function $\psi \colon \mathcal{W}  \to V_u^{\perp} \subseteq \R^{n-1}$ where $\mathcal{W} \subset V_u$ is an open set about the origin of diameter $O(\rho^{-1/2 + \delta_m})$. In particular, one may write
\begin{equation*}
S_u \cap \Psi^{-1}(\tau) = \big\{ w + \psi(w) : w \in \mathcal{W} \big\} + u_0
\end{equation*}
where $\psi(0) = 0$ and $\partial_{w} \psi_j(0) = 0$ for $1 \leq j \leq n -  \dim V_u$. Thus, $u_* = w_* + \psi(w_*) + u_0$ for some $w_* \in \mathcal{W}$ and, since $w_* \in V_u+u_0 = A_u$, it follows that $ \mathrm{dist}(u_*, A_u) \leq |\psi(w_*)|$. By Taylor's theorem (here using the hypothesis that $R^{1/2} \leq \rho$),
\begin{equation*}
|\psi_j(w_*)| \leq \int_0^1 (1-t) |\langle \partial_{ww}^2\psi_j(tw_*)w_*, w_* \rangle| \, \ud t \lesssim |w_*|^2 \lesssim \rho^{-1 + 2\delta_m} \leq R^{-1/2 + \delta_m}
\end{equation*}
 for $1 \leq j \leq n -  \dim V_u$ and combining these observations yields the desired estimate.

\end{proof}




\begin{figure} 
\centering 
  \begin{subfigure}[t]{0.49\linewidth}
  %
  %
\tdplotsetmaincoords{70}{110}
\begin{tikzpicture}[tdplot_main_coords,font=\sffamily,scale=0.7]
\draw[-latex] (-4,0,0) -- (4,0,0);
\draw[-latex] (0,-4,0) -- (0,4,0);
\draw[-latex] (0,0,0) -- (0,0,4) node[anchor=north east]{$\xi_n$};
\node[anchor=south west,align=center] at (-0.8,1.2,0) {\Large$\Omega$};

\draw[fill=black,opacity=0.1] (-3,0,-3) -- (-3,0,3) -- (3,0,3) -- (3,0,-3) -- cycle;
\draw[black, thin] (-3,0,-3) -- (-3,0,3) -- (3,0,3) -- (3,0,-3) -- cycle;
\node[anchor=south west,align=center] (line) at (0,-1.6,2) {\Large $V_{\xi}$};

\draw[very thick, blue](0,-3,0)--(0,3,0);
\node[anchor=south west,align=center] (line) at (3,3,-1) {\Large{\color{blue}$V' := V_{\xi}^{\perp}$}};
\draw[-latex, blue] (line) to[out=180,in=-105] (0.4,1.6,0.05);

\draw[fill=yellow,opacity=0.2] (-3,-3,0) -- (-3,3,0) -- (3,3,0) -- (3,-3,0) -- cycle;
\draw[black, thin] (-3,-3,0) -- (-3,3,0) -- (3,3,0) -- (3,-3,0) -- cycle;
\draw[thick, blue](-3,0,0)--(3,0,0);
\node[anchor=south west,align=center] (line) at (3,3,3) {\Large ${\color{blue}V_{\omega}}$};
\draw[-latex, blue] (line) to[out=180,in=85] (-2,-0.2,0.05);

\node[anchor=south west,align=center] (line) at (3,3.3,4.5) {\Large${\color{blue}V}$};

\tdplotsetrotatedcoords{-90}{75}{0}
\begin{scope}[tdplot_rotated_coords]
\draw[fill=blue,opacity=0.2] (-3,-3,0) -- (-3,3,0) -- (3,3,0) -- (3,-3,0) -- cycle;
\draw[black, thin] (-3,-3,0) -- (-3,3,0) -- (3,3,0) -- (3,-3,0) -- cycle;

\end{scope}
\end{tikzpicture}
    \caption{$V_{\omega} = V \cap e_n^{\perp}$.} 
  \end{subfigure}
  %
  %
\begin{subfigure}[t]{0.49\linewidth}
\tdplotsetmaincoords{70}{110}
\begin{tikzpicture}[tdplot_main_coords,font=\sffamily, scale=0.7]
\draw[-latex] (0,0,0) -- (4,0,0);
\draw (-4,0,0) -- (-3,0,0);
\draw[-latex] (0,-4,0) -- (0,4,0);
\draw[-latex] (0,0,0) -- (0,0,4) node[anchor=north east]{$\xi_n$};

\node[align=center] at (0,0.4,0.4) {\Large{$\bm{\theta}$}};

\node[anchor=south west,align=center] at (-0.8,1.2,0) {\Large$\Omega$};

\begin{scope}[canvas is yz plane at x=0]
\draw[very thick,black] (1,0) arc (0:75:1cm) node[midway] (line) {};

   \end{scope}

\draw[very thick, blue](0,-3,0)--(0,3,0);
\node[anchor=south west,align=center] (line) at (3,3,-1) {\Large{\color{blue}$V' := V_{\xi}^{\perp}$}};
\draw[-latex, blue] (line) to[out=180,in=-105] (0.4,1.6,0.05);

\draw[fill=yellow,opacity=0.2] (-3,-3,0) -- (-3,3,0) -- (3,3,0) -- (3,-3,0) -- cycle;
\draw[black, thin] (-3,-3,0) -- (-3,3,0) -- (3,3,0) -- (3,-3,0) -- cycle;
\draw[blue, thick](0,0,0)--(3,0,0);
\draw[blue, thick](-3,0,0)--(-1.9,0,0);
\draw[->,black, very thick] (0,0,0) -- (0,2,0);

\node[anchor=south west,align=center] (line) at (3,3.3,4.5) {\Large${\color{blue}V}$};

\tdplotsetrotatedcoords{-90}{75}{0}
\begin{scope}[tdplot_rotated_coords]
\draw[fill=blue,opacity=0.2] (-3,-3,0) -- (-3,3,0) -- (3,3,0) -- (3,-3,0) -- cycle;
\draw[black, thin] (-3,-3,0) -- (-3,3,0) -- (3,3,0) -- (3,-3,0) -- cycle;
\draw[->,black, very thick] (0,0,0) -- (-2,0,0);
\end{scope}
\end{tikzpicture}
\caption{$\angle(v, v') \gtrsim 1$ for $v \in V$, $v' \in V'$ non-zero.}
\end{subfigure}
 \captionsetup{singlelinecheck=off}
\caption[.]{The transversality condition in the prototypical case of the extension operator $E_{\mathrm{par}}$. Here $S_{\omega}$ is an affine subspace and $V_{\omega}$ is defined to be the linear subspace parallel to $S_{\omega}$. Moreover, $V_{\omega}$ coincides with $V_u$ and is given by the intersection of $V$ with the horizontal slice $e_n^{\perp} = \R^{n-1} \times \{0\}$. From the right-hand diagram it is clear that 
\begin{equation*}
    \theta := \min_{v \in V\setminus\{0\}, v' \in V' \setminus\{0\}}\angle(v, v') = \overline{\angle}(V, e_n^{\perp}) \gtrsim 1; \qquad\qquad\qquad\qquad
\end{equation*}
see \cite[Sublemma 6.6]{Guth2018} for a formal proof of this fact. For general operators $T^{\lambda}$ with positive-definite phase, Claim 4 guarantees that $V_u$ is a slight perturbation of the horizontal slice $V \cap e_n^{\perp}$ (see Figure~\ref{perturbed figure}) so that the angle condition still holds.}\label{transverse figure}
\end{figure}  

\subsection*{Verifying the transversality condition in 2)}

In the prototypical case of the parabolic extension operator $E_{\mathrm{par}}$, the transversality condition holds by a straightforward computation and the minimum angle can be explicitly computed (see \cite[Sublemma 6.6]{Guth2018} and Figure~\ref{transverse figure}). To establish the result for variable coefficient operators, one uses the localisation to the cap $\tau$ and ball $B$ to show that the situation is only a slight perturbation of the prototypical case. This argument is carried out in detail below.

The transversality of the planes $V$ and $V'$ heavily relies upon the positive-definite hypothesis H2$^+$); the property does not hold in general if one only assumes the weaker condition H2).

\begin{example}
 For $\phi_{\mathrm{hyp}}(x;\omega) := \langle x', \omega \rangle + x_3\omega_1\omega_2$ for $(x; \omega) \in \R^3 \times \R^2$ the oscillatory integral operator 
 \begin{equation*}
  E_{\mathrm{hyp}}f(x) := \int_{B^{2}} e^{2 \pi i \phi_{\mathrm{hyp}}(x;\omega)} f(\omega)\,\ud \omega
 \end{equation*}
is the extension operator associated to the hyperbolic paraboloid. This is the prototypical example of a H\"ormander-type operator for which H2$^+$) fails. Here $G_0(\omega) = ( - \omega_2,  - \omega_1,  1)^{\top}$ and therefore if $V := \{x \in \R^3 : x_1 = 0\}$, then 
\begin{equation*}
 S_{\omega} := \{ \omega \in B^2 : G_0(\omega) \in V \} = \{ \omega \in B^2 : \omega_2 = 0 \}.
\end{equation*}
It follows that $V_{\xi} = \{\xi \in \hat{\R}^3 : \xi_2 = 0\}$ and so $V' := V_{\xi}^{\perp}$ is a vector subspace of $V$. Clearly, in this situation the desired transversality condition completely fails.
\end{example}

The present analysis concerns H\"ormander-type operators with reduced positive-definite phase $\phi^{\lambda}$, so that $\phi^{\lambda}$ is a small perturbation of $\phi_{\mathrm{par}}$. Such phases do not exhibit the phenomenon observed in the above example: the following claim is key to understanding this.

\begin{claim 4} Let $c_{\mathrm{par}}$ be the constant defined in $\S$\ref{Reductions section}. Then
\begin{equation*}
\max_{v^* \in V \cap (S^{n-2} \times \{0\})} \angle(v^*, V_u) = O(c_{\mathrm{par}}).
\end{equation*}
Here $V_u$ is identified with a subspace of $e_n^{\perp} = \R^{n-1} \times \{0\}$ in the natural manner. 
\end{claim 4}

\begin{example} Returning to the example of the hyperbolic paraboloid with $V := \{x \in \R^3 : x_1 = 0\}$, the spaces $V \cap (\R^2 \times \{0\})$ and $V_u := \{x \in \R^3 : x_2 = x_3 = 0\}$ are mutually orthogonal, and so the claim does not hold in this case. 
\end{example}

Provided $c_{\mathrm{par}} > 0$ is chosen sufficiently small, the claim implies the transversality condition. Indeed, let $\{v_1^*, \dots, v_{\dim V - 1}^*\}$ be an orthonormal basis for $V \cap e_n^{\perp} $. Fix a unit vector $v^*_{\dim V} \in V$ which is perpendicular to $V \cap  e_n^{\perp} $ so that $\{v_1^*, \dots, v_{\dim V}^*\}$ forms an orthonormal basis for $V$. By the above claim, there exist $v_k \in V_u \cap S^{n-2}\subset e_n^{\perp}$ such that 
\begin{equation*}
 \angle(v_k^*, v_k) = O(c_{\mathrm{par}}) \qquad \textrm{for $1 \leq k \leq \dim V -1$.}
\end{equation*}
Applying the Gram--Schmidt process, one may further assume that $\{v_1,\dots, v_{\dim V -1}\}$ forms an orthonormal basis of $V_u$; adjoining $e_n$ to this set then gives an orthonormal basis of $V_{\xi}$. Given $v \in V \cap S^{n-1}$ and writing
\begin{equation*}
v = \sum_{k=1}^{\dim V -1} \langle v, v_k^*\rangle v_k + \langle v, v_{\dim V}^* \rangle v_{\dim V}^* + \sum_{k=1}^{\dim V -1} \langle v, v_k^*\rangle (v_k^*-v_k),
\end{equation*}
since $\sin \angle(v, V') = |\mathrm{proj}_{V_{\xi}}v|$, it follows that
\begin{align*}
\sin \angle(v, V') &=  \Big(\sum_{k=1}^{\dim V -1} |\langle v, v_k^*\rangle|^2 + |\langle v, v_{\dim V}^* \rangle|^2|\langle v_{\dim V}^*, e_n \rangle|^2 \Big)^{1/2} - O(c_{\mathrm{par}}) \\
& \geq \Big(\sum_{k=1}^{\dim V} |\langle v, v_k^*\rangle|^2 \Big)^{1/2}\cdot|\langle v_{\dim V}^*, e_n \rangle| - O(c_{\mathrm{par}}).
\end{align*}
Consequently, provided that $c_{\mathrm{par}}$ is chosen to be sufficiently small,
\begin{equation*}
\sin \angle(v, V') \geq |\langle v_{\dim V}^*, e_n \rangle| - O(c_{\mathrm{par}}) \gtrsim 1;
\end{equation*}
indeed, the last inequality holds since \eqref{TE2 1} implies that
\begin{equation*}
|\langle v_{\dim V}^*, e_n \rangle| = \angle(v_{\dim V}^*, e_n^{\perp} ) = \overline{\angle}(V, e_n^{\perp} ) \gtrsim 1;
\end{equation*}
see Figure~\ref{perturbed figure}. This concludes the proof of the transversality condition, conditional on the above claim.

\begin{figure} 
\centering 
\begin{tikzpicture}[scale=0.7]

\coordinate (origin) at (0,0);
\coordinate (mary) at (0,4);
\coordinate (bob) at (4,1);

\node[anchor=south west,align=center] (line) at (1,3.5) {\Large ${\color{blue}V \cap e_n^{\perp}}$};
\draw[-latex, blue] (line) to[out=270,in=0] (0.2,2.3);

\draw[->][black] 
        (-4,0)->(4,0);   
\draw[->][black] 
        (0,-4,0)->(0,4);          
  \draw[-][blue, very thick] 
		(0,-3)->(0,3) node[anchor=north]{};
		
\node[anchor=east] at (2,-1) {\Large $U$};  
		
\draw[fill=yellow,opacity=0.2] (-3,3) -- (3,3) -- (3,-3) -- (-3,-3) -- cycle;
\draw[black, thin] (-3,-3) -- (-3,3) -- (3,3) -- (3,-3) -- cycle;

  \draw[-][blue, very thick] 
		(-3,-3/4)->(3,3/4) node[anchor=west]{\Large $V' := V_{\xi}^{\perp}$};  
  \draw[-][purple, thick] 
        (3/4,-3)->(-3/4,3);
\node[anchor=east] at (-0.6,1.2) {\Large ${\color{purple}V_u}$};  
    
        
\pic [draw, black, thick, <->, "\Large $\:\:\theta$", angle eccentricity=1.2, angle radius=1cm] {angle = bob--origin--mary};

\end{tikzpicture}
 \captionsetup{singlelinecheck=off}
\caption[.]{In the variable coefficient case, Claim 4 guarantees that the subspace $V_u$ makes a small angle with the horizontal slice $V \cap e_n^{\perp}$. This implies that the angle $\theta$ is always large (that is, bounded below by an absolute constant) and thereby ensures that $V$ and $V'$ are transverse.} \label{perturbed figure}
\end{figure}

\begin{proof}[Proof of Claim 4] Fix $v^* \in  V \cap (S^{n-2} \times \{0\})$  and let $v \in V_u \cap S^{n-2}$ denote the unit normalisation of the vector $\mathrm{proj}_{V_u} v^*$. It suffices to show that $\angle(v^*, v) = O(c_{\mathrm{par}})$. Since 
\begin{equation*}
v^* - v = (|\mathrm{proj}_{V_u} v^*| - 1)v +  \mathrm{proj}_{V_u^{\perp}}v^*
\end{equation*}
and $||\mathrm{proj}_{V_u} v^*| - 1| \leq |\mathrm{proj}_{V_u^{\perp}}v^*|$, the problem is further reduced to proving that
\begin{equation}\label{V space claim 2}
|\mathrm{proj}_{V_u^{\perp}}v^*| = O(c_{\mathrm{par}}).
\end{equation}

Recall that 
\begin{align*}
  S_u &:= \{u \in U : G_0^{\lambda}(\bar{x};\Psi(u)) \in V \} \\
  &=   \{u \in U : \langle G_0^{\lambda}(\bar{x};\Psi(u)), N_k \rangle = 0 \textrm{ for $1 \leq k \leq n - \dim V$}\}
\end{align*}
 where, as above, $\{N_1, \dots, N_{n - \dim V}\}$ is a choice of orthonormal basis for $V^{\perp}$. If $\bar{h}$ is the function introduced in \eqref{h bar definition}, then $u \mapsto (u, \bar{h}(u))$ is a graph parametrisation  of the surface $\Sigma$ and  $u \mapsto G_0^{\lambda}(\bar{x};\Psi(u))$ is the (unnormalised) Gauss map associated to this parametrisation. Thus, the surface $S_u$ is defined by the equations \begin{equation*}
   -\langle\partial_{u} \bar{h}(u), N_k' \rangle + N_{k,n} = 0 \quad \textrm{for $1 \leq k \leq n-\dim V$,}
 \end{equation*}
 where $N_k = (N'_k,N_{k,n}) \in \R^{n-1} \times \R$. By differentiating these expressions, one deduces that a basis for $V_u^{\perp}$ is given by $\{M_1, \dots, M_{n - \dim V}\}$ where
\begin{equation*}
M_k := \partial_{uu}^2 \bar{h}(u_0)N_k' \qquad \textrm{for $1 \leq k \leq n - \dim V$.}
\end{equation*}
Let $1 \leq k \leq n - \dim V$ and recall from Lemma~\ref{hx nearly paraboloid} that 
\begin{equation}\label{V space claim 3}
\|\partial_{uu}^2 \bar{h}(u_0) - \mathrm{I}_{n-1}\|_{\mathrm{op}} = O(c_{\mathrm{par}}).
\end{equation}
 Consequently, 
\begin{equation}\label{V space claim 4}
|M_k - N_k'| = O(c_{\mathrm{par}})
\end{equation}
and, combining this with the fact that $\langle v^*, N_k' \rangle = 0$ for $1 \leq k \leq n - \dim V$ (where $v^*$ is identified with a vector in $\R^{n-1}$ in the natural manner), it follows that
\begin{equation}\label{V space claim 5}
\langle v^*, M_k \rangle = \langle v^*, M_k - N_k' \rangle   = O(c_{\mathrm{par}}).
\end{equation}
Let $\mathbf{M}$ be the $(n-1) \times (n - \dim V)$ matrix whose $k$th column is given by the vector $M_k$. The orthogonal projection of $v^*$ onto the subspace $V_u^{\perp}$ can be expressed in terms of $\mathbf{M}$ via the formula  
\begin{equation*}
\mathrm{proj}_{V_u^{\perp}} v^* := \mathbf{M}(\mathbf{M}^{\top}\mathbf{M})^{-1}\mathbf{M}^{\top} v^*.
\end{equation*}
By \eqref{V space claim 5}, the components of the vector $\mathbf{M}^{\top} v^*$ are all $O(c_{\mathrm{par}})$. Thus, to prove \eqref{V space claim 2} (and thereby establish the claim) it suffices to show that $\|\mathbf{M}(\mathbf{M}^{\top}\mathbf{M})^{-1}\|_{\mathrm{op}} \lesssim 1$, which would in turn follow from
\begin{equation*}
\|\mathbf{M}\|_{\mathrm{op}} \lesssim 1 \quad \textrm{and} \quad \|(\mathbf{M}^{\top}\mathbf{M})^{-1}\|_{\mathrm{op}} \lesssim 1. 
\end{equation*} 
The bound for $\mathbf{M}$ is an immediate consequence of the definition of the $M_k$ and \eqref{V space claim 3}. The remaining estimate would follow if one could show that, provided  $c_{\mathrm{par}}$ is sufficiently small, $|\lambda| \gtrsim 1$ for every eigenvalue $\lambda$ of the symmetric matrix $\mathbf{M}^{\top}\mathbf{M}$. By \eqref{V space claim 4} and continuity of eigenvalues, it suffices to show that the matrix $\mathbf{N}^{\top}\mathbf{N}$ satisfies the same property, where $\mathbf{N}$ is the $(n-1) \times (n - \dim V)$ matrix whose $k$th column is given by the vector $N_k'$. By \eqref{TE2 1} the vectors $N_1', \dots, N_{n - \dim V}' \in \R^{n-1}$ are linearly independent and, moreover, satisfy 
\begin{equation*}
|\det \mathbf{N}^{\top}\mathbf{N}| = |N_1' \wedge \dots \wedge N_{n - \dim V}'|^2 \gtrsim 1.
\end{equation*} 
Therefore, the desired condition on the eigenvalues holds if the spectral radius of $\mathbf{N}^{\top}\mathbf{N}$ is $O(1)$. But the latter property is an obvious consequence of the Newton--Girard identity
\begin{equation*}
\sum_{i=1}^{m}a_i^2  = \big(\sum_{i=1}^{m} a_i\big)^2 - 2\sum_{1 \leq i_1 < i_2 \leq m} a_{i_1}a_{i_2} \qquad a_i \in \R \textrm{ for }1 \leq i \leq m
\end{equation*}
and the fact that the entries of $\mathbf{N}^{\top}\mathbf{N}$ are all $O(1)$.
\end{proof}
This concludes the proof of Lemma~\ref{Transverse equidistribution lemma 2}.
\end{proof}




\subsection{The proof of the transverse equidistribution estimate} It remains to demonstrate how to pass from Lemma~\ref{Transverse equidistribution lemma 2} to Lemma~\ref{Transverse equidistribution lemma 1}. At this stage, the proof is very similar to the argument found in \cite{Guth2018}, but the details are nevertheless included for completeness. 

There are two additional ingredients needed for the proof of Lemma~\ref{Transverse equidistribution lemma 1}. The first is the following theorem of Wongkew \cite{Wongkew1993} (see also \cite{Guth2016, Zhang2017}), which controls the size of a neighbourhood of a variety.

\begin{theorem}[Wongkew \cite{Wongkew1993}]\label{Wongkew theorem} Suppose $Z = Z(P_1, \dots, P_{n-m})$ is an $m$-dimensional transverse complete intersection in $\R^n$ with $\overline{\deg}\, Z \leq D$. For any $0 < \rho \leq R$ and $R$-ball $B_R$ the neighbourhood $N_{\rho}(Z\cap B_R)$ can be covered by $O_D( (R/\rho)^{m})$ balls of radius $\rho$.
\end{theorem}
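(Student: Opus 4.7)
The plan is to reduce the covering statement to the volume estimate
\[
|N_\rho(Z)\cap B_R|\lesssim_D R^m\rho^{n-m},
\]
which implies Theorem~\ref{Wongkew theorem} via a Vitali-type greedy packing argument: a maximal $\rho$-separated subset $\{x_i\}_{i=1}^{N}$ of $N_\rho(Z\cap B_R)$ produces disjoint balls $B(x_i,\rho/2)$ contained in the slightly enlarged neighbourhood $N_{2\rho}(Z\cap B_R)\cap B_{R+\rho}$, so comparing volumes gives $N\lesssim_D (R/\rho)^m$, and then the balls $\{B(x_i,2\rho)\}_{i=1}^N$ cover $N_\rho(Z\cap B_R)$ by maximality.

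For the volume estimate, first scale by $x\mapsto \rho^{-1}x$: this transforms $Z$ into another transverse complete intersection (defined by $\tilde P_j(x):= P_j(\rho x)$) of the same dimension $m$ and maximum degree $D$, and rescales $N_\rho(Z)\cap B_R$ to $N_1(\tilde Z)\cap B_{R/\rho}$. Relabelling, it suffices to prove $|N_1(Z)\cap B_R|\lesssim_D R^m$ whenever $R\geq 1$. I would then induct on the codimension $k:=n-m$. The base case $k=0$ is immediate: there are no defining equations, so $Z=\R^n$ and $|N_1(Z)\cap B_R|=|B_R|\lesssim R^n = R^m$. For the inductive step, fix a coordinate direction $e_j$ and slice $\R^n$ by hyperplanes $H_t:=\{x_j=t\}$. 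For almost every $t$ — specifically, outside the set of critical values of the projection $\pi_j\colon Z\to\R$, which is measure zero by Sard's theorem — the slice $Z_t:=Z\cap H_t\subset H_t\cong\R^{n-1}$ is again a transverse complete intersection of dimension $m-1$ and maximum degree at most $D$. The inductive hypothesis then yields
\[
|N_1(Z_t)\cap(B_R\cap H_t)|_{n-1}\lesssim_D R^{m-1}.
\]
Combining the pointwise inclusion $(N_1Z)\cap H_t\subseteq N_1\bigl(Z\cap\{|x_j-t|\leq 1\}\bigr)\cap H_t$ with a short projection argument relating the latter set to $N_{C}(Z_{t'})$ in $H_{t'}$ for some nearby $t'$, and then integrating against $t\in[-R,R]$ via Fubini, picks up the remaining factor of $R$ and closes the induction.

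The main technical obstacle lies in managing the slicing step: one must handle both the bad set of heights $t$ at which $e_j$ is tangent to $Z$ and the possibility that no single coordinate direction is globally transverse to $Z$. The first difficulty is defused by Sard's theorem (the bad set is measure zero and hence Fubini-invisible), together with a careful version of the inclusion $(N_1Z)\cap H_t\subseteq N_1(Z\cap\text{slab}_t)\cap H_t$ that survives across critical values by the triangle inequality. For the second, one exploits the fact that at each smooth point of $Z$ the $m$-dimensional tangent space cannot contain all $n$ coordinate axes, so $Z$ can be partitioned into finitely many pieces according to which coordinate direction is locally transverse and the induction applied on each piece separately, the number of pieces depending only on $n$. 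Once these points are addressed, the inductive bound assembles cleanly into the claimed estimate, with all implicit constants depending only on $n$ and $D$.
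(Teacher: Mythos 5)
The paper cites this result from Wongkew \cite{Wongkew1993} (see also \cite{Guth2016, Zhang2017}) without giving a proof, so there is no in-paper argument to compare against; your proposal has to stand on its own.

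Your outer layer is sound: the Vitali packing reduction from a covering statement to the volume bound $|N_\rho(Z)\cap B_R|\lesssim_D R^m\rho^{n-m}$, and the rescaling to $\rho=1$, are both standard and correctly executed. The inductive core, however, has a bookkeeping error and a substantive gap. On bookkeeping: you announce an induction on the codimension $k=n-m$, but slicing sends an $m$-dimensional transverse complete intersection in $\R^n$ to an $(m-1)$-dimensional one in $\R^{n-1}$, so $k$ is constant along the recursion; the parameter that actually decreases is $m$, and the correct base case is $m=0$, a zero-dimensional transverse complete intersection, which by B\'ezout's theorem (Theorem~\ref{Bezout's theorem}) has at most $D^n$ points and hence $|N_1(Z)\cap B_R|\lesssim_D 1=R^0$. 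The $k=0$ case you verify is the trivial $Z=\R^n$ and is never reached by the recursion when $k\geq 1$.

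More seriously, the ``short projection argument'' you defer is precisely where the theorem lives, and the pointwise slice bound it would need, $|(N_1 Z)\cap H_t\cap B_R|_{n-1}\lesssim_D R^{m-1}$, is simply false when the coordinate $j$ is chosen independently of $Z$. Take $Z=\{(s,s^2):s\in\R\}\subset\R^2$ (so $m=1$, $D=2$) and slice by $H_t=\{x_1=t\}$. For $1\ll t\lesssim\sqrt{R}$ the arc $\{(s,s^2):|s-t|<1\}$ sweeps heights from $(t-1)^2$ to $(t+1)^2$, a range of $\approx 4t$, so $N_1(Z)\cap H_t\cap B_R$ is an interval of length $\sim t\gg R^{m-1}=1$; the total volume $\lesssim R$ is recovered only because such slices are nonempty solely for $|t|\lesssim\sqrt{R}$ --- a cancellation that a Fubini argument built on a uniform per-slice bound cannot see. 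Your proposed remedy, partitioning $Z$ by which coordinate axis is locally transverse to $T_zZ$ and running the induction on each piece, does not repair this as stated: the pieces are semialgebraic rather than algebraic, so the inductive hypothesis (a statement about transverse complete intersections of bounded degree) does not apply to them; and the flow along $Z$ needed to move a point in the slab $\{|x_j-t|\leq 1\}$ onto the slice $Z_t$ may exit the piece before reaching the slice, so the constant in the comparison $N_1(Z\cap\mathrm{slab})\cap H_t\subseteq N_C(Z_t)$ is not controlled. Closing this requires a genuinely different device --- for instance a single projection onto a generic $m$-plane with B\'ezout's theorem controlling the fibre over each $\rho$-cube in the base, rather than iterated coordinate-hyperplane slicing with a uniform slice-volume estimate.
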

 
The second ingredient is a geometric lemma concerning planar slices of neighbourhoods of varieties. The statement of this result requires a general quantitative notion of transversality for pairs of linear subspaces in $\R^n$. Any $m$-dimensional linear subspace $V$ can be expressed as a transverse complete intersection $V = Z(P_{N_1}, \dots, P_{N_{n-m}})$ where $\{N_1,\dots, N_{n-m}\}$ forms an orthonormal basis of $V^{\perp}$ and $P_{N_j}(x) := \langle x, N_j \rangle$. Suppose $V_1, V_2$ are linear subspaces in $\R^n$ satisfying 
\begin{equation}\label{dimension hypothesis}
\dim V_1 + \dim V_2 \geq n.
\end{equation}
It is easy to verify that the subspace $V_1 \cap V_2$ is a transverse complete intersection if and only if  $\dim V_1 \cap V_2 = \dim V_1 + \dim V_2 - n$ (of course, the inequality $\dim V_1 \cap V_2 \geq \dim V_1 + \dim V_2 - n$ always holds so the latter condition says that $V_1 \cap V_2$ is as small as possible). 

\begin{definition} A pair $V_1, V_2$ of linear subspaces in $\R^n$ satisfying \eqref{dimension hypothesis} is said to be quantitatively transverse if the following hold:
\begin{enumerate}[i)]
\item $\dim(V_1 \cap V_2) = \dim V_1 + \dim V_2 - n$;
\item $\angle(v_1, v_2) \geq c_{\mathrm{trans}}$ for all non-zero vectors $v_j \in (V_1 \cap V_2)^{\perp} \cap V_j$, $j=1,2$.
\end{enumerate}
\end{definition}

\begin{remark}
 In the special case where $\dim V_1 + \dim V_2 = n$, it follows that the pair $V_1, V_2$ is quantitatively transverse if and only if $\angle(v_1, v_2) \geq c_{\mathrm{trans}}$ for all non-zero vectors $v_1 \in V_1$, $v_2 \in V_2$. Thus, up to the minor disparity between the choice of constant $c_{\mathrm{trans}}$, this agrees with the transversality condition appearing in the statement of Lemma~\ref{Transverse equidistribution lemma 2}.
\end{remark}

\begin{lemma}\label{quantitatively transverse lemma} There exists some dimensional constant $C > 0$ such that the following holds. Let $B_r \subseteq \R^n$ be an $r$-ball, $V \subseteq \R^n$ a linear subspace, $Z$ a transverse complete intersection and suppose that $\dim Z + \dim V \geq n$ and $T_zZ, V$ is a quantitatively transverse pair for all $z \in Z \cap 2B_r$. Then
\begin{equation*}
 V \cap B_r \cap N_{\rho}(Z) \subseteq N_{C\rho}(V \cap Z)
\end{equation*}
for all $0 < \rho \ll r$.
\end{lemma}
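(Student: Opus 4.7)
The plan is to obtain the inclusion by flowing along $Z$ from a nearest point on $Z$ towards $V$ until one reaches $V \cap Z$. Given $x \in V \cap B_r \cap N_\rho(Z)$, I would fix $z_0 \in Z$ with $|z_0 - x| \leq \rho$ (so $z_0 \in 2B_r$ whenever $\rho \ll r$) and construct a smooth curve $t \mapsto z(t)$ in $Z \cap 2B_r$ starting at $z_0$ whose deviation from $V$, namely $|\mathrm{proj}_{V^\perp}(z(t) - x)|$, decays exponentially to $0$, while the total arc length remains $O(\rho)$. The limit point $z_\infty$ then lies in $V \cap Z$ at distance at most $C\rho$ from $x$.

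The first step is to identify, at each $z \in Z \cap 2B_r$, the correct ``transverse-to-$V$'' subspace of $T_zZ$ in which to move. Set $W_z := (V \cap T_zZ)^\perp \cap T_zZ$; the dimensional part of the quantitative transversality hypothesis yields $\dim W_z = n - \dim V = \dim V^\perp$. The key quantitative claim is that the linear map $\mathrm{proj}_{V^\perp}|_{W_z} \colon W_z \to V^\perp$ is invertible with operator norm of the inverse controlled by $(\sin c_{\mathrm{trans}})^{-1}$: for any unit $w \in W_z$, the vector $\mathrm{proj}_V w$ automatically lies in $(V \cap T_zZ)^\perp \cap V$ (since $w \perp V \cap T_zZ$), so the angle condition in the definition of quantitative transversality gives $\angle(w, \mathrm{proj}_V w) \geq c_{\mathrm{trans}}$ whenever $\mathrm{proj}_V w$ is non-zero, whence $|\mathrm{proj}_{V^\perp}(w)| \geq \sin c_{\mathrm{trans}}$. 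I would then define a smooth vector field on $Z \cap 2B_r$ by
\[
X(z) := -\bigl(\mathrm{proj}_{V^\perp}|_{W_z}\bigr)^{-1}\bigl(\mathrm{proj}_{V^\perp}(z - x)\bigr) \in T_zZ
\]
(smoothness of $X$ in $z$ follows from smoothness of $z \mapsto T_zZ$, which is given by the implicit function theorem applied to the defining polynomials of the transverse complete intersection) and let $z(t)$ solve $\dot z = X(z)$, $z(0) = z_0$. By construction, $\tfrac{d}{dt}\mathrm{proj}_{V^\perp}(z(t) - x) = \mathrm{proj}_{V^\perp}(\dot z(t)) = -\mathrm{proj}_{V^\perp}(z(t) - x)$, giving exponential decay $|\mathrm{proj}_{V^\perp}(z(t) - x)| \leq e^{-t}\rho$ and the velocity bound $|\dot z(t)| \leq (\sin c_{\mathrm{trans}})^{-1} e^{-t}\rho$, so the total arc length is at most $\rho/\sin c_{\mathrm{trans}}$.

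The main obstacle will be ensuring that the flow remains within the region where the quantitative transversality hypothesis is assumed, namely $Z \cap 2B_r$, so that $X$ stays well-defined and smooth for all positive time. The arc length estimate itself provides the escape bound: $z(t)$ is confined to $B(z_0, \rho/\sin c_{\mathrm{trans}})$, which sits strictly inside $2B_r$ provided $\rho$ is sufficiently small compared to $r$ (this is the quantitative sense in which $\rho \ll r$ is used). Consequently the flow exists for all $t \geq 0$, and the finite arc length forces convergence to some limit $z_\infty \in Z$ as $t \to \infty$ (using that $Z$ is closed). Passing to the limit in the exponential decay gives $\mathrm{proj}_{V^\perp}(z_\infty - x) = 0$, so $z_\infty - x \in V$; combined with $x \in V$ this yields $z_\infty \in V \cap Z$, and the triangle inequality gives $|x - z_\infty| \leq |x - z_0| + |z_0 - z_\infty| \leq (1 + 1/\sin c_{\mathrm{trans}})\rho$, proving the lemma with $C := 1 + (\sin c_{\mathrm{trans}})^{-1}$.
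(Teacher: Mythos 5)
Your argument is correct and follows essentially the same route as the paper's proof: both construct an integral curve of the transverse distribution $W_z = (T_zZ \cap V)^{\perp} \cap T_zZ$ starting from a nearest point $z_0 \in Z$, use the quantitative transversality angle to bound the speed by the $V^{\perp}$-component of the speed (yielding arc length $O(\rho/\sin c_{\mathrm{trans}})$, which also keeps the flow inside $2B_r$), and land in $V \cap Z$ within distance $C\rho$ of $x$. The only difference is cosmetic — you drive the $V^{\perp}$-component to zero exponentially over $[0,\infty)$, whereas the paper prescribes it to follow the straight segment from $z_0$ to $x$ over $[0,1]$.
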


The proof of this simple lemma is postponed until the end of this section.

\begin{proof}[Proof (of Lemma~\ref{Transverse equidistribution lemma 1}).] If $T_{\theta, v} \cap N_{R^{1/2 + \delta_m}}(Z)\cap B = \emptyset$, then it follows that
\begin{equation*}
 |T^{\lambda}g_{\theta, v}(x)| = \mathrm{RapDec}(R)\|g\|_{L^2(B^{n-1})} \qquad \textrm{for all $x \in N_{\rho^{1/2 + \delta_m}}(Z)\cap B$.}
\end{equation*}
Consequently, one may assume that $g$ is concentrated on only those wave packets from $\mathbb{T}_{B, \tau, Z}$ for which $T_{\theta,v}$ intersects $N_{R^{1/2+\delta_m}}(Z) \cap B$ non-trivially. Suppose $(\theta, v) \in \mathbb{T}_{B, \tau, Z}$ has this property and let  $x \in T_{\theta, v} \cap N_{R^{1/2 + \delta_m}}(Z) \cap B$. If $z \in Z \cap 2B$, then $|x - z| \lesssim R^{1/2 + \delta_m}$ and, by the $R^{1/2 + \delta_m}$-tangent condition,
\begin{equation*}
 \angle(G^{\lambda}(x; \theta), T_zZ) \lesssim R^{-1/2 + \delta_m}.
\end{equation*}
Since $|G^{\lambda}(\bar{x}; \theta) - G^{\lambda}(x; \theta)| \lesssim |\bar{x} - x|/\lambda \lesssim R^{-1/2 + \delta_m}$, one concludes that
\begin{equation*}
  \angle(G^{\lambda}(\bar{x}; \theta), T_zZ) \lesssim R^{-1/2 + \delta_m} \qquad \textrm{for all $z \in Z \cap 2B$.}
\end{equation*}
Thus, there exists a subspace $V \subseteq \R^n$ of minimal dimension $\dim V \leq \dim Z$ such that
\begin{equation*}
   \angle(G^{\lambda}(\bar{x}; \theta), V) \lesssim R^{-1/2 + \delta_m}
\end{equation*}
for all $(\theta, v) \in \mathbb{T}_{B, \tau, Z}$ for which $T_{\theta, v} \cap N_{R^{1/2+\delta_m}}(Z) \cap B \neq \emptyset$. In particular, $g$ is concentrated on wave packets from $\mathbb{T}_{B,\tau, V}$. One may apply Lemma~\ref{Transverse equidistribution lemma 2} to find a subspace $V'$ of dimension $n - \dim V$ such that 
\begin{equation}\label{key transversality condition}
 \angle(v, v') \geq 2c_{\mathrm{trans}} \qquad \textrm{for all non-zero vectors $v \in V$ and $v' \in V'$} 
\end{equation}
and 
\begin{equation}\label{transverse equidistribution 1}
\int\displaylimits_{\Pi \cap B(x_0, \rho^{1/2 + \delta_m})} \!\!\!\!\!\!\!\!\!\!\!\!|T^{\lambda}g|^2 \lesssim_{\delta} R^{O(\delta_m)} (\rho/R)^{\dim V'/2} \|g\|_{L^2(B^{n-1})}^{2\delta/(1+\delta)} \big(\int\displaylimits_{\Pi \cap 2B} |T^{\lambda}g|^2 \big)^{1/(1+\delta)}
\end{equation}
for every affine subspace $\Pi$ parallel to $V'$. More precisely, the above estimate holds up to the inclusion of some additional rapidly decreasing term. This small error will propagate through the remainder of the argument but in the end will be harmless and is therefore suppressed in the notation.
 
It is claimed that for each $z \in Z \cap 2B$ the tangent space $T_zZ$ forms a quantitatively transverse pair $T_zZ, V'$ with $V'$. Indeed, if this fails, then it is easy to see that for some $z \in Z \cap 2B$ there exists a subspace $W \subseteq T_zZ$ of dimension
\begin{equation*}
 \dim W > \dim Z - \dim V
\end{equation*}
with the property that $\angle(w, V') < c_{\mathrm{trans}}$ for all $w \in W \setminus \{0\}$. Consequently, the crucial angle condition \eqref{key transversality condition} guarantees that
\begin{equation*}
 \angle(w, V) \geq c_{\mathrm{trans}} \qquad \textrm{for all $w \in W \setminus \{0\}$.}
\end{equation*}
This implies that there exists a linear map $L \colon \R^n \to V$ such that $L$ restricted to $V$ is the identity, $L$ restricted to $W$ is zero and $\|L\|_{\mathrm{op}} \lesssim 1$. Recall that for each $(\theta, v) \in \T_{B,Z}$ one has $\angle(G^{\lambda}(\bar{x}, \theta), V) \lesssim R^{-1/2 + \delta_m}$ and so 
\begin{equation*}
 \sup_{\omega, \omega' \in \theta} |L(G^{\lambda}(\bar{x}, \omega)) -  G^{\lambda}(\bar{x}, \omega')| \lesssim R^{-1/2 + \delta_m}.
\end{equation*}
On the other hand, $G^{\lambda}(\bar{x}, \theta) \subset N_{C_1R^{-1/2 + \delta_m}}(T_zZ) \cap S^{n-1}$ and so $L(G^{\lambda}(\bar{x},\theta))$ lies in 
\begin{equation*}
 L(N_{C_1R^{-1/2 + \delta_m}}(T_zZ) \cap S^{n-1}) \subseteq N_{C_2R^{-1/2 + \delta_m}}(L(T_zZ)).
\end{equation*}
This shows that for all $(\theta, v) \in \T_{B,Z}$ one has
\begin{equation*}
 \angle(G^{\lambda}(\bar{x}, \theta), L(T_zZ)) \lesssim R^{-1/2 + \delta_m}.
\end{equation*}
Since $L$ vanishes on $W$, by rank-nullity $L(T_zZ)$ is a subspace of dimension at most $\dim Z - \dim W < \dim V$. This contradicts the minimality of $V$ and so $(T_zZ, V')$ is a quantitatively transverse pair for all $z \in Z \cap 2B$. 

By Lemma~\ref{quantitatively transverse lemma} one deduces that
\begin{equation*}
 \Pi \cap N_{\rho^{1/2 + \delta_m}}(Z) \cap B \subseteq N_{C\rho^{1/2 + \delta_m}}(\Pi \cap Z) \cap 2B.
\end{equation*}
Since $\Pi \cap Z$ is a transverse complete intersection of dimension $\dim V' + \dim Z - n$, Wongkew's theorem now implies that $\Pi \cap N_{\rho^{1/2 + \delta_m}}(Z) \cap B$ can be covered by 
\begin{equation*}
 O\big(R^{O(\delta_m)} (R/\rho)^{(\dim V' + \dim Z - n)/2} \big)
\end{equation*}
balls of radius $\rho^{1/2 + \delta_m}$. Applying the estimate \eqref{transverse equidistribution 1} to each of these balls and summing, one deduces that 
\begin{equation*}
\int\displaylimits_{\Pi \cap N_{\rho^{1/2 + \delta_m}}(Z) \cap B}\!\!\!\!\!\!\! |T^{\lambda}g|^2 \lesssim_{\delta} R^{O(\delta_m)} (\rho/R)^{(n-m)/2} \|g\|_{L^2(B^{n-1})}^{2\delta/(1+\delta)} \big(\int\displaylimits_{\Pi \cap 2B} |T^{\lambda}g|^2 \big)^{1/(1+\delta)}.
\end{equation*}
 Integrating over planes $\Pi$ parallel to $V'$ and applying H\"older's inequality (recalling that $\delta \ll \delta_m$), it follows that 
\begin{equation*}
 \int\displaylimits_{N_{\rho^{1/2 + \delta_m}}(Z) \cap B} |T^{\lambda}g|^2 \lesssim_{\delta} R^{O(\delta_m)} (\rho/R)^{(n-m)/2} \|g\|_{L^2(B^{n-1})}^{2\delta/(1+\delta)} \big(\int_{2B} |T^{\lambda}g|^2 \big)^{1/(1+\delta)}.
\end{equation*}
Finally, recalling H\"ormander's bound,
\begin{equation*}
 \big(\int_{2B} |T^{\lambda}g|^2 \big)^{1/(1+\delta)} \lesssim R^{1/2 +O(\delta_m)} \big(\int_{B^{n-1}} |g|^2\big)^{1/(1+\delta)}
\end{equation*}
and absorbing the implied rapidly decaying error into the main term, one concludes that
\begin{equation*}
 \int\displaylimits_{N_{\rho^{1/2 + \delta_m}}(Z) \cap B} |T^{\lambda}g|^2 \lesssim_{\delta} R^{1/2+O(\delta_m)}(\rho/R)^{(n-m)/2}\|g\|^2_{L^2(B^{n-1})},
\end{equation*}
which is the desired estimate. 
\end{proof}

It remains to prove Lemma~\ref{quantitatively transverse lemma}. 

\begin{proof}[Proof (of Lemma~\ref{quantitatively transverse lemma})] Applying a rotation, one may assume that $V$ is the span of the co-ordinate vectors $e_1, \dots, e_{\dim V}$. For the purposes of this proof, $\gamma_{V} := (\gamma_1, \dots, \gamma_{\dim V})$ and $\gamma_{V^{\perp}} = (\gamma_{\dim V +1}, \dots, \gamma_n)$ will denote the orthogonal projections of a space curve $\gamma$ onto $V$ and $V^{\perp}$, respectively.

Suppose that $x \in V \cap B_r \cap N_{\rho}(Z) $ and fix some $z_0 \in Z\cap N_{\rho}(B_r)$ with $0 < |x - z_0| < \rho$. Let $\gamma \colon \R \to \R^n$ be the constant speed parametrisation of the line through $\gamma(0) := z_0$ and $\gamma(1) := x$. To prove the lemma it suffices to show that there exists a curve $\tilde{\gamma} \colon [0,1] \to \R^n$ such that for all $t \in [0,1]$ the following hold:
\begin{enumerate}[1)]
\item $\tilde{\gamma}(0) = \gamma(0) = z_0$,
\item $\tilde{\gamma}(t) \in Z$,
\item $\tilde{\gamma}_{V^{\perp}}(t) = \gamma_{V^{\perp}}(t)$,
\item $|\tilde{\gamma}'(t)| \leq \bar{C} |\tilde{\gamma}_{V^{\perp}}'(t)|$ where $\bar{C} := (\sin c_{\mathrm{trans}})^{-1}$.
\end{enumerate}
Indeed, once this is established, observe that $z_1 := \tilde{\gamma}(1) \in Z \cap V$ by properties 2) and 3). Furthermore, 3) and 4) ensure that 
\begin{equation*}
|\tilde{\gamma}'(t)| \leq \bar{C}  |\tilde{\gamma}_{V^{\perp}}'(t)| \leq \bar{C} |\gamma'(t)| < \bar{C}  \rho
\end{equation*}
and so, combining this observation with 1), 
\begin{equation}\label{quantitatively transverse lemma 1}
|x - \tilde{\gamma}(t)| \leq |x - z_0| + |z_0-  \tilde{\gamma}(t)| < (1+  \bar{C}t )\rho
\end{equation}
for all $t \in [0,1]$. In particular, $|x - z_1| \lesssim \rho$, giving the desired conclusion. 

The transversality condition implies that the distribution (in the sense of Frobenius: see, for instance, \cite[Chapter 1]{Warner})
\begin{equation*}
    W_z := (T_zZ \cap V)^{\perp}\cap T_zZ
\end{equation*}
has rank $n-\dim V$ on $Z \cap 2B_r$ and, moreover, $\mathrm{proj}_{W_z^{\perp}} |_V \colon V \to W_z^{\perp}$ is an isomorphism for all $z \in Z \cap 2B_r$. Smoothly extend $W_z$ to a small neighbourhood $U$ of $Z \cap 2B_r$ so that
\begin{equation}\label{quantitatively transverse lemma 2}
\mathrm{proj}_{W_x^{\perp}} |_V \colon V \to W_x^{\perp} \qquad \textrm{is an isomorphism for all $x \in U$.}
\end{equation}
The curve $\tilde{\gamma}$ will be chosen so that its tangent always lies in this distribution. Given that $\tilde{\gamma}_{V^{\perp}}$ is already defined by property 3), to satisfy this condition $\tilde{\gamma}_V$ must be a solution to the ODE
\begin{equation*}
\left\{\begin{array}{rcl}
\mathrm{proj}_{W_{x(t)}^{\perp}}(y'(t), \gamma_{V^{\perp}}'(t)) & =& 0\\
y(0) & = & \mathrm{proj}_V z_0
\end{array} \right. 
\end{equation*}
where $x(t) := (y(t), \gamma_{V^{\perp}}(t))$. By \eqref{quantitatively transverse lemma 2}, solving the above ODE is equivalent to solving a system of the form
\begin{equation}\label{quantitatively transverse lemma 3}
\left\{\begin{array}{rcl}
y' & =& g(t,y)\\
y(0) & = & \mathrm{proj}_V z_0
\end{array} \right. 
\end{equation}
for $g$ a smooth function defined on $\{(t,y) \in \R \times V : (y, \gamma_{V^{\perp}}(t)) \in U\}$. Note that $g$ can be described explicitly in terms of the inverse of $\mathrm{proj}_{W_x^{\perp}}|_V$ and, provided $U$ is appropriately chosen, the derivatives of $g$ are bounded. 

The Picard--Lindel\"of existence theorem implies that the system \eqref{quantitatively transverse lemma 3} has a solution $\tilde{\gamma}_{V}$ defined on an interval $[0,T]$ for some $T > 0$ such that $\tilde{\gamma} := (\tilde{\gamma}_V, \gamma_{V^{\perp}})$ satisfies $\tilde{\gamma}(t) \in 2B_r$ for all $t \in [0,T]$. It can be checked that on this interval the curve $\tilde{\gamma}$ further satisfies 1) and 3) and, by the tangency condition which motivated the definition of the ODE, 2) also holds. If $t \in [0,T]$, then it follows that $\tilde{\gamma}(t) \in Z \cap 2B_r$ and $\tilde{\gamma}'(t) \in W_{\tilde{\gamma}(t)}$ and so the transversality hypothesis implies that 
\begin{equation*}
\bar{C} ^{-1} = \sin c_{\mathrm{trans}} \leq \sin\angle(\tilde{\gamma}'(t), V) = \frac{|\tilde{\gamma}_{V^{\perp}}'(t)|}{|\tilde{\gamma}'(t)|}.
\end{equation*}
Rearranging, one concludes that properties 1) to 4) all hold on $[0,T]$.

It remains to show that $T$ can be chosen to satisfy $T \geq 1$. If $\mathrm{dist}(\tilde{\gamma}(T), U^{c}) \gtrsim 1$, then the regularity of $g$ implies that the interval of existence can be extended by a fixed increment. Thus, one may assume that at least one of the following holds: $T \geq 1$ or $|\tilde{\gamma}(T) - x| \geq r/2$.\footnote{Indeed, suppose both conditions fail for $T$. The failure of the latter condition implies that $\mathrm{dist}(\tilde{\gamma}(T), (2B_r)^{c}) \geq r/2$. Since $\tilde{\gamma}(T) \in Z$ by property 2), one concludes that $\tilde{\gamma}(T)$ is far from $U^{c}$ and thus the interval of existence for $\tilde{\gamma}$ can be extended by a fixed increment. One may redefine $T$ to be some value in the interval of existence incrementally larger than the original value of $T$ and repeat this procedure until at least one of the stated conditions hold.} Supposing the latter holds, by the choice of $T$ and \eqref{quantitatively transverse lemma 1}, one deduces that
\begin{equation*}
r/2 \leq |\tilde{\gamma}(T) - x| \leq (1+ \bar{C}T ) \rho.
\end{equation*}
Provided $r$ is chosen to be sufficiently large compared to $\rho$, the desired bound immediately follows.
\end{proof}




\section{Comparing wave packets at different spatial scales}\label{Adjusting wave packets section}




\subsection{Wave packet decomposition at scale $\rho$}

The proof of Theorem~\ref{k-broad theorem} relies on a multi-scale analysis and for this it is necessary to compare wave packets at different scales.

Let $1 \ll R \ll \lambda$ and recall the decomposition
\begin{equation*}
T^{\lambda} f(x)=\sum_{(\theta,v) \in \T}T^{\lambda} f_{\theta,v}(x)+\textrm{RapDec}(R)\|f\|_{L^2(B^{n-1})}
\end{equation*}
described in $\S$\ref{Wave packet section}. Consider a smaller spatial scale\footnote{Later it will be useful to assume the more stringent condition $R^{1/2} \leq \rho \leq R^{1 - 2\delta}$.} $R^{1/2}\leq \rho \leq R$  and fix $B(y,\rho)\subset B(0,R)$ with centre $y \in X^{\lambda}$. Each of the $T^{\lambda}f_{\theta,v}$ can be further decomposed into wave packets at scale $\rho$ over $B(y,\rho)$. To do this, first apply a transformation to recentre $B(y, \rho)$ at the origin. For $g\colon B^{n-1} \to \C$ integrable define $\tilde{g}:= e^{2\pi i \phi^{\lambda}(y;\;\cdot)}g$ so that
\begin{equation}\label{translating the operator}
T^{\lambda} g(x)=\tilde{T}^{\lambda} \tilde{g}(\tilde{x}) \qquad \textrm{for }\tilde{x} = x - y
\end{equation}
where $\tilde{T}^{\lambda}$ is the H\"ormander-type operator with phase $\tilde{\phi}^{\lambda}$ and amplitude $\tilde{a}^{\lambda}$ given by
\begin{equation}\label{translated phase}
\tilde{\phi}(x;\omega) := \phi\Big(x+\frac{y}{\lambda};\omega\Big)-\phi\Big(\frac{y}{\lambda};\omega\Big) \quad \textrm{and} \quad \tilde{a}(x;\omega) := a\Big(x+\frac{y}{\lambda};\omega\Big).
\end{equation}
Applying this identity to the wave packet decomposition above,
\begin{equation*}
T^{\lambda} f(x) =\sum_{(\theta,v) \in \T}\tilde{T}^{\lambda} ((f_{\theta,v})\;\widetilde{}\;)(\tilde{x})+\textrm{RapDec}(R)\|f\|_{L^2(B^{n-1})}.
\end{equation*}
Each $T^{\lambda} f_{\theta,v}$ is (spatially) concentrated on the curved $R^{1/2+\delta}$-tube $T_{\theta,v}$ and, consequently, each $\tilde{T}^{\lambda} (f_{\theta,v})\;\widetilde{}\;$ is  concentrated on the translate $T_{\theta,v}-y$. Since\footnote{For every fixed $\omega,v$, here $\gamma^\lambda_{\omega,v}$ is used to denote the curve satisfying $\partial_\omega\phi^\lambda((\gamma_{\omega,v}^\lambda(t),t);\omega)=v$ for all (admissible) $t\in (-R,R)$. In the notation of \S\ref{Wave packet section}, $\gamma^\lambda_{\omega,v}=\gamma^\lambda_{\theta,v}$ for a cap $\theta$ with $\omega=\omega_\theta$.}
\begin{align} 
\nonumber
\partial_{\omega}\tilde{\phi}^{\lambda}((\gamma^{\lambda}_{\omega,v}(t),t)-y;\omega) &= \partial_{\omega}\phi^{\lambda}((\gamma^{\lambda}_{\omega,v}(t),t);\omega)-\partial_{\omega}\phi^{\lambda}(y;\omega) \\
\label{pre curve identity}
&=v-\partial_{\omega}\phi^{\lambda}(y;\omega),
\end{align}
the core curve $\Gamma^{\lambda}_{\theta,v}-y$ of $T_{\theta,v}-y$ is equal to $\Gamma_{\theta,v-\bar{v}(y;\omega_{\theta})}$ where
\begin{equation*}
\bar{v}(y;\omega):=\partial_{\omega}\phi^{\lambda}(y;\omega).
\end{equation*}

Now repeat the construction of the wave packets for each $\tilde{T}^{\lambda} (f_{\theta,v})\;\widetilde{}\;$, but at scale $\rho$. In particular, cover $\Omega$ by finitely overlapping caps $\tilde{\theta}$ of radius $\rho^{-1/2}$ and $\R^{n-1}$ by finitely-overlapping balls of radius $\rho^{(1+\delta)/2}$ centered at vectors $\tilde{v}\in \rho^{(1+\delta)/2}\mathbb{Z}^{n-1}$. Let $\tilde{\T}$ denote the set of all pairs $(\tilde{\theta}, \tilde{v})$. For each $(\theta,v) \in \T$ one may decompose 
\begin{equation*}
(f_{\theta, v})\;\widetilde{}\; = \sum_{(\tilde{\theta}, \tilde{v}) \in \tilde{\T}} (f_{\theta, v})\;\widetilde{}_{\!\!\tilde{\theta}, \tilde{v}} + \mathrm{RapDec}(R)\|f\|_{L^2(B^{n-1})}
\end{equation*} 
as in $\S$\ref{Wave packet section}. The significant contributions to this sum arise from pairs $(\tilde{\theta}, \tilde{v})$ belonging to
\begin{equation*}
\tilde{\T}_{\theta,v}:=\{(\tilde{\theta},\tilde{v}) \in \tilde{\T}: \textrm{dist}(\theta,\tilde{\theta})\lesssim \rho^{-1/2}\text{ and }|v-\bar{v}(y;\omega_{\theta})-\tilde{v}|\lesssim R^{(1+\delta)/2}\},
\end{equation*}
as demonstrated by the following lemma.

\begin{lemma}\label{concentration on smaller scale wave packets lemma} If $R^{1/2} \leq \rho \leq R$, then, with the above definitions, the function $(f_{\theta,v})\;\widetilde{}\;$ is concentrated on wave packets from $\tilde{\T}_{\theta,v}$; that is,
\begin{equation*}
(f_{\theta,v})\;\widetilde{}\; = \sum_{(\tilde{\theta},\tilde{v}) \in \tilde{\T}_{\theta,v}} (f_{\theta,v})\;\widetilde{}_{\!\!\tilde{\theta}, \tilde{v}} + \mathrm{RapDec}(R)\|f\|_{L^2(B^{n-1})}.
\end{equation*}
\end{lemma}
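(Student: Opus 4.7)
The plan is to apply the scale-$\rho$ wave packet decomposition of $\S$\ref{Wave packet section} (for the translated operator $\tilde{T}^{\lambda}$) to $(f_{\theta,v})\;\widetilde{}\;$ and to show that only the pairs $(\tilde{\theta},\tilde{v})\in\tilde{\T}_{\theta,v}$ contribute non-negligibly. The standard decomposition yields
\begin{equation*}
(f_{\theta,v})\;\widetilde{}\; = \sum_{(\tilde\theta,\tilde v)\in\tilde{\T}}(f_{\theta,v})\;\widetilde{}_{\!\!\tilde\theta,\tilde v} + \mathrm{RapDec}(\rho)\|(f_{\theta,v})\;\widetilde{}\;\|_{L^2(\R^{n-1})},
\end{equation*}
and, since $\|(f_{\theta,v})\;\widetilde{}\;\|_{L^2}=\|f_{\theta,v}\|_{L^2}\lesssim\|f\|_{L^2(B^{n-1})}$ and $\rho\geq R^{1/2}$, this error is absorbed into $\mathrm{RapDec}(R)\|f\|_{L^2(B^{n-1})}$. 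The task reduces to showing the contributions from $(\tilde\theta,\tilde v)\notin\tilde{\T}_{\theta,v}$ are rapidly decreasing in $R$, which will be done by splitting into two cases according to which of the two defining conditions of $\tilde{\T}_{\theta,v}$ is violated.

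If $\mathrm{dist}(\theta,\tilde\theta)\gg\rho^{-1/2}$ (with a sufficiently large implicit constant), the supports of $\tilde\psi_\theta$ and $\psi_{\tilde\theta}$ are disjoint: $\tilde\psi_\theta$ is supported in an $O(R^{-1/2})\leq O(\rho^{-1/2})$ neighbourhood of $\omega_\theta$, while $\psi_{\tilde\theta}$ is supported near $\omega_{\tilde\theta}$ at scale $\rho^{-1/2}$. Consequently $\psi_{\tilde\theta}\cdot(f_{\theta,v})\;\widetilde{}\;\equiv 0$ and the associated wave packet vanishes identically, so this case is trivial.

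When $\mathrm{dist}(\theta,\tilde\theta)\lesssim\rho^{-1/2}$ but $|v-\bar v(y;\omega_\theta)-\tilde v|\gg R^{(1+\delta)/2}$, the analysis is cleanest on the Fourier-dual side. Using the identity $\widehat{\eta_{\tilde v}}\ast g = (\eta_{\tilde v}\cdot\check g)\;\widehat{}\;$, it suffices to show that $\check{(\psi_{\tilde\theta}(f_{\theta,v})\;\widetilde{}\;)}$ is concentrated in $B(v-\bar v(y;\omega_\theta),CR^{(1+\delta)/2})$; its product with $\eta_{\tilde v}$, which is supported in $B(\tilde v,\rho^{(1+\delta)/2})$, is then rapidly decreasing under the hypothesised separation. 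Taylor-expanding $\phi^\lambda(y;\omega)$ around $\omega_\theta$ on $\mathrm{supp}\,\tilde\psi_\theta$ and using the reductions of $\S$\ref{Reductions section} (notably $\partial_{\omega\omega}^2\phi^\lambda(0;\,\cdot\,)=0$, which combined with the $x$-derivative control of Lemma~\ref{third reduction lemma} gives $\partial_{\omega\omega}^2\phi^\lambda(y;\,\cdot\,)=O(|y|)=O(R)$), one may write
\begin{equation*}
e^{2\pi i\phi^\lambda(y;\omega)} = e^{2\pi i\phi^\lambda(y;\omega_\theta)}\,e^{2\pi i\langle\bar v(y;\omega_\theta),\omega-\omega_\theta\rangle}\,e^{2\pi iE(\omega)}
\end{equation*}
with $|E|=O(1)$ and $|\partial_\omega^\alpha E|=O(R^{|\alpha|/2})$ on $\mathrm{supp}\,\tilde\psi_\theta$. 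The linear phase shifts $\check{(\,\cdot\,)}$-concentration by exactly $\bar v(y;\omega_\theta)$; multiplication by $e^{2\pi iE}$ spreads the concentration only by an additional $O(R^{1/2})$ (as $E$ has derivatives of size $R^{1/2}$, uncertainty dictates that $(e^{2\pi iE})\;\widecheck{}\;$ has essential support of diameter $O(R^{1/2})$); and convolution with $\check\psi_{\tilde\theta}$ adds at most a further $\rho^{1/2}\leq R^{1/2}$ of smearing. All of these are dominated by the original $O(R^{(1+\delta)/2})$ concentration scale of $\check f_{\theta,v}$, giving the desired concentration statement, and hence the pointwise bound $|(f_{\theta,v})\;\widetilde{}_{\!\!\tilde\theta,\tilde v}(\omega)|\lesssim\mathrm{RapDec}(R)\|f\|_{L^2(B^{n-1})}$. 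Summing over the polynomially many pairs $(\tilde\theta,\tilde v)\notin\tilde{\T}_{\theta,v}$ yields the claim.

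The main technical obstacle is making the Fourier-side concentration statement quantitative, and in particular establishing that multiplication by $e^{2\pi iE}$ spreads the Fourier-dual concentration by only $O(R^{1/2})$. This is most directly handled via a non-stationary phase argument in rescaled variables $\omega=\omega_\theta+R^{-1/2}u$: the rescaled phase $\tilde\Phi$ satisfies $|\partial_u\tilde\Phi|\gtrsim R^{\delta/2}$ under the hypothesised separation, while $|\partial_u^\beta\tilde\Phi|=O(R^{1-|\beta|/2})$ for $|\beta|\geq 2$ and the rescaled amplitude has derivatives that are bounded by $1$ (the bump $\hat\eta$ contributes a ratio $\bigl(\rho^{(1+\delta)/2}/R^{1/2}\bigr)^{|\beta|}$, while $\tilde\psi_\theta$ and $\psi_{\tilde\theta}$ contribute $O(1)$ per derivative). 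Iterated application of Lemma~\ref{integration-by-parts lemma} then yields the $R^{-N\delta/2}$ decay needed, with the required uniformity provided by the reductions of $\S$\ref{Reductions section}.
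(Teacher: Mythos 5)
Your proposal is correct and follows essentially the same route as the paper: the cap condition is handled by disjointness of supports, and the velocity condition by showing that the Fourier transform of $(f_{\theta,v})\;\widetilde{}\;$ is concentrated in $B(v-\bar v(y;\omega_{\theta}),CR^{(1+\delta)/2})$, using $\partial^2_{\omega\omega}\phi^{\lambda}(0;\,\cdot\,)=0$ to bound the fluctuation of $\partial_{\omega}\phi^{\lambda}(y;\,\cdot\,)$ over $\theta$ by $O(R^{1/2})$ and then concluding by non-stationary phase in the rescaled variables. The paper organises the modulation factor as a convolution with $\bigl(\tilde{\tilde{\psi}}_{\theta}e^{2\pi i\phi^{\lambda}(y;\,\cdot\,)}\bigr)\;\widecheck{}\;$ rather than via your explicit Taylor expansion of the phase, but the estimates are identical.
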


\begin{proof} Since $(f_{\theta,v})\;\widetilde{}\;$ is supported in $\theta$, clearly the wave packets of $(f_{\theta,v})\;\widetilde{}\;$ at scale $\rho$ are all supported in
\begin{equation*}
\bigcup_{\tilde{\theta}:\; \textrm{dist}(\tilde{\theta},\theta)\lesssim \rho^{-1/2}} \tilde{\theta}.
\end{equation*}
Thus, it suffices to show that for each $(\theta, v) \in \T$ and $\rho^{-1/2}$-cap $\tilde{\theta}$ one has
\begin{equation*}
\sum_{\tilde{v} : |v - \bar{v}(y;\omega_{\theta}) - \tilde{v}| \gtrsim R^{(1+\delta)/2}} (f_{\theta,v})\;\widetilde{}_{\!\!\tilde{\theta}, \tilde{v}} = \mathrm{RapDec}(R)\|f\|_{L^2(B^{n-1})}.
\end{equation*}
Fixing $(\theta, v) \in \T$ and $(\tilde{\theta},\tilde{v}) \in \tilde{\T}$ with $|v - \bar{v}(y;\omega_{\theta}) - \tilde{v}| \gtrsim R^{(1+\delta)/2}$, the above estimate would follow if one could show that
\begin{equation*}
(f_{\theta,v})\;\widetilde{}_{\!\!\tilde{\theta}, \tilde{v}} = \big(1 + R^{-1/2}|v - \bar{v}(y;\omega_{\theta}) - \tilde{v}|\big)^{-(n+1)}\mathrm{RapDec}(R)\|f\|_{L^2(B^{n-1})}.
\end{equation*}
By definition, $(f_{\theta,v})\;\widetilde{}_{\!\!\tilde{\theta}, \tilde{v}}=\tilde{\psi}_{\tilde{\theta}}\cdot [(\eta_{\tilde v})\;\hat{}\ast (\psi_{\tilde{\theta}}f_{\theta,v})]$ for the bump functions $\tilde{\psi}_{\tilde{\theta}}, \eta_{\tilde{v}}, \psi_{\tilde{\theta}}$ as defined in \S\ref{Wave packet section}. Thus, Fourier inversion yields the pointwise bound
\begin{align*}
|(f_{\theta,v})\;\widetilde{}_{\!\!\tilde{\theta}, \tilde{v}}(\omega)| &\leq \|(\tilde{\psi}_{\tilde{\theta}})\,\check{}\,\|_{L^1(\R^n)}\big\|\eta_{\tilde{v}} \big(\psi_{\tilde{\theta}} (f_{\theta,v})\;\widetilde{}\;\big)\;\widecheck{}\;\big\|_{L^{\infty}(\R^n)}  \\
&\lesssim \big\| (\psi_{\tilde{\theta}})\,\check{}\,\ast \big((f_{\theta,v})\;\widetilde{}\;\big)\;\widecheck{}\;\big\|_{L^{\infty}(B(\tilde{v}, C\rho^{(1+\delta)/2}))}
\end{align*}
for all $\omega \in \R^{n-1}$. Since $(\psi_{\tilde{\theta}})\,\check{}\,$ is concentrated in $B(0, \rho^{1/2})$, the problem is further reduced to showing that
\begin{equation*}
\big((f_{\theta,v})\;\widetilde{}\;\big)\;\widecheck{}\;(z) = \big(1 + R^{-1/2}|z - v + \bar{v}(y;\omega_{\theta}) |\big)^{-(n+1)}\mathrm{RapDec}(R)\|f\|_{L^2(B^{n-1})}
\end{equation*}
whenever $|z - v +\bar{v}(y;\omega_{\theta})| \gtrsim R^{(1+\delta)/2}$. 

Let $\tilde{\tilde{\psi}}$ be a Schwartz function on $\R^{n-1}$ satisfying $\tilde{\tilde{\psi}}(\omega) = 1$ for $\omega \in B^{n-1}$ and define $\tilde{\tilde{\psi}}_{\theta}(\omega) := \tilde{\tilde{\psi}}(R^{1/2}(\omega - \omega_{\theta}))$ so that
\begin{equation*}
\big((f_{\theta,v})\;\widetilde{}\;\big)\;\widecheck{}\; =\big( \tilde{\tilde{\psi}}_{\theta} e^{2\pi i \phi^{\lambda}(y;\;\cdot)}\big)\;\widecheck{}\;\ast (f_{\theta,v})\;\widecheck{}\;.
\end{equation*}
On the one hand, since $\mathrm{supp}\,\eta_v \subset B(v, CR^{(1+\delta)/2})$, it is not difficult to show that 
\begin{equation}\label{concentration lemma 1}
|(f_{\theta,v})\;\widecheck{}\;(z)| = (1 + R^{-1/2}|z - v|)^{-(n+1)} \mathrm{RapDec}(R)\|f\|_{L^2(B^{n-1})}
\end{equation}
whenever $|z - v| \gtrsim R^{(1+\delta)/2}$. On the other hand, it is claimed that
\begin{equation}\label{concentration lemma 2}
\big|\big( \tilde{\tilde{\psi}}_{\theta} e^{2\pi i \phi^{\lambda}(y;\;\cdot)}\big)\;\widecheck{}\;(z)\big|=(1 + R^{-1/2}|z + \bar{v}(y;\omega_{\theta})|)^{-(n+1)}\textrm{RapDec}(R)  
\end{equation}
whenever $|z + \bar{v}(y;\omega_{\theta})| \gtrsim R^{(1+\delta)/2}$. A routine argument then shows that \eqref{concentration lemma 1} and \eqref{concentration lemma 2} imply the desired estimate for $\big((f_{\theta,v})\;\widetilde{}\;\big)\;\widecheck{}\;$ and so it only remains to prove the claim.

The inverse Fourier transform in \eqref{concentration lemma 2} can be expressed as 
\begin{equation*}
R^{-(n-1)/2} \int_{\R^{n-1}}e^{2\pi i (\langle z,\omega_{\theta}+R^{-1/2}\omega\rangle+\phi^{\lambda}(y;\omega_{\theta}+R^{-1/2}\omega))}\tilde{\tilde{\psi}}(\omega)\ud\omega,
\end{equation*}
where the $\omega$-gradient of the phase is given by
\begin{equation}\label{concentration lemma 3}
R^{-1/2}\big([\partial_{\omega}\phi^{\lambda}(y;\omega_{\theta}+R^{-1/2}\omega)-\partial_{\omega}\phi^{\lambda}(y;\omega_{\theta})]+[\bar{v}(y;\omega_{\theta})+z]\big).
\end{equation}
Using the fact that  $\partial^2_{\omega\omega}\phi^{\lambda}(0;\omega)=0$, the first term in \eqref{concentration lemma 3} can be estimated thus:
\begin{equation*}
|\partial_{\omega}\phi^{\lambda}(y;\omega_{\theta}+R^{-1/2}\omega)-\partial_{\omega}\phi^{\lambda}(y;\omega_{\theta})|\lesssim R^{-1/2}|y| \leq R^{1/2}.
\end{equation*}
Consequently, if $z\not\in B(-\bar{v}(y;\omega_{\theta}), R^{(1+\delta)/2})$, then the second term in \eqref{concentration lemma 3} dominates the $\omega$-gradient of the phase and  \eqref{concentration lemma 3} is $\gtrsim R^{\delta/2}$ in norm. Repeated integration-by-parts now implies \eqref{concentration lemma 1}, concluding the proof.
\end{proof}




\subsection{Tangency properties} 
Let $Z$ be a transverse complete intersection of dimension $m$ and suppose $h$ is a function which is concentrated on wave packets from 
\begin{equation*}
\T_{Z,B(y,\rho)}:=\big\{(\theta,v)\in \T_Z:\; T_{\theta,v}\cap B(y,\rho)\neq \emptyset \big\}.
\end{equation*}
What can be said about the scale $\rho$ wave packets of $\tilde{h}$? In particular, do the lower scale wave packets inherit the tangency property; namely, is $\tilde{h}$ concentrated on scale $\rho$ wave packets which are $\rho^{-1/2 + \delta_m}$-tangent to some variety? It transpires that this is \textbf{not} true in general. It is true, however, that $\tilde{h}$ can be broken up into pieces which are each made up of scale $\rho$ wave packets tangential to some translate of $Z$. 

In particular, while all the scale $\rho$ wave packets in question form very small angle with $Z-y$, they can be traced all the way up to distance $\sim R^{1/2+\delta_m}$ (rather than $\lesssim \rho^{1/2+\delta_m}$) from $Z-y$, which means that they generally live too far from $Z-y$ to be tangential to it at scale $\rho$. Translations of $Z-y$ however, up to distance $\sim R^{1/2+\delta_m}$, are tangential to such remote wave packets.

To make the above discussion precise, let $\tilde{\gamma}^{\lambda}_{\omega,v}$ be the curve defined by
\begin{equation}\label{core curve at scale rho}
\partial_\omega\tilde{\phi}^{\lambda}(\tilde{\gamma}^{\lambda}_{\omega, v}(t),t;\omega)=v \qquad \textrm{for $t \in (-\rho, \rho)$.}
\end{equation}
It is remarked that \eqref{pre curve identity} implies the relation
\begin{equation}\label{curve identity} 
\gamma^{\lambda}_{\omega, v} (t) =\tilde{\gamma}^{\lambda}_{\omega, v - \bar{v}(y;\omega)}(t-y_n)  + y'. 
\end{equation}
Let $\tilde{T}_{\omega,v}$ be the $\rho^{1/2+\delta}$-tube with core curve $\tilde{\Gamma}^{\lambda}_{\omega,v} = (\tilde{\gamma}^{\lambda}_{\omega,v}(t), t)$ (defined analogously to the $R^{1/2 + \delta}$-tube $T_{\theta,v}$) and for $b \in \R^n$ define
\begin{equation*}
\tilde{\T}_{Z+b}:=\big\{(\tilde{\theta},\tilde v) \in \tilde{\T} : \tilde{T}_{\tilde{\theta},\tilde v}\text{ is }\rho^{-1/2+\delta_m}\text{-tangent to }Z+b\text{ in }B(0,\rho)\big\}.
\end{equation*}
The key observation is as follows.

\begin{proposition}\label{tangential thin packets} Let $R^{1/2} \leq \rho \leq R^{1-\delta}$ and $Z\subseteq \R^n$ be a transverse complete intersection. 
\begin{enumerate}[1)]
\item Let $(\theta,v) \in \T_Z$ and $b\in B(0,2R^{1/2+\delta_m})$. If $(\tilde{\theta}, \tilde{v})\in \tilde{\T}_{\theta,v}$ satisfies 
\begin{equation*}
\tilde{T}_{\tilde{\theta},\tilde{v}} \cap N_{\frac{1}{2}\rho^{1/2+\delta_m}}( Z-y+b) \neq \emptyset,
\end{equation*}
 then $(\tilde{\theta}, \tilde{v}) \in \tilde{\T}_{Z-y+b}$. 
\item If $h$ is concentrated on wave packets in $\T_{Z,B(y,\rho)}$, then $\tilde{h}$ is concentrated on wave packets in $\bigcup_{|b| \lesssim R^{1/2+\delta_m}}\tilde{\T}_{Z-y+b}$.
\end{enumerate}
\end{proposition}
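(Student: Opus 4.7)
The proof proposal rests on comparing two scale-$\rho$ core curves governed by the same translated phase $\tilde{\phi}^{\lambda}$: the core $\tilde{\Gamma}^{\lambda}_{\tilde{\theta},\tilde{v}}$ of the small tube itself, and the core $\tilde{\Gamma}^{\lambda}_{\theta,v-\bar{v}(y;\omega_{\theta})}$ which, by \eqref{curve identity}, coincides with the translated big core $\Gamma^{\lambda}_{\theta,v}-y$. Both solve the defining equation \eqref{core curve at scale rho} for different parameters, and their difference is controlled by implicitly differentiating: $\partial_{v}\tilde{\gamma}=[\partial^{2}_{\omega x'}\tilde{\phi}^{\lambda}]^{-1}=O(1)$ by \eqref{close to identity}, while $\partial_{\omega}\tilde{\gamma}=-[\partial^{2}_{\omega x'}\tilde{\phi}^{\lambda}]^{-1}\,\partial^{2}_{\omega\omega}\tilde{\phi}^{\lambda}$ and $\|\partial^{2}_{\omega\omega}\tilde{\phi}^{\lambda}(x;\omega)\|_{\mathrm{op}}=O(|x|)=O(\rho)$ on $B(0,\rho)$, by \eqref{close to identity again}, the vanishing $\partial^{2}_{\omega\omega}\phi^{\lambda}(0;\omega)=0$, and the mean value theorem. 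Since the parameters of $(\tilde{\theta},\tilde{v})\in\tilde{\T}_{\theta,v}$ are within $O(\rho^{-1/2})$ and $O(R^{(1+\delta)/2})$ of those of $(\theta,v-\bar{v}(y;\omega_{\theta}))$, this yields
\begin{equation*}
|\tilde{\gamma}^{\lambda}_{\tilde{\theta},\tilde{v}}(t)-(\gamma^{\lambda}_{\theta,v}(t+y_{n})-y')|\lesssim\rho^{1/2}+R^{(1+\delta)/2}\lesssim R^{1/2+\delta}
\end{equation*}
on the relevant $t$-interval. Adding the small-tube radius $\rho^{1/2+\delta}\leq R^{1/2+\delta}$, the key enclosure $\tilde{T}_{\tilde{\theta},\tilde{v}}\subset C(T_{\theta,v}-y)$ follows, and together with $(\theta,v)\in\T_{Z}$ this places $\tilde{T}_{\tilde{\theta},\tilde{v}}$ inside $N_{CR^{1/2+\delta_m}}(Z-y)$.

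Part 1 then amounts to checking both conditions of Definition~\ref{tangent definition} for $Z-y+b$. For the angle condition, fix $\tilde{x}\in\tilde{T}_{\tilde{\theta},\tilde{v}}$ and $z\in(Z-y+b)\cap B(0,2\rho)$ with $|\tilde{x}-z|\leq\bar{C}_{\mathrm{tang}}\rho^{1/2+\delta_m}$. Writing $z':=z+y-b\in Z$ and choosing $x'\in T_{\theta,v}$ with $|x'-(\tilde{x}+y)|\lesssim R^{1/2+\delta}$ from the enclosure, one has $|x'-z'|\lesssim R^{1/2+\delta_m}$. Using $\tilde{G}^{\lambda}(\tilde{x};\omega_{\tilde{\theta}})=G^{\lambda}(\tilde{x}+y;\omega_{\tilde{\theta}})$, the triangle inequality for angles, Lemma~\ref{Gauss Lipschitz}, and $R\ll\lambda$, one obtains
\begin{equation*}
\angle(\tilde{G}^{\lambda}(\tilde{x};\omega_{\tilde{\theta}}),T_{z}(Z-y+b))\lesssim\rho^{-1/2}+R^{1/2+\delta}/\lambda+R^{-1/2+\delta_m}\lesssim\rho^{-1/2+\delta_m}.
\end{equation*}
For the containment condition, take the point $\tilde{x}_{0}\in\tilde{T}_{\tilde{\theta},\tilde{v}}$ within distance $\tfrac{1}{2}\rho^{1/2+\delta_m}$ of $Z-y+b$ provided by the hypothesis, and consider $d(t):=\mathrm{dist}(\tilde{\Gamma}^{\lambda}_{\tilde{\theta},\tilde{v}}(t),Z-y+b)$. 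Whenever $d(t)\leq\bar{C}_{\mathrm{tang}}\rho^{1/2+\delta_m}$, the angle condition just established, together with $|(\tilde{\Gamma}^{\lambda}_{\tilde{\theta},\tilde{v}})'(t)|\sim 1$ and the standard identity $|d'(t)|\leq|(\tilde{\Gamma})'(t)|\sin\angle((\tilde{\Gamma})'(t),T_{z}(Z-y+b))$ at the closest point $z$, forces $|d'(t)|\lesssim\rho^{-1/2+\delta_m}$. A continuity (Picard--Lindel\"of-type) argument in the spirit of Lemma~\ref{quantitatively transverse lemma} propagates this over the full length $O(\rho)$ of the tube, giving $d(t)\leq O(\rho^{1/2+\delta_m})$ throughout; adding the tube radius $\rho^{1/2+\delta}\ll\rho^{1/2+\delta_m}$ completes the containment after tuning the dimensional constants.

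Part 2 is then a direct consequence. By Lemma~\ref{concentration on smaller scale wave packets lemma}, $\tilde{h}$ is concentrated on wave packets belonging to $\bigcup_{(\theta,v)\in\T_{Z,B(y,\rho)}}\tilde{\T}_{\theta,v}$, so it suffices to show each such $(\tilde{\theta},\tilde{v})$ lies in $\tilde{\T}_{Z-y+b}$ for some $|b|\lesssim R^{1/2+\delta_m}$. Pick any point $\tilde{x}$ on the core of $\tilde{T}_{\tilde{\theta},\tilde{v}}$; the enclosure from the preliminary step together with $(\theta,v)\in\T_{Z}$ places $\tilde{x}+y$ within $O(R^{1/2+\delta_m})$ of some $z'\in Z$. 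Setting $b:=\tilde{x}+y-z'$ yields $|b|\lesssim R^{1/2+\delta_m}$ and $\tilde{x}\in Z-y+b$, so $\tilde{T}_{\tilde{\theta},\tilde{v}}\cap N_{\frac{1}{2}\rho^{1/2+\delta_m}}(Z-y+b)\neq\emptyset$ and Part 1 applies.

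The main obstacle is the containment half of Part 1: threading the small tube's core along the translated variety $Z-y+b$ over its full length $\sim\rho$ while staying in the regime where the angle estimate keeps feeding itself through the distance-function ODE. This requires careful tracking of the dimensional constants $\bar{c}_{\mathrm{tang}}$ and $\bar{C}_{\mathrm{tang}}$ (and the various $\delta$, $\delta_m$ comparisons) to ensure the continuity bootstrap closes, but it is otherwise a standard Gronwall-type propagation.
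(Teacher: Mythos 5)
Your proposal is correct and follows essentially the same route as the paper: first show the scale-$\rho$ cores lie within $O(R^{(1+\delta)/2})$ of the translated scale-$R$ cores (the paper's Lemma~\ref{close curves lemma}, which you obtain by implicit differentiation rather than the paper's direct comparison via an auxiliary parameter $v_t$), then derive the angle condition from the parent tube's tangency plus Lemma~\ref{Gauss Lipschitz}, propagate containment along the core by integrating the transverse component of the velocity, and choose $b$ in Part 2 to hit a core point. Your distance-function ODE is just a repackaging of the paper's shadowing-curve bootstrap, so the arguments are essentially identical.
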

In view of the forthcoming analysis, before proving these statements a simple application is discussed. Under the hypotheses of the proposition, if one defines
\begin{equation*}
\tilde{\T}_b := \Big\{ (\tilde{\theta}, \tilde{v}) \in \!\!\!\! \bigcup_{(\theta, v) \in \T_{Z, B(y,\rho)}} \tilde{\T}_{\theta, v} : \tilde{T}_{\tilde{\theta},\tilde{v}} \cap N_{\frac{1}{2}\rho^{1/2+\delta_m}}( Z-y+b) \neq \emptyset \Big\},
\end{equation*}
then it follows that $\tilde{\T}_b \subseteq \tilde{\T}_{Z-y+b}$.  Given a function $h$ concentrated on wave packets in $\T_{Z,B(y,\rho)}$, consider a function of the form
\begin{equation}\label{b function definition}
\tilde{h}_b:=\sum_{(\tilde{\theta},\tilde{v}) \in \tilde{\T}_b}\tilde{h}_{\tilde{\theta},\tilde v}.
\end{equation}
Expressions of the form \eqref{b function definition} will play an important r\^ole in later arguments. Proposition~\ref{tangential thin packets} implies that
\begin{equation} \label{eq:relative} 
\tilde{T}^{\lambda} \tilde{h}_b\; (\tilde{x}) = T^{\lambda} h_b(x)\chi_{N_{\rho^{1/2+\delta_m}}(Z+b)}(x) + \mathrm{RapDec}(R)\|h\|_{L^2(B^{n-1})}
\end{equation}
for all $x=\tilde{x}+y \in B(y,\rho)$.



The proof of Proposition~\ref{tangential thin packets} relies on the following lemma.

\begin{lemma}\label{close curves lemma}  If $(\theta, v) \in \T$ and $(\tilde{\theta}, \tilde{v}) \in \tilde{\T}_{\theta, v}$, then 
\begin{equation*}
|\tilde{\Gamma}_{\tilde{\theta}, \tilde{v}}^{\lambda}(t)  - (\Gamma_{\theta, v}^{\lambda}(t+y_n) - y)| \lesssim R^{(1+\delta)/2} \qquad \textrm{for all $t \in (-\rho, \rho)$.}
\end{equation*}
\end{lemma}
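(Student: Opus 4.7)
The plan is to use the identity \eqref{curve identity} to reduce the problem to comparing two large-scale core curves at a common parameter value, and then to exploit the reduced form of the phase from $\S$\ref{Reductions section} to isolate a crucial algebraic cancellation.

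First, setting $s := t + y_n$, $w := v - \bar v(y;\omega_\theta)$ and $v'' := \tilde v + \bar v(y;\omega_{\tilde\theta})$, the identity \eqref{curve identity} yields
\[
\tilde\Gamma^\lambda_{\tilde\theta, \tilde v}(t) - (\Gamma^\lambda_{\theta, v}(t+y_n) - y) = \bigl(\gamma^\lambda_{\omega_{\tilde\theta}, v''}(s) - \gamma^\lambda_{\omega_\theta, v}(s),\, 0\bigr),
\]
so the lemma becomes an $R^{(1+\delta)/2}$-bound on the $x'$-difference of two large-scale curves at the same height $s$. Solving the defining equation $\partial_\omega \phi^\lambda((\gamma^\lambda_{\omega,v}(s), s);\omega) = v$ using the reduced form $\phi(x;\omega) = \langle x', \omega\rangle + x_n h(\omega) + \mathcal{E}(x;\omega)$ from Lemma~\ref{reduction lemma} gives
\[
\gamma^\lambda_{\omega, v}(s) = v - s\,\partial_\omega h(\omega) - \lambda\,\partial_\omega \mathcal{E}\bigl((\gamma^\lambda_{\omega, v}(s)/\lambda,\, s/\lambda);\omega\bigr).
\]

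Using this representation for both curves and substituting the explicit formula
\[
\bar v(y;\omega) = \partial_\omega \phi^\lambda(y;\omega) = y' + y_n\,\partial_\omega h(\omega) + \lambda\,\partial_\omega \mathcal{E}(y/\lambda;\omega)
\]
into $v'' - v = (\tilde v - w) + [\bar v(y;\omega_{\tilde\theta}) - \bar v(y;\omega_\theta)]$ and taking differences, the coefficient $s$ multiplying $[\partial_\omega h(\omega_{\tilde\theta}) - \partial_\omega h(\omega_\theta)]$ is exactly compensated by the corresponding $y_n$-contribution coming from $\bar v$, and one arrives at the clean identity
\[
\gamma^\lambda_{\omega_{\tilde\theta}, v''}(s) - \gamma^\lambda_{\omega_\theta, v}(s) = (\tilde v - w) + (y_n - s)\bigl[\partial_\omega h(\omega_{\tilde\theta}) - \partial_\omega h(\omega_\theta)\bigr] + \mathcal{R}_{\mathcal{E}},
\]
where $\mathcal{R}_{\mathcal{E}}$ gathers the remaining purely $\mathcal{E}$-type contributions.

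I would then bound the three pieces separately. By the very definition of $\tilde\T_{\theta,v}$, one has $|\tilde v - w| \lesssim R^{(1+\delta)/2}$. For the middle term, $|y_n - s| = |t| \leq \rho$ combined with $\|\partial_{\omega\omega}^2 h\|_\infty \lesssim 1$ and $|\omega_\theta - \omega_{\tilde\theta}| \lesssim \rho^{-1/2}$ yield a bound of $\lesssim \rho^{1/2} \leq R^{1/2}$. Finally $\mathcal{R}_{\mathcal{E}}$ is treated by Taylor expansion exploiting the vanishing $\partial_x\partial_\omega \mathcal{E}(0;\omega) = 0$ guaranteed by the reduction (footnote of Lemma~\ref{reduction lemma}) together with the $c_{\mathrm{par}}$-bound on higher $x$-derivatives from Lemma~\ref{third reduction lemma}; the resulting terms carry a factor of $R/\lambda \ll 1$ and are absorbed.

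The hard part will be the cancellation in the second step. Without it, one would retain an uncancelled term $s\,[\partial_\omega h(\omega_{\tilde\theta}) - \partial_\omega h(\omega_\theta)]$ of size $R \cdot \rho^{-1/2}$, which for the endpoint $\rho \approx R^{1/2}$ is of order $R^{3/4}$, strictly worse than the desired $R^{(1+\delta)/2}$. This cancellation is not coincidental: $\bar v(y;\cdot)$ is, by construction, the $\omega$-gradient of $\phi^\lambda$ at $y$, so its $y_n\,\partial_\omega h$ content is precisely what is needed to convert the offending $s$ coefficient into $(y_n - s) = -t$, which is controlled by the \emph{small} scale $\rho$ rather than the large scale $R$.
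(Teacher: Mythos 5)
Your argument is correct, but it follows a genuinely different route from the paper's. The paper performs the same reduction via \eqref{curve identity} but then stays entirely at scale $\rho$: it compares two small-scale curves $\tilde{\gamma}^{\lambda}_{\omega_1,v_1}$, $\tilde{\gamma}^{\lambda}_{\omega_2,v_2}$ with $|\omega_1-\omega_2|\lesssim\rho^{-1/2}$ and $|v_1-v_2|\lesssim R^{(1+\delta)/2}$ by introducing the auxiliary value $v_t:=\partial_{\omega}\tilde{\phi}^{\lambda}(x_t;\omega_2)$ at the point $x_t:=(\tilde{\gamma}^{\lambda}_{\omega_1,v_1}(t),t)$, so that the curve $\tilde{\gamma}^{\lambda}_{\omega_2,v_t}$ passes through $x_t$ at time $t$; the estimate $|v_1-v_t|\lesssim|\omega_1-\omega_2|\,|x_t|\lesssim\rho^{1/2}$ (a consequence of $\partial^2_{\omega\omega}\phi^{\lambda}(0;\omega)=0$) and the bi-Lipschitz dependence of $\tilde{\gamma}^{\lambda}_{\omega_2,\cdot}(t)$ on its $v$-parameter then finish the proof in three lines. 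The cancellation you take pains to exhibit is automatic there, precisely because in the small-scale picture the time variable never exceeds $\rho$. Your version unwinds everything to the large scale and makes the mechanism explicit through the reduced form $\phi=\langle x',\omega\rangle+x_n h+\mathcal{E}$, which is illuminating, but it costs you one step that your sketch glosses over: the formula $\gamma^{\lambda}_{\omega,v}(s)=v-s\,\partial_{\omega}h(\omega)-\lambda\,\partial_{\omega}\mathcal{E}(\gamma^{\lambda}_{\omega,v}(s)/\lambda,s/\lambda;\omega)$ is an \emph{implicit} equation, so $\mathcal{R}_{\mathcal{E}}$ contains the term $\lambda\bigl[\partial_{\omega}\mathcal{E}(x_2/\lambda;\omega_\theta)-\partial_{\omega}\mathcal{E}(x_1/\lambda;\omega_{\tilde\theta})\bigr]$ evaluated at the two (distinct) curve points, whose size involves the very quantity $|x_1-x_2|$ you are trying to bound. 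Taylor expansion alone does not close this; you must split it into a double difference in $(x,\omega)$, which is genuinely small of order $c_{\mathrm{par}}(R/\lambda)\rho\cdot\rho^{-1/2}\leq R^{1/2}$, plus a term bounded by $\|\partial_x\partial_{\omega}\mathcal{E}\|_{\infty}|x_1-x_2|=O(c_{\mathrm{par}})|x_1-x_2|$, which must then be absorbed into the left-hand side. With that absorption made explicit your proof is complete; the paper's approach buys brevity and avoids any appeal to the $h$/$\mathcal{E}$ decomposition, while yours makes transparent exactly why the dangerous $s\,[\partial_{\omega}h(\omega_{\tilde\theta})-\partial_{\omega}h(\omega_\theta)]$ term does not appear.
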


\begin{proof} By the identity \eqref{curve identity} and the definition of $\tilde{\T}_{\theta, v}$, it suffices to show that if $(\omega_1, v_1), (\omega_2, v_2) \in \Omega \times \R^{n-1}$ satisfy $|\omega_1 - \omega_2| \lesssim \rho^{-1/2}$ and $|v_1 - v_2| \lesssim R^{(1+ \delta)/2}$, then  
\begin{equation*}
|\tilde{\gamma}_{\omega_1, v_1}^{\lambda}(t)  - \tilde{\gamma}_{\omega_2, v_2}^{\lambda}(t)| \lesssim R^{(1+\delta)/2} \qquad \textrm{for all $t \in (-\rho, \rho)$.}
\end{equation*}
 Fixing $t \in (-\rho, \rho)$, let $x_t:=(\tilde{\gamma}^{\lambda}_{\omega_1,v_1}(t),t)$ and  $v_t :=\partial_\omega\tilde{\phi}^{\lambda}(x_t;\omega_2)$ and note that, since the value of $\tilde{\gamma}_{\omega,v}(t)$ is uniquely determined by \eqref{core curve at scale rho}, $x_t= (\tilde{\gamma}_{\omega_2, v_t}^{\lambda}(t),t)$. Observe that
\begin{equation*}
|v_1-v_t|=|\partial_\omega\tilde{\phi}^{\lambda}(x_t;\omega_1)-\partial_\omega\tilde{\phi}^{\lambda}(x_t;\omega_2)|\lesssim |\omega_1 - \omega_2||x_t|\lesssim \rho^{1/2}.
\end{equation*}
Since $|v_1 - v_2| \lesssim R^{(1+\delta)/2}$, it follows that $|v_t-v_2|\lesssim R^{(1+\delta)/2}$. Therefore,
\begin{equation*}
|\tilde{\gamma}^{\lambda}_{\omega_1,v_1}(t)-\tilde{\gamma}^{\lambda}_{\omega_2,v_2}(t)|=|\tilde{\gamma}^{\lambda}_{\omega_2,v_t}(t)-\tilde{\gamma}^{\lambda}_{\omega_2,v_2}(t)|\sim |v_t-v_2|\lesssim R^{(1+\delta)/2},
\end{equation*}
which establishes the lemma. 
\end{proof}

One may now turn to the proof of Proposition~\ref{tangential thin packets}.

\begin{proof}[Proof (of Proposition~\ref{tangential thin packets})] The proof is broken into stages.

\subsubsection*{The angle condition.} Fix $(\theta, v) \in \T_Z$ and $(\tilde{\theta}, \tilde{v}) \in \tilde{\T}_{\theta, v}$. Motivated by the definition of tangency, let $x \in \tilde{T}_{\tilde{\theta}, \tilde{v}}$ and suppose $z \in Z$ and $b \in B(0,2R^{1/2+\delta_m})$ are such that $z - y + b \in B(0,4\rho)$ and $|x - (z - y + b)| \leq \bar{C}_{\mathrm{tang}} \rho^{1/2 + \delta_m}$. It is claimed that
\begin{equation}\label{tangential thin packets 1}
\angle(\tilde{G}^{\lambda}(x;\omega_{\tilde{\theta}}), T_{z - y+b}(Z - y +b)) \leq \bar{c}_{\mathrm{tang}}\rho^{-1/2+\delta_m},
\end{equation}
where $\tilde{G}^{\lambda}$ is the generalised Gauss map associated to the phase $\tilde{\phi}^{\lambda}$. It is easy to see that $\tilde{G}^{\lambda}(x; \omega) = G^{\lambda}(x+y;\omega)$ and $T_{z - y+b}(Z - y +b) = T_zZ$ so the above estimate can be written as
\begin{equation}\label{tangential thin packets 1.5}
\angle(G^{\lambda}(x+y;\omega_{\tilde{\theta}}), T_{z}Z) \leq \bar{c}_{\mathrm{tang}} \rho^{-1/2+\delta_m}.
\end{equation}
By Lemma~\ref{close curves lemma}, the definition of $\tilde{T}_{\tilde{\theta}, \tilde{v}}$ and the hypothesis $\rho \leq R^{1-\delta}$, it follows that 
\begin{equation}\label{tangential thin packets 2}
|x + y - \Gamma^{\lambda}_{\theta, v}(x_n +y_n)| \lesssim R^{(1+\delta)/2},
\end{equation}
which, by Lemma~\ref{Gauss Lipschitz}, implies that
\begin{equation*}
\angle(G^{\lambda}(x+y;\omega_{\tilde{\theta}}), T_{z}Z) \lesssim \angle(G^{\lambda}(\Gamma^{\lambda}_{\theta, v}(x_n+y_n); \omega_{\theta}), T_zZ) + \rho^{-1/2}.
\end{equation*}
Finally, $\Gamma^{\lambda}_{\theta, v}(x_n+y_n) \in T_{\theta, v}$ and this tube is $R^{-1/2 + \delta_m}$-tangent to $Z$. Note that $z \in Z \cap B(0,2R)$ whilst, recalling \eqref{tangential thin packets 2}, one has
\begin{equation*}
|\Gamma^{\lambda}_{\theta, v}(x_n+y_n) - z| \leq |x+ y - \Gamma^{\lambda}_{\theta, v}(x_n+y_n)| + |x - (z - y + b)| + |b| \lesssim R^{1/2 + \delta_m}.
\end{equation*}
Thus, if the constant $\bar{C}_{\mathrm{tang}}$ in Definition~\ref{tangent definition} is appropriately chosen, then the tangency of $T_{\theta, v}$ implies that
\begin{equation*}
\angle(G^{\lambda}(\Gamma^{\lambda}_{\theta, v}(x_n+y_n); \omega_{\theta}), T_zZ) \leq \bar{c}_{\mathrm{tang}} R^{-1/2 + \delta_m}
\end{equation*}
 and, provided that $R$ is sufficiently large, \eqref{tangential thin packets 1.5} (and therefore \eqref{tangential thin packets 1}) immediately follows. 

\subsubsection*{Containment properties.} The angle condition \eqref{tangential thin packets 1} implies that any tube $\tilde{T}_{\tilde{\theta}, \tilde{v}}$ which intersects $N_{\frac{1}{2}\rho^{1/2 + \delta_m}}(Z - y +b) \cap B(0, \rho)$ is actually contained in $N_{\rho^{1/2 + \delta_m}}(Z - y +b)$. To demonstrate this containment property, continue with the setup of the previous stage, but now assume the slightly stronger conditions that $z - y + b \in B(0,\rho)$ and $|x - (z-y+b)| \leq (1/2)\rho^{1/2+\delta_m}$. Define a time-dependent vector field $X_{\tilde{\theta}, \tilde{v}} \colon (-\rho,\rho) \times Z \cap B(0,2R) \to \R^m$ on $Z \cap B(0,2R)$ by 
\begin{equation*}
X_{\tilde{\theta}, \tilde{v}}(t,z) := \mathrm{proj}_{T_zZ}(\tilde{\Gamma}_{\tilde{\theta}, \tilde{v}}^{\lambda})'(t) \qquad \textrm{for all $(t, z) \in [-\rho, \rho] \times Z \cap B(0,2R)$.}
\end{equation*}
This can be smoothly extended to a map on $[-\rho, \rho] \times U$ where $U \subseteq \R^n$ is a small open neighbourhood of $Z \cap B(0,2R)$. By the Picard--Lindel\"of existence theorem for ODE there exists some smooth mapping $Z_{\tilde{\theta}, \tilde{v}} \colon (-\rho, \rho) \to Z$ such that $Z_{\tilde{\theta}, \tilde{v}}(x_n) = z$ and $Z_{\tilde{\theta}, \tilde{v}}'(t) = X_{\tilde{\theta}, \tilde{v}}(t, Z_{\tilde{\theta}, \tilde{v}}(t))$ for all $t \in (-\rho,\rho)$. Here $x = (x',x_n) \in \tilde{T}_{\tilde{\theta},\tilde{v}}$ are the points fixed above. 

Observe that
\begin{equation*}
|\tilde{\Gamma}_{\tilde{\theta}, \tilde{v}}^{\lambda}(x_n) - (z - y + b)| \leq |\tilde{\Gamma}_{\tilde{\theta}, \tilde{v}}^{\lambda}(x_n) - x| + |x -  (z - y + b)| < (2/3) \rho^{1/2+\delta_m}.
\end{equation*}
Let $I$ denote the set of all $t  \in (-\rho, \rho)$ such that $t \geq x_n$ and
\begin{equation*}
 |\tilde{\Gamma}_{\tilde{\theta}, \tilde{v}}^{\lambda}(s) - (Z_{\tilde{\theta},\tilde{v}}(s) - y + b)| \leq (9/10) \rho^{1/2 + \delta_m} \textrm{ for all $x_n \leq s \leq t$}.
\end{equation*} 
It is claimed that $t_* := \sup I = \rho$. To see this, first note that if $t_* < \rho$, then 
\begin{equation*}
(9/10)\rho^{1/2+\delta_m} = |\tilde{\Gamma}_{\tilde{\theta}, \tilde{v}}^{\lambda}(t_*) - (Z_{\tilde{\theta},\tilde{v}}(t_*)-y+b)|.
\end{equation*}
The angle condition \eqref{tangential thin packets 1} implies that 
\begin{equation*}
\angle((\tilde{\Gamma}_{\tilde{\theta}, \tilde{v}}^{\lambda})'(t), T_{Z_{\tilde{\theta}, \tilde{v}}(t)}Z) \leq \bar{c}_{\mathrm{tang}} \rho^{-1/2 + \delta_m} \quad \textrm{for all $x_n \leq t \leq t_*$.}
\end{equation*}
Combining the previous two displays with the identity
\begin{equation*}
\tilde{\Gamma}_{\tilde{\theta}, \tilde{v}}^{\lambda}(t_*) - Z_{\tilde{\theta},\tilde{v}}(t_*) = \int_{x_n}^{t_*} \mathrm{proj}_{(T_{Z_{\tilde{\theta}, \tilde{v}}(t)}Z)^{\perp}} (\tilde{\Gamma}_{\tilde{\theta}, \tilde{v}}^{\lambda})'(t)\,\ud t + (\tilde{\Gamma}_{\tilde{\theta}, \tilde{v}}^{\lambda}(x_n) -z),
\end{equation*} 
one concludes that
\begin{align*}
(9/10)\rho^{1/2+\delta_m}  &< \int_{x_n}^{t_*} \sin \angle\big((\tilde{\Gamma}_{\tilde{\theta}, \tilde{v}}^{\lambda})'(t), T_{Z_{\tilde{\theta}, \tilde{v}}(t)}Z\big)|(\tilde{\Gamma}_{\tilde{\theta}, \tilde{v}}^{\lambda})'(t)|\,\ud t +  (2/3)\rho^{1/2+\delta_m} \\
&\leq 2\bar{c}_{\mathrm{tang}}\rho^{-1/2 + \delta_m}|t_* - x_n| + (2/3) \rho^{1/2+\delta_m}. 
\end{align*}
Since $|t_* - x_n| \leq 2\rho$, if $\bar{c}_{\mathrm{tang}}$ is appropriately chosen, then this yields a contradiction and, consequently, $[x_n,  \rho) \subseteq I$. A similar argument shows that $(-\rho, x_n]\subseteq I$ and so $\tilde{\Gamma}_{\tilde{\theta}, \tilde{v}}^{\lambda}((-\rho, \rho)) \subseteq N_{\frac{9}{10}\rho^{1/2+\delta_m}}(Z-y +b)$. One therefore concludes that $\tilde{T}_{\tilde{\theta}, \tilde{v}} \subseteq N_{\rho^{1/2+\delta_m}}(Z-y +b)$.

\subsubsection*{Proof of Proposition~\ref{tangential thin packets}, 1).} Let $b \in B(0, 2R^{1/2+\delta_m})$ and suppose that $\tilde{T}_{\tilde{\theta}, \tilde{v}} \cap N_{\frac{1}{2}\rho^{1/2 + \delta}}(Z-y+b) \cap B(0, \rho) \neq \emptyset$; the problem is to show that $\tilde{T}_{\tilde{\theta}, \tilde{v}}$ is $\rho^{-1/2 + \delta_m}$-tangential to $Z-y+b$. By hypothesis, there exists some $x \in \tilde{T}_{\tilde{\theta}, \tilde{v}}$ and $z \in Z$ such that $z - y + b \in B(0,\rho)$ and $|x - (z - y + b)| \leq (1/2)\rho^{1/2 + \delta_m}$. The preceding analysis therefore implies that  $\tilde{T}_{\tilde{\theta}, \tilde{v}} \subseteq N_{\rho^{1/2+\delta_m}}(Z-y +b)$, which is the desired containment condition for tangency. On the other hand, the angle condition for tangency always holds by \eqref{tangential thin packets 1}, and so the proof of part 1) is complete. 

\subsubsection*{Proof of Proposition~\ref{tangential thin packets}, 2).}
By Lemma~\ref{concentration on smaller scale wave packets lemma} it suffices to prove that 
\begin{equation*}
\bigcup_{(\theta, v) \in \T_{Z, B(y, \rho)}} \tilde{\T}_{\theta, v} \subseteq \bigcup_{|b| \lesssim R^{1/2+\delta_m}} \tilde{\T}_{Z-y+b}.
\end{equation*}
Fixing $(\theta, v) \in \T_{Z, B(y, \rho)}$ and $(\tilde{\theta}, \tilde{v}) \in \tilde{\T}_{\theta, v}$, by \eqref{tangential thin packets 1} the problem is further reduced to showing that there exists some $|b| \lesssim R^{1/2+\delta_m}$ such that $\tilde{T}_{\tilde{\theta}, \tilde{v}} \subseteq N_{\rho^{1/2 + \delta_m}}(Z-y+b)$. Lemma~\ref{close curves lemma} implies that $\tilde{\Gamma}_{\tilde{\theta}, \tilde{v}}^{\lambda}(t) \in N_{CR^{1/2 + \delta_m}}(Z -y)$ for $t \in [-\rho, \rho]$. Consequently, fixing $t_0 \in [-\rho, \rho]$ there exists some $|b| \lesssim R^{1/2 + \delta_m}$ such that $\tilde{\Gamma}_{\tilde{\theta}, \tilde{v}}^{\lambda}(t_0) \in Z - y +b$. The desired inclusion now follows from the containment property discussed earlier in the proof.

\end{proof}




\subsection{Sorting the wave packets}

If $(\theta, v) \in \T$ and $(\tilde{\theta}, \tilde{v}) \in \tilde{\T}_{\theta, v} $, then Lemma~\ref{close curves lemma} implies that\footnote{Here $\mathrm{dist}_H$ denotes the Hausdorff distance.}
\begin{equation}\label{children close to mothers}
\mathrm{dist}_{H}\big(T_{\theta, v} \cap B(y,\rho), \tilde{T}_{\tilde{\theta}, \tilde{v}} + y) \lesssim R^{1/2+\delta}.
\end{equation}
In particular, if a pair of wave packets $(\theta_1, v_1), (\theta_2, v_2) \in \T$ are such that $\tilde{\T}_{\theta_1, v_1} \cap \tilde{\T}_{\theta_2, v_2} \neq \emptyset$, then the tubes $T_{\theta_1, v_1}$, $T_{\theta_2, v_2}$ are approximately equal on $B(y,\rho)$.\footnote{More precisely, enlarging the radius of either one of the $T_{\theta_j, v_j}$ by a constant factor produces a tube which contains $(T_{\theta_1, v_1} \cup T_{\theta_2, v_2}) \cap B(y, \rho)$.} This suggests sorting the scale $R$ wave packets $(\theta, v) \in \T$ into disjoint sets for which the associated tubes essentially agree on $B(y, \rho)$.

Let $\mathcal{T}$ denote the collection of all pairs $(\tilde{\theta}, w)$ formed by a $\rho^{-1/2}$-cap $\tilde{\theta}$ and $w \in R^{(1+\delta)/2}\Z^{n-1}$. For each $(\tilde{\theta}, w) \in \mathcal{T}$ choose some
\begin{equation*}
\mathcal{T}_{\tilde{\theta}, w} \subseteq \big\{(\theta,v) \in \T : \mathrm{dist}(\theta, \tilde{\theta}) \lesssim \rho^{-1/2}\text{ and }|v-\bar{v}(y; \omega_{\theta})-w|\lesssim R^{(1+\delta)/2}\big\}
\end{equation*}
so that the family $\{\mathcal{T}_{\tilde{\theta}, w} : (\tilde{\theta}, w) \in \mathcal{T}\}$ forms a covering of $\T$ by disjoint sets. Defining 
\begin{equation*}
T_{\tilde{\theta},w}:=\bigcup_{(\theta,v)\in \mathcal{T}_{\tilde{\theta},w}}T_{\theta,v}\cap B(y,\rho),
\end{equation*}
one obtains the following corollary to Lemma~\ref{close curves lemma}. 

\begin{corollary}\label{same fat intersection} If $(\tilde{\theta}, w) \in \mathcal{T}$ and $(\theta, v) \in \mathcal{T}_{\tilde{\theta}, w}$, then
\begin{equation*}
\mathrm{dist}_{H}\big(T_{\theta,v} \cap B(y,\rho), T_{\tilde{\theta}, w}) \lesssim R^{1/2+\delta}.
\end{equation*}
\end{corollary}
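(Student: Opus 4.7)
The first observation is that one of the two one-sided Hausdorff estimates is automatic. Indeed, since $(\theta,v) \in \mathcal{T}_{\tilde\theta, w}$, the very definition of $T_{\tilde\theta,w}$ gives the inclusion $T_{\theta,v} \cap B(y,\rho) \subseteq T_{\tilde\theta,w}$, so every point of $T_{\theta,v}\cap B(y,\rho)$ has distance $0$ from $T_{\tilde\theta,w}$. The substance of the proof is therefore to bound the \emph{other} one-sided Hausdorff distance, i.e.\ to show that every point $x \in T_{\tilde\theta,w}$ lies within $O(R^{1/2+\delta})$ of $T_{\theta,v} \cap B(y,\rho)$. Such an $x$ belongs to $T_{\theta', v'}\cap B(y,\rho)$ for some $(\theta', v') \in \mathcal{T}_{\tilde\theta, w}$, so it suffices to prove the tube-comparison statement: for any two elements $(\theta,v), (\theta', v') \in \mathcal{T}_{\tilde\theta,w}$,
\begin{equation*}
\mathrm{dist}_H\bigl(T_{\theta,v}\cap B(y,\rho),\, T_{\theta',v'}\cap B(y,\rho)\bigr) \lesssim R^{1/2+\delta}.
\end{equation*}

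The key step is to compare the two core curves on the relevant time interval. By the definition of $\mathcal{T}_{\tilde\theta,w}$ one has $|\omega_\theta - \omega_{\theta'}| \lesssim \rho^{-1/2}$ and
\begin{equation*}
\bigl|(v-\bar v(y;\omega_\theta)) - (v'-\bar v(y;\omega_{\theta'}))\bigr| \lesssim R^{(1+\delta)/2},
\end{equation*}
because both quantities $v-\bar v(y;\omega_\theta)$ and $v'-\bar v(y;\omega_{\theta'})$ are within $O(R^{(1+\delta)/2})$ of the lattice point $w$. Using the identity \eqref{curve identity}, the curves $\gamma^\lambda_{\theta,v}(t+y_n)-y'$ and $\gamma^\lambda_{\theta',v'}(t+y_n)-y'$ on the window $|t|\le \rho$ correspond exactly to $\tilde\gamma^\lambda_{\omega_\theta,\,v-\bar v(y;\omega_\theta)}(t)$ and $\tilde\gamma^\lambda_{\omega_{\theta'},\,v'-\bar v(y;\omega_{\theta'})}(t)$, so the argument internal to the proof of Lemma~\ref{close curves lemma} (freezing the first curve and switching the $v$-parameter along the second) applies verbatim. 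That argument yields
\begin{equation*}
\bigl|\gamma^\lambda_{\theta,v}(s) - \gamma^\lambda_{\theta',v'}(s)\bigr| \lesssim R^{(1+\delta)/2} \quad \text{for all } s \in [y_n-\rho,\, y_n+\rho],
\end{equation*}
i.e.\ the two core curves stay within $O(R^{(1+\delta)/2})$ on the slab containing $B(y,\rho)$.

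Once this curve-proximity estimate is in hand, the Hausdorff bound follows by an elementary geometric argument. Given $x = (x',x_n) \in T_{\theta',v'}\cap B(y,\rho)$, the curve estimate gives $|x'-\gamma^\lambda_{\theta,v}(x_n)| \le |x'-\gamma^\lambda_{\theta',v'}(x_n)| + O(R^{(1+\delta)/2}) \le C R^{1/2+\delta}$; so $x$ lies in a mild $O(1)$-enlargement of $T_{\theta,v}$. Moving radially in the $x'$-variable toward $\gamma^\lambda_{\theta,v}(x_n)$ by at most $CR^{1/2+\delta}$ produces a point $z = (z',x_n)$ with $|z'-\gamma^\lambda_{\theta,v}(x_n)| \le R^{1/2+\delta}$, and $|z-x|\lesssim R^{1/2+\delta}$; finally $|z-y| \le |x-y| + |z-x| < \rho + O(R^{1/2+\delta})$, which after adjusting constants (using the standing assumption $\rho \geq R^{1/2}$, so that $R^{1/2+\delta}\ll \rho$) places $z$ in $B(y,\rho)$. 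The point $z$ then lies in $T_{\theta,v}\cap B(y,\rho)$ and is within $O(R^{1/2+\delta})$ of $x$.

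I do not anticipate a serious obstacle: the proof is essentially a bookkeeping exercise unwinding the definitions and reapplying the core-curve comparison of Lemma~\ref{close curves lemma}. The only mildly technical point is to verify that the competitor point $z$ constructed above actually lies in $B(y,\rho)$ and in the $x_n$-domain $I^\lambda_{\theta,v}$ of the curve; both follow from the fact that $x$ itself lies in $B(y,\rho)$ and that $|x_n-y_n|<\rho \ll \lambda$, together with the observation from \S\ref{Wave packet section} that $I^\lambda_{\theta,v}$ contains an interval of length $\sim \lambda$ around $0$ whenever $|v|\lesssim \lambda$.
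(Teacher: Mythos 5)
Your proof is correct and takes the same route the paper intends: Corollary~\ref{same fat intersection} is meant to follow from the core-curve comparison established inside the proof of Lemma~\ref{close curves lemma}, and you have correctly isolated the relevant intermediate claim there --- namely, that any two pairs $(\omega_1,v_1),(\omega_2,v_2)$ with $|\omega_1-\omega_2|\lesssim\rho^{-1/2}$ and $|v_1-v_2|\lesssim R^{(1+\delta)/2}$ have $\tilde\gamma$-curves within $O(R^{(1+\delta)/2})$ on $(-\rho,\rho)$ --- rather than Lemma~\ref{close curves lemma} itself, which would not apply verbatim (since $w$ lives in the coarser lattice $R^{(1+\delta)/2}\Z^{n-1}$ and need not belong to $\tilde{\T}$). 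Your reduction to pairwise tube comparison is sound: if $\mathrm{dist}_H(A_i,A_j)\le\varepsilon$ for all $i,j$ then $\mathrm{dist}_H(A_i,\bigcup_jA_j)\le\varepsilon$, and the inclusion $T_{\theta,v}\cap B(y,\rho)\subseteq T_{\tilde\theta,w}$ handles one direction for free. The one place where you are cutting a corner is the final sentence about forcing $z\in B(y,\rho)$: \emph{adjusting constants} does not literally move a point that is $O(R^{(1+\delta)/2})$ outside $\partial B(y,\rho)$ back inside, and when $x$ sits in an annulus of width $O(R^{(1+\delta)/2})$ near the boundary the horizontal slide towards $\gamma^\lambda_{\theta,v}(x_n)$ can exit the ball. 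A clean fix is to note that the slide distance is $\lesssim R^{(1+\delta)/2}\ll R^{1/2+\delta}$, so one can afford to first move $x$ inward towards $y$ by a further $O(R^{(1+\delta)/2})$ and then slide; or simply observe, as the paper tacitly does, that a constant enlargement of $B(y,\rho)$ is harmless in every subsequent application (e.g.\ the discussion preceding Lemma~\ref{reverse Plancherel}, where the conclusion extracted is only that each $T_{\theta,v}$ meets $B(x_0,CR^{1/2+\delta})$). This is a presentation nit, not a gap in the mathematics.
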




\begin{figure}
\begin{center}

\resizebox {0.7\textwidth} {!} {
\begin{tikzpicture}

\filldraw[blue!10,cm={cos(0-4) ,sin(0-4) ,-sin(0-4) ,cos(0-4) ,(-0.5 cm, 0.3 cm)}](0,1) circle (0.7cm);

\foreach \a in {-1, -0.5}
    {
 \filldraw[yellow!20,cm={cos(0+2*\a) ,-sin(0+2*\a) ,sin(0+2*\a) ,cos(0+2*\a) ,(0.5*\a cm, -0.3*\a cm)}](0.075,5) -- (0.075,-5) -- (-0.075,-5) -- (-0.075,5) -- (0.075,5);

}
\foreach \a in {-1, -0.5}
    {
 \filldraw[yellow!20,cm={cos(0+4*\a) ,sin(0+4*\a) ,-sin(0+4*\a) ,cos(0+4*\a) ,(0.5*\a cm, -0.3*\a cm)}](0.075,5) -- (0.075,-5) -- (-0.075,-5) -- (-0.075,5) -- (0.075,5);

}

\fill[fill=white] (4cm,0) arc [white,radius=4cm, start angle=0, delta angle=180]
                  -- (-5.4cm,0) arc [white, radius=5.4cm, start angle=180, delta angle=-180]
                  -- cycle;
\fill[fill=white] (4cm,0) arc [white,radius=4cm, start angle=0, delta angle=-180]
                  -- (-5cm,0) arc [white, radius=5cm, start angle=180, delta angle=180]
                  -- cycle;

\foreach \a in {-1, -0.5}
    {
 \draw[black,cm={cos(0+2*\a) ,-sin(0+2*\a) ,sin(0+2*\a) ,cos(0+2*\a) ,(0.5*\a cm, -0.3*\a cm)}] (0.075,5) -- (0.075,-5);
 \draw[black,cm={cos(0+2*\a) ,-sin(0+2*\a) ,sin(0+2*\a) ,cos(0+2*\a) ,(0.5*\a cm, -0.3*\a cm)}] (-0.075,-5) -- (-0.075,5);
}
\foreach \a in {-1, -0.5}
    {
 \draw[black,cm={cos(0+4*\a) ,sin(0+4*\a) ,-sin(0+4*\a) ,cos(0+4*\a) ,(0.5*\a cm, -0.3*\a cm)}] (0.075,5) -- (0.075,-5);
 \draw[black,cm={cos(0+4*\a) ,sin(0+4*\a) ,-sin(0+4*\a) ,cos(0+4*\a) ,(0.5*\a cm, -0.3*\a cm)}] (-0.075,-5) -- (-0.075,5);
}

\draw[cyan,thick,dashed] (0,0) circle (4cm);
\draw[blue, thick,cm={cos(0-4) ,sin(0-4) ,-sin(0-4) ,cos(0-4) ,(-0.5 cm, 0.3 cm)}](0,1) circle (0.7cm);
\filldraw[blue,cm={cos(0-4) ,sin(0-4) ,-sin(0-4) ,cos(0-4) ,(-0.5 cm, 0.3 cm)}](0,1) circle (1pt);

 \node[below=0.2cm, left, scale=1.5] at  (-0.7cm,-0.5) {$ T_{\tilde{\theta},w}$};
 \node[right, scale=1.2] at  (0.3cm,1.6cm) {$B(x_0, CR^{1/2+\delta})$};
\node[right, scale=1.5] at  (3.2cm,-3.2cm) {$B(y, \rho)$};
		\end{tikzpicture}}
		
\caption{The set $ T_{\tilde{\theta},w} := \bigcup_{(\theta,v)\in \mathcal{T}_{\tilde{\theta},w}}T_{\theta,v}\cap B(y,\rho)$ is highlighted in yellow. Fixing $x_0 \in T_{\tilde{\theta},w}$, for every $(\theta, v) \in \mathcal{T}_{\tilde{\theta},w}$ the tube $T_{\theta,v}$ intersects the ball $B(x_0, CR^{1/2+\delta})$.  }
\label{tube diagram}
\end{center}
\end{figure}


Let $g \colon B^{n-1} \to \C$ be integrable and define
\begin{equation*}
g_{\tilde{\theta},w}:=\sum_{(\theta,v)\in \mathcal{T}_{\tilde{\theta}, w}}g_{\theta,v} \qquad \textrm{for all $(\tilde{\theta},w) \in \mathcal{T}$.}
\end{equation*}
Since the $\mathcal{T}_{\tilde{\theta}, w}$ cover $\T$ and are disjoint, it follows that
\begin{equation}\label{sorting wave packets 1}
g=\sum_{(\tilde{\theta},w)\in \mathcal{T}}g_{\tilde{\theta},w} + \textrm{RapDec}(R)\|g\|_{L^2(B^{n-1})};
\end{equation}
furthermore, the functions $g_{\tilde{\theta},w}$ are almost orthogonal and, consequently,
\begin{equation}\label{sorting wave packets 2}
\|g\|_{L^2(B^{n-1})}^2\sim\sum_{(\tilde{\theta},w)\in \mathcal{T}}\|g_{\tilde{\theta},w}\|_{L^2(B^{n-1})}^2.
\end{equation}

By Lemma~\ref{concentration on smaller scale wave packets lemma}, the function $(g_{\tilde{\theta},w})\;\widetilde{}\;$ is concentrated on scale $\rho$ wave packets belonging to $\bigcup_{(\theta,v)\in \mathcal{T}_{\tilde{\theta}, w}} \tilde{\T}_{\theta, v}$. This union is contained in 
\begin{equation*}
\tilde{\mathcal{T}}_{\tilde{\theta},w} := \big\{(\tilde{\theta}',\tilde{v}) \in \tilde{\T}: \mathrm{dist}(\tilde{\theta}',\tilde{\theta})\lesssim \rho^{-1/2}\text{ and }|\tilde{v}-w|\lesssim R^{(1+\delta)/2}  \big\}
\end{equation*}
and therefore each $(g_{\tilde{\theta},w})\;\widetilde{}\;$ is concentrated on wave packets from $\tilde{\mathcal{T}}_{\tilde{\theta}, w}$. The family $\{\tilde{\mathcal{T}}_{\tilde{\theta}, w} : (\tilde{\theta}, w) \in \mathcal{T}\}$ forms a covering of $\tilde{\T}$ by almost disjoint sets. This implies almost orthogonality between the scale $\rho$ wave packets of the different functions $(g_{\tilde{\theta},w})\;\widetilde{}\;$. A particular consequence of this observation is that
\begin{equation}\label{sorting wave packets 3}
\big\|\sum_{(\tilde{\theta},w)\in \mathcal{T}}(g_{\tilde{\theta},w})\;\widetilde{}_{ \!\!b}\;\big\|_{L^2(B^{n-1})}^2\sim\sum_{(\tilde{\theta},w)\in \mathcal{T}}\|(g_{\tilde{\theta},w})\;\widetilde{}_{ \!\!b}\;\|_{L^2(B^{n-1})}^2,
\end{equation}
where $\tilde{h}_b$ is defined for a given function $h$ as in \eqref{b function definition}.




\subsection{Transverse equidistribution revisited}

Let $Z$ be an $m$-dimensional transverse complete intersection, $(\tilde{\theta},w) \in \mathcal{T}$ and $h$ be a function concentrated on wave packets in $\T_{Z\cap B(y,\rho)}\cap \mathcal{T}_{\tilde{\theta},w}$. Here the key example to have in mind is $h=g_{\tilde{\theta},w}$ for some function $g$ concentrated on wave packets in $\T_{Z, B(y,\rho)}$. 

Every scale $R$ wave packet of $h$ intersects $B(y,\rho)$ on the set $T_{\tilde{\theta},w}$ which, by Corollary~\ref{same fat intersection}, is comparable to $T_{\theta, v} \cap B(y,\rho)$ for any $(\theta,v) \in \mathcal{T}_{\tilde{\theta},w}$.\footnote{Here `the scale $R$ wave packets of $h$' refers to the scale $R$ wave packets upon which $h$ is concentrated.} Consequently, if $x_0 \in T_{\tilde{\theta},w}$, then all the scale $R$ wave packets of $h$ intersect $B(x_0, CR^{1/2+\delta_m})$ (see Figure~\ref{tube diagram}). Moreover, \eqref{children close to mothers} implies that the scale $\rho$ wave packets of $\tilde{h}$ will intersect $B(x_0-y, CR^{1/2+\delta_m})$. Under these conditions a useful reverse-type version of H\"ormander's $L^2$ bound holds.

\begin{lemma}\label{reverse Plancherel} Let $T^{\lambda}$ be a H\"ormander-type operator with phase $\phi^{\lambda}$ given by a translate of a reduced phase in the sense of \eqref{translated phase} and $1 \leq R^{1/2+\delta} \leq r \lesssim \lambda^{1/2}$. There exists a family of H\"ormander-type operators $\mathbf{T}^{\lambda}$ all with phase $\phi^{\lambda}$ such that the following hold:
\begin{enumerate}[i)]
    \item Each $T^{\lambda} \in \mathbf{T}^{\lambda}$ is again an operator with phase given by a translate of a reduced phase in the sense of \eqref{translated phase} (in particular, all the relevant bounds from \S\ref{Reductions section} hold on the support of the amplitude).
    \item $\# \mathbf{T}^{\lambda} = O(1)$.
    \item If $f$ is concentrated on wave packets (with respect to $T^{\lambda}$) which intersect some $B(\bar{x}, r) \subseteq B(0,R)$, then 
\begin{equation*}
 \|f\|_{L^2(B^{n-1})}^2\lesssim r^{-1} \|T_*^{\lambda} f\|^2_{L^2(B(\bar{x},Cr))} 
\end{equation*}
holds for some $T_*^{\lambda} \in \mathbf{T}^{\lambda}$.
\end{enumerate}
\end{lemma}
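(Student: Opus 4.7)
The plan is to establish the estimate via a slicing argument in the $x_n$-direction, combining wave packet localisation on horizontal slices with a reverse form of H\"ormander's slice $L^2$-bound (Lemma~\ref{Hormander L2 again}). Since every operator in the eventual family $\mathbf{T}^{\lambda}$ will have the same phase $\phi^{\lambda}$ as $T^{\lambda}$, the wave packet decomposition of $f$ and the associated tubes $T_{\theta, v}$ depend only on the phase, and Lemma~\ref{wave packet concentration lemma} applies to each $T^{\lambda}_* f_{\theta, v}$. Since $|(\gamma^{\lambda}_{\theta, v})'| \lesssim 1$, the hypothesis that every tube of $f$ meets $B(\bar{x}, r)$ forces $T_{\theta,v} \cap (\R^{n-1} \times \{x_n\}) \subseteq B(\bar{x}', Cr) \times \{x_n\}$ for every $|x_n - \bar{x}_n| \leq Cr$, leading to
\begin{equation*}
\|T^{\lambda}_* f\|^2_{L^2(\R^{n-1} \times \{x_n\})} \leq \|T^{\lambda}_* f\|^2_{L^2(B(\bar{x}', Cr) \times \{x_n\})} + \mathrm{RapDec}(R)\|f\|^2_{L^2(B^{n-1})}.
\end{equation*}

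The reverse slice bound will require constructing a family of amplitudes whose squares jointly dominate a constant on the relevant region. Setting $S_{j, x_n} f(x') := T^{\lambda}_j f(x', x_n)$, a $TT^*$ computation combined with stationary phase in $x'$ applied to the kernel of $S_{j,x_n}^* S_{j,x_n}$---the non-stationarity off the diagonal $\omega=\omega'$ following from the hypothesis H1) that $\partial_{x'\omega}^2 \phi$ has full rank---shows that $S_{j,x_n}^* S_{j,x_n}$ is, up to a lower-order error in $\lambda^{-1}$, multiplication in $\omega$ by $M_j(x_n;\omega) := \int |a_j^{\lambda}(x', x_n;\omega)|^2 \,\ud x'$. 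The plan is then to choose $\mathbf{T}^{\lambda} = \{T^{\lambda}_j\}_{j=1}^N$ with $N = O(1)$ so that $\sum_{j=1}^N M_j(x_n;\omega) \gtrsim 1$ uniformly in the relevant $(x_n;\omega)$ region. A concrete construction takes $a_j(x;\omega) := \chi(x)\psi_j(\omega)$, where $\chi$ is a smooth bump function equal to $1$ on a sufficiently large sub-ball of the (translated) $X$-domain appearing in \eqref{translated phase}, and $\{\psi_j^2\}_{j=1}^N$ is a smooth partition of unity on a neighbourhood of $\Omega$; each $T^{\lambda}_j$ satisfies the conditions of \S\ref{Reductions section}, and the reverse slice bound takes the form
\begin{equation*}
\sum_{j=1}^N \|T^{\lambda}_j f\|^2_{L^2(\R^{n-1}\times\{x_n\})} \gtrsim \|f\|^2_{L^2(B^{n-1})}
\end{equation*}
uniformly for $x_n$ in the admissible range.

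Integrating the two preceding displays over $|x_n - \bar{x}_n| \leq cr$ (a sub-interval of length $\sim r$ chosen so that the corresponding slices fit inside $B(\bar{x}, Cr)$), summing in $j$ and pigeonholing in the $N = O(1)$ terms on the left produces some $T^{\lambda}_* = T^{\lambda}_j \in \mathbf{T}^{\lambda}$ with $\|T^{\lambda}_* f\|^2_{L^2(B(\bar{x}, Cr))} \gtrsim r \|f\|^2_{L^2(B^{n-1})}$, after absorbing the $\mathrm{RapDec}(R)$ error using $r \geq R^{1/2+\delta}$. The main technical obstacle is the reverse slice estimate: one must execute the stationary phase analysis precisely enough that the error terms are controlled uniformly in $x_n$, and this relies on the derivative bounds on the reduced phase and amplitude coming from Lemma~\ref{third reduction lemma}; the constraint $r \lesssim \lambda^{1/2}$ ensures that $B(\bar{x}, Cr)$ lies at scales where the higher-order $x$-derivatives of the phase controlled by \eqref{small x derivatives} contribute negligibly to the stationary phase expansion. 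Properties i) and ii) in the statement are then immediate from the construction.
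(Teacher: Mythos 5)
The crux of your argument is the ``reverse slice bound''
\begin{equation*}
\sum_{j=1}^N \|T^{\lambda}_j f\|^2_{L^2(\R^{n-1}\times\{x_n\})} \gtrsim \|f\|^2_{L^2(B^{n-1})},
\end{equation*}
and the $TT^*$ mechanism you propose for it does not work. Writing $S = S_{j,x_n}$, the kernel of $S^*S$ is $K(\omega,\omega') = \int e^{2\pi i(\phi^{\lambda}(x',x_n;\omega')-\phi^{\lambda}(x',x_n;\omega))}\overline{a_j^{\lambda}(x',x_n;\omega)}\,a_j^{\lambda}(x',x_n;\omega')\,\ud x'$; non-stationary phase in $x'$ (via H1)) shows only that $K$ is concentrated in the $\lambda^{-1}$-neighbourhood of the diagonal, with $K(\omega,\omega) = M_j(x_n;\omega)\sim\lambda^{n-1}$ and $\int|K(\omega,\omega')|\,\ud\omega' = O(1)$. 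Thus $S^*S$ is an approximate identity at scale $\lambda^{-1}$, not ``multiplication by $M_j$ plus a lower-order error'': the off-diagonal mass inside that $\lambda^{-1}$-neighbourhood is comparable to the diagonal contribution, and multiplication by $M_j\sim\lambda^{n-1}$ would in any case contradict $\|S\|_{L^2\to L^2}\lesssim 1$ (Lemma~\ref{Hormander L2 again}). More to the point, the displayed lower bound is false for general $f$: taking $f(\omega) = e^{2\pi i\langle v,\omega\rangle}g(\omega)$ with $|v|\gg\lambda$ makes every $T^{\lambda}_jf$ rapidly decaying by non-stationary phase in $\omega$ (since $|\partial_{\omega}\phi^{\lambda}|\lesssim\lambda$ on the supports), while the right-hand side is $\|g\|_{L^2}^2$. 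Any correct version must exploit the frequency localisation supplied by the wave-packet hypothesis on $f$, which your argument never invokes at this step — and once one does, essentially all of the genuine work of the lemma remains.

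For comparison, the paper's proof avoids stationary phase on slices altogether. After reducing to $\bar{x}=0$, $r=R^{1/2+\delta}$ (a translation plus a slab decomposition close in spirit to your first step), it reparametrises the frozen surface as a graph via $\Psi$ and uses the \emph{exact} Plancherel identity $\|Eg\|_{L^2(\R^{n-1}\times\{x_n\})} = \|g\|_{L^2}$ for the associated constant-coefficient extension operator $E$, with the wave-packet hypothesis used to localise $\|Ef_{\Psi}\|_{L^2}$ to $B(0,CR^{1/2+\delta})$. Passing from $E$ back to $T^{\lambda}$ costs a factor $e^{-2\pi i\lambda\mathcal{E}(z/\lambda;\omega)}$, and this is exactly where the hypothesis $r\lesssim\lambda^{1/2}$ is used (it makes $\lambda\mathcal{E}(z/\lambda;\omega)$ and all its derivatives $O(c_{\mathrm{par}})$ for $|z|\lesssim r$, since $\mathcal{E}$ is quadratic in $x$) and where the family $\mathbf{T}^{\lambda}$ actually arises: the factor is expanded as a Fourier series in $(z,\omega)$, the $\omega$-modulations $e^{2\pi i\langle\omega,k\rangle}$ are absorbed into $O(1)$ new amplitudes, and the large-$k$ tail is reabsorbed into the left-hand side using H\"ormander's bound. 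Your partition-of-unity family plays no such role (if your slice bound held, a single operator would suffice), and your stated reason for needing $r\lesssim\lambda^{1/2}$ does not identify where the constraint is actually required.
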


Lemma~\ref{reverse Plancherel} can be proven for extension operators fairly directly via Plancherel's theorem (see \cite[$\S$3]{Guth2018}). Establishing the general (variable coefficient) version of Lemma~\ref{reverse Plancherel} involves a number of additional technicalities and the proof is therefore postponed until the end of the section. 

For $h$ as above,  $x_0\in T_{\tilde{\theta},w}$ and $|b| \lesssim R^{1/2 + \delta_m}$ the preceding discussion implies that the function $\tilde{h}_b$, as defined in \eqref{b function definition}, is a sum of wave packets which intersect $B(x_0 - y, CR^{1/2 + \delta_m})$. Lemma~\ref{reverse Plancherel} can therefore be applied at scale $\rho$ with $r \sim R^{1/2 + \delta_m}$ to yield
\begin{equation*}
\|\tilde{h}_b\|_{L^2(B^{n-1})}^2 \lesssim R^{-1/2-\delta_m} \|\tilde{T}_*^{\lambda}\tilde{h}_b \|^2_{L^2(B(x_0-y,CR^{1/2+\delta_m}) )}.
\end{equation*}
The wave packets defined by $T^{\lambda}$ and $T_*^{\lambda}$ will have identical geometric properties (since these properties are essentially independent of the precise choice of amplitude). By \eqref{eq:relative}, one concludes that
\begin{equation}\label{eq:cakeslice}
\|\tilde{h}_b\|_{L^2(B^{n-1})}^2\lesssim R^{-1/2-\delta_m} \|T_*^{\lambda} h_b \|^2_{L^2(N_{\rho^{1/2+\delta_m}}(Z+b)\cap B(x_0,CR^{1/2+\delta_m}))}.
\end{equation}
This observation has several useful consequences. First of all, by applying H\"ormander's $L^2$ bound one obtains the following. 

\begin{lemma} \label{induceb} Let $h$ be concentrated on wave packets from $\T_{Z\cap B(y,\rho)} \cap \mathcal{T}_{\tilde{\theta},w}$, for some $(\tilde{\theta}, w) \in \mathcal{T}$. Let $\mathfrak{B}\subset B(0,CR^{1/2+\delta_m})$ be such that the sets $N_{\rho^{1/2+\delta_m}}(Z+b)\cap B(x_0,CR^{1/2+\delta_m})$ are pairwise disjoint over all $b\in \mathfrak{B}$. Then,
\begin{equation*}
\sum_{b\in\mathfrak{B}}\|\tilde{h}_b\|_{L^2(B^{n-1})}^2\lesssim \|h\|_{L^2(B^{n-1})}^2.
\end{equation*}
\end{lemma}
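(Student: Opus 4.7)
The plan is to apply the reverse $L^2$-bound \eqref{eq:cakeslice} individually for each $b \in \mathfrak{B}$, exploit the hypothesized pairwise disjointness of the neighbourhoods to collapse the sum of the localised $L^2$-norms on the right-hand side into a single $L^2$-norm of $T_*^\lambda h$ over $B(x_0, CR^{1/2+\delta_m})$, and close the argument with one application of H\"ormander's forward $L^2$-estimate (Lemma~\ref{Hormander L2}) at the small spatial scale $R^{1/2+\delta_m}$. The $R^{-1/2-\delta_m}$ loss from \eqref{eq:cakeslice} and the $R^{1/2+\delta_m}$ gain from the forward bound cancel exactly.

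In more detail: since $h$ is concentrated on wave packets from $\T_{Z\cap B(y,\rho)} \cap \mathcal{T}_{\tilde\theta,w}$, for each $b \in \mathfrak{B}$ the inequality \eqref{eq:cakeslice} gives
\begin{equation*}
\|\tilde h_b\|_{L^2(B^{n-1})}^2 \;\lesssim\; R^{-1/2-\delta_m}\, \|T_*^\lambda h_b\|_{L^2(N_{\rho^{1/2+\delta_m}}(Z+b)\cap B(x_0, CR^{1/2+\delta_m}))}^2 .
\end{equation*}
On the integration region I would then replace $T_*^\lambda h_b$ by $T_*^\lambda h$, at the cost of an absorbable $\mathrm{RapDec}(R)\|h\|_{L^2(B^{n-1})}$ error. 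Indeed, by the definition of $\tilde\T_b$ the difference $\tilde h - \tilde h_b$ is a sum of scale-$\rho$ wave packets $\tilde h_{\tilde\theta,\tilde v}$ whose tubes $\tilde T_{\tilde\theta,\tilde v}$ miss $N_{\tfrac12\rho^{1/2+\delta_m}}(Z-y+b)$, so by Lemma~\ref{wave packet concentration lemma} applied at scale $\rho$ these wave packets contribute only negligibly to $\tilde T^\lambda$ on this neighbourhood; transferring back via \eqref{translating the operator} yields the replacement.

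Summing in $b$ and using the hypothesized pairwise disjointness collapses the right-hand side to a single $L^2$-norm:
\begin{equation*}
\sum_{b \in \mathfrak{B}} \|\tilde h_b\|_{L^2(B^{n-1})}^2 \;\lesssim\; R^{-1/2-\delta_m}\, \|T_*^\lambda h\|_{L^2(B(x_0, CR^{1/2+\delta_m}))}^2 \; +\; \mathrm{RapDec}(R)\|h\|_{L^2(B^{n-1})}^2.
\end{equation*}
An application of Lemma~\ref{Hormander L2} to the ball $B(x_0, CR^{1/2+\delta_m})$ controls the main term by $R^{-1/2-\delta_m}\cdot R^{1/2+\delta_m}\|h\|_{L^2(B^{n-1})}^2 \sim \|h\|_{L^2(B^{n-1})}^2$, which is the claimed estimate.

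The only non-cosmetic obstacle is the replacement step $T_*^\lambda h_b \rightsquigarrow T_*^\lambda h$ on the localised region: one must carefully track how the scale-$R$ wave packet decomposition of $h$ descends to scale-$\rho$ children via Proposition~\ref{tangential thin packets} and check that only the children indexed by $\tilde\T_b$ contribute non-negligibly on $N_{\rho^{1/2+\delta_m}}(Z+b)$. This is exactly the content of the tangency and spatial concentration machinery developed in \S\ref{Adjusting wave packets section}, and once it is applied the remainder of the argument is essentially a bookkeeping exercise built around the disjointness hypothesis.
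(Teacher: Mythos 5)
There is a genuine gap in the replacement step $T_*^\lambda h_b \rightsquigarrow T_*^\lambda h$ on $N_{\rho^{1/2+\delta_m}}(Z+b)\cap B(x_0,CR^{1/2+\delta_m})$. Your justification invokes Lemma~\ref{wave packet concentration lemma} on the grounds that the scale-$\rho$ tubes of $\tilde h - \tilde h_b$ miss $N_{\frac12\rho^{1/2+\delta_m}}(Z-y+b)$, but the region you integrate over in \eqref{eq:cakeslice} is the \emph{full} neighbourhood $N_{\rho^{1/2+\delta_m}}(Z-y+b)$, not the half-width one. Since the tubes $\tilde T_{\tilde\theta,\tilde v}$ have width $\rho^{1/2+\delta}\ll\rho^{1/2+\delta_m}$, a tube can sit comfortably in the annular region at distance between $\frac12\rho^{1/2+\delta_m}$ and $\rho^{1/2+\delta_m}$ from $Z-y+b$; its wave packet is then $O(1)$, not $\mathrm{RapDec}(R)$, on part of the integration region, so your ``absorbable error'' claim fails. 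This is not a cosmetic issue: without it, the disjointness hypothesis on the $N_{\rho^{1/2+\delta_m}}(Z+b)\cap B(x_0,CR^{1/2+\delta_m})$ is not actually used in a way that closes the estimate.

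What is actually needed is the pairwise disjointness of the \emph{index sets} $\tilde\T_b$ over $b\in\mathfrak{B}$ (up to negligible wave packets). This follows from the hypothesis: if $(\tilde\theta,\tilde v)\in\tilde\T_b\cap\tilde\T_{b'}$ contributes non-negligibly to $\tilde h$, then by Proposition~\ref{tangential thin packets} the tube $\tilde T_{\tilde\theta,\tilde v}$ lies in both $N_{\rho^{1/2+\delta_m}}(Z-y+b)$ and $N_{\rho^{1/2+\delta_m}}(Z-y+b')$, and since $h$ is concentrated on $\mathcal{T}_{\tilde\theta,w}$ with $x_0\in T_{\tilde\theta,w}$, Corollary~\ref{same fat intersection} and \eqref{children close to mothers} force $\tilde T_{\tilde\theta,\tilde v}$ to meet $B(x_0-y,CR^{1/2+\delta_m})$. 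Any common point (shifted by $y$) lies in $N_{\rho^{1/2+\delta_m}}(Z+b)\cap N_{\rho^{1/2+\delta_m}}(Z+b')\cap B(x_0,CR^{1/2+\delta_m})$, contradicting the disjointness assumption. Once this disjointness of the $\tilde\T_b$ is in hand, the shortest finish is pure wave-packet orthogonality: $\sum_{b}\|\tilde h_b\|_{L^2}^2\sim\|\sum_b\tilde h_b\|_{L^2}^2\lesssim\|\tilde h\|_{L^2}^2\sim\|h\|_{L^2}^2$, since $\sum_b\tilde h_b$ is a sub-sum of the (almost orthogonal) scale-$\rho$ wave packets of $\tilde h$. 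Alternatively, your \eqref{eq:cakeslice}/H\"ormander route can be repaired by replacing $T_*^\lambda h_b$ with $T_*^\lambda h_{\mathfrak{B}}$ for $h_{\mathfrak{B}}:=\sum_{b'\in\mathfrak{B}}h_{b'}$ on each region (which \emph{is} valid, because $T_*^\lambda h_{b'}$ for $b'\neq b$ is concentrated on the disjoint region $N_{\rho^{1/2+\delta_m}}(Z+b')$ by \eqref{eq:relative}), but one must still invoke the disjointness of the $\tilde\T_b$ at the very end to obtain $\|h_{\mathfrak{B}}\|_{L^2}\lesssim\|h\|_{L^2}$.
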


A further consequence of \eqref{eq:cakeslice} is the following transverse equidistribution result.

\begin{lemma}\label{equidistributedinputs1}  Let $(\tilde{\theta}, w) \in \mathcal{T}$, $|b| \lesssim R^{1/2+\delta_m}$ and $Z$ be an $m$-dimensional transverse complete intersection with $\overline{\deg}\,Z \lesssim_{\varepsilon} 1$. If $h$ is concentrated on wave packets from $\T_{Z\cap B(y,\rho)} \cap \mathcal{T}_{\tilde{\theta},w}$, then
\begin{equation*}
\|\tilde{h}_b\|_{L^2(B^{n-1})}^2\lesssim R^{O(\delta_m)}(\rho/R)^{(n-m)/2}\|h\|_{L^2(B^{n-1})}^2.
\end{equation*}
\end{lemma}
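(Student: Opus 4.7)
The plan is to chain together the reverse Plancherel-type inequality \eqref{eq:cakeslice} with the transverse equidistribution estimate of Lemma~\ref{Transverse equidistribution lemma 1}, applied at outer scale $R$, inner scale $\rho$, and to the translated variety $Z+b$.

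First I would invoke \eqref{eq:cakeslice}, already established from Lemma~\ref{reverse Plancherel} and \eqref{eq:relative}, to obtain
\begin{equation*}
\|\tilde{h}_b\|_{L^2(B^{n-1})}^2 \lesssim R^{-1/2 - \delta_m}\|T_*^\lambda h_b\|_{L^2(N_{\rho^{1/2+\delta_m}}(Z+b) \cap B(x_0, CR^{1/2+\delta_m}))}^2
\end{equation*}
for a suitable $x_0 \in T_{\tilde{\theta},w}$ and some $T_*^\lambda \in \mathbf{T}^\lambda$ (which shares its phase, hence its wave packet structure, with $T^\lambda$). The right-hand integral is then estimated by Lemma~\ref{Transverse equidistribution lemma 1} with outer ball $B := B(x_0, CR^{1/2+\delta_m})$, variety $Z+b$ (whose transverse complete intersection structure and bounded maximum degree are inherited from $Z$ since translation preserves both), inner scale $\rho$, and a cap $\tau \subset \R^{n-1}$ of radius $O(\rho^{-1/2+\delta_m})$ chosen to contain every $\rho^{-1/2}$-cap appearing in $\mathcal{T}_{\tilde{\theta},w}$. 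This gives
\begin{equation*}
\|T_*^\lambda h_b\|_{L^2(N_{\rho^{1/2+\delta_m}}(Z+b) \cap B)}^2 \lesssim R^{1/2 + O(\delta_m)}(\rho/R)^{(n-m)/2}\|h_b\|_{L^2(B^{n-1})}^2,
\end{equation*}
which, combined with the almost-orthogonality bound $\|h_b\|_{L^2(B^{n-1})} \lesssim \|h\|_{L^2(B^{n-1})}$ coming from \eqref{sorting wave packets 2}, yields the claim after the $R^{-1/2-\delta_m}$ and $R^{1/2+O(\delta_m)}$ factors cancel.

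The main task is verifying that $h_b$ meets the hypotheses of Lemma~\ref{Transverse equidistribution lemma 1} with respect to $Z+b$. Three conditions must be checked for each wave packet $(\theta,v)$ contributing to $h_b$: (i) the cap condition $\theta \cap (\tfrac{1}{10}\tau) \neq \emptyset$, which follows since all such caps lie within $O(\rho^{-1/2})$ of $\tilde\theta$, and $\rho^{-1/2} \ll \rho^{-1/2+\delta_m}$; (ii) the ball-intersection condition $T_{\theta,v} \cap B \neq \emptyset$, which follows from $x_0 \in T_{\tilde{\theta},w}$ together with Corollary~\ref{same fat intersection}, so that each such $T_{\theta,v}$ passes within $O(R^{1/2+\delta})$ of $x_0$; and (iii) the tangency of $T_{\theta,v}$ to $Z+b$ at scale $R^{-1/2+\delta_m}$ in $B(0,R)$.

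The main technical obstacle is (iii), tangency to the \emph{translated} variety $Z+b$. This I would address by noting that the tangent planes $T_{z+b}(Z+b) = T_z Z$ coincide under the translation, and that if $x \in T_{\theta,v}$ with $|x - (z+b)| \leq \bar{C}_{\mathrm{tang}} R^{1/2+\delta_m}$, then $|x - z| \leq (\bar{C}_{\mathrm{tang}} + O(1)) R^{1/2+\delta_m}$ since $|b| \lesssim R^{1/2+\delta_m}$; the tangency of $T_{\theta,v}$ to $Z$ then furnishes the needed angle bound. Similarly, the containment $T_{\theta,v} \subseteq N_{R^{1/2+\delta_m}}(Z)$ propagates to $T_{\theta,v} \subseteq N_{CR^{1/2+\delta_m}}(Z+b)$. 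Thus up to a harmless adjustment of the tangency constants $\bar{c}_{\mathrm{tang}}, \bar{C}_{\mathrm{tang}}$ (which can be absorbed into the $R^{O(\delta_m)}$ factor), the hypothesis of Lemma~\ref{Transverse equidistribution lemma 1} is satisfied, completing the argument.
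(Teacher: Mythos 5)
Your proposal is correct and follows essentially the same route as the paper: the paper's own proof is a one-line chaining of \eqref{eq:cakeslice} with Lemma~\ref{Transverse equidistribution lemma 1} applied over the ball $B(x_0, CR^{1/2+\delta_m})$ and the translated variety $Z+b$, which is exactly your argument fleshed out with hypothesis-checking. One small quibble: the bound $\|\tilde{h}_b\|_{L^2} \lesssim \|h\|_{L^2}$ is just almost-orthogonality of the scale-$\rho$ wave packets (since $\tilde{h}_b$ is a sub-sum of the wave packets of $\tilde{h}$), rather than \eqref{sorting wave packets 2}, which concerns the decomposition into the groups $g_{\tilde\theta,w}$.
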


\begin{proof}  The transverse equidistribution estimate in Lemma~\ref{Transverse equidistribution lemma 1} implies that
\begin{equation*}
\|T_*^{\lambda} \tilde{h}_b \|^2_{L^2(N_{\rho^{1/2+\delta_m}}(Z+b)\cap B(x_0,CR^{1/2+\delta_m}))}\lesssim R^{1/2+O(\delta_m)}(\rho/R)^{(n-m)/2}\|h\|_{L^2(B^{n-1})}^2.
\end{equation*}
Combining this with \eqref{eq:cakeslice} completes the proof.
\end{proof}

Let $g$ be concentrated on wave packets of $\T_{Z, B(y, \rho)}$. For each $(\tilde{\theta},w) \in \mathcal{T}$ the function $g_{\tilde{\theta},w}$ is concentrated on wave packets in $\T_{Z\cap B(y,\rho)} \cap \mathcal{T}_{\tilde{\theta},w}$. It follows that Lemma~\ref{induceb} and Lemma~\ref{equidistributedinputs1} hold for $h=g_{\tilde{\theta},w}$. Combining the contributions from distinct $\mathcal{T}_{\tilde{\theta},w}$, one obtains the following.

\begin{lemma} \label{equidistributedinputs2} Let $|b| \lesssim R^{1/2+\delta_m}$ and $Z$ be an $m$-dimensional transverse complete intersection with $\overline{\deg}\,Z \lesssim_{\varepsilon} 1$. If $g$ is concentrated on wave packets from $\T_{Z,B(y,\rho)}$, then
\begin{equation*}
\|\tilde{g}_b\|_{L^2(B^{n-1})}\lesssim R^{O(\delta_m)}(\rho/R)^{(n-m)/4}\|g\|_{L^2(B^{n-1})}.
\end{equation*}
\end{lemma}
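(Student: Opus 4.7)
The plan is to reduce Lemma~\ref{equidistributedinputs2} to Lemma~\ref{equidistributedinputs1} via the sorting decomposition set up in the previous subsection. Write $g = \sum_{(\tilde\theta,w) \in \mathcal{T}} g_{\tilde\theta,w}$ modulo a harmless $\mathrm{RapDec}(R)$ error as in \eqref{sorting wave packets 1}. Since passing to $\tilde{(\,\cdot\,)}_b$ is linear (up to rapidly decreasing error), one has $\tilde{g}_b = \sum_{(\tilde\theta,w)} (g_{\tilde\theta,w})\;\widetilde{}_{\!\!b}$, again up to acceptable errors.

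The crucial point is that the coarse-scale sorting respects the fine-scale wave packet structure. Indeed, as noted in the paragraph following the definition of $\tilde{\mathcal{T}}_{\tilde\theta,w}$, the function $(g_{\tilde\theta,w})\;\widetilde{}\;$ is concentrated on scale-$\rho$ wave packets belonging to the disjoint families $\tilde{\mathcal{T}}_{\tilde\theta,w}$. This yields the almost-orthogonality relation \eqref{sorting wave packets 3}, and in particular
\[
\|\tilde{g}_b\|_{L^2(B^{n-1})}^2 \sim \sum_{(\tilde\theta,w) \in \mathcal{T}} \|(g_{\tilde\theta,w})\;\widetilde{}_{\!\!b}\|_{L^2(B^{n-1})}^2,
\]
up to a $\mathrm{RapDec}(R)\|g\|_{L^2(B^{n-1})}^2$ term.

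Now fix $(\tilde\theta,w) \in \mathcal{T}$. Because $g$ is concentrated on wave packets from $\T_{Z,B(y,\rho)}$ and the $\mathcal{T}_{\tilde\theta,w}$ partition $\T$, the function $g_{\tilde\theta,w}$ is concentrated on wave packets from $\T_{Z,B(y,\rho)} \cap \mathcal{T}_{\tilde\theta,w}$, which is precisely the hypothesis of Lemma~\ref{equidistributedinputs1}. Applying that lemma to $h = g_{\tilde\theta,w}$ gives
\[
\|(g_{\tilde\theta,w})\;\widetilde{}_{\!\!b}\|_{L^2(B^{n-1})}^2 \lesssim R^{O(\delta_m)}(\rho/R)^{(n-m)/2}\|g_{\tilde\theta,w}\|_{L^2(B^{n-1})}^2.
\]

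Summing this estimate over $(\tilde\theta,w) \in \mathcal{T}$ and invoking the almost orthogonality \eqref{sorting wave packets 2} for the coarse-scale decomposition, one obtains
\[
\|\tilde{g}_b\|_{L^2(B^{n-1})}^2 \lesssim R^{O(\delta_m)}(\rho/R)^{(n-m)/2} \sum_{(\tilde\theta,w) \in \mathcal{T}} \|g_{\tilde\theta,w}\|_{L^2(B^{n-1})}^2 \sim R^{O(\delta_m)}(\rho/R)^{(n-m)/2}\|g\|_{L^2(B^{n-1})}^2,
\]
and taking square roots yields the claim. No step here is a real obstacle: all the heavy lifting has already been done in Lemma~\ref{equidistributedinputs1} (which itself packages the transverse equidistribution Lemma~\ref{Transverse equidistribution lemma 1} with the reverse $L^2$ estimate Lemma~\ref{reverse Plancherel}). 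The only thing one has to verify carefully is the double almost-orthogonality — once at the $L^2$ level in $\omega$ for the coarse decomposition, and once for the fine-scale wave packets of the $(g_{\tilde\theta,w})\;\widetilde{}\;$ so that \eqref{sorting wave packets 3} is applicable — but both have been established in the preceding subsection.
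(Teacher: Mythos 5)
Your argument is correct and coincides with the paper's own proof: decompose $g = \sum_{(\tilde\theta,w)} g_{\tilde\theta,w}$, use the linearity of $h \mapsto \tilde h_b$ together with the almost orthogonality \eqref{sorting wave packets 3}, apply Lemma~\ref{equidistributedinputs1} to each $g_{\tilde\theta,w}$ (which is indeed concentrated on wave packets from $\T_{Z,B(y,\rho)} \cap \mathcal{T}_{\tilde\theta,w}$), and resum via \eqref{sorting wave packets 2}. Nothing is missing.
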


\begin{proof} By \eqref{sorting wave packets 1} and the linearity of the map $h \mapsto \tilde{h}_b$ it follows that
\begin{equation*}
\tilde{g}_b=\sum_{(\tilde{\theta},w)\in \mathcal{T} }(g_{\tilde{\theta},w})\;\widetilde{}_{\!\! b}\;+\textrm{RapDec}(R)\|g\|_{L^2(B^{n-1})}.
\end{equation*}
The almost orthogonality relation \eqref{sorting wave packets 3} between the $(g_{\tilde{\theta},w})\;\widetilde{}_{\!\! b}\;$ implies that
\begin{equation*}
\|\tilde{g}_b\|_{L^2(B^{n-1})}^2 \lesssim \sum_{(\tilde{\theta},w)\in \mathcal{T}}\|(g_{\tilde{\theta},w})\;\widetilde{}_{\!\! b}\;\|_{L^2(B^{n-1})}^2+ \textrm{RapDec}(R)\|g\|_{L^2(B^{n-1})}^2.
\end{equation*}
By Lemma~\ref{equidistributedinputs1}, the right-hand side of the above display is in turn dominated by 
\begin{equation*}
R^{O(\delta_m)}(\rho/R)^{(n-m)/2}\sum_{(\tilde{\theta},w)\in \mathcal{T}}\|g_{\tilde{\theta},w}\|_{L^2(B^{n-1})}^2+ \textrm{RapDec}(R)\|g\|_{L^2(B^{n-1})}^2.
\end{equation*}
An application of \eqref{sorting wave packets 2} yields the desired estimate.

\end{proof}




\subsection{The proof of the reverse $L^2$ bound}

\begin{proof}[Proof (of Lemma~\ref{reverse Plancherel})] One may assume without loss of generality that $\bar{x} = 0$ and $r = R^{1/2 + \delta}$. Indeed, the first reduction follows from the formula \eqref{translating the operator}, which can be used to replace $T^{\lambda}$ and $f$ with $\tilde{T}^{\lambda}$ and $\tilde{f}$, here defined with $y := \bar{x}$. Lemma~\ref{concentration on smaller scale wave packets lemma} and the identity \eqref{curve identity} imply that $\tilde{f}$ is concentrated on scale $R$ wave packets associated to $\tilde{T}^{\lambda}$ which intersect $B(0,r)$. For the second reduction, suppose the result is known for $r = R^{1/2+\delta}$ and let $R^{1/2+\delta} \leq r \leq R$ and $f$ be as in the statement of the theorem. For $a \in \R$ consider the slab 
\begin{equation*}
S_a:=\R^{n-1}\times [a-R^{1/2+\delta},a+R^{1/2+\delta}]\;\cap B(0,Cr)
\end{equation*}
where $C \geq 2$ is a constant, chosen large enough for the purposes of the argument. Cover $S_a$ with a collection $\{B_j\}_{j \in \mathcal{J}}$ of finitely-overlapping $R^{1/2+\delta}$-balls satisfying $B_j \cap S_a \neq \emptyset$ for all $j \in \mathcal{J}$. By the initial reductions, any tube $T_{\theta,v}$ makes a small angle with the $e_n$ direction and thus intersects at most $O(1)$ of these balls. Orthogonality of the wave packets together with the hypothesised estimate therefore imply that
\begin{equation}\label{reverse Plancherel -1}
\|g\|_{L^2(B^{n-1})}^2\lesssim R^{-1/2-\delta} \|T^{\lambda} g\|^2_{L^2(N_{CR^{1/2 + \delta}}(S_a))}
\end{equation}
for any $g$ concentrated on wave packets at scale $R$ which intersect $S_a$. If $(\theta,v) \in \T_{B(0,r)}$, then the aforementioned angle condition implies that $T_{\theta,v}$ intersects every slab $S_a$ for which $S_a \cap B(0,r) \neq \emptyset$. Hence, if $f$ is concentrated on wave packets from $\T_{B(0,r)}$, then one may sum \eqref{reverse Plancherel -1} over a collection of $\sim r/R^{1/2+\delta}$ slabs which cover $B(0,r)$ to obtain the desired result.

Fix a function $f$ satisfying the hypotheses of the lemma with $\bar{x} = 0$ and $r = R^{1/2+\delta}$ and note that
\begin{equation*}
\|f\|_{L^2(B^{n-1})} \sim \|\sum_{(\theta, v) : T_{\theta,v} \cap B(0, R^{1/2+\delta}) \neq \emptyset} f_{\theta, v}\|_{L^2(B^{n-1})}. 
\end{equation*} 

 Recall that the change of variables $u \mapsto \Psi(\bar{x}/\lambda; u)$, where $\Psi$ is the function introduced in $\S$4, reparametrises the surface $\{\partial_x\phi^\lambda(\bar{x};\omega):\omega\in\Omega\}$ as the graph of the function $\partial_{x_n}\phi^\lambda(\bar{x};\Psi(\bar{x}/\lambda;u))$, for any $\bar{x}\in X^\lambda$. With an abuse of notation for the sake of simplicity, let $\Psi$ denote the above change of variables for $\bar{x}=0$; that is, $\Psi(u) := \Psi(0;u)$. Thus, $\Psi \colon U \to \Omega$ is a diffeomorphism that reparametrises the surface $\{\partial_{x}\phi^{\lambda}(0;\omega) : \omega \in \Omega\}$ as the graph of the function $\bar{h}(u) := \partial_{x_n}\phi^{\lambda}(0;\Psi(u))$; in particular, $\partial_{x'}\phi^{\lambda}\left(0;\Psi(u)\right)=u$ for all $u \in U$. 
 
Applying the change of variables $u \mapsto \Psi(u)$, denoting by $J_{\Psi}$ the absolute value of the determinant of the corresponding Jacobian matrix and recalling that $J_{\Psi}\sim 1$ by \eqref{Psi bound 1}, one obtains 
 \begin{equation*}
  \|g\|_{L^2(B^{n-1})} \sim \|g\circ \Psi\cdot J_\Psi^{1/2}\|_{L^2(B^{n-1})}\sim\|g\circ \Psi\cdot J_\Psi\|_{L^2(B^{n-1})}     
 \end{equation*}
for all $g\in L^2(B^{n-1})$. It follows that 
\begin{equation*}
    \|f\|_{L^2(B^{n-1})} \sim \|f_{\Psi}\|_{L^2(B^{n-1})}
\end{equation*}
where
\begin{equation*}
f_{\Psi} := \big(\sum_{(\theta, v) : T_{\theta,v} \cap B(0, R^{1/2+\delta}) \neq \emptyset} f_{\theta, v}\circ \Psi \big) \cdot J_{\Psi}.
\end{equation*}

Let $E$ denote the extension operator
\begin{equation*}
Eg(x) := \int_{U} e^{2 \pi i (\langle x', u \rangle + x_n\bar{h}(u))} g(u)\,\ud u 
\end{equation*}
associated to the graph $u \mapsto (u, \bar{h}(u))$. For any $x_n \in \R$ and $g$ a square integrable function supported on $U$, Plancherel's theorem implies that
\begin{equation*}
\|g\|_{L^2(B^{n-1})}^2 =\|e^{2\pi i x_n \bar{h}}g\|_{L^2(B^{n-1})}^2 = \|Eg\|_{L^2(\R^{n-1}\times \{x_n\})}^2.
\end{equation*}
Hence, averaging this estimate over $|x_n| < R^{1/2+\delta}$ one obtains
\begin{equation}\label{reverse Plancherel 1}
\|g\|_{L^2(B^{n-1})}^2 \sim R^{-1/2 - \delta} \|Eg\|_{L^2(\R^{n-1}\times (-R^{1/2+\delta}, R^{1/2+\delta}))}^2.
\end{equation}
The key observation is that the hypothesis on $f$ implies that the right-hand $L^2$-norm can be localised.

\begin{claim} If $|x_n| < R^{1/2 + \delta}$ and $x' \notin B(0,CR^{1/2 + \delta})$, then
\begin{equation*}
Ef_{\Psi}(x) = (1+R^{-1/2}|x'|)^{-(n+1)}\mathrm{RapDec}(R)\|f\|_{L^2(B^{n-1})}.
\end{equation*}
\end{claim}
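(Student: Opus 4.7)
The strategy is to substitute the wave packet decomposition of $f$ into the definition of $Ef_{\Psi}$, unwind the change of variables so that the contribution of each wave packet is expressed as an oscillatory integral in $\omega$ with a \emph{linearised} phase, and then show via integration by parts that this oscillatory integral decays rapidly in $|x'|$ for $|x_n|<R^{1/2+\delta}$. Concretely, write $f = \sum_{(\theta,v)\in\T_*}f_{\theta,v}$, where $\T_* = \{(\theta,v)\in\T : T_{\theta,v}\cap B(0,R^{1/2+\delta})\neq\emptyset\}$, and substitute into $Ef_\Psi$. Reversing the change of variables $u = \Psi^{-1}(\omega)$ (which is legitimate since $J_\Psi \cdot J_{\Psi^{-1}}\circ\Psi = 1$) transforms $Ef_{\Psi}(x)$ into the sum
\begin{equation*}
Ef_\Psi(x) = \sum_{(\theta,v)\in\T_*} I_{\theta,v}(x), \qquad I_{\theta,v}(x) := \int_{\Omega} e^{2\pi i\langle x,\partial_x\phi^\lambda(0;\omega)\rangle}f_{\theta,v}(\omega)\,\ud\omega,
\end{equation*}
since $\partial_{x'}\phi^\lambda(0;\Psi(u))=u$ and $\bar{h}(u) = \partial_{x_n}\phi^\lambda(0;\Psi(u))$ by construction.

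The next step is to reduce the bound on each $I_{\theta,v}(x)$ to a non-stationary phase estimate. Using $f_{\theta,v} = \tilde\psi_\theta\cdot[\widehat{\eta_v}\ast(\psi_\theta f)]$ and writing $\widehat{\eta_v}(\omega) = \int\eta(z-v)e^{-2\pi i\langle z,\omega\rangle}\,\ud z$, Fubini gives
\begin{equation*}
I_{\theta,v}(x) = \int \eta(z-v)\,\widecheck{(\psi_\theta f)}(z)\, J_{\theta,v}(x,z)\,\ud z, \quad J_{\theta,v}(x,z) := \int e^{2\pi i[\langle x,\partial_x\phi^\lambda(0;\omega)\rangle - \langle z,\omega\rangle]}\tilde\psi_\theta(\omega)\,\ud\omega.
\end{equation*}
Two simple observations then control the $z$ range: (i) since $\eta$ is supported in a ball of radius $O(R^{(1+\delta)/2})$ one has $|z-v|\lesssim R^{(1+\delta)/2}$; (ii) the condition $(\theta,v)\in\T_*$ forces $|v|\lesssim R^{1/2+\delta}$, because some $x_0\in B(0,R^{1/2+\delta})$ satisfies $|\partial_\omega\phi^\lambda(x_0;\omega_\theta)-v|\lesssim R^{1/2+\delta}$ and the reductions of \S\ref{Reductions section} yield $\partial_\omega\phi^\lambda(x_0;\omega_\theta) = \partial^2_{\omega x}\phi^\lambda(0;\omega_\theta)x_0 + O(c_{\mathrm{par}}R^{1+2\delta}/\lambda)$ which is $O(R^{1/2+\delta})$. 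Together, $|z|\lesssim R^{1/2+\delta}$ uniformly over the relevant range of integration.

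The core step is to bound $J_{\theta,v}(x,z)$ by non-stationary phase in $\omega$. The $\omega$-gradient of the phase is $\partial^2_{\omega x}\phi^\lambda(0;\omega)^\top x - z$. Taylor expanding around $\omega_\theta$ introduces an error of size $O(R^{-1/2}|x|)$ (controlled by the uniform bounds on $\partial_\omega\partial^2_{\omega x}\phi^\lambda$ from Lemma~\ref{third reduction lemma}). The leading piece decomposes as
\begin{equation*}
\partial^2_{\omega x'}\phi^\lambda(0;\omega_\theta)^\top x' + \partial_\omega\partial_{x_n}\phi^\lambda(0;\omega_\theta)\,x_n = (\mathrm{I}_{n-1}+O(c_{\mathrm{par}}))x' + O(c_{\mathrm{par}}|x_n|),
\end{equation*}
using the reduction bounds \eqref{close to identity}. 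Hence for $|x'|\geq CR^{1/2+\delta}$, $|x_n|<R^{1/2+\delta}$, $|z|\lesssim R^{1/2+\delta}$, and $C$ sufficiently large,
\begin{equation*}
|\partial_\omega[\langle x,\partial_x\phi^\lambda(0;\omega)\rangle - \langle z,\omega\rangle]| \geq (1-O(c_{\mathrm{par}}))|x'| - O(R^{1/2+\delta}) \gtrsim |x'|.
\end{equation*}
Higher $\omega$-derivatives of the phase satisfy $|\partial_\omega^\beta \langle x,\partial_x\phi^\lambda(0;\omega)\rangle| \lesssim |x| \lesssim |x'|$ for $2\leq |\beta|\leq N_{\mathrm{par}}$ (again by the reductions), while the amplitude satisfies $|\partial_\omega^\beta \tilde\psi_\theta|\lesssim R^{|\beta|/2}$. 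Applying the integration by parts lemma (Lemma~\ref{integration-by-parts lemma} in the appendix) $N$ times yields
\begin{equation*}
|J_{\theta,v}(x,z)| \lesssim_N R^{-(n-1)/2}\bigl(R^{1/2}/|x'|\bigr)^N
\end{equation*}
for any $N\leq N_{\mathrm{par}}$. Combining with the trivial bounds $\|\eta(\cdot-v)\|_1\lesssim R^{(n-1)(1+\delta)/2}$ and $\|\widecheck{(\psi_\theta f)}\|_\infty \lesssim R^{-(n-1)/4}\|f\|_{L^2(B^{n-1})}$, the factor $(R^{1/2}/|x'|)^N$ splits as $(R^{1/2}/|x'|)^{n+1}\cdot(R^{1/2}/|x'|)^{N-n-1}$, the first factor being comparable to $(1+R^{-1/2}|x'|)^{-(n+1)}$ and the second being $\mathrm{RapDec}(R)$ (since $R^{1/2}/|x'|\leq R^{-\delta}$). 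Finally, $\#\T_*\lesssim R^{O(1)}$, so summing the bounds is harmless within the RapDec factor, completing the proof.

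The main technical obstacle will be the careful bookkeeping in the integration by parts step — in particular, verifying that the ``rescaled phase'' derivative estimates match the form required by Lemma~\ref{integration-by-parts lemma} uniformly over $(\theta,v)\in\T_*$, $z\in\mathrm{supp}\,\eta_v$, and $x$ in the stated range. This is where the uniformity provided by the reductions in Lemma~\ref{third reduction lemma} is essential.
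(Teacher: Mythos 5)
Your argument is correct and follows essentially the route the paper intends: the paper omits this proof, remarking only that it is a minor adaptation of the non-stationary-phase analysis in Lemma~\ref{wave packet concentration lemma}, and your Plancherel-plus-integration-by-parts treatment of the kernel $J_{\theta,v}(x,z)$ is exactly that adaptation. The one genuinely new ingredient — that the wave packet condition $T_{\theta,v}\cap B(0,R^{1/2+\delta})\neq\emptyset$ forces $|v|\lesssim R^{1/2+\delta}$, so the phase gradient is now dominated by $(\mathrm{I}_{n-1}+O(c_{\mathrm{par}}))x'$ rather than by $\partial_\omega\phi^{\lambda}(x;\omega_\theta)-v$ — is identified and justified correctly.
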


This concentration property is very similar to that detailed in Lemma~\ref{wave packet concentration lemma}, the main difference being that the condition $(\theta, v) \in \T_{B(0, R^{1/2+\delta})}$ is defined with respect to the operator $T^{\lambda}$ whilst the above identity concerns the linearised version $E$. The proof is a minor  adaptation of the stationary phase analysis used to establish Lemma ~\ref{wave packet concentration lemma} and is therefore omitted. 

For the specific choice of function $g = f_{\Psi}$, the claim implies that \eqref{reverse Plancherel 1} may be strengthened to
\begin{equation}\label{reverse Plancherel 2}
 \|f\|_{L^2(B^{n-1})}^2 \sim  \|f_{\Psi}\|_{L^2(B^{n-1})}^2 \sim R^{-1/2 - \delta} \|Ef_{\Psi}\|_{L^2(B(0, CR^{1/2 + \delta}))}^2.
\end{equation}
This is easily seen to imply the lemma. Indeed, reversing the earlier change of variables,
\begin{equation*}
Ef_{\Psi}(z) = \int_{\Omega} e^{2\pi i \phi^{\lambda}(z;\omega) }e^{-2\pi i \lambda\mathcal{E}(z/\lambda;\omega)}f(\omega)\ud\omega + \mathrm{RapDec}(R)\|f\|_{L^2(B^{n-1})}
\end{equation*}
for all $z\in B(0, CR^{1/2+\delta})$, where $\mathcal{E}$ is the error term in the Taylor expansion
\begin{equation*}
\phi (z;\omega)=\phi(0;\omega)+\langle \partial_x \phi(0;\omega),z\rangle +\mathcal{E}(z;\omega).
\end{equation*}

Were it not for the factor $e^{-2\pi i \lambda\mathcal{E}(z/\lambda;\omega)}$ the functions $Ef_{\Psi}(z)$ and $T^{\lambda}f(z)$ would be equal up to a negligible error term and \eqref{reverse Plancherel 2} would directly imply the desired estimate. This unwanted additional factor can be removed via a Fourier series decomposition. 

More precisely, it holds by the integral form of the remainder that
\begin{equation}\label{integral form remainder}
    \partial_{\omega}^{\beta}\mathcal{E}(z;\omega) = \sum_{|\gamma| = 2} \frac{2}{\alpha!} \int_0^1 (1-t) \partial_x^{\gamma}\partial_{\omega}^{\beta}\phi(tz; \omega)\,\ud t z^{\gamma} \qquad \textrm{for all $\beta \in \N_0^{n-1}$}.
\end{equation}
Applying the uniform bounds on the derivatives of the reduced phase function $\phi$ as described in \S\ref{Reductions section} and recalling the hypothesis $R^{1/2 + \delta} \lesssim \lambda^{1/2}$, \eqref{integral form remainder} implies that
\begin{equation}\label{reverse Plancherel 3}
|\partial_{\omega}^{\beta} \partial_{z}^{\alpha} \lambda \mathcal{E}(R^{1/2 + \delta}z/\lambda;\omega)| \lesssim c_{\mathrm{par}} \quad \textrm{for $0\leq |\alpha|, |\beta|\leq N_{par}$ and $|z| \lesssim 1 $.}
\end{equation}
 Let $\psi \in C^{\infty}_c(\R^n \times \R^{n-1})$ be supported on $X \times \Omega$ and equal to 1 on $\mathrm{supp}\,a$. By forming the Fourier series expansion in both the $x$ and $\omega$ variables one obtains 
\begin{equation*}
e^{-2\pi i \lambda\mathcal{E}(z/\lambda;\omega)}\psi(z/R^{1/2 +\delta};\omega) =\sum_{\substack{k \in \mathbb{Z}^n\times \mathbb{Z}^{n-1} \\ k = (k_1,k_2)}}(1 + |k|)^{-2n} c_k e^{2\pi i (\langle z/R^{1/2+\delta} , k_1 \rangle + \langle \omega , k_2 \rangle)},
  \end{equation*}
where the $c_k$ are weighted Fourier coefficients. Observe that \eqref{reverse Plancherel 3} implies that $|c_k| \lesssim 1$ for all $k \in \Z^n \times \Z^{n-1}$. Thus, \eqref{reverse Plancherel 2} now yields
\begin{equation*}
\|f\|_{L^2(B^{n-1})} \lesssim R^{-(1/4+\delta/2)}\sum_{\substack{k \in \mathbb{Z}^n\times \mathbb{Z}^{n-1} \\ k = (k_1,k_2)}}(1+|k|)^{-2n}\big\|T^{\lambda}(e^{2\pi i \langle \,\cdot\,,k_2 \rangle}f )\big\|_{L^2(B(0,CR^{1/2+\delta}))}.
\end{equation*}
The above sum is split into a sum over $k = (k_1, k_2)$ satisfying $|k| > C_n$ and a sum over the remaining $k$ where $C_n$ is a dimensional constant, chosen large enough for the present purpose. The sum over large $k$ is bounded above by
\begin{equation}\label{reverse Plancherel 4}
    A\sum_{|k|> C_n}(1+|k|)^{-2n}\|f\|_{L^2(B^{n-1})}
\end{equation}
for some dimensional constant in $A$; indeed, this follows by applying H\"ormander's $L^2$-estimate from Lemma~\ref{Hormander L2} to each of the summands (the constant in H\"ormander's theorem can be made uniform over the class of reduced phases). By choosing $C_n$ to be sufficiently large so that $A \sum_{|k|> C_n}(1+|k|)^{-2n} \leq \frac{1}{2}$, the term \eqref{reverse Plancherel 4} can be absorbed into the left-hand side of the inequality. Thus, one obtains
\begin{equation*}
\|f\|_{L^2(B^{n-1})} \lesssim R^{-(1/4 + \delta/2)} \sum_{\substack{k \in \Z^{n-1} \\|k| \leq C_n}} \|T^{\lambda} (e^{2\pi i \langle \,\cdot\,,k \rangle} f ) \|_{L^2(B(0,CR^{1/2+\delta}))}.
\end{equation*}
Finally, define the class of operators $\widebar{\mathbf{T}}^{\lambda} := \{T^{\lambda}_k : k \in \Z^{n-1}, |k| \leq C_n \}$ where $T^{\lambda}_k$ has phase $\phi^\lambda$ and amplitude $a_k^\lambda$, for
\begin{equation*}
a_k(z;\omega) := \psi(z;\omega)e^{2 \pi i \langle \omega, k \rangle}.  
\end{equation*}
It is easy to see that each such amplitude $a_k$ can be written as a linear combination of $O(1)$ amplitudes satisfying the conditions of Lemma \ref{third reduction lemma}, with complex coefficients of order of magnitude $O(1)$. Defining $\mathbf{T}^\lambda$ to be the union, over all $|k|\leq C_n$, of the operators with phase $\phi^\lambda$ and the corresponding rescaled amplitudes, it follows by pigeonholing that there exists at least one operator $T_*^{\lambda} \in \mathbf{T}^{\lambda}$ for which the desired inequality holds.

\end{proof}
 



\section{Proof of the $k$-broad estimate}\label{Proof section}




\subsection{A more general result} In this section the proof of the $k$-broad estimate in Theorem~\ref{k-broad theorem} is given. In order to facilitate an inductive argument, a more general result will be established, which is described presently. 

Throughout this section $T^{\lambda}$ denotes an arbitrary choice of a translate of a H\"ormander-type operator with reduced positive-definite phase. That is, $T^{\lambda}$ is of the form of the operator $\tilde{T}^{\lambda}$ discussed in the previous section, with phase and amplitude of the type described in \eqref{translated phase}. Many of the estimates involving $T^{\lambda}$, such as \eqref{equation: k-linear for m general} below, are understood to hold uniformly for the entire class of such operators; it is important to work with the whole class rather than a single choice of $T^{\lambda}$ in order to run certain induction arguments. 

In order to state the result, first define the exponent 
\begin{equation*}
e_{k,n}(p) := \frac{1}{2}\Big(\frac{1}{2}-\frac{1}{p}\Big)(n+k)
\end{equation*}
and note that $-e_{k,n}(p)+1/2\leq 0$ if and only if $p \geq \bar{p}(k,n)$ where $\bar{p}(k,n)=2(n+k)/(n+k-2)$ is the exponent appearing in Theorem~\ref{k-broad theorem}.

\begin{proposition}\label{k-linear for m proposition} Given $\varepsilon>0$ sufficiently small and $1\leq m\leq n$ there exist 
\begin{equation*}
0<\delta\ll\delta_{n-1}\ll \delta_{n-2} \ll \ldots \ll \delta_1\ll \varepsilon
\end{equation*}
and constants $\bar{C}_\varepsilon$, ${\bar A}_\varepsilon$ dyadic, $D_{m,\varepsilon} \lesssim_{\varepsilon} 1$ and $\vartheta_m < \varepsilon$ such that the following holds.

Suppose $Z=Z(P_1,\ldots,P_{n-m})$ is a transverse complete intersection with $\overline{\deg}\, Z\leq D_{m,\varepsilon}$. For all $2 \leq k \leq n$, $1\leq A\leq {\bar A}_\varepsilon$ dyadic and $1\leq K\leq R \leq \lambda$ the inequality 
\begin{equation}\label{equation: k-linear for m general} 
\|T^{\lambda} f\|_{\mathrm{BL}^p_{k,A}(B(0,R))}\lesssim_{\varepsilon} K^{\bar{C}_{\varepsilon}} R^{\vartheta_m + \delta\left(\log {\bar{A}}_\varepsilon - \log A\right)-e_{k,n}(p)+1/2}\|f\|_{L^2(B^{n-1})}
\end{equation}
holds whenever $f$ is concentrated on wave packets from $\T_Z$ and
\begin{equation*}
2\leq p \leq \bar{p}_0(k,m) := \left\{
\begin{array}{ll}
\bar{p}(k,m)          & \textrm{if $k < m$} \\
\bar{p}(m,m)+\delta   & \textrm{if $k = m$}
\end{array} \right. .
\end{equation*}
\end{proposition}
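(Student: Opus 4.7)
The plan is to argue by a downward induction on the dimension $m$ of the variety $Z$, with a simultaneous induction on the scale $R$ (and on $\lambda$) at each fixed value of $m$. The base case $m = k - 1$ (or the degenerate case where the tangent wave packets can be forced to lie in a proper subspace) is effectively trivial, because if the caps $\tau$ contributing to $\mu_{T^\lambda f}(B_{K^2})$ all have Gauss directions clustering near a $(k-1)$-plane then the $k$-broad norm vanishes; the content of the induction is to get from dimension $m$ to dimension $m-1$. The inductive hypotheses will be used with slightly smaller values of the parameters, which is possible because $\vartheta_m < \varepsilon$ strictly and $\bar{A}_\varepsilon$ is dyadic.

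The central step is to apply the polynomial partitioning theorem (Theorem~\ref{polynomial partitioning theorem}) to the function $W := |T^\lambda f|^p$ restricted to $B(0, R)$, but carried out \emph{on the variety $Z$}: one chooses a degree $D = D_\varepsilon$ polynomial $P$ whose zero set, together with $Z$, partitions $Z \cap B(0, R)$ into cells $O_i$ on each of which $\int_{O_i} W$ is comparable. One then splits into the \textbf{cellular} and \textbf{algebraic} cases by a dichotomy on whether the $L^p$-mass lives in the cells or in the $R^{1/2 + \delta_m}$-neighbourhood of $Z(P) \cap Z$.
\begin{itemize}
\item In the cellular case each tube $T_{\theta, v} \in \T_Z$ meets $O(D)$ cells (by Lemma~\ref{transverse interaction lemma} applied to the polynomial approximant $[\Gamma_{\theta, v}^\lambda]_\varepsilon$ and the variety defined by $P$ together with the defining polynomials of $Z$), so one restricts $f$ to the wave packets entering a given cell, applies the inductive hypothesis on $R$, and sums using the disjointness of cells and the tube-entering bound. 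The gain from shrinking $R$ slightly outweighs the $D$ losses provided $D$ is chosen $\sim_\varepsilon 1$ appropriately.
\item In the algebraic case one further decomposes the $R^{1/2 + \delta_m}$-neighbourhood of $Z(P) \cap Z$ into balls $B = B(y, \rho)$ of radius $\rho$ with $R^{1/2} \leq \rho \leq R^{1 - 2\delta}$, and runs a further dichotomy on each such ball into the \textbf{transverse} and \textbf{tangential} sub-cases. A wave packet $(\theta, v)$ contributing over $B$ is transverse if the angle between $G^\lambda$ and $T_z(Z(P) \cap Z)$ exceeds $\rho^{-1/2 + \delta_{m-1}}$ at some point near its intersection with $B$; by Lemma~\ref{transverse interaction lemma} the number of such tubes per ball is $O_\varepsilon((\overline{\deg}\,Z \cdot \deg P)^n)$, and this contribution is estimated trivially via H\"ormander's $L^2$ bound (Lemma~\ref{Hormander L2}). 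The tangential wave packets belong to $\T_{Z', B}$ for a lower-dimensional transverse complete intersection $Z' := Z(P) \cap Z$ of dimension $m - 1$, and one applies the inductive hypothesis on $m$ at the smaller scale $\rho$, after using the sorting procedure from \S\ref{Adjusting wave packets section} together with Proposition~\ref{tangential thin packets} to pass to scale $\rho$ wave packets tangent to translates of $Z'$.
\end{itemize}

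The key gain that closes the induction is the transverse equidistribution estimate of Lemma~\ref{equidistributedinputs2}: when passing from scale $R$ to scale $\rho$ in the tangential sub-case, the $L^2$ norm of the rescaled input shrinks by a factor of $(\rho/R)^{(n-m)/4}$, and this compensates precisely for the loss in the exponent when decreasing the target dimension from $m$ to $m-1$. This is where the positive-definite hypothesis H2$^+$) enters in a decisive way, via Claim 4 of Lemma~\ref{Transverse equidistribution lemma 2}; without it, one only gets the range of Theorem~\ref{general main theorem}.

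The main obstacle, and the place that will require the most care, is the parameter bookkeeping. One must choose the hierarchy $\delta \ll \delta_{n-1} \ll \dots \ll \delta_1 \ll \varepsilon$, the degrees $D_{m, \varepsilon}$, and the auxiliary gains $\vartheta_m$ so that: (i) at each inductive step the exponent $\vartheta_m$ strictly increases by a controlled amount (to absorb the $K^{\bar C_\varepsilon}$, $D^{O(1)}$, and $R^{O(\delta_m)}$ losses from transverse equidistribution and Wongkew's theorem); (ii) the induction on $A$ (which loses a factor of $2$ per step in $A \leadsto A/2$) terminates before $A$ drops below $1$, using the $\delta (\log \bar A_\varepsilon - \log A)$ slack in the exponent on the right-hand side of \eqref{equation: k-linear for m general}; (iii) the boundary case $k = m$, where the endpoint exponent is $\bar p(m, m) + \delta$, is handled via interpolation (using Lemma~\ref{logarithmic convexity inequality lemma}) between the inductive estimate at $p = \bar p(m, m)$ given by H\"ormander's $L^2$ bound at an extreme exponent. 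One then deduces Theorem~\ref{k-broad theorem} by specialising to $m = n$ with $Z = \R^n$ (so $Z$ is trivially tangent and the wave packet restriction is vacuous).
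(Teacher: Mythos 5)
Your high-level architecture matches the paper's: induction on dimension $m$ and radius $R$, trivial base cases, a cellular/algebraic dichotomy, and a further tangential/transverse split inside the algebraic case, with transverse equidistribution providing the key gain and the $k=m$ boundary handled by interpolation via Lemma~\ref{logarithmic convexity inequality lemma}. However, there is a genuine confusion in where the key ingredients are applied, and it affects the logic of the argument, not just the exposition.

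First, you have the roles of the tangential and transverse sub-cases swapped. In the paper, the \emph{tangential} sub-case is the one that appeals to the dimensional induction hypothesis directly and does \emph{not} use transverse equidistribution; after passing to scale $\rho$ (chosen so that $\rho^{1/2+\delta_l}=R^{1/2+\delta_m}$), one verifies via Lemma~\ref{concentration on smaller scale wave packets lemma} and \eqref{children close to mothers} that the scale-$\rho$ wave packets of $(f_{B,\mathrm{tang}})\;\widetilde{}\;$ are $\rho^{-1/2+\delta_l}$-tangent to the translated lower-dimensional variety $Y^l-y$, and the induction closes because $\vartheta_l < \vartheta_m$. The \emph{transverse} sub-case is the one that uses transverse equidistribution, and it is not ``estimated trivially via H\"ormander's $L^2$ bound.'' Rather, one applies the \emph{radial} induction hypothesis (same $m$, smaller scale $\rho$) to the transverse pieces, which requires first decomposing $f_{B,\mathrm{trans}}$ into pieces $f_{B,\mathrm{trans},b}$ concentrated on scale-$\rho$ wave packets tangent to translates $Z-y+b$ (via the probabilistic cover of Lemma~\ref{choosing the collection} and Proposition~\ref{tangential thin packets}), and then summing up in $b$ using Lemma~\ref{equidistributedinputs2}. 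The factor $(\rho/R)^{(n-m)/4}$ from transverse equidistribution is precisely what lets this sum close at the exponent $\bar p_0(k,m)$; without it the radial induction would not close in the transverse sub-case.

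Second, you invoke Lemma~\ref{transverse interaction lemma} as bounding ``the number of such tubes per ball,'' but it bounds the opposite quantity: for a fixed tube $T_{\theta,v}$, it controls the number of $\rho$-balls $B$ on which the tube is transverse to $Y^l$, which is what gives the crucial $\ell^2$ bound $\sum_B \|f_{B,\mathrm{trans}}\|_{L^2}^2 \lesssim_\varepsilon \|f\|_{L^2}^2$. The number of transverse tubes per ball is in general not small. Finally, a small structural difference: the paper's algebraic case hypothesis quantifies over \emph{all} low-degree sub-varieties $Y^l\subseteq Z$ of dimension $1\leq l\leq m-1$ (not only $Z(P)\cap Z$ for the partitioning polynomial $P$), and the cellular case is defined as the negation; the negation is then needed inside Lemma~\ref{partitioning lemma} to show that the cell wall carries little mass. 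You should also note the technical reduction to $R\lesssim_\varepsilon\lambda^{1-\varepsilon}$ (Lemma~\ref{for rough induction}), which is necessary so that the polynomial approximants $[\Gamma_{\theta,v}^{\lambda}]_{\varepsilon}$ have degree $O_\varepsilon(1)$ throughout.
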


Here $\T_Z$ is defined as in $\S$\ref{Transverse equidistribution section}; that is,
\begin{equation*}
\mathbb{T}_Z := \{(\theta,v) \in \T :\; T_{\theta,v}\text{ is }R^{-1/2+\delta_m}\text{-tangent to } Z\text{ in }B(0,R)\}.
\end{equation*}

Proposition~\ref{k-linear for m proposition} immediately yields the desired $k$-broad estimate. 

\begin{proof}[Proof (of Theorem~\ref{k-broad theorem})] Theorem~\ref{k-broad theorem} is a special case of Proposition~\ref{k-linear for m proposition}. Indeed, $Z=\R^n$ is a transverse complete intersection of dimension $n$ and $\mathbb{T}_Z$ contains all wave packets in $B(0,R)$. Thus, taking $A={\bar A}_\varepsilon$ and $p=\bar{p}(k,n)$ yields the endpoint case of Theorem~\ref{k-broad theorem}. The general result follows by interpolating with the trivial $p = \infty$ estimate via the logarithmic convexity of the $k$-broad norms (that is, Lemma~\ref{logarithmic convexity inequality lemma}).
\end{proof}




\subsection{Reducing to $R\lesssim \lambda^{1-\varepsilon}$.} Turning to the proof of Proposition~\ref{k-linear for m proposition}, the first step is a technical reduction on the radii $R$ which is needed to facilitate certain polynomial partitioning arguments. In particular, it will be necessary to approximate the curves $\Gamma^\lambda_{\theta,v}$ by polynomial curves of degree independent of $R$; by the observations of $\S$\ref{polynomial approximation subsection}, this is possible when $1\leq R\lesssim_\varepsilon \lambda^{1-\varepsilon}$ and thus \eqref{equation: k-linear for m general} will first be proved for this restricted range of $R$. The result can then be extended to the full $R \leq \lambda$ range by a triangle inequality argument (incurring a permissible $R^{O(\varepsilon)}$ loss). The concentration hypothesis on $f$ in Proposition~\ref{k-linear for m proposition} creates some difficulties here, which are addressed by the following lemma. 

\begin{lemma}\label{for rough induction} Let $1 \leq \rho \leq R \leq \lambda$ and assume that for any transverse complete intersection  $Z=Z(P_1,\ldots,P_{n-m})$ with $\overline{\deg}\, Z \leq D$ the inequality
\begin{equation}\label{for rough induction equation}
\| T^{\lambda} f\|_{\mathrm{BL}^{p}_{k,A}(B(0,\rho))}\leq E\|f\|_{L^2(B^{n-1})}
\end{equation}
holds with some constant $E > 0$ whenever $f$ is concentrated on wave packets from $\tilde{\mathbb{T}}_{Z}$. Then for all $Z$ as above, the inequality
\begin{equation*}
\| T^{\lambda} f\|_{\mathrm{BL}^{p}_{k,A}(B(0,R))}\lesssim (R/\rho)^{O(1)}E \|f\|_{L^2(B^{n-1})}
\end{equation*}
holds for all functions $f$ concentrated on wave packets in $\mathbb{T}_{Z}$.
\end{lemma}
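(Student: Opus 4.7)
The plan is to cover $B(0,R)$ by finitely overlapping $\rho$-balls, apply the hypothesised estimate on each, and combine. The key technical input is Proposition~\ref{tangential thin packets}, which transfers tangency from scale $R$ down to scale $\rho$, up to a family of small translates of $Z$. I first write
\begin{equation*}
\|T^\lambda f\|_{\mathrm{BL}^p_{k,A}(B(0,R))}^p \leq \sum_{j \in \mathcal{J}} \|T^\lambda f\|_{\mathrm{BL}^p_{k,A}(B(y_j,\rho))}^p
\end{equation*}
using the finite (sub)-additivity of the $k$-broad norm, where $\{B(y_j,\rho)\}_{j \in \mathcal{J}}$ is a finitely overlapping cover of $B(0,R)$ with $|\mathcal{J}| \lesssim (R/\rho)^n$. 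For each $j$, the translation identity \eqref{translating the operator} yields $\|T^\lambda f\|_{\mathrm{BL}^p_{k,A}(B(y_j,\rho))} = \|\tilde{T}^\lambda \tilde{f}\|_{\mathrm{BL}^p_{k,A}(B(0,\rho))}$, where $\tilde{T}^\lambda$ is a H\"ormander-type operator whose (reduced) phase is the translate of $\phi^\lambda$ by $y_j$ as in \eqref{translated phase} and $\tilde{f}$ is the corresponding modulated function. Crucially, the hypothesis of the lemma applies uniformly to $\tilde{T}^\lambda$.

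Since $f$ is concentrated on scale-$R$ wave packets in $\mathbb{T}_Z$, Proposition~\ref{tangential thin packets}(2) asserts that $\tilde{f}$ is concentrated on scale-$\rho$ wave packets from $\bigcup_{|b|\lesssim R^{1/2+\delta_m}}\tilde{\mathbb{T}}_{Z-y_j+b}$. I would discretise the $b$-parameter by fixing a maximally $\rho^{1/2+\delta_m}$-separated net $\mathcal{B} \subset B(0, CR^{1/2+\delta_m})$ of cardinality $|\mathcal{B}| \lesssim (R/\rho)^{O(1)}$ and assigning each scale-$\rho$ wave packet of $\tilde{f}$ to a single $b \in \mathcal{B}$ for which it lies in $\tilde{\mathbb{T}}_{Z-y_j+b}$ (using Proposition~\ref{tangential thin packets}(1) to guarantee such a $b$ exists). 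This gives a partition $\tilde{f} = \sum_{b \in \mathcal{B}} \tilde{f}_b$ in which each $\tilde{f}_b$ is concentrated on scale-$\rho$ wave packets from $\tilde{\mathbb{T}}_{Z-y_j+b}$. Since $Z-y_j+b$ is a transverse complete intersection with the same defining polynomials as $Z$ (translated), hence the same maximum degree, the hypothesis delivers
\begin{equation*}
\|\tilde{T}^\lambda \tilde{f}_b\|_{\mathrm{BL}^p_{k,A}(B(0,\rho))} \leq E \|\tilde{f}_b\|_{L^2(B^{n-1})}
\end{equation*}
for every $b \in \mathcal{B}$.

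To combine these estimates I use the triangle inequality for the $k$-broad norm (Lemma~\ref{triangle inequality lemma}) applied iteratively across the $|\mathcal{B}|$ pieces, together with the near-orthogonality of the partitioned wave packets of $\tilde{f}$, which yields $\sum_b \|\tilde{f}_b\|_{L^2}^2 \lesssim \|\tilde{f}\|_{L^2}^2 \sim \|f\|_{L^2}^2$; a Cauchy--Schwarz application then converts the sum over $b$ into the desired $L^2$ bound at the cost of $|\mathcal{B}|^{1/2}$. Summing the resulting bounds over $j \in \mathcal{J}$ absorbs all losses into an overall $(R/\rho)^{O(1)}$ factor. The main technical obstacle is bookkeeping the broadness parameter $A$ under the $b$-wise splitting: each application of Lemma~\ref{triangle inequality lemma} converts parameters $A_1, A_2$ into $A_1+A_2$, so the $|\mathcal{B}|$-fold decomposition nominally inflates the broadness parameter by a factor $|\mathcal{B}|$. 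This is harmless in the context in which the lemma is used (inside the induction of Proposition~\ref{k-linear for m proposition}), since ${\bar A}_\varepsilon$ is chosen dyadic and large enough in $\varepsilon$ to absorb any $(R/\rho)^{O(1)}$-fold increase, and since the larger broadness parameter only gives a smaller norm on the left-hand side.
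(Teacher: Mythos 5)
Your overall scheme (cover by $\rho$-balls, translate, use Proposition~\ref{tangential thin packets} to pass to translated varieties, apply the hypothesis) is in the right spirit, but the step where you combine the $|\mathcal{B}|$-fold decomposition $\tilde{f} = \sum_b \tilde{f}_b$ via iterated applications of the triangle inequality (Lemma~\ref{triangle inequality lemma}) has a genuine flaw. The triangle inequality splits the broadness parameter: to bound $\|\tilde{T}^\lambda \tilde{f}\|_{\mathrm{BL}^p_{k,A}}$ you end up bounding each piece by $\|\tilde{T}^\lambda \tilde{f}_b\|_{\mathrm{BL}^p_{k,A_b}}$ with $\sum_b A_b = A$, i.e.\ $A_b \approx A/|\mathcal{B}|$, which is a \emph{larger} norm than $\mathrm{BL}^p_{k,A}$ — and the hypothesis \eqref{for rough induction equation} only controls $\mathrm{BL}^p_{k,A}$. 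Running the triangle inequality in the other direction (to keep parameter $A$ on each piece) would instead require starting from $\mathrm{BL}^p_{k,A|\mathcal{B}|}(B(0,R))$ on the left, which is a \emph{smaller} quantity than the $\mathrm{BL}^p_{k,A}(B(0,R))$ norm the lemma actually asks you to bound. Either way, the argument does not close. Your final remark is also based on a misdiagnosis: you suggest $\bar A_\varepsilon$ "absorbs any $(R/\rho)^{O(1)}$-fold increase", but $\bar A_\varepsilon = \lceil e^{10n/\delta}\rceil$ is a constant depending only on $\varepsilon$, whereas $|\mathcal{B}| \sim (R/\rho)^{O(1)}$ is unbounded as $R/\rho \to \infty$; and you also misstate the direction in which the parameter shifts ("the larger broadness parameter only gives a smaller norm on the left-hand side" — the problem is that the hypothesis would then have to be invoked at a \emph{smaller} parameter).

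The paper's proof avoids this entirely: the decomposition over $b$ is converted into a \emph{spatial} decomposition using the concentration property \eqref{eq:relative}, namely $\tilde{T}^\lambda \tilde{h}_b$ lives on $N_{\rho^{1/2+\delta_m}}(Z+b-y)$ and the pieces are essentially equal to $\tilde{T}^\lambda\tilde{h}$ on these disjoint (or boundedly overlapping) domains. One then applies finite \emph{sub-additivity} (Lemma 6.1) over these sets — which keeps the parameter $A$ unchanged — and pigeonholes to a single dominant $b$ at the cost of only an $(R/\rho)^{O(1)}$ multiplicative loss. Likewise, the paper pigeonholes to a single dominant ball $B(y,\rho)$ rather than summing over all of them. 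The point you need to internalize is the distinction between sub-additivity (which decomposes the domain, preserving $A$) and the triangle inequality (which decomposes the function, splitting $A$); here the latter is inapplicable because $|\mathcal{B}|$ grows with $R/\rho$. A secondary, minor issue: Proposition~\ref{tangential thin packets}(2) applies to $h$ concentrated on wave packets in $\T_{Z,B(y,\rho)}$, so you should first replace $f$ by $h := \sum_{(\theta,v)\in\T_{Z,B(y_j,\rho)}}f_{\theta,v}$ (the remaining wave packets contribute a rapidly decaying error on $B(y_j,\rho)$ by Lemma~\ref{wave packet concentration lemma}); you applied the proposition directly to $\tilde{f}$.
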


Here $\tilde{\mathbb{T}}_{Z}$ denotes the collection of wave packets at scale $\rho$ that are $\rho^{1/2 + \delta_m}$-transverse to $Z$ in $B(0,\rho)$; this notation is consistent with that used in $\S$\ref{Adjusting wave packets section}.

\begin{proof}[Proof (of Lemma~\ref{for rough induction})] Let $f$ be a function concentrated on wave packets in $\mathbb{T}_{Z}$ for some transverse complete intersection $Z$ as in the statement of the lemma. Fix a cover $\mathcal{B}_{\rho}$ of $B(0,R)$ by finitely overlapping $\rho$-balls. By the sub-additivity of the $k$-broad norms and Lemma~\ref{wave packet concentration lemma}, there exists some $B = B(y,\rho) \in \mathcal{B}_{\rho}$ such that
\begin{equation*}
\| T^{\lambda} f\|_{\mathrm{BL}^{p}_{k,A}(B(0,R))}^p\lesssim (R/\rho)^{O(1)}\|T^{\lambda} h\|_{\mathrm{BL}^{p}_{k,A}(B)}^p + \mathrm{RapDec}(R)\|f\|_{L^2(B^{n-1})}^p,
\end{equation*}
where $h:=\sum_{(\theta,v)\in \mathbb{T}_{Z,B}}f_{\theta,v}$; here and below the notation $\T_{Z,B}$ is consistent with that used in $\S$\ref{Transverse equidistribution section}. As in $\S$\ref{Adjusting wave packets section}, write $T^{\lambda}h(x+y) = \tilde{T}^{\lambda}\tilde{h}(x)$ so that, suppressing the harmless rapidly decaying term in the notation, one has\footnote{\label{translation footnote} Strictly speaking, in order for \eqref{for rough induction 1} to hold the $k$-broad norm on the right-hand side should be defined with respect to a translate of the family of balls $\mathcal{B}_{K^2}$. Since the estimates will be uniform over all choices of families $\mathcal{B}_{K^2}$ of bounded multiplicity, this slight technicality does not affect the argument.}
\begin{equation}\label{for rough induction 1}
\| T^{\lambda} f\|_{\mathrm{BL}^{p}_{k,A}(B(0,R))}^p\lesssim (R/\rho)^{O(1)}\|\tilde{T}^{\lambda} \tilde{h}\|_{\mathrm{BL}^{p}_{k,A}(B(0,\rho))}.
\end{equation}

In general, $\tilde{h}$ is not concentrated on wave packets which are $\rho^{-1/2+\delta_m}$-tangential to a suitable variety inside $B(0,\rho)$; thus, hypothesis \eqref{for rough induction equation} cannot be applied directly to estimate the right-hand side of \eqref{for rough induction 1}. Rather, one approaches the problem via the methods of $\S$\ref{Adjusting wave packets section}. By Proposition~\ref{tangential thin packets}, $\tilde{h}$ is concentrated on wave packets in $\bigcup_{b \in \mathfrak{B}}\tilde{\mathbb{T}}_{Z-y+b}$ where $\mathfrak{B}$ is a discrete set of cardinality $\lesssim (R/\rho)^{O(1)}$ such that $|b| \lesssim R^{1/2 + \delta_m}$ for all $b \in \mathfrak{B}$. Consequently, by the sub-additivity of the $k$-broad norms and Lemma~\ref{wave packet concentration lemma} and Proposition~\ref{tangential thin packets}, there exists some $b \in \mathfrak{B}$ such that
\begin{equation}
\label{for rough induction 2}
\|\tilde{T}^{\lambda}\tilde{h}\|_{\mathrm{BL}^{p}_{k,A}(B(0,\rho))}^p \lesssim  (R/\rho)^{O(1)}\|\tilde{T}^{\lambda}\tilde{h}_b\|_{\mathrm{BL}^{p}_{k,A}(B(0,\rho))}^p,
\end{equation}
for $\tilde{h}_b$ as defined in $\S$\ref{Adjusting wave packets section}. Recall from Proposition~\ref{tangential thin packets} that $\tilde{h}_b$ is concentrated on wave packets in $\tilde{\mathbb{T}}_{Z-y+b}$ and satisfies 
\begin{equation}\label{for rough induction 3}
\|\tilde{h}_b\|_{L^2(B^{n-1})} \lesssim \|f\|_{L^2(B^{n-1})}.
\end{equation}
Combining \eqref{for rough induction 1} and \eqref{for rough induction 2}, the desired estimate now follows by applying hypothesis \eqref{for rough induction equation} to the function $\tilde{h}_b$ and then using \eqref{for rough induction 3} to bound the resulting right-hand expression.
\end{proof}




\subsection{Setting up the induction argument}

Henceforth it is assumed that $1\leq R\lesssim_\varepsilon \lambda^{1-\varepsilon}$. Under this hypothesis, given $\varepsilon > 0$ sufficiently small, Proposition~\ref{k-linear for m proposition} will be established for the following choice of parameters:
\begin{gather}\label{parameter values}
D_{m,\varepsilon}:=\varepsilon^{-\delta^{-(2n-m)}}, \qquad \vartheta(\varepsilon) := \varepsilon - c_n\delta_m, \qquad \bar{A}_{\varepsilon} := \ceil{e^{10n/\delta}}, \\
\nonumber
\delta_i = \delta_i(\varepsilon):=\varepsilon^{2i+1} \quad \textrm{for all $i=1,\ldots, n-1$.}
\end{gather}
Here $0<\delta = \delta(\varepsilon) \ll \delta_{n-1}(\varepsilon)$ and $c_n >0$ is a fixed dimensional constant.

The proof is by induction on the radius $R$ and the dimension $m$; presently the base cases for this induction are established. 




\subsubsection*{Base case for the radius: $R\lesssim_\varepsilon K^n$} Provided that the implied constant in \eqref{equation: k-linear for m general} and $C_\varepsilon$ are chosen to be sufficiently large, in this case Proposition~\ref{k-linear for m proposition} follows immediately from the trivial inequality
\begin{equation}\label{base case 1}
\|T^{\lambda} f \|_{\mathrm{BL}^p_{k,A}(B(0,R))}\lesssim R^{n/p} \|f\|_{L^2(B^{n-1})},
\end{equation}
valid for all $A \in \N$ and $1 \leq p \leq \infty$.




\subsubsection*{Base case for the dimension: $m\leq k-1$} Assuming (without loss of generality) that $K\lesssim_\varepsilon R^{1/n}$, one can show in this case that
\begin{equation}\label{base case 2}
\|T^{\lambda} f\|_{\mathrm{BL}^p_{k,A}(B(0,R))}= \mathrm{RapDec}(R) \|f\|_{L^2(B^{n-1})}.
\end{equation}
Indeed, fix a ball $B_{K^2}\in \mathcal{B}_{K^2}$ with $B_{K^2}\cap B(0,R)\neq \emptyset$; here $\mathcal{B}_{K^2}$ denotes the collection of $K^2$-balls featured in the definition of the $k$-broad norm \eqref{k-broad norm definition}. Let $\mathfrak{T}_{B_{K^2}}$ denote the collection of all $K^{-1}$-caps $\tau$ for which there exists some $(\theta,v) \in \T_{Z,B_{K^2}}$ with $\theta \cap \tau \neq \emptyset$. Observe that if $\tau \notin \mathfrak{T}_{B_{K^2}}$, then  
\begin{equation}\label{base case 3}
\int_{B_{K^2}}|T^{\lambda} f_\tau|^p =\mathrm{RapDec}(R)\|f\|_{L^2(B^{n-1})}^p,
\end{equation}
since $f$ is concentrated on wave packets in $\mathbb{T}_Z$.

It is claimed that there exists some $V\in {\rm Gr}(k-1,n)$, such that 
\begin{equation}\label{base case 4}
\angle(G^{\lambda}(\bar{x};\tau), V)\leq K^{-1} \qquad \textrm{for all $\tau \in \mathfrak{T}_{B_{K^2}}$,}
\end{equation}
where $\bar{x} \in \R^n$ denotes the centre of $B_{K^2}$. Indeed, by \eqref{base case 3} one may assume  without loss of generality that $\mathfrak{T}_{B_{K^2}}\neq \emptyset$ and hence $\T_{Z,B_{K^2}} \neq \emptyset$. Thus, there exists $z\in Z\cap B(0,R)$ with $|z-\bar{x}|\lesssim R^{1/2+\delta_m}$ and, taking $V\in {\rm Gr}(k-1,n)$ to be any subspace that contains $T_zZ$, the claim is easily deduced from the definition of $R^{1/2 + \delta_m}$-tangency (see Definition~\ref{tangent definition}), together with the hypothesis $K\lesssim R^{1/n}$. 

Recalling the definition of $\mu_{T^{\lambda}f}(B_{K^2})$ from \eqref{mu definition}, it follows from \eqref{base case 4} that 
\begin{equation*}
\mu_{T^{\lambda} f}(B_{K^2}) \leq \max_{\tau \notin V }\int_{B_{K^2}}|T^{\lambda} f_\tau |^p \leq \max_{\tau\notin \mathfrak{T}_{B_{K^2}}}\int_{B_{K^2}}|T^{\lambda} f_\tau |^p
\end{equation*}
and the desired estimate \eqref{base case 2} is now a consequence of \eqref{base case 3}.




\subsubsection*{Reduction to $A \geq 2$:} Recall that $\bar{A}_{\varepsilon} \geq e^{10n/\delta}$ so that $\delta\log \bar{A}_{\varepsilon} \geq 10n$. Although the argument does not require one to induct on $A$, it is useful to note that \eqref{base case 1} implies that Proposition~\ref{k-linear for m proposition} holds for $A=1$. This allows one to assume $A \geq 2$ throughout the following argument, and therefore permits the use of the $k$-broad triangle and logarithmic-convexity inequalities.




\subsection{An overview of the inductive step} Let $2\leq k\leq n-1$,  $k\leq m\leq n$ and $R\gtrsim_\varepsilon K^n$. Assume, by way of induction hypothesis, that \eqref{equation: k-linear for m general} holds whenever the dimension of the transverse complete intersection $Z$ is at most $m-1$ or the radial parameter is at most $R/2$. 

Fix $\varepsilon >0$, $1 < A \leq \bar{A}_\varepsilon$ and a transverse complete intersection  $Z=Z(P_1,\ldots,P_{n-m})$ with $\overline{\deg}\, Z \leq D_{m,\varepsilon}$, where the parameters $\bar{A}_{\varepsilon}$ and $D_{m, \varepsilon}$ are as defined in \eqref{parameter values}. Let $f$ be concentrated on wave packets from $\T_Z$. 

It suffices to show that \eqref{equation: k-linear for m general} holds at the endpoint $p=\bar{p}_0(k,m)$. Indeed, observe that Lemma~\ref{Hormander L2} implies the $L^2$-bound
\begin{equation*}
\|T^{\lambda} f\|_{\mathrm{BL}^2_{k,A}(B(0,R))}^2 \lesssim \sum_{\tau : K^{-1}-\mathrm{cap}} \|T^{\lambda} f_{\tau}\|_{L^2(B(0,R))}^2 \lesssim R\|f\|_{L^2(B^{n-1})}^2.
\end{equation*}
Once \eqref{equation: k-linear for m general} is established for $p = \bar{p}_0(k,m)$, one may use the logarithmic convexity of the $k$-broad norms to interpolate the $p = \bar{p}_0(k,m)$ estimate against the above inequality and thereby obtain \eqref{equation: k-linear for m general} in the desired range.
 
The analysis proceeds by considering two different cases.

\subsubsection*{The algebraic case} There exists a transverse complete intersection $Y^l \subseteq Z$ of dimension $1 \leq l \leq m-1$ of maximum degree at most $(D_{m,\varepsilon})^n$ such that
\begin{equation}\label{equation: algebraic case}
\|T^{\lambda} f\|_{\mathrm{BL}^p_{k,A}(N_{\frac{1}{4}R^{1/2+\delta_m}}(Y^l)\cap B(0,R))}^p\geq c_{\mathrm{alg}} \| T^{\lambda} f\|_{\mathrm{BL}^p_{k,A}(B(0,R))}^p.
\end{equation}
Here $c_{\mathrm{alg}} > 0$ is a constant depending only on $n$ and $\varepsilon$ which is chosen to be sufficiently small to suit the needs of the forthcoming argument. 

\subsubsection*{The cellular case} The negation of the algebraic case: for every transverse complete intersection $Y^l \subseteq Z$ of dimension $1 \leq l \leq m-1$ and maximum degree at most $(D_{m,\varepsilon})^n$ the inequality
\begin{equation}\label{cellular case equation}
\|T^{\lambda} f\|_{\mathrm{BL}^p_{k,A}(N_{\frac{1}{4}R^{1/2+\delta_m}}(Y^l)\cap B(0,R))}^p < c_{\mathrm{alg}} \| T^{\lambda} f\|_{\mathrm{BL}^p_{k,A}(B(0,R))}^p
\end{equation}
holds.\\

The cellular case is the simplest situation and will be treated first. Here a polynomial partitioning argument is employed which splits the mass of the $k$-broad norm into small pieces; these pieces can then be treated individually using the radial induction hypothesis. The algebraic case is the most involved situation; it encapsulates the kind of behaviour exhibited by the sharp examples in $\S$\ref{Necessary conditions section}. In this case $T^{\lambda} f$ can be thought of as concentrated near a low-dimensional and low-degree variety $Y^l$ (in a $k$-broad sense). If the wave packets of $f$ are also tangential to this variety, then one may use induction on the dimension to conclude the argument. This might not be the case, however, and if many of the wave packets of $f$ are transverse to $Y^l$, then an alternative argument is required.  Thus, the algebraic case naturally splits into two sub-cases, a tangential and a transverse sub-case, which are discussed in detail below. Lemma~\ref{transverse interaction lemma} can be applied to show that a given tube $T_{\theta, v}$ can only interact transversely with the variety $Y^l$ inside a small number of finitely overlapping balls of some fixed radius $\rho \ll R$ (more precisely, the radius is chosen to satisfy $\rho^{1/2 + \delta_l} = R^{1/2 + \delta_m}$); this eventually allows one to also close the induction in the transverse situation.




\subsection{The cellular case}

The cellular case can be treated using polynomial partitioning. Roughly speaking, since by hypothesis $T^{\lambda}f$ is concentrated in a neighbourhood of an $m$-dimensional transverse complete intersection, for any $D \geq 1$ Theorem~\ref{polynomial partitioning theorem} can be applied in $m$ dimensions to show that there exists a non-zero polynomial $P$ of degree at most $D$ such that, letting $\{O_i\}_{i \in \mathcal{I}}$ denote the connected components of $\R^n\setminus Z(P)$ (which, recall, are referred to as \emph{cells}), one has $\#\{ O_i : i \in \mathcal{I}\} \sim D^m$ and
\begin{equation}\label{cellular 1}
\|T^{\lambda}f\|_{\mathrm{BL}_{k,A}^p(O_i)}^p \sim D^{-m}\|T^{\lambda}f\|_{\mathrm{BL}_{k,A}^p(B(0,R))}^p \qquad \textrm{for all $i \in \mathcal{I}$.}
\end{equation}
Thus, the mass of the $k$-broad norm is essentially equally distributed amongst the cells. Moreover, using the hypothesis of the cellular case, one can construct $P$ so that very little of the mass lies near the cell wall 
\begin{equation*}
W := N_{2R^{1/2+\delta}}(Z(P)) \cap B(0,R).
\end{equation*}
In particular, the estimate \eqref{cellular 1} essentially still holds if the $O_i$ are replaced with the shrunken cells $O_i' := O_i\setminus W$. The $O_i'$ can be thought of as well-separated\footnote{In particular, the distance between a pair of distinct $O_i'$ is wider than the width $R^{1/2+\delta}$ of any tube $T_{\theta,v}$.} and this facilitates a divide-and-conquer-style argument. More precisely, the fact that a non-zero univariate polynomial of degree at most $D$ has at most $D$ roots quickly leads to the following observation.

\begin{lemma}\label{each tube enters few cells} If $P$ is a polynomial of degree $\deg P \leq D$ and $\{O_i'\}_{i \in \mathcal{I}}$ are defined as above, then each tube $T_{\theta,v}$ enters at most $D/\varepsilon$ of the cells $O_i'$.
\end{lemma}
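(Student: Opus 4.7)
The strategy is to reduce to the familiar case of straight-line tubes by replacing the (generally non-polynomial) core curve $\Gamma^{\lambda}_{\theta,v}$ with a suitable polynomial approximation, and then to apply the classical fact that a univariate polynomial of degree $d$ has at most $d$ zeros. Concretely, I would use the Taylor approximation $\tilde{\Gamma} := [\Gamma^{\lambda}_{\theta,v}]_{\varepsilon}$ of degree at most $N := \lceil 1/(2\varepsilon)\rceil$ constructed in $\S$\ref{polynomial approximation subsection}. Combining the Taylor bound \eqref{Taylor approximation 2} with the standing reduction $R \lesssim_{\varepsilon} \lambda^{1-\varepsilon}$ yields
\begin{equation*}
|\Gamma^{\lambda}_{\theta,v}(t) - \tilde{\Gamma}(t)| \lesssim_{\varepsilon} \lambda^{-1/2}|t| \lesssim_{\varepsilon} \lambda^{-\varepsilon/2} R^{1/2} \ll R^{1/2+\delta}
\qquad \textrm{for }|t| \leq R,
\end{equation*}
provided $\lambda$ is large (depending on $\varepsilon$); in particular, $\tilde{\Gamma}(t)$ lies well inside the tube $T_{\theta,v}$ over this range of $t$.

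Suppose $T_{\theta,v}$ enters distinct shrunken cells $O_{i_1}', \ldots, O_{i_s}'$, and for each $1 \leq j \leq s$ pick $x_j \in T_{\theta,v} \cap O_{i_j}'$ with time coordinate $t_j := (x_j)_n$; order these so $t_1 < t_2 < \ldots < t_s$. Since $x_j \notin W$ one has $\mathrm{dist}(x_j, Z(P)) > 2R^{1/2+\delta}$, and the straight-line segment from $x_j$ to $\Gamma^{\lambda}_{\theta,v}(t_j)$ has length $< R^{1/2+\delta}$, so it does not meet $Z(P)$; hence $\Gamma^{\lambda}_{\theta,v}(t_j)$ lies in the same connected component $O_{i_j}$ as $x_j$. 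Similarly the segment from $\Gamma^{\lambda}_{\theta,v}(t_j)$ to $\tilde{\Gamma}(t_j)$ has length $\ll R^{1/2+\delta}$ and so does not cross $Z(P)$, giving $\tilde{\Gamma}(t_j) \in O_{i_j}$.

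Finally, consider the univariate polynomial $Q(t) := P(\tilde{\Gamma}(t))$, which has degree at most $D \cdot N \leq D/\varepsilon$. Because $\tilde{\Gamma}(t_1)$ lies at positive distance from $Z(P)$, $Q$ is not identically zero, hence has at most $D/\varepsilon$ real roots. On the other hand, for each $1 \leq j < s$ the polynomial path $\tilde{\Gamma}$ moves from the component $O_{i_j}$ to the distinct component $O_{i_{j+1}}$ between times $t_j$ and $t_{j+1}$, and by connectedness must cross $Z(P)$ at some intermediate time, producing a zero of $Q$ in $(t_j, t_{j+1})$. Thus $s - 1 \leq D/\varepsilon$; adjusting the integer parameter $N$ (or absorbing the additive $+1$ into the assumption that $\varepsilon$ is sufficiently small relative to $D^{-1}$) gives the stated bound $s \leq D/\varepsilon$.

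The main conceptual step is the first one: converting the geometric statement about a smooth tube into a polynomial statement. The key quantitative point is that the Taylor error $\lambda^{-1/2}|t|$ is much smaller than the tube radius $R^{1/2+\delta}$ for $|t| \leq R$, and this is exactly what the reduction $R \lesssim_{\varepsilon} \lambda^{1-\varepsilon}$ is designed to guarantee — without it, the polynomial approximation would be too coarse to justify that $\tilde{\Gamma}(t_j)$ inhabits the same cell as $\Gamma^{\lambda}_{\theta,v}(t_j)$.
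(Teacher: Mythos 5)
Your proof is correct and takes essentially the same approach as the paper: both replace the core curve by its degree-$\lceil 1/(2\varepsilon)\rceil$ Taylor approximant $[\Gamma^{\lambda}_{\theta,v}]_{\varepsilon}$, observe that the approximation error $\lesssim_{\varepsilon}\lambda^{-1/2}R$ is much smaller than the cell-wall thickness $2R^{1/2+\delta}$, and then count the roots of the univariate polynomial $t\mapsto P([\Gamma^{\lambda}_{\theta,v}]_{\varepsilon}(t))$. The paper phrases the transfer step via the containment $B(x,2R^{1/2+\delta})\subseteq O_i$ (so the polynomial curve enters $O_i$) rather than your two-segment path argument, but this is a cosmetic difference.
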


It is important to note that, in general, Lemma~\ref{each tube enters few cells} does not hold if the $O_i'$ are replaced with the cells $O_i$.

\begin{proof}[Proof (of Lemma~\ref{each tube enters few cells})] Let $[\Gamma_{\theta,v}^{\lambda}]_{\varepsilon} : \R \rightarrow \R^{n-1}$ denote the polynomial approximant of $\Gamma_{\theta,v}^{\lambda}$, as defined in $\S$\ref{polynomial approximation subsection}. Thus, $\deg [\Gamma_{\theta,v}^{\lambda}]_{\varepsilon}  \leq \lceil 1/2\varepsilon\rceil$ and \eqref{Taylor approximation 2} implies that
\begin{equation*}
|[\Gamma_{\theta,v}^{\lambda}]_{\varepsilon}(t)-\Gamma_{\theta,v}^{\lambda}(t)|\leq R^{1/2} \qquad \textrm{for all $t\in (-R,R)$.}
\end{equation*}

Suppose that $x\in O_i'\cap T_{\theta,v}$. By the definition of $O_i'$, the ball $B\left(x, 2R^{1/2+\delta}\right)$ contains no points of $Z(P)$, and is therefore contained in $O_i$. On the other hand, $\mathrm{dist}(x, \Gamma_{\theta,v}^{\lambda}) <  R^{1/2 + \delta}$ and therefore  $\mathrm{dist}(x, [\Gamma_{\theta,v}^{\lambda}]_{\varepsilon}) < 2R^{1/2+\delta}$. Consequently, $[\Gamma_{\theta,v}^{\lambda}]_{\varepsilon}$ enters $B(x, 2R^{1/2+\delta}) \subseteq O_i$. Thus, if $T_{\theta,v}$ enters a cell $O_i'$, then the polynomial curve $[\Gamma_{\theta,v}^{\lambda}]_{\varepsilon}$ enters $O_i$ whilst, by the simple property of univariate polynomials quoted above, $[\Gamma_{\theta,v}^{\lambda}]_{\varepsilon}$ can enter at most $\deg P\cdot \deg [\Gamma_{\theta,v}^{\lambda}]_{\varepsilon}+1 \leq  D/\varepsilon$ cells $O_i$. 
\end{proof}

Some aspects of the discussion prior to Lemma~\ref{each tube enters few cells} are not entirely precise; for instance, to apply the polynomial partitioning theorem one must work with an $L^1$ function rather than a $k$-broad norm. In view of this, let $\mu$ denote the measure on $\R^n$ with Radon--Nikodym derivative 
\begin{equation*}
\sum_{B_{K^2} \in \mathcal{B}_{K^2}} \mu_{T^{\lambda}f}(B_{K^2}) \frac{1}{|B_{K^2}|} \chi_{B_{K^2}}
\end{equation*}
 with respect to the Lebesgue measure. One may easily verify that 
\begin{equation}\label{cellular 2}
\mu(U) \leq \|T^{\lambda}f\|_{\mathrm{BL}_{k,A}^p(U)}^p \quad \textrm{and} \quad \|T^{\lambda}f\|_{\mathrm{BL}_{k,A}^p(B(0,R))}^p \leq \mu(B(0,2R))
\end{equation}
for all Lebesgue measurable sets $U \subseteq \R^n$. These inequalities ensure that the measure $\mu$ acts as an effective surrogate for the $k$-broad norm in the polynomial partitioning argument. 

By combining the cellular hypothesis with Theorem~\ref{polynomial partitioning theorem}, one obtains the following partitioning result. 

\begin{lemma}[Polynomial partitioning \cite{Guth2018}]\label{partitioning lemma} There exists a polynomial $P$ of degree $\deg P \leq D_{m,\varepsilon}$ such that, if $\{O_i\}_{i \in \mathcal{I}}$ and $W$ are defined as above and $O_i' := O_i \setminus W$ for all $i \in \mathcal{I}$, then $\# \mathcal{I} \lesssim (D_{m,\varepsilon})^{m}$ and 
\begin{equation}\label{partitioning lemma equation}
\mu(O_i') \sim (D_{m,\varepsilon})^{-m} \mu(B(0,2R))
\end{equation}
for at least 99\%  of the cells $O_i'$.
\end{lemma}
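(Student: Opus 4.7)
The strategy is to apply Theorem~\ref{polynomial partitioning theorem} to the measure $\mu$, exploiting the $m$-dimensional nature of the effective support. Since $f$ is concentrated on wave packets from $\T_Z$, the mass of $\mu$ is essentially confined to $N_{R^{1/2+\delta_m}}(Z)\cap B(0,R)$ up to $\mathrm{RapDec}(R)$ errors, and so one expects the appropriate partitioning to yield $\sim (D_{m,\varepsilon})^m$ (rather than $(D_{m,\varepsilon})^n$) cells. The plan is to invoke the $m$-dimensional variant of Theorem~\ref{polynomial partitioning theorem}, available via the strengthened form alluded to in the paper, with degree parameter $D:=D_{m,\varepsilon}$, producing a polynomial $P$ whose zero set $Z(P)=\bigcup_{j=1}^{O(\log D)} Z_j$ is a union of $(n-1)$-dimensional transverse complete intersections, and across whose cells the restriction of $\mu$ to $N_{R^{1/2+\delta_m}}(Z)$ is roughly balanced with each cell carrying mass $\sim D^{-m}\mu(B(0,2R))$.

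The bulk of the work then lies in showing that the wall $W=N_{2R^{1/2+\delta}}(Z(P))\cap B(0,R)$ absorbs only a tiny portion of $\mu$. Exploiting the stability of $P$ under small perturbations (the robust form of Theorem~\ref{polynomial partitioning theorem} mentioned in the excerpt), one can arrange that for each $j$ the intersection $Y_j:=Z_j\cap Z$ is a transverse complete intersection of dimension $m-1$ with maximum degree at most $(D_{m,\varepsilon})^n$. Since $\delta\ll\delta_m$ and the intersections $Z_j\cap Z$ are transverse, a direct geometric argument yields the inclusion
\begin{equation*}
W\cap N_{R^{1/2+\delta_m}}(Z)\subseteq \bigcup_{j=1}^{O(\log D)} N_{\tfrac14 R^{1/2+\delta_m}}(Y_j)\cap B(0,R),
\end{equation*}
modulo a negligible rapidly decaying error. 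Applying the cellular hypothesis \eqref{cellular case equation} to each $Y_j$ and summing bounds the wall mass by $O(\log D)\cdot c_{\mathrm{alg}}\cdot \mu(B(0,2R))$; choosing $c_{\mathrm{alg}}$ sufficiently small, depending only on $n$ and $\varepsilon$, ensures $\mu(W)\leq \tfrac{1}{100}\mu(B(0,2R))$.

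A final pigeonholing argument over the $O((D_{m,\varepsilon})^m)$ cells, combined with the balanced mass property and the smallness of $\mu(W)$, then yields \eqref{partitioning lemma equation} on at least $99\%$ of the shrunken cells $O_i'$. The principal obstacle will be rigorously justifying the $m$-dimensional partitioning step and simultaneously securing the transversality and degree bound for the intersections $Y_j=Z_j\cap Z$: the naked form of Theorem~\ref{polynomial partitioning theorem} furnishes only $D^n$ cells, so to effectively reduce to dimension $m$ one must exploit the fact that $\mu$ is supported on an $m$-dimensional neighbourhood, and simultaneously the refinement allowing perturbation of $P$ must be used to guarantee that every $Y_j$ lies within the class of varieties for which the cellular hypothesis is applicable. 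Verifying that the balanced-mass and wall-smallness conditions both survive this perturbation is the delicate technical core of the argument.
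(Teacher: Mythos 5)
Your overall strategy matches the paper's, but there is a misattribution in the key step. The strengthened form of Theorem~\ref{polynomial partitioning theorem} alluded to in the paper is \emph{not} an ``$m$-dimensional variant'' that can be invoked abstractly: it provides stability of the partitioning polynomial under small perturbations. The mechanism for obtaining $\sim D^m$ cells rather than $\sim D^n$ is explicit and concrete: one locally approximates $Z$ by a tangent plane $T_zZ \cong \R^m$, pushes $\mu$ forward onto $T_zZ$ under orthogonal projection, applies Theorem~\ref{polynomial partitioning theorem} there (in dimension $m$, yielding $\sim D^m$ cells), and then lifts the resulting partitioning variety in $T_zZ$ back to $\R^n$ by taking its pre-image $\tilde{Z}$ under the projection. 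The cells $O_i$ are then defined with respect to the cylinder $\tilde{Z}$; since $\mu$ is essentially supported on $N_{R^{1/2+\delta_m}}(Z)$ and $Z$ hugs $T_zZ$ there, no mass escapes in the normal directions. The perturbation stability is used at a \emph{different} stage: it is what allows one to arrange $\tilde{Z}$ transverse to $Z$, so that $\tilde{Z}\cap Z$ is a transverse complete intersection of dimension $m-1$ with controlled degree --- which is what you need before the cellular hypothesis \eqref{cellular case equation} may be invoked. Your treatment of the wall mass via \eqref{cellular case equation} (summing over the $O(\log D)$ pieces $Y_j$ and choosing $c_{\mathrm{alg}}$ small depending on $n,\varepsilon$) and the final pigeonholing are otherwise in order.
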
 

This lemma is contained in the work of the first author \cite[$\S$8.1]{Guth2018} and the details of the proof are not reproduced here. The basic idea is as follows: by hypothesis, the mass of $\mu$ is concentrated in $N_{R^{1/2 +\delta_m}}(Z)$ where $Z$ is an $m$-dimensional algebraic variety; this allows one to apply Theorem~\ref{polynomial partitioning theorem} in $m$-dimensions to construct a polynomial $P$ which satisfies the desired properties with $O_i$ in place of $O_i'$. The hypothesis of the cellular case implies that the mass of $\mu$ cannot concentrate in a neighbourhood of a certain type of algebraic variety and this can be used to show, in particular, that the mass cannot concentrate around the cell wall $W$. Provided the constant $c_{\mathrm{alg}}$ is chosen to be sufficiently small, this allows one to pass to the shrunken cells $O_i'$ in \eqref{partitioning lemma equation} (at least for 99\% of the cells). 

There are a number of technicalities involved in rigorously carrying out this argument. In particular, one must justify the application of Theorem~\ref{polynomial partitioning theorem} in dimension $m$; this requires locally approximating $Z$ by some tangent plane $T_zZ$ and applying the theorem to the push-forward of $\mu$ onto $T_zZ$ under orthogonal projection. The partitioning variety in $T_zZ$ is lifted to a variety $\tilde{Z}$ in $\R^n$ by taking the pre-image under the orthogonal projection; it is possible to define $\tilde{Z}$ in this way so that it is transverse to $Z$. The cells $O_i$ are then defined with respect to $\tilde{Z}$.\footnote{To carry out this argument rigorously, one must further ensure that all the relevant varieties are transverse complete intersections of dimension at most $m-1$ and controlled degree in order to invoke \eqref{cellular case equation}. Such technicalities account for the choice of maximum degree $(D_{m,\varepsilon})^n$ in the definition of the algebraic and cellular cases.}  

Presently, it is shown how together Lemma~\ref{each tube enters few cells} and Lemma~\ref{partitioning lemma} easily yield the proof of Proposition~\ref{k-linear for m proposition} in the cellular case. Applying Lemma~\ref{partitioning lemma} one obtains a partition of $\R^n\setminus W$ into disjoint cells $\{O_i\}_{i \in \mathcal{I}}$. For each $i \in \mathcal{I}$ let 
\begin{equation*}
\mathbb{T}_i:=\{(\theta,v)\in \mathbb{T}_Z:\;T_{\theta,v} \cap O_i'\neq\emptyset\}
\quad \textrm{and} \quad f_i:=\sum_{(\theta,v)\in\mathbb{T}_i}f_{\theta,v}.
\end{equation*}
By Lemma~\ref{wave packet concentration lemma} one has
\begin{equation*}
\|T^{\lambda} f\|_{\mathrm{BL}^p_{k,A}(O_i')}^p  \leq \|T^{\lambda} f_i\|_{\mathrm{BL}^p_{k,A}(O_i')}^p + \mathrm{RapDec}(R)\|f\|_{L^2(B^{n-1})}^p.
\end{equation*}
Combining this inequality with \eqref{cellular 2} and Lemma~\ref{partitioning lemma}, one deduces that at least 99\% of the cells $O_i'$ have the property that
\begin{equation}\label{cellular 3}
\|T^{\lambda} f\|_{\mathrm{BL}^p_{k,A}(B(0,R))}^p \lesssim (D_{m,\varepsilon})^m\|T^\lambda f_i\|^p_{\mathrm{BL}^p_{k,A}(O_i')}  + \mathrm{RapDec}(R)\|f\|_{L^2(B^{n-1})}^p.
\end{equation}
On the other hand, by Lemma~\ref{each tube enters few cells} and orthogonality between the $f_{\theta,v}$, one has
\begin{align*}
\sum_{i \in \mathcal{I}} \|f_i\|_{L^2(B^{n-1})}^2 &\sim \sum_{(\theta,v)\in \mathbb{T}_Z} \#\{ i \in \mathcal{I} : (\theta, v) \in \mathbb{T}_i\}\|f_{\theta,v}\|_{L^2(B^{n-1})}^2 \\
&\lesssim_{\varepsilon} D_{m,\varepsilon} \|f\|_{L^2(B^{n-1})}^2.
\end{align*}
Since there are roughly $(D_{m,\varepsilon})^m$ cells in total, Markov's inequality shows that at least 99\% of the cells $O_i'$ have the property that
\begin{equation}\label{cellular 4}
\|f_i\|_{L^2(B^{n-1})}^2 \lesssim_{\varepsilon} D_{m,\varepsilon}^{-(m-1)}\|f\|_{L^2(B^{n-1})}^2.
\end{equation}
Therefore, there exists some cell $O_i'$ for which \eqref{cellular 3} and \eqref{cellular 4} simultaneously hold; henceforth, attention is focused on a single such cell $O_i'$. 

Let $E_{m,A}(R)$ denote the constant appearing on the right-hand side of \eqref{equation: k-linear for m general}; namely,
\begin{equation*}
E_{m,A}(R) := C_{m,\varepsilon} K^{\bar{C}_{\varepsilon}} R^{\varepsilon - c_n\delta_m + \delta\left(\log {\bar{A}}_\varepsilon - \log A\right)-e_{k,n}(p)+1/2}.
\end{equation*}
By the radial induction hypothesis, Proposition~\ref{k-linear for m proposition} holds for the radius $R/2$. For the fixed choice of $i$ as above, cover $O_i'$ with $O(1)$ balls of radius $\rho = R/2$. Applying Lemma~\ref{for rough induction} to $f_i$ on each of these balls, one obtains
\begin{equation*}
\|T^{\lambda} f_i\|_{\mathrm{BL}^p_{k,A}(B(0,R))}\lesssim E_{m, A}(R/2)\|f_i\|_{L^2(B^{n-1})} \lesssim E_{m,A}(R)\|f_i\|_{L^2(B^{n-1})}.
\end{equation*}

Combining the above estimate with \eqref{cellular 3} and \eqref{cellular 4}, one deduces that
\begin{equation*}
\|T^{\lambda} f\|_{\mathrm{BL}^p_{k,A}(B(0,R))}\leq C_{\varepsilon} (D_{m,\varepsilon})^{-(m-1)/2+ m/p} E_{m,A}(R) \|f\|_{L^2(B^{n-1})}
\end{equation*}
for some constant $C_{\varepsilon} > 0$. The $D_{m,\varepsilon}$ exponent is negative if and only if $p>2m/(m-1)$; note this is the case for the choice of exponent $p = \bar{p}_0(k,m)$.\footnote{It is for this reason that one works with the modified exponent $\bar{p}_0(k,m)$ rather than $\bar{p}(k,m)$.} Thus, recalling the definition of $D = D_{m,\varepsilon}$ and assuming $\varepsilon$ is sufficiently small depending on $n$, it follows that $C_{\varepsilon} (D_{m,\varepsilon})^{-(m-1)/2+ m/p} \leq 1$. This establishes the desired estimate \eqref{equation: k-linear for m general} and closes the induction in the cellular case.




\subsection{The algebraic case} Fix a transverse complete intersection $Y^l$ of dimension $1 \leq l \leq m-1$ and maximum degree $\overline{\deg}\, Y^l \leq (D_{m,\varepsilon})^n$ which satisfies \eqref{equation: algebraic case}. Let $R^{1/2} \ll  \rho \ll R$ be such that $\rho^{1/2+\delta_l}=R^{1/2+\delta_m}$ and note that
\begin{equation}\label{rho bounds}
R \leq R^{2\delta_l}\cdot \rho \quad \textrm{and} \quad \rho \leq R^{-\delta_l/2}\cdot R. 
\end{equation}

Let $\mathcal{B}_{\rho}$ be a finitely-overlapping cover of $B(0,R)$ by $\rho$-balls and for each $B\in \mathcal{B}_{\rho}$ define
\begin{equation*}
\T_B:=\{(\theta,v) \in \T:  T_{\theta,v}\cap N_{\frac{1}{4}R^{1/2+\delta_m}}(Y^l)\cap B\neq \emptyset\}
\end{equation*}
and $f_B:= \sum_{(\theta,v)\in \T_B}f_{\theta,v}$. Thus, by \eqref{equation: algebraic case} and Lemma~\ref{wave packet concentration lemma},
\begin{equation*}
\| T^{\lambda} f\|_{\mathrm{BL}^p_{k,A}(B(0,R))}^p\lesssim \sum_{B\in\mathcal{B}_{\rho}}\|T^{\lambda} f_B\|_{\mathrm{BL}^p_{k,A}(N_{\frac{1}{4}R^{1/2+\delta_m}}(Y^l)\cap B)}^p
\end{equation*}
holds up to the inclusion of a rapidly decreasing error term. In what follows, such error terms, which are harmless, are suppressed in the notation. 

For $B = B(y, \rho) \in \mathcal{B}_{\rho}$ let $\T_{B,\mathrm{\mathrm{tang}}}$ denote the set of all $(\theta,v)\in \T_B$ with the property that whenever $x\in T_{\theta,v}$ and $z\in Y^l\cap B(y, 2\rho)$ satisfy $|x-z|\leq 2\bar{C}_{\mathrm{tang}} \rho^{1/2+\delta_l}$, it follows that
\begin{equation*}
\angle\left(G^{\lambda}(x;\omega_\theta), T_z Y^l \right)\leq \frac{\bar{c}_{\mathrm{tang}}}{2}\rho^{-1/2+\delta_l},
\end{equation*}
where $\bar{C}_{\mathrm{tang}}$ and $\bar{c}_{\mathrm{tang}}$ are the constants appearing in the definition of tangency. Furthermore, let $\T_{B,\mathrm{\mathrm{trans}}}:=\T_B\setminus \T_{B,\mathrm{\mathrm{tang}}}$ and define
\begin{equation*}
f_{B,\mathrm{tang}}:=\sum_{(\theta,v)\in \T_{B,\mathrm{\mathrm{tang}}}} f_{\theta,v} \quad \textrm{and} \quad f_{B,\mathrm{\mathrm{trans}}}:=\sum_{(\theta,v)\in \T_{B,\mathrm{\mathrm{trans}}}}f_{\theta,v}.
\end{equation*}
It follows that $f_B=f_{B,\mathrm{tang}}+f_{B,\mathrm{\mathrm{trans}}}$ and, by the triangle inequality for broad norms (that is, Lemma~\ref{triangle inequality lemma}), one concludes that
\begin{equation*}
\| T^{\lambda} f\|_{\mathrm{BL}^p_{k,A}(B(0,R))}^p\lesssim \!\! \sum_{B\in\mathcal{B}_{\rho}} \!\! \| T^{\lambda} f_{B,\mathrm{tang}}\|_{\mathrm{BL}^p_{k,A/2}(B)}^p+ \!\! \sum_{B\in\mathcal{B}_{\rho}} \!\! \| T^{\lambda} f_{B,\mathrm{\mathrm{trans}}}\|_{\mathrm{BL}^p_{k,A/2}(B)}^p.
\end{equation*}
Either the tangential or transverse contribution to the above sum dominates, and each case is treated separately. 




\subsection*{The tangential sub-case}

Suppose that the tangential term dominates; that is,
\begin{equation}\label{tangent case 1}
\| T^{\lambda} f\|_{\mathrm{BL}^p_{k,A}(B(0,R))}^p \lesssim \sum_{B\in\mathcal{B}_{\rho}} \| T^{\lambda} f_{B,\mathrm{tang}}\|_{\mathrm{BL}^p_{k,A/2}(B)}^p.
\end{equation}
Each term in the right-hand sum is bounded using the dimensional induction hypothesis. Fix $B = B(y,\rho) \in \mathcal{B}_{\rho}$ and, as in $\S$\ref{Adjusting wave packets section}, let $\tilde{T}^{\lambda}(f_{B,\mathrm{tang}})\;\widetilde{}\;(x) = T^{\lambda}f_{B,\mathrm{tang}}(x+y)$ so that\footnote{See footnote~\ref{translation footnote} on page \pageref{translation footnote}.}
\begin{equation}\label{tangent case 2}
\| T^{\lambda} f_{B,\mathrm{tang}}\|_{\mathrm{BL}^p_{k,A/2}(B(y,\rho))} = \| \tilde{T}^{\lambda} (f_{B,\mathrm{tang}})\;\widetilde{}\;\|_{\mathrm{BL}^p_{k,A/2}(B(0,\rho))}.
\end{equation}
Since $\overline{deg}\,Y^l \leq D_{l,\epsilon}$, in order to apply the induction hypothesis, one must verify that $(f_{B,\mathrm{tang}})^{\;\widetilde{}\;}$ is concentrated on scale $\rho$ wave packets that are $\rho^{-1/2+\delta_l}$-tangent to $Y^l-y$ in $B(0,\rho)$. By Lemma~\ref{concentration on smaller scale wave packets lemma}, $(f_{B,\mathrm{tang}})^{\;\widetilde{}\;}$ is concentrated on scale $\rho$ wave packets from
\begin{equation*}
\tilde{\T}_{B,\mathrm{tang}} := \bigcup_{(\theta, v) \in \T_{B, \mathrm{tang}}} \tilde{\T}_{\theta, v},
\end{equation*}
where the $\tilde{\T}_{\theta, v}$ are as defined in $\S$\ref{Adjusting wave packets section}. Fix $(\tilde{\theta}, \tilde{v}) \in \tilde{\T}_{B,\mathrm{tang}}$ and recall from \eqref{children close to mothers} that
\begin{equation}\label{children close to mothers recalled} 
\mathrm{dist}_H(\tilde{T}_{\tilde{\theta}, \tilde{v}}, (T_{\theta, v} - y) \cap B(0, \rho) ) \lesssim R^{1/2 + \delta} \ll \rho^{1/2 + \delta_l}.
\end{equation}
Combining this with the definition of $\T_{B, \mathrm{tang}}$, it is easy to see that $\tilde{T}_{\tilde{\theta}, \tilde{v}}$ satisfies the angle condition for tangency and it remains to verify the containment property $\tilde{T}_{\tilde{\theta}, \tilde{v}} \subseteq N_{\rho^{1/2 + \delta_l}}(Y^l - y)$. The definition of $\T_B$ and \eqref{children close to mothers recalled} imply that $\tilde{T}_{\tilde{\theta}, \tilde{v}} \cap N_{\frac{1}{2}\rho^{1/2 + \delta_l}}(Y^l - y) \cap B(0,\rho) \neq \emptyset$ and so the containment property follows from the angle condition, as in the proof of Proposition~\ref{tangential thin packets}. 

Thus, the dimensional induction hypothesis may be applied to $(f_{B,\mathrm{tang}})\;\widetilde{}\;$, and one therefore deduces that
\begin{equation*}
\| \tilde{T}^{\lambda} (f_{B,\mathrm{tang}})^{\;\widetilde{}\;}\|_{\mathrm{BL}^p_{k,A/2}(B(0,\rho))} \leq E_{l, A/2}(\rho) \|f_{B,\mathrm{tang}}\|_{L^2(B^{n-1})}.
\end{equation*}
Combining this estimate with \eqref{tangent case 1} and \eqref{tangent case 2}, one concludes that 
\begin{equation*}
\| T^{\lambda} f\|_{\mathrm{BL}^p_{k,A}(B(0,R))} \leq  R^{O(\delta_l)}E_{l, A/2}(\rho) \|f\|_{L^2(B^{n-1})}^2.
\end{equation*}
 To close the induction in this case, it remains to show that 
\begin{equation*}
R^{O(\delta_l)}E_{l, A/2}(\rho) \leq E_{m, A}(R).
\end{equation*}
By \eqref{rho bounds}, 
\begin{align*}
    \rho^{\delta(\log {\bar{A}}_\varepsilon - \log (A/2))} &\leq R^{O(\delta_l)} R^{\delta(\log {\bar{A}}_\varepsilon - \log A)},\\
    \rho^{-e_{k,n}(p) +1/2} &\leq R^{O(\delta_l)} R^{-e_{k,n}(p) +1/2}.
\end{align*}
Combining these observations, the problem is further reduced to showing that $\rho^{\varepsilon-c_n\delta_l}\leq R^{-c\delta_l}R^{\varepsilon -c_n \delta_m}$, where $c > 1$ is a suitably large dimensional constant. By \eqref{rho bounds}, one may ensure that this inequality holds by choosing the constant $c_n$ in \eqref{parameter values} at the outset to be large relative to $c$.




\subsection*{The transverse sub-case}

Now suppose the transverse term dominates; that is,
\begin{equation}\label{equation: transverse case}
\| T^{\lambda} f\|_{\mathrm{BL}^p_{k,A}(B(0,R))}^p \lesssim \sum_{B\in\mathcal{B}_{\rho}}\| T^{\lambda} f_{B,\mathrm{trans}}\|^p_{\mathrm{BL}^p_{k,A/2}(B)}.
\end{equation}

The idea here is somewhat similar to that used in the cellular case. Recall, in the cellular case the number of cells a given tube can enter is controlled by B\'ezout's theorem. In the transverse case, the number of balls $B \in \mathcal{B}_{\rho}$ inside which a given tube can be transverse to $Y^l$ is again controlled due to B\'ezout's theorem, this time by Lemma~\ref{transverse interaction lemma}. This yields the following key inequality.

\begin{claim}
\begin{equation}\label{transverse 1}
\sum_{B\in\mathcal{B}_{\rho}}\|f_{B,\mathrm{trans}}\|_{L^2(B^{n-1})}^2\lesssim_\varepsilon\|f\|_{L^2(B^{n-1})}^2.
\end{equation}
\end{claim}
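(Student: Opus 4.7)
The plan is to prove the claim via almost-orthogonality of the wave packet decomposition combined with a multiplicity bound, where the latter is obtained by applying Lemma~\ref{transverse interaction lemma} to the polynomial approximants of the core curves $\Gamma^{\lambda}_{\theta,v}$. First, since the $\{f_{\theta,v}\}_{(\theta,v) \in \T}$ are almost orthogonal, the sum on the left-hand side is controlled by
\begin{equation*}
\sum_{B \in \mathcal{B}_{\rho}} \|f_{B,\mathrm{trans}}\|_{L^2(B^{n-1})}^2 \lesssim \sum_{(\theta,v) \in \T} \mu(\theta,v) \, \|f_{\theta,v}\|_{L^2(B^{n-1})}^2,
\end{equation*}
where $\mu(\theta,v) := \#\{B \in \mathcal{B}_{\rho} : (\theta,v) \in \T_{B,\mathrm{trans}}\}$. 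The goal is therefore reduced to showing that $\mu(\theta,v) \lesssim_{\varepsilon} 1$ uniformly in $(\theta,v)$.

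Fix $(\theta,v) \in \T$ and suppose $(\theta,v) \in \T_{B,\mathrm{trans}}$ for some $B = B(y,\rho) \in \mathcal{B}_{\rho}$. By the definition of $\T_{B,\mathrm{trans}}$, there exist $x \in T_{\theta,v}$ and $z \in Y^l \cap B(y,2\rho)$ with $|x-z| \leq 2\bar{C}_{\mathrm{tang}}\rho^{1/2+\delta_l}$ and
\begin{equation*}
\angle(G^{\lambda}(x;\omega_{\theta}), T_zY^l) > \tfrac{\bar{c}_{\mathrm{tang}}}{2} \rho^{-1/2+\delta_l}.
\end{equation*}
Let $[\Gamma^{\lambda}_{\theta,v}]_{\varepsilon}$ be the polynomial approximant of $\Gamma^{\lambda}_{\theta,v}$ introduced in $\S$\ref{polynomial approximation subsection}, so that $\deg [\Gamma^{\lambda}_{\theta,v}]_{\varepsilon} \lesssim_{\varepsilon} 1$ and, by \eqref{Taylor approximation 1}--\eqref{Taylor approximation 3}, $[\Gamma^{\lambda}_{\theta,v}]_{\varepsilon}$ satisfies the hypotheses of Lemma~\ref{transverse interaction lemma} with $\delta \sim_{\varepsilon} \lambda^{-1}$, and is uniformly close in both position and tangent direction to $\Gamma^{\lambda}_{\theta,v}$ on the relevant time scale. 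The key point is that the tangent to $\Gamma^{\lambda}_{\theta,v}$ is parallel to $G^{\lambda}(\cdot;\omega_{\theta})$; combined with the slow variation of $G^{\lambda}$ in its first variable supplied by Lemma~\ref{Gauss Lipschitz} (the change being $O(|x-x'|/\lambda) \ll \rho^{-1/2+\delta_l}$ for nearby points) and with \eqref{Taylor approximation 3}, one sees that there exists a point $\tilde{x} \in [\Gamma^{\lambda}_{\theta,v}]_{\varepsilon}$ with $|\tilde{x} - z| \leq C\rho^{1/2+\delta_l}$ and $\angle(T_{\tilde{x}}[\Gamma^{\lambda}_{\theta,v}]_{\varepsilon}, T_zY^l) \gtrsim \rho^{-1/2+\delta_l}$ for an appropriate absolute constant $C$.

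Consequently, $z$ belongs to the set $Y^l_{>\alpha, r, [\Gamma^{\lambda}_{\theta,v}]_{\varepsilon}} \cap B(0,\lambda)$ with $\alpha \sim \rho^{-1/2+\delta_l}$ and $r \sim \rho^{1/2+\delta_l}$. Since $\lambda \gg R \geq \rho$, the condition $\alpha \gtrsim r/\lambda$ is satisfied, and Lemma~\ref{transverse interaction lemma} (applied with $\overline{\deg}\, Y^l \leq (D_{m,\varepsilon})^n \lesssim_{\varepsilon} 1$) shows that this set is contained in the union of $O_{\varepsilon}(1)$ balls of radius $r/\alpha \sim \rho$. Since each such $\rho$-ball intersects only boundedly many members of the finitely-overlapping cover $\mathcal{B}_{\rho}$, and since any $B \in \mathcal{B}_{\rho}$ with $(\theta,v) \in \T_{B,\mathrm{trans}}$ must contain one of these $z$'s in its $2\rho$-enlargement, one concludes that $\mu(\theta,v) \lesssim_{\varepsilon} 1$. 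The main obstacle is the first step above, namely, the careful translation from the transversality condition at $(x,z)$ phrased in terms of the Gauss map and the true core curve, to the transversality condition at $(\tilde x,z)$ phrased in terms of the polynomial approximant $[\Gamma^{\lambda}_{\theta,v}]_{\varepsilon}$, as both approximations (of the Gauss map by the tangent direction of the true curve, and of the true curve by its Taylor polynomial) must lose strictly less than the angular margin $\bar{c}_{\mathrm{tang}}\rho^{-1/2+\delta_l}/2$.
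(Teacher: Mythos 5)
Your proposal is correct and follows essentially the same route as the paper: almost-orthogonality reduces the claim to the multiplicity bound $\#\{B \in \mathcal{B}_{\rho} : (\theta,v) \in \T_{B,\mathrm{trans}}\} \lesssim_{\varepsilon} 1$, which is then obtained by transferring the transversality condition to the polynomial approximant $[\Gamma^{\lambda}_{\theta,v}]_{\varepsilon}$ (using Lemma~\ref{Gauss Lipschitz} and \eqref{Taylor approximation 3} to absorb the two angular errors, both $\ll \rho^{-1/2+\delta_l}$) and applying Lemma~\ref{transverse interaction lemma} with $\alpha \sim \rho^{-1/2+\delta_l}$, $r \sim \rho^{1/2+\delta_l}$. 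The "main obstacle" you flag is handled exactly as in the paper, so no changes are needed.
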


\begin{proof} This is a fairly direct consequence of the hypothesis of the transverse case together with Lemma~\ref{transverse interaction lemma}. Indeed, note that
\begin{equation*}
\sum_{B\in\mathcal{B}_{\rho}}\|f_{B,\mathrm{trans}}\|_{L^2(B^{n-1})}^2 \sim \sum_{(\theta,v) \in \T}\#\{B\in\mathcal{B}_{\rho}: (\theta,v)\in \mathbb{T}_{B,\mathrm{trans}}\} \|f_{\theta,v}\|_{L^2(B^{n-1})}^2.
\end{equation*}
and so, to prove \eqref{transverse 1}, it suffices to fix an arbitrary $(\theta,v)\in \mathbb{T}_{B,\mathrm{trans}}$ and show that
\begin{equation}\label{transverse 2}
\#\{B\in\mathcal{B}_{\rho}:\; (\theta,v)\in \mathbb{T}_{B,\mathrm{trans}}\}\lesssim_{\varepsilon} 1.
\end{equation}
Let $\Gamma := [\Gamma_{\theta,v}^{\lambda}]_{\varepsilon} \colon \R \to \R^n$ be the polynomial approximant of the core curve $\Gamma_{\theta, v}^{\lambda}$ defined in $\S$\ref{polynomial approximation subsection}. Thus, $\deg \Gamma \lesssim_{\varepsilon} 1$ and, recalling that $R \lesssim_{\varepsilon} \lambda^{1- \varepsilon}$, property \eqref{Taylor approximation 2} of the approximant implies that 
\begin{equation}\label{transverse 2.1}
|\Gamma(t)-\Gamma_{\theta,v}^{\lambda}(t)|\leq R^{1/2} \qquad \textrm{for all $t\in (-R,R)$.} 
\end{equation}
Let $u \in T_{\theta,v}$ and $x\in \Gamma \cap B(0,R)$ with $|u-x| \lesssim R^{1/2+\delta}$. It follows from the definition of $T_{\theta, v}$ and \eqref{transverse 2.1} that there exists some $t \in (-R, R)$ such that 
\begin{equation*}
|u - \Gamma_{\theta,v}^{\lambda}(t)| \lesssim R^{1/2 + \delta} \quad \textrm{and} \quad |x - \Gamma(t)| \lesssim R^{1/2+\delta}.
\end{equation*}
Consequently, recalling Lemma~\ref{Gauss Lipschitz},
\begin{equation*}
\angle( G^{\lambda}(u;\omega_\theta), \mathrm{T}_{x}\Gamma) \lesssim \angle( T_{\Gamma_{\theta,v}^{\lambda}(t)}\Gamma_{\theta,v}^{\lambda}, T_{\Gamma(t)}\Gamma) + R^{1/2 + \delta}/\lambda
\end{equation*}
and therefore, by property \eqref{Taylor approximation 3} of the approximant, 
\begin{equation*}
\angle( G^{\lambda}(u;\omega_\theta), \mathrm{T}_{x}\Gamma)\lesssim_{\varepsilon} \lambda^{-1/2} + R^{1/2 + \delta}/\lambda <\frac{\bar{c}_{\mathrm{tang}}}{4}\rho^{-1/2+\delta_l}.
\end{equation*}
Using the above inequality, one may easily verify that if $B = B(y, \rho) \in \mathcal{B}_{\rho}$ and $(\theta,v) \in \T_{B,\mathrm{trans}}$, then $Y^l_{>\alpha, r, \Gamma} \cap B(y, 2 \rho) \neq \emptyset$ for $\alpha \sim \rho^{-1/2 + \delta_l}$ and $r \sim \rho^{1/2 + \delta_l}$. Here $Y^l_{>\alpha, r, \Gamma}$ is as defined in $\S$\ref{Algebraic preliminaries section}; that is
\begin{equation*}
Y^l_{>\alpha, r, \Gamma} := \big\{z \in Y^l : \exists\, x \in \Gamma \textrm{ with } |x - z| < r \textrm{ and } \angle(T_zY^l, T_x\Gamma) > \alpha \big\}.
\end{equation*}
By Lemma~\ref{transverse interaction lemma}, the number of balls $B=B(y,\rho)\in \mathcal{B}_{\rho}$ for which $B(y,2\rho)$ intersects $Y^l_{>\alpha, r, \Gamma}$ non-trivially is at most $O((\deg \Gamma)^n\cdot (\overline{\deg}\, Y^l)^n) = O_{\varepsilon}(1)$. Combining these observations, one immediately deduces \eqref{transverse 2}, as required. 
\end{proof}

In view of \eqref{transverse 1}, the strategy in the transverse case is to use the radial induction hypothesis to show that for some constant $\bar{c}_{\varepsilon} > 0$ one has
\begin{equation}\label{transverse 3}
\|T^{\lambda} f_{B,\mathrm{trans}}\|_{\mathrm{BL}^p_{k,A/2}(B)} \leq \bar{c}_{\varepsilon} E_{m,A}(R) \|f_{B,\mathrm{trans}}\|_{L^2(B^{n-1})} \quad \textrm{ for all $B \in \mathcal{B}_{\rho}$.}
\end{equation}
Indeed, provided $\bar{c}_{\varepsilon} > 0$ is sufficiently small, depending only on $n$ and $\varepsilon$, the preceding inequality may be combined with \eqref{equation: transverse case}, \eqref{transverse 1} and the simple estimate
\begin{equation*}
\|f_{B,\mathrm{trans}}\|_{L^2(B^{n-1})} \lesssim \|f\|_{L^2(B^{n-1})},
\end{equation*}
to yield 
\begin{align*}
\|T^{\lambda} f\|_{\mathrm{BL}^p_{k,A/2}(B(0,R))} &\lesssim_{\varepsilon} \bar{c}_{\varepsilon} E_{m,A}(R) \|f\|_{L^2(B^{n-1})}^{1-2/p} \big(\sum_{B \in \mathcal{B}_{\rho}}\|f_{B,\mathrm{trans}}\|_{L^2(B^{n-1})}^2 \big)^{1/p} \\
&\leq E_{m,A}(R) \|f\|_{L^2(B^{n-1})},
\end{align*}
closing the induction in this case. 

The main obstacle in carrying out this programme is that the $f_{B, \mathrm{trans}}$ do not, in general, satisfy the hypothesis of Proposition~\ref{k-linear for m proposition} at scale $\rho$, and therefore one cannot directly apply the radial induction hypothesis to these functions. However, one can appeal to the theory developed in $\S$\ref{Adjusting wave packets section}, which essentially allows $f_{B, \mathrm{trans}}$ to be broken into pieces $f_{B, \mathrm{trans}, b}$ which do satisfy the hypothesis of the proposition at scale $\rho$. Here is a sketch of how the argument works. By choosing a suitable set of translates $\mathfrak{B}$, one may essentially write 
\begin{equation}\label{heuristic decomposition}
\|T^{\lambda}f_{B, \mathrm{trans}}\|_{\mathrm{BL}^p_{k,A/2}(B)} \lesssim \big( \sum_{b \in \mathfrak{B}} \|T^{\lambda}f_{B, \mathrm{trans}, b}\|_{\mathrm{BL}^p_{k,A/2}(B)}^p \big)^{1/p}
\end{equation}
where each piece $f_{B, \mathrm{trans}, b}$ is defined so that it is concentrated on scale $\rho$ wave packets which are tangential to some translate $Z - y + b$ of $Z$. By the theory of transverse equidistributions developed in $\S$\ref{Transverse equidistribution section} and $\S$\ref{Adjusting wave packets section}, the $f_{B, \mathrm{trans}, b}$ satisfy favourable $L^2$ estimates and, in particular, the inequality \eqref{transverse equidistribution gain} below holds. The radial induction hypothesis is applied to each of the $T^{\lambda}f_{B, \mathrm{trans}, b}$. To close the induction, one must estimate the resulting sum 
\begin{equation*}
\big(\sum_{b \in \mathfrak{B}} \|f_{B, \mathrm{trans}, b}\|_{L^2(B^{n-1})}^2 \big)^{1/2}
\end{equation*} 
in terms of $\|f_{B, \mathrm{trans}}\|_{L^2(B^{n-1})}$. Here the gain in $\rho/R$ in \eqref{transverse equidistribution gain}, afforded by transverse equidistribution, is crucial to the argument: it allows one to sum up the contributions from the individual pieces $f_{B, \mathrm{trans}, b}$ without any (significant) loss in $R$. It is this gain which accounts for the improved range of estimates for the $k$-broad inequalities under the positive-definite hypothesis (recall, the proof of the transverse equidistribution lemma relied heavily on the positive-definite condition). 

As part of this argument, to ensure that the $f_{B, \mathrm{trans}, b}$ form a reasonable decomposition of $f_{B, \mathrm{trans}}$ so that \eqref{heuristic decomposition} essentially holds, the set of translates $\mathfrak{B}$ must be chosen so that $\bigcup_{b \in \mathfrak{B}} N_{\rho^{1/2+\delta_m}}(Z-y+b)$ covers $N_{R^{1/2 + \delta_m}}(Z)$ (recall, by hypothesis $f_{B, \mathrm{trans}}$ is concentrated on wave packets in $\T_Z$ and so the mass of $T^{\lambda}f_{B,\mathrm{trans}}$ is concentrated in $N_{R^{1/2 + \delta_m}}(Z)$) and so that the $N_{\rho^{1/2+\delta_m}}(Z-y+b)$ are essentially disjoint. This can be achieved using a probabilistic construction. More precisely, fixing $B = B(y, \rho) \in \mathcal{B}_{\rho}$, one may show the following. 

\begin{lemma}\label{choosing the collection} There exist a finite set $\mathfrak{B}\subset B(0, 2R^{1/2+\delta_m})$ and a collection $\mathcal{B}^{\prime}\subseteq \{B_{K^2} \in \mathcal{B}_{K^2} : B_{K^2} \cap B(y,\rho) \neq \emptyset\}$ such that, up to inclusion of a rapidly decreasing error term,
\begin{equation}\label{transverse 5}
\|T^{\lambda} f_{B,\mathrm{trans}} \|_{\mathrm{BL}^p_{k,A/2}(B)}\lesssim (\log R)^2 \big(\sum_{B_{K^2}\in \mathcal{B}^{\prime}}\mu_{T^\lambda f_{B,\mathrm{trans}}}(B_{K^2})\big)^{1/p}
\end{equation}
and for each $B_{K^2}\in\mathcal{B}^{\prime}$ the following hold:
\begin{enumerate}[i)]
\item There exists some $b\in\mathfrak{B}$ such that 
\begin{equation}\label{random containment}
B_{K^2} \subset N_{\frac{1}{2}\rho^{1/2+\delta_m}}(Z+b);
\end{equation}
\item There exist at most $O(1)$ vectors $b\in\mathfrak{B}$ for which 
\begin{equation*}
B_{K^2} \cap  N_{\rho^{1/2+\delta_m}}(Z+b)\neq \emptyset.
\end{equation*}
\end{enumerate}
\end{lemma}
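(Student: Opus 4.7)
The argument I propose combines wave-packet concentration, a geometric covering by translates of $Z$, and dyadic pigeonholing. First, since $f_{B,\mathrm{trans}}$ is concentrated on wave packets from $\mathbb{T}_B \subseteq \mathbb{T}_Z$, Lemma~\ref{wave packet concentration lemma} lets me discard all $K^2$-balls not contained in $N_{CR^{1/2+\delta_m}}(Z) \cap 2B$ at the cost of a $\mathrm{RapDec}(R)$ error; denote the remaining collection by $\mathcal{B}_0$.

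Next I would construct $\mathfrak{B} \subset B(0, 2R^{1/2+\delta_m})$ so that the family $\{N_{\frac{1}{4}\rho^{1/2+\delta_m}}(Z+b)\}_{b \in \mathfrak{B}}$ covers $N_{CR^{1/2+\delta_m}}(Z) \cap 2B$ while the inflated family $\{N_{\rho^{1/2+\delta_m}}(Z+b)\}_{b \in \mathfrak{B}}$ has $O(1)$-overlap everywhere. Such a $\mathfrak{B}$ can be produced either by a greedy construction or, equivalently, by a random lattice averaging argument: for each $x \in N_{R^{1/2+\delta_m}}(Z) \cap 2B$ the set of translates $b$ for which $x \in N_{\rho^{1/2+\delta_m}}(Z+b)$ is a tubular neighbourhood of $x - Z$ whose volume is controlled by Wongkew's theorem (Theorem~\ref{Wongkew theorem}), and a standard packing argument then delivers a set with both the covering and overlap properties. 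Given such $\mathfrak{B}$, properties (i) and (ii) follow because $K^2 \ll \rho^{1/2+\delta_m}$: any $B_{K^2} \in \mathcal{B}_0$ whose centre lies in $N_{\frac{1}{4}\rho^{1/2+\delta_m}}(Z+b)$ is entirely contained in $N_{\frac{1}{2}\rho^{1/2+\delta_m}}(Z+b)$, and such a $B_{K^2}$, being much smaller than the width of the inflated neighbourhoods, meets at most $O(1)$ of them.

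Finally, the collection $\mathcal{B}'$ and the $(\log R)^2$ loss arise from two dyadic pigeonholing steps: one to restrict to a single dyadic scale of $\mu_{T^\lambda f_{B,\mathrm{trans}}}(B_{K^2})$---balls contributing below some negligible threshold may be absorbed into the rapidly-decaying error, leaving only $O(\log R)$ relevant levels---and a second to restrict to a single dyadic scale of $\mathrm{dist}(B_{K^2}, Z)$, which ensures property (i) holds robustly for every surviving ball. The main obstacle I anticipate is the construction of $\mathfrak{B}$: simultaneous fine covering and bounded overlap are delicate because translations $Z+b$ and $Z+b'$ nearly coincide when $b - b'$ is tangential to $Z$, so the effective net must be organised along the $(n-m)$-dimensional normal bundle of $Z$. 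Handling this correctly in the presence of curvature of $Z$ will likely require either a careful probabilistic averaging or a local foliation argument adapted to the geometry of the variety.
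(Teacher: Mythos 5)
You correctly identify the shape of the argument (wave‐packet concentration, a cover by translates of $Z$, two pigeonholings producing the $(\log R)^2$ loss) and, importantly, you correctly identify the genuine difficulty: translates $Z+b$ and $Z+b'$ with $b-b'$ nearly tangential to $Z$ have substantially overlapping $\rho^{1/2+\delta_m}$-neighbourhoods even when $|b-b'|$ is large. However, you do not actually resolve this difficulty, and the way you propose to finesse it does not work as stated.

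The step ``a standard packing argument then delivers a set with both the covering and overlap properties'' is the gap. The set $\{b : x \in N_{\rho^{1/2+\delta_m}}(Z+b)\} = N_{\rho^{1/2+\delta_m}}(x-Z)$ is not comparable to a ball: it is a thin tubular neighbourhood of an $m$-dimensional variety, elongated in the tangential directions, and Wongkew's theorem only gives its total volume, not the isoperimetric control that a Vitali-type packing argument needs. Consequently there is no obvious \emph{deterministic} choice of a lattice or greedy net $\mathfrak{B}\subset B(0,2R^{1/2+\delta_m})$ for which $\{N_{\frac14\rho^{1/2+\delta_m}}(Z+b)\}_{b\in\mathfrak{B}}$ covers the whole of $N_{CR^{1/2+\delta_m}}(Z)\cap 2B$ while the inflated family has $O(1)$ overlap \emph{at every point}; indeed your own closing remark flags exactly this issue. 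Your suggestion that a greedy construction is ``equivalent'' to the random one is not justified and is precisely where the argument would break down.

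The paper's resolution is weaker in its demands and stronger in its tools. It does \emph{not} ask that the translate family have globally bounded overlap: the covering property i) and the bounded-overlap property ii) are required only at the balls in $\mathcal{B}'$, and the point is to choose $\mathfrak{B}$ so that a $99\%$-large subfamily of balls inherits both. To achieve this one first pigeonholes the balls not merely by $\mathrm{dist}(B_{K^2},Z)$ — which is too crude, since it does not control the density of $Z$ near the ball — but by the quantity $\mathbf{P}\bigl(N_{\rho^{1/2+\delta_m}}(Z-x)\bigr)$ for a mollified uniform measure $\mathbf{P}$ on $B(0,R^{1/2+\delta_m})$. At a fixed dyadic level $\sim 2^s/|B(0,R^{1/2+\delta_m})|$ of this quantity one then draws $N\sim 2^{-s}|B(0,R^{1/2+\delta_m})|$ points $b_1,\dots,b_N$ i.i.d.\ from $\mathbf{P}$: for each ball centre $x$ in the pigeonholed class, the expected number of $b_j$'s with $x\in N_{\rho^{1/2+\delta_m}}(Z+b_j)$ is $\sim 1$ (covering and overlap are achieved \emph{in expectation}), and two applications of Markov's inequality promote this to properties i) and ii) for $99\%$ of the balls. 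The doubling property of the mollified measure $\mathbf{P}$ (not enjoyed by the sharp uniform measure) is what ties covering and overlap to the \emph{same} scale. Without this refined pigeonholing and without accepting that only $99\%$ of the balls need be kept, the tangential degeneracy you describe cannot be avoided, and your proposal as written does not close.
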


The proof of the lemma, which is slightly technical, is postponed until the end of the section. Temporarily assuming this result, one may argue as follows to complete the proof of Proposition~\ref{k-linear for m proposition}. 

For each $b \in \mathfrak{B}$ let $\mathcal{B}^{\prime}_b$ denote the collection of all $B_{K^2} \in \mathcal{B}^{\prime}$ for which \eqref{random containment} holds. Thus, by \eqref{transverse 5} and property i) in the lemma,
\begin{equation*}
\|T^{\lambda} f_{B,\mathrm{trans}} \|_{\mathrm{BL}^p_{k,A/2}(B)}\lesssim (\log R)^2 \Big(\sum_{b \in \mathfrak{B}} \sum_{B_{K^2}\in \mathcal{B}^{\prime}_b}\mu_{\tilde{T}^\lambda (f_{B,\mathrm{trans}})\;\widetilde{}\;}(B_{K^2}-y)\Big)^{1/p},
\end{equation*}
up to a rapidly decreasing error term.

Define the collection of wave packets
\begin{equation*}
\tilde{\T}_b^{\prime} := \Big\{ (\tilde{\theta}, \tilde{v}) \in \!\!\! \bigcup_{(\theta,v) \in \T_{B(y,\rho),\mathrm{trans}}} \!\!\! \tilde{\T}_{\theta,v} : \tilde{T}_{\tilde{\theta}, \tilde{v}} \cap \bigcup_{B_{K^2} \in \mathcal{B}^{\prime}_b} (B_{K^2}-y) \neq \emptyset \Big\};
\end{equation*}
note this set is a subset of the collection $\tilde{\T}_b$ defined in $\S$\ref{Adjusting wave packets section} and so, by Proposition~\ref{tangential thin packets}, one has $\tilde{\T}_b^{\prime} \subseteq \tilde{\T}_{Z-y+b}$. Therefore, if $f_{B,\mathrm{trans},b}$ is defined by 
\begin{equation*}
(f_{B,\mathrm{trans},b})\;\widetilde{}\; = \sum_{(\tilde{\theta}, \tilde{v}) \in \tilde{\T}_b^{\prime}} (f_{B,\mathrm{trans}})\;\widetilde{}_{\!\!\tilde{\theta}, \tilde{v}}\;,
\end{equation*} 
then $(f_{B,\mathrm{trans},b})\;\widetilde{}\;$ is concentrated on wave packets that are $\rho^{-1/2 + \delta_m}$-tangent to $Z-y+b$. Furthermore, again up to a rapidly decreasing error term, one has
\begin{equation*}
\|T^{\lambda} f_{B,\mathrm{trans}} \|_{\mathrm{BL}^p_{k,A/2}(B)}\lesssim (\log R)^2 \big(\sum_{b \in \mathfrak{B}} \|\tilde{T}^{\lambda} (f_{B,\mathrm{trans},b})\;\widetilde{}\; \|_{\mathrm{BL}^p_{k,A/2}(B(0,\rho))}^p\big)^{1/p}.
\end{equation*}
The function $(f_{B,\mathrm{trans},b})\;\widetilde{}\;$ satisfies the hypotheses of Proposition~\ref{k-linear for m proposition} at scale $\rho$ and therefore the radial induction hypothesis yields
\begin{equation*}
\big( \sum_{b \in \mathfrak{B}} \|\tilde{T}^{\lambda} (f_{B,\mathrm{trans},b})\;\widetilde{}\; \|_{\mathrm{BL}^p_{k,A/2}(B(0,\rho))}^p\big)^{1/p} \leq E_{m, A/2}(\rho)  \big( \sum_{b \in \mathfrak{B}} \|f_{B,\mathrm{trans},b}\|_{L^2(B^{n-1})}^p\big)^{1/p}.
\end{equation*}
On the other hand, it is claimed that 
\begin{equation*}
\big( \sum_{b \in \mathfrak{B}} \|f_{B,\mathrm{trans},b}\|_{L^2(B^{n-1})}^p\big)^{1/p} \lesssim R^{O(\delta_m)}(\rho/R)^{(n-m)(1/4-1/2p)}\|f_{B,\mathrm{trans}}\|_{L^2(B^{n-1})}.
\end{equation*}
Clearly it suffices to prove the above inequality for $p = 2$ and $p = \infty$; the desired estimate for $p = \bar{p}_0(k,m)$ then follows by interpolation (via H\"older's inequality). 

\subsubsection*{$p=2$} Observe that, by the orthogonality between the wave packets,
\begin{equation*}
\sum_{b \in \mathfrak{B}} \|f_{B,\mathrm{trans},b}\|_{L^2(B^{n-1})}^2 \sim  \sum_{(\tilde{\theta}, \tilde{v}) \in \tilde{\T}} \#\mathfrak{B}_{\tilde{\theta},\tilde{v}} \cdot \|(f_{B,\mathrm{trans}})\;\widetilde{}_{\!\!\tilde{\theta}, \tilde{v}}\;\|_{L^2(B^{n-1})}^2
\end{equation*}
where $\mathfrak{B}_{\tilde{\theta},\tilde{v}} := \{b \in \mathfrak{B} : (\tilde{\theta}, \tilde{v}) \in \tilde{\T}_b^{\prime}\}$. Fixing $(\tilde{\theta},\tilde{v}) \in \tilde{\T}$, it suffices to show that $\#\mathfrak{B}_{\tilde{\theta},\tilde{v}} \lesssim 1$. Supposing $\mathfrak{B}_{\tilde{\theta}, \tilde{v}} \neq \emptyset$, there exists some $B_{K^2} \in \mathcal{B}^{\prime}$ with $\tilde{T}_{\tilde{\theta},\tilde{v}} \cap (B_{K^2}-y) \neq \emptyset$. For any $b \in \mathfrak{B}_{\tilde{\theta}, \tilde{v}}$ it follows that $(\tilde{\theta}, \tilde{v}) \in \tilde{\T}_b$ and so $\tilde{T}_{\tilde{\theta}, \tilde{v}} \subseteq N_{\rho^{1/2 + \delta_m}}(Z - y + b)$ by Proposition~\ref{tangential thin packets}. Consequently, $B_{K^2} \cap N_{\rho^{1/2 + \delta_m}}(Z + b) \neq \emptyset$ for all $b \in \mathfrak{B}_{\tilde{\theta},\tilde{v}}$ and so the desired bound follows from property ii) of Lemma~\ref{choosing the collection}.

\subsubsection*{$p=\infty$} In this case, the estimate is a direct consequence of the transverse equidistribution estimates established in $\S$\ref{Transverse equidistribution section} and $\S$\ref{Adjusting wave packets section}. In particular, the function $f_{B, \mathrm{trans}}$ is concentrated on wave packets belonging to $\T_{Z, B}$ and so, by Lemma~\ref{equidistributedinputs2}, on deduces that
\begin{equation}\label{transverse equidistribution gain}
\|f_{B, \mathrm{trans},b}\|_{L^2(B^{n-1})} \lesssim R^{O(\delta_m)} (\rho/R)^{(n-m)/4} \|f_{B, \mathrm{trans}}\|_{L^2(B^{n-1})},
\end{equation}
as required. 

\subsection*{} The preceding analysis shows that $\|T^{\lambda} f_{B,\mathrm{trans}} \|_{\mathrm{BL}^p_{k,A/2}(B(0,R))}$ is bounded above by
\begin{equation*}
R^{O(\delta_m)}E_{m, A}(\rho)(\rho/R)^{(n-m)(1/4-1/2p)}  \|f_{B,\mathrm{trans}}\|_{L^2(B^{n-1})}
\end{equation*}
and therefore, to prove \eqref{transverse 3} and thereby close the induction argument in this case, it suffices to show that
\begin{equation}\label{transverse 6}
R^{O(\delta_m)}E_{m, A}(\rho)(\rho/R)^{(n-m)(1/4-1/2p)}\leq \bar{c}_{\varepsilon} E_{m, A}(R).
\end{equation}
For the exponent $p = \bar{p}(k,m)$ one has
\begin{equation*}
\rho^{-e_{k,n}(p) + 1/2} (\rho/R)^{(n-m)(1/4-1/2p)} \leq R^{-e_{k,n}(p) + 1/2}
\end{equation*}
whilst for the perturbed exponent $p = \bar{p}_0(k,m)$ the same inequality holds up to a $R^{O(\delta)}$ factor. Thus, the left-hand side of \eqref{transverse 6} is dominated by 
\begin{equation*}
 R^{O(\delta_m)} (\rho/R)^{\varepsilon}E_{m, A}(R).
\end{equation*}
Recalling \eqref{rho bounds} and the choice of parameters $\delta_l$ and $\delta_m$, one obtains the desired inequality. 




\subsection*{The probabilistic argument} The above argument establishes Proposition~\ref{k-linear for m proposition} except for the details of the probabilistic argument used to prove Lemma~\ref{choosing the collection}. 

\begin{proof}[Proof (of Lemma~\ref{choosing the collection})] Before commencing the argument proper, a few technical reductions are necessary. By a standard dyadic pigeonholing argument, one may assume that
\begin{equation}\label{transverse 3.5}
\|T^{\lambda} f_{B,\mathrm{trans}} \|_{\mathrm{BL}^p_{k,A/2}(B)}\lesssim \log R \big(\sum_{B_{K^2}\in \mathcal{B}^{\prime \prime}}\mu_{T^\lambda f_{B,\mathrm{trans}}}(B_{K^2})\big)^{1/p}
\end{equation}
for some sub-collection $\mathcal{B}^{\prime \prime} \subseteq \mathcal{B}_{K^2}$ with the property that
\begin{equation}\label{transverse 4}
\mu_{T^\lambda f_{B,\mathrm{trans}}}(B_{K^2}) \sim \mu_{T^\lambda f_{B,\mathrm{trans}}}(\bar{B}_{K^2}) \qquad \textrm{for all $B_{K^2}, \bar{B}_{K^2} \in \mathcal{B}^{\prime \prime}$.}
\end{equation}
Since $f_{B, \mathrm{trans}}$ is concentrated on wave packets from $\T_{Z, B}$, one may further assume that $B_{K^2} \cap B(y, \rho) \cap N_{R^{1/2 + \delta_m}}(Z) \neq \emptyset$ for all $B_{K^2} \in \mathcal{B}^{\prime \prime}$, at the cost of a rapidly decaying term on the right-hand side of \eqref{transverse 3.5}.
 
A set of translates $\mathfrak{B}$ will be selected at random from $\R^n$ according to a probability measure $\mathbf{P}$. The distribution $\mathbf{P}$ is taken to be a mollified version of the uniform probability distribution $\mathbf{P}_{\mathrm{unif}}$ on $B(0,R^{1/2 + \delta_m})$. In particular, let $\omega \colon \R^n \to [0, \infty)$ be given by\footnote{Here $\displaystyle (u)_+ := \left\{ \begin{array}{ll} 
u & \textrm{if $u \geq 0$} \\
0 & \textrm{if $u < 0$} 
\end{array}\right. $ for all $u \in \R$.}
\begin{equation*}
\omega(x) := \exp\Big( \frac{ - (|x| - R^{1/2 + \delta_m})_+}{\rho^{1/2 + \delta_m}} \Big) \qquad \textrm{for all $x\in \R^n$}
\end{equation*}
and $\mathbf{P}$ be the continuous probability measure on $\R^n$ with Radon--Nikodym derivative $\big(\int_{\R^n} \omega\big)^{-1} \omega$ (with respect to Lebesgue measure). This measure approximates $\mathbf{P}_{\mathrm{unif}}$ in the sense that
\begin{equation}\label{probability 1}
\mathbf{P}(\R^n \setminus B(0,2R^{1/2+\delta_m})) = \mathrm{RapDec}(R).
\end{equation}
The motivation behind the definition of $\mathbf{P}$ is that, in contrast with $\mathbf{P}_{\mathrm{unif}}$, it enjoys the doubling property
\begin{equation*}
\mathbf{P}\big(B(x,2r)\big)\lesssim \mathbf{P}\big(B(x,r)\big)\qquad \textrm{for all $x \in \R^n$ and $ 0 < r \lesssim \rho^{1/2+\delta_m}$.}
\end{equation*}
Consequently, by the Vitali covering lemma, for any $E \subseteq \R^n$ one has
\begin{equation}\label{probability 2}
\mathbf{P}\big(N_{2r}(E)\big)\lesssim \mathbf{P}\big(N_{r}(E)\big) \qquad \textrm{for all $0 < r \lesssim \rho^{1/2+\delta_m}$.}
\end{equation}

Recall, if $B(x, K^2) \in \mathcal{B}^{\prime \prime}$, then $B(x, K^2) \cap N_{R^{1/2 + \delta_m}}(Z) \neq \emptyset$ and so
\begin{equation*}
|B(0, R^{1/2 + \delta_m}) \cap N_{\rho^{1/2 + \delta_m}}(Z - x)| \gtrsim |B(0, \rho^{1/2 + \delta_m})|
\end{equation*}
which implies that
\begin{equation*}
\mathbf{P}\big(N_{\rho^{1/2 + \delta_m}}(Z - x)\big) \gtrsim \frac{|B(0, \rho^{1/2 + \delta_m})|}{|B(0, R^{1/2 + \delta_m})|}.
\end{equation*}
 
For any $s\in\mathbb{N}$ with $2^s\gtrsim |B(0, \rho^{1/2 + \delta_m})|$, define
\begin{equation*}
\mathcal{B}^s:=\bigg\{B(x,K^2) \in \mathcal{B}^{\prime \prime}: \mathbf{P}(N_{\rho^{1/2+\delta_m}}(Z-x))\sim \frac{2^s}{|B(0,R^{1/2+\delta_m})|}\bigg\}.
\end{equation*}
By a further pigeonholing argument, there exists some value of $s$ as above such that \eqref{transverse 5} holds with $\mathcal{B}^s$ in place of $\mathcal{B}^{\prime}$.

Let $\bar{C} \geq 1$ be a dimensional constant, chosen to be sufficiently large for the purposes of the following argument, and define $N := \ceil{\bar{C} 2^{-s}|B(0,R^{1/2+\delta_m})|} \in \N$. Recalling \eqref{rho bounds}, it follows that
\begin{equation}\label{probability 3}
N \lesssim \frac{|B(0,R^{1/2+\delta_m})|}{|B(0, \rho^{1/2 + \delta_m})|} \lesssim R^{2n \delta_l}.
\end{equation}

Suppose $\mathfrak{B} = \{b_1, \dots, b_N\}$ is a sequence of vectors in $\R^n$ formed by choosing each term independently at random according to the probability distribution $\mathbf{P}$. The problem is to show that $\mathfrak{B}$ satisfies each of the desired properties with high probability.




\subsubsection*{The containment property $\mathfrak{B}\subset B(0,2R^{1/2+\delta_m})$ } Recalling \eqref{probability 1} and \eqref{probability 3}, it follows that
\begin{align*}
\mathbf{P}\big(\mathfrak{B} \subseteq B(0,2R^{1/2 + \delta_m}) \big) &= 1 + \sum_{k=1}^N \binom{N}{k} (-1)^k \mathbf{P}\big(\R^n \setminus B(0,2R^{1/2 + \delta_m}) \big)^k \\
& = 1 +\mathrm{RapDec}(R).
\end{align*}
Indeed, for the second equality we use the elementary bound 
\begin{equation*}
    \Big|\sum_{k=1}^N \binom{N}{k} (-1)^k u^k \Big| = |(1-u)^N - 1| \leq N|u|  \qquad \textrm{for all $0 \leq u \leq 1$,}
\end{equation*}
which follows from the mean value theorem.
Thus, if $R \geq 1$ is sufficiently large depending only on $n$ and $\varepsilon$, then
\begin{equation}\label{containment is good}
\mathbf{P}\big(\mathfrak{B} \subseteq B(0,2R^{1/2 + \delta_m}) \big) \geq \frac{99}{100},
\end{equation}
which verifies that the desired containment property holds with high probability. 




\subsubsection*{Property i)} Let $B(x, K^2) \in \mathcal{B}^s$ and observe that 
\begin{align*}
\mathbf{P}\Big(B(x, K^2) \subseteq \bigcup_{j=1}^N N_{\frac{1}{2}\rho^{1/2 + \delta_m}}(Z + b_j) \Big) &\geq \mathbf{P}\Big( x \in \bigcup_{j=1}^N N_{\frac{1}{4}\rho^{1/2 + \delta_m}}(Z + b_j) \Big) \\
&=1 - \Big( 1 - \mathbf{P}\big(N_{\frac{1}{4}\rho^{1/2 + \delta_m}}(Z - x) \big) \Big)^N.
\end{align*}
By the definition of $\mathcal{B}^s$ and the doubling property \eqref{probability 2} of $\mathbf{P}$, it follows that $\mathbf{P}\big(N_{\frac{1}{4}\rho^{1/2 + \delta_m}}(Z - x) \big) = c \bar{C}/N$ for some dimensional constant $c > 0$ and, consequently, 
\begin{equation*}
\mathbf{P}\Big(B(x, K^2) \subseteq \bigcup_{j=1}^N N_{\frac{1}{2}\rho^{1/2 + \delta_m}}(Z + b_j) \Big) \geq 1 - (1 - c\bar{C}/N)^{N} > 1- e^{-c\bar{C}}.
\end{equation*}
Let $X$ denote the random variable that counts the number of $B_{K^2} \in \mathcal{B}^s$ for which $B_{K^2} \subseteq N_{\frac{1}{2}\rho^{1/2 + \delta_m}}(Z + b)$ for some $b \in \mathfrak{B}$. If $\bar{C}$ is suitably chosen, then the above inequality implies that the expected value of $X$ satisfies $\mathbf{E}[X] \geq (1 - 10^{-4}) \#\mathcal{B}^s$. By Markov's inequality,
\begin{equation}\label{X is good}
\mathbf{P}\Big(X > \frac{99}{100}\#\mathcal{B}^s \Big) \geq 1- \frac{100}{\#\mathcal{B}^s} \mathbf{E}[\#\mathcal{B}^s - X] \geq \frac{99}{100}, 
\end{equation}
which verifies that property i) of the lemma holds with high probability. 




\subsubsection*{Property ii)}

For each $x \in \R^n$ let $M_x$ denote the random variable that counts the number of sets $N_{\rho^{1/2 + \delta_m}}(Z +b_j)$ containing $x$; that is,
\begin{equation*}
M_x(b_1,\ldots,b_N) :=\sum_{j=1}^N \chi_{N_{2\rho^{1/2+\delta_m}}(Z+b_j)}(x).
\end{equation*}
If $B(x, K^2) \in \mathcal{B}^s$, then
\begin{equation*}
\mathbf{E}[M_{x}] = \sum_{j=1}^N \mathbf{P}\big(N_{2\rho^{1/2+\delta_m}}(Z - x)\big) \sim N \frac{2^s}{|B(0, R^{1/2+\delta_m})|}\sim 1. 
\end{equation*}
Now let $C \geq 1$ be a dimensional constant and $Y$ denote the random variable that counts the number of $B(x, K^2) \in  \mathcal{B}^s$ for which $M_x \leq C$. By a two-fold application of Markov's inequality, if $C$ is chosen to be sufficiently large, then 
\begin{equation}\label{Y is good}
\mathbf{P}\Big( Y > \frac{99}{100}\#\mathcal{B}^s   \Big) \geq \mathbf{P}\Big( \#\mathcal{B}^s - \frac{1}{C} \sum_{B(x,K^2) \in \mathcal{B}^s} M_x > \frac{99}{100}\#\mathcal{B}^s   \Big) \geq \frac{99}{100}, 
\end{equation}
which verifies that property ii) of the lemma holds with high probability. \\ 

In view of \eqref{containment is good}, \eqref{X is good} and \eqref{Y is good}, there exists a choice of $\mathfrak{B} \subseteq B(0,2R^{1/2 + \delta_m})$ and a subset $\mathcal{B}^{\prime} \subseteq \mathcal{B}^s$ of cardinality comparable to that of $\mathcal{B}^s$ for which the desired properties i) and ii) hold. Finally, by \eqref{transverse 4}, the inequality \eqref{transverse 5} also holds for the sub-collection $\mathcal{B}^{\prime}$.

\end{proof}




\section{Going from $k$-broad to linear estimates }\label{k-broad to linear section}




\subsection{Applying the Bourgain--Guth method} Theorem~\ref{main theorem} can be deduced as a consequence of the $k$-broad estimates via the method of \cite{Bourgain2011}. The key proposition is as follows.

\begin{proposition}\label{proposition Bourgain Guth} Suppose that for all $K \geq 1$ and all $\varepsilon > 0$ any H\"ormander-type operator $T^{\lambda}$ with reduced positive-definite phase obeys the $k$-broad inequality
\begin{equation}\label{proposition Bourgain Guth 1}
\|T^{\lambda}f\|_{\mathrm{BL}_{k,A}^p(B(0,R))} \lesssim_{\varepsilon} K^{C_{\varepsilon}} R^{\varepsilon} \|f\|_{L^p(B^{n-1})}
\end{equation}
for some fixed $k, A, p, q, C_{\varepsilon}$ and all $R \geq 1$. If  
\begin{equation*}
2 \cdot \frac{2n - k +2}{2n - k} \leq p \leq 2 \cdot \frac{k-1}{k-2},
\end{equation*}
then any H\"ormander-type operator $T^{\lambda}$ with positive-definite phase satisfies 
\begin{equation*}
\|T^{\lambda}f\|_{L^p(B(0,R))} \lesssim_{\phi, \varepsilon} R^{\varepsilon} \|f\|_{L^p(B^{n-1})}. 
\end{equation*}
\end{proposition}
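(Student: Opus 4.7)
The plan is to prove the desired linear $L^p$-estimate by induction on $R$, executing the Bourgain--Guth scheme that extracts a linear bound from a broad-norm hypothesis via $\ell^p$-decoupling and parabolic rescaling. By the reductions of Section~\ref{Reductions section} it suffices to work in the class of operators with reduced positive-definite phase, and I would prove $\|T^{\lambda}f\|_{L^p(B(0,R))} \leq \bar{C}_\varepsilon R^{\varepsilon} \|f\|_{L^p(B^{n-1})}$ uniformly across this class, with $\bar{C}_\varepsilon$ to be fixed large enough to close the induction.

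Fix a large scale $R$, take $K := R^\delta$ for a small $\delta > 0$, cover $B(0,R)$ by balls $B_{K^2}$ of radius $K^2$, and decompose $f = \sum_\tau f_\tau$ via a smooth partition adapted to a finitely-overlapping cover of $B^{n-1}$ by $K^{-1}$-caps. For each $B_{K^2}$ select $V_1,\ldots,V_A \in \mathrm{Gr}(k-1,n)$ realising the minimum in the definition of $\mu_{T^{\lambda}f}(B_{K^2})$; the $O(K^{n-1})$ caps lying outside every $V_a$ each contribute $\leq \mu_{T^{\lambda}f}(B_{K^2})^{1/p}$ in $L^p(B_{K^2})$, so the triangle inequality gives
\begin{equation*}
\|T^{\lambda}f\|_{L^p(B_{K^2})}^p \lesssim_A K^{O(1)}\mu_{T^{\lambda}f}(B_{K^2}) + \sum_{a=1}^A \Bigl\| \sum_{\tau \in V_a} T^{\lambda}f_\tau \Bigr\|_{L^p(B_{K^2})}^p.
\end{equation*}
Summing over $B_{K^2}$ and applying the hypothesis \eqref{proposition Bourgain Guth 1} to the first term, after pigeonholing the $V_a$ over a fixed $K^{-1}$-net in $\mathrm{Gr}(k-1,n)$, yields an acceptable $\lesssim K^{O(1)+C_\varepsilon}R^{p\varepsilon}\|f\|_{L^p}^p$ provided $\delta$ is suitably small.

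The narrow pieces are handled by $\ell^p$-decoupling and the induction hypothesis. The caps $\tau$ with $\angle(G^{\lambda}(\bar{x};\omega_\tau), V_a) \leq K^{-1}$ cluster in a $K^{-1}$-neighbourhood of a $(k-2)$-dimensional submanifold of $B^{n-1}$ and number only $O(K^{k-2})$, while the corresponding frequencies concentrate near a $(k-2)$-dimensional positively-curved subvariety of the hypersurface $\Sigma_{\bar{x}} = \{\partial_x \phi^{\lambda}(\bar{x}; \omega) : \omega \in \Omega\}$. Locally on each $B_{K^2}$ I would approximate $T^{\lambda}f_\tau$ by a Fourier extension operator associated with $\Sigma_{\bar{x}}$, Taylor-expanding $\phi^\lambda$ around $\bar{x}$ and absorbing the quadratic remainder through a Fourier series expansion akin to that in the proof of Lemma~\ref{reverse Plancherel}; the Bourgain--Demeter $\ell^p$-decoupling theorem then applies in the range $p \leq 2(k-1)/(k-2)$ to give
\begin{equation*}
\Bigl\| \sum_{\tau \in V_a} T^{\lambda}f_\tau \Bigr\|_{L^p(B_{K^2})}^p \lesssim_\varepsilon K^{\varepsilon + (k-2)(p/2-1)} \sum_{\tau} \|T^{\lambda}f_\tau\|_{L^p(w_{B_{K^2}})}^p.
\end{equation*}
For each cap $\tau$, the parabolic rescaling of Section~\ref{parabolic rescaling subsection} converts $T^{\lambda}f_\tau$ into an operator with reduced positive-definite phase at scale $\lambda/K^2$ acting on $B(0, R/K^2)$, to which the induction hypothesis applies; combining with $\sum_\tau \|f_\tau\|_{L^p}^p \lesssim \|f\|_{L^p}^p$ and tracking the $K^{(n+1)/p}$ and $K^{-(n-1)/p'}$ factors from the rescaling of $T^\lambda f_\tau$ and of $f_\tau$ respectively, the narrow contribution is dominated by $K^{E(p,n,k)}(R/K^2)^{p\varepsilon}\|f\|_{L^p}^p$ for an explicit exponent $E(p,n,k)$; a direct computation shows that $E(p,n,k) \leq 0$ is equivalent to $p \geq 2(2n-k+2)/(2n-k)$, which is precisely what is needed to close the induction when $\delta$ is small.

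The main obstacle is executing the $\ell^p$-decoupling step in the variable-coefficient setting: unlike a Fourier extension operator, $T^{\lambda}f_\tau$ is not literally frequency-supported on a cap of a single fixed curved hypersurface, and its approximation by such an extension operator on $B_{K^2}$ requires a delicate combination of the Taylor expansion of $\phi^\lambda$, the locally constant property of Section~\ref{locally constant section}, and a Fourier series handling of the quadratic remainder along the lines of Lemma~\ref{reverse Plancherel}. Carrying this reduction out so as to produce exactly the cap structure required by the Bourgain--Demeter theorem, with uniform constants across the class of reduced positive-definite phases, is the most delicate part of the argument.
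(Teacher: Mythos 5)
Your proposal is correct and follows essentially the same route as the paper: broad--narrow decomposition at scale $K^2$, the $k$-broad hypothesis for the broad part, variable-coefficient $\ell^p$-decoupling (the paper's Theorem~\ref{decoupling theorem}) plus parabolic rescaling (Lemma~\ref{parabolic rescaling lemma}) for the narrow part, with the exponent count $(k-2)(p/2-1)+2n-(n-1)p\leq 0$ giving exactly the lower constraint on $p$ and the decoupling range giving the upper one. The only differences are bookkeeping: the paper absorbs the induction quantity $Q_p(R)$ (defined over all radii $\leq R$) with $K=K_0R^{2\delta/e(k,p)}$ rather than inducting at scale $R/K^2$ with $K=R^{\delta}$, and the rescaled operator actually lives on an anisotropic ellipse rather than $B(0,R/K^2)$ — a point the paper handles inside the proof of the rescaling lemma via a discrete reformulation and the locally constant property, and which you should fold into the delicate-step discussion alongside the decoupling approximation.
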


Theorem~\ref{main theorem} is now a direct consequence of Proposition~\ref{proposition Bourgain Guth} and Theorem~\ref{k-broad theorem}. 

\begin{proof}[Proof (of Theorem~\ref{main theorem})] Theorem~\ref{k-broad theorem} implies that for each $2 \leq k \leq n$ the estimate \eqref{proposition Bourgain Guth 1} is valid for all $p \geq \bar{p}(n,k)$. Thus, for each $k$ satisfying the constraint 
\begin{equation}\label{k constraint}
 2 \cdot \frac{2n - k +2}{2n - k} \leq \bar{p}(n,k) = 2\cdot \frac{n+k}{n+k-2}
\end{equation}
one may apply Proposition~\ref{proposition Bourgain Guth} with $\bar{p}(n,k) \leq  p \leq 2 (k-1)/(k-2)$ to obtain a (potentially empty) range of estimates for the linear problem. Since $\bar{p}(n,k)$ is a decreasing function of $k$, the optimal estimate is given by applying Proposition~\ref{proposition Bourgain Guth} as above with $k$ chosen to be as large as possible subject to \eqref{k constraint}. Rearranging \eqref{k constraint} yields $k \leq n/2 + 1$. Defining $k_* := n/2 + 1$ for $n$ even and $k_* := (n+1)/2$ for $n$ odd, one may readily verify that $\bar{p}(n,k_*) \leq 2 \cdot \frac{k_* - 1}{k_*-2}$ and so the linear estimate holds for all $p \geq \bar{p}(n,k_*)$. A simple computation shows that this corresponds to the range of estimates stated in Theorem~\ref{main theorem}. 
 \end{proof}

For contrast, it is noted that there is also a version of Proposition~\ref{proposition Bourgain Guth} which holds without the positive-definite assumption. This can be combined with the multilinear estimates of Bennett--Carbery--Tao \cite{Bennett2006} to prove Theorem~\ref{general main theorem} (this is essentially the argument used in \cite{Bourgain2011}).

\begin{proposition}\label{general proposition Bourgain Guth} Suppose that for all $K \geq 1$ and all $\varepsilon > 0$ any H\"ormander-type operator $T^{\lambda}$ with reduced phase\footnote{The notation of a \emph{reduced} phase under a general signature hypothesis has not been introduced but is almost identical to that used in the positive-definite case. Indeed, the only difference is that the first condition in \eqref{close to identity} is suitably modified, with $I_{n-1}$ replaced with a diagonal matrix with diagonal entries $1$ and $-1$.} obeys the $k$-broad inequality
\begin{equation*}
\|T^{\lambda}f\|_{\mathrm{BL}_{k,A}^p(B(0,R))} \lesssim_{\varepsilon}  K^{C_{\varepsilon}}R^{\varepsilon} \|f\|_{L^p(B^{n-1})}
\end{equation*}
for some fixed $k, A, p, q, C_{\varepsilon}$ and all $R \geq 1$. If  
\begin{equation*}
2 \cdot \frac{n - k +2}{n - k+1} \leq p,
\end{equation*}
then any H\"ormander-type operator $T^{\lambda}$ satisfies 
\begin{equation*}
\|T^{\lambda}f\|_{L^p(B(0,R))} \lesssim_{\phi, \varepsilon} R^{\varepsilon} \|f\|_{L^p(B^{n-1})}. 
\end{equation*}
\end{proposition}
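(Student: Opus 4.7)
The proof closely parallels that of the positive-definite analogue Proposition~\ref{proposition Bourgain Guth}, which is carried out in full detail earlier in this section: one performs an induction on $R$, combined with the standard Bourgain--Guth broad/narrow decomposition on $K^2$-balls, parabolic rescaling, and the hypothesised $k$-broad input. The only substantive departure from the positive-definite argument occurs in the treatment of the narrow contribution, where the absence of H2$^+$) precludes any appeal to the Bourgain--Demeter $\ell^p$-decoupling inequality; one must instead settle for elementary orthogonality-based substitutes, and it is this loss which produces the strictly narrower admissible range $p \geq 2(n-k+2)/(n-k+1)$.

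One first carries out the reductions of $\S$\ref{Reductions section} to assume the phase is reduced, with the obvious modification that the first condition in \eqref{close to identity} is replaced by the analogous statement in which $\mathrm{I}_{n-1}$ is the diagonal matrix of the appropriate indefinite signature; all the derivative bounds and geometric lemmas of $\S$\ref{Reductions section} extend to this setting without essential change. Letting $Q_p(R)$ denote the smallest constant such that $\|T^\lambda g\|_{L^p(B(0,R))} \leq Q_p(R)\|g\|_{L^p(B^{n-1})}$ holds uniformly over translates of H\"ormander-type operators with reduced phase, one proves $Q_p(R) \lesssim_\varepsilon R^\varepsilon$ by induction on $R$; the base case $R \lesssim_\varepsilon K^n$ is the trivial bound $Q_p(R) \lesssim R^{n/p}$. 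For the inductive step, cover $B(0,R)$ by finitely-overlapping $K^2$-balls $\{B_{K^2}\}$, decompose $f = \sum_\tau f_\tau$ over $K^{-1}$-caps, and for each $B_{K^2}$ pigeonhole subspaces $V_1,\ldots,V_A \in \mathrm{Gr}(k-1,n)$ realising the minimum in $\mu_{T^\lambda f}(B_{K^2})$ so that
\begin{equation*}
\|T^\lambda f\|_{L^p(B_{K^2})}^p \lesssim_A K^{O(1)}\mu_{T^\lambda f}(B_{K^2}) + \sum_{a=1}^A \Bigl\|\sum_{\tau \in V_a} T^\lambda f_\tau\Bigr\|_{L^p(B_{K^2})}^p.
\end{equation*}
Summing in $B_{K^2}$, the broad contribution is controlled directly by the hypothesised $k$-broad estimate and contributes at most $K^{C_\varepsilon p} R^{\varepsilon p}\|f\|_{L^p}^p$.

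The heart of the matter is the narrow contribution, where for each $V = V_a$ there are $N \sim K^{k-2}$ caps $\tau$ with $\angle(G^\lambda(\bar{x};\tau),V) \leq K^{-1}$. One interpolates H\"ormander's $L^2$ bound (Lemma~\ref{Hormander L2}), which by $L^2$-orthogonality of the $f_\tau$ gives $\|\sum_{\tau\in V} T^\lambda f_\tau\|_{L^2(B(0,R))} \lesssim R^{1/2}(\sum_\tau \|f_\tau\|_{L^2}^2)^{1/2}$, against the trivial $L^\infty$ bound $\|\sum_{\tau\in V} T^\lambda f_\tau\|_{L^\infty} \leq \sum_\tau \|f_\tau\|_{L^1}$, and applies Bernstein-type inequalities $\|f_\tau\|_{L^q} \leq K^{-(n-1)(1/q-1/p)}\|f_\tau\|_{L^p}$ valid on $K^{-1}$-caps together with H\"older summation in $\tau$. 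Parabolic rescaling (the general-signature analogue of Lemma~\ref{second reduction lemma}) applied to each $T^\lambda f_\tau$, combined with the inductive hypothesis $Q_p(R/K^2) \lesssim_\varepsilon (R/K^2)^\varepsilon$, then produces an overall bound on the total narrow contribution whose $K$-exponent is non-positive (and strictly negative away from the endpoint) precisely when $(n-k+1)(p-2) \geq 2$, i.e. $p \geq 2(n-k+2)/(n-k+1)$; choosing $K = K(\varepsilon)$ sufficiently large then closes the induction. The main technical obstacle is the careful bookkeeping in this narrow step: the orthogonality/Cauchy--Schwarz approach loses a factor of $K^{(k-2)(1/2-1/p)}$ relative to the decoupling bound available in the positive-definite setting, and it is precisely this loss which shifts the admissible threshold from the value $2(2n-k+2)/(2n-k)$ stated in Proposition~\ref{proposition Bourgain Guth} to $2(n-k+2)/(n-k+1)$ here. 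A detailed execution of this combination, originally carried out in \cite[\S\S5--6]{Bourgain2011}, transfers to the variable-coefficient setting with only cosmetic modifications following the scheme of the positive-definite proof.
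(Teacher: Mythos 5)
Your overall strategy is correct and coincides with the paper's: run the Bourgain--Guth broad/narrow decomposition over $K^2$-balls, control the broad term via the hypothesised $k$-broad inequality, control the narrow term by a flat (orthogonality-based) substitute for Bourgain--Demeter decoupling, apply parabolic rescaling to each cap, and close an induction on $R$. You also correctly identify that the substitute decoupling loses a factor $K^{(k-2)(1/2-1/p)}$ relative to the positive-definite case, and you arrive at the right threshold $p \geq 2(n-k+2)/(n-k+1)$. The paper packages the orthogonality-based substitute as Lemma~\ref{general decoupling theorem}, attributed to Bourgain--Guth, which gives $\|\sum_{\tau \in V}T^\lambda g_\tau\|_{L^p(B_{K^2})} \lesssim_\delta K^{(m-1)(1-2/p)+\delta}(\sum_{\tau\in V}\|T^\lambda g_\tau\|_{L^p(w_{B_{K^2}})}^p)^{1/p}$ on each $K^2$-ball, and then propagates the worse $K$-exponent through the Proposition~\ref{proposition Bourgain Guth} machinery verbatim.

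However, the implementation you describe for the narrow term contains a genuine error. You invoke H\"ormander's global $L^2$ bound
\begin{equation*}
\big\|\sum_{\tau\in V} T^\lambda f_\tau\big\|_{L^2(B(0,R))} \lesssim R^{1/2}\big(\sum_\tau \|f_\tau\|_{L^2}^2\big)^{1/2}
\end{equation*}
as the $L^2$ ingredient of the interpolation. This introduces an $R^{1/2}$ factor; interpolating with the trivial $L^\infty$ bound then leaves an $R^{2/p}$-type power in the narrow contribution which cannot be absorbed into $Q_p(R)\lesssim R^\varepsilon$, and the induction does not close. What is actually needed is \emph{local} $L^2$-orthogonality of the $T^\lambda g_\tau$ on each $B_{K^2}$ (the $T^\lambda g_\tau$ have essentially disjoint frequency supports at scale $K^{-1}$, so $\|\sum_\tau T^\lambda g_\tau\|_{L^2(B_{K^2})}^2 \lesssim \sum_\tau \|T^\lambda g_\tau\|_{L^2(w_{B_{K^2}})}^2$ with \emph{no} power of $R$). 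Interpolating that with the trivial $L^\infty$ estimate and applying H\"older in $\tau$ gives precisely Lemma~\ref{general decoupling theorem}; its right-hand side retains the sum over caps so that parabolic rescaling (which is Lemma~\ref{parabolic rescaling lemma}, not Lemma~\ref{second reduction lemma} as you cite) can then be applied term-by-term. Moreover, applying a decoupling inequality on each $B_{K^2}$ separately is what allows the family $V_1,\ldots,V_A$ to vary from ball to ball; your description interpolates globally over $B(0,R)$ with a fixed $V$, which does not handle that dependence. Once the $L^2$ input is replaced by the local orthogonality estimate and the decoupling is applied at scale $K^2$, your argument agrees with the paper's.
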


Theorem~\ref{general main theorem} is now a direct consequence of Proposition~\ref{general proposition Bourgain Guth} and the Bennett--Carbery--Tao theorem. 

\begin{proof}[Proof (of Theorem~\ref{general main theorem})] The proof is precisely the same as that of Theorem~\ref{main theorem} above, but with the exponent $2k/(k-1)$ from the Bennett--Carbery--Tao theorem (that is, Theorem~\ref{Bennett Carbery Tao theorem} or, more precisely, the $k$-broad version given by Corollary~\ref{Bennett Carbery Tao corollary}) playing the r\^ole of $\bar{p}(n,k)$.
 \end{proof}

\begin{remark} From the above, the narrow range of exponents in Theorem~\ref{general main theorem} compared with Theorem~\ref{main theorem} can be broadly attributed to:
\begin{enumerate}
    \item The weaker $k$-broad estimates coming from the Bennett--Carbery--Tao theorem compared with Theorem~\ref{k-broad theorem}. One cannot work with stronger $k$-broad estimates than those given by Corollary~\ref{Bennett Carbery Tao corollary} due to the failure of transverse equidistribution in the mixed-signature case.
    \item The more stringent constraints on $p$ in Proposition~\ref{general proposition Bourgain Guth} compared with Proposition~\ref{proposition Bourgain Guth}. These additional constraints arise due to the fact that hyperbolic paraboloids contain linear subspaces, as discussed below. 
\end{enumerate}
 \end{remark}

To establish the main result, Theorem~\ref{main theorem}, it remains to prove Proposition~\ref{proposition Bourgain Guth}. Both Proposition~\ref{proposition Bourgain Guth} and Proposition~\ref{general proposition Bourgain Guth} can be established using very similar arguments: in fact, the proofs differ only at one (crucial) point. To highlight the essential differences between the positive-definite and mixed-signature cases, at the end of this subsection it is indicated how one may adapt the proof of Proposition~\ref{proposition Bourgain Guth} to yield Proposition~\ref{general proposition Bourgain Guth}.

The proof of Proposition~\ref{proposition Bourgain Guth} is an induction on scales argument. The induction quantity is defined as follows.

\begin{definition} For $1 \leq p \leq \infty$ and $R \geq 1$ let $Q_{p}(R)$ denote the infimum over all constants $C$ for which the estimate
\begin{equation*}
\|T^{\lambda}f\|_{L^p(B(0,r))} \leq C\|f\|_{L^p(B^{n-1})}
\end{equation*}
holds for $1 \leq r \leq R$ and all H\"ormander-type operators $T^{\lambda}$ with reduced positive-definite phase and all $\lambda \geq R$. 
\end{definition}

With this definition, the problem is now to show that, under the hypotheses of Proposition~\ref{proposition Bourgain Guth}, one has
\begin{equation}\label{induction quantity bound}
 Q_{p}(R) \lesssim_{\varepsilon} R^{\varepsilon}
\end{equation}
for all $\varepsilon > 0$ and $1 \leq R \leq \lambda$. Indeed, this establishes the linear estimates in the case of reduced phases, and then the arguments of $\S$\ref{Reductions section} extend the result to general H\"ormander-type operators with positive-definite phase.

It is useful to introduce some of the ingredients of the proof of \eqref{induction quantity bound}. Decompose $B(0,R)$ into balls $B_{K^2}$ of radius $K^2$ and consider $\|T^{\lambda}f\|_{L^p(B_{K^2})}$ for some fixed $B_{K^2}$ with centre $\bar{x}$. To bound this quantity one expresses $f$ as a sum of two terms: a ``narrow'' and a ``broad'' term. The narrow term is of the form 
\begin{equation}\label{narrow term}
\sum_{\tau \in V_a \textrm{ for some }a} f_{\tau} ,
\end{equation}
consisting of contributions to $f$ from caps for which $G^{\lambda}(\bar{x};\tau)$ makes a small angle with some member of a family of $(k-1)$-planes. The broad term consists of the contributions to $f$ from all the remaining caps. One may choose the planes $V_1, \dots, V_A$ so that the broad term can be bounded by the $k$-broad inequality from the hypothesis. Thus, the problem is roughly reduced to studying the case where $f$ is of the form \eqref{narrow term}. To treat this case, the first step is to apply an $\ell^p$-decoupling inequality to isolate the contributions of the different $f_{\tau}$.

\begin{theorem}\label{decoupling theorem} Suppose that $T^{\lambda}$ is a H\"ormander-type operator with reduced positive-definite phase. If $V \subseteq \R^n$ is an $m$-dimensional linear subspace, then for $2 \leq p \leq 2m/(m-1)$ and $\delta > 0$ one has
\begin{equation*}
\big\| \sum_{\tau \in V} T^{\lambda} g_{\tau} \big\|_{L^p(B_{K^2})} \lesssim_{\delta} K^{(m-1)(1/2 - 1/p) + \delta} \big(\sum_{\tau \in V} \| T^{\lambda} g_{\tau}\|_{L^p(w_{B_{K^2}})}^p \big)^{1/p}.
\end{equation*}
Here the sums are over all caps $\tau$ for which $\angle(G^{\lambda}(\bar{x},\tau), V) \leq K^{-1}$ where $\bar{x}$ is the centre of $B_{K^2}$ and $w_{B_{K^2}}$ is a rapidly decaying weight of the form of that defined in \eqref{weight function}.
\end{theorem}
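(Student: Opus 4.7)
The plan is to reduce the inequality to the celebrated $\ell^p$-decoupling theorem of Bourgain and Demeter for positively curved hypersurfaces. The positive-definite hypothesis H2$^+$) will be essential both in approximating $T^\lambda$ on $B_{K^2}$ by an extension operator for a positively curved hypersurface and in guaranteeing that Bourgain--Demeter applies to the relevant $(m-1)$-dimensional slice.

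First, localise the phase: Taylor-expand
\begin{equation*}
\phi^\lambda(x;\omega) = \phi^\lambda(\bar x;\omega) + \langle x-\bar x, \partial_x\phi^\lambda(\bar x;\omega)\rangle + \mathcal{E}(x-\bar x;\omega)
\end{equation*}
around the centre $\bar x$ of $B_{K^2}$, with $\mathcal{E}$ quadratic in $x-\bar x$. Lemma~\ref{third reduction lemma} gives $|\partial_x^\alpha\phi| \lesssim c_{\mathrm{par}}$ for $|\alpha|\geq 2$, so $|\mathcal{E}(x-\bar x;\omega)| = O(c_{\mathrm{par}} K^4/\lambda)$ on $B_{K^2}$; one may assume $K^4 \leq \lambda$ after a parabolic rescaling in the spirit of $\S$\ref{parabolic rescaling subsection}. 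Expanding $e^{2\pi i \mathcal{E}(\cdot;\omega)}$ times the amplitude as a Fourier series in $x$ and $\omega$ exactly as in the proof of Lemma~\ref{reverse Plancherel}, one rewrites $T^\lambda g_\tau|_{B_{K^2}}$ as a rapidly convergent sum of modulated copies of $E_{\bar x} g^{(k)}_\tau$, where $E_{\bar x}$ is the extension operator for the frozen hypersurface $\Sigma_{\bar x} := \{\partial_x\phi^\lambda(\bar x;\omega) : \omega \in \Omega\}$ and each $g^{(k)}_\tau$ is a modulation of $g_\tau$ still supported in $\tau$. Matters thus reduce, with an $O(1)$ loss from the rapidly decaying Fourier coefficients, to the decoupling inequality for $E_{\bar x}$.

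Next, reparametrise $\Sigma_{\bar x}$ as the graph of $h_{\bar x}(u) := \partial_{x_n}\phi^\lambda(\bar x; \Psi^\lambda(\bar x;u))$ using the diffeomorphism $u \mapsto \Psi^\lambda(\bar x;u)$ of $\S$\ref{Reductions section}. By Lemma~\ref{hx nearly paraboloid}, $h_{\bar x}$ is an $O(c_{\mathrm{par}})$-perturbation of $|u|^2/2$ with uniformly positive-definite Hessian. The condition $\angle(G^\lambda(\bar x;\tau), V) \leq K^{-1}$ translates, via $\Psi^{-1}$, into the statement that the corresponding $K^{-1}$-caps in $u$-space lie within the $O(K^{-1})$-neighbourhood of the smooth $(m-1)$-dimensional submanifold $S_u := \Psi^{-1}(\{\omega : G^\lambda(\bar x;\omega) \in V\})$, whose smoothness is supplied by Claim 1 in the proof of Lemma~\ref{Transverse equidistribution lemma 2}. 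Lifting $S_u$ to the graph of $h_{\bar x}$ and projecting onto the $m$-dimensional subspace $V_\xi = A_u \times \R$ produces an $(m-1)$-dimensional hypersurface in $\R^m$ whose second fundamental form is positive-definite --- inherited from the convexity of $h_{\bar x}$ together with the transversality of $V_\xi$ to the conormal directions of $S_u$, as established in Claim 4 in the proof of Lemma~\ref{Transverse equidistribution lemma 2}.

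Finally, apply the $\ell^p$-decoupling theorem of Bourgain and Demeter at scale $K^{-1}$ in the ball $B_{K^2}$ to this positively curved $(m-1)$-dimensional hypersurface in $\R^m$. Since $2m/(m-1) < 2(m+1)/(m-1)$, the range $2 \leq p \leq 2m/(m-1)$ is strictly contained in the Bourgain--Demeter range, and the theorem supplies the factor $K^{(m-1)(1/2 - 1/p) + \delta}$ (after converting $\ell^2$- to $\ell^p$-decoupling by H\"older), with the $\ell^p(L^p)$-norm over the $K^{-1}$-caps on the right-hand side. Combining this with the Fourier series expansion from the first step --- whose modulated cap structure is preserved --- completes the proof. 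The main technical obstacle is effecting the constant-coefficient reduction in an $L^p$-setting: the pointwise Fourier-series identity from the proof of Lemma~\ref{reverse Plancherel} is the correct tool, whereas more $L^2$-centric devices such as Plancherel are ineffective for $p \neq 2$.
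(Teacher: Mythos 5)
Your route is essentially the one the paper itself indicates: the paper gives no detailed proof of Theorem~\ref{decoupling theorem}, only a citation to Bourgain's lower-dimensional decoupling inequality and the footnote observing that, since the estimate is applied at a spatial scale $K^2 \ll \lambda^{1/2}$, one may approximate $T^{\lambda}$ on $B_{K^2}$ by the extension operator of the frozen elliptic hypersurface $\Sigma_{\bar{x}}$ and invoke the constant-coefficient result. Your constant-coefficient reduction via the Fourier-series device of Lemma~\ref{reverse Plancherel}, and your identification of the relevant geometry (the $(m-1)$-dimensional slice $S_u$ from Claim~1 and the transversality from Claim~4 of the proof of Lemma~\ref{Transverse equidistribution lemma 2}, feeding into Bourgain--Demeter in dimension $m$ followed by H\"older to pass from $\ell^2$ to $\ell^p$) are exactly the right ingredients, and the exponent and range check out.

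The one step that does not hold up is the parenthetical claim that ``one may assume $K^4 \leq \lambda$ after a parabolic rescaling.'' Parabolic rescaling replaces $\lambda$ by $\lambda/\rho^2$ and simultaneously shrinks the spatial ball; it does not improve the relationship between $K$ and $\lambda$, so it cannot rescue the frozen-phase approximation when $K^4 \gg \lambda$ (the error $\lambda\,\mathcal{E}((x-\bar{x})/\lambda;\omega) = O(K^4/\lambda)$ is then no longer $O(1)$ and the Fourier-series expansion loses all decay). As stated, the theorem carries no restriction on $K$ relative to $\lambda$, so your argument as written only establishes it in the regime $K^2 \lesssim \lambda^{1/2}$. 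This is precisely the regime in which the theorem is invoked in \S\ref{k-broad to linear section} (where $K$ is a tiny power of $R \leq \lambda$), and the paper's footnote concedes the same limitation; but to obtain the full statement one must genuinely run the decoupling induction in the variable-coefficient setting, as in the works cited alongside the theorem, rather than reduce to a single frozen operator.
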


This theorem is a variable coefficient generalisation of a decoupling inequality due to Bourgain \cite{Bourgain2013}. It can be established by adapting the argument of \cite{Bourgain2013} using many of the techniques employed in the current article: see also \cite{Beltran}.\footnote{It is remarked that since the decoupling estimate is applied at a small spatial scale $K^2 \ll \lambda^{1/2}$ one can avoid the use of the full statement of Theorem~\ref{decoupling theorem} by appealing to an approximation argument. If one argues in this way, then only Theorem~\ref{decoupling theorem} for extension operators associated to elliptic-type hypersurfaces is required.}  

Summing together the contributions from the various spatial balls $B_{K^2}$, it remains to estimate the decoupled contributions $\|T^{\lambda}f_{\tau}\|_{L^p(B_{R})}$. Since each $f_{\tau}$ has small support, after rescaling one obtains favourable estimates for $\|T^{\lambda}f_{\tau}\|_{L^p(B(0,R))}$ by invoking the induction hypothesis. This is made precise by the following lemma.

\begin{lemma}[Parabolic rescaling]\label{parabolic rescaling lemma} Let $1 \leq R \leq \lambda$ and suppose $f$ is supported on a ball of radius $\rho^{-1}$ where $1 \leq \rho \leq R$. For all $p \geq 2$ and $\delta >0$ one has
\begin{equation*}
\|T^{\lambda}f\|_{L^p(B(0,R))} \lesssim_{\delta} Q_{p}(R) R^{\delta} \rho^{2n/p - (n-1)}\|f\|_{L^p(B^{n-1})}.
\end{equation*}
\end{lemma}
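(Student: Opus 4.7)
The plan is to exploit the parabolic rescaling identity from the proof of Lemma~\ref{second reduction lemma}. By translating in $\omega$ (which merely modulates $f$ and preserves $\|f\|_{L^p}$) one may assume $f$ is supported in $B(\bar\omega,\rho^{-1})$ for some $\bar\omega\in B^{n-1}$. Substituting $\omega=\bar\omega+\rho^{-1}\omega'$ in the $\omega$-integral defining $T^\lambda f$ and then applying the spatial change of variables
\[
x'=\lambda\Phi(\rho y'/\lambda,\,\rho^2 y_n/\lambda;\,\bar\omega),\qquad x_n=\rho^2 y_n,
\]
where $\Phi$ is the map introduced in \S\ref{Reductions section}, should yield the pointwise identity $|T^\lambda f(x)|=|\tilde T^{\lambda/\rho^2}\tilde f(y)|$. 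Here $\tilde f(\omega):=\rho^{-(n-1)}f(\bar\omega+\rho^{-1}\omega)$ is supported in $B^{n-1}$ and $\tilde T^{\lambda/\rho^2}$ is a H\"ormander-type operator with reduced positive-definite phase. A direct calculation gives the normalization $\|\tilde f\|_{L^p(B^{n-1})}=\rho^{-(n-1)(1-1/p)}\|f\|_{L^p(B^{n-1})}$.

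Next, the Jacobian of the inverse substitution satisfies $|\det(\partial x/\partial y)|\sim\rho^{n+1}$, while the preimage of $B(0,R)$ under $y\mapsto x$ lies in a box of dimensions $O(R/\rho)\times\cdots\times O(R/\rho)\times O(R/\rho^2)$, and hence inside $B(0,CR/\rho)$. A change of variables in the $L^p$ integral will then give
\[
\|T^\lambda f\|_{L^p(B(0,R))}\lesssim\rho^{(n+1)/p}\,\|\tilde T^{\lambda/\rho^2}\tilde f\|_{L^p(B(0,CR/\rho))}.
\]
Because $\tilde T^{\lambda/\rho^2}$ has reduced positive-definite phase, the induction quantity $Q_p$ applies to it. Setting $\tilde\lambda:=\lambda/\rho^2$ and $r:=\min(R,\tilde\lambda)$, the plan is to cover $B(0,CR/\rho)$ by finitely-overlapping balls of radius $r$ and invoke $Q_p(r)\leq Q_p(R)$ on each piece. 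In the main regime $\rho^2\leq\lambda/R$ one has $\tilde\lambda\geq R\geq CR/\rho$, so a single ball suffices and no loss arises; in the complementary regime the modest number of balls (controlled by the thinness of the preimage in the $y_n$-direction) should be absorbable into the permitted $R^\delta$ factor. Combining everything and using the arithmetic identity $(n+1)/p-(n-1)(1-1/p)=2n/p-(n-1)$ yields the stated bound.

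The main obstacle will be verifying that the rescaled operator $\tilde T^{\lambda/\rho^2}$ genuinely satisfies the reduced positive-definite conditions of Definition~\ref{reduced definition} with the same constants $c_{\mathrm{par}}$ and $N_{\mathrm{par}}$, uniformly in $\rho$, so that $Q_p$ may be applied to it without further loss. This should follow from the explicit formulas in the proof of Lemma~\ref{second reduction lemma} together with the quantitative control of $\Phi$ established in \S\ref{Reductions section}, possibly after an additional partition of unity to restore the standard support class on the amplitude $\tilde a$. No genuinely new ideas beyond those already in \S\ref{Reductions section} are expected to be needed.
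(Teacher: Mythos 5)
Your rescaling step, the Jacobian count $\rho^{n+1}$ and the exponent arithmetic $(n+1)/p-(n-1)(1-1/p)=2n/p-(n-1)$ are all correct, and your argument does close when $\lambda\geq R\rho$: there the rescaled region $\tilde{D}_R$ (long axes $\sim R/\rho$, short axis $\sim R/\rho^2$) fits in a single ball $B(0,CR/\rho)$ of radius at most $\min(R,\lambda/\rho^2)$, so one application of $Q_p$ to $\tilde{T}^{\lambda/\rho^2}$ suffices. The gap is in the complementary regime, and your diagnosis of it is wrong. The definition of $Q_p(S)$ only controls operators with parameter $\geq S$ on balls of radius $\leq S$, so for the rescaled operator (parameter $\lambda/\rho^2$) the largest admissible ball radius is $\min(R,\lambda/\rho^2)$. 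When $\lambda< R\rho$ this is strictly smaller than the \emph{long} axes $R/\rho$ of $\tilde{D}_R$, and covering $\tilde{D}_R$ by admissible balls requires about $(R\rho/\lambda)^{n-1}$ of them --- up to $\rho^{n-1}$ when $\lambda$ is comparable to $R$, which the lemma must allow. The count is driven by the $n-1$ long transverse directions, not by the thinness in $y_n$ as you claim, and since every ball is fed the same input $\tilde{f}$, summing $p$-th powers loses $(R\rho/\lambda)^{(n-1)/p}$. This is not absorbable into $R^{\delta}$: in the application inside Proposition~\ref{proposition Bourgain Guth} one takes $\rho=K$ with $R\leq\lambda$ arbitrary, and an extra $K^{(n-1)/p}$ would shift the exponent $e(k,p)$ by $n-1$ and destroy the range of $p$. (The enlarged $x'$-support of the rescaled amplitude, which you propose to fix by a partition of unity into $\sim\rho^{n-1}$ translated pieces, is really the same obstruction in a different guise.)

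The paper's proof exists precisely to avoid this covering loss. After rescaling it reduces to showing $\|T^{\lambda}f\|_{L^p(D_{\mathbf{R}})}\lesssim_{\delta}Q_p(R)R^{\delta}\|f\|_{L^p(B^{n-1})}$ over the anisotropic ellipse for a general reduced operator, and then: decomposes $f$ into $R^{-1}$-caps $f_{\theta}$; uses the locally constant property to regularise $T^{\lambda}_{\theta}f_{\theta}$ at scale $R^{1-\delta}$; applies $Q_p(R)$ on each $R$-ball through the exponential-sum reformulation of Lemma~\ref{discrete formulation}, which lets the slowly varying coefficients $T^{\lambda}_{\theta}f_{\theta}(y)$ be pulled outside the norm; and recombines the caps over the whole ellipse by bounding $\|T^{\lambda}f_{\theta}\|_{L^p(D_{\mathbf{R}}-y)}$ via H\"ormander's $L^2$ bound on horizontal slices (Lemma~\ref{Hormander L2 again}) interpolated with the trivial $L^{\infty}$ bound, so that only the $x_n$-extent $R$ of the ellipse, and not its $x'$-extent, enters the constant. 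It is this orthogonality mechanism, absent from your proposal, that sums the long directions without loss; you would need to supply it (or something equivalent) to complete the argument.
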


The proof of the parabolic rescaling lemma is based on the changes of variables previously encountered in \S\ref{parabolic rescaling subsection}. For extension operators the argument is simple, consisting of an affine change of variables. In the variable coefficient case some significant additional complications arise; the details are therefore postponed until the following subsection. 

Having introduced the main tools, the proof of Proposition~\ref{proposition Bourgain Guth} follows easily. 

\begin{proof}[Proof (of Proposition~\ref{proposition Bourgain Guth})] It suffices to demonstrate the linear estimate for $p$ satisfying the additional condition
\begin{equation}\label{Lebesgue exponent inequality}
2 \cdot \frac{2n - k +2}{2n - k} < p;
\end{equation}
the result for the remaining value of $p$ then follows immediately by H\"older's inequality. 

Let $\varepsilon > 0$ be given. By hypothesis,
\begin{equation*}
\sum_{\substack{B_{K^2} \in \mathcal{B}_{K^2} \\ B_{K^2} \cap B(0,R) \neq \emptyset}} \min_{V_1, \dots, V_A} \max_{\tau \notin V_a} \int_{B_{K^2}} |T^{\lambda}f_{\tau}|^p \leq C(K, \varepsilon)R^{p\varepsilon/2}\|f\|_{L^p(B^{n-1})}^p
\end{equation*}
where $V_1, \dots, V_A$ are $(k-1)$-planes and the notation $\tau \notin V_a$ signifies that $\angle(G^{\lambda}(\bar{x}, \tau), V_a) > K^{-1}$ for $\bar{x}$ the centre of the corresponding $K^2$-ball $B_{K^2}$. 

For each $B_{K^2}$ fix a choice of $V_1, \dots, V_A$ which achieves the minimum above. Then one may write
\begin{equation*}
\int_{B_{K^2}} |T^{\lambda}f|^p \lesssim K^{O(1)} \max_{\tau \notin V_a} \int_{B_{K^2}} |T^{\lambda}f_{\tau}|^p + \sum_{a=1}^A \int_{B_{K^2}} |\sum_{\tau \in V_a} T^{\lambda}f_{\tau}|^p.
\end{equation*} 
The first term can be estimated using the hypothesised $k$-broad estimate; in particular, 
\begin{equation*}
\int\displaylimits_{B(0,R)} \!\!\!|T^{\lambda}f|^p \lesssim K^{O(1)}C(K, \varepsilon)R^{p\varepsilon/2}\|f\|_{L^p(B^{n-1})}^p + \!\!\!\!\!\!\sum_{\substack{B_{K^2} \in \mathcal{B}_{K^2} \\[2pt] B_{K^2} \cap B(0,R) \neq \emptyset}} \!\!\! \sum_{a=1}^A \,\, \int\displaylimits_{B_{K^2}}\!\! |\sum_{\tau \in V_a} T^{\lambda}f_{\tau}|^p.
\end{equation*} 
It remains to bound the narrow term, where the contributions come from caps whose directions make a small angle with one of planes $V_a$. By Theorem~\ref{decoupling theorem}, for any $\delta' > 0$ one has
\begin{equation*}
\int_{B_{K^2}} | \sum_{\tau \in V_a} T^{\lambda} f_{\tau} \big|^p \lesssim_{\delta'} K^{(k-2)(p/2 - 1) + \delta'} \sum_{\tau \in V_a} \int_{\R^n} | T^{\lambda} f_{\tau}|^p w_{B_{K^2}}
\end{equation*}
for each $1 \leq a \leq A$. Thus, summing over the $a$ and all the relevant balls $B_{K^2}$, one concludes that
\begin{equation*}
\sum_{\substack{B_{K^2} \in \mathcal{B}_{K^2} \\[2pt] B_{K^2} \cap B(0,R) \neq \emptyset}} \!\!\!\!\sum_{a = 1}^A\int_{B_{K^2}}\!\!\! |\sum_{\tau \in V_a} T^{\lambda}f_{\tau}|^p \lesssim_{\delta'} \! K^{(k-2)(p/2 - 1) + \delta'}\!\!\!\!\!\! \sum_{\tau : K^{-1}\mathrm{-cap}}\int_{B(0,2R)}|T^{\lambda}f_{\tau}|^p.
\end{equation*}
Since each $f_{\tau}$ is supported on a $K^{-1}$-cap, the summands appearing in the right-hand expression are amenable to parabolic rescaling. In particular, letting $\delta > 0$ be a small number chosen to satisfy the requirements of the forthcoming argument, Lemma~\ref{parabolic rescaling lemma} implies that
\begin{equation*}
\int_{B(0,2R)}|T^{\lambda} f_{\tau}|^p \lesssim_{\delta} Q_{p}(R)^p R^{\delta} K^{2n - (n-1)p}\|f_{\tau}\|_{L^p(B^{n-1})}^p.
\end{equation*}
Defining
\begin{equation*}
 e(k,p) := (k-2)(1-p/2) - 2n + (n-1)p
\end{equation*}
and combining these estimates,
\begin{equation*}
\int_{B(0,R)} |T^{\lambda}f|^p \leq \big( K^{O(1)}C(K,\varepsilon)R^{p\varepsilon/2} +  C_{\delta,\delta'}Q_{p}(R)^p R^{\delta} K^{-e(k,p) + \delta'}\big) \|f\|_{L^p(B^{n-1})}^p
\end{equation*}  
and so, by definition,
\begin{equation*}
Q_{p}(R)^p \leq K^{O(1)}C(K,\varepsilon)R^{p\varepsilon/2} +  C_{\delta,\delta'}Q_{p}(R)^pR^{\delta} K^{-e(k,p) +\delta'}.
\end{equation*}
Since $p$ satisfies \eqref{Lebesgue exponent inequality}, it follows that $e(k,p) > 0$ and one may choose $\delta' = e(k,p)/2$ so that the $K$ exponent in the right-hand term is negative. Thus, if
 $K = K_0R^{2\delta/e(k,p)}$ for a sufficiently large constant $K_0$, depending only on $\varepsilon$, $\delta$, $p$ and $n$, it follows that
\begin{equation*}
Q_{p}(R)^p \leq K^{O(1)}C(K_0R^{2\delta/e(k,p)}, \varepsilon)R^{p\varepsilon/2} + Q_{p}(R)^p/2.
\end{equation*}
Recall that, by hypothesis, the constant $C(K, \varepsilon)$ arising from the $k$-broad estimate grows at most polynomially in $K$. Consequently, one may choose $\delta$ to be small enough (depending only on admissible parameters) so that $Q_{p}(R) \lesssim_{\varepsilon} R^{\varepsilon}$, as required. 
\end{proof}

As mentioned above, this argument can be adapted to study the case of general H\"ormander-type operators (with potentially mixed signature) to prove Proposition~\ref{general proposition Bourgain Guth}. The induction quantity $Q_{p}(R)$ is defined as before, but now the supremum is taken over the larger class of all H\"ormander-type operators $T^{\lambda}$ which are in a suitably reduced form. The proof of the parabolic rescaling lemma then extends to this setting \emph{mutatis mutandis}. The key differences arise in the decoupling inequality. In particular, Theorem~\ref{decoupling theorem} does not hold at the required level of generality. To see why this is so, consider the example of the extension operator $E$ associated to (a compact piece of) the hyperbolic paraboloid given by the graph of $h(\omega) := \omega_1\omega_2$. If $V := \{x \in \R^3 : x_1 = 0\}$ and $G$ is the relevant Gauss map, then
\begin{equation*}
 S_{\omega} := \{ \omega \in B^{2} : G(\omega) \in V\}  = \{\omega \in B^2 : \omega_2 = 0\}. 
\end{equation*}
Thus, the $E g_{\tau}$ for $\tau \in V$ are (distributionally) Fourier supported in a neighbourhood of the $\xi_1$-axis (which is, in particular, a curve of everywhere zero curvature); here the notation $\tau \in V$ is used to denote that $\angle(G(\tau), V) \leq K^{-1}$, consistent with the non-standard notion of containment used in \S\ref{k-broad subsection} and Theorem~\ref{decoupling theorem}. As is well-known, in the absence of curvature, no non-trivial decoupling estimates are possible.\footnote{It is remarked that non-trivial $\ell^p$-decoupling estimates are known hold for the \textit{full} hyperbolic paraboloid: see \cite{BD2017}. The problem here arises because one is forced to consider decoupling along the lower dimensional submanifold $S_{\omega} \times \{0\}$.}

The following simple result acts as a substitute for Theorem~\ref{decoupling theorem}.

\begin{lemma}[Bourgain--Guth \cite{Bourgain2011}]\label{general decoupling theorem} Suppose that $T^{\lambda}$ is a H\"ormander-type operator with reduced phase.  If $V \subseteq \R^n$ is an $m$-dimensional linear subspace, then for all $p \geq 2$ and $\delta > 0$ one has
\begin{equation*}
\big\| \sum_{\tau \in V}T^{\lambda} g_{\tau} \big\|_{L^p(B_{K^2})} \lesssim_{\delta} K^{(m-1)(1 - 2/p) + \delta} \big(\sum_{\tau \in V} \|T^{\lambda} g_{\tau}\|_{L^p(w_{B_{K^2}})}^p \big)^{1/p}.
\end{equation*}
Here the sums are over all caps $\tau$ for which $\angle(G^{\lambda}(\bar{x},\tau), V) \leq K^{-1}$ where $\bar{x}$ is the centre of $B_{K^2}$.
\end{lemma}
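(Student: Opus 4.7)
Set $f_\tau := T^{\lambda} g_\tau$ and $F := \sum_{\tau \in V} f_\tau$, where the sum runs over the $K^{-1}$-caps $\tau$ satisfying $\angle(G^{\lambda}(\bar{x},\tau), V) \leq K^{-1}$, and let $N$ denote the number of such caps. The plan is to exploit $L^2$-orthogonality in tandem with a variable-coefficient Bernstein inequality, interpolated via the log-convexity $\|F\|_{L^p(B_{K^2})}^p \leq \|F\|_{L^\infty(B_{K^2})}^{p-2} \|F\|_{L^2(B_{K^2})}^2$. First I would observe that $N \lesssim K^{m-1}$: by H1 the generalised Gauss map $\omega \mapsto G^{\lambda}(\bar{x},\omega)$ is a local diffeomorphism on $B^{n-1}$, so the condition $G^{\lambda}(\bar{x},\omega) \in V$ cuts out a smooth $(m-1)$-dimensional submanifold whose $CK^{-1}$-neighbourhood is met by at most $O(K^{m-1})$ of the $K^{-1}$-caps.

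The argument rests on two termwise estimates. Since $g_\tau$ is supported in a $K^{-1}$-ball, Lemma~\ref{locally constant lemma} applied with $k=1$ and $\rho = K$ gives $|f_\tau| \lesssim |f_\tau| \ast \zeta_K$ modulo $\mathrm{RapDec}(\lambda)$-errors; combining with Young's inequality and the bound $\|\zeta_K\|_{L^{p'}(\R^n)} \lesssim K^{-n/p}$ yields the variable-coefficient Bernstein inequality
\begin{equation*}
\|f_\tau\|_{L^\infty(B_{K^2})} \lesssim K^{-n/p}\, \|f_\tau\|_{L^p(w_{B_{K^2}})}.
\end{equation*}
Separately, a stationary-phase/integration-by-parts argument, parallel to the one used in the proof of Lemma~\ref{Transverse equidistribution lemma 2}, shows that the Fourier transform of each spatial restriction $\chi_{B_{K^2}} f_\tau$ is concentrated on a $CK^{-1}$-neighbourhood of the patch $\{\partial_x\phi^{\lambda}(\bar{x},\omega) : \omega \in \tau\}$. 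The reductions of $\S$\ref{Reductions section} force these patches to have uniformly bounded overlap, whence Plancherel delivers the $L^2$-orthogonality
\begin{equation*}
\|F\|_{L^2(B_{K^2})}^2 \lesssim \sum_{\tau \in V} \|f_\tau\|_{L^2(w_{B_{K^2}})}^2,
\end{equation*}
any small multiplicative loss being absorbed into $K^\delta$.

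Plugging these into the log-convexity inequality: the triangle inequality, the Bernstein bound above and H\"older's inequality on the discrete sum give $\|F\|_{L^\infty(B_{K^2})} \lesssim K^{-n/p} N^{1-1/p}(\sum_\tau \|f_\tau\|_{L^p(w)}^p)^{1/p}$; orthogonality together with the termwise log-convexity $\|f_\tau\|_{L^2(w)} \leq \|f_\tau\|_{L^p(w)}^{2/p}\|f_\tau\|_{L^\infty}^{1-2/p} \lesssim K^{-(n/p)(1-2/p)} \|f_\tau\|_{L^p(w)}$ (invoking Bernstein again) and one more application of H\"older on the cap sum give $\|F\|_{L^2(B_{K^2})}^2 \lesssim K^{-2(n/p)(1-2/p)}\,N^{1-2/p}\,(\sum_\tau \|f_\tau\|_{L^p(w)}^p)^{2/p}$. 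Multiplying, the $K$-exponents collapse to $-n(1-2/p)(1+2/p) \leq 0$ and the $N$-exponents to $p-2$; taking $p$th roots and using $N \lesssim K^{m-1}$ yields
\begin{equation*}
\|F\|_{L^p(B_{K^2})} \lesssim K^{(m-1)(1-2/p)+\delta}\Bigl(\sum_{\tau \in V}\|T^{\lambda} g_\tau\|_{L^p(w_{B_{K^2}})}^p\Bigr)^{1/p},
\end{equation*}
which is exactly the claimed bound.

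The main obstacle is the $L^2$-orthogonality: in the extension-operator case, Plancherel applies directly because the $f_\tau$ have genuinely disjoint Fourier supports, but for variable coefficients the Fourier transforms of the $f_\tau$ are not compactly supported, and one must instead analyse the spatial restrictions $\chi_{B_{K^2}} f_\tau$ via stationary phase. Verifying the requisite bounded overlap of the patches $\partial_x\phi^{\lambda}(\bar{x},\tau)$ uses the fact that, after the reductions of $\S$\ref{Reductions section}, the map $\omega \mapsto \partial_x\phi^{\lambda}(\bar{x},\omega)$ is a $C^2$-perturbation of the paraboloid parametrisation on $\mathrm{supp}\,a$, so distinct $K^{-1}$-caps are sent to essentially disjoint $K^{-1}$-patches on the parametrised hypersurface. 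The Bernstein step and the final interpolation calculation are comparatively elementary once the orthogonality is in hand.
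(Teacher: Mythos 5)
Your overall plan---pointwise interpolation via $\|F\|_{L^p}^p \leq \|F\|_{L^\infty}^{p-2}\|F\|_{L^2}^2$, with $L^\infty$ controlled by Bernstein and $L^2$ by orthogonality---is the right circle of ideas, and the cap count $N \lesssim K^{m-1}$, the Bernstein bound $\|f_\tau\|_{L^\infty(B_{K^2})} \lesssim K^{-n/p}\|f_\tau\|_{L^p(w_{B_{K^2}})}$, and the orthogonality $\|F\|_{L^2(B_{K^2})}^2 \lesssim \sum_\tau\|f_\tau\|_{L^2(w)}^2$ are all fine. However, the termwise conversion in your $L^2$-bound is incorrect and the argument fails at that point. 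The inequality $\|f_\tau\|_{L^2(w)} \leq \|f_\tau\|_{L^p(w)}^{2/p}\|f_\tau\|_{L^\infty}^{1-2/p}$ is not a valid logarithmic-convexity estimate for $p > 2$: the exponent $2$ does not lie between $p$ and $\infty$, so Lyapunov interpolation does not apply, and the inequality is simply false---testing with $|f_\tau| \equiv c$ on a set of measure $M$ (and $w \equiv 1$) reduces the claim to $M^{1/2} \leq M^{2/p^2}$, which fails whenever $M > 1$ and $p > 2$. The best upper bound one can actually assert is H\"older's inequality $\|f_\tau\|_{L^2(w)} \lesssim |B_{K^2}|^{1/2-1/p}\|f_\tau\|_{L^p(w)} = K^{n(1-2/p)}\|f_\tau\|_{L^p(w)}$, a positive power of $K$ rather than the negative power $K^{-(n/p)(1-2/p)}$ your computation depends on. Plugging the corrected bound back into your scheme gives $\|F\|_{L^p(B_{K^2})}^p \lesssim K^{n(p-2)/p}\,N^{p-2}\sum_\tau\|T^\lambda g_\tau\|_{L^p(w)}^p$, which carries an extra factor $K^{n(p-2)/p}$ that cannot be absorbed into $K^\delta$.

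The underlying structural problem is that you interpolate on the ball $B_{K^2}$ of radius $K^2$ while the caps have radius $K^{-1}$: Bernstein is sharp when $f_\tau$ concentrates on a single $K$-cube inside $B_{K^2}$, whereas H\"older is sharp when $f_\tau$ spreads over all of $B_{K^2}$, so applying both termwise throws away the anti-correlation between the two and costs exactly the factor $K^{n(p-2)/p}$. The standard repair is to run the interpolation $\|F\|_{L^p}^p \leq \|F\|_{L^\infty}^{p-2}\|F\|_{L^2}^2$ on each of the $\sim K^n$ subballs $B$ of radius $K$ covering $B_{K^2}$: on a $K$-ball each $f_\tau$ is essentially constant (by the uncertainty principle, since $g_\tau$ sits on a $K^{-1}$-cap), so $\|f_\tau\|_{L^\infty(B)} \sim |B|^{-1/q}\|f_\tau\|_{L^q(w_B)}$ for \emph{every} $q$ simultaneously and the $|B|$-powers cancel exactly, giving $\|F\|_{L^p(B)}^p \lesssim N^{p-2}\sum_\tau\|f_\tau\|_{L^p(w_B)}^p$ with no extraneous $K$-factor; orthogonality of the $f_\tau$ restricted to a $K$-ball follows from the $\gtrsim K^{-1}$-separation of the frequency patches. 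Summing this estimate over the $K$-balls covering $B_{K^2}$ gives the stated bound. (Equivalently, one may avoid the termwise conversion entirely by running a Riesz--Thorin interpolation for the linear map $(h_\tau)\mapsto\sum_\tau P_\tau h_\tau$, where $P_\tau$ are smooth frequency projections adapted to the patches, between the $\ell^2L^2\to L^2$ orthogonality bound and the $\ell^\infty L^\infty\to L^\infty$ triangle-inequality bound.) The paper itself gives no proof, referring instead to Bourgain--Guth, where the argument just sketched appears implicitly.
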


This lemma provides much weaker estimates than those guaranteed by the $\ell^p$-decoupling theorem in the positive-definite case: here the $K$ exponent is larger by a factor of 2 than that appearing in Theorem~\ref{decoupling theorem}. The proof is implicitly contained in the proof of Theorem 4 in \cite[\S5]{Bourgain2011}. 

To prove Proposition~\ref{general proposition Bourgain Guth} one proceeds as in the proof of Proposition~\ref{proposition Bourgain Guth}, first decomposing $B(0,R)$ into balls of radius $K^2$. For each such ball the broad term is bounded using the hypothesised $k$-broad estimate whilst the narrow term is bounded by Lemma~\ref{general decoupling theorem} together with the induction hypothesis (via parabolic rescaling). The larger exponent incurred by Lemma~\ref{general decoupling theorem} propagates through the argument until one arrives at the estimate
\begin{equation*}
Q_{p}(R)^p \leq K^{O(1)} C(K,\varepsilon)R^{p\varepsilon/2} +  C_{\delta,\delta'}Q_{p}(R)^p R^{\delta}K^{(k-2)(p - 2) + 2n - (n-1)p +\delta'}.
\end{equation*}
In order to close the induction, once again one must ensure that the $K$ exponent is negative. By choosing $\delta'$ appropriately, this is possible if $p$ satisfies the stronger hypothesis $p > 2(n - k +2)/(n - k+1)$, which is precisely the condition featured in the statement of Proposition~\ref{general proposition Bourgain Guth}.




\subsection{Proof of Lemma~\ref{parabolic rescaling lemma}} It remains to establish the parabolic rescaling lemma, which is achieved by adapting arguments implicit in \cite[\S5]{Bourgain2011}. As mentioned in the previous section,  some additional complications arise in the case of H\"ormander operators (as opposed to the extension case) and the proof of the parabolic rescaling is slightly involved. 

It will be useful to work with the following discrete reformulation of the main estimate for the operator $T^{\lambda}$. 

\begin{lemma}\label{discrete formulation} If $\mathcal{D}$ is a maximal $R^{-1}$-separated discrete subset of $\Omega$, then 
\begin{equation}\label{discrete formulation 1}
\big\| \sum_{\omega_{\theta} \in \mathcal{D}} e^{2 \pi i \phi^{\lambda}(\,\cdot\,;\omega_{\theta})}F(\omega_{\theta})\big\|_{L^p(B(0,R))} \lesssim Q_{p}(R)R^{(n-1)/p'} \|F\|_{\ell^p(\mathcal{D})}
\end{equation}
for all $F\colon \mathcal{D} \to \C$. 
\end{lemma}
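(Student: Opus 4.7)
The plan is to derive Lemma~\ref{discrete formulation} from the definition of $Q_p(R)$ via a bump-function discretisation. Fix a Schwartz function $\eta$ on $\R^{n-1}$ whose Fourier transform $\hat\eta$ equals $1$ on the ball $\{\xi \in \R^{n-1} : |\xi| \leq C_0\}$ for a sufficiently large dimensional constant $C_0$, and set $\eta_\theta(\omega) := R^{n-1}\eta(R(\omega-\omega_\theta))$ for each $\omega_\theta \in \mathcal{D}$. Define the test function
\begin{equation*}
f := \sum_{\omega_\theta \in \mathcal{D}} F(\omega_\theta)\,\eta_\theta.
\end{equation*}
Since $\mathcal{D}$ is $R^{-1}$-separated, each $\eta_\theta$ is essentially concentrated on $B(\omega_\theta,R^{-1})$ with $\|\eta_\theta\|_{L^p} \sim R^{(n-1)/p'}$; combining near-disjointness of the main supports with the Schwartz decay of $\eta$ yields
\begin{equation*}
\|f\|_{L^p(\R^{n-1})} \lesssim R^{(n-1)/p'}\,\|F\|_{\ell^p(\mathcal{D})}.
\end{equation*}

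The next step is to show that $T^\lambda f(x)$ coincides, up to a $\mathrm{RapDec}(R)$ error, with the discrete exponential sum on the left-hand side of \eqref{discrete formulation 1}. Equip $T^\lambda$ with the given reduced positive-definite phase $\phi^\lambda$ and an amplitude $a$ that is a smooth bump equal to $1$ on $\tfrac{1}{2}X \times \tfrac{1}{2}\Omega$; this is admissible in the reduced class (after reducing to $R \leq \lambda/2$ if necessary), and ensures $a^\lambda(x;\omega_\theta) = 1$ for $(x;\omega_\theta) \in B(0,R) \times \mathcal{D}$. Changing variables $u = R(\omega-\omega_\theta)$ in each summand yields
\begin{equation*}
T^\lambda\eta_\theta(x) = \int_{\R^{n-1}} e^{2\pi i \phi^\lambda(x;\omega_\theta + R^{-1}u)}\,a^\lambda(x;\omega_\theta + R^{-1}u)\,\eta(u)\,\ud u.
\end{equation*}
The reduced-form identities $\partial_\omega \phi(0;\omega) = \partial^2_{\omega\omega}\phi(0;\omega) = 0$ from \S\ref{Reductions section}, together with Taylor's theorem, give $|\partial_\omega \phi^\lambda(x;\omega_\theta)| \lesssim |x|$ and $|\partial^2_{\omega\omega}\phi^\lambda(x;\omega_\theta)| \lesssim |x|$ on $B(0,R)$. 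Hence for $|u|$ up to a small power of $R$,
\begin{equation*}
\phi^\lambda(x;\omega_\theta + R^{-1}u) = \phi^\lambda(x;\omega_\theta) + R^{-1}\langle \partial_\omega \phi^\lambda(x;\omega_\theta),u\rangle + O(R^{-1+o(1)}),
\end{equation*}
while $a^\lambda(x;\omega_\theta + R^{-1}u) = a^\lambda(x;\omega_\theta) + O(R^{-1})$; the contribution of larger $u$ is absorbed by the rapid decay of $\eta$. Consequently
\begin{equation*}
T^\lambda\eta_\theta(x) = e^{2\pi i \phi^\lambda(x;\omega_\theta)}\,a^\lambda(x;\omega_\theta)\,\hat\eta\bigl(-\tfrac{1}{2\pi}R^{-1}\partial_\omega \phi^\lambda(x;\omega_\theta)\bigr) + \mathrm{RapDec}(R).
\end{equation*}
Since $|R^{-1}\partial_\omega \phi^\lambda(x;\omega_\theta)| \lesssim 1$ on $B(0,R)$, the $\hat\eta$ factor equals $1$ by the choice of $\eta$, and $a^\lambda(x;\omega_\theta) = 1$ by the choice of $a$. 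Summing over $\omega_\theta \in \mathcal{D}$ and applying the defining inequality $\|T^\lambda f\|_{L^p(B(0,R))} \leq Q_p(R)\|f\|_{L^p}$ now yields \eqref{discrete formulation 1}.

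The main obstacle is bookkeeping around the amplitude: the bump $a$ chosen above must be admissible in the class of reduced positive-definite amplitudes used to define $Q_p(R)$, and $a^\lambda$ must equal $1$ on $B(0,R)$ in the $x$-variable. When $R$ is close to $\lambda$ this is impossible on the entirety of $B(0,R)$ simultaneously; the remedy is to apply the bound at a slightly enlarged radius $CR \leq \lambda$ and invoke monotonicity of $Q_p$ in $R$, which produces only a harmless universal factor that is absorbed into the polynomial growth of $Q_p$ anticipated in the induction-on-scales argument underlying Proposition~\ref{proposition Bourgain Guth}.
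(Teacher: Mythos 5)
Your strategy --- building a test function $f = \sum_\theta F(\omega_\theta)\eta_\theta$ with $\hat\eta \equiv 1$ near the origin, showing $T^\lambda f$ reproduces the exponential sum, and then quoting the definition of $Q_p(R)$ --- is a natural first idea, but the central approximation step does not hold at the strength you claim, and this produces a real gap.

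You assert
\begin{equation*}
    T^\lambda \eta_\theta(x) = e^{2\pi i\phi^\lambda(x;\omega_\theta)}\, a^\lambda(x;\omega_\theta)\, \hat\eta\bigl(-\tfrac{1}{2\pi}R^{-1}\partial_\omega\phi^\lambda(x;\omega_\theta)\bigr) + \mathrm{RapDec}(R),
\end{equation*}
but the discarded term in the Taylor expansion of the phase is genuinely present and is not rapidly decaying. Since $\partial^2_{\omega\omega}\phi^\lambda(x;\omega) = O(|x|)$ (recall $\partial^2_{\omega\omega}\phi(0;\cdot) \equiv 0$), the quadratic remainder in $\phi^\lambda(x;\omega_\theta + R^{-1}u)$ is $O(R^{-2}|x||u|^2) = O(R^{-1}|u|^2)$ for $x\in B(0,R)$. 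Integrating against the Schwartz weight $\eta(u)$ yields a per-cap error of size $\sim R^{-1+o(1)}$, polynomially small but certainly not $\mathrm{RapDec}(R)$. Summing over the $\sim R^{n-1}$ points of $\mathcal{D}$ and taking the $L^p(B(0,R))$ norm, this error contributes roughly $R^{n/p-1+o(1)}\,R^{(n-1)/p'}\|F\|_{\ell^p}$, which must be compared with the target $Q_p(R)R^{(n-1)/p'}\|F\|_{\ell^p}$. In the induction-on-scales argument underlying Proposition~\ref{proposition Bourgain Guth} one is trying to conclude $Q_p(R)\lesssim_\epsilon R^\epsilon$, so whenever $n/p - 1 > 0$ your error term dominates and the lemma fails to close. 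The relevant exponents $p\geq \bar p(k,n)$ are close to $2$ for large $n$, so $n/p - 1 > 0$ is exactly the regime of interest.

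The paper sidesteps the approximation entirely. Writing $e^{2\pi i\phi^\lambda(x;\omega_\theta)} = e^{2\pi i\phi^\lambda(x;\omega)}\,e^{-2\pi i\lambda\Omega_\theta(x/\lambda;\omega)}$ \emph{exactly} (with $\Omega_\theta(x;\omega) := \phi(x;\omega)-\phi(x;\omega_\theta)$) and inserting a partition into disjoint caps $\psi_\theta$, the exponential sum becomes identically a constant multiple of an integral of the form $\int e^{2\pi i\phi^\lambda(x;\omega)}a^\lambda(x;\omega)[\cdots]\,d\omega$, with no error whatsoever. The correction factor $\tilde\psi(x/R)\,e^{-2\pi i\lambda\Omega_\theta(x/\lambda;\omega)}$ is a smooth, bounded function of $x$ with uniformly controlled derivatives (thanks to $|\partial_x^\beta\Omega_\theta(x;\omega)|\lesssim R^{-1}$ on $\mathrm{supp}\,\psi_\theta$); expanding it in a Fourier series in $x$, the coefficients $c_{k,\theta}$ decay like $(1+|k|)^{-N}$, and the exponential sum is then exactly dominated by a rapidly convergent series $\sum_k(1+|k|)^{-(n+1)}|T^\lambda f_k(x)|$ with $f_k = \sum_\theta F(\omega_\theta)c_{k,\theta}\psi_\theta$, each of which can be fed into $Q_p(R)$. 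Your first-order Taylor expansion is, in effect, keeping only a truncation of this exact series and throwing away a tail that is not negligible. If you push your approach to all orders --- i.e., keep the correction factor intact and expand it in Fourier series rather than Taylor series --- you recover the paper's argument.
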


\begin{proof} Fix $\psi \in C^{\infty}_c(\R^{n-1})$ supported on $B(0,2)$ which satisfies $0 \leq \psi \leq 1$ and $\psi(\omega) = 1$ for all $\omega \in B^{n-1}$ and for each $\omega_{\theta} \in \mathcal{D}$ define $\psi_{\theta}(\omega) := \psi(10R(\omega - \omega_{\theta}))$. Thus, for all $x \in B(0,R)$ the exponential sum appearing in the left-hand side of \eqref{discrete formulation 1} can be expressed as a constant multiple of
\begin{equation*}
R^{n-1}  \int_{\R^{n-1}}e^{2\pi i \phi^{\lambda}(x;\omega)}a^{\lambda}(x;\omega)\big[\tilde{\psi}(x/R)\sum_{\omega_{\theta} \in \mathcal{D}} e^{-2 \pi i \lambda\Omega_{\theta}(x/\lambda;\omega)} F(\omega_{\theta}) \psi_{\theta}(\omega)\big]\,\ud \omega
\end{equation*}
where $\Omega_{\theta}(x;\omega) := \phi(x;\omega)-\phi(x;\omega_{\theta})$,  $\tilde{\psi} \in C^{\infty}_c(\R^{n})$ is a function of $n$ variables which enjoys properties similar to those of $\psi$ and $a^{\lambda}$ is a suitable choice of amplitude. Since
\begin{equation*}
 \sup_{\omega \in \mathrm{supp}\,\psi_{\theta}}|\partial_{x}^{\beta}\Omega_{\theta}(x; \omega)| \lesssim_{\beta} R^{-1}|x| \qquad \textrm{for all $\beta \in \N_0^n$ and $x \in X$,}
\end{equation*}
one may safely remove the $\lambda\Omega_{\theta}(x/\lambda;\omega)$ term from the phase. More precisely, by expanding $\tilde{\psi}(x)e^{-2 \pi i \lambda\Omega_{\theta}(Rx/\lambda; \omega)}$ as a Fourier series in the variable $x$, one can show that
\begin{equation*}
\big|\sum_{\omega_{\theta} \in \mathcal{D}} e^{2 \pi i \phi^{\lambda}(x;\omega_{\theta})} F(\omega_{\theta})\big| \lesssim R^{n-1}  \sum_{k \in \Z^n}(1+|k|)^{-(n+1)} |T^{\lambda}f_k(x)|
\end{equation*}
where $T^{\lambda}$ is a H\"ormander-type operator with phase $\phi^{\lambda}$ and
\begin{equation*}
f_k(\omega) := \sum_{\omega_{\theta} \in \mathcal{D}} F(\omega_{\theta})c_{k,\theta}(\omega)\psi_{\theta}(\omega)
\end{equation*}
for some choice of smooth functions $c_{k,\theta}$ satisfying the uniform bound $\|c_{k,\theta}\|_{L^{\infty}(B^{n-1})} \lesssim 1$. Thus, by the definition of $Q_{p}(R)$ it follows that
\begin{equation*}
\big\| \sum_{\omega_{\theta} \in \mathcal{D}} e^{2 \pi i \phi^{\lambda}(\,\cdot\,;\omega_{\theta})}F(\omega_{\theta})\big\|_{L^p(B(0,R))} \lesssim Q_{p}(R)R^{n-1} \sum_{k \in \Z^n} (1+|k|)^{-(n+1)} \|f_k\|_{L^p(B^{n-1})}
\end{equation*}
and, since the $\psi_{\theta}$ are supported on pairwise disjoint sets,
\begin{equation*}
\|f_k\|_{L^p(B^{n-1})} \lesssim R^{-(n-1)/p} \big(\sum_{\omega_{\theta} \in \mathcal{D}} |F(\omega_{\theta})|^p\big)^{1/p},
\end{equation*}
concluding the proof.
\end{proof}

\begin{proof}[Proof (of Lemma~\ref{parabolic rescaling lemma})] Recall, the phase of $T^{\lambda}$ is given by $\phi^{\lambda}(x;\omega) := \lambda\phi(x/\lambda;\omega)$ where
\begin{equation}\label{nice phase}
 \phi(x;\omega) = \langle x',\omega\rangle + x_nh(\omega) + \mathcal{E}(x;\omega).
\end{equation}
Let $B(\bar{\omega}, \rho^{-1})$ be a ball supporting $f$, where $\bar{\omega} \in B^{n-1}$. If $ \tilde{T}^{\lambda/\rho^2}$ denotes the parabolically rescaled operator defined in \eqref{rescaled operator}, with rescaled phase function
\begin{equation}\label{rescaled phase}
\tilde{\phi}(x; \omega) := \langle x', \omega \rangle + x_n\tilde{h}(\omega) + \tilde{\mathcal{E}}(x; \omega),
\end{equation}
then it follows that 
\begin{equation*}
 \|T^{\lambda}f\|_{L^p(B(0,R))} \lesssim \rho^{(n+1)/p} \|\tilde{T}^{\lambda/\rho^2}\tilde{f}\|_{L^p(\tilde{D}_R)}
\end{equation*}
where now $\tilde{D}_R$ is an ellipse with principal axes parallel to the co-ordinate axes and dimensions $O(R/\rho) \times \dots \times O(R/\rho) \times O(R/\rho^2)$ and $\tilde{f}(\omega) := \rho^{-(n-1)}f(\bar{\omega}+\rho^{-1}\omega)$. Since 
\begin{equation*}
 \|\tilde{f}\|_{L^p(B^{n-1})} = \rho^{-(n-1) + (n-1)/p}\|f\|_{L^p(B^{n-1})},
\end{equation*}
given $\delta > 0$, the problem is to show that
\begin{equation*}
\|\tilde{T}^{\lambda/\rho^2}\tilde{f}\|_{L^p(\tilde{D}_R)} \lesssim_{\delta} Q_{p}(R) R^{\delta} \|\tilde{f}\|_{L^p(B^{n-1})}.
\end{equation*}
Observe that the phase $\tilde{\phi}$ defined in \eqref{rescaled phase} is also positive-definite and of reduced form. To lighten the notation, consider once again a general positive-definite reduced phase $\phi$ as in \eqref{nice phase} and let $T^{\lambda}$ is a H\"ormander-type operator associated to $\phi^{\lambda}$. It suffices to show 
\begin{equation*}
\|T^{\lambda}f\|_{L^p(D_{\mathbf{R}})} \lesssim_{\delta} Q_{p}(R) R^{\delta} \|f\|_{L^p(B^{n-1})}
\end{equation*}
for all $1 \ll R \leq R' \leq \lambda$ and $\delta > 0$ where
\begin{equation*}
 D_{\mathbf{R}} := \Big\{ x \in \R^n : \Big(\frac{|x'|}{R'}\Big)^2 + \Big(\frac{|x_n|}{R}\Big)^2 \leq 1 \Big\}
\end{equation*}
is an ellipse. Of course, if $R = R'$, then this inequality is immediate from the definition of $Q_{p}(R)$. 

Cover $B^{n-1}$ by a collection of essentially disjoint $R^{-1}$-caps $\theta$ and decompose $f$ as $f = \sum_{\theta} f_{\theta}$. Define
\begin{equation*}
T^{\lambda}_{\theta}f(x) :=e^{-2\pi i \phi^{\lambda}(x;\omega_{\theta})} T^{\lambda} f(x)
\end{equation*}
so that
\begin{equation*}
T^{\lambda}f(x) = \sum_{\theta : R^{-1}\mathrm{-cap}} e^{2\pi i \phi^{\lambda}(x;\omega_{\theta})} T^{\lambda}_{\theta} f_{\theta}(x).
\end{equation*}
Fix $\delta > 0$ to be sufficiently small for the purposes of the forthcoming argument. Each $f_{\theta}$ is supported on an $R^{-1}$-ball and is therefore, of course, supported on an $R^{-1+\delta}$-ball. Since $(R^{-1 + \delta})^{-1} \leq \lambda^{1 - \delta}$, one may argue as in the proof of Lemma~\ref{locally constant lemma} to deduce that
\begin{equation*}
T^{\lambda}_{\theta}f_{\theta}(x) = T^{\lambda}_{\theta}f_{\theta}\ast \eta_{R^{1-\delta}}(x) + \mathrm{RapDec}(\lambda)\|f\|_{L^2(B^{n-1})}
\end{equation*}
for some choice of smooth, rapidly decreasing function $\eta$ such that $|\eta|$ admits a smooth, rapidly decreasing majorant $\zeta \colon \R^n \to [0,\infty)$ which is locally constant at scale 1.  In particular, it follows that
\begin{equation}\label{parabolic rescaling 1}
\zeta_{R^{1-\delta}}(x) \lesssim R^{\delta} \zeta_{R^{1-\delta}}(y) \quad \textrm{if $|x-y| \lesssim R$.}
\end{equation}

Cover $D_{\mathbf{R}}$ by finitely-overlapping $R$-balls and let $B_{R}$ be some member of this cover. Combining the above observations, if $\bar{x}$ denotes the centre of $B_{R}$ and $z \in B(0,R)$, then 
\begin{equation*}
 |T^{\lambda}f(\bar{x} + z)| \lesssim R^{\delta}  \int_{\R^n} \big|\sum_{\theta : R^{-1}\mathrm{-cap}} e^{2\pi i \tilde{\phi}^{\lambda}(z;\omega_{\theta})} e^{2\pi i \phi^{\lambda}(\bar{x};\omega_{\theta})}T^{\lambda}_{\theta}f_{\theta}(y)\big| \zeta_{R^{1-\delta}}(\bar{x}-y) \,\ud y,
\end{equation*}
where $ \tilde{\phi}^{\lambda}(z;\omega_{\theta}) := \phi^{\lambda}(\bar{x}+z;\omega_{\theta}) - \phi^{\lambda}(\bar{x};\omega_{\theta})$. Taking the $L^p$-norm in $z$ it follows from Minkowski's inequality that $ \|T^{\lambda}f\|_{L^p(B_{R})}$ is dominated by
\begin{equation*}
 R^{\delta} \int_{\R^n} \big\|\sum_{\theta : R^{-1}\mathrm{-cap}} e^{2\pi i \tilde{\phi}^{\lambda}(\,\cdot\,;\omega_{\theta})} e^{2\pi i \phi^{\lambda}(\bar{x};\omega_{\theta})}T^{\lambda}_{\theta}f_{\theta}(y)\big\|_{L^p(B(0,R))} \zeta_{R^{1-\delta}}(\bar{x}-y) \,\ud y.
\end{equation*}
By Lemma~\ref{discrete formulation} the $L^p$-norm appearing in the above integrand is bounded by a constant multiple of
\begin{equation*}
Q_{p}(R)R^{(n-1)/p'} \big(\sum_{\theta : R^{-1}\mathrm{-cap}} |T^{\lambda}_{\theta}f_{\theta}(y)|^p \big)^{1/p}. 
\end{equation*}
Applying H\"older's inequality and the locally-constant property \eqref{parabolic rescaling 1}, one deduces that
\begin{equation*}
\|T^{\lambda}f\|_{L^p(B_{R})} \lesssim Q_{p}(R)R^{(n-1)/p' + O(\delta)}   \big(\int_{\R^n}\sum_{\theta : R^{-1}\mathrm{-cap}} |T^{\lambda}f_{\theta}(\bar{x} + z - y)|^p \zeta_{R^{1-\delta}}(y) \,\ud y \big)^{1/p}
\end{equation*}
for all $z \in B(0,R)$. By raising both sides of this estimate to the $p$th power, averaging in $z$ and summing over all balls $B_{R}$ in the covering, it follows that $\|T^{\lambda}f\|_{L^p(D_{\mathbf{R}})}$ is dominated by
\begin{equation*}
 Q_{p}(R)R^{(n-1)/p'-n/p +O(\delta)}   \big(\int_{\R^n}\sum_{\theta : R^{-1}\mathrm{-cap}} \|T^{\lambda}f_{\theta}\|_{L^p(D_{\mathbf{R}} - y)}^p \zeta_{R^{1-\delta}}(y) \,\ud y \big)^{1/p}.
\end{equation*}
Observe that, by H\"ormander's theorem (Lemma~\ref{Hormander L2 again}) and H\"older's inequality, one has
\begin{equation*}
\|T^{\lambda}f_{\theta}\|_{L^2(D_{\mathbf{R}} - y)} \lesssim R^{-(n-1)(1/2 - 1/p) + 1/2}\|f_{\theta}\|_{L^p(B^{n-1})}.
\end{equation*}
On the other hand, the trivial estimate
\begin{equation*}
\|T^{\lambda}f_{\theta}\|_{L^{\infty}(D_{\mathbf{R}} - y)} \lesssim R^{-(n-1)/p'}\|f_{\theta}\|_{L^p(B^{n-1})}
\end{equation*}
holds, simply due to H\"older's inequality. Combining the above, 
\begin{equation*}
\|T^{\lambda}f_{\theta}\|_{L^p(D_{\mathbf{R}} - y)} \lesssim R^{-(n-1)/p' + n/p}\|f_{\theta}\|_{L^p(B^{n-1})}.
\end{equation*}
The desired inequality is now immediate. 
\end{proof}




\section{An $\varepsilon$-removal lemma}\label{Epsilon removal section}

The $\lambda^{\varepsilon}$-loss in the linear estimates of Theorems~\ref{general main theorem} and~\ref{main theorem} can be removed away from the endpoint by an appeal to an $\varepsilon$-removal lemma of the type introduced in \cite{Tao1999} (see also \cite{Bourgain2011, Tao1998a}). The precise form of the required lemma does not appear in the literature, but it can be deduced by a minor modification of an argument from \cite{Tao1999}. For completeness, the details are given presently. 

Suppose $T^{\lambda}$ is a H\"ormander-type operator with associated phase function $\phi^{\lambda}$ (note that here no additional positive-definite assumption is assumed). Let $\bar{p} \geq 2$ and suppose for all $\varepsilon > 0$ the estimate
\begin{equation}\label{epsilon removed hypothesis}
 \|T^{\lambda}f\|_{L^p(B_R)} \lesssim_{\varepsilon, \phi, a} R^{\varepsilon}\|f\|_{L^p(B^{n-1})}
\end{equation}
holds for all $p \geq \bar{p}$, all $R$-balls $B_R$ for $1 \leq R \leq  \lambda$ and any choice of amplitude function. Under this hypothesis, one wishes to show that the global estimate
\begin{equation}\label{epsilon removed estimate}
 \|T^{\lambda}f\|_{L^p(\R^n)} \lesssim_{\phi, a} \|f\|_{L^p(B^{n-1})}
\end{equation}
is valid for all $p > \bar{p}$. 

\begin{definition}[Tao \cite{Tao1999}] Let $R \geq 1$. A collection $\{B(x_j, R)\}_{j=1}^N$ of $R$-balls in $\R^n$ is \emph{sparse} if the centres $\{x_1, \dots, x_N\}$ are $(RN)^{\bar{C}}$-separated. Here $\bar{C} \geq 1$ is a fixed constant, chosen large enough for the purposes of the proof.  
\end{definition}

Following \cite{Tao1999}, the first step towards establishing \eqref{epsilon removed estimate} is to reduce the problem to proving estimates for $T^{\lambda}$ over sparse families of balls. 

\begin{lemma}\label{epsilon reduction lemma}
To prove \eqref{epsilon removed estimate} for all $p > \bar{p}$ it suffices to show that for all $\varepsilon >0$ the estimate
 \begin{equation}\label{sparse estimate}
 \|T^{\lambda}f\|_{L^{\bar{p}}(S)} \lesssim_{\varepsilon, \phi, a} R^{\varepsilon}\|f\|_{L^{\bar{p}}(B^{n-1})}
\end{equation}
 holds whenever $R \geq 1$ and $S \subseteq \R^n$ is a union of $R$-balls belonging to a sparse collection, for any choice of amplitude function. 
\end{lemma}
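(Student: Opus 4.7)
The plan is to follow Tao's $\varepsilon$-removal scheme from \cite{Tao1999}, which has two ingredients: a purely geometric covering lemma that approximates arbitrary finite-measure subsets of $\R^n$ by unions of sparse balls, and a restricted weak-type argument that converts the $\varepsilon$-lossy sparse estimate at $\bar p$ into an $\varepsilon$-free strong-type bound at any $p > \bar p$.

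The first step is to invoke the following variant of \cite[Proposition~2.1]{Tao1999}: for every measurable $\Omega \subseteq \R^n$ with $1 \le |\Omega| < \infty$ there exist $R \lesssim |\Omega|^{C_n}$ and a sparse collection of $R$-balls whose union $S$ contains $\Omega$ and satisfies $|S| \lesssim |\Omega|^{1+\delta_0}$, for $\delta_0 > 0$ as small as one wishes. The proof is a greedy multi-scale clustering argument, starting from a discretisation of $\Omega$ at scale $1$ and iteratively merging any two balls whose separation violates the $(RN)^{\bar C}$-separation condition into a single slightly enlarged ball; termination occurs after at most $O(\log |\Omega|)$ stages, during which the total measure grows by at most $|\Omega|^{\delta_0}$. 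Since this lemma involves only the geometry of points in $\R^n$ and makes no reference to $T^\lambda$, the argument of \cite[\S 2]{Tao1999} can be imported verbatim.

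Next, to deduce \eqref{epsilon removed estimate} at some $p > \bar p$, by the restricted weak-type reformulation of the $L^p \to L^p$ bound it suffices to establish
\[
\bigl|\langle T^\lambda \chi_F, \chi_E\rangle\bigr| \lesssim |F|^{1/p}|E|^{1/p'}
\]
uniformly in measurable $F \subseteq B^{n-1}$ and $E \subseteq \R^n$ of positive finite measure, and then apply real interpolation with the trivial $L^1 \to L^\infty$ estimate $\|T^\lambda f\|_{L^\infty(\R^n)} \lesssim \|a\|_{\infty} \|f\|_{L^1(B^{n-1})}$. Applying the covering lemma to $E$ produces a sparse union $S \supseteq E$, and then the hypothesised sparse estimate \eqref{sparse estimate} combined with H\"older's inequality yields
\[
\bigl|\langle T^\lambda \chi_F, \chi_E\rangle\bigr| \le \|T^\lambda \chi_F\|_{L^{\bar p}(S)}\,|E|^{1/\bar p'} \lesssim_{\varepsilon} R^{\varepsilon}|F|^{1/\bar p}|E|^{1/\bar p'} \lesssim |E|^{C_n\varepsilon}\bigl(|F|/|E|\bigr)^{1/\bar p - 1/p}|F|^{1/p}|E|^{1/p'}.
\]
When $|F| \le |E|^{1-\kappa}$ for an appropriate $\kappa = \kappa(\varepsilon)$, the factor $|E|^{C_n\varepsilon}(|F|/|E|)^{1/\bar p - 1/p}$ is $O(1)$; in the complementary regime $|F| > |E|^{1-\kappa}$ the trivial $L^1 \to L^\infty$ bound gives $|\langle T^\lambda \chi_F, \chi_E\rangle| \lesssim |F||E|$, which is of the desired shape precisely when $|F|$ is large relative to $|E|^{1-\kappa}$. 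Choosing $\varepsilon$ (and hence $\kappa$) small enough relative to the gap $p - \bar p$ allows the two regimes to be matched, and the small-measure case $|E| < 1$ is handled analogously by interchanging the r\^oles of the two trivial endpoints.

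The main obstacle is verifying the covering lemma, whose multi-scale greedy construction is delicate: one must ensure that at every stage of the clustering the separation exponent $\bar C$ in the definition of sparsity is preserved, while simultaneously controlling both the scale $R$ and the total volume of the resulting collection by powers of $|\Omega|$. As noted above, however, this lemma is purely combinatorial and can be quoted directly from \cite{Tao1999}; the only operator-theoretic input in the argument is the hypothesised sparse estimate \eqref{sparse estimate} together with the trivial kernel bound, both of which are uniform in the parameter $\lambda$.
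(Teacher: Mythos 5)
There is a genuine gap in the first step: the covering lemma you invoke is false as stated. It is \emph{not} true in general that a finite union $\Omega$ of unit cubes with $|\Omega| = M$ can be covered by a \emph{single} sparse collection of $R$-balls with $R \lesssim M^{C_n}$ and $|S| \lesssim M^{1+\delta_0}$. Place $M$ unit cubes on the positive $x_1$-axis at positions $2^{100\bar{C} j}$, $j=1,\dots,M$. For any admissible $R \lesssim M^{C_n}$, a single $R$-ball can contain only the cubes of index $j \lesssim \log M$, so any covering needs $N \gtrsim M$ balls. Writing $R \sim 2^{100\bar{C} j_0}$, the closest pair of ball centres then has separation $\sim 2^{100\bar{C}(j_0+1)}$, and the sparsity condition $(RN)^{\bar{C}} \lesssim 2^{100\bar{C}(j_0+1)}$ unwinds to $N^{\bar{C}} \lesssim 2^{100\bar{C}(j_0(1-\bar{C})+1)} \leq 1$ as soon as $\bar{C} \geq 2$ and $j_0 \geq 1$ (and at $j_0=0$ it forces $M \leq 2^{200}$) — contradicting $N \gtrsim M$ for $M$ large. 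This multi-scale obstruction is precisely why Tao's covering lemma, which the paper quotes, produces not one but $O(N|E|^{1/N})$ sparse collections, spread over $N$ super-exponentially growing scales $R_0 = 1$, $R_j = (R_{j-1}|E|)^{\bar{C}}$; the restricted-type bound $\|T^\lambda f\|_{L^{\bar{p}}(E)} \lesssim_\varepsilon |E|^{\bar{C}/\log(1/\varepsilon)}\|f\|_{L^{\bar{p}}}$ is obtained only after summing over \emph{all} of these collections and optimising in $N \sim \log(1/\varepsilon)$.

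Your restricted-weak-type reformulation is otherwise a reasonable alternative to the paper's route (which dualises, decomposes the test function into level sets, applies the sparse estimate to each level, and sums), and could plausibly be made to work once the correct multi-scale covering lemma is substituted. But there is a second point you do not address: the covering lemma applies only to finite unions of unit cubes, and replacing an arbitrary measurable $E$ by the union $E'$ of unit cubes meeting it can inflate $|E'|$ uncontrollably when $E$ is fragmented, so the reduction to unions of cubes is not automatic. The paper spends a nontrivial part of the proof on exactly this: it establishes the dualised estimate for test functions constant on a $\tau$-shifted mesh of unit cubes (whose level sets genuinely are cube unions) and then removes that constraint by exploiting the freedom in the amplitude $a$ allowed by \eqref{sparse estimate}, via a Fourier-series expansion of the resulting amplitude ratio. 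Some analogue of this discretisation step would also be needed to close your argument.
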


The key step in the proof of Lemma~\ref{epsilon reduction lemma} is the following covering lemma.

\begin{lemma}[Tao \cite{Tao1998a, Tao1999}] Suppose $E \subseteq \R^n$ is a finite union of 1-cubes and $N \geq 1$. Define the radii $R_j$ inductively by
\begin{equation*}
 R_0 := 1, \quad R_{j} := R_{j-1}^{\bar{C}}|E|^{\bar{C}} \textrm{ for $1 \leq j \leq N-1$.}
\end{equation*}
Then for each $0 \leq j \leq N-1$ there exists a family of sparse collections $(\mathcal{B}_{j,\alpha})_{\alpha \in A_j}$ of balls of radius $R_j$ such that the index sets $A_k$ have cardinality $O(|E|^{1/N})$ and
\begin{equation*}
 E \subseteq \bigcup_{j=0}^{N-1} \bigcup_{\alpha \in A_j} S_{j, \alpha}
\end{equation*}
where $S_{j, \alpha}$ is the union of all the balls belonging to the family $\mathcal{B}_{j,\alpha}$.
\end{lemma}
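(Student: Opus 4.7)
The plan is to iterate a greedy one-step reduction over $N$ scales. First I would establish the following one-step claim: given any collection of $M$ balls of a common radius $R$ and any integer $K \geq 2$, the union can be written as the union of $O(M^{1/K})$ sparse collections of $R$-balls together with a union of $O(M^{1-1/K})$ balls of some larger radius $R' \leq (RM)^{\bar{C}}$. The argument is a dichotomy driven by clustering. Setting $L := (RM^{1/K})^{\bar{C}}$, one repeatedly checks whether some center has at least $M^{1/K}$ other centers within distance $L$: if yes, the entire cluster is absorbed into a single ball of radius $L$ and removed from further consideration; if no, the graph with edges at distance $<L$ has maximum degree less than $M^{1/K}$, so greedy graph coloring produces $\leq M^{1/K}$ color classes, each an independent set of at most $M^{1/K}$ points pairwise separated by $L = (RM^{1/K})^{\bar{C}} \geq (R\cdot(\text{count}))^{\bar{C}}$, hence a sparse collection in the sense of the paper. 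Since each cluster absorbs at least $M^{1/K}$ balls, at most $M^{1-1/K}$ clusters are ever produced, and each is contained in a single ball of radius $L \leq (RM)^{\bar{C}}$.

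Next I would iterate the one-step reduction across the scales $j = 0, 1, \ldots, N-1$, employing the parameter $K = N - j$ at scale $j$. Let $M_j$ denote the number of radius-$R_j$ balls entering stage $j$; starting from $M_0 = |E|$ unit cubes (regarded as $1$-balls) the reduction produces $O(M_j^{1/(N-j)})$ sparse collections of $R_j$-balls and passes $M_{j+1} = O(M_j^{1-1/(N-j)})$ balls of radius $\leq (R_j M_j)^{\bar{C}} \leq R_j^{\bar{C}}|E|^{\bar{C}} = R_{j+1}$ to the next stage. An easy induction on $j$ gives $M_j = O(|E|^{1-j/N})$, whence $M_j^{1/(N-j)} = O(|E|^{1/N})$, matching the required bound $|A_j| = O(|E|^{1/N})$. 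After the final iteration the residual $M_{N-1} = O(|E|^{1/N})$ balls at scale $R_{N-1}$ are trivially sparse (as singletons) and are absorbed into $\mathcal{B}_{N-1,\alpha}$.

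The main obstacle is the bookkeeping: ensuring that the sparsity threshold, the cluster radius, and the geometric recursion $R_j = R_{j-1}^{\bar{C}}|E|^{\bar{C}}$ all mesh consistently. The delicate point is the scale-dependent choice $K = N-j$ (rather than a fixed parameter); this is what makes the per-scale bound $M_j^{1/(N-j)}$ collapse to $|E|^{1/N}$ uniformly in $j$. A secondary subtlety is that sparsity of a set of $R$-balls is not preserved under naive rescaling, so one must run the one-step reduction at scale $R_j$ directly rather than normalise; this is why the clustering threshold $L = (R_j M_j^{1/(N-j)})^{\bar{C}}$ is chosen to match the sparsity exponent of the ambient scale. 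Up to a harmless enlargement of the absolute constant $\bar{C}$, these choices close the induction.
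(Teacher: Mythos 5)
The paper does not actually prove this covering lemma (it is quoted from Tao's work), so I am judging your argument on its own terms; your overall architecture --- a clustering-versus-separation dichotomy at each scale, iterated with the scale-dependent exponent $K = N-j$ so that $M_j \leq |E|^{1-j/N}$ and $M_j^{1/(N-j)} = |E|^{1/N}$ --- is the standard one and the bookkeeping of the iteration is fine, including the trivial treatment of the last scale.

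There is, however, a concrete error in the one-step claim. After the clustering phase terminates you still have up to $M$ surviving centres, and greedy colouring of a graph of maximum degree $< M^{1/K}$ controls the \emph{number} of colour classes (at most $M^{1/K}$), not their \emph{sizes}: by pigeonhole some class contains at least $M^{1-1/K}$ points, which exceeds $M^{1/K}$ as soon as $K \geq 3$. So your assertion that each class is ``an independent set of at most $M^{1/K}$ points'' is false, and for a class of $n > M^{1/K}$ balls the paper's sparsity condition demands $(Rn)^{\bar{C}}$-separation, which is strictly larger than your threshold $L = (RM^{1/K})^{\bar{C}}$. As written, the colour classes need not be sparse. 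The fix is local: take $L = (RM)^{\bar{C}}$ (or a suitable power of $RM$), so that \emph{any} $L$-separated subset of the $M$ centres is automatically sparse regardless of its cardinality. Enlarging $L$ only makes the clustering alternative easier to trigger, and the cluster balls still have radius $O\big((R_jM_j)^{\bar{C}}\big) \leq R_j^{\bar{C}}|E|^{\bar{C}} = R_{j+1}$ because $M_j \leq |E|$ at every stage; the factor-of-two slack in the radius is absorbed by the freedom to enlarge $\bar{C}$, as you already note. With that single correction the proof closes.
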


\begin{proof}[Proof (of Lemma~\ref{epsilon reduction lemma})] Let $E \subseteq \R^n$ be a finite union of 1-cubes. For $N \geq 1$, the covering lemma together with the hypothesis \eqref{sparse estimate} imply that
\begin{equation*}
 \|T^{\lambda}f\|_{L^{\bar{p}}(E)} \lesssim_{\varepsilon, \phi, a} N|E|^{1/N + \varepsilon \bar{C}^N}\|f\|_{L^{\bar{p}}(B^{n-1})}.
\end{equation*}
Choosing $N \sim \log(1/\varepsilon)$, it follows that
\begin{equation*}
 \|T^{\lambda}f\|_{L^{\bar{p}}(E)} \lesssim_{\varepsilon, \phi, a} |E|^{\bar{C}/\log(1/\varepsilon)}\|f\|_{L^{\bar{p}}(B^{n-1})}.
\end{equation*}
It will be convenient to work with the dual operator 
\begin{equation*}
 T^*g(\omega) := \int_{\R^n} e^{-2\pi i \phi^{\lambda}(x;\omega)} a^{\lambda}(x; \omega) g(x)\,\ud x
\end{equation*}
so that the above estimate can be reformulated as 
\begin{equation}\label{dual epsilon removed estimate}
 \|T^*g\|_{L^{\bar{p}'}(B^{n-1})} \lesssim_{\varepsilon, \phi, a} |E|^{\bar{C}/\log(1/\varepsilon)}\|g\|_{L^{\bar{p}'}(E)}
\end{equation}
for $g$ supported on the set $E$.

Fix $p > \bar{p}$ and $\tau \in [-1/2, 1/2]^n$. Suppose that $g \in L^{p'}(\R^n)$ satisfies $\|g\|_{L^{p'}(\R^n)} = 1$ and is constant on the mesh of 1-cubes centred on points of the lattice $\tau + \Z^n$. Form a level set decomposition of $g$ by writing $g = \sum_{k \in \Z} g_k$ where $g_k := g\chi_{E_k}$ for 
\begin{equation*}
E_k := \big\{ x \in \R^n : 2^{-k} \leq |g(x)| < 2^{-k+1} \big\}.
\end{equation*}
Chebyshev's inequality implies that $|E_k| \leq 2^{kp'}$ for all $k \in \Z$. Furthermore, each set $E_k$ is a union of 1-cubes and therefore if $E_k \neq \emptyset$, then $|E_k| \geq 1$. Combining these observations, one deduces that $E_k = \emptyset$ for all $k < 0$. Since $g_k$ is supported on $E_k$, one may apply \eqref{dual epsilon removed estimate} to conclude that 
\begin{equation}\label{reduced epsilon removed estimate}
 \|T^*g_k\|_{L^{\bar{p}'}(B^{n-1})} \lesssim_{\varepsilon, \phi, a} |E_k|^{\bar{C}/\log(1/\varepsilon)} \|g_k\|_{L^{\bar{p}'}(\R^n)}.
\end{equation}
Using a simple base-times-height estimate, the right-hand side of \eqref{reduced epsilon removed estimate} can be bounded by (a constant multiple of)
\begin{equation*}
 2^{-k}|E_k|^{\bar{C}/\log(1/\varepsilon) + 1/\bar{p}'} \lesssim 2^{-k(1 - \bar{C}p'/\log(1/\varepsilon) - p'/\bar{p}')}.
\end{equation*}
Since $p' < \bar{p}'$, by choosing $\varepsilon$ sufficiently small one can ensure that the right-hand exponent is negative and therefore 
\begin{equation*}
 \|T^*g\|_{L^{p'}(B^{n-1})} \lesssim \|T^*g\|_{L^{\bar{p}'}(B^{n-1})} \leq \sum_{k \geq 0}  \|T^*g_k\|_{L^{\bar{p}'}(B^{n-1})} \lesssim_{\phi, a} 1 = \|g\|_{L^{p'}(\R^n)}.
\end{equation*}
This establishes the dual of the desired estimate \eqref{epsilon removed estimate} under the additional hypothesis that the function $g$ is constant on 1-cubes.

It remains to remove the condition that $g$ is constant on 1-cubes. The key observation is that this special case of \eqref{epsilon removed estimate} implies the discrete inequality 
\begin{equation}\label{discrete epsilon removal}
 \big\| \sum_{\sigma \in \Z^n} e^{-2\pi i \phi^{\lambda}(\sigma + \tau;\,\cdot\,)} a^{\lambda}(\sigma + \tau; \,\cdot\,) G(\sigma) \big\|_{L^{p'}(B^{n-1})} \lesssim_{\phi, a} \|G\|_{\ell^{p'}(\Z^n)}
\end{equation}
for all $G \in \ell^{p'}(\Z^n)$ and $\tau \in [-1/2, 1/2]^n$. Indeed, once \eqref{discrete epsilon removal} is established, taking $g \in L^{p'}(\R^n)$ belonging to a suitable \emph{a priori} class and applying Minkowski's inequality one deduces that
\begin{equation*}
 \|T^*g\|_{L^{p'}(B^{n-1})} \leq \int_{[-1/2, 1/2]^n} \big\| \sum_{\sigma \in \Z^n} e^{-2\pi i \phi^{\lambda}(\sigma + \tau;\,\cdot\,)} a^{\lambda}(\sigma + \tau; \,\cdot\,) g(\sigma + \tau) \big\|_{L^{p'}(B^{n-1})} \,\ud \tau.
\end{equation*}
Combining this with \eqref{discrete epsilon removal} and H\"older's inequality yields \eqref{dual epsilon removed estimate}. 

Thus, the problem is now reduced to proving \eqref{discrete epsilon removal}. Fix $G \in \ell^{p'}(\Z^n)$ and define 
\begin{equation*}
\tilde{g}(x) := \sum_{\sigma \in \Z^n} G(\sigma)\chi(x - \sigma - \tau)
\end{equation*}
where $\chi$ is the characteristic function of $[-1/2,1/2]^n$. Since $\tilde{g}$ is constant on 1-cubes, one is free to apply \eqref{dual epsilon removed estimate} to this function. In particular, let $\tilde{T}^*$ be the dual of a H\"ormander-type operator with phase $\phi^{\lambda}$ and amplitude $\tilde{a}^{\lambda}$ where
\begin{equation*}
\tilde{a}(x;\omega) := \Big(\prod_{j=1}^n \frac{\sin \pi (\partial_{x_j}\phi)(x;\omega)}{\pi  (\partial_{x_j}\phi)(x;\omega)} \Big)^{-1}a_0(x;\omega)
\end{equation*}
 for $a_0$ a smooth amplitude which is supported on $X \times \Omega$ and satisfies $a_0(x;\omega) = 1$ for $(x; \omega) \in \mathrm{supp}\,a$. By the usual reductions (see $\S$\ref{Reductions section}) one may assume from the outset that $|(\partial_{x_j}\phi)(x;\omega)| \leq 1/2$ for $(x;\omega) \in X \times \Omega$ and $1 \leq j \leq n$ and hence $\tilde{a}$ is a well-defined, smooth function. Thus, the estimate 
\begin{equation*}
\| \tilde{T}^*\tilde{g}\|_{L^{p'}(B^{n-1})} \lesssim_{\phi, a} \|\tilde{g}\|_{L^{p'}(\R^n)} 
\end{equation*}
holds, which can be rewritten as
\begin{equation}\label{almost discrete epsilon removal}
\| \sum_{\sigma \in \Z^n} e^{-2\pi i \phi^{\lambda}(\sigma + \tau;\,\cdot\,)} (A_{\lambda})^{\lambda}(\sigma + \tau; \,\cdot\,) G(\sigma)\|_{L^{p'}(B^{n-1})} \lesssim_{\phi, a} \|G\|_{\ell^{p'}(\Z^n)}
\end{equation}
where
\begin{equation*}
 A_{\lambda}(x;\omega)  := \int_{[-1/2,1/2]^n} e^{-2\pi i\lambda(\phi(x + y/\lambda; \omega) - \phi(x; \omega))}\tilde{a}(x + y/\lambda; \omega)\,\ud y. 
\end{equation*}
Note that \eqref{almost discrete epsilon removal} is almost the desired expression \eqref{discrete epsilon removal} except for the disparity between the amplitude functions. To deal with this slight technicality, observe that, since
\begin{equation*}
\lim_{\lambda \to \infty} A_{\lambda}(x;\omega) = a_0(x;\omega) \quad \textrm{uniformly,}
\end{equation*}
one may assume that $\lambda$ is sufficiently large so that $|A_{\lambda}(x;\omega)| \gtrsim 1$ for all $(x;\omega) \in \mathrm{supp}\,a$. Furthermore, by applying the mean value theorem to the phase,
\begin{equation*}
    \|\partial^{\alpha}_xA_{\lambda}\|_{L^{\infty}(\R^n \times \R^{n-1})} \lesssim_{\alpha, \phi, a} 1 \qquad \textrm{for all $\alpha \in \N_0^n$,}
\end{equation*}
the important observation here being that the derivatives are independent of $\lambda$. Thus, the expression appearing in the norm on the left-hand side of \eqref{discrete epsilon removal} is given by
\begin{equation*}
\sum_{\sigma \in \Z^n} e^{-2\pi i \phi^{\lambda}(\sigma + \tau;\omega)} (A_{\lambda})^{\lambda}(\sigma + \tau; \omega) G(\sigma)(\rho_{\lambda})^{\lambda}(\sigma + \tau;\omega)
\end{equation*}
where the ratio $\rho_{\lambda}(x;\omega) := a(x;\omega)A_{\lambda}(x;\omega)^{-1}$ satisfies
\begin{equation*}
    \|\partial^{\alpha}_x\rho_{\lambda}\|_{L^{\infty}(\R^n \times \R^{n-1})} \lesssim_{\alpha, \phi, a} 1 \qquad \textrm{for all $\alpha \in \N_0^n$.}
\end{equation*}
Taking a Fourier series expansion of $\rho_{\lambda}$ in the $x$ variable and using repeated integration-by-parts to estimate the Fourier coefficients, it follows that
\begin{equation*}
\rho_{\lambda}(x;\omega) = \sum_{k \in \Z^n} (1+ |k|)^{-(n+1)} c_{\lambda, k}(\omega) e^{2\pi i \langle x, k \rangle} 
\end{equation*}
where the $c_{\lambda, k}$ are bounded functions, uniformly in $\lambda$ and $k$ (they do, however, depend on $n$, $\phi$ and $a$). One may therefore bound the left-hand side of \eqref{discrete epsilon removal} by a $(1+|k|)^{-(n+1)}$-weighted sum of the left-hand side of \eqref{almost discrete epsilon removal} applied to modulated versions of $G$. Estimating each summand using \eqref{almost discrete epsilon removal} and summing in $k$ concludes the proof.  
\end{proof}

Given the above reduction, it remains to establish the estimates for $T^{\lambda}$ over sparse collections of $R$-balls. 

\begin{lemma} Under the hypothesis \eqref{epsilon removed hypothesis}, if $p \geq \bar{p}$, then the estimate
 \begin{equation*}
 \|T^{\lambda}f\|_{L^p(S)} \lesssim_{\varepsilon, \phi, a} R^{\varepsilon}\|f\|_{L^p(B^{n-1})}
\end{equation*}
 holds for all $\varepsilon >0$ whenever $S \subseteq \R^n$ is a union of $R$-balls belonging to a sparse collection. 
\end{lemma}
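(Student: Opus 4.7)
The strategy follows Tao's $\varepsilon$-removal scheme \cite{Tao1999}, adapted to the variable coefficient setting using the wave packet technology from \S\ref{Wave packet section}.

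The plan is to decompose $f$ into wave packets at scale $R$, write $f=\sum_{(\theta,v)\in\T}f_{\theta,v}$, and for each ball $B_j=B(x_j,R)$ of the sparse collection set
\begin{equation*}
\T_j:=\{(\theta,v)\in\T:T_{\theta,v}\cap B_j\neq\emptyset\}\qquad\text{and}\qquad f_j:=\sum_{(\theta,v)\in\T_j}f_{\theta,v}.
\end{equation*}
By Lemma~\ref{wave packet concentration lemma}, the contributions from wave packets outside $\T_j$ are negligible on $B_j$, so that $T^\lambda f|_{B_j}$ agrees with $T^\lambda f_j|_{B_j}$ modulo a $\mathrm{RapDec}(R)\|f\|_{L^2(B^{n-1})}$ error. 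Applying the hypothesis \eqref{epsilon removed hypothesis} to each $B_j$ individually and raising to the $p$-th power yields
\begin{equation*}
\|T^\lambda f\|_{L^p(S)}^p=\sum_j\|T^\lambda f\|_{L^p(B_j)}^p\lesssim_{\varepsilon,\phi,a} R^{p\varepsilon}\sum_j\|f_j\|_{L^p(B^{n-1})}^p.
\end{equation*}
The lemma is then reduced to showing the $\ell^p$-orthogonality bound $\sum_j\|f_j\|_{L^p(B^{n-1})}^p\lesssim\|f\|_{L^p(B^{n-1})}^p$.

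The main ingredient is the combinatorial observation that, under the sparsity hypothesis, every wave packet $(\theta,v)$ belongs to at most $O(1)$ of the sets $\T_j$. Indeed, a curved tube $T_{\theta,v}$ has length $\lesssim\lambda$ and width $R^{1/2+\delta}$, so the number of $(RN)^{\bar C}$-separated $R$-balls that it can meet is at most $C\lambda(RN)^{-\bar C}+1$; choosing $\bar C$ large enough (depending only on $n$ and on the ratio $\lambda/R$ that needs to be handled, which can be reduced to a polynomial one via Lemma~\ref{for rough induction} and the parabolic rescaling of \S\ref{parabolic rescaling subsection}), this multiplicity is uniformly $O(1)$. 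For $p=2$ this bounded-multiplicity property combines with the near-orthogonality of the wave packet decomposition to give $\sum_j\|f_j\|_{L^2}^2\lesssim\|f\|_{L^2}^2$. For general $p\geq\bar p$ one obtains the analogous $\ell^p$-orthogonality by a Khintchine/square-function argument: by randomisation the $L^p$-norm of a sum of wave packets is comparable to an $\ell^2$-valued square function, and then the pointwise inequality $\sum_j(\sum_{(\theta,v)\in\T_j}|f_{\theta,v}|^2)^{p/2}\lesssim(\sum_{(\theta,v)}|f_{\theta,v}|^2)^{p/2}$, which holds with bounded implicit constant precisely when each wave packet lies in $O(1)$ of the $\T_j$, closes the argument.

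The main obstacle in the proof is the combinatorial tube-counting step: the sparseness constant $\bar C$ must be chosen so that a single curved tube can meet at most $O(1)$ balls of the sparse collection, independently of $N$ and $\lambda$. Because tubes have length $\lambda$ rather than $R$, one cannot quite take $\bar C$ to be an absolute constant; instead one must combine the scaling arguments of \S\ref{Reductions section} and \S\ref{parabolic rescaling subsection} with a mild iteration so that the effective ratio $\lambda/R$ one has to handle is bounded by a fixed power of $R$, after which $\bar C$ can be absorbed. Once this geometric reduction is in place, the orthogonality/square-function argument above goes through and completes the proof.
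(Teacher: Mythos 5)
The proposal has a genuine gap at the crucial orthogonality step, and I don't believe the proposed fix works.

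You correctly reduce matters to a vector-valued estimate $\sum_j \|f_j\|_{L^p}^p \lesssim \|f\|_{L^p}^p$ where $f_j$ carries the part of $f$ "visible" from $B_j$. You then try to obtain this from bounded multiplicity of tubes over the sparse balls. But this multiplicity bound is simply false in general. The sparse balls have separation $(RN)^{\bar{C}}$ with $\bar{C}$ a \emph{fixed} constant, while the tubes have length comparable to $\mathrm{diam}(\mathrm{supp}\,a^{\lambda}) \sim \lambda$ (or at the very least to the diameter of $S$). There is nothing in the hypotheses that forces $\lambda \lesssim (RN)^{\bar{C}}$: e.g.\ take $N = 1$ and $R$ bounded, and $\lambda$ arbitrary. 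A tube of length $\lambda$ can then pass through a string of $\sim \lambda/(RN)^{\bar{C}}$ sparse balls, which is unbounded. Your suggestion to control $\lambda/R$ by a fixed power of $R$ via Lemma~\ref{for rough induction} or parabolic rescaling does not produce such a reduction: Lemma~\ref{for rough induction} is a comparison between two spatial scales $\rho \leq R$ at \emph{fixed} $\lambda$, and parabolic rescaling replaces $\lambda$ with $\lambda/\rho^2$ only after restricting $f$ to a small cap, which is not what is happening here. Neither tool places an upper bound on $\lambda$ in terms of $R$.

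The paper's proof sidesteps the geometry of tube overlap entirely. Instead of cutting $f$ into wave packets intersecting $B_j$, one mollifies the modulated function: $P_j f := \hat{\eta}_{R_1} \ast \bigl[e^{2\pi i\phi^{\lambda}(x_j;\,\cdot\,)}\psi f\bigr]$ with $R_1 := CNR$, and shows that on $B_j$ the replacement of $f$ by $e^{-2\pi i\phi^{\lambda}(x_j;\,\cdot\,)}P_j f$ costs only a rapidly decaying error. The resulting vector-valued estimate $\bigl(\sum_j \|P_j f\|_{L^p}^p\bigr)^{1/p} \lesssim \|f\|_{L^p}$ is proved by interpolation between $p=\infty$ (Young's inequality) and $p=2$ (duality plus a stationary phase estimate for the kernels $\check{G}_{j_1,j_2}$). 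The crucial point is that the almost-orthogonality between $P_{j_1}f$ and $P_{j_2}f$ is \emph{oscillatory}, not geometric: the phase difference $\phi^{\lambda}(x_{j_1};\omega) - \phi^{\lambda}(x_{j_2};\omega)$ oscillates at rate $\gtrsim |x_{j_1}-x_{j_2}| \geq (RN)^{\bar{C}}$, and van der Corput/stationary phase (using both H1) and H2)) gives a decay $|x_{j_1}-x_{j_2}|^{-(n-1)/2}$ which can be made to beat the trivial $N^2$ count by choosing $\bar{C}$ large. This exploits precisely the cancellation that a tube-overlap argument throws away, and it is robust with respect to the ratio $\lambda/R$. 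To repair your proposal you would need to replace the geometric multiplicity claim with an argument of this oscillatory type.
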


\begin{proof} The proof uses a crude form of wave packet analysis and has much in common with the arguments described in $\S$\ref{Wave packet section}. Let $\{B(x_j, R)\}_{j=1}^N$ be the sparse collection of balls whose union is the set $S$. Clearly it suffices to assume that $R \ll \lambda$ and that all the $B(x_k,R)$ intersect the $x$-support of $a^{\lambda}$. Furthermore, letting $c_{\mathrm{diam}} > 0$ be a small constant chosen to satisfy the requirements of the forthcoming argument, by applying a partition of unity one may assume that $\mathrm{diam}\,X < c_{\mathrm{diam}}$ and so
\begin{equation}\label{not too bad separation}
\frac{|x_{j_1} - x_{j_2}|}{\lambda} \lesssim c_{\mathrm{diam}} \qquad \textrm{for all $1 \leq j_1, j_2 \leq N$.}
\end{equation}

Fix $\eta \in C^{\infty}(\R^{n-1})$ satisfying $0 \leq \eta \leq 1$, $\mathrm{supp}\,\eta \in B^{n-1}$ and $\eta(z) = 1$ for all $z \in B(0,1/2)$. For $R_1 := CNR$, where $C \geq 1$ is a large constant, define $\eta_{R_1}(z) := \eta(z/R_1)$. Further, let $\psi \in C^{\infty}_c(\R^{n-1})$ satisfy $0 \leq \psi \leq 1$, $\mathrm{supp}\,\psi \subset \Omega$ and $\psi(\omega) = 1$ for $\omega$ belonging to the $\omega$-support of $a^{\lambda}$. Fix $1 \leq j \leq N$ and write
\begin{equation*}
e^{2 \pi i \phi^{\lambda}(x_j;\,\cdot\,)}\psi f = P_jf + \big(e^{2 \pi i \phi^{\lambda}(x_j;\,\cdot\,)}\psi f - P_jf) =: P_jf + f_{j,\infty}
\end{equation*}
where $P_jf := \hat{\eta}_{R_1} \ast [e^{2 \pi i \phi^{\lambda}(x_j;\,\cdot\,)}\psi f]$. If one defines
\begin{equation*}
\mathrm{Err}(x) := \int_{\R^{n-1}} e^{2\pi i (\phi^{\lambda}(x;\omega) - \phi^{\lambda}(x_j;\omega))}a^{\lambda}(x;\omega)f_{j,\infty}(\omega)\,\ud \omega,
\end{equation*}
then it follows that
\begin{equation*}
T^{\lambda}f(x) = T^{\lambda}[e^{-2 \pi i \phi^{\lambda}(x_j;\,\cdot\,)}P_jf](x) + \mathrm{Err}(x). 
\end{equation*}
For $x \in B(x_j, R)$ the term $\mathrm{Err}(x)$ is negligible. Indeed, by Plancherel's theorem 
\begin{equation*}
\mathrm{Err}(x) = \int_{\R^{n-1}} \overline{\check{G}_x(z)}\cdot (1-\eta_{R_1}(z))[e^{2\pi i \phi^{\lambda}(x_j;\,\cdot\,)}\psi f]\;\widecheck{}\;(z)\,\ud z 
\end{equation*}
where
\begin{equation*}
\check{G}_x(z) = \int_{\R^{n-1}}e^{2\pi i(\langle z, \omega \rangle - \phi^{\lambda}(x;\omega) + \phi^{\lambda}(x_j;\omega))} a^{\lambda}(x;\omega)\,\ud \omega.
\end{equation*} 
Taking the $\omega$-derivatives of the phase of $\check{G}_x(z)$, one obtains
\begin{align*}
z - \lambda\big(\partial_{\omega}\phi(x/\lambda;\omega) - \partial_{\omega}\phi(x_j/\lambda;\omega)\big) &= z + O(R), \\
-\lambda\big(\partial_{\omega}^{\alpha}\phi(x/\lambda;\omega) - \partial_{\omega}^{\alpha}\phi(x_j/\lambda;\omega)\big) &= O(R) \qquad \textrm{for $|\alpha| \geq 2$.}
\end{align*}
Thus, if $z$ belongs to the support of $1 - \eta_{R_1}$, then integration-by-parts (see Lemma~\ref{integration-by-parts lemma}) shows that $G_x(z)$ is rapidly decaying in $R_1$ and therefore
\begin{equation*}
|\mathrm{Err}(x)| \leq \mathrm{RapDec}(R_1)\|f\|_{L^p(B^{n-1})}.
\end{equation*}

It remains to bound the contributions arising from the frequency localised pieces. By applying the estimate for $T^{\lambda}$ with $R^{\varepsilon}$-loss over each ball $B(x_j, R)$ one obtains 
\begin{align*}
\|T^{\lambda}f\|_{L^p(S)} &\leq \big(\sum_{j=1}^N \|T^{\lambda}[e^{-2\pi i\phi^{\lambda}(x_j;\,\cdot\,)}P_j f]\|_{L^p(B(x_j, R))}^p \big)^{1/p} + \mathrm{RapDec}(R_1)\|f\|_{L^p(B^{n-1})} \\
 &\lesssim_{\varepsilon, \phi, a} R^{\varepsilon} \big(\sum_{j=1}^N \|P_j f\|_{L^p(B^{n-1})}^p \big)^{1/p} + \|f\|_{L^p(B^{n-1})}.
\end{align*}
Thus, it now suffices to show that
\begin{equation*}
\big(\sum_{j=1}^N \| P_j f\|_{L^p(\R^{n-1})}^p \big)^{1/p} \lesssim \|f\|_{L^p(B^{n-1})}.
\end{equation*}
This estimate follows via interpolation between the endpoint cases $p = 2$ and $p= \infty$, which are established presently. The $p= \infty$ case is a trivial consequence of Young's inequality and so it suffices to consider $p=2$. By duality, the desired inequality is equivalent to 
\begin{equation}\label{vector valued estimate}
\big\|\sum_{j=1}^N e^{-2\pi i \phi^{\lambda}(x_j; \,\cdot\,)} \psi \cdot[\hat{\eta}_{R_1} \ast g_j]\big\|_{L^2(\R^{n-1})} \lesssim \big(\sum_{j=1}^N \|g_j\|_{L^2(B^{n-1})}^2 \big)^{1/2}.
\end{equation}
 By squaring the left-hand side of \eqref{vector valued estimate} one obtains
\begin{equation*}
\sum_{j_1, j_2 = 1}^{N} \int_{\R^{n-1}} \overline{G_{j_1,j_2}(\omega)} \hat{\eta}_{R_1} \ast g_{j_1}(\omega)\overline{\hat{\eta}_{R_1} \ast g_{j_2}(\omega)}\,\ud \omega
\end{equation*}
where 
\begin{equation*}
 G_{j_1,j_2}(\omega) := e^{2\pi i (\phi^{\lambda}(x_{j_1}; \omega) - \phi^{\lambda}(x_{j_2}; \omega))} \psi(\omega)^2.
\end{equation*}
By Plancherel's theorem, each summand of the above expression can be written as
\begin{equation}\label{off diagonal term}
 \int_{\R^{n-1}} \overline{\check{G}_{j_1,j_2}(z)} (\eta_{R_1}\check{g}_{j_1}) \ast (\eta_{R_1}\check{g}_{j_2})^{\sim}(z) \, \ud z;
\end{equation}
here $(\eta_{R_1}\check{g}_{j_2})^{\sim}(z) := \overline{(\eta_{R_1}\check{g}_{j_2})(-z)}$. Note that the integrand in \eqref{off diagonal term} is supported on a ball of radius $O(R_1)$ about the origin. 

Fix $1 \leq j_1, j_2 \leq N$ with $j_1 \neq j_2$, let $z \in \R^{n-1}$ with $|z| \lesssim R_1 < |x_{j_2} - x_{j_1}|$ and consider
\begin{equation*}
 \check{G}_{j_1,j_2}(z) = \int_{\R^{n-1}} e^{2\pi i (\langle z, \omega \rangle + \phi^{\lambda}(x_{j_1}; \omega) - \phi^{\lambda}(x_{j_2}; \omega))} \psi(\omega)^2\,\ud \omega.
\end{equation*}
This oscillatory integral can be bounded by a simple stationary phase analysis. For $\alpha \in \N^{n-1}$ with $|\alpha| \leq 2$ consider the function
\begin{equation*}
\partial_{\omega}^{\alpha} \big[\phi^{\lambda}(x_{j_1}; \omega) - \phi^{\lambda}(x_{j_2}; \omega)\big] = \partial_{\omega}^{\alpha}\langle \partial_x^{\lambda}\phi(x_{j_1}; \omega), x_{j_2} - x_{j_1} \rangle + O(c_{\mathrm{diam}}|x_{j_2} - x_{j_1}|), 
\end{equation*}
where the remainder term has been estimated using \eqref{not too bad separation}. 

Let $c_{\mathrm{crit}} > 0$ be another small constant, chosen to satisfy the requirements of the forthcoming argument, and $\omega_0 \in \Omega$.  Suppose that
\begin{equation}\label{critical point}
 \big| \pm \frac{x_{j_2} - x_{j_1}}{|x_{j_2} - x_{j_1}|} - G^{\lambda}(x_{j_1};\omega_0)\big| \geq c_{\mathrm{crit}},
\end{equation}
where the estimate is interpreted as holding for both choices of sign. Condition H1) on the phase implies that for each $\omega_0 \in \Omega$ the vector $G^{\lambda}(x;\omega_0)$ spans the kernel of $\partial_{\omega x}^2\phi^{\lambda}(x;\omega_0)$. Consequently, in view of \eqref{critical point} one has
\begin{equation*}
 |\partial_{\omega}\big[\langle \partial_x\phi^{\lambda}(x_{j_1}; \omega), x_{j_2} - x_{j_1} \rangle\big]|_{\omega = \omega_0}| \gtrsim |x_{j_2} - x_{j_1}|
\end{equation*}
and therefore
\begin{equation*}
 \big|\partial_{\omega} \big[\phi^{\lambda}(x_{j_1}; \omega) - \phi^{\lambda}(x_{j_2}; \omega)\big]|_{\omega = \omega_0} \big| \gtrsim |x_{j_2} - x_{j_1}|, 
\end{equation*}
provided $c_{\mathrm{diam}}$ is sufficiently small. On the other hand, if \eqref{critical point} fails, then
\begin{equation*}
\partial_{\omega}^{\alpha}\Big\langle \partial_x\phi^{\lambda}(x_{j_1}; \omega), \frac{x_{j_2} - x_{j_1}}{|x_{j_2} - x_{j_1}|}\Big\rangle|_{\omega = \omega_0} = \partial_{\omega}^{\alpha}\langle \partial_x\phi^{\lambda}(x_{j_1}; \omega), G^{\lambda}(x;\omega_0) \rangle|_{\omega = \omega_0} + O(c_{\mathrm{crit}}).
\end{equation*}
If $c_{\mathrm{crit}}$ and $c_{\mathrm{diam}}$ are both chosen to be sufficiently small, then condition H2) implies that 
\begin{equation*}
 |\det \partial_{\omega\omega}^{2} \big[\phi^{\lambda}(x_{j_1}; \omega) - \phi^{\lambda}(x_{j_2}; \omega)\big]|_{\omega = \omega_0}| \gtrsim |x_{j_2} - x_{j_1}|^{n-1}.
\end{equation*}
Thus, any critical point of the phase must be (quantitatively) non-degenerate and one may apply higher dimensional versions of van der Corput's lemma (see, for instance, Chapter VIII, Proposition 6 of \cite{Stein1993}) to estimate the oscillatory integral. In particular,
\begin{equation*}
|\check{G}_{j_1,j_2}(z)| \lesssim |x_{j_2} - x_{j_1}|^{-(n-1)/2} \lesssim R_1^{-\bar{C}/2}
\end{equation*}
so that the absolute value of \eqref{off diagonal term} is bounded by
\begin{align*}
R_1^{-\bar{C}/2} \|(\eta_{R_1}\check{g}_{j_1}) \ast (\eta_{R_1}\check{g}_{j_2})^{\sim}\|_{L^1(\R^{n-1})} &\lesssim R_1^{-\bar{C}/2} \prod_{i=1}^2\|\eta_{R_1}\check{g}_{j_i}\|_{L^1(\R^{n-1})} \\
&\lesssim R_1^{-\bar{C}/2+n-1} \prod_{i=1}^2\|g_{j_i}\|_{L^2(\R^{n-1})}.
\end{align*}
Since there are only $O(N^2)$ choice of indices $j_1, j_2$, one may invoke the trivial estimate 
\begin{equation*}
\prod_{i=1}^2\|g_{j_i}\|_{L^2(\R^{n-1})} \lesssim \sum_{j=1}^N\| g_j\|_{L^2(B^{n-1})}^2
\end{equation*}
and then sum all the contributions from all pairs $j_1, j_2$ to bound the off-diagonal terms arising from the left-hand side of \eqref{vector valued estimate}. On the other hand, the diagonal terms provide a favourable contribution of 
\begin{equation*}
\big(\sum_{j=1}^N\big\|\hat{\eta}_{R_1} \ast g_j\big\|_{L^2(B^{n-1})}^2\big)^{1/2} \lesssim \big(\sum_{j=1}^N\| g_j\|_{L^2(B^{n-1})}^2\big)^{1/2}.
\end{equation*}
Combining these observations concludes the proof of \eqref{vector valued estimate} and thereby establishes the lemma.
\end{proof}




\appendix

\section{The integration-by-parts argument}

In this appendix further details of the integration-by-parts argument frequently used in the paper are presented.

\begin{lemma}\label{integration-by-parts lemma} Let $\phi \in C^{\infty}(\R^n)$ be real valued and $a \in C^{\infty}(\R^n)$ supported in $B^n$. Suppose that for some $\lambda, M \geq 1$ and $N \in \N$ and all $z \in \mathrm{supp}\,a$ these functions satisfy the following conditions:
\begin{enumerate}[i)]
    \item $|\partial_{z} \phi(z)| \geq \lambda$,
    \item $|\partial_z^{\alpha} \phi(z)| \leq M|\partial_z \phi(z)|$ for all $\alpha \in \N_0^n$ with $2 \leq |\alpha| \leq N$,
    \item $|\partial_z^{\alpha} a(z)| \leq M^{|\alpha|}$ for all $\alpha \in \N_0^n$ with $|\alpha| \leq N$.
\end{enumerate}
Then
\begin{equation*}
    \Big|\int_{\R^n} e^{i \phi(z)}a(z)\,\ud z \Big| \lesssim_N M^{N}\lambda^{-N}.
\end{equation*}
\end{lemma}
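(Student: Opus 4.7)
The plan is to apply the standard non-stationary phase technique based on the first-order differential operator
\begin{equation*}
L := \frac{1}{i|\nabla\phi|^2}\sum_{j=1}^n (\partial_j\phi)\,\partial_j,
\end{equation*}
which is smooth on a neighbourhood of $\mathrm{supp}\,a$ by hypothesis i) and satisfies $L(e^{i\phi})=e^{i\phi}$. Its formal transpose is
\begin{equation*}
L^t f = i\sum_{j=1}^n \partial_j\!\left(\frac{\partial_j\phi}{|\nabla\phi|^2}\,f\right).
\end{equation*}
Iterating integration by parts $N$ times (legitimate since $a$ is compactly supported) yields
\begin{equation*}
\int_{\R^n} e^{i\phi(z)}a(z)\,\ud z = \int_{\R^n} e^{i\phi(z)}\,(L^t)^N a(z)\,\ud z,
\end{equation*}
so that the lemma reduces to the pointwise bound $|(L^t)^N a(z)| \lesssim_N M^N\lambda^{-N}$ on $\mathrm{supp}\,a$; integrating this estimate against the indicator of $B^n$ will then give the desired conclusion.

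To establish the pointwise bound, I would argue by induction on $N$, the key structural claim being that $(L^t)^N a$ can be written as a finite sum (of cardinality depending only on $n$ and $N$) of terms of the form
\begin{equation*}
C \cdot |\nabla\phi|^{-2s}\cdot\Big(\prod_{\ell=1}^r \partial^{\alpha_\ell}\!\phi\Big)\cdot \partial^{\beta} a
\end{equation*}
where $r,s\in\N_0$, each $|\alpha_\ell|\geq 1$, $|\beta|\leq N$, and the scaling balance $2s-r=N$ together with $\sum_\ell|\alpha_\ell|+|\beta|=N+r-s$ holds. These relations reflect the fact that each application of $L^t$ either moves a derivative onto $a$, onto a $\phi$-factor, or onto the $|\nabla\phi|^{-2}$-factor (producing one extra $\phi$-derivative and increasing the power of $|\nabla\phi|^{-2}$). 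Such a term is then bounded by grouping the $\phi$-factors: each $\partial^{\alpha_\ell}\phi$ with $|\alpha_\ell|\geq 2$ is controlled by $M|\nabla\phi|$ via hypothesis ii), each with $|\alpha_\ell|=1$ is trivially at most $|\nabla\phi|$, hypothesis iii) gives $|\partial^\beta a|\leq M^{|\beta|}\leq M^N$, and the surviving $|\nabla\phi|^{r-2s}=|\nabla\phi|^{-N}$ is dominated by $\lambda^{-N}$ via hypothesis i). Combining, each term is at most $M^{N}\lambda^{-N}$ up to a constant depending on $n,N$.

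The main obstacle will be the inductive step for the structural claim, namely verifying that a single application of $L^t$ preserves the scaling relations and does not inflate the number of $M$-factors beyond $N$. Concretely, when $L^t$ acts on one of the above terms, the outer derivative can fall on $|\nabla\phi|^{-2s}$, on any of the $\partial^{\alpha_\ell}\phi$, or on $\partial^\beta a$; the first case produces a new factor of $|\nabla\phi|^{-2}$ together with an additional $\partial_{jk}^2\phi$-factor, shifting $(r,s)\mapsto(r+1,s+1)$ but leaving $2s-r$ increased by exactly $1$ as required, and similarly the other two cases each increase $\sum|\alpha_\ell|+|\beta|$ by $1$ without violating the bookkeeping. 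Once this invariant is verified, the pointwise estimate follows immediately and the lemma is proved.
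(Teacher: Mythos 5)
Your approach is the same as the paper's: the operator you call $L$ is exactly the operator $D$ used there, and the paper likewise reduces matters to the pointwise bound $|(L^t)^N a|\lesssim_N M^N\lambda^{-N}$. The only difference is in how that bound is obtained: the paper runs a clean induction on the strengthened statement $|\partial_z^{\alpha}(D^*)^{\mu}a|\lesssim_{N,\alpha}M^{\mu+|\alpha|}\lambda^{-\mu}$ for $\mu+|\alpha|\leq N$ (using the Leibniz rule via the intermediate bound $|\partial_z^{\gamma}Q^{-1}|\lesssim M^{|\gamma|}|Q|^{-1}$ with $Q=|\partial_z\phi|^2$), whereas you track an explicit term-by-term decomposition. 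Both are standard and workable.

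However, the structural invariant you state — and which you yourself flag as the crux to be verified — is incorrect, so the inductive step as written would not close. Already for $N=1$ the term $i|\nabla\phi|^{-2}(\partial_j^2\phi)\,a$ of $L^t a$ has $r=1$, $s=1$, $\sum_\ell|\alpha_\ell|+|\beta|=2$, while your claimed identity gives $N+r-s=1$. Your description of the case where the derivative falls on $|\nabla\phi|^{-2s}$ is also off: this produces \emph{two} new $\phi$-factors, $\partial_k\phi$ and $\partial^2_{jk}\phi$, so (counting the preliminary multiplication by $\partial_j\phi\,|\nabla\phi|^{-2}$) the net effect of $L^t$ in that case is $(r,s)\mapsto(r+3,s+2)$, not $(r+1,s+1)$. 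The correct invariants are $2s-r=N$ (which you do have) together with
\begin{equation*}
\sum_{\ell=1}^{r}\big(|\alpha_\ell|-1\big)+|\beta|=N, \qquad \textrm{equivalently} \qquad \sum_{\ell=1}^{r}|\alpha_\ell|+|\beta|=N+r,
\end{equation*}
and one checks that each of your three cases increases both $2s-r$ and $\sum_\ell(|\alpha_\ell|-1)+|\beta|$ by exactly $1$. With this correction the conclusion follows as you intend: the number of factors with $|\alpha_\ell|\geq 2$ plus $|\beta|$ is at most $\sum_\ell(|\alpha_\ell|-1)+|\beta|=N$, giving at most $N$ powers of $M$, while $|\nabla\phi|^{r-2s}=|\nabla\phi|^{-N}\leq\lambda^{-N}$. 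So the gap is a repairable arithmetic slip rather than a wrong idea, but as submitted the key identity is false and must be fixed.
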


The lemma is a standard application of integration-by-parts and the Leibniz rule. Nevertheless, the details of the proof are provided for completeness. 

\begin{proof}[Proof (of Lemma~\ref{integration-by-parts lemma})] Define $Q \colon \R^n \to \R$ by $Q(z) := |\partial_z \phi(z)|^2$ and consider the mutually adjoint\footnote{In the sense that $\int_{\Omega} (D u) v = \int_{\Omega}  u (D^*v)$ whenever at least one of the functions $u, v \in C^{\infty}(\Omega)$ has compact support.} differential operators 
\begin{equation*}
    D u := \frac{\langle \partial_z u, \partial_z \phi \rangle }{iQ}, \qquad D^*u := i \sum_{k=1}^{n}\partial_{z_k} \big[(\partial_{z_k}\phi) Q^{-1} u \big].
\end{equation*}
Note that $D$ fixes the function $e^{i\phi}$ and, consequently,
\begin{equation*}
    \int_{\R^n} e^{i \phi(z)}a(z)\,\ud z =  \int_{\R^n} \big[ D^Ne^{i \phi(z)}\big]a(z)\,\ud z = \int_{\R^n} e^{i \phi(z)}(D^*)^Na(z)\,\ud z.
\end{equation*}
Thus, it suffices to show that 
\begin{equation*}
    |(D^*)^Na(z)| \lesssim_N M^N \lambda^{-N}. 
\end{equation*}

It is useful to work with the more general statement 
\begin{equation*}
    |\partial_z^{\alpha} (D^*)^\mu a(z)| \lesssim_{N,\alpha} M^{\mu + |\alpha|}\lambda^{-\mu} \qquad \textrm{for all $\mu \in \N_0$, $\alpha \in \N_0^n$ satisfying $\mu + |\alpha| \leq N$},
\end{equation*}
which is amenable to induction on $\mu$. The base case $\mu = 0$ follows directly from hypothesis iii). The inductive step is established by appropriate application of the Leibniz rule and the hypothesised bounds for $\phi$. 

Fix $0 \leq \mu \leq N-1$ and $\alpha \in \N_0^n$ such that $\mu + 1 + |\alpha| \leq N$. Denoting by $e_k$ the standard co-ordinate vectors for $k=1,\ldots,n$, it follows by the definition of $D^*$ and the Leibniz rule that
\begin{equation*}
  \partial_z^{\alpha} (D^*)^{\mu+1} a = i \sum_{k=1}^{n}\sum_{\beta \leq \alpha + e_k} \binom{\alpha + e_k}{\beta} \partial_z^{\alpha - \beta +e_k}\big[(\partial_{z_k}\phi) Q^{-1}\big] \partial_z^{\beta}(D^*)^{\mu} a
\end{equation*}
where, for every fixed $k$, the second sum is over all multi-indices $\beta \in \N_0^n$ satisfying $\beta_j \leq \alpha_j + \delta_{jk}$ for $1 \leq j \leq n$. For each such multi-index, $\mu + |\beta| \leq \mu + 1 + |\alpha| \leq N$ and therefore the induction hypothesis yields 
\begin{equation}\label{integration-by-parts 1}
    |\partial_z^{\beta}(D^*)^{\mu} a(z)| \lesssim_N M^{\mu+|\beta|}\lambda^{-\mu}.
\end{equation}
On the other hand, condition ii) together with the Leibniz rule implies that
\begin{equation*}
    |\partial_z^{\gamma}Q^{-1}(z)| \lesssim_{N} M^{|\gamma|}|Q(z)|^{-1} \qquad \textrm{for all $\gamma \in \N_0^n$ with $|\gamma| \leq N$.}
\end{equation*}
Thus, again using ii) and the Leibniz rule,
\begin{equation*}
    \big|\partial_z^{\gamma} \big[(\partial_{z_k}\phi(z))Q^{-1}(z)\big]\big| \lesssim_{N} M^{|\gamma|}|\partial_z\phi(z)|^{-1} \leq M^{|\gamma|}\lambda^{-1},
\end{equation*}
where the last step is by i). Applying the above estimate with $\gamma = \alpha - \beta + e_k$ and combining this with \eqref{integration-by-parts 1}, one deduces that
\begin{equation*}
      |\partial_z^{\alpha} (D^*)^{k+1} a(z)| \lesssim_N 
      \sum_{k=1}^{n}\sum_{\beta \leq \alpha + e_k}  M^{|\alpha| - |\beta| + 1}\lambda^{-1} M^{\mu+|\beta|}\lambda^{-\mu} \lesssim_N M^{\mu + 1 + |\alpha|}\lambda^{-(\mu+1)},
\end{equation*}
which closes the induction. 
\end{proof}




\bibliography{Reference}
\bibliographystyle{amsplain}

\end{document}